%% file: main.tex
\documentclass[10pt,oneside,a4paper]{amsart}
\usepackage{graphicx}
\input{preamble}

\title{Super \texorpdfstring{$K$}{K}-theory and group completion}
\author{David Aretz}
\author{Luuk Stehouwer}

\begin{document}

\maketitle

\begin{abstract}
    We develop a spectrum-level graded $K$-theory for real super Banach algebras.
    Our construction is categorical and homotopy theoretic, in the style of algebraic $K$-theory: the graded $K$-theory spectrum is obtained by a (co)fiber sequence from the $(\infty,1)$-categorical group completion of topological groupoids of finitely generated projective graded modules, rather than from spaces of Fredholm operators or Kasparov cycles.
    We define a connective spectrum $k^{\ABS}_A$ refining the Atiyah--Bott--Shapiro construction as a cofiber, together with its periodification $K^{\mathrm{gr}}_A$, and show that both are lax symmetric monoidal and functorial in bimodules, not merely in homomorphisms.
    The failure of graded $A$-modules to present all cocycles in $K^{\mathrm{gr}}_0(A)$ is shown to be purely a $\pi_0$-phenomenon on $k^{\ABS}_A$.
    We obtain a natural equivalence $K^{\mathrm{gr}}_{A\grotimes \Cl_{p,q}} \simeq \Sigma^{p-q}K^{\mathrm{gr}}_A$, which links topological Bott periodicity with the Morita equivalence between $\Cl_8$ and $\R$.
    Restricting to invertible finite-dimensional semisimple super algebras yields a symmetric monoidal functor $\Pic(\Bim(\sBan_{\R})^{\mathrm{fd}}) \to \Pic(\ModInf(KO))$ which splits off the bottom three Postnikov layers of $\Pic(\ModInf(KO))$, giving a direct link between super division algebras and invertible $KO$-modules.
    We also give spectral refinements of Karoubi's and van Daele's graded $K$-groups, with explicit comparison equivalences, therefore connecting to $KK$-theory.
    We also provide an extensive general treatment for $K$-theory of ungraded topological rings that might be of independent interest. In particular, we characterize connective topological $K$-theory of ungraded Banach algebras by a universal property.
\end{abstract}

\tableofcontents

\newpage
\section{Introduction}
\label{sec:intro}
Every existing construction of the $K$-theory spectrum $K^{\gr}_A$ of a graded $C^*$-algebra $A$ is built from Fredholm operators or Kasparov cycles on infinite rank Hilbert modules.
The purpose of this article is to provide a purely categorical homotopy theoretic framework that functorially builds $K^{\gr}_A$ for any graded Banach algebra $A$ in the style of algebraic $K$-theory without any functional analysis.

\subsubsection*{Topological \texorpdfstring{$K$}{K}-theory without analysis}
We construct the $K$-theory of ungraded Banach algebras $A$ through the lens of algebraic $K$-theory, obtaining a blueprint for the graded theory.
That a group completion definition of topological $K$-theory exists at all is not widely known, see for example \cite{MOsegalmachine, MOhenriquesKspace, MOkrausektheory}.
However, with some care (Warning~\ref{ex:wrong}) we can define the \emph{connective $K$-theory spectrum $k_A$ of $A$} as the (higher) group completion $k(\ModEn_A)$ of a suitable monoid of finitely generated projective modules under direct sum.
More precisely, $k_A$ is the group completion of $N^{hc}(\ModEn_A^{\cong},\oplus)$, the $\E_\infty$-monoid obtained from taking the homotopy coherent nerve of the $\sSet$-enriched groupoid $\ModEn_A^{\cong}$ coming from the Banach space norm on module maps (Definition~\ref{def:KthBanAlg}).
These are modules in the algebraic sense; we never need Hilbert bimodules or $C^*$-correspondences in this article.

Since our setup is far more general, it immediately applies to topological rings (Section~\ref{sec:sSetKth}) and simplicially enriched module categories (Section~\ref{sec:enrichKth}), and we expect adaptations to bornological or condensed~\cite{aoki2024semi} settings to be straightforward.
However, Banach algebras are close enough to topology that the key features of topological $K$-theory become categorical statements about module categories: the rigid behavior of their idempotents yields a Serre--Swan theorem (Proposition~\ref{prop:serreswan}), and by the open mapping theorem surjective homomorphisms behave like fibrations (Example~\ref{ex:serrefunctor}), which is the source of excision (Corollary~\ref{cor:excision}).
All our results apply to Banach algebras over both $\mathbb{F} = \R$ and $\mathbb{F} = \C$, though our main focus will be $\mathbb{F} = \R$ for definiteness.

\subsubsection*{The Atiyah--Bott--Shapiro construction}

The paper~\cite{ABS} of Atiyah, Bott and Shapiro is the source of the influential idea that higher $K$-theory classes can be represented by bundles of Clifford modules.
More precisely, for a compact Hausdorff space $X$ there is a map we call the \emph{ABS construction}
\begin{equation}
    \label{eq:ABSconstruction}
    t\colon \mathrm{mod}_{\Cl_{-n}}(X)/\mathrm{mod}_{\Cl_{-(n+1)}}(X) \to KO^n(X) \,,
\end{equation}
where $\mathrm{mod}_{\Cl_{-n}}(X)$ is the monoid of bundles of finitely generated projective modules over the Clifford algebra (Definition~\ref{def:clifford}) under direct sum $\oplus$. 
The subtlety that $t$ is \emph{not} an isomorphism for general $X$ (Example~\ref{ex:circle})
has been repeatedly observed~\cite{MR2490588}~\cite[second remark on p.~8]{andreK-theory}~\cite[Section~2.1]{thiang}~\cite[Appendix~A]{bourne2026analyticindextheoryspectral}---yet has never been properly explained.
We explain this failure by showing this is purely a $\pi_0$-phenomenon, and use it as a motivation to develop $K$-theory of graded algebras in terms of their module categories.

In more detail, analogously to $k_A$, let $k^{C_2}_A =k(\ModEn^{C_2}_A)$ be the group completion of finitely generated projective graded $A$-modules (Proposition~\ref{prop:kC_2}).
Define the connective graded $K$-theory spectrum $k^{\ABS}_A$ for every graded Banach algebra $A$ as the \emph{cofiber} of the map $k^{C_2}_{A \grotimes \Cl_{-1}} \to k^{C_2}_A$ given by forgetting the $\Cl_{-1}$-action (Definition~\ref{def:ABSKth}).
Here $\grotimes$ is the graded tensor product~\eqref{gradedtensoralgebra} and we have $k^{\ABS}_A \cong k_A$ if $A$ is ungraded (Example~\ref{example:ungraded}).
We define a map of connective spectra
\begin{equation}
    \label{eq:ABSpectra}
    t\colon k_{A \grotimes \Cl_{-n}}^{ABS}\to \Omega^n k^{ABS}_A\,.
\end{equation}
which recovers \eqref{eq:ABSconstruction} on $\pi_0$ for $A = C(X)$.
We prove that $\pi_i(t)$ is an isomorphism for all $i>0$, providing in addition a single obstruction for $\pi_0(t)$, and hence $t$, to be an isomorphism~(Corollary~\ref{cor:quotientcondition}).
This is a form of Bott periodicity that connects it (non-tautologically) to algebraic Bott periodicity.

\subsubsection*{The product structure}

Topological $K$-theory of a space is a ring, and more generally we prove that there is a product structure 
\begin{equation}
\label{eq:ungrproduct}
k_A \otimes k_B \to k_{A \otimes B}     
\end{equation} 
for ungraded Banach algebras $A$ and $B$ (Theorem~\ref{thm:omnibusktheory}, known for $C^*$-algebras~\cite{MR3788857}). 
Here we used the tensor product in the $(\infty,1)$-category $\Sp$ of spectra and the completed projective tensor product of Banach algebras~\cite{Ryan2002}, see Section~\ref{sec:Banach}.
The precise choice of tensor product on the algebra side is not too important here---other tensor products, such as $\otimes_{\mathrm{min}},\otimes_{\mathrm{max}}$ from the $C^*$-world, work just as well, since we don't claim the lax monoidal structure \eqref{eq:ungrproduct} is strong.

Relating graded tensor products of modules to a multiplication on spectra is more delicate: $k^{\ABS}$ is not obviously lax monoidal.
We resolve this by exhibiting the algebra homomorphism $\R \to \Cl_{-1}$ as a Smith ideal (Definition \ref{def:smith}) in $\sBanAlgInf^{\op}$ in Section~\ref{sec:smith}.
For this, it is important to realize the Smith ideal as the image of $\R^0 \to \R^1$ under the Clifford functor $\Cl$ of Proposition~\ref{prop:Clfunctor}.
This functor maps the $(\infty,1)$-category of finite-dimensional vector spaces and injections to the $(\infty,1)$-category $\BanAlgInf$ of Banach algebras and homomorphisms obtained by localizing the $1$-category $\Alg(\Ban)$ at homotopy equivalences (Proposition~\ref{prop:localizationofBanAlg}).
One of our main contributions is to use the Smith ideal machinery to construct a spectrum-level product structure $k_A^{\ABS} \otimes k_B^{\ABS} \to  k_{A \grotimes B}^{\ABS}$~(Theorem~\ref{thm:abslaxmonoidal}).
The core idea of our approach is present but not fully worked out in \cite{TheMscThesis}, and is closely connected to Weiss' orthogonal calculus~\cite{WeissOrthogonalCalculus} as observed by Rezk~\cite{MOrezkclifford}.

\subsubsection*{Periodic graded \texorpdfstring{$K$}{K}-theory}

Taking $B = \R$ and using $k^{ABS}_\R = ko$ (Examples~\ref{example:ungraded} and \ref{ex:ku}), we obtain a $ko$-module action on $k_A^{ABS}$.
We can thus define the \emph{periodic $K$-theory spectrum $K^{\mathrm{gr}}_A := k_A^{ABS}[\beta^{-1}]$} by inverting the action by the Bott class $\beta \in \pi_8 ko$ (Definition~\ref{def:periodick}).
Even though $k_A^{ABS}$ does not satisfy excision, we show that $K^{\mathrm{gr}}$ does~(Theorem~\ref{thm:periodicKtheorylax}) and $k^{\ABS}\to K^{\mathrm{gr}}$ is a $\pi_{\geq 1}$-isomorphism.

\subsubsection*{Morita functoriality}

One advantage of our setup is that (graded) $K$-theory is automatically functorial for bimodules that are finitely generated and projective on one side.
In particular, graded $K$-theory is Morita invariant and gives wrong way maps for certain algebra homomorphisms.

In more detail, we start with the very general construction $\Mor(\sBan)$ of the Morita $(2,2)$-category (Appendix~\ref{app:algebras}) in which
\begin{itemize}
    \item objects are graded Banach algebras
    \item $1$-morphisms are graded Banach bimodules
    \item $2$-morphisms are grading-preserving bimodule maps
\end{itemize}
We use this to construct the Morita $(\infty,1)$-category $\sBimBanInf$ (Definition~\ref{def:BimBaninfty}) of graded Banach algebras by
\begin{enumerate}
    \item restricting to $1$-morphisms that admit a left adjoint, or equivalently are finitely generated and projective over the target;
    \item restricting to invertible $2$-morphisms, resulting in the $(2,1)$-category $\BimBanOne$;
    \item localizing $\BimBanOne$ with respect to bimodule homotopies.
\end{enumerate}
We construct graded $K$-theory as a functor out of $\sBimBanInf$, so that bimodules gives maps of $K$-theory spectra while both bimodule isomorphisms as well as bimodule homotopies give homotopies of spectrum maps.
Even though bicategories of $C^*$-algebras, von Neumann algebras and bornological algebras have been constructed~\cite{MR1327199,MR3056650, MR1972774,MR1867561,ARETZ2026111618}, the specific higher categorical structure on Banach algebras we need does not appear in the literature.

\subsubsection*{The role of invertible superalgebras in \texorpdfstring{$K$}{K}-theory}

The observation that the Brauer--Wall group $\Z/8$ of central real graded division algebras~\cite{MR167498} and the periodicity of $KO$ are related is at least 60 years old.
A full comparison between the whole Picard $2$-groupoid of Morita-invertible finite-dimensional graded algebras $\Pic(\Mor(\sVectOne))$ and the Picard spectrum $\Pic(\ModInf(KO))$ of invertible $KO$-module spectra had been made abstractly~\cite{beardsley2023brauerwallgroupstruncatedpicard,FreedTwistedKTheoryOrientifolds} but never by a functor.
We produce a functor $\Pic(\Mor(\sVectOne)) \to \Pic(\ModInf(KO))$ simply by restricting $K^{\mathrm{gr}}$ itself, on which it becomes strong symmetric monoidal~(Corollary~\ref{cor:strongmonoidalfd}).
This identifies $\Pic(\Mor(\sVectOne))$ with a $2$-truncation of $\Pic(\ModInf(KO))$, providing a direct link between graded division algebras and invertible $KO$-modules~(Theorem~\ref{thm:picardgroupoid}).
This explains the connection with twisted $K$-theory through bundles of invertible superalgebras in homotopical language.

\subsection{Main results}
Our main results are highlighted as Theorems~A, B and C below.
In brief, Theorem~A characterizes connective topological $K$-theory of ungraded Banach algebras as the initial homotopy invariant functor under algebraic $K$-theory and establishes its basic properties: lax symmetric monoidality, excision, and preservation of filtered colimits.
Theorem~B extends this to the graded Morita setting: the functors $k^{\ABS}$ and $K^{\mathrm{gr}}$ are lax symmetric monoidal on $\sBimBanInf$, homotopy invariant, excisive, preserve filtered colimits and they satisfy spectral Bott periodicity.
Theorem~C identifies the Picard $\infty$-groupoid of finite-dimensional semisimple graded algebras with a $2$-truncation of $\Pic(\ModInf(KO))$.

\begin{theoremA*}\hypertarget{thmA}{}
    The connective topological $K$-theory functor
    \begin{equation*}
        k\colon \Alg(\Ban) \to \Sp_{\geq 0}
    \end{equation*}
    is initial among homotopy invariant functors equipped with a natural transformation $\mathcal{K}^{\mathrm{alg}}\to k$ from algebraic $K$-theory.
    The functor $k$ is lax symmetric monoidal.
    It satisfies excision and preserves filtered colimits along contractive homomorphisms.
    Topological $K$-theory can be computed as
    \begin{align*}
        k_A \simeq \colim_{\Delta^{\op}} \mathcal{K}^{\mathrm{alg}}(C(\Delta^n;A)) \,.
    \end{align*}
\end{theoremA*}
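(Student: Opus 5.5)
Since Theorem~A is an omnibus statement, I will prove it by assembling the main results of Sections~\ref{sec:sSetKth} and~\ref{sec:enrichKth} for Banach algebras. The organizing object is the simplicially enriched groupoid $\ModEn_A^{\cong}$ of Definition~\ref{def:KthBanAlg}. I will compare its homotopy coherent nerve with the realization of the simplicial object $[n]\mapsto \mathrm{Proj}(C(\Delta^n;A))^{\cong}$ of discrete groupoids.

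\textbf{Step 1: the colimit formula.} An $n$-simplex of invertible maps in $\ModEn_A^{\cong}$ is a continuous path $\Delta^n\to\mathrm{Iso}(P,Q)$. This is the same as an isomorphism of the extended modules $C(\Delta^n;A)\otimes_A P\cong C(\Delta^n;A)\otimes_A Q$. To turn this observation into an equivalence, I will use the Serre--Swan theorem (Proposition~\ref{prop:serreswan}), or its local form: idempotents in $M_k(C(\Delta^n;A))$ are conjugate to constant ones, because $\Delta^n$ is contractible and idempotents are rigid in Banach algebras. This yields an equivalence of $\E_\infty$-spaces
\[
N^{hc}(\ModEn_A^{\cong},\oplus)\simeq \colim_{\Delta^{\op}} \mathrm{Proj}(C(\Delta^n;A))^{\cong}.
\]
I then group complete levelwise. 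Group completion is a left adjoint, so it commutes with geometric realization, and this gives $k_A\simeq\colim_{\Delta^{\op}}\mathcal{K}^{\mathrm{alg}}(C(\Delta^n;A))$. There is one caveat: $\mathcal{K}^{\mathrm{alg}}$ here must mean the connective group completion of projective modules, not the nonconnective version. This is exactly the subtlety flagged in Warning~\ref{ex:wrong}.

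\textbf{Step 2: the universal property.} Restricting to the $0$-simplex gives the natural map $\mathcal{K}^{\mathrm{alg}}\to k$. Homotopy invariance of $k$ is immediate from the formula, since $C(\Delta^n;C([0,1];A))$ is a simplicial homotopy of $C(\Delta^\bullet;A)$. Now let $F$ be homotopy invariant and let $\mathcal{K}^{\mathrm{alg}}\to F$ be given. Homotopy invariance makes $F(A)\to F(C(\Delta^n;A))$ an equivalence for every $n$, so $F(A)\simeq\colim_{\Delta^{\op}}F(C(\Delta^\bullet;A))$. This produces a factorization
\[
k_A\simeq\colim\mathcal{K}^{\mathrm{alg}}(C(\Delta^\bullet;A))\to\colim F(C(\Delta^\bullet;A))\simeq F(A).
\]
The factorization is unique because the construction $L F:=\colim_{\Delta^{\op}}F(C(\Delta^\bullet;-))$ is a localization onto homotopy invariant functors: it is idempotent, and the unit $F\to LF$ is an equivalence exactly when $F$ is homotopy invariant. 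Proving idempotence requires the usual swap of two simplicial directions. This in turn uses the homeomorphism $\Delta^n\times\Delta^m$, together with the fact that the resulting bisimplicial object is homotopy invariant in each variable separately.

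\textbf{Step 3: the remaining properties.} Lax symmetric monoidality follows from Theorem~\ref{thm:omnibusktheory}. Alternatively, it comes from Step~2: $\mathcal{K}^{\mathrm{alg}}$ is lax monoidal via $P\otimes Q$, and the localization $L$ is compatible with the tensor structure because of the diagonal map $C(\Delta^n;A)\hat\otimes C(\Delta^n;B)\to C(\Delta^n;A\hat\otimes B)$ and the fact that the diagonal $\Delta^{\op}\to\Delta^{\op}\times\Delta^{\op}$ is cofinal.

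Filtered colimits along contractive maps work as follows. Finitely generated projective modules, their isomorphisms, and paths of isomorphisms are all defined by finitely many elements satisfying open conditions. Since invertibility is an open condition, each of these data descends to a finite stage of the colimit, possibly after a small perturbation.

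Excision is Corollary~\ref{cor:excision}. It rests on Example~\ref{ex:serrefunctor}: a surjection $A\to A/I$ induces a fibration of enriched groupoids, because paths of invertibles lift by the open mapping theorem. Comparing fibers then identifies $k_I$ with the fiber of $k_A\to k_{A/I}$.

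\textbf{Main obstacle.} I expect the hardest part to be Step~1. One must show that the coherent nerve agrees with the realization of the discrete groupoids, and the essential-surjectivity part of this needs a uniform Serre--Swan argument over $\Delta^n$. A second delicate point is the requirement in Step~3 that group completion commute with the realization while the result stays connective.
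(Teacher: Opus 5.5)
Your proposal is correct in outline, and your Step~1 follows the paper's argument: Proposition~\ref{prop:compareK} together with Theorem~\ref{th:comparisontopktheory}, with Lemma~\ref{lem:homotopycoherentnerve} supplying $N^{\mathrm{hc}}(\mathsf{C})\simeq\colim_{\Delta^{\op}} j(\mathsf{C})_n$. Your route to essential surjectivity is a clean substitute for Corollary~\ref{cor:contractibleSerreSwan}: contract $\Delta^n$ to get a path of idempotents in $M_k(C(\Delta^n;A))$, then conjugate it to a constant one by Lemma~\ref{lem:idempotents} with $X=[0,1]$.

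The genuine divergence is Step~2. The paper identifies $k$ with $i^*i_!\mathcal{K}^{\mathrm{alg}}$ for the Dwyer--Kan localization $i\colon\Alg(\Ban)\to\BanAlgInf$ (Proposition~\ref{prop:localizationofBanAlg}). It evaluates the Kan extension on the resolution $C(\Delta^\bullet;A)$ via the derived mapping space lemma (Lemmas~\ref{lem:simplicialresBanAlg}, \ref{lem:derivedmappingspace}, \ref{lem:leftKanextension}). Lax monoidality is then formal, because $W$ is $\otimes$-stable and $i_!$ is symmetric monoidal for Day convolution.

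You instead prove directly that $LF=\colim_{\Delta^{\op}}F(C(\Delta^\bullet;-))$ is a localization onto homotopy invariant functors. Simplicial homotopies induced by $H\colon B\to C(\Delta^1;B')$ make $LF$ homotopy invariant for every $F$, and the swap $C(\Delta^m;C(\Delta^n;A))\cong C(\Delta^n;C(\Delta^m;A))$ identifies $L\eta$ with $\eta L$. You then build the lax structure from the diagonals $C(\Delta^n;A)\otimes_\pi C(\Delta^n;B)\to C(\Delta^n;A\otimes_\pi B)$ and Lemma~\ref{lem:geomrealizationmonoidal}. Your route is more elementary and never needs mapping spaces in $\BanAlgInf$. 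The paper's route packages universality and monoidality into formal properties of symmetric monoidal localizations, which is what lets it be reused verbatim for $\BimBanOne\to\BimBanInf$ (Definition~\ref{def:ungradedBimBanKth}, Lemma~\ref{lem:simplicalresolution}).

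Three points in Step~3 need correction.

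\textbf{Lax monoidality.} Your first justification, ``follows from Theorem~\ref{thm:omnibusktheory}'', is circular, since that theorem is Theorem~A. Only your alternative argument counts.

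\textbf{Excision.} Citing Corollary~\ref{cor:excision} is what the paper does, but your account of why it holds misses the crux. This step, not Step~1, is where the real technical weight of Theorem~A sits. Milnor patching (Lemma~\ref{lem:milnorpatch}) plus the Serre-functor property give a pullback of $\E_\infty$-monoids only \emph{before} group completion (Lemma~\ref{lem:homotopypullbackofbanachcategories}). Group completion does not preserve pullbacks in general; see the warning after Lemma~\ref{lem:groupcompletionpullback}. The paper needs two further inputs to apply Lemma~\ref{lem:groupcompletionpullback}: every functor induced by a homomorphism is quasi-surjective, and $N^{\mathrm{hc}}(\ModEn_A^{\cong})$ is telescopic (Lemma~\ref{lem:modulestelescopic}). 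Also, the statement actually proved is that Milnor squares go to pullbacks. The identification $k_I\simeq\fib(k_A\to k_{A/I})$ is a consequence, which the paper uses to \emph{define} $k_I$.

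\textbf{Filtered colimits.} Idempotence is a closed condition, not an open one. To descend a projective module you need an almost-idempotent at a finite stage, corrected by holomorphic functional calculus, plus similarity of nearby idempotents for injectivity on $\pi_0$. You also need families over all compact $X$, not just paths. The paper avoids idempotents entirely. It shows $\colim_I\GL_n(A_i)\to\GL_n(A_\infty)$ is a weak equivalence, using openness of invertibles and the infimum formula for the colimit norm, and recovers $\pi_0$ from Bott periodicity.
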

These results also extend to nonunital Banach algebras (Section~\ref{sec:nonunital}).

\begin{theoremB*}\hypertarget{thmB}{}
    $k^{\ABS}$ and $K^{\mathrm{gr}}$ refine to lax symmetric monoidal functors
    \begin{align*}
        k^{\ABS}\colon \sBimBanInf \to \Sp_{\geq 0} \,, \qquad K^{\mathrm{gr}}\colon \sBimBanInf \to \ModInf(KO) \,,
    \end{align*}
    such that the natural transformations $k^{C_2}\to k^{\ABS}\to K^{\mathrm{gr}}$ are monoidal.
    The restriction of $k^{\ABS}$ to ungraded Banach algebras coincides with the connective $K$-theory functor $k$.
    The functors are homotopy invariant and preserve filtered colimits along contractive homomorphisms, and $K^{\mathrm{gr}}$ is excisive.
    There is a natural equivalence of $KO$-modules 
    \begin{equation}
        \label{eq:Bottper}
            K^{\mathrm{gr}}_{A\grotimes \Cl_{p,q}} \simeq \Sigma^{p-q} K^{\mathrm{gr}}_A \,.        
    \end{equation}
\end{theoremB*}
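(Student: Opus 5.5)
We assemble Theorem~B from the constructions announced in the introduction, in roughly the order the paper sets them up.

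\textbf{Step 1: the $C_2$-equivariant functor.} First extend $k^{C_2}$ to a lax symmetric monoidal functor on $\sBimBanInf$. We run the ungraded construction of Theorem~A on the $\sSet$-enriched groupoids of finitely generated projective graded modules, $\ModEn^{C_2}_A$.
\begin{itemize}
\item A bimodule that is finitely generated projective over the target acts on these groupoids by tensoring.
\item Bimodule isomorphisms and bimodule homotopies give enriched natural isomorphisms and homotopies. Hence the assignment factors through the localization defining $\sBimBanInf$.
\item The graded tensor product of modules $\ModEn^{C_2}_A\times\ModEn^{C_2}_B\to\ModEn^{C_2}_{A\grotimes B}$ distributes over $\oplus$. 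It is therefore a bilinear map of $\E_\infty$-monoids, and group completion, being symmetric monoidal, produces the lax structure.
\end{itemize}
Homotopy invariance and preservation of filtered colimits along contractive maps follow as in Theorem~A, since both hold at the level of the module groupoids.

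\textbf{Step 2: the connective theory $k^{\ABS}$.} Next pass to the cofiber $k^{\ABS}_A=\mathrm{cofib}(k^{C_2}_{A\grotimes\Cl_{-1}}\to k^{C_2}_A)$. This is where the Smith ideal of Section~\ref{sec:smith} enters.
\begin{itemize}
\item The map $\R\to\Cl_{-1}$, obtained as the image of $\R^0\to\R^1$ under the Clifford functor $\Cl$ of Proposition~\ref{prop:Clfunctor}, is a Smith ideal in $\sBanAlgInf^{\op}$.
\item Precomposing with $-\grotimes(\R\to\Cl_{-1})$ turns $k^{C_2}$ into a lax monoidal functor into arrows of $\Sp$. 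Here $\Sp^{[1]}$ carries the pushout-product structure.
\item The cofiber functor $\Sp^{[1]}\to\Sp$ is symmetric monoidal from the pushout-product structure, which is the Smith ideal/quotient correspondence. This gives the lax structure on $k^{\ABS}$ and makes $k^{C_2}\to k^{\ABS}$ monoidal.
\end{itemize}
For ungraded $A$, graded modules over $A\grotimes\Cl_{-1}$ are equivalent to ungraded $A$-modules, and likewise over $A$ they are pairs of $A$-modules. The forgetful map is then the diagonal $k_A\to k_A\oplus k_A$, whose cofiber is $k_A$; this is Example~\ref{example:ungraded}. Homotopy invariance and the colimit statement survive cofibers.

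\textbf{Step 3: periodization and excision.} Using $k^{\ABS}_\R=ko$, each $k^{\ABS}_A$ is a $ko$-module, so lax monoidality lifts to $\ModInf(ko)$. Inverting $\beta$ is a smashing localization $-\otimes_{ko}KO$, hence symmetric monoidal. It follows that $K^{\mathrm{gr}}$ is lax monoidal into $\ModInf(KO)$ and that $k^{\ABS}\to K^{\mathrm{gr}}$ is monoidal.

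Excision is proved as follows.
\begin{itemize}
\item By Corollary~\ref{cor:quotientcondition}, $t$ is a $\pi_{>0}$-isomorphism. Combined with periodicity, this gives $K^{\mathrm{gr}}_A\simeq\colim_n\Omega^{8n}k^{\ABS}_{A\grotimes\Cl_{-8n}}$, so each homotopy group of $K^{\mathrm{gr}}$ is computed by positive homotopy of $k^{\ABS}$.
\item In positive degrees, $k^{\ABS}$ fits in long exact sequences inherited from the excisive $k^{C_2}$. The graded analogue of Corollary~\ref{cor:excision} comes from the open mapping theorem.
\end{itemize}

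\textbf{Step 4: Bott periodicity.} For \eqref{eq:Bottper}, the map $t$ inverts to $K^{\mathrm{gr}}_{A\grotimes\Cl_{-1}}\simeq\Omega K^{\mathrm{gr}}_A$ after periodization, because $t$ is an isomorphism on $\pi_{\ge1}$ and periodization kills the $\pi_0$ defect.
\begin{itemize}
\item Next, $\Cl_{1}\grotimes\Cl_{-1}$ is Morita equivalent to $\R$. Since $K^{\mathrm{gr}}$ factors through $\sBimBanInf$, this gives $K^{\mathrm{gr}}_{A\grotimes\Cl_1}\simeq\Sigma K^{\mathrm{gr}}_A$.
\item Iterating over $\Cl_{p,q}\cong\Cl_{1}^{\grotimes p}\grotimes\Cl_{-1}^{\grotimes q}$ yields the general statement.
\end{itemize}
Naturality and $KO$-linearity come from the lax structure: the equivalence is $K^{\mathrm{gr}}_A\otimes_{KO}K^{\mathrm{gr}}_{\Cl_{p,q}}\to K^{\mathrm{gr}}_{A\grotimes\Cl_{p,q}}$, which can be checked on the invertible object $K^{\mathrm{gr}}_{\Cl_{p,q}}\simeq\Sigma^{p-q}KO$.

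\textbf{Main obstacle.} I expect Step 2 to be the hardest. It requires producing the coherent Smith ideal structure, i.e.\ $\E_\infty$-coherence of the Clifford functor on the arrow $\R^0\to\R^1$, and verifying that the cofiber of a pushout-product is compatible with the Morita-localized source. I would try first to build everything on the $(2,1)$-category $\BimBanOne$ and only then descend along the localization, using that the localization is symmetric monoidal.
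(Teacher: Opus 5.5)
Your outline is correct, and for the monoidal structure it is the paper's argument. Step~2 is Theorem~\ref{thm:abslaxmonoidal}: the Smith ideal of Proposition~\ref{prop:cliffordsmithideal} is transported into $\sBimBanInf$ by wrong-way functoriality along the finite map $\R\to\Cl_{-1}$, tensored with $k^{C_2}$, and then Corollary~\ref{cor:cofiberlaxmon} applies. Your pushout-product structure on arrows is exactly Day convolution over $[1]_{\mathrm{min}}$. The periodization in Step~3 is the smashing localization of Theorem~\ref{thm:periodicKtheorylax}.

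The real difference is where Bott periodicity gets used. In the body (Definition~\ref{def:periodick}) the paper defines $K^{\mathrm{gr}}_A$ as the $\overline{\alpha}$-telescope $\colim_n\Sigma^{-n}k^{\ABS}_{A\grotimes\Cl_{+n}}$. Then $K^{\mathrm{gr}}_{A\grotimes\Cl_{+1}}\simeq\Sigma K^{\mathrm{gr}}_A$ is a cofinality tautology, and \eqref{eq:Bottper} follows from $\Cl_{k,k}\Morita\R$ (Proposition~\ref{prop:clifshift}). Theorem~\ref{thm:gradedbottperiodicity} is used instead to identify the telescope with $k^{\ABS}_A[\beta^{-1}]$ (Corollary~\ref{cor:ktheoryomegaspectrum}: $\overline{\alpha}^{(8)}$ acts by the Bott class). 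That identification is what supplies the $KO$-module and monoidal structure.

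You take $k^{\ABS}[\beta^{-1}]$ as the definition, so monoidality comes for free, and spend the $\pi_{\geq 1}$-isomorphism on $t$ itself. Its cofiber has bounded homotopy, as does the discrepancy between $\Omega=\tau_{\geq0}\Sigma^{-1}$ and $\Sigma^{-1}$, and both are killed by inverting $\beta$. This is a clean alternative that never needs to recognize $\overline{\alpha}^{(8)}$ as $\beta$.

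Your excision argument is the homotopy-group shadow of the paper's. The paper shows that $\ModInf_{\Sp_{\geq0}}(ko)\to\ModInf(KO)$ preserves finite limits, because $-\otimes_{ko}KO$ kills coconnective modules, and composes this with the excisive functor of Lemma~\ref{lem:loopsofabsexcisive}. If you phrase your argument that way, via $K^{\mathrm{gr}}\simeq\Sigma\bigl((\Omega k^{\ABS})[\beta^{-1}]\bigr)$, you get a pullback of spectra rather than merely exact sequences, and you need no periodicity input at all.

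Several details need repair.
\begin{itemize}
\item The $\pi_{\geq1}$-isomorphism of $t$ is Theorem~\ref{thm:gradedbottperiodicity}, not Corollary~\ref{cor:quotientcondition}, which only concerns the $\pi_0$-comparison $f_2$. It is the substantive input of your argument (reduction to $\pi_0$, Karoubi triples, Karoubi's theorem). By contrast, the Smith-ideal coherence you single out as the main obstacle is settled in the paper by the operad $\mathsf{J}$ of coordinate-positive embeddings, whose multimorphism spaces are empty or contractible (Lemma~\ref{lem:operadequivJ}).
\item With $\Omega=\tau_{\geq0}\Sigma^{-1}$, the colimit $\colim_n\Omega^{8n}k^{\ABS}_{A\grotimes\Cl_{-8n}}$ is connective and cannot be $K^{\mathrm{gr}}_A$. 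What you want is $\colim_n\Sigma^{-8n}k^{\ABS}_A$ along $\beta$, which uses no $t$.
\item The last sentence of Step~4 is not an argument. Invertibility of $K^{\mathrm{gr}}_{\Cl_{p,q}}$ does not make the lax structure map an equivalence; that is Corollary~\ref{cor:kuennethforclifford}, proved by comparing with $\overline{\alpha}^{(n)}$ using its $k^{\ABS}$-linearity. You do not need it, since $t$ is already a natural map of $ko$-modules.
\item In Step~1, ``group completion is symmetric monoidal'' only gives binary pairings. The paper obtains the coherent lax structure by left Kan extending the lax monoidal $\mathcal{K}^{C_2,\mathrm{alg}}$ along the symmetric monoidal localization $\sBimBanOne\to\sBimBanInf$. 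It transfers homotopy invariance and filtered colimits from the ungraded case via $k^{C_2}_A\simeq k_{|A\grotimes\Cl_{+1}|}$ (Proposition~\ref{prop:kC_2}).
\item The ungraded identification should be stated as the natural transformation $k\to k^{C_2}\to k^{\ABS}$ induced by regarding modules as purely even. Objectwise this is the equivalence of Example~\ref{example:ungraded}.
\end{itemize}
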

Equation~\eqref{eq:Bottper} connects \emph{algebraic} Bott periodicity (the statement that $\Cl_8$ is Morita equivalent to $\R$) to \emph{topological} Bott periodicity ($\Sigma^8KO \simeq KO$).
This is proven in a connective version in Theorem~\ref{thm:gradedbottperiodicity} and is essentially a spectral reformulation of strong Bott periodicity.
For this approach, the Morita functoriality in our setup is crucial.

We prove a splitting result of Picard spectra as a corollary of our setup:
\begin{theoremC*}\hypertarget{thmC}{}
    The restriction of $K^{\mathrm{gr}}$ to the Picard $\infty$-groupoid of $\grotimes$-invertible finite-dimensional semisimple graded algebras is a symmetric monoidal functor
            \begin{equation*}
                \Pic(\Bim(\sBan_{\R}))^{\mathrm{fd}}) \longrightarrow \Pic(\ModInf(KO)) \,.
            \end{equation*}
        The map is an isomorphism on $\pi_0,\pi_1,\pi_2$ and $\Pic(\Bim(\sBan_{\R})^{\mathrm{fd}})$ is $2$-coconnective.
        Hence, $K^{\mathrm{gr}}$ witnesses a splitting of Picard $\infty$-groupoids:
        \begin{equation*}
            \Pic(\ModInf(KO)) \simeq \Pic(\Bim(\sBan_{\R})^{\mathrm{fd}}) \times \tau_{\geq 3}\Pic(\ModInf(KO))\,.
        \end{equation*}
\end{theoremC*}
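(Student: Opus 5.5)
Our approach is to obtain the symmetric monoidal functor by restricting the lax symmetric monoidal functor $K^{\mathrm{gr}}\colon \sBimBanInf \to \ModInf(KO)$ of Theorem~B to the full subcategory on $\grotimes$-invertible finite-dimensional semisimple graded algebras. We then pass to Picard $\infty$-groupoids. We will carry this out in four steps.

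\textbf{Step 1: strong monoidality.} We must show that the lax structure maps $K^{\mathrm{gr}}_A\otimes_{KO} K^{\mathrm{gr}}_B \to K^{\mathrm{gr}}_{A\grotimes B}$ are equivalences on this subcategory. By the Wall classification, every such $A$ is Morita equivalent to some $\Cl_{p,q}$, and Morita equivalences are invertible $1$-morphisms in $\sBimBanInf$. By naturality we may therefore take $A=\Cl_{p,q}$ and $B=\Cl_{p',q'}$. In that case $A\grotimes B\cong \Cl_{p+p',q+q'}$, and the equivalence~\eqref{eq:Bottper} identifies both sides with $\Sigma^{p+p'-q-q'}KO$. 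It remains to check that the structure map is the canonical equivalence. For this we plan to use that the Bott equivalence is built from the monoidal structure, namely the action of $K^{\mathrm{gr}}_{\Cl_{p,q}}$ on $K^{\mathrm{gr}}_A$. Granting this, the map is a map of invertible $KO$-modules that is an isomorphism on homotopy, hence an equivalence. It follows that $K^{\mathrm{gr}}$ sends invertible objects to invertible $KO$-modules, and so induces a symmetric monoidal functor on $\Pic$.

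\textbf{Step 2: homotopy groups of the source.} We need $\pi_0=\Z/8$ (Brauer--Wall), $\pi_1=\Z/2$ (graded lines, or automorphisms of the unit $\R$ given by the invertible graded bimodules $\R^{0|1}$ and $\R$), and $\pi_2=\Z/2=\pi_0(\mathrm{Aut}(\R))$ in the bimodule-homotopy localization. There are no higher homotopy groups: the space of bimodule automorphisms of $\R$ is $\R^\times\simeq \Z/2$, which is discrete after the homotopy localization. This gives $2$-coconnectivity.

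\textbf{Step 3: comparison on $\pi_0,\pi_1,\pi_2$.} For the target we use the known computation $\pi_*\Pic(KO)=\Z/8,\ \Z/2,\ \Z/2,\ \pi_{*-1}KO$ for $*\geq 3$. On $\pi_0$, the class of $\Cl_{1,0}$ maps to $\Sigma KO$ by~\eqref{eq:Bottper}, which generates $\Z/8$. On $\pi_1$, the odd line $\R^{0|1}$ should map to the sign $-1\in\pi_0(KO)^\times$, or to the nontrivial element in the $\Z/2$ of units; we verify this by computing its action on $k^{\ABS}_\R=ko$, where swapping parity induces $-1$ on $\pi_0$. On $\pi_2$, the generator $-1\in\R^\times$ maps to $\eta\in\pi_1KO$. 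This identification is delicate, and we expect it to be the \textbf{main obstacle}. Our first attempt would be to note that the loop of automorphisms $\R^\times\to GL_1$ of the module $\R$ realizes, under group completion, the image of $\pi_0 O(1)\to \pi_1 ko$, which is $\eta$ because $\R P^\infty\to BO$ detects it. Alternatively we could argue abstractly: a map of $\Z/2$'s compatible with the $k$-invariant ($Sq^2$-type) linking $\pi_1$ and $\pi_2$ must be nonzero.

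\textbf{Step 4: the splitting.} Since the source is $2$-coconnective, the composite of $K^{\mathrm{gr}}$ with the truncation to $\tau_{\leq 2}\Pic(\ModInf(KO))$ is an equivalence. Combining its inverse with the fiber sequence $\tau_{\geq 3}\Pic\to \Pic\to\tau_{\leq 2}\Pic$ gives a section. Because everything is a map of connective spectra (Picard spectra), the sequence splits, yielding the stated product decomposition.
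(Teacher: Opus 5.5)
Your proposal is correct and follows essentially the same route as the paper's proof. The ingredients match one for one. Strong monoidality comes from the Clifford K\"unneth formula (Corollary~\ref{cor:kuennethforclifford}), which is exactly your ``the Bott map is a module map'' argument. The loop space $\Omega_{\mathbbm{1}}\Pic\simeq N^{\mathrm{hc}}(\sLine_{\R}^{\cong})$ is read off from $\Hom_{\sBimBanInf}(\R,\R)\simeq N^{\mathrm{hc}}(\sVectOne_{\R}^{\cong})$. The parity shift $\Pi\R$ goes to $-1$ via Lemma~\ref{lem:ABSpizero}. Finally, $-1\in\R^{\times}$ (the M\"obius bundle, i.e.\ the generator of $\pi_1 BO(1)\cong\pi_1 BO$) goes to $\eta$.

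The one step you cite rather than prove is that $\grotimes$-invertibility in the localized $\infty$-category already forces algebraic (Brauer--Wall) invertibility, which you need before Wall's classification applies. The paper proves this directly: the dual is $A^{\op}$, Wedderburn--Artin decomposes $A$ into graded division algebras, a $K_0$ count forces a single simple factor, and $\C$ and $\Cxl_1$ are then excluded.
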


We now summarize other results that we have not discussed so far.

Theorem \ref{thm:fibergroupcompletion} expresses fibers of group completions as cofibers, which could be of independent interest.
We use it to connect with Karoubi's definitions of relative $K$-theory of a Banach functor, and also in our proof of Bott periodicity.

We prove a K\"{u}nneth theorem for a large class of graded Banach algebras~(Corollary~\ref{cor:periodicKunneth}), a general graded Wood fiber sequence~(Theorem~\ref{thm:wood}), and we recover the iterated loop spaces of $ko$ from the representation theory of Clifford algebras in Section~\ref{sec:Bottclock}.

We construct spectral refinements of both van Daele~\cite{MR947500,MR961241} and Karoubi~\cite{Karoubi68} $K$-theory (Sections \ref{sec:vD} and \ref{sec:karoubikt-h}).
We reconcile these spectra and several other sign conventions systematically with our main approach in Section \ref{sec:atlas}, and summarize them in Table~\ref{table:mastercomparisonintro}---the first place these are all lined up.

\begin{table}[ht]
\mastercomparisonbody
\caption{The comparison of conventions for graded $K$-theory proved in Section~\ref{sec:atlas}. The rows record whether one takes a fiber or a cofiber, and whether the defining map is induced covariantly or contravariantly; the columns record whether one adjoins $\Cl_{+1}$ or $\Cl_{-1}$. All eight entries are equivalent to a shift of $K^{\mathrm{gr}}_A$ or of $K^{\mathrm{gr}}_{A^{\op}}$.}
\label{table:mastercomparisonintro}
\end{table}

\subsubsection*{Comparison with the literature}
Kasparov's $KK$-theory~\cite{MR582160} incorporates graded $C^*$-algebras from the outset, but it is functional analytic in nature and group-valued.
The modern approaches to $KK$-theory through stable $(\infty,1)$-categories~\cite{bunkeKKEth,bunke2023survey} by default only treat ungraded algebras.
In these frameworks gradings are implemented as a $C_2$-action, which does not encode the Koszul sign rule; e.g.\ Clifford algebras are not multiplicative in $KK^{C_2}$.
Bott periodicity is an analytic input rather than a theorem. The connection to projective modules or spaces of projections is proven a posteriori and often highly involved.
Bunke on the other hand proposes graded $KK$-theory as a category of comodules over $\mathcal{S} = C_0(\R)$ in~\cite{bunkenoncommhom2}, which rests on Trout's reformulation~\cite{MR1775323} of $KK$-theory.
Haag~\cite{MR1694805} develops a $\Z/2$-graded analogue of Cuntz's picture~\cite{MR733641, MR899916}, identifying the graded $\Ext$-functor with $C_2$-equivariant $KK$ up to a degree shift.

Spectrum-level refinements of periodic operator $K$-theory have been obtained by analytic means.
Bunke, Joachim and Stolz~\cite{MR2048716} use the Fredholm and the unbounded pictures of Kasparov theory to construct classifying spaces for the $K$-theory of a $\Z/2$-graded $\sigma$-unital $C^*$-algebra; the unbounded picture yields an orthogonal spectrum, functorial for \emph{essential} $*$-homomorphisms.
In~\cite{MR1808222}, Joachim constructs a symmetric ring spectrum representing $KO$ out of spaces of homomorphisms of $\Z/2$-graded $C^*$-algebras and their Clifford algebras, so that the $K$-theory spectra of graded $C^*$-algebras become module spectra over it.
Joachim and Stolz~\cite{MR2545610} lift the additive enrichment of the Kasparov category to an enrichment over symmetric spectra, working in Cuntz's picture of $KK$.
Dell'Ambrogio, Emerson, Kandelaki and Meyer~\cite{dell2011functorial} construct a lax symmetric monoidal functor from graded ($G$-equivariant) $C^*$-algebras to symmetric module spectra over a commutative symmetric ring spectrum; their model is close to that of~\cite{MR2048716} but is functorial for \emph{all} $*$-homomorphisms, and it rests on Trout's description~\cite{MR1775323} of $\hat{K}_0$ of a graded $C^*$-algebra by graded $*$-homomorphisms out of $\mathcal{S}$.
Closest in spirit to the present paper is~\cite{MR2122155}, where Joachim builds $G$-equivariant $K$-theory for a compact Lie group $G$ as an orthogonal $G$-spectrum indexed on inner product spaces, again out of $\Z/2$-graded $C^*$-algebras.
In such orthogonal models Bott periodicity is largely built into the indexing rather than established as a theorem;
in our setting it is instead proved directly (\ThmB).
Independently from a different vantage point~\cite{anupam} recovers our fiber and cofiber sequences.
Focusing on the module categories instead, our approach to $K^{\mathrm{gr}}$ is functorial in suitable bimodules, not just in homomorphisms---a feature which is only recognized a posteriori in all modern treatments of $KK$-theory.

Beyond the $C^*$-setting, bivariant $K$-theories have been developed for locally convex and bornological algebras by Cuntz, Meyer and Rosenberg~\cite{MR2340673};
Grensing~\cite{grensing2013noncommutative} reconstructs $KK$ and $E$-theory by stable homotopy theoretic methods with such generalizations in view.
Further homotopical models for ungraded operator $K$-theory are given by solid or condensed spectra~\cite{aoki2024semi} and by sheaves of spectra in the differential setting~\cite{zbMATH06558447}.
In \cite{KamelCoherentABS} a different method is used to realize the graded tensor product of Clifford modules coherently as the multiplication $\Omega^k ko\times \Omega^l ko\to \Omega^{k+l}ko$.

\subsubsection*{Relation to physics}
 Symmetry-protected topological
(SPT) phases of free fermions are classified by $K$-theory groups~\cite{kitaev2009periodic}.
In particular, Freed--Moore used twisted equivariant $K$-theory~\cite{MR3119923,gomi_freed-moore_2021}, and later accounts used the generalization to graded $K$-theory of real $C^*$-algebras~\cite{thiang, MR4121611}. Explicit Hamiltonian quantum systems representing a given topological phase are most easily described in Karoubi's (or van Daele's~\cite{MR3665214}) picture as opposed to a Fredholm picture.
More specifically, gapped free-fermion systems protected by a
$\Z/2$-graded symmetry algebra $A$ (with the grading recording time-reversing
symmetries) form the group $K_2^{\mathrm{gr}}(A)$~\cite{stehouwer2025free}. 

Several results in condensed matter such as the unification of superficially different tenfold-way schemes implicitly rely on  Morita invariance of graded $K$-theory.
For us this is built in, since $K^{\mathrm{gr}}$ is a functor from the Morita $(\infty,1)$-category
$\sBimBanInf$.
Bimodule functoriality also supplies the wrong-way maps used for the necessary equivariant transfer arguments.

Freed and Hopkins proposed an approach to classify interacting SPT phases using invertible unitary TQFTs~\cite{MR4268163}.
In that setup, the comparison between free and interacting SPT phases proceeds through their
free-to-interacting map, the Anderson dual of an
Atiyah--Bott--Shapiro orientation twisted by one of the ten symmetry
types of the tenfold way.
The domain of this map is exactly the group our $\Sigma^2 K_A^{\mathrm{gr}}$
spectrifies, and the ABS orientation is a necessary input.
In separate work, the first author will construct a spin orientation of $KO$ within
the present framework, without recourse to the usual Fredholm operator type
analysis.
This construction is directly relevant to spectrum-level formulations
of the free-to-interacting map and sheds light on formulating it for general symmetry algebras, a problem on which partial progress was made in~\cite{debray2026unravelingbottspiral}.

\subsection{Notational conventions}
We will freely use the language of $(\infty,1)$-categories as developed in Lurie's works~\cite{HTT,HA}.
All concepts are by default homotopical, e.g.\ $\colim$ always refers to the (homotopy) colimit in the $(\infty,1)$-categorical sense.

Throughout this document, we utilize the following standard notations and conventions.
We also refer to Appendices~\ref{sec:enriched} and~\ref{app:categories} for more details.

\vspace{0.3cm}
\noindent\textbf{Typefaces for categories.}
We distinguish three kinds of category by typeface:
\begin{itemize}
    \item $\mathrm{C}$, upright roman: a $1$-category, $(2,2)$-category or $(2,1)$-category. For example $\Alg(\Ban)$, $\ModOne_A$, $\Mor(\Ban)$.
    \item $\mathsf{C}$, sans-serif: an enriched category. For example $\BanAlgEn$, $\ModEn_A$, $\VectEn$.
    \item $\mathrm{C}_\infty$, upright roman with a subscript $\infty$: an $(\infty,1)$-category obtained from a named enriched category. For example $\BanAlgInf$, $\BimBanInf$, $\InnInf$.
    \item $\calC$, calligraphic: a generic, unnamed $(\infty,1)$-category.
\end{itemize}
Two classes of exception are worth recording.
$(\infty,1)$-categories that have no $1$-categorical analogue we ever wish to consider carry no subscript: $\Sp$, $\Spc$, $\CMon(\calC)$, and $\Cat_\infty$, whose subscript is part of its standard name.
Likewise $\ModInf(R)$, the $(\infty,1)$-category of modules over an algebra $R$ in some $(\infty,1)$-category (e.g.\ a ring spectrum) is distinguished from the $1$-category $\ModOne_A$ of Definition~\ref{def:fgpmod} in that it is written in parentheses rather than as a subscript.

\vspace{0.3cm}
\noindent\textbf{Enrichment.}
Unless stated otherwise, a sans-serif category $\mathsf{C}$ is enriched in $\Ban$.
Every $\Ban$-enriched category is regarded as $\Top$-enriched via the norm topology on its mapping spaces, and as $\sSet$-enriched via $\Sing$ (Conventions \ref{conv:topenr} and \ref{conv:banenr}); both functors are suppressed from the notation.
Thus $\ModEn_A$ denotes the Banach category of Definition~\ref{def:fgpmod}, and the same symbol is used for its underlying $\Top$- and $\sSet$-enriched categories whenever a construction requires one of these.
The passage to an $(\infty,1)$-category is written $N^{\mathrm{hc}}(\mathsf{C})$.

\vspace{0.3cm}
\noindent\textbf{Monoids}
We will use the terms `commutative monoid' and `$\E_\infty$-monoid' interchangeably.
We will often implicitly identify grouplike commutative monoids in $\Spc$ with connective spectra.

\vspace{0.5cm}
\noindent\textbf{Categories and $\infty$-Categories} \vspace{0.2cm} \\
\noindent
\begin{tabularx}{\textwidth}{@{} p{3.7cm} X @{}}
    $\Sp, \Sp_{\geq 0}$ & The $(\infty,1)$-category of spectra and the full subcategory of connective spectra equipped with the symmetric monoidal smash product $\otimes$. \\
    $\Spc$ & The $(\infty,1)$-category of spaces. \\
    $\CMon(\mathcal{C}), \CGrp(\mathcal{C})$ & The $(\infty,1)$-categories of commutative monoids and grouplike commutative monoids in a symmetric monoidal $(\infty,1)$-category $\mathcal{C}$. \\
    $\Cat_\infty^\Sigma$ & The $(\infty,1)$-category of small $(\infty,1)$-categories that admit finite coproducts. \\
    $\Cat_1(\mathcal{V})$ & The $(2,1)$-category of $\mathcal{V}$-enriched categories, where $\mathcal{V}$ is a symmetric monoidal $1$-category.\\
    $\Top$ & The category of compactly generated weak Hausdorff spaces. \\
    $\sSet$ & The category of simplicial sets. \\
    $\Ban, \Ban_\leq$ & The categories of Banach spaces with bounded linear maps and short maps (contractions), respectively. \\
    $\Alg(\Ban), \sBanAlgOne$ & The $1$-categories of Banach algebras and graded Banach algebras.\\
    $\BanAlgInf, \sBanAlgInf$ & The $(\infty,1)$-categories of Banach algebras and graded Banach algebras and algebra homomorphisms. \\
    $\BimBanInf, \sBimBanInf$ & The Morita $(\infty,1)$-categories of Banach algebras (resp. graded Banach algebras) and bimodules.
\end{tabularx}

\vspace{0.5cm}
\noindent\textbf{Algebras and Modules} \vspace{0.2cm} \\
\noindent
\begin{tabularx}{\textwidth}{@{} p{3.5cm} X @{}}
    $\ModOne_A$ & The 1-category of finitely generated projective $A$-modules (Definition~\ref{def:fgpmod}). \\
    $\ModEn_A$ & The same category regarded as a Banach category, and, via the conventions above, as a $\Top$- or $\sSet$-enriched category. \\
    $\ModOne_A^{C_2}, \ModEn_A^{C_2}$ & The corresponding categories of finitely generated projective graded modules over $A$. \\
    $\ModInf(R)$ & The $(\infty,1)$-category of modules over a ring spectrum $R$. \\
    $\Cl_{\pm n}$ & The real Clifford algebra on $\R^n$ with the quadratic form $\pm\langle -, - \rangle$. \\
    $\Cxl_n$ & The complex Clifford algebra on $\C^n$. \\
    $|A|$ & The underlying ungraded algebra of a graded algebra $A$. \\
    $A^{\op}$ & The opposite algebra, equipped with the Koszul sign rule for graded algebras. \\
    $A^{\flop}$ & The ``flopposite'' algebra, defined as the ungraded opposite of a graded algebra.
\end{tabularx}

\vspace{0.5cm}
\noindent\textbf{Functors and Operations} \vspace{0.2cm} \\
\noindent
\begin{tabularx}{\textwidth}{@{} p{3.5cm} X @{}}
    $N^{\mathrm{hc}}$ & The homotopy coherent nerve functor. \\
    $(-)^{\gp}$ & The group completion functor. \\
    $\Pic(\calC)$ & The Picard $\infty$-groupoid of $\otimes$-invertible objects in a symmetric monoidal $(\infty,1)$-category $\calC$, regarded as a connective spectrum (Definition~\ref{def:picard}). \\
    $\otimes_\pi$ & The (completed) projective tensor product of Banach spaces. \\
    $\grotimes$ & The graded tensor product of graded vector spaces or graded algebras, incorporating the Koszul sign rule $\beta(v \otimes w) = (-1)^{|v||w|} w \otimes v$. \\
    $\Pi$ & The parity shift functor for graded modules, defined by tensoring with the odd line $\Pi \C$.
\end{tabularx}

\vspace{0.5cm}
\noindent\textbf{$K$-theory} \vspace{0.2cm} \\
\noindent
\begin{tabularx}{\textwidth}{@{} p{3.5cm} X @{}}
    $\mathcal{K}^{\mathrm{alg}}$ & Algebraic $K$-theory~\eqref{eq:algebraicktheory}. \\
    $k_A$ & The connective $K$-theory spectrum of a Banach algebra $A$ (Definition~\ref{def:KthBanAlg}); $K_A:=k_A[\beta^{-1}]$ is its periodic version. \\
    $k^{C_2}_A$ & The $K$-theory $k(\ModEn_A^{C_2})$ of finitely generated projective \emph{graded} $A$-modules; $K_A^{C_2}:=k_A^{C_2}[\beta^{-1}]$. \\
    $k^{\ABS}_A$ & Connective graded (Atiyah--Bott--Shapiro) $K$-theory of a graded Banach algebra $A$ (Definition~\ref{def:ABSKth}). \\
    $K^{\mathrm{gr}}_A$ & Periodic graded $K$-theory, $k^{\ABS}_A[\beta^{-1}]$ (Definition~\ref{def:periodick}). \\
    $k^{\Kar}_A, K^{\Kar}_A$ & Connective and periodic Karoubi $K$-theory (Definition~\ref{def:Karoubispectral}). \\
    $ko, ku, KO, KU$ & The connective and periodic real and complex topological $K$-theory spectra. \\
\end{tabularx}

\subsubsection*{Artificial intelligence disclosure statement}
The mathematical content of this paper---the definitions, constructions, theorems and their proofs---is our own.
In the later stages of the project we used large language models, principally Claude (Anthropic), Gemini (Google) and ChatGPT (OpenAI) for literature search, drafting expository material,\footnote{The sections \ref{sec:enriched} and \ref{app:algebras} of the Appendix are revised Claude-written text based on an earlier draft.} reorganizing existing text, proof exploration, finding typos, enforcing notational consistency, and as an additional independent verification of correctness.
We have verified every mathematical statement in this paper ourselves and take full responsibility for its correctness.

\subsubsection*{Acknowledgements}

Both authors express their thanks to Ulrich Bunke for helpful input, to Shachar Carmeli for openly sharing his closely related ideas on Smith ideals, to Anupam Datta for explaining several important subtleties in defining operator space topologies, and to Peter Teichner for his interest and feedback.
David also thanks Christian Blohmann for helpful discussions.
Luuk would like to thank Cameron Krulewski, Lukas M\"uller and Natalia Pacheco-Tallaj for helpful discussions, and Thomas Nikolaus for asking a question that motivated this line of research.

Luuk is supported by ERC Consolidator Grant SYMSPEC.
We thank the Max Planck Institute for Mathematics in Bonn for its hospitality; most of this work was carried out there.

\section{Ungraded \texorpdfstring{$K$}{K}-theory by group completion}
\label{sec:ungradedkthy}

The goal of this section is to construct the connective topological $K$-theory of ungraded Banach algebras using modern homotopical group completion technology.
Several of the topics discussed repackage known results, and serve as a blueprint for the
graded theory of Section~\ref{sec:graded kthy}.

We begin in Section~\ref{sec:algKth} by recalling direct sum algebraic
$K$-theory $K(R) = (\ModOne_R^{\cong})^{\gp}$ and its lax
symmetric monoidal and Morita functoriality following~\cite{Gepner2015}.
Section~\ref{sec:groupcompletion} develops the group completion
toolkit following~\cite{nikolaus_groupcompletion} on which the rest of the paper relies.
\emph{Telescopic} $\E_\infty$-monoids play an important role, for which the group completion
is computed by a mapping telescope.
In the telescopic case, we identify the fiber of the group completion of a quasi-surjective map
$f\colon M \to N$ with an $\E_\infty$-refinement of Karoubi's
$K$-theory of $f$ (Theorem~\ref{thm:fibergroupcompletion}). The latter
result is what will later allow us to recognize the $K$-theories of
Karoubi and van Daele as the homotopy groups of our spectra.

We extend $K$-theory to simplicial rings and 
simplicially enriched categories in
Section~\ref{sec:kthysimplicial} by geometric realization.
We set up our conventions for
categories of Banach algebras in Section~\ref{sec:Banach}: the
symmetric monoidal $(2,1)$-category $\BimBanOne$ of Banach algebras,
one-sided finitely generated projective bimodules and bimodule
isomorphisms, as well as the $(\infty,1)$-category $\BanAlgInf$,
which we characterize as the localization of Banach algebras at the
homotopy equivalences. 
In Section~\ref{sec:BanachKth} we give two
definitions of the $K$-theory spectrum $k_A$ of a Banach algebra
$A$: as the group completion of the simplicially enriched groupoid
of finitely generated projective modules, and as the $K$-theory of
the simplicial ring $C(\Delta^\bullet; A)$. We prove they agree
via a spectrum-level Serre--Swan theorem
(Proposition~\ref{prop:serreswan}). A key simplification over the
algebraic case is that $\ModOne_A^{\cong}$ is telescopic, so that its
group completion is $K_0(A) \times B\GL_\infty(A)$. We characterize
$k$ as the initial homotopy invariant functor equipped with a
transformation from algebraic $K$-theory
(Section~\ref{sec:kan}), and show it satisfies excision,
preserves filtered colimits along contractive homomorphisms, and
extends to nonunital algebras
(Sections~\ref{sec:excision}--\ref{sec:nonunital}). 
Finally, Section~\ref{sec:moritafunctoriality} upgrades $k$ to a lax symmetric monoidal
functor $\BimBanInf \to \Sp_{\geq 0}$ on the Morita
$(\infty,1)$-category $\BimBanInf= \BimBanOne[W^{-1}]$, obtained by
localizing at the bimodule homotopy equivalences.
We view this Morita category as an important contribution that is mid-way between purely algebraic statements and bivariant $K$-theory; we observe that $\Hom_{\BimBanInf}(C(X),\R)$ group completes to \emph{$K$-homology} of $X$ in Theorem~\ref{th:segal}.

\subsection{Recollections on algebraic \texorpdfstring{$K$}{K}-theory}
\label{sec:algKth}
Starting with a discrete ring $R$, one considers the category $\ModOne_R$ of finitely generated projective left modules.
This category admits direct sums, which supplies a symmetric monoidal structure $\oplus$.
The maximal subgroupoid $\ModOne_R^{\cong}$ carries the induced structure of a commutative monoid.
Recall that an $\E_\infty$-monoid $M$ is \emph{grouplike} if the commutative monoid $\pi_0M$ is a group.
The algebraic $K$-theory of $R$ is defined as the group completion (cf.\ Section~\ref{sec:groupcompletion})
\begin{equation*}
    \mathcal{K}^{\mathrm{alg}}(R):=(\ModOne_R^{\cong})^{\gp} \in \Sp_{\geq 0} \,.
\end{equation*}
Here we used the equivalence of categories $\CGrp(\Spc)\simeq \Sp_{\geq 0}$ between grouplike $\E_\infty$-monoids and connective spectra (\cite[Theorem 5.2.6.10]{HA}), which is classically known as May's recognition principle.
The $K$-theory functor is given by 
\begin{equation}
    \label{eq:algebraicktheory}
    \begin{tikzcd}
        \mathcal{K}^{\mathrm{alg}}: \Cat_\infty^{\Sigma} \ar[r,hookrightarrow] & \CMon(\Cat_\infty) \ar[r,"(-)^\simeq"] & \CMon(\Spc) \ar[r,"(-)^{\gp}"] & \Sp 
    \end{tikzcd} \,,
\end{equation}
where $\Spc$ is the $(\infty,1)$-category of spaces ($\infty$-groupoids), $\Cat^\Sigma_\infty$ is the $(\infty,1)$-category of small $(\infty,1)$-categories with coproducts and functors which preserve them, and $\CMon$ denotes the $(\infty,1)$-category of commutative monoids.
The following is proven in~\cite{Gepner2015}.
\begin{theorem}
    \label{th:algebraicktheory}
    All categories in the composition \eqref{eq:algebraicktheory} carry universal symmetric monoidal structures and all functors in \eqref{eq:algebraicktheory} admit lax symmetric monoidal refinements.
    The inclusion $\Cat_\infty^{\Sigma} \hookrightarrow \CMon(\Cat_\infty)$ and the group completion $(-)^{\gp}$ are strong symmetric monoidal functors.
    Also the functor $\ModOne\colon \Ring\to \Cat_\infty^\Sigma$ admits a lax symmetric monoidal refinement.
\end{theorem}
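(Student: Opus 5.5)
The plan is to treat each stage of the composite \eqref{eq:algebraicktheory} separately and to build each symmetric monoidal structure from a universal property, following \cite{Gepner2015}. Laxness of a composite is automatic, so it suffices to treat each functor in turn.

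\textbf{Step 1: the monoidal structure on $\CMon$.} For a presentably symmetric monoidal $\calC$ (for example $\Spc$ under $\times$, or $\Cat_\infty$ under $\times$), I would equip $\CMon(\calC)$ with the unique presentably symmetric monoidal structure for which the free functor $\calC\to\CMon(\calC)$ is symmetric monoidal. Concretely, $\CMon(\calC)\simeq \CMon(\Spc)\otimes\calC$ in $\mathrm{Pr}^L$. Since $\CMon(\Spc)$ is an idempotent algebra in $\mathrm{Pr}^L$ (it is the mode of semiadditivity), this tensor product carries a canonical commutative algebra structure; this is the sense of "universal".

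\textbf{Step 2: the functors $(-)^{\simeq}$ and $(-)^{\gp}$.}
\begin{itemize}
\item The core functor $(-)^{\simeq}\colon\Cat_\infty\to\Spc$ preserves products, so it is strong monoidal for the cartesian structures. It also preserves sifted colimits, so it is induced by a lax symmetric monoidal functor on $\CMon(-)$ via functoriality of $\CMon(\Spc)\otimes -$ for product-preserving maps.
\item Likewise, $\CGrp(\Spc)\simeq\Sp_{\geq 0}$ is again an idempotent algebra, namely $\CMon(\Spc)$ localized at grouplike objects. The group completion $(-)^{\gp}$ is then a smashing symmetric monoidal localization, hence strong monoidal.
\item The final inclusion $\Sp_{\geq0}\to\Sp$ is strong monoidal.
\end{itemize}

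\textbf{Step 3: the inclusion $\Cat_\infty^\Sigma\hookrightarrow\CMon(\Cat_\infty)$.} I would identify $\Cat_\infty^\Sigma$ with the full subcategory of cocartesian monoids. Its tensor product is the one corepresenting functors preserving coproducts separately in each variable. The key point is that the free $\Cat_\infty^\Sigma$-object on $\calC$ (finite coproduct completion) is a symmetric monoidal localization of the free commutative monoid. I expect this to be \textbf{the main obstacle}: one must check that the inclusion $\Cat_\infty^\Sigma\hookrightarrow\CMon(\Cat_\infty)$ is strong monoidal, not merely lax. My approach would be to show that the tensor product of semiadditive categories in $\CMon(\Cat_\infty)$ already lands in cocartesian monoids. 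I would do this by verifying that the induced map is an equivalence on generators, using that finite coproducts in $\calC\otimes\calD$ are computed from those in each factor.

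\textbf{Step 4: $\ModOne$ on rings.} Finally, $\ModOne\colon\Ring\to\Cat_\infty^\Sigma$ factors through $R\mapsto$ one-object additive category $BR$, followed by the idempotent-completed additive envelope. The first functor is strong monoidal, with $B(R\otimes S)\simeq BR\otimes BS$. The second is a left adjoint to a lax monoidal forgetful functor and is symmetric monoidal. The comparison map $\ModOne_R\otimes\ModOne_S\to\ModOne_{R\otimes S}$ is given by $(P,Q)\mapsto P\otimes Q$; it exists because the functor is coproduct preserving in each variable.
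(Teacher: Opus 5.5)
Your overall route (presenting $\CMon(-)$ and $\CGrp(-)$ via idempotent algebras in $\mathrm{Pr}^L$ and smashing localizations) is that of \cite{Gepner2015}, to which the paper simply defers, and Step~1 and your treatment of $(-)^{\gp}$ are fine. But the step you single out as the main obstacle cannot be carried out, because strong monoidality of $\Cat_\infty^\Sigma\hookrightarrow\CMon(\Cat_\infty)$ already fails on units. The unit of $\CMon(\Cat_\infty)$ is the free commutative monoid on the point, i.e.\ the groupoid $\Fin^{\simeq}$ of finite sets and bijections under $\sqcup$. Its unit object $\emptyset$ is not initial, so $\Fin^{\simeq}$ is not cocartesian and does not lie in the essential image of the inclusion. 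Hence no symmetric monoidal structure on $\Cat_\infty^\Sigma$ can make the inclusion strong. Concretely, the unit $\Fin$ of Lurie's tensor product on $\Cat_\infty^\Sigma$ is sent to $(\Fin,\sqcup)\not\simeq(\Fin^{\simeq},\sqcup)$. What is true, and what suffices for lax monoidality of the composite \eqref{eq:algebraicktheory}, is that the inclusion is lax symmetric monoidal. One can further check that its structure maps on binary tensor products are equivalences, so the failure sits only at the unit. That clause of the statement therefore has to be weakened rather than proven. Also, categories with finite coproducts are not ``semiadditive''; the relevant condition is being cocartesian.

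Two further justifications need repair. In Step~2, the core does \emph{not} preserve sifted colimits; this is exactly the phenomenon of Remark~\ref{rem:coresdontcommute}. By Lemma~\ref{lem:homotopycoherentnerve} and Warning~\ref{ex:wrong}, $\colim_{\Delta^{\op}} j(\VectEn_\C)_n\simeq N^{\mathrm{hc}}\VectEn_\C\simeq *$ has contractible core, while the colimit of the cores is $N^{\mathrm{hc}}(\VectEn_\C^{\cong})\simeq\bigsqcup_n BU(n)$. Moreover, functoriality of $\CMon(\Spc)\otimes-$ only applies to colimit-preserving functors. Argue instead that $\Spc\hookrightarrow\Cat_\infty$ is a product-preserving left adjoint, hence induces a symmetric monoidal left adjoint $\CMon(\Spc)\to\CMon(\Cat_\infty)$, whose right adjoint $\CMon((-)^{\simeq})$ is then lax symmetric monoidal. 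In Step~4, $BR$ regarded in $\Cat_\infty$ only remembers the multiplicative monoid of $R$, so $B(R\otimes S)\not\simeq BR\times BS$; also, a one-object category is preadditive, not additive. The factorization has to run through $\mathrm{Ab}$-enriched categories, where $BR\boxtimes BS\cong B(R\otimes S)$, followed by additive idempotent completion and the forgetful functor to $\Cat_\infty^\Sigma$. Being left adjoint to a lax monoidal functor only makes the completion \emph{oplax}, so its compatibility with $\otimes$ needs its own argument. Alternatively, as the paper does in the Banach setting, obtain the lax structure from $\ModOne=\Hom(\mathbbm{1},-)$ on the Morita bicategory via Proposition~\ref{prop:daniel}.
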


This has significant implications:
Firstly, if $R$ is a commutative ring, then $\mathcal{K}^{\mathrm{alg}}(R)$ is canonically an $\E_\infty$-ring in $\Sp$.
Secondly, if $R$ is commutative and $S$ is an $R$-algebra, then $\mathcal{K}^{\mathrm{alg}}(S)$ is canonically a $\mathcal{K}^{\mathrm{alg}}(R)$-module.
Thirdly, from the factorization~\eqref{eq:algebraicktheory}, $\mathcal{K}^{\mathrm{alg}}$ is functorial in Morita bimodules, which implement coproduct-preserving functors ${}_{S}N_R\otimes_R-: \ModOne_R\to \ModOne_S$.

We will apply a similar approach to Banach algebras while taking the Banach topology into account.

\subsection{Group completion}
\label{sec:groupcompletion}
The inclusion of abelian groups into abelian monoids has a left adjoint called the Grothendieck group completion.
Explicitly, the group completion of $M$ can be modeled by a quotient of $M \times M$, thought of as formal differences.
More specifically, $m_1 - m_2 = m_1' - m_2'$ if and only if there is a $m \in M$ such that $m_1 +m_2' + m = m_1' + m_2 + m$.

The same constructions can be made in the homotopical setting where we replace $\CMon(\Set)$ by $\CMon(\Spc)$ and $\CGrp(\Set)$ by $\CGrp(\Spc) \simeq \Sp_{\geq 0}$.
The quotient (cofiber) of the diagonal map $M \to M \times M$ in the category $\CMon(\Spc)$ still gives a model for the group completion, see \cite[Proposition 8.7]{lehner2024groupcompletionactioninftycategory}.

\subsubsection{Telescopic group completion}
Let $(M,\oplus)$ be a commutative monoid in $\Spc$.
For $m\in M$ and $X$ a space with an $M$-action we can form
\begin{equation*}
    X[m^{-1}]:= \colim \left(X\xrightarrow{m \oplus (-) } X\xrightarrow{m \oplus  (-) }X\xrightarrow{m \oplus (-) } \dots \right) \,.
\end{equation*}
The space $X[m^{-1}]$ still carries an $M$-action and the colimit in spaces coincides with the colimit in the category $\LMod(M)$ of $M$-spaces~\cite[Corollary 4.2.3.5]{HA}. 
Choose a well-ordering on $\pi_0 M$.
We inductively define $X[\{m_{1}\dots m_{n}\}^{-1}]=(X[\{m_{1}\dots m_{n-1}\}^{-1}])[m_{n}^{-1}]$ for $m_1<\dots <m_n$. 
Given a subset $S \subseteq \pi_0 M$, write $X[S^{-1}] = \colim_{A\subseteq \pi_0 M \text{ finite}} X[A^{-1}]$. 
This is a filtered colimit of spaces with an $M$-action.
Observe that if $I$ is a set of generators for $\pi_0M$, then the induced map $X[I^{-1}] \to X[\pi_0(M)^{-1}]$ is an equivalence.

$X\mapsto X[\pi_0(M)^{-1}]$ defines a functor $\LMod(M)\to \LMod(M)$.
Applying this functor to $M\to M^\gp $ we get a natural $M$-equivariant map $M[\pi_0(M)^{-1}]\to M^\gp[\pi_0(M^{\gp})^{-1}]\simeq M^\gp$, because for $X=M^\gp$ the transition maps in the filtered colimit are all equivalences.
The following terminology comes from the fact that filtered colimits are modeled by mapping telescopes.
\begin{definition}
    $M$ is called \emph{telescopic} if $M[\pi_0(M)^{-1}] \to M^{\gp}$ is an equivalence.
\end{definition}

A group is called \emph{hypoabelian} if it does not contain any nontrivial perfect subgroup.
Equivalently, $G$ is hypoabelian if the derived series $G^{(\alpha+1)}=[G^{(\alpha)},G^{(\alpha)}]$ terminates in $G^{(\alpha)}=\{e\}$ for some (potentially transfinite) $\alpha$.

For the following proposition, we observe that the different orders in which to add $n$ copies of some $m \in M$ yield a map
\begin{equation}
\label{eq:braid}
B\Sigma_n \to (M^n)_{h\Sigma_n} \xrightarrow{\oplus} M
\end{equation}
coming from the higher commutativity of $M$.

\begin{proposition}[{\cite[Proposition~6]{nikolaus_groupcompletion}}]
    \label{prop:cyclicinvariance}
    The following are equivalent:
    \begin{enumerate}[label=\normalfont{(\alph*)}]
        \item $M$ is telescopic.
        \item The fundamental group of every component of $M[\pi_0(M)^{-1}]$ is abelian.
        \item The fundamental group of every component of $M[\pi_0(M)^{-1}]$ is hypoabelian.
        \item \label{item:permutationcondition}
        For some $k\geq 2$ and all $m_i,i\in I$, the permutation $(1 \dots k)$ is in the kernel of the symmetric braiding $\Sigma_k\to \pi_1(M,k \cdot m_i)\to \pi_1(M[\pi_0(M)^{-1}],k \cdot m_i)$.
    \end{enumerate}
\end{proposition}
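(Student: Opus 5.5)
Set $X := M[\pi_0(M)^{-1}]$. The plan is to run the group completion theorem in the spirit of McDuff--Segal, as organized by Nikolaus, and then compare $X$ with $M^{\gp}$ through the plus construction.

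\textbf{The comparison map is a homology equivalence.} The map $X \to M^{\gp}$ is always an $H_*$-isomorphism. Indeed, $H_*(X) \cong H_*(M)[\pi_0(M)^{-1}]$, because homology commutes with filtered colimits and the transition maps are multiplication by the classes $[m]$ in the Pontryagin ring. The classical group completion theorem identifies this localized ring with $H_*(M^{\gp})$. We also know that $\pi_0 X \to \pi_0 M^{\gp}$ is a bijection, since $\pi_0 X = (\pi_0 M)[(\pi_0 M)^{-1}]$ is the Grothendieck group. Moreover, $M^{\gp}$ is grouplike, so all its components are simple spaces: their fundamental groups are abelian and act trivially on higher homotopy groups. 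It follows that $M^{\gp} \simeq X^+$, the plus construction of $X$ with respect to the maximal perfect subgroup of $\pi_1$ of each component. I expect this identification, carried out coherently enough to be a statement about the map, to be the main obstacle. I would import it from the group completion theorem rather than reprove it.

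\textbf{Equivalence of (a), (b), (c).} The implication (a)$\Rightarrow$(b) holds because $X \simeq M^{\gp}$ has abelian fundamental groups, and (b)$\Rightarrow$(c) is trivial. For (c)$\Rightarrow$(a), note that the plus construction kills exactly the maximal perfect subgroup of each $\pi_1$. If every $\pi_1$ is hypoabelian, that subgroup is trivial, so $X \to X^+ \simeq M^{\gp}$ is an acyclic map that induces an isomorphism on $\pi_1$. An acyclic map with trivial kernel on $\pi_1$ is a weak equivalence, by Whitehead's theorem applied to the universal covers with their homology.

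\textbf{Adding (d).} Given (a), all components of $X$ are loop-space components of the grouplike $M^{\gp}$. There the braiding $\Sigma_k \to \pi_1$ factors through the abelianization of $\Sigma_k$, namely the sign map. Hence the square of the braiding lands in the kernel, and in fact the even permutations do. Taking $k = 3$, the $3$-cycle is even and therefore dies, which gives (d) for $k=3$. Conversely, assume (d). The key step is the standard trick that in the telescope, $\pi_1(X, \cdot)$ is abelian once cyclic permutations of $k$ copies act trivially. Concretely, a loop $\gamma$ at $k \cdot m$ in the colimit can be stabilized, and conjugation by the $k$-cycle moves the $\oplus$-summand positions: it sends $\gamma \oplus 1 \oplus \cdots$ to $1 \oplus \gamma \oplus \cdots$. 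Combining this with the Eckmann--Hilton interchange $(\gamma \oplus 1)(1 \oplus \delta) = (1 \oplus \delta)(\gamma \oplus 1)$ shows that $\gamma$ and $\delta$ commute in the colimit. This yields (b).

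The care needed is in the bookkeeping that the transition maps $m_i \oplus -$ for the generators $m_i$ suffice, using the earlier remark that $X[I^{-1}] \simeq X[\pi_0(M)^{-1}]$.
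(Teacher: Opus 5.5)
The paper gives no argument here; it only cites Nikolaus. Your overall plan is the standard one, and several steps are fine: (a)$\Rightarrow$(b)$\Rightarrow$(c), the Whitehead step, and (a)$\Rightarrow$(d) with $k=3$ (the $3$-cycle is a commutator in $\Sigma_3$, so it dies in any abelian group).

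\textbf{Gap in (c)$\Rightarrow$(a).} The problem is the sentence ``It follows that $M^{\gp}\simeq X^+$.'' An integral homology isomorphism onto a simple space need not be a plus construction. For a nontrivial fibered knot $K$ such as the trefoil, the map $S^3\setminus K\to S^1$ classifying abelianization is an integral homology equivalence onto a simple space. The knot group is even hypoabelian, since its commutator subgroup is free. Yet the map is not an equivalence. So your (c)$\Rightarrow$(a), run on the McDuff--Segal homology isomorphism alone, would ``prove'' a false statement.

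What you actually need is that $X\to M^{\gp}$ is \emph{acyclic}: a homology isomorphism for every local system pulled back from $M^{\gp}$, equivalently for every abelian local system on $X$. This is the strengthened group completion theorem with abelian local coefficients (Randal-Williams). It is exactly what makes the commutator subgroup of each $\pi_1$ perfect, hence equal to the maximal perfect subgroup. The classical plus-construction corollary of the group completion theorem assumes this perfectness as a hypothesis. So ``importing it from the group completion theorem'' works only if you import this stronger form; with it, your argument for (a)$\Leftrightarrow$(b)$\Leftrightarrow$(c) is complete.

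\textbf{Gap in (d)$\Rightarrow$(b).} The permutation you conjugate by is not the one (d) controls. To move a loop $\gamma$ at a basepoint $b$ from one summand to another, you need the cyclic permutation of $k$ copies of $b$. Hypothesis (d) only kills the cyclic permutation of $k$ copies of a generator $m_i$, and writing ``$\gamma\oplus1\oplus\cdots$'' for a loop at $k\cdot m$ conflates the two.

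The fix is as follows. Represent classes at a finite stage with $b=\bigoplus_i m_i^{\oplus n_i}$, which is possible since the $m_i$ generate $\pi_0M$. By $\E_\infty$-coherence, after a fixed reordering path, the block $k$-cycle on $b^{\oplus k}$ is the image of a product of $\sum_i n_i$ disjoint $k$-cycles. Each of these permutes $k$ copies of a single $m_i$, and each is conjugate, in the symmetric group on those copies, to the stabilized cycle of (d). Since $\Sigma_N\to\pi_1(X)$ is a homomorphism, all of them die. Hence $\gamma\oplus1^{\oplus(k-1)}=1\oplus\gamma\oplus1^{\oplus(k-2)}$ in $\pi_1(X,k\cdot b)$. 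Eckmann--Hilton then makes the stabilizations of $\gamma$ and $\delta$ commute. Since $-\oplus 1_{(k-1)b}$ is injective on $\pi_1(X)$ (because $\pi_0M$ acts invertibly on $X$), $\gamma$ and $\delta$ already commute in $\pi_1(X,b)$.

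This works for every $k\geq2$. This reduction to block cycles, not the choice of transition maps you flagged, is where the real bookkeeping lies.
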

Importantly, hypoabelian groups have a permanence property that abelian groups do not have.
\begin{lemma}
    \label{lem:hypoabelianSerre}
    The class of hypoabelian groups is a Serre class.
    That is, they are closed under subgroups, quotients and extensions of groups.
    The full subcategory $\Spc^{\mathrm{hyp}}$ of $\Spc$ on those spaces whose every component has hypoabelian fundamental group is closed under pullback.
\end{lemma}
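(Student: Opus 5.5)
We have to prove two things: hypoabelian groups form a Serre class, and $\Spc^{\mathrm{hyp}}$ is closed under pullbacks. Throughout we use the definition that $G$ is hypoabelian when it contains no nontrivial perfect subgroup ($P=[P,P]$ forces $P=1$).

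\emph{Group-theoretic part.}
\begin{itemize}
\item \emph{Subgroups.} This is immediate: a perfect subgroup of $H\le G$ is a perfect subgroup of $G$, hence trivial.
\item \emph{Extensions.} Let $1\to N\to G\to Q\to 1$ with $N$ and $Q$ hypoabelian, and let $P\le G$ be perfect. Its image in $Q$ is perfect, since images of perfect groups are perfect, so the image is trivial. Thus $P\le N$, and then $P=1$.
\item \emph{Quotients.} This is the delicate point and cannot be proved in this generality. Take $G$ free, which is residually nilpotent and hence hypoabelian. Every group, including perfect groups such as $A_5$, is a quotient of a free group.
\end{itemize}
So "closed under quotients" must be read in the sense actually needed. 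Either it means quotients by subgroups whose preimages behave well, or, more plausibly, only subgroups and extensions are used in the sequel. The second is exactly what the pullback statement requires. I would therefore prove closure under subgroups and extensions carefully, and remark that closure under quotients holds only in the restricted form the argument uses. If the authors intend "Serre class" loosely, the pullback argument below needs only subgroups, extensions and images of homomorphisms into abelian-like quotients, none of which require general quotient closure.

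\emph{Space-level part.} Let $X\to Z\leftarrow Y$ be a cospan in $\Spc^{\mathrm{hyp}}$ and $P=X\times_Z Y$, with a basepoint $p=(x,y,\gamma)$. The Mayer–Vietoris / long exact sequence of the fibration $P\to X\times Y$, whose fiber is $\Omega Z$, gives an exact sequence
\[
\pi_2 Z\to \pi_1(P,p)\to \pi_1(X,x)\times\pi_1(Y,y).
\]
Hence $\pi_1(P,p)$ is an extension of a subgroup of $\pi_1X\times\pi_1Y$ by the image of $\pi_2 Z$.
\begin{itemize}
\item The subgroup of $\pi_1X\times\pi_1Y$ is hypoabelian: the product is an extension of hypoabelian groups, and we then take a subgroup.
\item The image of $\pi_2 Z$ is abelian, being a quotient of an abelian group, and in particular hypoabelian. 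Here quotient closure is used only for abelian groups, where it is trivial.
\end{itemize}
Closure under extensions then shows that $\pi_1(P,p)$ is hypoabelian. This applies at every basepoint, so each component qualifies and $P\in\Spc^{\mathrm{hyp}}$.

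The only real obstacle is the quotient clause. My plan is to handle it as described rather than attempt a false proof, since the pullback statement needs only subgroups, extensions and abelian quotients.
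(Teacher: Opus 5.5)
The paper states this lemma without proof, so there is no argument to compare yours against. Your proposal is correct, and it also repairs the statement.

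You are right that closure under quotients is false. The free group $F_2$ is residually nilpotent, hence hypoabelian, yet it surjects onto the perfect group $A_5$. Relatedly, the remark right after the lemma is also inaccurate. The solvable groups contain the abelian groups and are closed under subgroups, quotients and extensions, yet they form a strictly smaller class than the hypoabelian groups.

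Your proofs of closure under subgroups and under extensions are correct.

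Your pullback argument is also sound. The fiber of $X\times_Z Y\to X\times Y$ over $(x,y)$ is the space of paths from $f(x)$ to $g(y)$, and its $\pi_1$ at any point is $\pi_2(Z,f(x))$. Hence the kernel of $\pi_1(P,p)\to\pi_1(X,x)\times\pi_1(Y,y)$ is a homomorphic image of an abelian group, so it is abelian. The image is a subgroup of a product of hypoabelian groups. Your vague phrase about ``images of homomorphisms into abelian-like quotients'' should simply read: a homomorphic image of the abelian group $\pi_2 Z$ is abelian.

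So the space-level clause needs only three facts: closure under subgroups, closure under extensions (hence under finite products), and that abelian groups are hypoabelian. This is exactly what the paper invokes in the proof of Theorem~\ref{thm:fibergroupcompletion} and in the remark after Lemma~\ref{lem:groupcompletionpullback}. The downstream uses are therefore unaffected by the false quotient clause.
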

\begin{remark}
    In fact, hypoabelian groups are the smallest subclass of groups containing abelian groups and having the above permanence properties.
\end{remark}
\begin{example}
    Let $M$ be the category of finite sets and bijections equipped with the disjoint union operation. Then $M[\pi_0(M)^{-1}] = \Z \times B\Sigma_\infty$ where $\Sigma_\infty = \colim_n \Sigma_n$.
    For every basepoint we have that $\pi_1(M[\pi_0(M)^{-1}]) = \Sigma_\infty$ is a nonabelian group, and so $M$ is not telescopic.
    In fact, $M^{\gp}$ is the sphere spectrum by the Barratt--Priddy theorem~\cite{MR314940}.
\end{example}

\begin{warning}
    Given an $\E_\infty$-monoid $M$ and a subset $S \subseteq \pi_0M$, our notation $M[S^{-1}]$ is motivated by the fact that $M[S^{-1}]$ is the universal $M$-module on which $S$ acts invertibly.
    By the previous example, $M[\pi_0(M)^{-1}]$ need not be an $\E_\infty$-monoid at all unless $M$ is telescopic.
\end{warning}

If $f \colon M \to N$ is an $\E_\infty$-map, then the induced map $M[\pi_0(M)^{-1}] \to N[\pi_0(N)^{-1}]$ factors as $M[\pi_0M^{-1}] \to N[f(\pi_0M)^{-1}] \to N[\pi_0N^{-1}]$.
However, if $f\colon M\to N$ is $\pi_0$-surjective, then $\{f(m)\}_{m \in \pi_0(M)}$ is a set of generators for $\pi_0 N$, and hence the second map is an equivalence.
We can weaken the assumption to $f$ being quasi-surjective (compare~\cite[II.2.6.]{karoubi_k-theory_1978}):
\begin{definition}
    A map $f\colon M \to N$ of $\E_\infty$-spaces is called \emph{quasi-surjective} (also called \emph{cofinal}) if for every $n \in \pi_0 N$ there exist $m \in \pi_0 M$ and $n' \in \pi_0 N$ such that $n+n' = \pi_0(f)(m)$.
\end{definition}

\begin{remark}
\label{rem:qsur}
    If $f_1 \colon M_1 \to M_2$ and $f_2 \colon M_2 \to M_3$ are maps of $\E_\infty$-spaces, then $f_2 f_1$ quasi-surjective implies that $f_2$ is quasi-surjective.
\end{remark}

\begin{lemma}
    \label{lem:quasi-surjlocalization}
    Let $f \colon M \to N$ be a quasi-surjective homomorphism of $\E_\infty$-monoids.
    Then
    \begin{equation*}
        N[\pi_0M^{-1}]\xrightarrow{\simeq} N[\pi_0N^{-1}] \,.
    \end{equation*}
\end{lemma}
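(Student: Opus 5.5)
Here $N$ is an $M$-module via $f$, so $N[\pi_0M^{-1}]$ means inverting the action of the elements $f(m)$, $m\in\pi_0M$. Both sides are filtered colimits over finite subsets, and the canonical map $N[\pi_0M^{-1}] \to N[\pi_0N^{-1}]$ is the map $N[f(\pi_0M)^{-1}]\to N[\pi_0 N^{-1}]$ induced by the inclusion $f(\pi_0 M)\subseteq \pi_0N$. I would prove that it is an equivalence by showing that every $n\in \pi_0N$ already acts invertibly on $X:=N[f(\pi_0M)^{-1}]$. Granting that, the remaining steps are formal:
\begin{itemize}
\item In the colimit defining $X[\pi_0N^{-1}]$, every transition map $X[A^{-1}]\to X[(A\cup\{n\})^{-1}]$ is then a colimit of equivalences, hence an equivalence.
\item So $X \to X[\pi_0N^{-1}]$ is an equivalence.
\item Iterated localizations commute in the evident sense: $X[\pi_0N^{-1}] \simeq N[(f(\pi_0M)\cup\pi_0N)^{-1}] = N[\pi_0N^{-1}]$. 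Both are filtered colimits over finite subsets, and cofinality of the relevant index posets identifies them.
\end{itemize}

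The key step is invertibility. Fix $n\in\pi_0N$. Quasi-surjectivity gives $m\in\pi_0M$ and $n'\in\pi_0N$ with $n+n'=f(m)$. The action of $f(m)$ on $X$ is an equivalence, because $X$ is a sequential colimit along $f(m)\oplus(-)$ (after the other inverted elements). Shifting the telescope by one identifies the induced self-map with an equivalence.

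Since the $\E_\infty$-structure makes actions commute up to homotopy, we have $(n\oplus -)\circ(n'\oplus -) \simeq (n'\oplus -)\circ(n\oplus -)\simeq f(m)\oplus(-)$ on $X$. So $n\oplus -$ has both a left and a right homotopy inverse, namely $(n'\oplus-)\circ(f(m)\oplus-)^{-1}$ and $(f(m)\oplus-)^{-1}\circ(n'\oplus-)$. A map of spaces with a left and right homotopy inverse is an equivalence.

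The main point needing care is that "acting by $n$" on $X$ is only defined up to homotopy as a self-map of spaces, not canonically as an $M$-module map. For this I would use that $X$ is an $N$-module, since the colimit is computed in $\LMod(N)$ by the cited \cite[Corollary 4.2.3.5]{HA}. Then $n\mapsto (n\oplus-)$ is a monoid map $\pi_0N\to\pi_0\mathrm{End}(X)$ into a commutative-up-to-homotopy setting, which is all the two-sided-inverse argument requires.
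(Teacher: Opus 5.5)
Your proof is correct and follows the paper's argument: in both, the key step is to write $n+n'=f(m)$ and invert the action of $n$ on $N[\pi_0M^{-1}]$ by acting with $n'$ and then with the inverse of $f(m)$. You go further than the paper in two places it leaves implicit: you check that this inverse is two-sided, using homotopy commutativity of the $N$-action, and you spell out the telescope bookkeeping showing why invertibility of all of $\pi_0N$ makes the comparison map an equivalence.
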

\begin{proof}
    Since $f$ is quasi-surjective, the fact that every $m\in M$ acts invertibly on $N[\pi_0M^{-1}]$ implies that every $n \in \pi_0N$ acts invertibly: write $n+n'=m$.
    Then an inverse to the action by $n$ is given by the action by $n'$ followed by the inverse of acting by $m$.
\end{proof}

\begin{lemma}
    \label{lem:imagetelescopic}
    Let $f\colon M\to N$ be $\pi_0$-surjective and let $M$ be telescopic.
    Then $N$ is telescopic.
\end{lemma}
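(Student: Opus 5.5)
The plan is to verify criterion~\ref{item:permutationcondition} of Proposition~\ref{prop:cyclicinvariance} for $N$. We take as generating set of $\pi_0N$ the image $\{f(m)\}_{m\in\pi_0 M}$, which generates $\pi_0N$ because $f$ is $\pi_0$-surjective. The required vanishing will be transported from $M$ to $N$ along $f$ by naturality of the braiding map~\eqref{eq:braid}. We will work with $k=3$. The point of this choice is that the $3$-cycle $(1\,2\,3)$ lies in $A_3=[\Sigma_3,\Sigma_3]$, so it maps to the identity under any homomorphism from $\Sigma_3$ to an abelian group.

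\emph{Step 1 (vanishing on $M$).} Since $M$ is telescopic, $M[\pi_0(M)^{-1}]\simeq M^{\gp}$ is a grouplike $\E_\infty$-space. Hence the fundamental group of each of its components is abelian; this is also criterion (b) of Proposition~\ref{prop:cyclicinvariance}. Fix $m\in\pi_0M$. The composite
\[
\Sigma_3\to\pi_1(M,3m)\to\pi_1(M[\pi_0(M)^{-1}],3m)
\]
lands in an abelian group and therefore kills $(1\,2\,3)$.

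\emph{Step 2 (transport to $N$).} Since $f$ is an $\E_\infty$-map, the maps $B\Sigma_3\to (M^3)_{h\Sigma_3}\to M$ and the corresponding maps for $N$ are compatible with $f$. Consequently the braiding $\Sigma_3\to\pi_1(N,3f(m))$ factors as $\Sigma_3\to\pi_1(M,3m)\xrightarrow{f_*}\pi_1(N,3f(m))$.

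Next we need a map of telescopes. Regard $N$ as an $M$-module via $f$. The $M$-equivariant map $f\colon M\to N$ then induces a map $M[\pi_0(M)^{-1}]\to N[\pi_0(M)^{-1}]$, where the telescope on the right is formed using the action of $f(m)$. Since $f$ is $\pi_0$-surjective, it is in particular quasi-surjective, so Lemma~\ref{lem:quasi-surjlocalization} identifies $N[\pi_0(M)^{-1}]\simeq N[\pi_0(N)^{-1}]$. These maps fit into a commutative square relating $\pi_1(M,3m)\to\pi_1(M[\pi_0(M)^{-1}],3m)$ with $\pi_1(N,3f(m))\to\pi_1(N[\pi_0(N)^{-1}],3f(m))$.

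Combining the square with Step~1, the composite $\Sigma_3\to\pi_1(N[\pi_0(N)^{-1}],3f(m))$ factors through a map that kills $(1\,2\,3)$. This is criterion~\ref{item:permutationcondition} for $N$ with the generators $f(m)$, so $N$ is telescopic.

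The main obstacle is the bookkeeping in Step 2. One must check that the map of telescopes is compatible with the colimit structure maps, given a chosen well-ordering of generators. One must also check that the identification $N[\pi_0(M)^{-1}]\simeq N[\pi_0(N)^{-1}]$ of Lemma~\ref{lem:quasi-surjlocalization} is compatible with the inclusion of $N$ at the basepoint $3f(m)$. Both hold because all the maps involved are natural in the $M$-module $X$ and are built from the $M$-action.
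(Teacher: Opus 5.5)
Your proof is correct and takes essentially the paper's approach. Both verify criterion~\ref{item:permutationcondition} for $N$ through the same commutative square relating $\Sigma_k\to\pi_1(M,k m)\to\pi_1(M[\pi_0(M)^{-1}],k m)$ to its analogue for $N$, which commutes by naturality of the braiding~\eqref{eq:braid} and of the telescope, with $\pi_0$-surjectivity used to reach every component. The only difference is that you get the vanishing on $M$ from the abelianness of $\pi_1(M^{\gp})$ together with $(1\,2\,3)\in[\Sigma_3,\Sigma_3]$, whereas the paper invokes criterion~\ref{item:permutationcondition} for the telescopic $M$ directly.
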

\begin{proof}
    We want to check condition~\ref{item:permutationcondition} in Proposition~\ref{prop:cyclicinvariance}.
    There is a commutative diagram of groups
    \begin{equation*}
        \begin{tikzcd}
            \Sigma_k \ar[r] \ar[dr]& \pi_1(M,k \cdot m) \ar[r]\ar[d]& \pi_1(M[\pi_0(M)^{-1}],k \cdot m) \ar[d] \\
            & \pi_1(N,k \cdot f(m)) \ar[r]& \pi_1(N[\pi_0(N)^{-1}],k \cdot f(m))
        \end{tikzcd} \,.
    \end{equation*}
    The triangle commutes by the naturality of \eqref{eq:braid}. 
    The right vertical map is the induced map on filtered colimits, and so the square commutes by naturality of the map $M \to M[\pi_0(M)^{-1}]$.
    Since $(1,\dots,k)$ is in the kernel of the upper horizontal composition, it also maps to $0$ in $\pi_1(N[\pi_0(N)^{-1}],f(m))$.
    Since $f\colon M\to N$ is $\pi_0$-surjective, the lower right corner for suitable $m\in M$ can hit $\pi_1(N[\pi_0(N)^{-1}], k \cdot n)$ for every $n \in N$.
    This now implies condition~\ref{item:permutationcondition}.
\end{proof}

\subsubsection{Fibers of group completions}
\label{sec:fibersgroupcompletion}
This section discusses the fiber of maps between group completions.
The results could be of independent interest.
We will use them in the proof of Bott periodicity in Section~\ref{sec:proofs} and to define the spectral version of Karoubi $K$-theory in Section~\ref{sec:karoubikt-h}.
We start with a few lemmas which will be useful for defining graded $K$-theory using group completion.

\begin{lemma}
    \label{lem:fibercofibersequences}
    Let $A\xrightarrow{f} B\xrightarrow{g} C$ be maps of grouplike $\E_\infty$-spaces or, equivalently, of connective spectra.
    \begin{itemize}
        \item If $A\to B\to C$ is a cofiber sequence, then it is a fiber sequence.
        \item If $A\to B\to C$ is a fiber sequence and $g\colon \pi_0B\to \pi_0C$ is surjective, then it is a cofiber sequence.
    \end{itemize}
\end{lemma}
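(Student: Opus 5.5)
The plan is to pass through the equivalence $\CGrp(\Spc)\simeq\Sp_{\geq 0}$ and reduce both statements to the stable setting, with the long exact sequence of homotopy groups as the main tool. The one subtlety is that the inclusion $\Sp_{\geq 0}\hookrightarrow\Sp$ preserves colimits but not limits: cofibers computed in $\Sp_{\geq 0}$ agree with those in $\Sp$, whereas the fiber in $\Sp_{\geq 0}$ is the connective cover $\tau_{\geq 0}$ of the fiber in $\Sp$. The underlying-space functor $\Omega^\infty\colon\Sp_{\geq 0}\to\Spc$ preserves limits, so fibers of grouplike $\E_\infty$-spaces are fibers in $\Sp_{\geq 0}$, i.e.\ $\tau_{\geq 0}$ of spectrum fibers.

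For the first bullet, suppose $A\to B\to C$ is a cofiber sequence in $\Sp_{\geq 0}$, hence in $\Sp$. In the stable category $\Sp$ every cofiber sequence is also a fiber sequence, so $A\simeq \mathrm{fib}_{\Sp}(g)$. Since $A$ is connective, $A\simeq\tau_{\geq 0}\mathrm{fib}_{\Sp}(g)=\mathrm{fib}_{\Sp_{\geq 0}}(g)$. The sequence is therefore a fiber sequence of connective spectra, equivalently of grouplike $\E_\infty$-spaces.

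For the second bullet, let $F=\mathrm{fib}_{\Sp}(g)$, so that the given fiber sequence identifies $A\simeq\tau_{\geq 0}F$. The long exact sequence gives
\[
\pi_0B\to\pi_0C\to\pi_{-1}F\to\pi_{-1}B=0 .
\]
Surjectivity of $\pi_0 g$ therefore forces $\pi_{-1}F=0$, and all lower homotopy groups of $F$ vanish as well because $B$ and $C$ are connective. Hence $F$ is connective, $A\simeq F$, and $A\to B\to C$ is a fiber sequence in $\Sp$. By stability it is then a cofiber sequence in $\Sp$, hence in $\Sp_{\geq 0}$, since the cofiber $C$ is connective and colimits agree.

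The only step needing care is the bookkeeping between (co)fibers in $\Sp_{\geq 0}$, in $\Sp$, and in $\CGrp(\Spc)$. Concretely, one must justify that $\Omega^\infty$ preserves limits and that $\Sp_{\geq 0}\subseteq\Sp$ is closed under colimits. Both are standard facts from \cite{HA}. Once these are recorded, the rest is the long exact sequence argument above.
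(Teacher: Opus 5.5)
Your argument is correct. The first bullet is the same as the paper's argument. For the second bullet you take a somewhat different route.

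The paper forms $\cofib(f)$ in $\Sp$ and uses the nullhomotopy of $g\circ f$ to get a comparison map $\cofib(f)\to C$. It then proves this map is an equivalence by the five lemma on the two long exact sequences. Surjectivity of $\pi_0 g$ is used at the bottom end, to identify both $\pi_0\cofib(f)$ and $\pi_0 C$ with $\operatorname{coker}(\pi_0 A\to\pi_0 B)$.

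You instead take $F=\fib_{\Sp}(g)$ and note that surjectivity on $\pi_0$ is exactly what kills $\pi_{-1}F$. So $F$ is connective, the connective cover $A\simeq\tau_{\geq 0}F\to F$ is already an equivalence, and the conclusion follows formally from stability.

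Your version pinpoints what the hypothesis does: it makes the fiber in $\Sp_{\geq 0}$ agree with the fiber in $\Sp$. It also spares you from checking that the map of long exact sequences commutes with the connecting homomorphisms. The paper's version produces the identification $\cofib(f)\simeq C$ directly, without passing through the fiber, at the cost of that extra bookkeeping.

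One minor point concerns reading ``fiber sequence of grouplike $\E_\infty$-spaces'' as a fiber sequence of underlying spaces. Then passing back to $\Sp_{\geq 0}$ needs $\Omega^\infty$ to be conservative on connective spectra, not just limit-preserving. It is simpler to note that the equivalence $\CGrp(\Spc)\simeq\Sp_{\geq 0}$ matches fibers and cofibers directly.
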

\begin{proof}
    Connective spectra form a coreflective subcategory of spectra:
    \begin{equation*}
    \begin{tikzcd}
         \Sp_{\geq 0} \ar[r,shift left=.5ex,hook]& \Sp \ar[l, shift left=.5ex,"\tau_{\geq 0}"]
    \end{tikzcd} \,.
    \end{equation*}
    
    Suppose that $A\to B\to C$ is a cofiber sequence in $\Sp_{\geq 0}$.
    Since the inclusion is a left adjoint, it is also a cofiber sequence in $\Sp$, and hence a fiber sequence.
    Applying the right adjoint $\tau_{\geq 0}$ the sequence is unchanged and stays a fiber sequence.

    Suppose the sequence $A\to B\to C$ is a fiber sequence and that $\pi_0(g)$ is surjective.
    There is an associated long exact sequence of abelian groups
    \begin{equation*}
        \dots \to \pi_1 B \to \pi_1 C \to \pi_0 A \to \pi_0 B \xrightarrow{g}\pi_0 C \to 0 \,,
    \end{equation*}
    by surjectivity.
    Now consider $A\to B\to C$ as a fiber sequence in $\Sp$ and let $\cofib(f)$ be the cofiber of $f\colon A\to B$ in $\Sp$.
    We get a map $\cofib(f)\to C$.
    This gives a comparison map between the long exact sequences which end in 
    \begin{equation*}
        \begin{tikzcd}
        \dots \ar[r] & \pi_1 \cofib(f) \ar[r]\ar[d]& \pi_0 A \ar[r]\ar[d,equals]& \pi_0B\times \pi_0 C \ar[r]\ar[d,equals]& \pi_0 \cofib(f) \ar[r] \ar[d] & 0 \\
        \dots \ar[r] & \pi_1 C \ar[r]& \pi_0 A \ar[r]& \pi_0B\ar[r]& \pi_0 C \ar[r]& 0
        \end{tikzcd} \,.
    \end{equation*}
    The $5$-lemma now proves that $\cofib(f)\to C$ is an equivalence.
\end{proof}
\begin{corollary}
    \label{cor:pullbackpushoutsquare}
    Consider the square of grouplike $\E_\infty$-spaces.
    \begin{equation}
        \label{eq:pullbackpushoutsquare}
        \begin{tikzcd}
            A\ar[r]\ar[d]& B \ar[d,"p"]\\
            C\ar[r,"q"]&  D
        \end{tikzcd} \,.
    \end{equation}
    \begin{itemize}
        \item If it is a pushout square, then it is also a pullback square.
        \item If $p+q\colon \pi_0(B\times C)\to \pi_0D$ is surjective and this square is a pullback square, then the square is a pushout square.
    \end{itemize}    
\end{corollary}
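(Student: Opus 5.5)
We reduce the corollary to Lemma~\ref{lem:fibercofibersequences} by passing from squares to sequences. In the stable $(\infty,1)$-category $\Sp$, a commutative square like \eqref{eq:pullbackpushoutsquare} is a pushout if and only if it is a pullback, if and only if the sequence
\begin{equation*}
    A \xrightarrow{(a_B,\,-a_C)} B\oplus C \xrightarrow{p+q} D
\end{equation*}
is a cofiber (equivalently, fiber) sequence. Here $a_B, a_C$ are the two maps out of $A$, and the null-homotopy of the composite is supplied by the commutativity of the square. We identify grouplike $\E_\infty$-spaces with connective spectra, so in $\Sp_{\geq 0}$ finite products are biproducts and $B\times C \simeq B\oplus C$.

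For the first bullet, suppose the square is a pushout in $\CGrp(\Spc)\simeq\Sp_{\geq 0}$. Then the total cofiber of the square vanishes, which is equivalent to saying that the sequence above is a cofiber sequence in $\Sp_{\geq 0}$. We then use exactly the first half of Lemma~\ref{lem:fibercofibersequences}. Since the inclusion $\Sp_{\geq 0}\hookrightarrow\Sp$ preserves colimits, the square is also a pushout in $\Sp$, hence a pullback in $\Sp$. The coreflection $\tau_{\geq 0}$ is a right adjoint and fixes each vertex, so applying it shows the square is a pullback in $\Sp_{\geq 0}$. Limits in $\CGrp(\Spc)$ are computed in spaces, so it is a pullback of grouplike $\E_\infty$-spaces as claimed.

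For the second bullet, suppose the square is a pullback of grouplike $\E_\infty$-spaces, hence a pullback in $\Sp_{\geq 0}$. Then $A\to B\oplus C\to D$ is a fiber sequence in $\Sp_{\geq 0}$, where the fiber is computed in $\Sp$ and then truncated. The second map induces $p+q$ on $\pi_0$, which by hypothesis is surjective. (One can twist by the sign automorphism of $C$ without changing images, so surjectivity of $p+q$ and of $p-q$ agree.) The second half of Lemma~\ref{lem:fibercofibersequences} then shows the sequence is a cofiber sequence, so the square is a pushout.

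The only point needing care is the translation between the square and the sequence inside $\Sp_{\geq 0}$ rather than $\Sp$. For colimits this is harmless: pushouts in $\Sp_{\geq 0}$ are computed in $\Sp$, and there pushout squares correspond to cofiber sequences $A\to B\oplus C\to D$. For pullbacks the pullback in $\Sp_{\geq 0}$ is $\tau_{\geq 0}$ of the pullback in $\Sp$, so one must check that "pullback in $\Sp_{\geq0}$" matches "$A\to B\oplus C\to D$ is a fiber sequence in $\Sp_{\geq 0}$" in the sense used in the lemma. Both are defined via $\tau_{\geq 0}$ of the $\Sp$-fiber, so they agree. I expect this bookkeeping to be the main, though mild, obstacle.
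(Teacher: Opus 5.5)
Your proposal is correct and follows the paper's proof: you translate the square into the sequence $A\to B\times C\to D$ and apply Lemma~\ref{lem:fibercofibersequences} in each direction. The paper writes the second map as $p-q$, while you use $p+q$ with the sign placed on $A\to C$, which matches the hypothesis as stated; your bookkeeping that pullbacks in $\Sp_{\geq 0}$ are $\tau_{\geq 0}$ of pullbacks in $\Sp$ is exactly what the paper's one-line proof leaves implicit.
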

\begin{proof}
    Follows from the fact that $A\to B\times C\xrightarrow{p-q} D$ is a (co)fiber sequence if and only if \eqref{eq:pullbackpushoutsquare} is a pullback (pushout). 
\end{proof}

\begin{lemma}
    \label{lem:groupcompletionpullback}
    Let 
    \begin{equation*}\begin{tikzcd}
	M & M_1 \\
	M_2 & N
	\arrow[from=1-1, to=1-2]
	\arrow[from=1-1, to=2-1]
	\arrow["\lrcorner"{anchor=center, pos=0.125}, draw=none, from=1-1, to=2-2]
	\arrow[from=1-2, to=2-2]
	\arrow[from=2-1, to=2-2]
\end{tikzcd}    \end{equation*}
    be a pullback square in $\CMon(\Spc)$ where all maps are quasi-surjective.
    Assume further that $M$ is telescopic.
    Then, the group completed square is again a pullback.
    \begin{equation*}\begin{tikzcd}
	M^\gp & M_1^\gp \\
	M_2^\gp & N^\gp
	\arrow[from=1-1, to=1-2]
	\arrow[from=1-1, to=2-1]
	\arrow["\lrcorner"{anchor=center, pos=0.125}, draw=none, from=1-1, to=2-2]
	\arrow[from=1-2, to=2-2]
	\arrow[from=2-1, to=2-2]
\end{tikzcd}    \end{equation*}
\end{lemma}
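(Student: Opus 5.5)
The plan is to compute all four group completions telescopically and use that filtered colimits in $\Spc$ commute with finite limits. First I will check that $M_1$, $M_2$ and $N$ are telescopic. The maps $M\to M_i$ are quasi-surjective, but Lemma~\ref{lem:imagetelescopic} asks for $\pi_0$-surjectivity, so it does not apply directly. Instead I will use Lemma~\ref{lem:quasi-surjlocalization}, which gives $M_i[\pi_0M^{-1}]\simeq M_i[\pi_0M_i^{-1}]$, and likewise for $N$. Hence every localization in sight can be taken with respect to the single indexing set $\pi_0 M$, acting through the given maps.

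The key step is the identification
\begin{equation*}
M[\pi_0M^{-1}] \simeq M_1[\pi_0M^{-1}]\times_{N[\pi_0M^{-1}]} M_2[\pi_0M^{-1}] \,.
\end{equation*}
Each side is a filtered colimit indexed by finite subsets of $\pi_0M$ and iterated mapping telescopes, and the telescopes for $M$, $M_1$, $M_2$, $N$ use the same index category. The actions of $m\in M$ on $M_1$, $M_2$, $N$ go through the images of $m$, which are compatible in the pullback square. So the square of telescope diagrams is levelwise the given pullback. Pullbacks of underlying spaces compute pullbacks in $\CMon(\Spc)$, and filtered colimits commute with pullbacks in $\Spc$, so the colimit square is again a pullback.

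Next I will use that the $M_i[\pi_0M^{-1}]$ and $N[\pi_0 M^{-1}]$ are grouplike spaces whose components have hypoabelian fundamental groups. For $M$ this holds by telescopicity and Proposition~\ref{prop:cyclicinvariance}. For the others I must show it, which is the main obstacle. The first idea is to deduce it from $M$: the braid element $(1\dots k)$ in $\pi_1(M_i[\pi_0 M^{-1}], k\cdot f(m))$ is the image of the one in $M[\pi_0M^{-1}]$, where it vanishes. This is the same naturality argument as in Lemma~\ref{lem:imagetelescopic}.

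That argument requires every component of $M_i[\pi_0 M^{-1}]$ to be translated to one of the form $k\cdot f(m)$. This translation should be possible because $\pi_0 M$ acts invertibly there and every class is a difference of images by quasi-surjectivity. Components are related by the translation equivalences, and condition~\ref{item:permutationcondition} only needs to be checked on suitable basepoints. So $M_1$, $M_2$ and $N$ are telescopic, and their localizations are identified with their group completions.

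Granting this, the displayed pullback becomes $M^\gp \simeq M_1^\gp\times_{N^\gp}M_2^\gp$. It remains to check that the equivalence is induced by the canonical comparison map. This follows from naturality of $X\mapsto X[\pi_0M^{-1}]\to X^\gp$ in $M$-modules. Since grouplike $\E_\infty$-spaces are closed under limits in $\CMon(\Spc)$, the group completed square is a pullback. Note that Lemma~\ref{lem:hypoabelianSerre} could alternatively be used to control the fundamental groups of the pullback, but the direction needed here is the reverse one, so I expect the naturality argument above to carry the weight.
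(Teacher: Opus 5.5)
Your overall architecture matches the paper's. You localize all four corners at $\pi_0M$ via Lemma~\ref{lem:quasi-surjlocalization}, commute the filtered colimit with the pullback, and identify telescopes with group completions. That part is fine. The gap is in the step you flag as the main obstacle: showing $M_1$, $M_2$, $N$ are telescopic.

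Condition~\ref{item:permutationcondition} of Proposition~\ref{prop:cyclicinvariance} is formulated at the basepoints $k\cdot n$ for a \emph{generating} set of $\pi_0M_1$. The image $f(\pi_0M)$ need not generate; for example $n\mapsto 2n$ on $\N$ is quasi-surjective. Translation does not repair this. Suppose $n+n'=f(m)$. Translating by $k\cdot n'$ identifies $\pi_1$ at $k\cdot n$ with $\pi_1$ at $k\cdot f(m)$. But it carries the braid element $\sigma_{k\cdot n}$ (the image of $(1\dots k)$) to $\sigma_{k\cdot n}\oplus\id_{k\cdot n'}$. The element you know vanishes is instead $\sigma_{k\cdot f(m)}$, which after transport along the shuffle path corresponds to $\sigma_{k\cdot n}\oplus\sigma_{k\cdot n'}$. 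So you only learn $\sigma_{k\cdot n}\oplus\id=(\id\oplus\sigma_{k\cdot n'})^{-1}$, not that it is trivial. Nor does vanishing of braid elements at a few basepoints say anything directly about the fundamental groups of those components. The implication (d)$\Rightarrow$(b) is a statement about a monoid together with a generating set of its $\pi_0$.

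Your translation idea does work if you apply it to condition (b) rather than (d). The steps are as follows.
\begin{enumerate}
\item Let $M_1'\subseteq M_1$ be the union of the components hit by $M$. Then $M\to M_1'$ is $\pi_0$-surjective, so $M_1'$ is telescopic by Lemma~\ref{lem:imagetelescopic}.
\item The map $M_1'[\pi_0M^{-1}]\to M_1[\pi_0M^{-1}]$ is a filtered colimit of inclusions of path components, hence is itself one.
\item Every element of $\pi_0M_1$ acts invertibly on the target, as in the proof of Lemma~\ref{lem:quasi-surjlocalization}. Translating by a suitable $n'$ therefore moves any component into the image.
\item Hence every component of $M_1[\pi_0M_1^{-1}]\simeq M_1[\pi_0M^{-1}]$ is homotopy equivalent to a component of $M_1'[\pi_0M^{-1}]\simeq (M_1')^{\gp}$, and so has abelian fundamental group. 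The same argument applies to $M_2$ and $N$, so all three are telescopic.
\end{enumerate}

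The paper takes a different route. It restricts the codomains to the essential images at the outset, so that Lemma~\ref{lem:imagetelescopic} applies verbatim. Your route, once repaired, has a small advantage: the pullback of telescopes is literally $M_1^{\gp}\times_{N^{\gp}}M_2^{\gp}$. You therefore never need to compare $(M_1')^{\gp}$ with $M_1^{\gp}$, which need not be an equivalence ($2\N\subset\N$ is cofinal, but $2\Z\to\Z$ is not an isomorphism).
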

\begin{proof}
    We may assume without loss of generality that the maps are $\pi_0$-surjective by restricting the codomain to the essential image.
    This does not change the pullbacks, or group completions by cofinality of the essential image.
    By Lemma~\ref{lem:imagetelescopic}, all corners are telescopic and so
    by Lemma~\ref{lem:quasi-surjlocalization}, it is sufficient to localize all corners at $\pi_0 M$.
    Localization is a filtered colimit and preserves pullbacks.
    Further using that the corners are telescopic, the maps $N[\pi_0M^{-1}]\to N[\pi_0N^{-1}]\to N^{\gp}$ are equivalences of spaces (even of $M$-modules).
    The group completed square is a pullback since the underlying square of spaces is.
\end{proof}

\begin{warning}
     It is necessary to assume that all maps in Lemma \ref{lem:groupcompletionpullback} are quasi-surjective.
     For example, the pullback of commutative monoids
    \begin{equation*}
    \begin{tikzcd}
        0 \ar[d] \ar[r] & \Z_{\leq 0} \ar[d]
        \\
        \Z_{\geq 0} \ar[r] & \Z
        \arrow["\lrcorner"{anchor=center, pos=0.125}, draw=none, from=1-1, to=2-2]
    \end{tikzcd}
    \end{equation*}
    does not group complete to a pullback square.
     It also follows that quasi-surjective maps are not stable under pullbacks; both maps into $\Z$ are quasi-surjective, but neither of the maps from $0$ is.
\end{warning}
\begin{remark}
    Instead of requiring that $M$ is telescopic, we can require that $M_1,M_2,N$ are telescopic. 
    It is possible to deduce that $M$ is telescopic:
    We just have to show that $M[\pi_0M^{-1}]$ is hypoabelian.
    But it is a pullback of hypoabelian spaces by the fact that the spaces $N[\pi_0M^{-1}]$ are equivalent to their group completions. 
\end{remark}

The following definition is motivated by~\cite[II.2.13]{karoubi_k-theory_1978}.

\begin{definition}
    \label{def:abstractKar}
    Let $f\colon M\to N$ be quasi-surjective.
    The \emph{$\E_\infty$-monoid of Karoubi triples for $f$} is the pullback
    \begin{equation*}
    \begin{tikzcd}
        \Gamma(f) \ar[r,"p_1"] \ar[d,"p_2"] & M\ar[d,"f"]
        \\
        M \ar[r,"f"] & N
        \arrow["\lrcorner"{anchor=center, pos=0.125}, draw=none, from=1-1, to=2-2]
    \end{tikzcd}
    \end{equation*}
    in the $(\infty,1)$-category of $\E_\infty$-monoids.
    The universal property yields a diagonal map $\Delta\colon M\to \Gamma(f)$. 
    We define the \emph{$\E_\infty$-Karoubi $K$-theory space for $f$} to be the cofiber
    \begin{equation*}
        k(f)=\cofib(\Delta\colon M\to \Gamma(f))
    \end{equation*}
    in the category of $\E_\infty$-spaces.
    We define the \emph{Karoubi $K$-theory group of $f$} to be $K^{\Kar}(f)=\pi_0k(f)$.
\end{definition}
 \begin{remark}
        If the map $f$ of $\E_\infty$-spaces arises as the core of a functor between symmetric monoidal $(\infty,1)$-categories, then there is a version of $\Gamma(f)$ which is a symmetric monoidal $(\infty,1)$-category so that its core recovers $\Gamma(f)$.
        Bass uses the weak pullback in the $(2,1)$-category $\Cat_1^\otimes$ to define a similar category $co(f)$ in~\cite[Ch. VII, \textsection5]{bassktheory}.
    \end{remark}
The underlying space of $\Gamma(f)$ can be computed as the pullback in the $(\infty,1)$-category of spaces.
    Explicitly, a point in this space is a triple $(m_1,m_2,\phi)$, where $m_1,m_2 \in M$ and $\phi$ is a path from $f(m_1)$ to $f(m_2)$ in $N$.
    In particular, $\pi_0\Gamma(f)$ is a quotient of such points by the equivalence relation of homotopy.
    Explicitly, a homotopy is a path $(m_1(t),m_2(t),\phi_t)$ where $\phi_t$ is a path homotopy with endpoints $f(m_1(t))$ and $f(m_2(t))$.
    The $\E_\infty$-structure on $\Gamma(f)$ is given pointwise by $(m_1,m_2,\phi)\oplus(m_1',m_2',\phi')=(m_1\oplus m_1',m_2\oplus m_2',\phi\oplus\phi')$.
    This induces a monoid structure on $\pi_0 \Gamma(f)$.
    \begin{definition}
        The commutative monoid $\pi_0\Gamma(f)$ is called the monoid of \emph{Karoubi triples}.
        Representatives of path components are denoted $(m_1,m_2,\phi)\in \pi_0\Gamma(f)$.
        Elements in the essential image of $\Delta$ are called \emph{elementary} Karoubi triples.
    They are (homotopic to) triples of the form $(m,m,\id_{f(m)})$.
    \end{definition}

The next definition provides an explicit model for the quotient in the category of commutative monoids (in $\Set$).
\begin{definition}
\label{def:monoidquotient}
    Let $f\colon A\to B$ be a map of commutative monoids.
    We declare $b,b'\in B$ to be equivalent if there exists $a,a'\in A$ with $b+f(a)=b'+f(a')$.
    The \emph{quotient monoid} is $B/f(A):=B/\sim$.
\end{definition}
 $B/f(A)$ is indeed a commutative monoid.
\begin{lemma}
\label{lem:triples}
    The Karoubi $K$-theory group $K^{\Kar}(f)$ is the quotient monoid of Karoubi triples by elementary Karoubi triples. 
    It is a group.
\end{lemma}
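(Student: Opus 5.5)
We must show two things: $\pi_0 k(f)$ is the quotient monoid $\pi_0\Gamma(f)/\Delta(\pi_0 M)$ in the sense of Definition~\ref{def:monoidquotient}, and this monoid is a group.

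For the identification, we use that $\pi_0\colon \CMon(\Spc)\to\CMon(\Set)$ is a left adjoint. It is left adjoint to the inclusion of discrete commutative monoids, being the composite of truncation $\tau_{\leq 0}$, which is symmetric monoidal and hence passes to commutative monoids. Left adjoints preserve cofibers, so
\[
\pi_0 k(f)=\pi_0\cofib(\Delta)\cong \cofib\bigl(\pi_0\Delta\colon \pi_0M\to\pi_0\Gamma(f)\bigr)
\]
in $\CMon(\Set)$. It remains to check that the cofiber in ordinary commutative monoids, namely the pushout of $0\leftarrow \pi_0M\to\pi_0\Gamma(f)$, is the quotient monoid $B/f(A)$. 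This is a routine universal property check. The relation $b\sim b'$ iff $b+f(a)=b'+f(a')$ is a congruence; transitivity follows by adding witnesses. Any monoid map out of $B$ killing $f(A)$ factors uniquely through it. We must also check that the underlying monoid $\pi_0\Gamma(f)$ is the monoid of Karoubi triples described above. This holds because the pullback in $\CMon(\Spc)$ is computed on underlying spaces.

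For the group property, take a triple $(m_1,m_2,\phi)$. The candidate inverse is $(m_2,m_1,\phi^{-1})$. Their sum is
\[
(m_1\oplus m_2,\ m_2\oplus m_1,\ \phi\oplus\phi^{-1}).
\]
Let $\sigma\colon m_1\oplus m_2\simeq m_2\oplus m_1$ be the symmetry path in $M$. We aim to show this triple is homotopic in $\Gamma(f)$ to the elementary triple $(m_1\oplus m_2,\ m_1\oplus m_2,\ \id)$. The homotopy moves the second coordinate along $\sigma^{-1}$ and correspondingly changes the path in $N$. So we must check that $f(\sigma)^{-1}\circ(\phi\oplus\phi^{-1})$ is path-homotopic to the identity of $f(m_1\oplus m_2)$.

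This path-homotopy is the main obstacle. Naturality of the symmetry in $N$ gives
\[
(\phi^{-1}\oplus\phi)\circ\sigma_N\simeq \sigma_N\circ(\phi\oplus\phi^{-1}),
\]
with $\sigma_N = f(\sigma)$, and this alone does not immediately trivialize the loop. The first fallback is to avoid the symmetry and instead prove the weaker statement that the sum is equivalent to an elementary triple under $\sim$, i.e.\ after adding further elementary triples. Adding $(m_2\oplus m_1,\ m_2\oplus m_1,\ \id)$ to both sides should allow an Eckmann--Hilton style rotation: $\phi\oplus\phi^{-1}$ is homotopic to $(\phi\oplus\id)\circ(\id\oplus\phi^{-1})$, and the elementary triples absorb the endpoint changes.

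The cleanest route, which we would try first, is to reduce to the case where $\phi$ is a loop. Translating along $(m_1,m_1,\id)$ relations writes $[(m_1,m_2,\phi)]$ as $[(m_1\oplus m_2,\ m_1\oplus m_2,\ \psi)]$ minus elementary classes, where $\psi$ is a loop built from $\phi$, $\phi^{-1}$ and $\sigma_N$. Then on the loops $\pi_1(N,f(m))$, modulo elementary triples, addition is concatenation. Indeed, $(m,m,\psi)\oplus(m,m,\psi')$ can be compared with $(m\oplus m,\ m\oplus m,\ \psi\psi'\oplus\id)$ using the standard Eckmann--Hilton homotopy $\psi\oplus\psi'\simeq(\psi\oplus\id)(\id\oplus\psi')$. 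Under this identification the inverse of $\psi$ is $\psi^{-1}$, so $(m,m,\psi)\oplus(m,m,\psi^{-1})\sim(m\oplus m,\ m\oplus m,\ \id)$, which is elementary. Hence every element has an inverse, and the quotient is a group.
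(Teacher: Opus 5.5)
Your identification of $\pi_0 k(f)$ with the quotient monoid of Definition~\ref{def:monoidquotient} is correct and is exactly the paper's argument. The gap is in the group property, and your final route fails at both of its steps.

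\emph{The reduction to loop triples is false.} The monoid map $\pi_0\Gamma(f)\to(\pi_0M)^{\gp}$, $(a,b,\phi)\mapsto[a]-[b]$, kills elementary triples and every triple with equal endpoints. It therefore descends to the quotient, so a class with $[m_1]\neq[m_2]$ is never the class of a loop triple $(p,p,\psi)$. An example is $(\R^{1|0},\R^{0|1},\id_\R)$ for the forgetful map $\sVectOne\to\VectOne$. The reduction is also unnecessary. Your first computation already shows that $x+I(x)$, with $I(m_1,m_2,\phi)=(m_2,m_1,\phi^{-1})$ as in the paper, is homotopic to a loop triple, so invertibility of loop triples would suffice. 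Moreover, for a loop triple to be elementary the loop only has to lie in the image of $\pi_1M$, not be null, since the second entry can be moved along any loop of $M$. It is usually not null: for $\VectOne$ with $m_1=m_2=\R$ it is the swap, of determinant $-1$.

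\emph{The loop step fails too.} At the basepoint $f(m)\oplus f(m)$ Eckmann--Hilton does not apply, and $\id\oplus\psi'$ is only the conjugate $\sigma_N(\psi'\oplus\id)\sigma_N^{-1}$. In
\[
\psi\oplus\psi'=(\psi\oplus\id)\,\sigma_N\,(\psi'\oplus\id)\,\sigma_N^{-1},
\]
the image of $\pi_1M$ can absorb the outer $\sigma_N^{-1}$ but not the inner $\sigma_N$. This is the obstacle from your first attempt again.

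In fact no argument using only the $\E_\infty$-structure can work. Let $f\colon M=\Fin^{\simeq}\times\Fin^{\simeq}\to N=\Fin^{\simeq}$ be disjoint union of finite sets. This map is $\pi_0$-surjective, and $M$ is not telescopic.
\begin{itemize}
\item A triple $((S_1,T_1),(S_2,T_2),\phi)$ is classified up to homotopy by its block sizes $(|\phi S_1\cap S_2|,|\phi S_1\cap T_2|,|\phi T_1\cap S_2|,|\phi T_1\cap T_2|)$, so $\pi_0\Gamma(f)\cong\N^4$.
\item The elementary triples are the $(a,0,0,d)$, so the quotient is $\N^2$, which is not a group.
\item Here $x+I(x)=(2a,b+c,b+c,2d)$ is not elementary, and the loop triple given by the transposition of $\{1\}\sqcup\{2\}$ has no inverse.
\end{itemize}

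The missing input is the linear structure available where the lemma is actually applied. Suppose $f$ is induced by an additive Banach functor $F$. Then your loop $f(\sigma)^{-1}(\phi\oplus\phi^{-1})$ is the block matrix $\bigl(\begin{smallmatrix}0&\phi^{-1}\\ \phi&0\end{smallmatrix}\bigr)$ on $Fm_1\oplus Fm_2$. The map
\[
t\longmapsto \begin{pmatrix}\cos t&\sin t\,\phi^{-1}\\ -\sin t\,\phi&\cos t\end{pmatrix}\begin{pmatrix}-1&0\\ 0&1\end{pmatrix},\qquad t\in[0,\pi/2],
\]
is a path of invertibles, built from a conjugate of the rotation in Lemma~\ref{lem:K1}. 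It runs from $F\bigl((-\id)\oplus\id\bigr)$, which comes from an automorphism in $M$, to that loop. Hence $x+I(x)$ is elementary, which is property (2) of $I$ that the paper feeds into Gomi's lemma.

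The paper's Eckmann--Hilton footnote for property (2) needs the same rotation input. Without it the footnote fails: in $\Fin^{\simeq}$, $\gamma^{-1}\oplus\gamma=(12)(34)\neq e$ in $\Sigma_4$ for the transposition $\gamma$ of a two-element set.
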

\begin{proof}
    $\pi_0\colon \CMon(\Spc)\to \CMon$ is a left adjoint and preserves cofiber sequences.
    Applying $\pi_0$ to the cofiber sequence $M\to \Gamma(f)\to k(f)$,
    we can express $K^{\Kar}(f)=\pi_0 k(f)$ as a cofiber in $\CMon$.
    The cofiber of a map of monoids is explicitly described in Definition~\ref{def:monoidquotient}.

    It follows by~\cite[Lemma C.1]{gomi_freed-moore_2021} that $K^{\Kar}(f)$ is a group, because the map
    \begin{equation*}
    I\colon \pi_0(\Gamma(f)) \to  \pi_0(\Gamma(f)) \quad I(m_1, m_2, \phi) = (m_2, m_1, \phi^{-1})
    \end{equation*}
    satisfies the necessary properties:
    \begin{enumerate}
        \item it is a monoid map
        \item $I(x) + x$ is an elementary triple for all $x \in \pi_0(\Gamma(f))$.\footnote{Indeed, note that if $\gamma$ is a path from $n_1$ to $n_2$, then $\gamma^{-1} \oplus \gamma$ is trivial in $\pi_1(N,n_1 \oplus n_2)$ by an Eckmann--Hilton argument.}
        \item $I$ is the identity on elementary triples. \qedhere
    \end{enumerate}
\end{proof}
Since every $\E_\infty$-space for which $\pi_0$ is a group is grouplike, we obtain
\begin{corollary}
    The $\E_\infty$-space $k(f)$ is grouplike.
\end{corollary}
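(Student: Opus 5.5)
The corollary asserts that $k(f)$ is grouplike. By definition, an $\E_\infty$-space is grouplike precisely when the commutative monoid $\pi_0$ is a group. So the argument is simply a matter of identifying $\pi_0 k(f)$ and invoking the lemma just proved.

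First, by Definition~\ref{def:abstractKar}, we have $\pi_0 k(f)=K^{\Kar}(f)$. Second, Lemma~\ref{lem:triples} shows that $K^{\Kar}(f)$ is a group. That lemma is where the actual content lies: $\pi_0$ commutes with cofibers in $\CMon$, and the involution $I(m_1,m_2,\phi)=(m_2,m_1,\phi^{-1})$ supplies inverses modulo elementary triples.

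It follows that $\pi_0 k(f)$ is a group, so $k(f)$ is grouplike. Equivalently, $k(f)$ lies in $\CGrp(\Spc)\simeq\Sp_{\geq 0}$ and may be regarded as a connective spectrum.

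There is no real obstacle here. The only point to be careful about is that the cofiber defining $k(f)$ was taken in $\CMon(\Spc)$, not in grouplike objects. Since $\pi_0$ is a left adjoint, it preserves this cofiber, so the group structure on $\pi_0 k(f)$ computed in Lemma~\ref{lem:triples} really is the $\pi_0$ of the $\E_\infty$-space $k(f)$ itself.
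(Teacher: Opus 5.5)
Your proposal is correct and follows the paper's argument exactly. The paper also deduces the corollary directly from Lemma~\ref{lem:triples}: $\pi_0 k(f)=K^{\Kar}(f)$ is a group, and an $\E_\infty$-space whose $\pi_0$ is a group is grouplike.
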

\begin{theorem}
\label{thm:fibergroupcompletion}
    Let $f\colon M \to N$ be a quasi-surjective map of $\E_\infty$-spaces and let $f^{\gp}\colon M^\gp \to N^\gp$ be the induced map on group completions.
    Assume further that $M$ is telescopic.
    Then there is a natural equivalence of grouplike $\E_\infty$-spaces $k(f)\simeq \fib f^{\gp}$.
\end{theorem}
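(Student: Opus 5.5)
We will prove $k(f)\simeq \fib f^{\gp}$ by group completing the pullback square defining $\Gamma(f)$ and then comparing the two resulting (co)fiber sequences.

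\textbf{Step 1: group complete the defining square.}
We first want to apply Lemma~\ref{lem:groupcompletionpullback} to the square of Definition~\ref{def:abstractKar}. The maps $M\to N$ are quasi-surjective by hypothesis. The projections $p_1,p_2\colon\Gamma(f)\to M$ are even split surjective, with section the diagonal $\Delta$. The remaining hypothesis is that $\Gamma(f)$ be telescopic; this is the point needing care. We would deduce it as in the Remark after Lemma~\ref{lem:groupcompletionpullback}:
\begin{itemize}
\item first replace $N$ by the essential image of $f$, which changes neither $\Gamma(f)$ nor $N^{\gp}$ (by cofinality), so that $f$ becomes $\pi_0$-surjective;
\item then $N$ is telescopic by Lemma~\ref{lem:imagetelescopic};
\item so $\Gamma(f)[\pi_0M^{-1}]$, computed via $\Delta$, is the pullback of $M[\pi_0M^{-1}]\to N[\pi_0M^{-1}]\leftarrow M[\pi_0M^{-1}]$, since localization is a filtered colimit and preserves pullbacks;
\item this is a pullback of spaces whose components have abelian fundamental groups, hence it is hypoabelian by Lemma~\ref{lem:hypoabelianSerre};
\item finally, using Lemma~\ref{lem:quasi-surjlocalization} for the quasi-surjective map $\Delta$ and Proposition~\ref{prop:cyclicinvariance}(c), $\Gamma(f)$ is telescopic.
\end{itemize}
The output of this step is a pullback square of connective spectra
\[
\Gamma(f)^{\gp}\simeq M^{\gp}\times_{N^{\gp}}M^{\gp}.
\]

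\textbf{Step 2: identify the cofiber of the diagonal.}
Group completion is a left adjoint, so it preserves cofibers, and $k(f)$ is already grouplike. Hence
\[
k(f)\simeq k(f)^{\gp}\simeq \cofib\bigl(\Delta^{\gp}\colon M^{\gp}\to M^{\gp}\times_{N^{\gp}}M^{\gp}\bigr)
\]
in $\Sp_{\geq 0}$.

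In spectra, the pullback $P=M^{\gp}\times_{N^{\gp}}M^{\gp}$ splits via the diagonal. The map $(x,y)\mapsto x-y$ gives a fiber sequence $\fib f^{\gp}\to P\xrightarrow{p_1} M^{\gp}$, and $\Delta$ is a section of $p_1$. Since the projection $p_1$ has a section, we get $P\simeq M^{\gp}\oplus\fib f^{\gp}$, with $\Delta$ the inclusion of the first summand.

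Concretely, we will use the map $P\to\fib(f^{\gp})$ given by $(x,y,\phi)\mapsto(x-y,\phi)$, i.e.\ the fiber of $p_1-p_2$ restricted appropriately. It is natural in $f$ and kills $\Delta$. It therefore induces a natural map $\cofib(\Delta^{\gp})\to\fib f^{\gp}$, which the splitting shows is an equivalence in $\Sp$. Since both sides are connective, the cofiber in $\Sp_{\geq0}$ agrees with that in $\Sp$ (cf.\ Lemma~\ref{lem:fibercofibersequences}). Alternatively, Lemma~\ref{lem:fibercofibersequences} applies directly: $M^{\gp}\to P\to\fib f^{\gp}$ is a split fiber sequence with $\pi_0$-surjective second map, hence a cofiber sequence.

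\textbf{Main obstacle.}
The hardest step is the telescopicity of $\Gamma(f)$ in Step 1, that is, justifying that the localization of a pullback is a pullback of the localizations when $\Gamma(f)$ is inverted only at the image of $\Delta$. Naturality of the final equivalence is then formal, since every construction used is functorial in the arrow $f$.
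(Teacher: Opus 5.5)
Your proof is correct. The first half is the paper's argument: the paper also reduces to $\pi_0$-surjective $f$, shows that $\Gamma(f)$ is telescopic by localizing the defining pullback and using that hypoabelian groups form a Serre class, and deduces $\Gamma(f)^{\gp}\simeq M^{\gp}\times_{N^{\gp}}M^{\gp}$.

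The second half is organized differently.
\begin{itemize}
\item The paper embeds the defining square in a cube whose other two faces are $(M=M,\ N=N)$ and $(k(f)\to M^{\gp},\ 0\to N^{\gp})$, and group completes it. It then uses $\pi_0$-surjectivity of $f^{\gp}$ and Corollary~\ref{cor:pullbackpushoutsquare} to turn the two pullback faces into pushouts. It concludes that the cofiber face is a pushout, i.e.\ that $k(f)\to M^{\gp}\to N^{\gp}$ is a cofiber sequence, hence a fiber sequence.
\item You instead use that $\Delta$ is a section of $p_1$, so $\cofib(\Delta^{\gp})\simeq\fib(p_1)\simeq\fib f^{\gp}$. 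This is precisely Lemma~\ref{lem:cofibdiag}, which the paper proves only later, in Section~\ref{sec:proofs}.
\end{itemize}
Your route is shorter and purely stable. It needs no pullback-to-pushout conversion, only the automatic $\pi_0$-surjectivity of $p_1$ rather than that of $f^{\gp}$. The cube's advantage is that it displays the comparison map and its null-homotopy over $N^{\gp}$ directly from the construction. It is, however, the same difference map $(x,y,\phi)\mapsto(x-y,\phi)$ that you write down.

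Two small points.
\begin{itemize}
\item The quasi-surjectivity of $\Delta$ that you invoke needs a reference: it is item (2) in the proof of Lemma~\ref{lem:triples}, namely that $x+I(x)$ is elementary. Alternatively, you can avoid it by localizing at all of $\pi_0\Gamma(f)$ as the paper does, using that $p_1,p_2$ are $\pi_0$-surjective.
\item Passing to the essential image can change $\pi_0 N^{\gp}$; for example, $0\to\Z/2$ is quasi-surjective. The new group completion is, however, a union of components of the old one containing the image of $M^{\gp}$. So neither the pullback nor the connective fiber you actually use is affected, but your parenthetical claim that $N^{\gp}$ is unchanged should be weakened accordingly.
\end{itemize}
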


\begin{proof}
    As in the proof of Lemma \ref{lem:groupcompletionpullback}, we may reduce to $f$ being $\pi_0$-surjective, which implies that $N$ is telescopic by Lemma~\ref{lem:imagetelescopic}.
    This does not change the pullbacks, the fiber nor the property that $k(f)\simeq \fib(f^\gp)$. 
    The following diagram commutes:
    \begin{equation}
        \label{eq:Karoubifiberdiagram}
        \begin{tikzcd}[row sep={40,between origins}, column sep={40,between origins}]
          & M \ar[rr,"\Delta"]\ar{dd} & & M\times M \ar{dd} \ar[rr]& & M^{\gp} \ar[dd,"f^{\gp}"]\\
        M \ar[crossing over,"\Delta", near start]{rr} \ar{dd}\ar[ru,equals]\ar[dd] & & \Gamma(f) \ar[rr,crossing over]\ar[ru]\ar[dd]& & k(f) \ar[ru]\ar[dd,crossing over] &\\
          & N  \ar[rr,"\Delta"] \ar{rr} & &  N\times N \ar{rr} && N^{\gp}  \\
        N \ar[rr,equals]\ar[ru,equals] && N\ar[ru,"\Delta"] \ar[rr]\ar[from=uu,crossing over] && 0\ar[ru] &
    \end{tikzcd} \,.
    \end{equation}
    The right face is the cofiber of the middle and leftmost face. 
    Both the left and the middle face are pullbacks in $\CMon(\Spc)$.
    Passing to group completions and using that $k(f)\simeq k(f)^\gp$, we obtain the following diagram:
    \begin{equation}
    \label{eq:groupcompleteddiagram}
    \begin{tikzcd}[row sep={40,between origins}, column sep={40,between origins}]
          & M^\gp \ar[rr,"\Delta"]\ar{dd} & & M^\gp\times M^\gp \ar{dd} \ar[rr]& & M^{\gp} \ar[dd,"f^{\gp}"]\\
        M^\gp \ar[crossing over,"\Delta", near start]{rr} \ar{dd}\ar[ru,equals]\ar[dd] & & \Gamma(f)^\gp \ar[rr,crossing over]\ar[ru]\ar[dd]& & k(f) \ar[ru]\ar[dd,crossing over] &\\
          & N^\gp  \ar[rr,"\Delta"] \ar{rr} & &  N^\gp\times N^\gp \ar{rr} && N^{\gp}  \\
        N^\gp \ar[rr,equals]\ar[ru,equals] && N^\gp\ar[ru,"\Delta"] \ar[rr]\ar[from=uu,crossing over] && 0\ar[ru] &
    \end{tikzcd} \,.    
    \end{equation}

    Since group completion preserves colimits, the right face is still the cofiber of the middle and left face.
    The left face is still a pullback after group completion.
    \textbf{Claim:} the middle face is still a pullback after group completion.
    
    Let us see how the claim implies the theorem.
    We assumed that $f^\gp$ is surjective on $\pi_0$.
    By Corollary~\ref{cor:pullbackpushoutsquare}, the left and middle face of diagram~\eqref{eq:groupcompleteddiagram} are pushouts.
    Since group completion is a left adjoint, the right face is still the cofiber of the left and middle faces in diagram~\eqref{eq:groupcompleteddiagram}.
    The cofiber of pushouts is a pushout square, i.e.\ $k(f)\to M^\gp \to N^\gp$ is a cofiber sequence.
    Again, since $f^\gp$ is surjective on $\pi_0$, it is also a fiber sequence, i.e.\ $k(f)\simeq \fib(f^\gp)$.

    To finish, we prove the claim.
    We first show that $\Gamma(f)$ is telescopic.
    In the defining pullback diagram
    \begin{equation*}
        \begin{tikzcd}
            \Gamma(f) \ar[r,"p_1"]\ar[d,"p_2"]& M \ar[d,"f"]\\
            M  \ar[r,"f"] & N
            \arrow["\lrcorner"{anchor=center, pos=0.125}, draw=none, from=1-1, to=2-2]
        \end{tikzcd} 
    \end{equation*}
    $p_1,p_2$ are $\pi_0$-surjective since $f$ is.
    Since the functor $(-)[S^{-1}]$ is given by a filtered colimit, it commutes with finite limits.
    We thus obtain a pullback diagram of spaces:
    \begin{equation}
        \label{eq:sqGamma}
        \begin{tikzcd}
            \Gamma(f)[\pi_0\Gamma(f)^{-1}] \ar[r,"p_1"]\ar[d,"p_2"]& M[\pi_0(M)^{-1}] \ar[d,"f"]\\
            M[\pi_0(M)^{-1}]  \ar[r,"f"] & N[\pi_0(N)^{-1}]
            \arrow["\lrcorner"{anchor=center, pos=0.125}, draw=none, from=1-1, to=2-2]
        \end{tikzcd} \,.
    \end{equation}
    By Proposition~\ref{prop:cyclicinvariance}, it is enough to check that $\pi_1(\Gamma(f)[\pi_0\Gamma(f)^{-1}],x)$ is hypoabelian for every $x\in \pi_0\Gamma(f)$.
    But a pullback of spaces with hypoabelian fundamental groups has hypoabelian fundamental groups, because hypoabelian groups form a Serre class.
    Since all spaces are telescopic and the maps are $\pi_0$-surjective,
    we obtain $\Gamma(f)^{\gp}\simeq M^{\gp}\times_{N^{\gp}} M^{\gp}$.
    Therefore, the middle face of diagram~\eqref{eq:groupcompleteddiagram} is also a pullback in $\Spc$ and hence a fortiori in $\CGrp(\Spc)$.
    This concludes the proof.
\end{proof}

\begin{remark}
    A particular version of the long exact sequence associated to the fibration $k(f) \to M^\gp \to N^\gp$ appears  in~\cite[Theorem II.3.22]{karoubi_k-theory_1978}.
\end{remark}

\begin{warning}
    Theorem~\ref{thm:fibergroupcompletion} is false in case $f$ is not quasi-surjective.
    For example, take $M = 0$.
    We obtain that $\Gamma(f) = \Omega N$ and so $k(f) = \Omega N$.
    On the other hand, we have that the fiber of $M^{\gp} \to N^{\gp}$ is given by $\Omega N^{\gp}$.
    These are different in general: 
    Let $N=\bigsqcup_{n\in \N} BO(n)$ be the monoid of real vector spaces under $\oplus$ we will consider in Example~\ref{ex:ku}.
    Then $\pi_0\Omega N= \pi_0 O(0) = 0$ which does not equal $\pi_0\Omega N^\gp = \pi_0 O=\Z/2$.
\end{warning}

\subsection{Topological \texorpdfstring{$K$}{K}-theory by simplicial techniques}
\label{sec:kthysimplicial}
Our main goal in the coming few sections is to define the connective $K$-theory spectrum of an ungraded Banach algebra $A$ in the same spirit as Section~\ref{sec:algKth}.
Already in the most famous case $A = \C$ we arrive at the problem that algebraic $K$-theory of $\C$ and topological $K$-theory of $\C$ do not agree.
We therefore need to remember the topology of $A$ when defining the group completion of $\ModOne_A^{\cong}$.
We will provide several equivalent approaches to do so.

In this section, we will focus on simplicial techniques.
More specifically, we define the $K$-theory spectrum of a simplicial ring $R$ and more generally a simplicial object in categories in Subsection~\ref{sec:sSetKth}.
We define the $K$-theory of a category enriched in topological spaces (or simplicial sets) in \ref{sec:enrichKth}.
We prove a relation between these two approaches in Theorem~\ref{th:comparisontopktheory}.

\subsubsection{\texorpdfstring{$K$}{K}-theory of simplicial rings}
\label{sec:sSetKth}
Any topological ring $A$ (such as a Banach algebra) determines a simplicial ring $A_\bullet$ given by the functor
\begin{align*}
    A_\bullet \colon \Delta^{\op} & \longrightarrow \Ring \,,\\
    [n] & \longmapsto C(\Delta^n;A) \,.
\end{align*}
Here, the set of continuous functions $C(\Delta^n;A)$ is equipped with the pointwise multiplication and addition.
To each of these rings we can separately apply the algebraic $K$-theory functor $\mathcal{K}^{\mathrm{alg}}$, obtaining a simplicial diagram in spectra.
 
\begin{definition}
Let $R$ be a simplicial object in the $1$-category of rings.
    The (connective) \emph{$K$-theory of $R$} is the geometric realization
\begin{equation*}
    k_R:= |\mathcal{K}^{\mathrm{alg}}(R_\bullet)|  = \colim_{[n]\in \Delta^{\op}} \mathcal{K}^{\mathrm{alg}}(R_n) 
\end{equation*}
in connective spectra.
\end{definition}

More generally we define 

\begin{definition}
\label{def:topktheory1}
The \emph{$K$-theory of a simplicial object in the $(\infty,1)$-category of $(\infty,1)$-categories with coproducts} is the image under the composition
\begin{equation*}
(\Cat_\infty^\Sigma)^{\Delta^{\op}} \xrightarrow{\mathcal{K}^{\mathrm{alg}}_*} \Sp^{\Delta^{\op}} \xrightarrow{|-|} \Sp,
\end{equation*}
where $\mathcal{C}^{\Delta^{\op}} := \Fun(\Delta^{\op},\mathcal{C})$ denotes the functor $(\infty,1)$-category.
\end{definition}

\begin{remark}
    Definition~\ref{def:topktheory1} is a variant of semi-topological $K$-theory~\cite{MR1910042, MR3477639}.
\end{remark}

We equip the functor categories $(\Cat_\infty^\Sigma)^{\Delta^{\op}}$ and $\Sp^{\Delta^{\op}}$ with the pointwise symmetric monoidal structure.

\begin{proposition}
\label{prop:Kthlax1}
The $K$-theory functor of Definition~\ref{def:topktheory1} admits a lax symmetric monoidal refinement.    
\end{proposition}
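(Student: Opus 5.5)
The functor of Definition~\ref{def:topktheory1} is a composite of two functors, so I will give each a lax symmetric monoidal structure and compose them. Lax symmetric monoidal functors compose, since $\mathrm{CAlg}(\Cat_\infty)$ has lax monoidal functors as $1$-morphisms in the $\infty$-operadic sense.

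\textbf{First factor.} Theorem~\ref{th:algebraicktheory} gives a lax symmetric monoidal refinement of $\mathcal{K}^{\mathrm{alg}}\colon \Cat_\infty^\Sigma \to \Sp$. Equivalently, it is a map of $\infty$-operads $(\Cat_\infty^\Sigma)^\otimes \to \Sp^\otimes$ over $\mathrm{Fin}_*$. For any symmetric monoidal $\infty$-category $\calC$ and any small $\infty$-category $I$, the pointwise symmetric monoidal structure on $\Fun(I,\calC)$ is characterized by the following universal property (\cite[Remark~2.1.3.4]{HA} and the surrounding discussion). Lax monoidal functors $\calD \to \Fun(I,\calC)$ correspond to lax monoidal functors $\calD \times I \to \calC$, where $I$ carries the cocartesian-type operad structure. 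Equivalently, $\Fun(I,\calC)^\otimes$ is the relative functor category $\Fun(I,\calC^\otimes)\times_{\Fun(I,\mathrm{Fin}_*)}\mathrm{Fin}_*$. Postcomposition with an operad map is functorial in this construction. Taking $I=\Delta^{\op}$, postcomposition with $\mathcal{K}^{\mathrm{alg}}$ therefore yields a lax symmetric monoidal functor
\[
\mathcal{K}^{\mathrm{alg}}_*\colon (\Cat_\infty^\Sigma)^{\Delta^{\op}}\to \Sp^{\Delta^{\op}} .
\]

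\textbf{Second factor.} I will show that geometric realization $|-|=\colim_{\Delta^{\op}}\colon \Sp^{\Delta^{\op}}\to\Sp$ is lax symmetric monoidal, and in fact strong. The colimit functor is left adjoint to the constant diagram functor $\mathrm{const}\colon \Sp\to\Sp^{\Delta^{\op}}$, and $\mathrm{const}$ is strong symmetric monoidal for the pointwise structure. By \cite[Corollary~7.3.2.7]{HA}, the left adjoint of a strong symmetric monoidal functor inherits an oplax monoidal structure. To promote this oplax structure to a strong one, I need the comparison map
\[
|X_\bullet\otimes Y_\bullet|\to |X_\bullet|\otimes|Y_\bullet|
\]
to be an equivalence. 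This holds for two reasons. First, $\Delta^{\op}$ is sifted, so the diagonal $\Delta^{\op}\to\Delta^{\op}\times\Delta^{\op}$ is cofinal. Second, $\otimes$ on $\Sp$ preserves colimits separately in each variable. Together these give
\[
\colim_{\Delta^{\op}} X_n\otimes Y_n\simeq \colim_{\Delta^{\op}\times\Delta^{\op}} X_n\otimes Y_m\simeq |X|\otimes|Y| .
\]
The unit map is an equivalence because $\Delta^{\op}$ is weakly contractible. An oplax structure whose structure maps are equivalences is strong, hence in particular lax. Alternatively, one can cite the general statement that colimits over sifted $\infty$-categories are symmetric monoidal when $\otimes$ preserves sifted colimits in each variable (\cite[Proposition~3.2.3.1, Example~3.2.3.2]{HA}).

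\textbf{Main obstacle.} The step requiring the most care is the first factor: making precise that the pointwise structure is functorial in lax monoidal functors at the $\infty$-operadic level. I will handle this with Lurie's construction of $\Fun(K,\calC)^\otimes$ via $\calC^\otimes$-valued functors over $\mathrm{Fin}_*$. Postcomposition with the operad map $(\Cat_\infty^\Sigma)^\otimes\to\Sp^\otimes$ visibly preserves inert morphisms, so it is a map of $\infty$-operads. Finally, if one wants connective spectra as the target, $\Sp_{\geq0}\subset\Sp$ is closed under $\otimes$ and colimits, so the whole argument restricts to $\Sp_{\geq 0}$.
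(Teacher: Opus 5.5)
Your proposal is correct and follows the paper's argument. The paper likewise treats postcomposition with the lax symmetric monoidal $\mathcal{K}^{\mathrm{alg}}$ as pointwise lax monoidal, and it proves Lemma~\ref{lem:geomrealizationmonoidal} exactly as you do: it takes the oplax structure on the left adjoint of the symmetric monoidal functor $\const$ and shows the structure maps are equivalences using siftedness and weak contractibility of $\Delta^{\op}$. Your justification of the first factor via Lurie's relative functor category $\Fun(\Delta^{\op},-)$ fills in a step that the paper only asserts.
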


    Note that the postcomposition with $\mathcal{K}^{\mathrm{alg}}$ is lax monoidal because $\mathcal{K}^{\mathrm{alg}}$ is lax monoidal by Theorem~\ref{th:algebraicktheory}.
    Hence Proposition~\ref{prop:Kthlax1} follows from:

\begin{lemma}    
    The colimit functor $|-|:\Sp^{\Delta^{\op}}\to \Sp$ admits a canonical symmetric monoidal refinement.
\end{lemma}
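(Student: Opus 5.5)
The plan is to use the general principle that, in a presentably symmetric monoidal $(\infty,1)$-category $\mathcal{C}$ whose tensor product preserves colimits separately in each variable, the colimit functor $\colim\colon \Fun(K,\mathcal{C})\to\mathcal{C}$ is symmetric monoidal for the pointwise structure on $\Fun(K,\mathcal{C})$ exactly when $K$ is sifted. Since $\Delta^{\op}$ is sifted and $\Sp$ is presentably symmetric monoidal, this gives the lemma.

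To make this precise, I would first note that the diagonal $\delta\colon \Delta^{\op}\to\Delta^{\op}\times\Delta^{\op}$ is cofinal; this is exactly the statement that $\Delta^{\op}$ is sifted (\cite[Lemma 5.5.8.4]{HTT}). Then, for $X,Y\in\Sp^{\Delta^{\op}}$,
\begin{equation*}
    |X\otimes Y| = \colim_{n} X_n\otimes Y_n \simeq \colim_{(n,m)} X_n\otimes Y_m \simeq \Big(\colim_n X_n\Big)\otimes\Big(\colim_m Y_m\Big) = |X|\otimes|Y| \,,
\end{equation*}
where the first equivalence uses cofinality of $\delta$ and the second uses that $\otimes$ preserves colimits in each variable. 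For the unit, the constant diagram on the sphere spectrum $\mathbb{S}$ has colimit $\mathbb{S}$ because $\Delta^{\op}$ is weakly contractible.

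The real content, and the main obstacle, is assembling these pointwise equivalences into coherent symmetric monoidal data rather than just producing them objectwise. I would obtain the coherence formally, without writing it down. The functor $|-|$ is left adjoint to the constant-diagram functor $c\colon\Sp\to\Sp^{\Delta^{\op}}$. This $c$ is restriction along $\Delta^{\op}\to\ast$, so it is strong symmetric monoidal for the pointwise structures. By \cite[Corollary 7.3.2.7]{HA}, the left adjoint $|-|$ therefore acquires a canonical oplax symmetric monoidal structure. Its structure maps $|X\otimes Y|\to|X|\otimes|Y|$ and $|\mathbb{S}|\to\mathbb{S}$ are the maps computed above, which are equivalences. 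An oplax symmetric monoidal functor with invertible structure maps is strong symmetric monoidal.

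Alternatively, one could identify $|-|$ with left Kan extension along $\Delta^{\op}\to\ast$ and cite \cite[Proposition 3.8]{Gepner2015}-type results on operadic left Kan extensions along sifted diagrams, or the symmetric monoidality of the Day convolution. The adjunction argument seems cleanest, since the only input it needs is siftedness of $\Delta^{\op}$.
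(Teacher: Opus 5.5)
Your proposal is correct and follows the paper's proof essentially verbatim. The constant-diagram functor is symmetric monoidal, so its left adjoint $|-|$ is canonically oplax symmetric monoidal, and the oplax structure maps are equivalences: for the unit by weak contractibility of $\Delta^{\op}$, and for the tensor by cofinality of the diagonal together with separate cocontinuity of $\otimes$.

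One small citation slip: \cite[Cor.~7.3.2.7]{HA} states that \emph{right} adjoints of symmetric monoidal functors are lax, not that left adjoints are oplax. To obtain the oplax structure on $|-|$, apply it to $c^{\op}$ with right adjoint $|-|^{\op}$, or cite a direct reference such as \cite[Prop.~A]{zbMATH07785229} as the paper does.
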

\begin{proof}
    See \ref{lem:geomrealizationmonoidal}.
\end{proof}

\subsubsection{\texorpdfstring{$K$}{K}-theory of simplicially enriched categories}
\label{sec:enrichKth}

If $R$ is a Banach algebra, the category of finitely generated projective $R$-modules carries an enrichment in simplicial sets.
Namely, as we will discuss in more detail in Section~\ref{sec:Banach}, $R$-modules form a Banach category, and the Banach space topology on module homomorphisms has a corresponding simplicial set.
This motivates us to define the (connective) $K$-theory $k(\mathsf{C})$ of any simplicially enriched category $\mathsf{C}$ with finite coproducts in this section.

Using the homotopy coherent nerve $N^{\mathrm{hc}}$~\cite[\S 1.1.5]{HTT}, we can regard $\mathsf{C}$ as an $(\infty,1)$-category with finite coproducts.\footnote{We will assume all simplicially enriched categories are fibrant, i.e.\ all hom-simplicial sets are Kan. This will ensure that the homotopy coherent nerve construction lands in quasicategories. Since the simplicial sets we consider are always induced by a topological space, this will not be a restriction.}
The most naive approach of defining the topological $K$-theory of $\mathsf{C}$ as $\mathcal{K}^{\mathrm{alg}}(N^{\mathrm{hc}}\mathsf{C})$ is not ``correct'':

\begin{warning}
\label{ex:wrong}
Consider for $R = \C$ the category $\ModOne_R = \VectOne_\C$ of finite-dimensional complex vector spaces.
    The Euclidean topology on $\C$ induces the standard topology on linear maps, which makes $\VectOne_\C$ into a $\Top$-enriched category, which we write $\VectEn_\C$.
Since the mapping spaces $M_{m\times n}(\C)$ are all contractible, $N^{\mathrm{hc}}\VectEn_\C \simeq *$ is the terminal $(\infty,1)$-category.
    We therefore see that $\mathcal{K}^{\mathrm{alg}}(N^{\mathrm{hc}}\VectEn_\C)=0\in \Sp$, which is quite different from the desired topological $K$-theory spectrum $ku$.
    
    Instead, we have to take the homotopy coherent nerve $N^{\mathrm{hc}}$ of the symmetric monoidal topologically enriched groupoid $(\VectEn_\C^{\cong}, \oplus)$ of vector spaces and linear isomorphisms.
    It turns out group completing the resulting $\E_\infty$-monoid does yield $ku$. 
    We will revisit this computation in more detail in Example~\ref{ex:ku}.
\end{warning}
To prevent the problem above, we need to differentiate between \emph{isomorphisms} and \emph{equivalences} in $\mathsf{C}$.
\begin{definition}
    Let $\mathsf{C}$ be a simplicially enriched category.
    The \emph{iso-core} of $\mathsf{C}$ is the simplicially enriched category $\mathsf{C}^{\cong}$ defined as follows:
    It has the same objects as $\mathsf{C}$.
    The simplicial set $\mathsf{C}^{\cong}(c_1,c_2)$ is the sub-simplicial set whose vertices are the isomorphisms in $\mathsf{C}(c_1,c_2)$.
\end{definition}
The enriched category $\mathsf{C}^{\cong}$ is a weak enriched groupoid, i.e.\ the underlying category is a groupoid, but it is generally not an enriched groupoid in the sense of Definition~\ref{def:enrichedgroupoid}, see Remark~\ref{rmk:weakgroupoid}.
This is enough to guarantee that the $(\infty,1)$-category $N^{\mathrm{hc}}(\mathsf{C}^{\cong})$ is an $\infty$-groupoid.
\begin{definition}
    \label{def:topktheory2}
    Let $\mathsf{C}$ be a simplicially enriched category with enriched coproducts (see Appendix \ref{sec:enriched}).
    The (connective) \emph{topological $K$-theory} of $\mathsf{C}$ is the group completion of the $\E_\infty$-monoid $(N^{\mathrm{hc}}(\mathsf{C}^{\cong}),\oplus)$:
    \begin{equation}
    \label{eq:topktheory2}
    k\colon \Cat^\Sigma_1(\sSet) \xrightarrow{(-)^{\oplus}} \CMon(\Cat_1(\sSet)) \xrightarrow{(-)^{\cong}} \CMon(\Grpd_1(\sSet)) \xrightarrow{N^{\mathrm{hc}}} \CMon(\Spc) \xrightarrow{(-)^{\gp}} \Sp \,.
    \end{equation}
\end{definition}
Here $\Grpd_1(\sSet)$ is the symmetric monoidal $(2,1)$-category of weak enriched groupoids and $(-)^\oplus$ assigns the cocartesian symmetric monoidal structure to any category with enriched coproducts, see Appendix \ref{sec:enriched} for details.
\begin{remark}
\label{rem:coresdontcommute}
     The underlying abstract reason for the disparity discussed in Warning~\ref{ex:wrong} is that the diagram
    \begin{equation*}
    \begin{tikzcd}
        \Cat(\sSet) \ar[r,"N^{\mathrm{hc}}"] \ar[d,"(-)^{\cong}"] & \Cat_\infty \ar[d,"(-)^{\simeq}"]
        \\
        \Grpd(\sSet) \ar[r,"N^{\mathrm{hc}}"] & \Spc
    \end{tikzcd} \,,
    \end{equation*}
    where the horizontal arrows are given by the homotopy-coherent nerve, does not commute.
\end{remark}
\begin{convention}
\label{conv:topenr}
The functor $\Sing \colon \Top \to \sSet$ from the category $\Top$ of compactly generated weak Hausdorff spaces is right adjoint to geometric realization, and so preserves finite products.
We will always consider topologically enriched categories as simplicially enriched categories using the change of enrichment functor $\Cat(\Top) \to \Cat(\sSet)$.
This functor preserves enriched coproducts and hence Definition \ref{def:topktheory2} compiles for topologically enriched categories.
\end{convention}

\paragraph{Comparison with simplicial objects in categories}

If $\mathsf{C}$ is a simplicially enriched category, it induces a simplicial object in categories $j(\mathsf{C})$ by mapping $[n]$ to the category $j(\mathsf{C})[n]$ with objects $\ob \mathsf{C}$ and morphisms the $n$-simplices of $\Hom_{\mathsf{C}}(x,y)$.
This Yoneda functor $j \colon \Cat_1(\sSet) \to \Cat_1^{\Delta^{\op}}$ is fully faithful and essentially surjects onto those simplicial diagrams that have constant objects. 
Note that simplicially enriched categories with enriched coproducts give simplicial diagrams of categories with coproducts and so we get $j \colon \Cat_1^\Sigma(\sSet) \to (\Cat_1^\Sigma)^{\Delta^{\op}}$.

Given the machinery we developed in Section~\ref{sec:sSetKth}, we can therefore alternatively define the (connective) $K$-theory of $\mathsf{C}$ as the $K$-theory of $j(\mathsf{C})$:
\begin{equation}
    \label{eq:topktheory1'}
    \begin{tikzcd}
        k:\Cat_1^{\Sigma}(\sSet) \ar[r,"j"]& (\Cat^{\Sigma}_1)^{\Delta^{\op}} \ar[r] &  (\Cat^{\Sigma}_\infty)^{\Delta^{\op}} \ar[r,"\mathcal{K}^{\mathrm{alg}}_*"] & \Sp^{\Delta^{\op}} \ar[r, "|-|"]& \Sp  
    \end{tikzcd} \,.
\end{equation}
We will now show that this composition agrees with Definition~\ref{def:topktheory2}.

\begin{theorem}
    \label{th:comparisontopktheory}
    The compositions of functors in \eqref{eq:topktheory2} and \eqref{eq:topktheory1'} are naturally equivalent.
\end{theorem}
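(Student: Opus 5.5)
The plan is to compare both composites at the level of $\E_\infty$-monoids \emph{before} group completion, and then use that group completion and geometric realization commute.

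In \eqref{eq:topktheory1'}, $\mathcal{K}^{\mathrm{alg}} = (-)^{\gp}\circ(-)^{\simeq}$ applied levelwise. Group completion $\CMon(\Spc)\to\Sp_{\geq 0}$ is a left adjoint, and the inclusion $\Sp_{\geq 0}\subseteq\Sp$ preserves colimits. Hence
\begin{equation*}
\bigl|\mathcal{K}^{\mathrm{alg}}(j(\mathsf{C})_\bullet)\bigr| \simeq \Bigl(\colim_{\Delta^{\op}} (j(\mathsf{C})_n)^{\simeq}\Bigr)^{\gp},
\end{equation*}
where the colimit is taken in $\CMon(\Spc)$. Geometric realizations (sifted colimits) in $\CMon(\Spc)$ are computed on underlying spaces. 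So it suffices to produce a natural equivalence of $\E_\infty$-spaces
\begin{equation*}
\colim_{[n]\in\Delta^{\op}} (j(\mathsf{C})[n])^{\simeq} \simeq N^{\mathrm{hc}}(\mathsf{C}^{\cong}).
\end{equation*}

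Next I identify the levels. Each $j(\mathsf{C})[n]$ is an ordinary $1$-category, so its core is the nerve of the groupoid whose objects are $\ob\mathsf{C}$ and whose morphisms are the $n$-simplices of $\mathsf{C}(x,y)$ that are invertible in the category $j(\mathsf{C})[n]$. The key observation is that an $n$-simplex is invertible in $j(\mathsf{C})[n]$ iff it lies in $\mathsf{C}^{\cong}$ \emph{and} has an $n$-simplex inverse. For Banach/topological enrichments this holds automatically, because inversion is continuous on invertibles. In general I would replace $\mathsf{C}^{\cong}$ by the genuine enriched groupoid $\mathsf{G}\subseteq\mathsf{C}^{\cong}$ of simplexwise-invertible simplices. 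I would then argue that $N^{\mathrm{hc}}\mathsf{G}\to N^{\mathrm{hc}}\mathsf{C}^{\cong}$ is an equivalence: both are $\infty$-groupoids, the map is bijective on objects, and on mapping spaces the inclusion $\mathsf{G}(x,y)\subseteq\mathsf{C}^{\cong}(x,y)$ is a union of components of a Kan complex, using Remark~\ref{rmk:weakgroupoid} and the fibrancy assumption. With this, $(j(\mathsf{C})[n])^{\simeq} = N(j(\mathsf{G})[n])$, and the left-hand colimit becomes the realization of the bisimplicial set $N_\bullet(j(\mathsf{G})_\bullet)$, i.e.\ of the simplicial groupoid $\mathsf{G}$ viewed levelwise.

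The main step, and the one I expect to be the obstacle, is the classical comparison for a simplicially enriched groupoid $\mathsf{G}$ with discrete object set:
\begin{equation*}
\bigl|[n]\mapsto N(\mathsf{G}_n)\bigr| \simeq N^{\mathrm{hc}}(\mathsf{G}).
\end{equation*}
The left side is the diagonal of the bisimplicial nerve, i.e.\ the classifying space $\overline{W}$ or diagonal-$B$ of the simplicial groupoid. The right side is the $\infty$-groupoid whose mapping spaces are $\mathsf{G}(x,y)$. To prove this, I would reduce to a single connected component. There both sides are equivalent to $B$ of the simplicial group $\mathsf{G}(x,x)$, by Kan's loop group / Dwyer--Kan theory, or by checking that both have the same $\pi_0$ and that their loop spaces at $x$ are $\mathsf{G}(x,x)$: the realization of levelwise $BG_n$ is $B|G_\bullet|$. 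To make this natural, I would construct a comparison map from the levelwise functor $N(j(\mathsf{G})[n])\to N^{\mathrm{hc}}(\mathsf{G})$, obtained from the inclusion of the constant-simplicial category at level $n$, precomposed with the degeneracy structure, and assemble these into a cocone. Alternatively, I would cite the Quillen equivalence between simplicial categories and Segal/complete Segal spaces (Bergner), under which $j(\mathsf{C})$ realizes to $N^{\mathrm{hc}}\mathsf{C}$, and apply it to the groupoid $\mathsf{G}$.

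Finally, I would check compatibility with the $\E_\infty$-structures. The map above is natural in $\mathsf{G}$ and preserves finite products. Both constructions are applied to the commutative monoid object $(\mathsf{C}^{\cong},\oplus)$ in $\Grpd_1(\sSet)$, and product-preserving natural transformations induce maps of $\E_\infty$-objects. The underlying map is an equivalence, so it is an equivalence of $\E_\infty$-spaces. Group completing then yields the natural equivalence between \eqref{eq:topktheory2} and \eqref{eq:topktheory1'}.
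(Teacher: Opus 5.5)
Your skeleton matches the paper's proof. That proof is a diagram chase: it moves group completion past $|-|$, commutes $j$ with the iso-core, and invokes Lemma~\ref{lem:homotopycoherentnerve} ($N^{\mathrm{hc}}\mathsf{C}\simeq\colim_{\Delta^{\op}}j(\mathsf{C})_n$). Your ``main step'' is the groupoid case of that lemma, and the $\overline{W}$/Bergner justification is an acceptable substitute. You are also right that the levelwise identification $j(\mathsf{C}^{\cong})_n=(j(\mathsf{C})_n)^{\simeq}$, which the paper calls immediate, requires every simplex of $\mathsf{C}^{\cong}$ to have a simplicial inverse.

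The gap is your repair for the general case, which is false. Every vertex of $\mathsf{C}^{\cong}(x,y)$ is an isomorphism of the underlying category, so $\mathsf{G}(x,y)$ and $\mathsf{C}^{\cong}(x,y)$ have the same vertices. A ``union of components'' would therefore force $\mathsf{G}=\mathsf{C}^{\cong}$, which is exactly what fails, and the inclusion need not even be a weak equivalence. For a counterexample, take $\mathsf{C}=\Sing(\VectEn_\C)\in\Cat_1^\Sigma(\sSet)$ with the simplicial iso-core of Example~\ref{ex:fullsubobject}. Then $\mathsf{C}^{\cong}(\C^n,\C^n)$ is the simplicial set of all singular simplices of $M_n(\C)$ with vertices in $\GL_n(\C)$. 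A full simplicial subset $V|_S$ of a Kan complex $V$ is weakly equivalent to the union of the components of $V$ meeting $S$: every simplex representing an element of $\pi_k(V,s)$ with $s\in S$, and every homotopy rel boundary between such, has all its vertices at $s$. Hence $\mathsf{C}^{\cong}(\C^n,\C^n)$ is contractible, whereas $\mathsf{G}(\C^n,\C^n)=\Sing\GL_n(\C)$. In this example \eqref{eq:topktheory2} yields $H\Z$ while \eqref{eq:topktheory1'} yields $ku$, so no argument can cover arbitrary fibrant simplicial categories with this iso-core. Drop the patch and state explicitly the hypothesis that your argument, and the paper's, actually uses: $\mathsf{C}^{\cong}$ is a genuine simplicial groupoid, in which case $\mathsf{G}=\mathsf{C}^{\cong}$ and your proof goes through. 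For Banach categories this hypothesis holds provided the iso-core is formed in $\Top$ before applying $\Sing$, i.e.\ as the subspace of isomorphisms, on which inversion is continuous; this is the reading implicit in the $\GL_n(\C)$ of Example~\ref{ex:ku}. For general topological enrichments inversion need not be continuous (Definition~\ref{rmk:weakgroupoid}), so your remark about ``Banach/topological enrichments'' should be restricted to the Banach case.
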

\begin{proof}
    This follows from the following commutative diagram:
    \begin{equation*}
        \begin{tikzcd}
            \Cat_1^{\Sigma}(\sSet)\ar[d,"j"] \ar[r] & \CMon(\Cat_1(\sSet)) \ar[r,"(-)^{\cong}"] \ar[d,"j"]& \CMon(\Grpd_1(\sSet)) \ar[d,"j"]\ar[rd,"N^{\mathrm{hc}}"] & &  \\
             (\Cat_1^\Sigma)^{\Delta^{\op}} \ar[d] \ar[r] & \CMon\left(\Cat_1^{\Delta^{\op}} \right)\ar[d] \ar[r,"(-)^{\cong}"]& \CMon\left(\Grpd_1^{\Delta^{\op}}\right) \ar[d] & \CMon(\Spc)\ar[r,"(-)^{\gp}"]& \Sp \\
            \left( \Cat^{\Sigma}_\infty \right)^{\Delta^{\op}} \ar[r]& \CMon\left(\Cat_\infty^{\Delta^{\op}} \right) \ar[r,"(-)^\simeq"] \ar[d,"\simeq"] & \ar[d,"\simeq"] \CMon\left(\Spc^{\Delta^{\op}}\right)  \ar[ru,"|-|"] & \Sp^{\Delta^{\op}} \ar[ru,"|-|", swap] & 
            \\
            &\CMon(\Cat_\infty)^{\Delta^{\op}} \ar[r,"(-)^{\simeq}"]& \CMon(\Spc)^{\Delta^{\op}} \ar[ru,"(-)^{\gp}", swap] &&
        \end{tikzcd} \,.
    \end{equation*}
    Note that we can take $\CMon$ without any problems since all functors involved preserve products.
    The right triangle commutes by Lemma~\ref{lem:homotopycoherentnerve}.
    Commutativity of all other squares is immediate.
\end{proof}
\begin{remark}
    We do not know how to construct the monoidal structure on $\Cat_1^\Sigma(\sSet)$.
    There is an operadic replacement for this and in this sense the $K$-theory of additive simplicially enriched categories is lax symmetric monoidal.
\end{remark}

\subsection{Banach algebras and Banach categories}
\label{sec:Banach}

In this section we review operator-algebraic facts we will need in the main text.

\paragraph{Banach algebras}

Let $\mathbb{F}$ be either the real or complex numbers. 
Recall that a \emph{Banach algebra} over $\mathbb{F}$ is a Banach space $(A,\Vert-\rVert)$ over $\mathbb{F}$ together with an algebra structure such that $\| a b \| \leq \| a\|  \|b\|$ for all $a,b \in A$. 
 Unless stated otherwise, Banach algebras are assumed to be unital. 

More abstractly, let $\Ban$ be the category of Banach spaces in which the hom sets $\Hom_{\Ban}(X,Y) = B(X,Y)$ are bounded linear maps.
Let $\Ban_\leq$ be the wide subcategory on the \emph{contractions} (also called \emph{short maps}), which are those morphisms $T \colon X \to Y$ such that $\|Tx\| \leq \|x\|$.
Both $\Ban$ and $\Ban_{\leq}$ are symmetric monoidal closed using the (completed) \emph{projective tensor product}~\cite{Ryan2002}, which is universal with respect to bounded bilinear maps.
There is a tautological symmetric monoidal functor $\Ban_{\leq} \to \Ban$.
The categories $\Ban_{\leq}$ and $\Ban$ are additive and $\Ban_{\leq}$ is bicomplete~\cite{MR533819} but $\Ban$ does not have infinite products. Both categories are closed with internal hom given by the Banach space of bounded linear maps with the operator norm.

An algebra object in $\Ban_\leq$ is exactly a Banach algebra.
Every algebra object in $\Ban$ is isomorphic to a Banach algebra by norm rescaling.
In the category $\Alg(\Ban_{\leq})$ on the other hand equivalent norms do \emph{not} define isomorphic objects.
\begin{definition}
    Let $(\Alg(\Ban),\otimes_\pi)$ be the symmetric monoidal $1$-category whose objects are Banach algebras and whose morphisms are all bounded Banach algebra homomorphisms.
    The multiplication on $A \otimes_\pi B$ is given by 
    \begin{equation*}
    (a_1 \otimes b_1) (a_2 \otimes b_2) = a_1 a_2 \otimes b_1 b_2.
    \end{equation*}
\end{definition}

\begin{convention}
\label{conv:tensor}
    From now on, an unadorned $\otimes$ between Banach spaces, Banach algebras, etc.\ denotes the completed projective tensor product $\otimes_\pi$; we write the subscript only for emphasis or contrast.
\end{convention}

\begin{example}
    Every finite-dimensional algebra $A$ over $\R$ is uniquely Banach up to continuous isomorphism.
    Indeed, pick any norm on $A$ which will be complete and the linear map $A \otimes A \to A$ given by multiplication is continuous since $A$ is finite-dimensional.
    If necessary, rescale the norm so that $\|ab\| \leq \|a\|\|b\|$.
\end{example}

\paragraph{Banach modules}
The coequalizer of $f \colon X \to Y$ and $g \colon X \to Y$ is $Y/\overline{\im(f-g)}$~\cite[1.18]{MR533819}. The projective tensor product $\otimes$ preserves coequalizers in each variable.

Let $\Mor(\Ban)$ be the Morita $(2,2)$-category of Banach algebras (Lemma~\ref{lem:morexists}). 
A $1$-morphism in $\Mor(\Ban)$ from $B$ to $A$, denoted $B\rightsquigarrow A$, is a Banach $(A,B)$-bimodule, which is a Banach space $M$ equipped with bounded maps $A \otimes_\pi M \to M$ and $M \otimes_\pi B \to M$ satisfying the usual (unital) bimodule axioms.
We warn the reader that the two indexings run in opposite directions: $\Mor(\Ban)(B,A)$ is the category of $(A,B)$-bimodules.

\begin{remark}
We could also consider $\Mor(\Ban_{\leq})$ in which Banach $(A,B)$-bimodules come with contractive actions.
The functor $\Mor(\Ban_{\leq}) \to \Mor(\Ban)$ is not just essentially surjective on objects, but also on $1$-morphisms;
for left modules see~\cite[Discussion below Def.~2.6.1]{MR1816726}.
\end{remark}

By the universal property of the projective tensor product, an action map $A \otimes_\pi M \to M$ is the same as a continuous bilinear map $A \times M \to M$.
The composition of an $(A,B)$-bimodule $M$ and a $(B,C)$-bimodule $N$ is given by the relative (projective) tensor product
\begin{equation}
    \label{eq:reltensor}
M \otimes_B N = \frac{M \otimes_\pi N}{\overline{(mb \otimes n - m \otimes bn)}}
\end{equation}
as a special case of \eqref{eq:generalreltensor}.
This vector space is equipped with the usual quotient norm of the Banach space $M \otimes_\pi N$ by a closed subspace~\cite[Theorem 1.5.3]{kadisonringrose}.
The symmetric monoidal structure on $\Mor(\Ban)$ is again given by the projective tensor product of algebras.
\begin{convention}
\label{conv:leftright}
    Unlike in most of the operator algebra literature, we use the convention that an $A$-module has an action on the left unless specified otherwise.
    An endomorphism of the free module $A^n$ is given by multiplication by a matrix on the right, and since such multiplications compose in the opposite order we obtain
    \begin{equation}
        \label{eq:endop}
        \End_A(A^n) \cong M_n(A)^{\op},
    \end{equation}
    in particular $\End_A(A) \cong A^{\op}$.
    This distinction will become especially important in  Section~\ref{sec:graded alg}.
\end{convention}

\paragraph{Finitely generated projective modules}

Because of the special role of finitely generated projective modules in $K$-theory, we will be especially interested in finitely generated projective Banach $A$-modules.
\begin{definition}
    \label{def:fgpmod}
    A \emph{finitely generated projective $A$-module} $M$ is a left $A$-module which is (algebraically) isomorphic to a direct summand of the free module $A^n$, for some $n\in \N$.
    We write $\ModOne_A$ for the category of finitely generated projective left $A$-modules.
\end{definition}
Every $A$-linear map between free $A$-modules is a matrix with entries in $A$ and hence continuous, so algebraic direct summands of $A^n$ are closed by the splitting of bounded idempotents in $\Ban$ recalled in Appendix~\ref{sec:enriched}.
Consequently any finitely generated projective module carries a unique Banach topology, and module maps between finitely generated projective Banach modules are automatically bounded.

It follows by the universal property of \eqref{eq:reltensor} that for an
$(A,B)$-bimodule $M$, $M \otimes_B (-) \colon \ModOne_B \to \ModOne_A$ is left adjoint to $\Hom_A(M,-)$, the Banach module of bounded $A$-linear maps.

\begin{proposition}\label{prop:adj}
An $(A,B)$-bimodule $M$ admits a right adjoint in $\Mor(\Ban)$ if and only if $M$
is finitely generated projective as a left $A$-module, in which case the right
adjoint is $\Hom_A(M,A)$.
\end{proposition}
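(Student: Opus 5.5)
We prove both directions by treating $\Mor(\Ban)$ as a bicategory in which adjunctions are specified by unit and counit $2$-morphisms satisfying the triangle identities. Here an $(A,B)$-bimodule $M$ is a $1$-morphism $B\rightsquigarrow A$. A right adjoint is therefore a $1$-morphism $N\colon A\rightsquigarrow B$, i.e.\ a $(B,A)$-bimodule, together with two maps. The first is a unit $\eta\colon B\to N\otimes_A M$ of $(B,B)$-bimodules. The second is a counit $\varepsilon\colon M\otimes_B N\to A$ of $(A,A)$-bimodules. The composites $M\to M\otimes_B N\otimes_A M\to M$ and $N\to N\otimes_A M\otimes_B N\to N$ must be identities.

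\emph{Sufficiency.} Suppose $M$ is finitely generated projective as a left $A$-module. Put $N=\Hom_A(M,A)$, with $B$ acting through the right action on $M$ and $A$ acting through the target. By Definition~\ref{def:fgpmod}, choose an $A$-linear retraction $M\xrightarrow{i}A^n\xrightarrow{p}M$. This yields a finite dual basis $m_1,\dots,m_n\in M$ and $\phi_1,\dots,\phi_n\in N$ with $m=\sum_j\phi_j(m)m_j$ for all $m$.
\begin{itemize}
\item The counit is evaluation $\varepsilon(m\otimes\phi)=\phi(m)$. It is a bounded bilinear map, it is $B$-balanced, and so it descends to the relative tensor product~\eqref{eq:reltensor}.
\item The unit is $\eta(b)=\sum_j\phi_j\otimes m_jb$.
\end{itemize}
Two verifications are needed.
\begin{enumerate}
\item $\eta$ is $B$-bilinear. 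One first checks that the tensor $\sum_j\phi_j\otimes m_j\in N\otimes_A M$ commutes with the $B$-action: the identity $b\phi_j=\sum_k(b\phi_j)(m_k)\phi_k$ lets one move coefficients across $\otimes_A$.
\item The triangle identities hold. They reduce to the dual basis formula $m=\sum_j\phi_j(m)m_j$ and to its dual $\phi=\sum_j\phi(m_j)\phi_j$.
\end{enumerate}
Continuity is automatic, since $N\otimes_A M$ is a quotient of a finite sum of copies of $N$. Uniqueness of adjoints then identifies the right adjoint with $\Hom_A(M,A)$.

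\emph{Necessity.} Suppose a right adjoint $(N,\eta,\varepsilon)$ exists. I would construct maps $M\to A^n\to M$ whose composite is $\mathrm{id}_M$.
\begin{enumerate}
\item The first triangle identity says $m\mapsto\sum\varepsilon(m\otimes\eta(1)')\cdot\eta(1)''$ is the identity, if $\eta(1)$ were a finite sum of elementary tensors $\sum n_j\otimes m_j$.
\item In general $\eta(1)$ is only a limit of finite sums in the completed tensor product $N\otimes_A M$. So approximate $\eta(1)$ by a finite sum $t=\sum_j n_j\otimes m_j$ within $\delta$. This defines bounded left $A$-linear maps $u\colon M\to A^n$, $u(m)=(\varepsilon(m\otimes n_j))_j$, and $v\colon A^n\to M$, $v(a_j)=\sum a_jm_j$.
\item Then $\|vu-\mathrm{id}_M\|\le C\delta$, where $C$ depends on the norms of $\varepsilon$ and of the action maps. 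Choosing $\delta$ small makes $vu$ invertible by a Neumann series in the Banach algebra $\End_A(M)$.
\item Then $M\xrightarrow{u}A^n\xrightarrow{(vu)^{-1}v}M$ exhibits $M$ as a retract of $A^n$. The composite $i\,(vu)^{-1}v$ is an $A$-linear idempotent on $A^n$ with image isomorphic to $M$.
\end{enumerate}
This shows $M$ is finitely generated projective.

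The main obstacle is this approximation step. One has to justify two things: that $\End_A(M)$ with the operator norm is a Banach algebra in which the Neumann series applies, and that the estimate $\|vu-\mathrm{id}\|$ really is controlled by $\|\eta(1)-t\|$ through the quotient norm on the relative tensor product. The algebraic parts, namely the dual basis and the triangle identities, are routine.
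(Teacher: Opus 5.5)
Your proof is correct. For sufficiency it coincides with the paper's: a finite dual basis gives the coevaluation, and evaluation is the counit.

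For necessity you take a different route. The paper picks a representative $\eta(1_B)=\sum_{i\in\N}\psi_i\otimes m_i$ with $\sum_i\|\psi_i\|\,\|m_i\|<\infty$. It reads off from the triangle identity that $\id_M=\sum_i\varepsilon(-\otimes\psi_i)\,m_i$ is nuclear. It then invokes Houzel's theorem (finitely generated projective $\iff$ $\id_M$ nuclear), which it must import from the bornological setting by identifying completed bornological tensor products with $\otimes_\pi$ on Banach spaces.

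You instead truncate $\eta(1)$ to a finite tensor $t$ and run a Neumann series in $\End_A(M)$. This is essentially the argument behind Houzel's criterion in the Banach case. As a result your proof is self-contained and needs only density of finite tensors in $N\otimes_A M$, not an absolutely summable representation. The paper's version is shorter and frames the result through nuclearity, which is what carries over to bornological or other enriched settings.

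The two points you flag as obstacles are routine.
\begin{itemize}
\item $\End_A(M)$ is closed in the Banach algebra of bounded operators on $M$, since $A$-linearity is a closed condition, so the Neumann series converges inside it.
\item The trilinear map $(m,n,m')\mapsto\varepsilon(m\otimes n)\,m'$ is bounded and $A$-balanced in the last two slots. Hence for each $m$ it descends to $N\otimes_A M$ with the quotient norm. This gives $x\mapsto T_x\in\End_A(M)$ with $\|T_x\|\le C\|x\|$, $T_{\eta(1)}=\id_M$ by the triangle identity, and $T_t=vu$. So $\|vu-\id_M\|\le C\|t-\eta(1)\|$, exactly as you want.
\end{itemize}

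Two cosmetic slips. The idempotent on $A^n$ is $u\,(vu)^{-1}v$, not $i\,(vu)^{-1}v$. The dual-basis expansion of $\phi\in\Hom_A(M,A)$ is $\phi=\sum_k\phi_k\cdot\phi(m_k)$ using the right $A$-action; left multiplication by $\phi(m_k)$ would not be $A$-linear.
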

See~\cite{MR733829} for the algebraic version of this statement.
\begin{proof}
We first observe as a consequence of a theorem of Houzel~\cite[Corollaire de la Proposition 6]{houzel1973} that a Banach $A$-module $M$ is finitely generated projective if and only if $\id_M$ is \emph{nuclear}\footnote{see~\cite[Section 6]{kelly2025localising} for an introduction to nuclearity in a similar categorical setup.
}, i.e.\ lies in the image of $M \otimes_A \Hom_A(M,A) \to \End_A(M)$.
More specifically, he proved the analogous result for a multiplicatively convex unital bornological algebra $A$ and a bornological $A$-module,
see~\cite[Proposition~4.3]{MR2608194}. A unital Banach algebra equipped with its von Neumann bornology is a complete multiplicatively convex unital bornological algebra, Banach modules are bornological modules, and on Banach spaces the completed bornological tensor product and equibounded hom agree with the projective tensor product~\cite[Sec.~1.3.6]{zbMATH05176991} and the Banach space of bounded maps.
Houzel's theorem therefore applies verbatim to Banach modules.

To prove the proposition: if $M$ is a direct summand of $A^{n}$, a finite dual basis
$\{(\phi_i,m_i)\}_{i\le n}\subseteq\Hom_A(M,A)\times M$ furnishes a bounded coevaluation with evaluation as the counit, realizing $\Hom_A(M,A)$ as a right adjoint.
Conversely, given a right adjoint $N$ with unit $\eta\colon\id_B\Rightarrow N\otimes_A M$ and counit $\epsilon\colon M\otimes_B N\Rightarrow\id_A$, choose a representative $\eta(1_B)=\sum_{i\in\N}\psi_i\otimes m_i$ in the projective tensor product with $\sum_i\|\psi_i\|\,\|m_i\|<\infty$, and put $\phi_i:=\epsilon(-\otimes\psi_i)\in\Hom_A(M,A)$.
The triangle identities give $\id_M=\sum_{i}\phi_i(\cdot)\,m_i$ with $\sum_i \|\phi_i\|\|m_i\| \leq \|\epsilon\| \sum_i \|\psi_i\|\|m_i\| < \infty$, so $\id_M$ is nuclear and Houzel's theorem applies.
\end{proof}

The statement for left adjoints in $\Mor(\Ban)$ is analogous.

\begin{definition}
\label{def:bimbamrm}
    Let $\BimBanOne$ be the symmetric monoidal $(2,1)$-subcategory of $\Mor(\Ban)$ defined in \ref{def:bimgeneral}.
    By Proposition~\ref{prop:adj} this $2$-category has
    \begin{itemize}
        \item the same objects as $\Mor(\Ban)$;
        \item those $1$-morphisms $M \colon A \rightsquigarrow B$ which are finitely generated and projective as $B$-modules;
        \item all invertible $2$-morphisms between those.
    \end{itemize} 
    Note that $\Hom_{\BimBanOne}(\mathbb{F},A)$ coincides with $\ModOne_A^{\cong}$, the groupoid of finitely generated projective left $A$-modules.
\end{definition}

There are symmetric monoidal functors $\Alg(\Ban) \to \Mor(\Ban)$ (respectively $\Alg(\Ban)^{\op} \to \Mor(\Ban)$) sending an algebra homomorphism $f \colon A \to B$ to the $(B,A)$-bimodule $B_f:A\rightsquigarrow B$ with action twisted on the right by $f$ (respectively $f \mapsto {}_f B$ twisted on the left).
Since $B_f$ is always a finitely generated projective $B$-module, but $_f A$ not necessarily, this induces functors $\Alg(\Ban) \to \BimBanOne$ and $\Alg(\Ban)^{\op}_{\mathrm{fgp}} \to \BimBanOne$.
This is why only some homomorphisms will give wrong-way maps in $K$-theory, compare~\cite[Lemma 2.3.5]{mertsch2020geometric}.
The induced functor $f_* := B_f \otimes_A (-) \colon \ModOne_A \to \ModOne_B$ is called \emph{extension of scalars} and $f^* := {}_f B \otimes_B (-)\colon \ModOne_B \to \ModOne_A$ \emph{restriction of scalars}.

\paragraph{Morita functoriality of taking modules}

It follows from Proposition~\ref{prop:daniel} applied
to the $(2,2)$-category $B \subseteq \Mor(\Ban)$ of Banach algebras, one-sided finitely generated projective modules and bimodule maps that 
    there is a lax symmetric monoidal functor $\Hom_{\BimBanOne}(\mathbb{F},-) = \ModOne\colon \BimBanOne \to \Cat^\Sigma_1$ of symmetric monoidal $(2,1)$-categories.
The functor concretely maps
\begin{itemize}
    \item the Banach algebra $A$ to the category $\ModOne_A$;
    \item the $(A,B)$-bimodule $M$ to the functor $M \otimes_B (-) \colon \ModOne_B \to \ModOne_A$;
    \item the (invertible) $(A,B)$-bimodule map $\phi \colon M_1 \to M_2$ to the natural transformation $M_1 \otimes_B (-) \Rightarrow M_2 \otimes_B (-)$ given on $N \in \ModOne_B$ by $\phi \otimes \id_N$;
    \item the monoidal data $\ModOne_A \times \ModOne_B \to \ModOne_{A \otimes_\pi B}$ takes $\otimes_\pi$ of finitely generated projective modules.
\end{itemize}
Next we will more generally consider the Banach space of maps between $A$-modules.

\paragraph{Banach categories}

A \emph{Banach category} is a category enriched in $\Ban_{\leq}$.
Explicitly, a Banach category over $\mathbb{F}$ is a category $\mathsf{C}$ equipped with a Banach space structure on $\mathsf{C}(x,y)$ over $\mathbb{F}$ for all $x,y \in \mathsf{C}$ such that composition is bilinear and contractive, i.e.\ $\|f \circ g\| \leq \|f\| \|g\|$.
The $1$-morphisms $F \colon \mathsf{C} \to \mathsf{D}$ are called \emph{Banach functors} and are defined to be $\Ban$-enriched functors, i.e.\ the functor data includes \emph{bounded} maps $\mathsf{C}(x,y) \to \mathsf{D}(Fx,Fy)$.
For a category enriched in $\Ban$ composition is only continuous and
Banach categories span a full $(2,2)$-subcategory $
\BanCat \subseteq \Cat_1(\Ban)$.
The category $\BanCat$ carries the usual symmetric monoidal structure $\boxtimes$ of enriched categories given by $(\mathsf{C}\boxtimes \mathsf{D})((c_1,d_1),(c_2,d_2))=\mathsf{C}(c_1,c_2)\otimes_\pi \mathsf{D}(d_1,d_2)$, see Appendix \ref{sec:enriched} for more on enriched categories.

Let $\BanCat^{\mathrm{add}}$ be the full subcategory of $\BanCat$ on the \emph{additive Banach categories}, i.e.\ those categories which admit finite coproducts.
Finite coproducts are automatically enriched direct sums by Lemma~\ref{lem:openmapping} and by Lemma~\ref{lem:enrichedbiproduct} every Banach functor preserves them.

The self enrichment of $\Mor(\Ban_{\leq})$ given in general in Equation~\eqref{eq:selfenrich} is a closed subspace of the Banach space of bounded linear maps, so $\underline{\Hom}_{A,B}(M,N)$ is the Banach space of bounded bimodule maps with the operator norm.
In particular $\ModOne_A = \Hom_{\BimBanOne}(\mathbb{F}, A)$ is a Banach category.
Recall that for the special case of endomorphisms of the free module $A^n$, the operator norm agrees with the standard Banach algebra structure on $M_n(A) = \End_A(A^n)^{\op}$, cf.~\eqref{eq:endop}. 
Therefore the operator norm on $\underline{\Hom_A}(M,N) := \underline{\Hom}_{A,\mathbb{F}}(M,N)$ for $M,N \in \ModOne$ is equivalent to the norm on the matrix algebra obtained by choosing embeddings of $M$ and $N$ into free modules, also see~\cite[Remark I.6.22]{karoubi_k-theory_1978}. Outside of this paragraph we suppress the underline and simply write $\Hom_A(M,N)$ for this Banach space.
To emphasize the extra structure, we denote the $\Ban_{\leq}$-enriched category by $\ModEn_A$.
Even in the enriched sense, $\ModEn_A$ is the idempotent completion of its full subcategory $\Free_A$ of free modules (Lemma~\ref{lem:idemfree}).

\paragraph{\texorpdfstring{$C^*$}{C*}-algebras}

A \emph{complex $C^*$-algebra} is a complex Banach algebra equipped with a complex antilinear map $* \colon A \to A$ such that $(ab)^* = b^* a^*$, $a^{**} = a$ and $\| a^* a \| = \| a\|^2$ for all $a,b \in A$.

A \emph{Real $C^*$-algebra} is a complex $C^*$-algebra $A$ equipped with a complex antilinear $*$-algebra homomorphism $\overline{(-)} \colon A \to A$ such that $\overline{\overline{a}} = a$.
A \emph{real $C^*$-algebra} is a real Banach algebra $B$ for which there exists a Real $C^*$-algebra $A$ such that
\begin{equation*}
B = \{ a \in A : \overline{a} = a\} \,,
\end{equation*}
with the induced Banach algebra structure.

Both complex and real $C^*$-algebras form a category $\CStarAlg$ in which morphisms $f \colon A \to B$ are \emph{$*$-homomorphisms}; algebra homomorphisms such that $f(a^*) = f(a)^*$.
Any $*$-homomorphism is automatically contractive and so there is a functor $\CStarAlg \to \Alg(\Ban_\leq)$.

The projective tensor product of $C^*$-algebras need not be a $C^*$-algebra and there are two common choices of $C^*$-tensor product: $\otimes_{\mathrm{min}}$ and $\otimes_{\mathrm{max}}$.
The forgetful functor $(\CStarAlg, \otimes_{\mathrm{max}}) \to (\Alg(\Ban_\leq), \otimes_\pi)$ preserves filtered colimits and is lax symmetric monoidal, i.e.\ if $A,B$ are $C^*$-algebras, there are continuous maps $A\otimes_\pi B \to A\otimes_{\mathrm{max}} B \to A\otimes_{\mathrm{min}}B$.

\begin{remark}
    Let $A$ be a $C^*$-algebra and $M$ a finitely generated and projective $A$-module.
    Then $M$ admits an $A$-valued inner product which is unique up to unitary isomorphism~\cite[Remark A.4.3]{higsonroe}.
    The Banach space structure induced by this inner product is equivalent to the Banach space structure on $M$ induced by an embedding $M \hookrightarrow A^n$ into a free module. Moreover, for maps $T\colon M_1 \to M_2$ being adjointable is equivalent to being $A$-linear. The norm underlying the usual $C^*$-algebra structure on $\End_A M$ is the operator norm and so reproduces the desired inner hom in Banach spaces.
\end{remark}

If $X$ is a compact Hausdorff space, then the complex algebra $C(X;\C)$ of continuous functions on $X$ with the supremum norm is a Real $C^*$-algebra with $(-)^*$ and $\overline{(-)}$ both given by pointwise complex conjugation.

\paragraph{The \texorpdfstring{$(\infty,1)$}{(infinity,1)}-category of Banach algebras and homomorphisms}
This section establishes the $(\infty,1)$-category of Banach algebras and homomorphisms.
We provide a concrete model as a $\Top$-enriched category and a characterization of this as a localization of the $1$-category of Banach algebras and homomorphisms at the homotopy equivalences in Proposition~\ref{prop:localizationofBanAlg}.

\begin{definition}
Let $A$ and $B$ be Banach algebras.
    A \emph{homotopy} between Banach algebra homomorphisms $f_0,f_1\colon A \to B$ is a homomorphism $H\colon A\to C(\Delta^1; B)$ such that $\ev_i\circ H=f_i$ for both $i$.
    A Banach algebra homomorphism $f\colon A\to B$ is called a \emph{homotopy equivalence}, if there is $g\colon B\to A$ and homotopies $\id_B\simeq fg$, $\id_A\simeq gf$.
\end{definition}

Gelfand duality identifies commutative $C^*$-algebras with compact Hausdorff spaces.
It is usually stated for $*$-homomorphisms, whereas the morphisms of $\Alg(\Ban)$ are bounded algebra homomorphisms, required neither to be contractive nor to preserve the $*$-operation.
The following lemma says that no generality is gained or lost: the notions of morphism agree, and each is the same as a continuous map in the opposite direction.

\begin{lemma}
\label{lem:gelfandhom}
    Let $X$ and $Y$ be compact Hausdorff spaces and $\mathbb{F} \in \{\R, \C\}$.
    Every algebra homomorphism $\varphi \colon C(X;\mathbb{F}) \to C(Y;\mathbb{F})$ is of the form $\varphi(f) = f \circ \tau$ for a unique continuous map $\tau \colon Y \to X$.
    In particular $\varphi$ is automatically bounded, contractive and $*$-preserving, and
    \begin{equation*}
        \Hom_{\Alg(\Ban)}(C(X;\mathbb{F}),C(Y;\mathbb{F})) \;\cong\; \Top(Y,X) \,.
    \end{equation*}
\end{lemma}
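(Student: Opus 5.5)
The strategy is to reduce everything to characters, i.e.\ algebra homomorphisms $C(X;\mathbb{F})\to\mathbb{F}$, and to show that these are exactly point evaluations. Composing $\varphi$ with $\ev_y$ for each $y\in Y$ gives a character $\chi_y\colon C(X;\mathbb{F})\to\mathbb{F}$. The key claim is that every unital character $\chi$ of $C(X;\mathbb{F})$ equals $\ev_x$ for a unique $x\in X$. Granting this, define $\tau(y)$ by $\chi_y=\ev_{\tau(y)}$. Then $\varphi(f)(y)=f(\tau(y))$ by construction. Uniqueness of $\tau$ follows from Urysohn: continuous functions separate points of $X$.

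For the key claim, I would argue as follows. The kernel $\ker\chi$ is an ideal of codimension one. Suppose $\ker\chi\not\subseteq\{f: f(x)=0\}$ for every $x$. Then for each $x$ pick $f_x\in\ker\chi$ with $f_x(x)\neq 0$. By compactness, finitely many $f_{x_i}$ have no common zero, and $g=\sum_i f_{x_i}\overline{f_{x_i}}$ is a strictly positive function. Here I use $f\overline f$ rather than $f^2$ in the complex case; $\overline{f}$ need not lie in $\ker\chi$, but the product $f\overline f$ does, since $\ker\chi$ is an ideal. So $g\in\ker\chi$, yet $g$ is invertible, which is a contradiction. Hence $\ker\chi\subseteq\ker\ev_x$ for some $x$. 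Both kernels have codimension one, so they are equal, and unitality gives $\chi=\ev_x$.

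For $\mathbb{F}=\R$, I should also make sure the characters are $\R$-valued and unital, which they are by definition. Unitality of $\chi_y$ holds because $\varphi$ is a unital homomorphism, by the paper's convention that Banach algebras are unital. If a non-unital $\varphi$ were allowed, $\chi_y$ could be zero; I would note that the convention excludes this.

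The main obstacle is continuity of $\tau$, which has to come without assuming $\varphi$ is bounded. Take a net $y_\alpha\to y$. For every $f$, $f(\tau(y_\alpha))=\varphi(f)(y_\alpha)\to\varphi(f)(y)=f(\tau(y))$, because $\varphi(f)$ is continuous on $Y$. Since $X$ is compact Hausdorff, its topology is the initial topology induced by $C(X;\mathbb{F})$ (by Urysohn/complete regularity), so $\tau(y_\alpha)\to\tau(y)$.

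The remaining properties then follow directly. Boundedness and contractivity hold because $\|f\circ\tau\|_\infty\le\|f\|_\infty$. The $*$-preservation is immediate, as $\overline{f\circ\tau}=\overline f\circ\tau$. Conversely, every continuous $\tau$ gives such a homomorphism. These correspondences yield the bijection $\Hom_{\Alg(\Ban)}(C(X;\mathbb{F}),C(Y;\mathbb{F}))\cong\Top(Y,X)$.
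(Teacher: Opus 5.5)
Your proof is correct. It differs from the paper's only in how you establish the key fact that every unital character of $C(X;\mathbb{F})$ is a point evaluation.

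The paper treats $\mathbb{F}=\C$ by citing Gelfand duality: characters are automatically bounded, and $X\to\Spec C(X;\C)$ is a homeomorphism. It then reduces $\mathbb{F}=\R$ to the complex case by extending a real character $\C$-linearly along $C(X;\R)\otimes_\R\C\cong C(X;\C)$.

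You instead prove the classification directly by the compactness argument. A character whose kernel lies in no $\ker\ev_x$ would contain the invertible function $\sum_i f_{x_i}\overline{f_{x_i}}$. A codimension-one comparison of kernels together with unitality then gives $\chi=\ev_x$.

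Your argument is self-contained and handles $\R$ and $\C$ uniformly, with no complexification step. It also makes visible that no continuity of $\chi$, and hence none of $\varphi$, is used anywhere. The paper's route is shorter because it delegates this step to Gelfand theory.

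The remaining steps coincide with the paper's: defining $\tau$ via $\ev_y\circ\varphi$, continuity of $\tau$ through the initial topology induced by $C(X;\mathbb{F})$, contractivity, $*$-preservation, and the resulting bijection with $\Top(Y,X)$.
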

\begin{proof}
    First let $\mathbb{F} = \C$.
    Recall that the \emph{Gelfand spectrum} $\Spec A$ of a commutative unital complex $C^*$-algebra $A$ is the set of algebra homomorphisms $A \to \C$, which are automatically bounded of norm one~\cite[Theorem 4.43]{allan2011banach}, equipped with the topology of pointwise convergence.
    \emph{Gelfand duality} states that $\Spec$ and $C(-;\C)$ are mutually inverse equivalences between $\CHaus^{\op}$ and the category of commutative complex $C^*$-algebras and $*$-homomorphisms; in particular the canonical map $X \to \Spec C(X;\C)$ is a homeomorphism, i.e.\ every character of $C(X;\C)$ is an evaluation at a point.
    For $y \in Y$ the composite $\ev_y \circ \varphi$ is a character of $C(X;\C)$, so $\ev_y\circ \varphi = \ev_{\tau(y)}$ for a unique $\tau(y) \in X$.
    Thus $\varphi(f) = f\circ \tau$ pointwise, whence $\|\varphi(f)\| = \sup_{y}|f(\tau(y))| \leq \|f\|$ and $\varphi(\bar f) = \bar f \circ \tau = \overline{\varphi(f)}$.
    The map $\tau$ is continuous because $f \circ \tau$ is continuous for every $f \in C(X;\C)$ and the topology of the compact Hausdorff space $X$ is the initial topology for $C(X;\C)$.
    For $\mathbb{F} = \R$, view $C(X;\R)$ as the fixed points of the Real $C^*$-algebra $C(X;\C)$: a character of $C(X;\R)$ extends $\C$-linearly along $C(X;\R) \otimes_\R \C \cong C(X;\C)$ to a character of $C(X;\C)$, hence is again an evaluation, and the same argument applies.
\end{proof}

\begin{remark}
    Let $A = C(X)$ and $B = C(Y)$ for compact Hausdorff spaces $X$ and $Y$ and let $f_0, f_1 \colon Y \to X$ be continuous maps.
    Then homotopies between $f_0$ and $f_1$ in the classical sense correspond to homotopies between the induced maps $C(X) \to C(Y)$: by Lemma~\ref{lem:gelfandhom}, a homomorphism $C(X) \to C(\Delta^1; C(Y)) \cong C(Y \times \Delta^1)$ is the same as a continuous map $Y \times \Delta^1 \to X$.
\end{remark}
In $\Top$, the internal hom $\Top(X,Y)$ is equipped with the $k$-ification of the compact-open topology, see \cite{strickland2009category} for a more detailed exposition.
We topologize the set of bounded algebra homomorphisms $\Hom(A,B)$ via the adjunction
\begin{equation}
    \label{eq:adjunctionhomomorphisms}
    \Top(X,\Hom(A,B)) \cong \Hom(A,C(X;B)) \,,
\end{equation}
where $X$ is a compact Hausdorff space.
Explicitly, this topology is the $k$-ification of the compact-open topology or the $k$-ification of the point-norm topology.

\begin{definition}
\label{def:BanAlgInf}
    We denote by $\BanAlgEn$ the topologically enriched category of Banach algebras equipped with the $k$-ification of the compact-open topology on $\Hom(A,B)$.
    The $(\infty,1)$-category $\BanAlgInf$ of Banach algebras and Banach algebra homomorphisms is defined as the homotopy coherent nerve $N^{\mathrm{hc}}\BanAlgEn$.
\end{definition}

The set of morphisms $A\to B$ in the homotopy category $\Ho(\BanAlgInf)$ is the set of homotopy classes of bounded algebra homomorphisms.

\begin{warning}
The operator norm topology is strictly finer than the topology on the hom-spaces of $\BanAlgEn$; let $A = C([0,1];\C )$ and $B = \C$.
    Then the sequence $\ev_{1/n}$ does not converge in the supremum norm since for every $n \neq m$
    \begin{equation*}
    \|\ev_{1/m} - \ev_{1/n}\|  = \sup_{\|f\| = 1} \|f(1/n) - f(1/m)\| \geq 1
    \end{equation*}
    by taking a bump function around $1/m$ that vanishes at $1/n$.
    In the topology of $\BanAlgEn$, on the other hand, $\ev_{1/n}$ does converge to $\ev_0$:
    applying the adjunction~\eqref{eq:adjunctionhomomorphisms} to the identity homomorphism $C([0,1];\C) \to C([0,1];\C)$ exhibits $t \mapsto \ev_t$ as a continuous map $[0,1] \to \Hom(C([0,1];\C),\C)$.
\end{warning}

\begin{remark} \label{rmk:banalgsimplicialdescription}
    By Lemma \ref{lem:homotopycoherentnerve}, we have
    \begin{equation*}
        \BanAlgInf \simeq N^{\mathrm{hc}} \BanAlgEn \simeq \colim_{\Delta^{\op}} j(\BanAlgEn)_n \,.
    \end{equation*}
    The category $j(\BanAlgEn)_n$ admits the following description:
    Its objects are Banach algebras and the morphisms are $\Delta^n$-families of Banach algebra homomorphisms using the $k$-ification of the compact-open topology, or equivalently, Banach algebra homomorphisms $A\to C(\Delta^n; B)$.
\end{remark}

\begin{definition}
    Let $(\calC,\otimes)$ be symmetric monoidal and $W$ a class of morphisms in $\calC$. 
    We say that $W$ is \emph{$\otimes$-stable} if $\id_c\otimes f\in W$  for each $c\in C$ and $f\in W$.
\end{definition}

The class $W$ of homotopy equivalences is the smallest class that satisfies the $2$-out-of-$3$-property, is $\otimes$-stable and contains the map $A\to C(\Delta^1;A)$ for all $A$.
Any symmetric monoidal functor that maps $A\to C(\Delta^1;A)$ to an equivalence will hence automatically factor through the localization $\Alg(\Ban)[W^{-1}]$.

\begin{proposition}
\label{prop:localizationofBanAlg}
    The map $\Alg(\Ban) \to \BanAlgInf$ exhibits the target as the Dwyer--Kan localization at homotopy equivalences, i.e.\ for every symmetric monoidal $(\infty,1)$-category $\calC$ we have an equivalence
    \begin{equation*}
        \Fun(\BanAlgInf,\calC) \xrightarrow{\simeq} \Fun^W(\Alg(\Ban),\calC) \,,
    \end{equation*}
    which identifies functors out of $\BanAlgInf$ with functors out of $\Alg(\Ban)$ that map $W$ to equivalences in $\calC$.
    The localization is symmetric monoidal, i.e.\ the localization map is symmetric monoidal and we also obtain an equivalence
    \begin{equation*}
        \Fun_{\mathrm{lax}}(\BanAlgInf,\calC) \xrightarrow{\simeq} \Fun_{\mathrm{lax}}^W(\Alg(\Ban),\calC) \,.
    \end{equation*}
\end{proposition}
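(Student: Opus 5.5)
The plan is to identify $\BanAlgInf = N^{\mathrm{hc}}\BanAlgEn$ with a colimit of the simplicial diagram of $1$-categories $j(\BanAlgEn)_\bullet$ (Remark~\ref{rmk:banalgsimplicialdescription}). Then I show that this colimit is the Dwyer--Kan localization of its zeroth level $j(\BanAlgEn)_0 = \Alg(\Ban)$ at $W$. The model is the classical fact that, for a simplicial category with constant objects whose higher levels are obtained from the zeroth by a ``path object'' construction, the realization is the localization at homotopy equivalences.

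Concretely, for each $n$ the category $j(\BanAlgEn)_n$ has morphisms $A\to C(\Delta^n;B)$. There is a functor $\Alg(\Ban)\to j(\BanAlgEn)_n$ (constant families, the degeneracy $s$) and the evaluation at a vertex $e\colon j(\BanAlgEn)_n\to \Alg(\Ban)$, with $e\circ s=\id$. The key step is to show that every face and degeneracy map of $j(\BanAlgEn)_\bullet$ becomes an equivalence after localizing each level at its levelwise homotopy equivalences $W_n$. Then $|j(\BanAlgEn)_\bullet[W_\bullet^{-1}]|\simeq \Alg(\Ban)[W^{-1}]$, since a simplicial object with all structure maps equivalences has colimit its zeroth term.

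I would then compare $|j(\BanAlgEn)_\bullet|$ with $|j(\BanAlgEn)_\bullet[W_\bullet^{-1}]|$ by checking that the canonical map $\Alg(\Ban)\to \BanAlgInf$ inverts $W$. This holds because a homotopy $H\colon A\to C(\Delta^1;B)$ is a $1$-simplex in the mapping space, so homotopic maps become equivalent in $N^{\mathrm{hc}}$. Conversely, the maps in $W_n$ become invertible in the colimit. This yields a map $\Alg(\Ban)[W^{-1}]\to\BanAlgInf$. It is essentially surjective, and I would prove it fully faithful by computing mapping spaces on both sides. Using the calculus-of-fractions-free description, $\Map_{\Alg(\Ban)[W^{-1}]}(A,B)$ should be the realization of $[n]\mapsto \Hom(A,C(\Delta^n;B))$. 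The reason is that $B\to C(\Delta^\bullet;B)$ is a simplicial resolution of $B$ by objects homotopy equivalent to $B$, giving a (Reedy) fibrant framing in the sense of Dwyer--Kan function complexes. That realization equals $\Sing\Hom(A,B)$ via the adjunction~\eqref{eq:adjunctionhomomorphisms}, which is the mapping space in $N^{\mathrm{hc}}\BanAlgEn$.

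The main obstacle is this mapping-space identification: making the Dwyer--Kan framing argument rigorous without a model structure on $\Alg(\Ban)$. I would try first to verify the hypotheses of a ``category with path objects'' / Brown-type fibration category directly. The path object is $C(\Delta^1;B)$ with evaluation $(\ev_0,\ev_1)$, and the fibrations are the surjections, which by the open mapping theorem have local sections of the required kind.

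For the symmetric monoidal part, I would note that $W$ is $\otimes$-stable, since $C(\Delta^1;B)\otimes_\pi A\to C(\Delta^1;B\otimes_\pi A)$ carries homotopies to homotopies. The monoidal localization statement then follows from \cite[Proposition 4.1.7.4]{HA}. Finally, the comparison map is symmetric monoidal because $\otimes_\pi$ is continuous on hom-spaces in $\BanAlgEn$.
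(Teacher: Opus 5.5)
Your route is the paper's. The paper's proof defers to \cite[Prop.~3.5]{bunkeKKEth}, noting that its only input is the adjunction~\eqref{eq:adjunctionhomomorphisms}. That is exactly your identification of mapping spaces in $\Alg(\Ban)[W^{-1}]$ with $\Sing\Hom_{\BanAlgEn}(A,B)$ through $B\to C(\Delta^\bullet;B)$; the paper carries out the same computation in Lemma~\ref{lem:simplicialresBanAlg}. Monoidality comes from $\otimes$-stability of $W$ and \cite[Prop.~4.1.7.4]{HA}, as you say. Once full faithfulness of $\Alg(\Ban)[W^{-1}]\to\BanAlgInf$ is proved via mapping spaces, your paragraph on levelwise localizations of $j(\BanAlgEn)_\bullet$ is superfluous; its ``key step'' is left unproved anyway.

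The weak point is the step you flag as the main obstacle. You justify the formula $\mathrm{Map}_{\Alg(\Ban)[W^{-1}]}(A,B)\simeq|\Hom(A,C(\Delta^\bullet;B))|$ by Reedy fibrancy, and propose to make it rigorous through a Brown fibration category. Neither is the right mechanism.

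\begin{itemize}
\item Requiring $B\to C(\Delta^n;B)$ to be homotopy equivalences is not enough. The constant resolution also satisfies this and would give the discrete set $\Hom(A,B)$.
\item What is actually needed is condition~\ref{cond:2} of Definition~\ref{def:resolution}: $\colim_{\Delta^{\op}}\Hom(-,C(\Delta^\bullet;B))$ must invert $W$. Given that, the Derived Mapping Space Lemma~\ref{lem:derivedmappingspace} holds in any relative category, with no model or fibration structure.
\item Condition~\ref{cond:2} is immediate. By~\eqref{eq:adjunctionhomomorphisms} this colimit is $\Sing\Hom_{\BanAlgEn}(-,B)$. Continuity of composition in $\BanAlgEn$ turns a homotopy $A'\to C(\Delta^1;A)$ into a homotopy between the induced precomposition maps.
\end{itemize}

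The Brown-type structure you suggest, with surjections as fibrations, does not exist, because trivial fibrations are not stable under pullback. Take $CH$, the cone on the Hawaiian earring, with wild point $p$. Then $\ev_p\colon C(CH)\to\R$ is a surjective homotopy equivalence, since $CH$ is contractible. Its pullback along itself is the restriction $C(CH\vee_p CH)\to C(CH)$. By Lemma~\ref{lem:gelfandhom} and the correspondence of homotopies, this is a homotopy equivalence only if Griffiths' twin cone $CH\vee_p CH$ is contractible. It is not: it is not even simply connected.
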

\begin{proof}
    The proof is the same as in~\cite[Prop. 3.5]{bunkeKKEth}.
    It only depends on the adjunction~\eqref{eq:adjunctionhomomorphisms}. 
    Monoidality follows since $W$ is $\otimes$-stable.
\end{proof}
In particular, the class $W$ of homotopy equivalences of Banach algebras is \emph{saturated}, i.e.\ the homotopy equivalences are precisely the algebra homomorphisms which are mapped to equivalences in the localization $\BanAlgInf[W^{-1}]$.

\begin{remark}
\label{rem:generatinghomotopy}
    In the setting of $C^*$-algebras there is a slight simplification.
    We have $C(X;A) \cong C(X) \otimes_{\mathrm{min}} A$ for any compact Hausdorff space $X$~\cite[Theorem 6.4.17]{MR1074574}, and in fact for any tensor product since every commutative $C^*$-algebra is nuclear~\cite[Thm.~II.9.4.4]{MR1656031}.
    Therefore for $C^*$-algebras, $W$ is even generated by the single element $\R \to C(\Delta^1)$ when requiring closure under tensor products.
    
    For Banach algebras $A$, it is still true that there is an isomorphism $C(X)\otimes_\epsilon A\cong C(X;A)$ of Banach algebras for the injective tensor product~\cite[Proposition 1.5.6]{MR2458901}, but generally the projective tensor product $C(\Delta^1)\otimes_\pi A$ is smaller than the injective one.
    This discrepancy causes no further issues.
\end{remark}
\begin{remark}
    Note that the map $C(\Delta^1)\otimes_\pi A\to C(\Delta^1;A)$ is a homotopy equivalence since both algebras are homotopy equivalent to $A$.
    Moreover, $W$ is also the smallest class containing the two types of maps $A\to C(\Delta^1)\otimes_\pi A$ and $C(\Delta^1)\otimes_\pi A \to C(\Delta^1;A)$ which is $\otimes$-stable and closed under $2$-out-of-$3$. 
\end{remark}

\subsection{The \texorpdfstring{$K$}{K}-theory spectrum of a Banach algebra}
\label{sec:BanachKth}

In this section we give several definitions of the $K$-theory spectrum of a Banach algebra, and show their equivalence.
More specifically, in Subsection~\ref{sec:SerreSwan} we show that for a Banach algebra $A$ the $K$-theory of the simplicial ring $C(\Delta^n; A)$ (cf.\ Section~\ref{sec:sSetKth}) and the $K$-theory of the simplicially enriched category $\ModEn_A$ (cf.\ Section~\ref{sec:Banach}) are equivalent.
This follows from rigidity of idempotents or, geometrically, from a Serre--Swan theorem.
In Subsection~\ref{sec:kan} we will provide yet another characterization of $K$-theory of Banach algebras as a universal homotopy invariant approximation of algebraic $K$-theory.
The latter approach will play a main role in the rest of the paper.
From now on $\ModEn_A$ will be regarded as a simplicially enriched category via the norm topology on its mapping spaces, in more detail:

\begin{convention}
\label{conv:banenr}
    Using the lax symmetric monoidal functor $(\Ban_\leq,\otimes_\pi) \to (\Top,\times)$, we can regard every Banach category as a $\Top$ or $\sSet$-enriched category (by Convention \ref{conv:topenr}).
    This functor preserves finite products, hence the $\Top$-enriched category underlying a Banach category has enriched coproducts.
    We will implicitly consider additive Banach categories as symmetric monoidal simplicially enriched categories in this way.
\end{convention}

\begin{definition}
\label{def:KthBanAlg}
    The \emph{$K$-theory spectrum of a Banach algebra $A$} is 
    \begin{enumerate}
        \item the $K$-theory of the simplicially enriched category $\ModEn_A$ in the sense of Definition~\ref{def:topktheory2}.
        \begin{equation}
        \label{eq:def1}
        k(\ModEn_A)=(N^{\mathrm{hc}}(\ModEn_{A}^{\cong}) ,\oplus)^{\gp}
         \end{equation}
        \item the $K$-theory of the simplicial ring $C(\Delta^\bullet;A)$:
        \begin{equation}
        \label{eq:def2}
            \colim_{[n]\in \Delta^{\op}} \mathcal{K}^{\mathrm{alg}}(C(\Delta^n;A))
        \end{equation}
    \end{enumerate}
\end{definition}
\begin{theorem}[Theorem A]
    \label{thm:omnibusktheory}
    The definitions~\eqref{eq:def1} and~\eqref{eq:def2} are naturally equivalent.
    Moreover, the connective topological $K$-theory functor 
    \begin{equation*}
        k\colon \Alg(\Ban) \longrightarrow \Sp_{\geq 0}
    \end{equation*}
    is the initial homotopy invariant functor with a natural transformation $\mathcal{K}^{\mathrm{alg}}\to k$ from algebraic $K$-theory.
    $k$ is lax symmetric monoidal.
    It satisfies excision and preserves filtered colimits along contractive homomorphisms.
\end{theorem}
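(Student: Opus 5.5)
The plan is to prove the four assertions in order: the comparison of definitions, then the universal property, then monoidality, then excision and filtered colimits. Each later step uses whichever model of $k_A$ is most convenient.

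\emph{Comparison.} By Theorem~\ref{th:comparisontopktheory}, \eqref{eq:def1} is equivalent to $|\mathcal{K}^{\mathrm{alg}}(j(\ModEn_A)_\bullet)|$. Here $j(\ModEn_A)_n$ has as objects the finitely generated projective $A$-modules and as morphisms the continuous maps $\Delta^n\to\Hom_A(M,N)$. I would compare this levelwise with $\ModOne_{C(\Delta^n;A)}$ via the functor $M\mapsto C(\Delta^n;A)\otimes_A M$. This functor is fully faithful: for $M,N$ summands of free modules one has $\Hom_{C(\Delta^n;A)}(C(\Delta^n;A)\otimes_A M,\, C(\Delta^n;A)\otimes_A N)\cong C(\Delta^n;\Hom_A(M,N))$, which reduces to the matrix case $M_k(C(\Delta^n;A))=C(\Delta^n;M_k(A))$. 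It is essentially surjective by the Serre--Swan statement (Proposition~\ref{prop:serreswan}), i.e.\ rigidity of idempotents. Concretely:
\begin{itemize}
    \item sufficiently close idempotents $p,q$ in a Banach algebra are conjugate via an invertible element depending continuously on $(p,q)$;
    \item hence, since $\Delta^n$ is contractible and compact, any idempotent in $M_k(C(\Delta^n;A))$ is conjugate to a constant one.
\end{itemize}
This gives a levelwise equivalence of additive categories, natural in $[n]$ and in $A$, hence an equivalence after applying $\mathcal{K}^{\mathrm{alg}}$ and realizing.

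\emph{Homotopy invariance and universality.} For any functor $F\colon\Alg(\Ban)\to\Sp$, set $(LF)(A):=|F(C(\Delta^\bullet;A))|$; then $k=L\mathcal{K}^{\mathrm{alg}}$. The first step is to show $LF$ is always homotopy invariant, by the classical prism argument. The simplicial maps $C(\Delta^\bullet\times\Delta^1;A)\rightrightarrows C(\Delta^\bullet;A)$, together with the triangulation of $\Delta^n\times\Delta^1$, give a simplicial homotopy between $\ev_0\circ c$ and the identity on $|F(C(\Delta^\bullet;C(\Delta^1;A)))|$, where $c$ is induced by the constant inclusion $A\to C(\Delta^1;A)$. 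The second step is to show $E\simeq LE$ whenever $E$ is homotopy invariant: all maps $E(A)\to E(C(\Delta^n;A))$ are equivalences, so the simplicial object is constant. The natural map $F\to LF$ is the inclusion of $0$-simplices. Together these show that $L$ is a localization onto homotopy invariant functors. Consequently, a transformation $\mathcal{K}^{\mathrm{alg}}\to E$ with $E$ homotopy invariant factors uniquely as $\mathcal{K}^{\mathrm{alg}}\to L\mathcal{K}^{\mathrm{alg}}=k\to LE\simeq E$, which is the initiality claim.

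\emph{Lax monoidality.} I would use the simplicial model. The maps $C(\Delta^n;A)\otimes_\pi C(\Delta^n;B)\to C(\Delta^n;A\otimes_\pi B)$ are natural in $[n]$ and coherently associative, making $A\mapsto C(\Delta^\bullet;A)$ a lax symmetric monoidal functor $\Alg(\Ban)\to\Ring^{\Delta^{\op}}$. Composing with the lax monoidal $\mathcal{K}^{\mathrm{alg}}$ (Theorem~\ref{th:algebraicktheory}) and the symmetric monoidal realization (Proposition~\ref{prop:Kthlax1}) gives the lax structure. Proposition~\ref{prop:localizationofBanAlg} then descends it to $\BanAlgInf$.

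\emph{Excision (expected main obstacle) and filtered colimits.} For excision, take a surjection $f\colon A\to B$ with kernel $I$. I would work with \eqref{eq:def1}, where $\ModEn_A^{\cong}$ is telescopic with group completion $K_0(A)\times B\GL_\infty(A)$. The steps are:
\begin{itemize}
    \item by the open mapping theorem (Example~\ref{ex:serrefunctor}), $\GL_n(A)\to\GL_n(B)$ is a Serre fibration onto a union of components;
    \item passing to $M_2$ and using the usual Whitehead trick, the map $N^{\mathrm{hc}}(\ModEn_A^{\cong})\to N^{\mathrm{hc}}(\ModEn_B^{\cong})$ is quasi-surjective, which needs lifting of idempotents modulo a complement;
    \item Theorem~\ref{thm:fibergroupcompletion} then identifies $\fib(k_A\to k_B)$ with the Karoubi space $k(f)$;
    \item finally, $k(f)$ is identified with $k_I$, defined through the unitization $I^+$, by comparing Karoubi triples for $A\to B$ with those for $I^+\to\mathbb{F}$. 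This uses that $\Gamma(f)$ is a pullback of fibrations and that relative $\GL(I)$ is the fiber of $\GL(A)\to\GL(B)$.
\end{itemize}
Handling $\pi_0$, i.e.\ the quasi-surjectivity and the $K_0$-part of the comparison, is where I expect the most care to be needed. For filtered colimits along contractive maps, the colimit in $\Alg(\Ban_\le)$ is the completion of the algebraic colimit. Idempotents and invertibles of the dense algebraic colimit are dense and open, and the relevant homotopy groups are detected by compact families. Hence both $K_0$ and $\pi_*B\GL_\infty$ commute with the colimit, which gives the statement via the telescopic description.
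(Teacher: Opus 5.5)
Your comparison of \eqref{eq:def1} and \eqref{eq:def2} is the paper's argument. Your treatments of universality and monoidality are valid alternatives to the paper's. Showing directly, via the prism homotopy, that $F\mapsto |F(C(\Delta^\bullet;-))|$ is a localization onto homotopy invariant functors avoids the Derived Mapping Space Lemma and the simplicial-resolution computation of mapping spaces in $\BanAlgInf$. For this you must still check that $L\eta_F$ is an equivalence, which follows from the swap symmetry of the bisimplicial object $F(C(\Delta^m\times\Delta^n;A))$. Your filtered-colimit argument is the paper's, with a direct $K_0$ argument in place of Bott periodicity.

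The gap is in excision. In the paper, excision means that a Milnor square $A=A_1\times_B A_2$, with $p\colon A_1\twoheadrightarrow B$ and $q\colon A_2\to B$, is sent to a pullback in $\Sp_{\geq 0}$ (Definition~\ref{def:excisive}). Your plan at best shows that the connective fiber $k(f)\simeq\fib(k_A\to k_{A/I})$ depends only on $I$. In a Milnor square both $A\to A_2$ and $A_1\to B$ are surjections with kernel $I=\ker p$. So you do get an equivalence on connective fibers over $k_{A_2}$ for the map $k_A\to P:=k_{A_1}\times_{k_B}k_{A_2}$ (pullback in $\Sp_{\geq0}$). The long exact sequences then give isomorphisms on $\pi_{\geq 1}$ and injectivity on $\pi_0$, but \emph{not} surjectivity on $\pi_0$.

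Surjectivity requires $\operatorname{im}(K_0(A)\to K_0(A_2))=\{y : q_*y\in\operatorname{im}(p_*)\}$, i.e.\ Mayer--Vietoris exactness at $K_0(A_1)\oplus K_0(A_2)$. This is information about $\pi_{-1}$ of the spectrum-level fibers, namely the cokernels of $K_0(A)\to K_0(A_2)$ and $K_0(A_1)\to K_0(B)$. It is invisible to the connective Karoubi space $k(f)$, so no comparison of Karoubi triples can supply it.

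The missing idea is Milnor patching of projective modules \cite[Thm~I.2.7]{weibelkbook}, and this is how the paper proceeds:
\begin{enumerate}
\item Lemma~\ref{lem:milnorpatch} identifies $\ModEn_A$ with the iso-comma category $\ModEn_{A_1}\times_{\ModEn_B}\ModEn_{A_2}$, with $p_*$ a Serre functor.
\item Lemma~\ref{lem:homotopypullbackofbanachcategories} makes the iso-cores a pullback of $\E_\infty$-monoids.
\item Lemma~\ref{lem:groupcompletionpullback} (telescopic, all maps quasi-surjective) shows the pullback survives group completion, handling $\pi_0$ and higher homotopy at once.
\end{enumerate}

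Two smaller points. Quasi-surjectivity needs no idempotent lifting or Whitehead trick, since the image of $f_*$ contains all free modules. The Whitehead trick, lifting $u\oplus u^{-1}$ through $\GL_{2n}(A)\to\GL_{2n}(B)$, is needed instead in your still unwritten $\pi_0$-comparison of Karoubi triples for $A\to B$ and $I^+\to\mathbb{F}$.
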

The equivalence of the definitions is proven in Proposition~\ref{prop:compareK}.
The universal characterization and lax monoidality are established in Section~\ref{sec:kan}.
Excision is stated and proven in Section~\ref{sec:excision}.
The preservation of filtered colimits along contractive homomorphisms is proven in Proposition~\ref{prop:filteredcolimits}.

We first need to establish some properties of the $\E_\infty$-monoid $N^{\mathrm{hc}}(\ModEn_A^{\cong})$.
The key simplification over algebraic $K$-theory is that it is telescopic, and so we recover the main result of \cite{MR1404916}:
\begin{lemma}
    \label{lem:modulestelescopic}
        Let $A$ be a Banach algebra.
        The $\E_\infty$-monoid $(N^{\mathrm{hc}}(\ModEn_A^{\cong}),\oplus)$ is telescopic and the group completion is given by
        \begin{equation*}
             K_0(A) \times B\GL_{\infty}(A) \,.
        \end{equation*}
    \end{lemma}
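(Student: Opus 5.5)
We verify condition~\ref{item:permutationcondition} of Proposition~\ref{prop:cyclicinvariance} with $k=2$, using the generating set $I=\{[A^n]\}_{n\ge 0}$ of the quasi-surjective part. Since $(N^{\mathrm{hc}}(\ModEn_A^{\cong}),\oplus)$ is cofinally generated by the free modules $A^n$, Lemma~\ref{lem:quasi-surjlocalization} lets us compute $M[\pi_0M^{-1}]$ by inverting only $[A]$. The underlying space of $N^{\mathrm{hc}}(\ModEn_A^{\cong})$ is $\bigsqcup_{[P]} B\mathrm{Aut}(P)$, where $\mathrm{Aut}(P)$ carries the norm topology. For $P=A^n$ this is $B\GL_n(A)$, where $\GL_n(A)$ is the open unit group of the Banach algebra $M_n(A)^{\op}$. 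The telescope along $A\oplus(-)$ therefore has components indexed by $K_0(A)$ (stabilization of $\pi_0$ gives the Grothendieck group, since every $P$ is a summand of some $A^n$). Each component is $\colim_n B\GL_n(A)\simeq B\GL_\infty(A)$, with $\GL_\infty(A)=\colim_n\GL_n(A)$ topologized as the colimit.

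Next, the braiding element of $\Sigma_2$ acting on $A^n\oplus A^n$ is the block swap matrix
\[
\sigma=\begin{pmatrix}0&1\\1&0\end{pmatrix}\in \GL_{2n}(A).
\]
We must show $\sigma$ is trivial in $\pi_1(B\GL_\infty(A))=\pi_0\GL_\infty(A)$, i.e.\ that it lies in the path component of the identity in some $\GL_{m}(A)$. The rotation trick does this: $\sigma$ times $\mathrm{diag}(1,-1)$ equals a rotation by $\pi/2$, which is connected to $1$ through the rotations $\begin{pmatrix}\cos t&-\sin t\\ \sin t&\cos t\end{pmatrix}$. The remaining factor $\mathrm{diag}(1,-1)$ is not itself connected to $1$ over $\R$. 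So we stabilize once more: in $\GL_{4n}$, the element $\sigma\oplus\sigma$ is the $3$-cycle-free swap, and $\mathrm{diag}(-1,-1)$ is a rotation by $\pi$, hence connected to $1$.

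The cleanest route is to take $k=3$ instead. The cyclic permutation $(1\,2\,3)$ is an even permutation matrix, hence a product of two transpositions, each of the form $\mathrm{rot}(\pi/2)\cdot\mathrm{diag}(1,-1)$ on a pair of blocks. The two sign factors combine to $\mathrm{diag}(1,-1,-1)$ up to block arrangement, and this is a block rotation by $\pi$. Concretely, every even permutation matrix lies in $SO(3n)\subseteq \GL_{3n}(\R)\subseteq\GL_{3n}(A)$, which is path connected, so the path lives inside the scalar matrices and no property of $A$ is needed. This gives condition~\ref{item:permutationcondition} with $k=3$, hence telescopicity, and then $M^{\gp}\simeq M[\pi_0M^{-1}]\simeq K_0(A)\times B\GL_\infty(A)$.

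The main obstacle I expect is the point-set and homotopical bookkeeping rather than the algebra. One needs to identify the components of $N^{\mathrm{hc}}$ of the weak enriched groupoid $\ModEn_A^{\cong}$ with $B\mathrm{Aut}(P)$ for the topological group $\mathrm{Aut}(P)$; here one uses that the units form an open subset, so $\Sing$ of the hom-space is compatible with inverses up to homotopy. One must also check that the $\E_\infty$-braiding on $\pi_1$ corresponds to the actual permutation matrix, and that the filtered colimit of the $B\GL_n(A)$ is $B$ of the colimit group. I would handle the first via the comparison with the simplicial description (Theorem~\ref{th:comparisontopktheory} and Lemma~\ref{lem:homotopycoherentnerve}), and the last by noting that $B$ commutes with filtered colimits of spaces.
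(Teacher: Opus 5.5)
Your proof is correct, after two small clean-ups, and it takes a different route from the paper.

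\textbf{The two routes.} The paper first identifies $M[\pi_0M^{-1}]\simeq K_0(A)\times B\GL_\infty(A)$. It then verifies condition (b) of Proposition~\ref{prop:cyclicinvariance}: the fundamental group of every component is $\pi_0\GL_\infty(A)$, which is abelian by the rotation-path argument of Lemma~\ref{lem:K1}. You instead verify the cyclic-permutation criterion~\ref{item:permutationcondition} with $k=3$. The block $3$-cycle on $P^{\oplus 3}$ is $\rho\otimes\id_P$ with $\rho\in SO(3)$. Since $SO(3)$ is path connected, this element is null already in $\pi_1(M,3[P])=\pi_0\Aut_A(P^{\oplus 3})$, before any stabilization.

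\textbf{What each buys.} Your route decouples telescopicity from any knowledge of the telescope. It uses nothing about $A$ beyond real scalars acting continuously, so it applies verbatim to the direct-sum monoid of any additive Banach category. The paper's route needs the identification of the telescope first, but both of you need that anyway for the group-completion formula. In exchange it yields, as a by-product, that $\pi_1$ of each component is the abelian group $\pi_0\GL_\infty(A)$. At heart both are the same rotation trick, packaged through different equivalent conditions of Nikolaus' criterion.

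\textbf{Clean-up 1: drop the $k=2$ paragraph.} It should be deleted rather than patched, because $k=2$ genuinely fails. For $A=\R$ and $m=[\R]$, which lies in every generating set of $\pi_0M=\N$, the swap has determinant $-1$. It is therefore nontrivial in $\pi_0\GL_\infty(\R)\cong\Z/2$. Replacing $\sigma$ by $\sigma\oplus\sigma$ replaces $m$ by $2m$, which the criterion does not allow.

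\textbf{Clean-up 2: use a generating set, not a cofinal one.} The classes $\{[A^n]\}$ are cofinal in $\pi_0M$ but do not generate it; for example, they do not for $A=\R\times\R$. Proposition~\ref{prop:cyclicinvariance} quantifies over a generating set. Take $I=\pi_0M$ and use the homomorphism $SO(3)\to\Aut_A(P^{\oplus 3})$, $g\mapsto g\otimes\id_P$, for arbitrary $P$. This is exactly the ``concretely'' sentence of your argument, extended to non-free $P$.
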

    \begin{proof}
    Free modules form a cofinal system in projective modules in the sense that the inclusion from the $\E_\infty$-submonoid of free modules of finite rank is quasi-surjective.
        We obtain by Lemma~\ref{lem:quasi-surjlocalization} that the telescope is given by 
        \begin{equation*}
        \colim(M \xrightarrow{\oplus A} M \xrightarrow{\oplus A}\dots),
        \end{equation*}
        where $M := N^{\mathrm{hc}}(\ModEn_A^{\cong})$.
        Since every finitely generated projective module is a summand of a free module, we thus get an equivalence of underlying spaces
        \begin{equation*}
        M[\pi_0(M)^{-1}] \simeq K_0(A) \times B\GL_{\infty}(A),
        \end{equation*}
        where $\GL_{\infty}(A) = \colim_n \GL_n(A)$ is given the subspace topology for the norm topology on $M_n(A)$.
        \footnote{This is \emph{not} a splitting of $\E_\infty$-monoids, only of underlying spaces.}
        By Lemma~\ref{lem:K1}, $\pi_0 \GL_{\infty}(A) \cong \pi_0 U_{\infty}(A) \cong K_1(A)$ is an abelian group.
        Since this result is independent of the basepoint, the result follows by Proposition~\ref{prop:cyclicinvariance}.
    \end{proof}
    The following result is standard, see \cite[Proposition 7.1.2]{MR1222415} for complex $C^*$-algebras.
    \begin{lemma}
\label{lem:K1}
    Let $A$ be a Banach algebra.
    Then $ \pi_0 \GL_\infty(A)$ is an abelian group.
\end{lemma}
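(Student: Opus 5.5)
The plan is to show that the commutator of any two elements of $\GL_\infty(A)$ lies in the identity path component. This amounts to the classical Whitehead-lemma argument, with the algebraic elementary-matrix factorization replaced by explicit paths in $\GL_{2n}(A)$. Here $\pi_0\GL_\infty(A) = \colim_n \pi_0 \GL_n(A)$, where each $\GL_n(A)$ is an open subset of the Banach algebra $M_n(A)$ with the norm topology, and path components agree with connected components since $\GL_n(A)$ is locally path connected (balls around invertibles consist of invertibles). Because the identity component $\GL_\infty(A)_0$ is a normal subgroup, $\pi_0\GL_\infty(A) = \GL_\infty(A)/\GL_\infty(A)_0$ is a group. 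It therefore suffices to show that $[\GL_\infty(A),\GL_\infty(A)] \subseteq \GL_\infty(A)_0$.

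First, for any $x \in M_n(A)$, the elementary block matrices $\begin{pmatrix} 1 & x \\ 0 & 1\end{pmatrix}$ and $\begin{pmatrix} 1 & 0 \\ x & 1\end{pmatrix}$ lie in the identity component of $\GL_{2n}(A)$. This is witnessed by the paths $t \mapsto \begin{pmatrix} 1 & tx \\ 0 & 1\end{pmatrix}$, which are continuous in norm and invertible with explicit inverses.

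Second, for $g \in \GL_n(A)$, write $\mathrm{diag}(g, g^{-1})$ as the usual product of four elementary block matrices:
\begin{equation*}
\begin{pmatrix} g & 0 \\ 0 & g^{-1}\end{pmatrix} = \begin{pmatrix} 1 & g \\ 0 & 1\end{pmatrix}\begin{pmatrix} 1 & 0 \\ -g^{-1} & 1\end{pmatrix}\begin{pmatrix} 1 & g \\ 0 & 1\end{pmatrix}\begin{pmatrix} 0 & -1 \\ 1 & 0\end{pmatrix}
\end{equation*}
The rotation $\begin{pmatrix} 0 & -1 \\ 1 & 0\end{pmatrix}$ is itself a product of three elementary matrices, so $\mathrm{diag}(g,g^{-1})$ lies in the identity component of $\GL_{2n}(A)$.

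It follows that for $g,h \in \GL_n(A)$ the element
\begin{equation*}
\begin{pmatrix} ghg^{-1}h^{-1} & 0\\ 0 & 1\end{pmatrix} = \begin{pmatrix} g & 0\\ 0 & g^{-1}\end{pmatrix}\begin{pmatrix} h & 0\\ 0 & h^{-1}\end{pmatrix}\begin{pmatrix} (hg)^{-1} & 0\\ 0 & hg\end{pmatrix}
\end{equation*}
lies in the identity component. Under the stabilization $\GL_n(A)\to\GL_{2n}(A) \to \GL_\infty(A)$, this shows that every commutator is connected to $1$, so $\pi_0\GL_\infty(A)$ is abelian.

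The only point requiring care is the identification $\pi_0 \colim_n \GL_n(A) = \colim_n \pi_0\GL_n(A)$. This holds because the colimit is a sequential colimit along closed inclusions, so compact subsets, and in particular paths, factor through a finite stage. Alternatively, one can simply work with $\colim_n \pi_0\GL_n(A)$ directly, which is what enters the telescope in Lemma~\ref{lem:modulestelescopic}.

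The comparison $\pi_0\GL_\infty(A)\cong\pi_0 U_\infty(A)$ for $C^*$-algebras, via polar decomposition, is not needed for the abelianness claim.
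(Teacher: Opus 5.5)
Your proof is correct, but it takes a different route from the paper. The paper does not use elementary matrices. It writes down explicit paths in $\GL_{2n}(A)$ built from the real rotation matrices $U(t)$, namely $t\mapsto \mathrm{diag}(c,1)\,U(t)\,\mathrm{diag}(1,d)\,U(t)^{-1}$ and $t\mapsto U(t)\,\mathrm{diag}(c,d)\,U(t)^{-1}$. These give $\mathrm{diag}(cd,1)\simeq\mathrm{diag}(c,d)\simeq\mathrm{diag}(d,c)\simeq\mathrm{diag}(dc,1)$, and hence $cd=dc$ in $\pi_0$ directly.

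You instead run the Whitehead lemma: commutators stabilize into the elementary subgroup, and each elementary matrix is joined to $1$ by a straight-line path. Both arguments rest on the same fact, that $\mathrm{diag}(g,g^{-1})$ lies in the identity component (this is the paper's first path with $d=c^{-1}$). The paper proves it by conjugating with rotations; you prove it by your four-term factorization.

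The paper's version is shorter, and its first homotopy also shows directly that block sum and multiplication agree on $\pi_0$. Yours gives the sharper inclusion $E(A)\subseteq\GL_\infty(A)_0$. This makes $\pi_0\GL_\infty(A)$ visibly a quotient of algebraic $K_1(A)=\GL(A)/E(A)$, which fits the paper's view of $k$ as the homotopy-invariant approximation of $\mathcal{K}^{\mathrm{alg}}$.

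Your treatment of $\pi_0$ of the colimit is fine. Since the paper's colimits are homotopical, $\pi_0$ commutes with the filtered colimit automatically there anyway.
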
 
\begin{proof}
    For $c,d \in \GL_n(A)$ there are paths between the block diagonal matrices in $\GL_{2n}(A)$
    \begin{equation*}
         \begin{pmatrix}
            cd&  \\
            & 1
        \end{pmatrix}
        \simeq
        \begin{pmatrix}
            c & \\
            & d
        \end{pmatrix} \simeq
        \begin{pmatrix}
            d & \\
            & c
        \end{pmatrix}
        \simeq 
        \begin{pmatrix}
            dc & \\
            & 1
        \end{pmatrix}
        \,.
    \end{equation*}
    The first path is given by
    \begin{equation*}
        t\mapsto \begin{pmatrix}
            c & \\
            & 1
        \end{pmatrix}U(t)\begin{pmatrix}
            1&\\
            & d
        \end{pmatrix}U(t)^{-1}\,,
        \qquad U(t)=\begin{pmatrix}
            \cos t & -\sin t\\
            \sin t & \cos t
        \end{pmatrix}\,.
    \end{equation*}
    The second path is given by
    \begin{equation*}
        t\mapsto U(t)\begin{pmatrix}
            c& \\
            & d
        \end{pmatrix}
        U(t)^{-1} \,.
    \end{equation*}
    Finally, the third path is the same as the first one, with $c,d$ interchanged.
    This yields $cd=dc$ in $\pi_0 \GL_\infty(A)$.
\end{proof}
\begin{example}
\label{ex:ku}
Consider the Banach algebra $\C$ and $\VectEn_{\C}$ the Banach category of finite-dimensional complex vector spaces.
    The topologically enriched groupoid $\VectEn_\C^{\cong}$ has objects $\C^n$ and spaces of automorphisms are given by $\GL_n(\C)$.
    It carries a symmetric monoidal structure $\oplus$ with $\C^n\oplus \C^m=\C^{n+m}$ and $\GL_n(\C)\times \GL_m(\C) \to \GL_{n+m}(\C)$ given by block direct sum of matrices.
    Applying $N^{\mathrm{hc}}$, we obtain the $\E_\infty$-monoid 
    \begin{equation*}
        \bigsqcup_{n\in \N} B\GL_n(\C) \simeq \bigsqcup_{n\in \N} BU_n\,.
    \end{equation*}
    Using the telescopic property of Banach algebras \ref{lem:modulestelescopic}, its group completion is 
    \begin{equation*}
        ku=\colim_{\N}\left( \bigsqcup_{n\in \N} BU_n \xrightarrow{-\oplus \C} \bigsqcup_{n\in \N}BU_n \xrightarrow{-\oplus \C} \dots \right) =\Z\times BU \,.
    \end{equation*}
    This example works verbatim when replacing $\C$ by $\R$, $U_n$ by $O_n$ and $ku$ by $ko$.
\end{example}

\begin{remark}
    Since $\ModEn_A$ is the idempotent completion of $\Free_A$ (Lemma~\ref{lem:idemfree}), part 1 of Definition~\ref{def:KthBanAlg} can be reformulated as a group completion of the space of projections in $M_\infty(A)$ under $\oplus$.
    On $\pi_0$, this translation in particular recovers the classic formulation of $K_0(A)$ as the group completion of homotopy classes of projections.
\end{remark}

\subsubsection{Serre--Swan and comparing definitions}
\label{sec:SerreSwan}
Throughout this section, $A$ is a Banach algebra and $X$ a compact Hausdorff space.
\begin{definition}
    We define $\ModEn_A(X)$ as the Banach category of bundles of $A$-modules that are locally isomorphic to a trivial bundle $U\times P\to U$, where the $A$-module $P$ is finitely generated projective.
\end{definition}
\begin{lemma}
\label{lem:idempotents}
    Let $e\colon X\to A$ be a continuous family of idempotents.
    Then for every $x_0\in X$ there is a neighborhood $V$ with $e(x)=z(x)e(x_0)z(x)^{-1}$ for all $x\in V$.
    Here, $x\mapsto z(x)$ is a continuous path of invertible elements in $A$ with $z(x_0)=1$.
    Moreover, if $X = [0,1]$ then $V$ can be chosen to be all of $X$.
\end{lemma}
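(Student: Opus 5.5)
The plan is to use the classical similarity trick for nearby idempotents. Fix $x_0\in X$ and write $p:=e(x_0)$. For every $x$ put
\begin{equation*}
    z(x) := e(x)p + (1-e(x))(1-p) \in A \,.
\end{equation*}
This depends continuously on $x$ because multiplication in $A$ is continuous, and $z(x_0)=p^2+(1-p)^2=1$. The intertwining identity $e(x)z(x) = e(x)p = z(x)p$ holds purely algebraically, using $e(x)^2=e(x)$ and $p^2=p$. So wherever $z(x)$ is invertible we get $e(x) = z(x)\,p\,z(x)^{-1}$. Since $\GL(A)$ is open in $A$, the Neumann series shows $z(x)$ is invertible as soon as $\|z(x)-1\|<1$. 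By continuity this holds on some neighborhood $V$ of $x_0$. On $V$ the map $x\mapsto z(x)^{-1}$ is continuous because inversion is continuous on $\GL(A)$, so $z$ is a continuous family of invertibles with $z(x_0)=1$, giving the first claim.

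For $X=[0,1]$ we upgrade this to a global statement by chaining local conjugations along a subdivision. First we may assume $x_0=0$ (or we treat $[0,x_0]$ and $[x_0,1]$ separately; the argument is symmetric). By compactness and uniform continuity of $e$, pick a partition $0=t_0<t_1<\dots<t_N=1$ such that on each $[t_{i-1},t_i]$ the local construction centered at $t_{i-1}$ works. That is, $\|e(t)-e(t_{i-1})\|$ is small enough that the corresponding $z_i(t)$ is invertible for all $t\in[t_{i-1},t_i]$. One checks this works with the explicit bound $\|z_i(t)-1\| = \|(2e(t_{i-1})-1)(e(t)-e(t_{i-1}))\| \le \|2p-1\|\,\|e(t)-p\|$, since $z-1 = (2e-1)p - e + \dots$ simplifies to $(e-p)(2p-1)$ up to ordering.

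Then define $z$ on $[t_{i-1},t_i]$ by
\begin{equation*}
    z(t) := z_i(t)\,z_{i-1}(t_{i-1})\cdots z_1(t_1) \,.
\end{equation*}
Inductively $e(t_{i-1}) = z_{i-1}(t_{i-1})\cdots z_1(t_1)\,e(0)\,(\cdots)^{-1}$. Consequently $e(t) = z(t)e(0)z(t)^{-1}$ on each subinterval. The pieces agree at the breakpoints because $z_i(t_{i-1})=1$, so $z$ is continuous on all of $[0,1]$ with $z(0)=1$.

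The main obstacle is only the bookkeeping of a uniform smallness bound, so that finitely many intervals suffice. Compactness of $[0,1]$ together with uniform continuity of $e$ (and boundedness of $\|2e(t)-1\|$ over $[0,1]$) provides it. No further analytic input beyond openness of $\GL(A)$ and continuity of inversion is needed.
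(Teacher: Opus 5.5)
Your proof is correct and follows the paper's route: the paper cites Blackadar's Proposition 4.3.2, applied in the Banach algebra $C(V;A)$ over a closed neighborhood $V$, whose proof uses exactly your conjugator $z=ep+(1-e)(1-p)$; for $X=[0,1]$ it cites Proposition 4.3.3, which chains local conjugations along a subdivision as you do. One cosmetic slip: the correct identity is $z-1=(e-p)(2p-1)$ (equivalently $(2e-1)(p-e)$), not $(2p-1)(e-p)$, but the bound $\|z-1\|\le\|2p-1\|\,\|e-p\|$ that you actually use still holds.
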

\begin{proof}
    Take a closed neighborhood $V$ of $x_0$ such that $\sup_{x \in V}\|e(x) - e(x_0)\|_A$ is sufficiently small.
    Let $f \colon V \to A$ be the idempotent constantly equal to $e(x_0)$.
    It follows by~\cite[Proposition 4.3.2]{MR1656031} that there is an invertible element $z \in C(V;A)$ such that $z^{-1} e z = f$.
    For the final statement, see~\cite[Proposition 4.3.3]{MR1656031}.
\end{proof}
The following is a version of the Serre--Swan theorem.
\begin{proposition}
    \label{prop:serreswan}
    The global sections functor
    \begin{equation*}
        \Gamma\colon \ModEn_A(X) \to \ModEn_{C(X;A)} 
    \end{equation*}
    induces an isomorphism between the Banach categories of finitely generated projective modules over $C(X;A)$ and of bundles of finitely generated projective $A$-modules over $X$.
\end{proposition}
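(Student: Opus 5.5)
The plan is to prove that $\Gamma$ is a Banach functor (bounded on hom-spaces), that it is fully faithful with isometric-up-to-equivalence inverse on hom-spaces, and that it is essentially surjective. The key input is the rigidity of idempotents, Lemma~\ref{lem:idempotents}. Throughout I identify $C(X;A)$-module maps between $C(X;A)^n$ and $C(X;A)^m$ with matrices in $M_{n\times m}(C(X;A)) \cong C(X;M_{n\times m}(A))$. I also use that both categories are idempotent completions of their free parts. For $\ModEn_{C(X;A)}$ this is Lemma~\ref{lem:idemfree}; for bundles it is the step to be shown.

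\textbf{Well-definedness and full faithfulness.} For a bundle $E$ locally trivial with fibre $P$, the space $\Gamma(E)$ of continuous sections is a Banach $C(X;A)$-module under the sup norm and pointwise action. For the trivial bundle $X\times A^n$ we have $\Gamma = C(X;A)^n$. A bundle morphism $X\times A^n\to X\times A^m$ is a continuous family of $A$-linear maps, i.e.\ an element of $C(X;M_{n\times m}(A))$, and this is exactly $\Hom_{C(X;A)}(C(X;A)^n, C(X;A)^m)$. The identification is compatible with the operator norms up to equivalence. Hence $\Gamma$ restricts to an isomorphism of Banach categories between trivial bundles with free fibres and free $C(X;A)$-modules. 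Since every enriched functor preserves idempotents and their splittings, it suffices to show that every object of $\ModEn_A(X)$ is a direct summand of a trivial bundle $X\times A^N$, i.e.\ is the image bundle of a continuous idempotent $e\colon X\to M_N(A)$. It also suffices to show that, conversely, every such image is a locally trivial bundle.

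\textbf{Images of idempotent families are bundles (essential surjectivity).} A finitely generated projective $C(X;A)$-module is the image of an idempotent $e\in M_N(C(X;A)) = C(X;M_N(A))$. By Lemma~\ref{lem:idempotents} applied in the Banach algebra $M_N(A)$, each $x_0$ has a neighbourhood $V$ and a continuous invertible $z$ with $e(x)=z(x)e(x_0)z(x)^{-1}$. Then $z$ trivializes $\operatorname{im} e|_V$ as $V\times \operatorname{im} e(x_0)$, where $\operatorname{im}e(x_0)$ is a finitely generated projective $A$-module. So $E_e:=\operatorname{im}(e)\subseteq X\times A^N$ is an object of $\ModEn_A(X)$, and $\Gamma(E_e)=e\cdot C(X;A)^N$ is the given module.

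\textbf{Every bundle is a summand of a trivial one.} This is the step I expect to be the main obstacle, and I would use a partition of unity argument. By compactness, choose a finite cover $U_1,\dots,U_k$ with trivializations $E|_{U_i}\cong U_i\times P_i$ and embeddings $P_i\hookrightarrow A^{n_i}$ with retractions. Take a subordinate partition of unity $\rho_i$ together with functions $\sigma_i$ satisfying $\sum\sigma_i^2=1$, where $\sigma_i=\sqrt{\rho_i}$. Define
\begin{equation*}
\iota=\bigoplus_i \sigma_i\,\iota_i\colon E\to X\times \textstyle\bigoplus_i A^{n_i}, \qquad r=\sum_i \sigma_i r_i .
\end{equation*}
Then $r\iota=\sum_i\sigma_i^2\,\id=\id$, so $E$ is a retract of a trivial bundle. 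For this, $\ModEn_A(X)$ is taken with bounded (sup-norm) continuous bundle maps as morphisms; one checks in local trivializations that $\iota$ and $r$ are continuous and bounded. This completes the argument: $\Gamma$ is an enriched equivalence that is bijective on the relevant objects up to the chosen models, giving the claimed isomorphism of Banach categories.
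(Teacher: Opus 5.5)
Your proof is correct and has the same skeleton as the paper's. Trivial bundles $X\times A^n$ are identified with free modules $C(X;A)^n$. Lemma~\ref{lem:idempotents} in $M_N(A)$ shows that images of continuous idempotent families are locally trivial. One then passes to idempotent completions; the paper phrases this as extending $F\colon\Free_{C(X;A)}\to\ModEn_A(X)$ via Lemma~\ref{lem:idemcompletion} and reducing to essential surjectivity of $F$.

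The real difference is in that last step, namely showing that every object of $\ModEn_A(X)$ is a direct summand of a trivial bundle. The paper chooses local trivializations with embeddings $P_i\subseteq A^n$ and applies Lemma~\ref{lem:idempotents} again to local idempotent families $f_i$ on each $U_i$. As written, this stays inside each $U_i$ and does not explain how the local embeddings assemble into a global embedding $E\hookrightarrow X\times A^N$. Your Swan-type construction $\iota=\bigoplus_i\sigma_i\iota_i$, $r=\sum_i\sigma_i r_i$ with $\sum_i\sigma_i^2=1$ supplies exactly this global gluing. This is where compactness and normality of $X$ genuinely enter, and it makes your route the more complete one.

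A few points to tidy up:
\begin{itemize}
\item The fibres of $E$ carry no canonical norm. The sup norms on $\Gamma(E)$ and on bundle maps should therefore be defined through $\iota$; different choices give equivalent norms. This means your last step logically comes before the norm statements in your first step.
\item Identifying bundle maps $X\times A^n\to X\times A^m$ with $C(X;M_{n\times m}(A))$ uses that a fibrewise $A$-linear map which is continuous on total spaces is norm-continuous as a matrix family. This holds because such a map is determined by the images of the finitely many standard basis vectors, but it deserves a sentence.
\item The sentence ``It also suffices to show that, conversely, \dots'' should read ``and that''. Both directions are needed: the first makes $\Gamma$ land in $\ModEn_{C(X;A)}$ and be fully faithful, and the second gives essential surjectivity.
\end{itemize}
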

\begin{proof}
    Consider the functor $F \colon \Free_{C(X;A)} \to \ModEn_A(X)$ that takes $C(X;A)^n$ to the trivial bundle $A^n \times X \to X$ of rank $n$.
    Using the universal property of idempotent completion (Lemma~\ref{lem:idemcompletion}), $F$ will uniquely extend to a functor from $\Idem(\Free_{C(X;A)}) \cong \ModEn_{C(X;A)}$ (Lemma~\ref{lem:idemfree}) if we can show that the image of $F$ is idempotent complete.
    This follows by Lemma~\ref{lem:idempotents} applied to $M_n(A)$; if $e \in M_n(C(X;A)) \cong C(X; M_n(A))$ is an idempotent on the free module, then the subbundle of $A^n \times X$ it picks out is locally trivial.

    There is a functor $\Gamma$ in the other direction which maps a bundle to its module of global sections.
    Since the global sections of a trivial rank $n$ bundle are $C(X;A^n)$, it follows by the uniqueness of extensions to the idempotent completion (Lemma~\ref{lem:idemcompletion}) that $\Gamma$ is a right inverse of $F$.

    Hence we are done once we can show $F$ is essentially surjective.
    Let $\{U_i\}$ be a covering of local trivializations for the object $E \to X$ of $\ModEn_A(X)$ giving $E|_{U_i} \cong P_i \times U_i$ for some finitely generated projective modules $P_i$.
    For every $U_i$ pick an embedding $P_i \subseteq A^{n_i}$ giving a corresponding idempotent $e_i \in M_{n_i}(A)$.
    Because $X$ is compact, we can assume the covering is finite, and hence we can assume $n_i = n$ is constant in $i$.
    
    In every local trivialization $U_i$ we obtain a family $f_i \colon U_i \to M_n(A)$ of idempotents such that $f_i(x) e_i = f_i(x) = e_i f_i(x)$ for all $x \in U_i$.
    We have to show that the subbundle picked out by $f_i$ is locally trivial.
    This follows by Lemma~\ref{lem:idempotents} applied to $M_n(A)$; after choosing a potentially finer finite covering, the subbundle over $U_i$ corresponding to $f_i$ becomes equivalent to a constant idempotent.
\end{proof}

\begin{corollary}
\label{cor:contractibleSerreSwan}
    If $X$ is a contractible space, then the functor $\ModEn_A \to \ModEn_{C(X;A)}$ of Banach categories induced by $X \to \pt$ is essentially surjective.
\end{corollary}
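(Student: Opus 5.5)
By Proposition~\ref{prop:serreswan}, the global sections functor identifies $\ModEn_{C(X;A)}$ with the Banach category $\ModEn_A(X)$ of locally trivial bundles of finitely generated projective $A$-modules over $X$. Under this identification, the functor induced by $X \to \pt$ sends a module $P$ to the trivial bundle $X \times P \to X$. So it suffices to show the following: for contractible compact Hausdorff $X$, every bundle $E \in \ModEn_A(X)$ is isomorphic to a trivial bundle $X \times P$.

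I would argue via idempotents, since the proof of Proposition~\ref{prop:serreswan} already shows every object comes from an idempotent $e \in C(X; M_n(A))$. Fix a contraction $H\colon X \times [0,1] \to X$ with $H(-,0)=\mathrm{id}_X$ and $H(-,1)\equiv x_0$. Then $\tilde e(x,t) := e(H(x,t))$ is a continuous family of idempotents on $X\times[0,1]$, that is, an idempotent in $C([0,1]; C(X;M_n(A)))$. The final clause of Lemma~\ref{lem:idempotents}, applied to the Banach algebra $B := C(X; M_n(A))$ and the path $t \mapsto \tilde e(-,t)$ in $B$, gives a path of invertibles $z(t) \in B$ with $z(0)=1$ and $\tilde e(-,t) = z(t)\tilde e(-,0) z(t)^{-1}$. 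Evaluating at $t=1$ gives $e(x_0) = z(1)\, e\, z(1)^{-1}$ in $B$, where the left side is the constant idempotent.

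Conjugate idempotents in $M_n(C(X;A))$ have isomorphic images as $C(X;A)$-modules: multiplication by $z(1)$ (or its inverse, depending on the right-multiplication convention \eqref{eq:endop}) maps one image isomorphically onto the other. The image of the constant idempotent $e(x_0)$ is $C(X;A)\otimes_A P = C(X;P)$ with $P = A^n e(x_0)$. This is exactly the image of $P$ under the functor induced by $X\to\pt$. Every object of $\ModEn_{C(X;A)}$ is the image of some idempotent on a free module, by Lemma~\ref{lem:idemfree}, so essential surjectivity follows.

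The main obstacle is bookkeeping rather than substance. First, one must check that the extension of scalars $C(X;A)\otimes_A -$ really corresponds to constant families, which is immediate on free modules and extends to summands. Second, one must get the left/right conventions for idempotents acting on $A^n$ right. Everything else is supplied by Lemma~\ref{lem:idempotents} applied to the Banach algebra $C(X;M_n(A))$, which is legitimate because a path of idempotents in $C(X;M_n(A))$ is the same as a continuous map $X\times[0,1]\to M_n(A)$ by compactness.
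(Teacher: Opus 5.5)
Your argument is correct, and its decisive step differs from the paper's. The paper uses Proposition~\ref{prop:serreswan} to turn a finitely generated projective $C(X;A)$-module into a locally trivial bundle of finitely generated projective $A$-modules. It then simply asserts that such a bundle over a contractible base (it writes $\Delta^n$) is trivial, i.e.\ it implicitly invokes homotopy invariance of locally trivial bundles with structure group $\Aut_A(P)$ over compact Hausdorff, hence paracompact, spaces.

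You stay on the algebraic side instead. Pulling an idempotent $e \in C(X;M_n(A))$ back along the contraction gives, via the exponential law $C(X\times[0,1];M_n(A)) \cong C([0,1];C(X;M_n(A)))$ for compact $X$, a path of idempotents in the Banach algebra $C(X;M_n(A))$ from $e$ to the constant idempotent $e(x_0)$. The last clause of Lemma~\ref{lem:idempotents} makes them similar, and similar idempotents have isomorphic images.

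Once you argue this way, your opening reduction to bundles is superfluous. Every object of $\ModEn_{C(X;A)}$ is already the image of an idempotent on $C(X;A)^n$ (Lemma~\ref{lem:idemfree}), so your proof needs neither Serre--Swan nor any bundle theory.

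What your route buys is self-containedness: using only a lemma already in the paper, it actually proves the triviality claim that the paper leaves to standard topology. The paper's route is shorter and more conceptual, presenting the corollary as the module-theoretic shadow of the fact that bundles over contractible spaces are trivial.
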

\begin{proof}
     It follows by Proposition~\ref{prop:serreswan} that any finitely generated projective $C(X;A)$-module is isomorphic to a bundle of finitely generated projective $A$-modules over $A$.
    Such a bundle is necessarily trivial since $\Delta^n$ is contractible.
\end{proof}

\begin{proposition}
\label{prop:compareK}
    Let $A$ be a Banach algebra. 
    Consider $\ModEn_A$ as a simplicially enriched category coming from the Banach category structure on $\ModEn_A$.
    Then $k(\ModEn_A)$ agrees with the $K$-theory of the simplicial ring $C(\Delta^n;A)$.
    That is, there is a natural equivalence between
    \begin{equation}
        k(\ModEn_A)=(N^{\mathrm{hc}}(\ModEn_{A}^{\cong}) ,\oplus)^{\gp} \,,
    \end{equation}
    and 
    \begin{equation}
        \label{eq:colimitexpressionktheory}
        k_A = \colim_{\Delta^{\op}} \mathcal{K}^{\mathrm{alg}}(C(\Delta^\bullet;A)) \,.
    \end{equation}
\end{proposition}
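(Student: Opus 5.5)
Via Theorem~\ref{th:comparisontopktheory}, $k(\ModEn_A)$ equals the realization of the simplicial spectrum $[n]\mapsto \mathcal{K}^{\mathrm{alg}}(j(\ModEn_A)[n])$. Here $j(\ModEn_A)[n]$ is the additive category with objects the finitely generated projective $A$-modules and morphisms the $n$-simplices of $\Sing\Hom_A(M,N)$, i.e.\ continuous maps $\Delta^n\to\Hom_A(M,N)$. So it is enough to give a natural levelwise equivalence of simplicial objects in $\Cat_\infty^\Sigma$ (or at least of their algebraic $K$-theories),
\begin{equation*}
    j(\ModEn_A)[n] \longrightarrow \ModOne_{C(\Delta^n;A)},
\end{equation*}
compatible with the simplicial structure maps, and then take $\colim_{\Delta^{\op}}$.

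The functor will be $M\mapsto C(\Delta^n;M)\cong C(\Delta^n;A)\otimes_A M$, i.e.\ extension of scalars along the constant inclusion $A\to C(\Delta^n;A)$. On morphisms it sends a continuous family $\Delta^n\to\Hom_A(M,N)$ to the corresponding $C(\Delta^n;A)$-linear map $C(\Delta^n;M)\to C(\Delta^n;N)$. Full faithfulness comes down to the identification $\Hom_{C(\Delta^n;A)}(C(\Delta^n;M),C(\Delta^n;N))\cong C(\Delta^n;\Hom_A(M,N))$. For free modules both sides are $C(\Delta^n;M_{k\times l}(A))$, using the equivalence of the operator norm with the matrix norm. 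The general case follows by passing to summands, since both sides are additive in $M$ and $N$ and compatible with idempotents (Lemma~\ref{lem:idemfree}). Essential surjectivity is Corollary~\ref{cor:contractibleSerreSwan}: $\Delta^n$ is contractible, so every finitely generated projective $C(\Delta^n;A)$-module is a trivial bundle, hence of the form $C(\Delta^n;P)$. Naturality in $[n]$ holds because the face and degeneracy maps on both sides are precomposition with the affine maps $\Delta^m\to\Delta^n$, and extension of scalars commutes strictly with these. The constructions are natural in $A$ as well, since a homomorphism $A\to B$ induces compatible extension-of-scalars functors.

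The point I expect to need the most care is coherence. Theorem~\ref{th:comparisontopktheory} involves $1$-categorical simplicial diagrams, while extension of scalars is only pseudofunctorial: $C(\Delta^m;A)\otimes C(\Delta^n;A)\otimes M$ is identified with the pulled-back module only up to canonical isomorphism. To avoid this, I will use the strict model $M\mapsto C(\Delta^n;M)$ with morphisms given by pointwise postcomposition. Pullback along $\Delta^m\to\Delta^n$ is then literally precomposition, so we get an honest map of simplicial objects in $\Cat_1^\Sigma$. Each level is an equivalence of categories with finite coproducts, so $\mathcal{K}^{\mathrm{alg}}$ gives a levelwise equivalence of simplicial spectra, and geometric realization gives the claimed natural equivalence $k(\ModEn_A)\simeq \colim_{\Delta^{\op}}\mathcal{K}^{\mathrm{alg}}(C(\Delta^\bullet;A))$.
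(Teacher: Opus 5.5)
Your proposal is correct and matches the paper's proof: the same reduction via Theorem~\ref{th:comparisontopktheory} to a levelwise equivalence $j(\ModEn_A)_n \to \ModOne_{C(\Delta^n;A)}$ given by $M \mapsto C(\Delta^n;M)$, with essential surjectivity from Corollary~\ref{cor:contractibleSerreSwan}. The differences are minor: the paper gets full faithfulness from the extension/restriction-of-scalars adjunction together with $\Top(\Delta^n,\Hom_A(M_1,M_2))\cong \Hom_A(M_1,C(\Delta^n;M_2))$ instead of your free-modules-then-idempotents argument, and it simply asserts the compatibility with the simplicial structure that you take more care to strictify.
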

\begin{remark}
    The formula in the preceding proposition is known, e.g.\ Formula~\eqref{eq:colimitexpressionktheory} appears as~\cite[Thm.~5.16]{aoki2024semi}.
    We could however not locate this exact comparison result in the literature.
\end{remark}
\begin{proof}
It suffices to show that the simplicial categories $j(\ModEn_A)$ and $[n] \mapsto \ModOne_{C(\Delta^n;A)}$ are equivalent.
   Consider the functor
    \begin{align*}
        j(\ModEn_A)_n &\longrightarrow \ModOne_{C(\Delta^n;A)} \\
        M &\longmapsto C(\Delta^n;M),
    \end{align*} 
    where $C(\Delta^n;M) \cong C(\Delta^n;A) \otimes_A M$ is the extension of scalars of $M$ to $C(\Delta^n;A)$ along the algebra homomorphism $A \to C(\Delta^n;A)$. 
    This means that it is given on morphisms by mapping 
    \begin{equation*}
    \Hom_{j(\ModEn_A)_n }(M_1,M_2) = \Top(\Delta^n, \Hom_A(M_1, M_2)) \cong \Hom_A(M_1, C(\Delta^n,M_2)) \ni f
    \end{equation*}
    to its extension of scalars in 
    \begin{equation*}
    \Hom_{C(\Delta^n;A)}(C(\Delta^n;A) \otimes_A M_1, C(\Delta^n;M_2)).
    \end{equation*}
    This assignment is a bijection by the adjunction given by extension and restriction of scalars and so the functor is fully faithful.
    Our functor is clearly a map of simplicial objects.
    It follows from Corollary~\ref{cor:contractibleSerreSwan} that the functor is essentially surjective.
 \end{proof}

\subsubsection{Excision and Mayer--Vietoris for topological \texorpdfstring{$K$}{K}-theory}
\label{sec:excision}

In this subsection, we show that connective topological $K$-theory of ungraded Banach algebras satisfies excision (Definition~\ref{def:excisive}).
This can be used to extend it to nonunital algebras.
To establish this, we will use the Banach category perspective.

\begin{lemma}
    \label{lem:serrefunctor}
    Let $p\colon \mathsf{C}\to \mathsf{D}$ be a Banach functor.
    The following are equivalent:
    \begin{enumerate}
        \item $\Aut_{\mathsf{C}}(c)\to \Aut_{\mathsf{D}}(p(c))$ is a Serre fibration $\forall c \in \mathsf{C}$.
        \item $\End_{\mathsf{C}}(c)\to \End_{\mathsf{D}}(p(c))$ is surjective $\forall c \in \mathsf{C}$.
    \end{enumerate}
\end{lemma}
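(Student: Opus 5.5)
Write $R=\End_{\mathsf{C}}(c)$ and $S=\End_{\mathsf{D}}(p(c))$. These are Banach algebras, with $\Aut_{\mathsf{C}}(c)=R^\times$ and $\Aut_{\mathsf{D}}(p(c))=S^\times$. The Banach functor restricts to a bounded unital algebra homomorphism $p\colon R\to S$, so both conditions concern the single homomorphism $p$ for each object $c$. I will prove the two implications separately.

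For (2)$\Rightarrow$(1), assume $p$ is surjective. By the open mapping theorem, $p$ has a bounded, possibly nonlinear, set-theoretic section near the unit. More precisely, by Bartle--Graves there is a continuous map $s\colon S\to R$ with $p\circ s=\id$. Alternatively one can use openness directly: every $t\in S$ with $\|t-1\|<\delta$ has a preimage $r$ with $\|r-1\|<C\delta$. I normalize $s$ so that $s(1)=1$, by replacing $s$ with $s(t)-s(1)+1$. Then $s$ maps a small ball $U$ around $1\in S^\times$ into $R^\times$, since invertibility is an open condition near $1$.

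This gives local sections of $R^\times\to S^\times$ around every point $t_0=p(r_0)$ in the image, namely $t\mapsto r_0\,s(p(r_0)^{-1}t)$. The group homomorphism $R^\times\to S^\times$ therefore admits local sections. Its image $p(R^\times)$ is then an open subgroup, hence also closed, so it is a union of path components. A surjective homomorphism of topological groups with local sections is a locally trivial fiber bundle with fiber $\ker$ over its image, and so it is a Serre fibration onto that image. It is a Serre fibration onto all of $S^\times$ because the image is a union of components and the lifting problems for cubes land in a single component.

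One subtlety: the lifting problem requires the starting point to be in the image. The path-lifting condition is only nontrivial for components hit by $p$. For components not hit, no lifting problem can arise, since the initial data must come from $R^\times$ and $\Delta^n$ is connected. So the union-of-components argument handles this.

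For (1)$\Rightarrow$(2), assume $R^\times\to S^\times$ is a Serre fibration. Given $t\in S$, choose $\lambda$ large with $\|t\|<\lambda$, so that $\lambda-t\in S^\times$ lies in the component of $\lambda\cdot 1$ via the path $\lambda-\tau t$ for $\tau\in[0,1]$. The unit $\lambda\cdot1=p(\lambda\cdot 1_R)$ lies in the image. Path lifting starting at $\lambda 1_R$ gives $r\in R^\times$ with $p(r)=\lambda-t$. Hence $t=p(\lambda 1_R-r)$, and $p$ is surjective.

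The main obstacle is the (2)$\Rightarrow$(1) direction. It requires producing continuous local sections from mere surjectivity, via the open mapping theorem and Bartle--Graves or an iterative openness argument. It also requires a careful argument that local sections of a group homomorphism yield the homotopy lifting property. The standard route is to trivialize over translates $t_0U$ and invoke that locally trivial maps are Serre fibrations.
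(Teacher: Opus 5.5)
Your proof is correct. The paper gives no argument of its own for this lemma; it only refers to \cite[Prop.~1.2.7]{Karoubi68}. What you have written is therefore a self-contained proof where the paper has a citation, rather than a competing route.

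For (2)$\Rightarrow$(1), your reduction to the single surjective unital homomorphism $p\colon R\to S$ is sound, as are the following steps:
\begin{enumerate}
  \item A continuous section from Bartle--Graves, normalized so that $s(1)=1_R$.
  \item Translated local sections $t\mapsto r_0\,s(p(r_0)^{-1}t)$ over the image.
  \item The conclusion that $R^\times\to p(R^\times)$ is a principal bundle with group $R^\times\cap(1+\ker p)$ over an open, hence closed, subgroup of $S^\times$.
  \item Connectedness of $D^n\times[0,1]$ to handle components of $S^\times$ that are not hit.
\end{enumerate}
This actually proves more than the lemma asks: a locally trivial fibration onto a union of components.

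One caveat: your parenthetical ``alternatively one can use openness directly'' is not by itself a substitute. The open mapping theorem gives \emph{some} preimage of controlled norm for each $t$ near $1$, but not a continuous choice, and local triviality (or any homotopy lifting) needs continuity. So either keep Bartle--Graves (or Michael selection), or spell out the iterative argument: lift maps from compacta using partitions of unity and repeated applications of openness.

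Your (1)$\Rightarrow$(2) argument is clean and uses only path lifting. You lift the path $\tau\mapsto\lambda\cdot 1_S-\tau t$ in $S^\times$, starting at $p(\lambda\cdot 1_R)$, to a path ending at some $r\in R^\times$, and then write $t=p(\lambda\cdot 1_R-r)$.
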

\begin{proof}
    This is~\cite[Prop.~1.2.7]{Karoubi68}.
\end{proof}
\begin{definition}
    If $p\colon \mathsf{C}\to \mathsf{D}$ satisfies the equivalent conditions of Lemma~\ref{lem:serrefunctor}, we say $p$ is a \emph{Serre functor}.
\end{definition}
\begin{example}
    \label{ex:serrefunctor}
    Let $p\colon A\twoheadrightarrow B$ be a surjection of Banach algebras.
    Then $\ModEn_A\to \ModEn_B$ is a Serre functor.
    To see this, first note that $M_n(A)\to M_n(B)$ is surjective for all $n \geq 0$. Observe now that an endomorphism of a finitely generated projective module $\im(e\colon A^n\to A^n)$ can be identified with a matrix $X$ satisfying $X=eX=Xe$. 
    So let $p(e)X'=X'=X'p(e)$ be an endomorphism of $\im(p(e))$. Pick a lift $p(\Tilde{X})=X'$ and set $X=e\Tilde{X}e$. We then have $p(X)=X'$ and $X$ is an endomorphism of $\im(e)$.
\end{example}
\begin{remark}
    \label{rmk:mappingcylinder}
    Every map $f\colon A\to B$ of Banach algebras factors as a homotopy equivalence and a surjection:
    \begin{equation*}
        \begin{tikzcd}
            A\ar[rr,"f"]\ar[rd,"\simeq"]& & B\\
            & Z_f\ar[ru,"\ev_1"]& 
        \end{tikzcd} \,,
    \end{equation*}
    where $Z_f=\{(a,\phi)| a\in A,\phi\in C([0,1];B),f(a)=\phi(0)\}$ is the mapping cylinder, see e.g. \cite[Exercise 6.M]{MR1222415}.
\end{remark}

A concrete model of the pullback in the $(2,1)$-category $\Cat_1(\Ban)$ is given by the iso-comma category.
We will apply Lemma~\ref{lem:htpypullbackoftopcats} for which the following definition will be useful, see Appendix~\ref{sec:enriched} for details.

\begin{definition}
    A \emph{Serre pullback} of Banach categories is a pullback square 
    \begin{equation*}
        \begin{tikzcd}
            \mathsf{C}^{12} \ar[d]\ar[r]& \mathsf{C}^1 \ar[d,"p"]\\
            \mathsf{C}^2 \ar[r]& \mathsf{C}^0
    \arrow["\lrcorner"{anchor=center, pos=0.125}, draw=none, from=1-1, to=2-2]
        \end{tikzcd}
    \end{equation*}
    in $\Cat_1(\Ban)$ where $p$ is a Serre functor.
    In particular, the underlying square in $\Cat_1$ is a pullback square.
\end{definition}
We want to think of Serre pullbacks as homotopy pullback squares of Banach categories.
The following is a version of Milnor patching~\cite{MR349811}.

\begin{lemma}
\label{lem:milnorpatch}
    Let 
    \begin{equation*}
        \begin{tikzcd}
            A\ar[d]\ar[r]& A_1 \ar[d,"p",twoheadrightarrow]\\
            A_2 \ar[r]& B
    \arrow["\lrcorner"{anchor=center, pos=0.125}, draw=none, from=1-1, to=2-2]
        \end{tikzcd}
    \end{equation*}
    be a pullback diagram of Banach algebras with $p$ a surjection.\footnote{The pullback in $\Alg(\Ban_{\leq})$ and $\Alg(\Ban)$ agree.}
    Then
    \begin{equation*}
        \begin{tikzcd}
            \ModEn_A \ar[d]\ar[r]& \ModEn_{A_1} \ar[d,"p_*"]\\
            \ModEn_{A_2} \ar[r]& \ModEn_{B}
    \arrow["\lrcorner"{anchor=center, pos=0.125}, draw=none, from=1-1, to=2-2]
        \end{tikzcd}
    \end{equation*}
    is a Serre pullback of Banach categories.
\end{lemma}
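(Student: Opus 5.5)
We have to prove two things about the square of module categories: that it is a pullback in $\Cat_1(\Ban)$, and that $p_*$ is a Serre functor. The second is immediate, since $p_*\colon \ModEn_{A_1}\to\ModEn_B$ is a Serre functor by Example~\ref{ex:serrefunctor}, because $p$ is surjective.

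So the work is to show that the canonical comparison functor
\begin{equation*}
\Phi\colon \ModEn_A \longrightarrow \ModEn_{A_1}\times_{\ModEn_B}\ModEn_{A_2}
\end{equation*}
is an equivalence of Banach categories. Here the target is the iso-comma (pseudo-pullback) model. Its objects are triples $(M_1,M_2,\alpha)$ with $M_i\in\ModEn_{A_i}$ and $\alpha\colon B\otimes_{A_1}M_1\cong B\otimes_{A_2}M_2$. Its morphisms are pairs of module maps compatible with $\alpha$, carrying the Banach structure of the closed subspace of $\Hom_{A_1}\oplus\Hom_{A_2}$. The functor $\Phi$ sends $M$ to $(A_1\otimes_A M,\,A_2\otimes_A M,\,\mathrm{can})$. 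This is Milnor patching, and I would follow Milnor's argument, checking at each step that the Banach structures match.

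\textbf{Full faithfulness.} First I would reduce to free modules. Both sides are additive and idempotent complete (Lemma~\ref{lem:idemfree}, and idempotents split componentwise in the pullback). So, by Lemma~\ref{lem:idemcompletion}, it suffices to check full faithfulness on the objects $A^n$. There $\Hom_A(A^n,A^m)=M_{n\times m}(A)$. Since the pullback of Banach algebras is computed on underlying Banach spaces, $M_{n\times m}(A)$ equals $M_{n\times m}(A_1)\times_{M_{n\times m}(B)}M_{n\times m}(A_2)$, which is exactly the hom-space in the pullback category. The norms on the two sides are equivalent by the open mapping theorem (or directly, since the pullback norm is the max norm). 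Hence $\Phi$ is fully faithful as a Banach functor, and it remains to show essential surjectivity.

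\textbf{Essential surjectivity, which I expect to be the main obstacle.} Take a triple $(P_1,P_2,\alpha)$. Choosing complements, I can enlarge $P_i$ to free modules: add $Q_1$ with $P_1\oplus Q_1\cong A_1^n$, then transport the complement along $\alpha$ and add something to both sides. This reduces the problem to triples of the form $(A_1^n,A_2^n,g)$ with $g\in\GL_n(B)$, at the cost of passing to a summand, which is harmless by idempotent completeness. Following Milnor, the key step is that $\operatorname{diag}(g,g^{-1})$ lifts to $\GL_{2n}(A_1)$. The Whitehead factorization writes it as a product of elementary matrices, and each elementary matrix lifts because $p$ is surjective. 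Then $(A_1^{n}\oplus A_1^n,\,A_2^{n}\oplus A_2^n,\,\operatorname{diag}(g,g^{-1}))$ is isomorphic to the trivial triple $\Phi(A^{2n})$. Consequently $(A_1^n,A_2^n,g)$ is a summand of $\Phi(A^{2n})$. The corresponding idempotent lies in the pullback endomorphism algebra, which by full faithfulness is $\End_A(A^{2n})$. Its image $M$ in $\ModEn_A$ therefore satisfies $\Phi(M)\cong(A_1^n,A_2^n,g)$.

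Finally, all functors involved are bounded, and the bijections on hom-spaces are bounded with bounded inverses. This gives an equivalence in $\Cat_1(\Ban)$, so the square is a pullback there and hence a Serre pullback.
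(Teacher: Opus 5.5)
Your argument is correct and has the same structure as the paper's proof: $p_*$ is a Serre functor by Example~\ref{ex:serrefunctor}, essential surjectivity is Milnor patching, and full faithfulness follows because the hom-squares are pullbacks of Banach spaces (checked on underlying sets, with the norms handled by the open mapping theorem). The differences are minor. The paper cites Weibel's Theorem~I.2.7 for patching, whereas you reprove it via Whitehead's lemma and the fact that the essential image of a fully faithful functor is closed under retracts. Your ``transport the complement along $\alpha$'' step should be made precise: writing $\bar{Q}$ for $B\otimes Q$, add $Q_1\oplus A_1^m$ and $Q_2\oplus A_2^n$ to the two sides, using $\bar{Q}_1\oplus B^m\cong\bar{Q}_1\oplus\bar{P}_2\oplus\bar{Q}_2\cong\bar{Q}_1\oplus\bar{P}_1\oplus\bar{Q}_2\cong B^n\oplus\bar{Q}_2$, where the middle isomorphism is $\alpha^{-1}$. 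For full faithfulness, the paper treats arbitrary $P,Q$ at once via exactness of $\Hom_A(P,-)$ and $-\otimes_A Q$, instead of reducing to free modules.
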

\begin{proof}
    By Example \ref{ex:serrefunctor}, $p$ induces a Serre functor.
    By~\cite[Thm~I.2.7]{weibelkbook}, every ``descent datum'' of finitely generated projective $A_i$-modules $P_i$ together with an isomorphism $g:B\otimes_{A_1} P_1\cong B\otimes_{A_2}P_2$ ``patches'' to a finitely generated projective $A$-module $P$, i.e.\ the map $\ModOne_A\to \ModOne_{A_1}\times_{\ModOne_B}\ModOne_{A_2}$ is essentially surjective.
    We are left to check fully faithfulness.
    Let $P,Q\in \ModOne_A$.
    We have to show that the square of Banach spaces
    \begin{equation*}
        \begin{tikzcd}
            \Hom_A(P,Q) \ar[r]\ar[d]& \Hom_{A_1}(A_1\otimes_A P,A_1\otimes_{A}Q) \ar[d]\\
            \Hom_{A_2}(A_2\otimes_A P,A_2\otimes_{A}Q) \ar[r] & \Hom_B(B\otimes_A P,B\otimes_A Q)
        \end{tikzcd}
    \end{equation*}
    is a pullback square.
    By the open mapping theorem, this can be checked on underlying sets. 
    We can use the isomorphic square
    \begin{equation*}
        \begin{tikzcd}
            \Hom_A(P,Q) \ar[r]\ar[d]& \Hom_{A}( P,A_1\otimes_{A}Q) \ar[d]\\
            \Hom_{A}(P,A_2\otimes_{A}Q) \ar[r] & \Hom_A(P,B\otimes_A Q)
        \end{tikzcd}
        \,.
    \end{equation*}
    But by projectivity, $\Hom_A(P,-)$ and $-\otimes_A Q$ are exact.
    So this is a pullback square since
    \begin{equation*}
        \begin{tikzcd}
            A \ar[r]\ar[d]& A_1 \ar[d]\\
            A_2 \ar[r] & B
    \arrow["\lrcorner"{anchor=center, pos=0.125}, draw=none, from=1-1, to=2-2]
        \end{tikzcd}
    \end{equation*}
    is a pullback.
\end{proof}

\begin{lemma}
    \label{lem:homotopypullbackofbanachcategories}
    Let 
    \begin{equation*}
        \begin{tikzcd}
            \mathsf{C}^{12} \ar[d]\ar[r]& \mathsf{C}^1 \ar[d,"p"]\\
            \mathsf{C}^2 \ar[r]& \mathsf{C}^0
    \arrow["\lrcorner"{anchor=center, pos=0.125}, draw=none, from=1-1, to=2-2]
        \end{tikzcd}
    \end{equation*}
    be a Serre pullback square of Banach categories.
    Then, the square 
        \begin{equation*}
                \begin{tikzcd}
                    N^{\mathrm{hc}}(\mathsf{C}^{12,\cong}) \ar[d]\ar[r]& N^{\mathrm{hc}}(\mathsf{C}^{1,\cong})\ar[d,"p"]\\
                    N^{\mathrm{hc}}(\mathsf{C}^{2,\cong}) \ar[r]& N^{\mathrm{hc}}(\mathsf{C}^{0,\cong})
    \arrow["\lrcorner"{anchor=center, pos=0.125}, draw=none, from=1-1, to=2-2]
                \end{tikzcd}
            \end{equation*}
    is a pullback square in $\CMon(\Spc)$.
\end{lemma}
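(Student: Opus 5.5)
The plan is to reduce to a statement about underlying spaces, because the forgetful functor $\CMon(\Spc)\to\Spc$ preserves and detects limits. The square is a diagram of $\E_\infty$-monoids: the functors are Banach functors and preserve direct sums by Lemma~\ref{lem:enrichedbiproduct}. It therefore suffices to show that the square of spaces $N^{\mathrm{hc}}(\mathsf{C}^{i,\cong})$ is a homotopy pullback in $\Spc$. For this I would use the criterion for homotopy pullbacks of topologically enriched groupoids, Lemma~\ref{lem:htpypullbackoftopcats} from the appendix, which the paper has already flagged for exactly this purpose. That criterion roughly states that a strict pullback of enriched (weak) groupoids is a homotopy pullback after $N^{\mathrm{hc}}$ when one leg is an ``isofibration'' on objects and a Serre fibration on automorphism/mapping spaces.

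The key steps, in order, are as follows.

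First, I would identify the iso-core of the strict pullback. Because the square is a pullback in $\Cat_1(\Ban)$ and the underlying square in $\Cat_1$ is a pullback, $\mathsf{C}^{12,\cong}$ is the strict pullback of enriched weak groupoids $\mathsf{C}^{1,\cong}\times_{\mathsf{C}^{0,\cong}}\mathsf{C}^{2,\cong}$. This uses that a morphism of $\mathsf{C}^{12}$ is invertible exactly when its two components are, and that the mapping spaces are the pullbacks of the mapping Banach spaces, restricted to invertible elements.

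Second, I would verify the hypotheses on $p$.
\begin{itemize}
\item \textbf{Fibration on mapping spaces.} By Lemma~\ref{lem:serrefunctor}, $\Aut(c)\to\Aut(p c)$ is a Serre fibration. More generally, $\mathrm{Iso}(c,c')\to\mathrm{Iso}(pc,pc')$ is a Serre fibration: when nonempty it is a torsor over $\Aut(c)$ compatibly with $\Aut(pc)$, so it is equivalent to the automorphism case, while empty fibers do not matter.
\item \textbf{Isofibration on objects.} Given $c\in\mathsf{C}^1$ and an isomorphism $pc\cong d$, I would need to lift to an isomorphism $c\cong c'$. This is the point where I expect to need care. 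In the Banach setting one does not in general have lifts of arbitrary isomorphisms, only of those in the path component of the identity (by the fibration property). The way out is that a homotopy pullback only needs the isofibration condition up to homotopy. Concretely, the map $N^{\mathrm{hc}}(\mathsf{C}^{1,\cong})\to N^{\mathrm{hc}}(\mathsf{C}^{0,\cong})$ decomposes over path components as $\bigsqcup B\Aut(c)\to\bigsqcup B\Aut(d)$. Its homotopy fiber over $d$ can then be computed as $\bigsqcup_{c:\,pc\cong d}\mathrm{Iso}(pc,d)/\Aut(c)$, the homotopy quotient.
\end{itemize}

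Third, I would compare fibers. Computing the homotopy pullback object by object, a point is a pair $(c_1,c_2)$ with a path, that is an isomorphism $p(c_1)\cong q(c_2)$ in $\mathsf{C}^0$. The components and the loop spaces then need to match those of $\mathsf{C}^{12,\cong}$. Objects of the homotopy pullback are descent data $(c_1,c_2,g)$. The strict pullback only contains triples with $g=\id$, so I must show every descent datum is isomorphic to a strict one. Since $g\colon pc_1\cong qc_2$ is an isomorphism, I can transport: the pair $(c_1,c_2,g)$ is isomorphic in the homotopy pullback to $(c_1',c_2,\id)$ precisely if $g$ lifts to an isomorphism $c_1\cong c_1'$ with $pc_1'=qc_2$. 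I would obtain this lift by choosing $c_1'$ using the strict pullback condition (the underlying $\Cat_1$ square is a pullback and $p$ is surjective on endomorphisms). In the module case this is exactly Milnor patching as in Lemma~\ref{lem:milnorpatch}. In the abstract setting I would invoke that the $\Cat_1$ pullback was assumed to model the $(2,1)$-pullback (iso-comma category), as stated just before the definition. With essential surjectivity in hand, the automorphism groups of $\mathsf{C}^{12}$ are the strict fiber products $\Aut(c_1)\times_{\Aut(pc_1)}\Aut(c_2)$. Because one leg is a Serre fibration, these are homotopy fiber products. A five-lemma on the long exact sequences of the fibrations $B\Aut$ then completes the equivalence.

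I expect the main obstacle to be this essential-surjectivity step: showing that the strict pullback captures all objects of the homotopy pullback. This requires the iso-comma description rather than just the Serre condition. Everything else is a standard fibration argument.
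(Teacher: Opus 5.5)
Your proposal is correct and is essentially the paper's argument, unpacked. The Serre condition makes the maps on $\mathrm{Iso}$-spaces Serre fibrations, and the hypothesis that the square is a pullback in the $(2,1)$-category $\Cat_1(\Ban)$ (i.e.\ equivalent to the iso-comma category) replaces the missing isofibration property. Lemma~\ref{lem:htpypullbackoftopcats}, or your direct comparison of mapping spaces, then gives a pullback of spaces, and the forgetful functor $\CMon(\Spc)\to\Spc$ detects it.

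The one thing to drop is your first attempt at essential surjectivity via the strict pullback and surjectivity on endomorphisms: it does not work, because isomorphisms need not lift. Only the iso-comma hypothesis you fall back on supplies essential surjectivity.
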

\begin{proof}
    By assumption that $p$ is a Serre functor, the square 
    \begin{equation*}
        \begin{tikzcd}
            \mathsf{C}^{12,\cong} \ar[d]\ar[r]& \mathsf{C}^{1,\cong} \ar[d,"p"]\\
            \mathsf{C}^{2,\cong} \ar[r]& \mathsf{C}^{0,\cong}
    \arrow["\lrcorner"{anchor=center, pos=0.125}, draw=none, from=1-1, to=2-2]
        \end{tikzcd}
    \end{equation*}
    is a homotopy pullback square of $\Top$-enriched categories.
    We can conclude by Lemma~\ref{lem:htpypullbackoftopcats}.
\end{proof}

\begin{corollary}
    \label{cor:kthypullback}
    Let 
    \begin{equation*}
        \begin{tikzcd}
            \mathsf{C}^{12} \ar[d]\ar[r]& \mathsf{C}^1 \ar[d,"p"]\\
            \mathsf{C}^2 \ar[r]& \mathsf{C}^0
    \arrow["\lrcorner"{anchor=center, pos=0.125}, draw=none, from=1-1, to=2-2]
        \end{tikzcd}
    \end{equation*}
    be a pullback square of Banach categories and let $p$ be a Serre functor. 
    Assume that all functors are quasi-surjective and that $N^{\mathrm{hc}}(\mathsf{C}^{12,\cong})$ is telescopic.
    Then the square
    \begin{equation*}
        \begin{tikzcd}
            k(\mathsf{C}^{12}) \ar[d]\ar[r]& k(\mathsf{C}^1) \ar[d,"p"]\\
            k(\mathsf{C}^2) \ar[r]& k(\mathsf{C}^0)
    \arrow["\lrcorner"{anchor=center, pos=0.125}, draw=none, from=1-1, to=2-2]
        \end{tikzcd}
    \end{equation*}
    is a pullback square of connective spectra.
\end{corollary}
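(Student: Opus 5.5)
The plan is to combine Lemma~\ref{lem:homotopypullbackofbanachcategories} with Lemma~\ref{lem:groupcompletionpullback}. First, since the square of Banach categories is a Serre pullback, Lemma~\ref{lem:homotopypullbackofbanachcategories} shows that applying $N^{\mathrm{hc}}((-)^{\cong})$ yields a pullback square
\begin{equation*}
    \begin{tikzcd}
        N^{\mathrm{hc}}(\mathsf{C}^{12,\cong}) \ar[d]\ar[r]& N^{\mathrm{hc}}(\mathsf{C}^{1,\cong})\ar[d]\\
        N^{\mathrm{hc}}(\mathsf{C}^{2,\cong}) \ar[r]& N^{\mathrm{hc}}(\mathsf{C}^{0,\cong})
    \end{tikzcd}
\end{equation*}
in $\CMon(\Spc)$. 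Here the $\E_\infty$-structures come from the enriched direct sums, and the functors in the square preserve them because every Banach functor preserves enriched biproducts (Lemma~\ref{lem:enrichedbiproduct}). So this is a pullback of $\E_\infty$-monoids and not merely of underlying spaces. Since pullbacks in $\CMon(\Spc)$ are computed on underlying spaces, this step is exactly what Lemma~\ref{lem:homotopypullbackofbanachcategories} provides.

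Second, I interpret the hypothesis ``all functors are quasi-surjective'' as the statement that the induced maps of $\E_\infty$-monoids $N^{\mathrm{hc}}(\mathsf{C}^{i,\cong})\to N^{\mathrm{hc}}(\mathsf{C}^{j,\cong})$ are quasi-surjective. This makes sense because quasi-surjectivity only depends on $\pi_0$, which is the monoid of isomorphism classes under $\oplus$. Together with the assumption that $N^{\mathrm{hc}}(\mathsf{C}^{12,\cong})$ is telescopic, all hypotheses of Lemma~\ref{lem:groupcompletionpullback} are then satisfied. That lemma says the group-completed square
\begin{equation*}
    \begin{tikzcd}
        k(\mathsf{C}^{12}) \ar[d]\ar[r]& k(\mathsf{C}^1) \ar[d]\\
        k(\mathsf{C}^2) \ar[r]& k(\mathsf{C}^0)
    \end{tikzcd}
\end{equation*}
is a pullback in grouplike $\E_\infty$-spaces, which is the definition of $k$ in Definition~\ref{def:topktheory2}.

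Finally, I transport this across the equivalence $\CGrp(\Spc)\simeq\Sp_{\geq 0}$. The result is a pullback in $\Sp_{\geq 0}$. If the statement is meant in $\Sp$, a little more care is needed, since pullbacks in $\Sp_{\geq 0}$ are $\tau_{\geq 0}$ of pullbacks in $\Sp$. Here I would use Corollary~\ref{cor:pullbackpushoutsquare}: the map $k(\mathsf{C}^1)\to k(\mathsf{C}^0)$ is $\pi_0$-surjective, because quasi-surjectivity of the monoid map becomes surjectivity after group completion. Hence the square is also a pushout, and therefore a bicartesian square in $\Sp$.

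The main obstacle I expect is the bookkeeping in the second step. One must check that ``quasi-surjective functor'' really means quasi-surjectivity of the monoid maps. One must also check that the proof of Lemma~\ref{lem:groupcompletionpullback}, which passes to essential images, is compatible with the square coming from the Serre pullback. That reduction replaces the codomains by sub-monoids of components. Since a union of path components is closed under the relevant pullback, this replacement does not change the pullback.
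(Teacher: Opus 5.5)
Your argument for the statement as posed is exactly the paper's proof, which combines Lemma~\ref{lem:homotopypullbackofbanachcategories} with Lemma~\ref{lem:groupcompletionpullback}. Here ``as posed'' means a pullback in $\Sp_{\geq 0}$, i.e.\ in grouplike $\E_\infty$-spaces. Your reading of ``quasi-surjective functor'' as quasi-surjectivity of the induced monoid maps is also how the paper uses it.

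The one thing to fix is your optional last paragraph, which upgrades the square to a bicartesian square in $\Sp$. It rests on the claim that quasi-surjectivity becomes $\pi_0$-surjectivity after group completion, and that claim is false. For example, $\mathbb{N}\xrightarrow{\cdot 2}\mathbb{N}$ is quasi-surjective, but $\mathbb{Z}\xrightarrow{\cdot 2}\mathbb{Z}$ is not surjective.

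The conclusion also fails within the corollary's own hypotheses. Cover $S^1$ by two closed arcs meeting in two points. The map $k_{C([0,1])}\to k_{C(\{0,1\})}$ is the diagonal $\mathbb{Z}\to\mathbb{Z}^2$ on $\pi_0$. So the pullback in $\Sp$ has $\pi_{-1}\cong\mathbb{Z}^2/\Delta\cong\mathbb{Z}\neq 0$, the surjectivity hypothesis of Corollary~\ref{cor:pullbackpushoutsquare} fails, and $k_{C(S^1)}$ is only the connective cover of that pullback.

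This is why the paper states the result in connective spectra and carries $\tau_{\geq 0}$ along later (Lemma~\ref{lem:excisivefunctor}). Simply drop that paragraph.
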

\begin{proof}
    Combine Lemma~\ref{lem:homotopypullbackofbanachcategories} with Lemma~\ref{lem:groupcompletionpullback}.
\end{proof}

\begin{definition}
    \label{def:excisive}
    A pullback square in $\Alg(\Ban)$ is a \emph{Milnor square} if one of the legs is surjective.
    A functor $F\colon \Alg(\Ban) \to \calC$ is \emph{excisive} if it maps Milnor squares to pullback squares.
\end{definition}

\begin{corollary}
\label{cor:excision}
    Topological $K$-theory is excisive, i.e.\ it maps Milnor squares of Banach algebras to pullback squares in $\Sp_{\geq 0}$.
\end{corollary}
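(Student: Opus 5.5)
The plan is to combine Lemma~\ref{lem:milnorpatch} with Corollary~\ref{cor:kthypullback}. Start with a Milnor square $A = A_1 \times_B A_2$ of Banach algebras, and assume without loss of generality that $p\colon A_1 \to B$ is the surjective leg. Lemma~\ref{lem:milnorpatch} then shows that the induced square of Banach categories $\ModEn_A, \ModEn_{A_1}, \ModEn_{A_2}, \ModEn_B$ is a Serre pullback. To apply Corollary~\ref{cor:kthypullback} we must check its two remaining hypotheses. First, all four functors must be quasi-surjective on the $\E_\infty$-monoids of cores. Second, $N^{\mathrm{hc}}(\ModEn_A^{\cong})$ must be telescopic.

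Telescopicity is immediate from Lemma~\ref{lem:modulestelescopic}, since $A$ is itself a Banach algebra. Quasi-surjectivity follows from the same cofinality argument used there. For any homomorphism $f\colon A\to A'$, extension of scalars sends $A^n$ to $A'^n$. Every finitely generated projective $A'$-module $P$ has a complement $P'$ with $P\oplus P'\cong A'^n = f_*(A^n)$. So on $\pi_0$ each of the four extension-of-scalars functors is quasi-surjective. The diagonal composite $\ModEn_A\to\ModEn_B$ is quasi-surjective by the same argument, which is consistent with Remark~\ref{rem:qsur}.

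Applying Corollary~\ref{cor:kthypullback} now gives that $k(\ModEn_A)\to k(\ModEn_{A_1})\times_{k(\ModEn_B)} k(\ModEn_{A_2})$ is an equivalence of connective spectra. By Proposition~\ref{prop:compareK}, $k(\ModEn_{-})\simeq k_{-}$ naturally, so the same holds for $k$ as a functor on $\Alg(\Ban)$. The one subtlety is the meaning of ``pullback'' here. The square produced by Corollary~\ref{cor:kthypullback} comes from Lemma~\ref{lem:groupcompletionpullback}, and the pullback in that lemma is one of underlying spaces of grouplike $\E_\infty$-spaces. Since $\CGrp(\Spc)\simeq\Sp_{\geq0}$ and the forgetful functor to $\Spc$ creates limits, this is a pullback in $\Sp_{\geq 0}$, where limits are computed as $\tau_{\geq0}$ of the pullback in $\Sp$. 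So the statement holds as formulated.

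The main obstacle I expect is the essential surjectivity in Milnor patching together with the full faithfulness check. Both are already handled inside Lemma~\ref{lem:milnorpatch}. What remains is the homotopical input of Lemma~\ref{lem:homotopypullbackofbanachcategories}: a Serre pullback of Banach categories gives a homotopy pullback on iso-cores. This rests on the Serre-fibration property of $\Aut_{A_1}(P)\to\Aut_B(p_*P)$ from Example~\ref{ex:serrefunctor} and Lemma~\ref{lem:serrefunctor}. Since both are stated earlier, the proof should reduce to the bookkeeping above.
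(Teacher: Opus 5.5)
Your proposal is correct and follows the paper's proof essentially step for step: telescopicity of $N^{\mathrm{hc}}(\ModEn_A^{\cong})$ comes from Lemma~\ref{lem:modulestelescopic}, quasi-surjectivity holds because extension of scalars hits all free modules, the Serre pullback comes from Lemma~\ref{lem:milnorpatch} and Example~\ref{ex:serrefunctor}, and the conclusion follows from Corollary~\ref{cor:kthypullback}. Your extra remarks on why the resulting square is a pullback in $\Sp_{\geq 0}$, and on the identification via Proposition~\ref{prop:compareK}, are harmless additions that the paper leaves implicit.
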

The same result is obtained for the space of stable projections in a $C^*$-algebra in~\cite[Prop.~10.4]{bunkeKKEth}.
\begin{proof}
    Let
    \begin{equation*}
        \begin{tikzcd}
            A\ar[d]\ar[r]& A_1 \ar[d,"p",twoheadrightarrow]\\
            A_2 \ar[r]& B
    \arrow["\lrcorner"{anchor=center, pos=0.125}, draw=none, from=1-1, to=2-2]
        \end{tikzcd}
    \end{equation*}
    be a Milnor square of Banach algebras.
    By Lemma~\ref{lem:modulestelescopic}, $N^{\mathrm{hc}}(\ModEn_A^{\cong})$ is telescopic.
    The Banach functors induced covariantly from algebra homomorphisms are all quasi-surjective, since for $f\colon A\to B$ the image of $f_*=B\otimes_A-:\ModEn_A \to \ModEn_B$ includes all finitely generated free modules $B^n$.
    Furthermore, $p_*$ is a Serre functor by Example~\ref{ex:serrefunctor}.
    Corollary~\ref{cor:kthypullback} implies that
    \begin{equation*}
        \begin{tikzcd}
            k_A\ar[d]\ar[r]& k_{A_1} \ar[d,"p"]\\
            k_{A_2} \ar[r]& k_B
    \arrow["\lrcorner"{anchor=center, pos=0.125}, draw=none, from=1-1, to=2-2]
        \end{tikzcd}
    \end{equation*}
    is a pullback square.
    This concludes the proof.
\end{proof}
\begin{remark}
    The $K$-theory of a nonunital Banach algebra $I$ is usually defined in terms of a unitization of $A$. More specifically, suppose $I \hookrightarrow A$ sits inside a unital Banach algebra $A$ as an ideal.
    Define $k_I := \fib(k_A \to k_{A/I})$.
    By Corollary~\ref{cor:excision}, this definition is independent of the choice of unitization $A$.
    Indeed, a commutative diagram 
    \begin{equation*}
    \begin{tikzcd}
        I \ar[r, hookrightarrow] \ar[dr, hookrightarrow] & A \ar[d] \\
        & B
    \end{tikzcd}
    \end{equation*}
    induces a pullback of Banach algebras
    \begin{equation*}
    \begin{tikzcd}
        A \ar[d] \ar[r] & A/I \ar[d] \\
        B \ar[r] & B/I
    \arrow["\lrcorner"{anchor=center, pos=0.125}, draw=none, from=1-1, to=2-2]
    \end{tikzcd} \,,
    \end{equation*}
    with surjective horizontal maps.
    Hence the induced pullback in spectra establishes an equivalence between fibers $\fib(k_A \to k_{A/I})$ and $\fib(k_B \to k_{B/I})$.
    Note that the category of unital Banach algebras under $I$ has an initial object which is the unitization.
    We discuss $K$-theory for nonunital algebras in more detail in Section~\ref{sec:nonunital}.
\end{remark}
We denote by $k_A(X)$ the $K$-theory spectrum of $\ModEn_A(X)$ and $k(X) := k_{\mathbb{F}}(X)$.
\begin{example}
    Let $X=U\cup V$ be a closed cover of a compact Hausdorff space $X$.
    By the pasting lemma, we obtain a pullback of Banach algebras
    \begin{equation*}
        \begin{tikzcd}
            C(X) \ar[r]\ar[d]& C(U) \ar[d]\\
            C(V) \ar[r]& C(U\cap V)
    \arrow["\lrcorner"{anchor=center, pos=0.125}, draw=none, from=1-1, to=2-2]
        \end{tikzcd} \,,
    \end{equation*}
    and $C(X)\to C(U)$ is surjective by Tietze's extension theorem.
    Hence, there is a pullback square
    \begin{equation*}
        \begin{tikzcd}
            k(X) \ar[r]\ar[d]& k(U) \ar[d]\\
            k(V) \ar[r]& k(U\cap V)
    \arrow["\lrcorner"{anchor=center, pos=0.125}, draw=none, from=1-1, to=2-2]
        \end{tikzcd} \,.
    \end{equation*}
    Equivalently, this is a fiber sequence of connective spectra $k(X)\to k(U)\oplus k(V) \to k(U\cap V)$.
    It induces a long exact Mayer--Vietoris sequence
    \begin{equation*}
        \dots \to k^i(X) \to k^i(U)\oplus k^i(V) \to k^{i}(U\cap V) \to k^{i+1}(X) \to \dots \,,
    \end{equation*}
    where we wrote $k^i(X)=\pi_i(k(X))$.
    The analogous statement also holds for any Banach algebra $A$ and $C(X;A)$.
\end{example}
\begin{remark}
There are other formulations of Mayer--Vietoris sequences for Banach algebras, compare~\cite[21.5]{MR1656031} and~\cite[Theorem 4.1]{MR757510}.
\end{remark}
\begin{lemma}
    \label{lem:excisivefunctor}
    Let $F \colon \CHaus \to \Sp_{\geq 0}$ be an excisive homotopy invariant functor such that $F(\emptyset) = 0$.
    If $X$ is a compact Hausdorff space and homotopy finitely dominated (a homotopy retract of a finite CW complex), there is an equivalence
    \begin{equation*}
    \tau_{\geq 0} \underline{\Hom}_{\Sp}(\Sigma^\infty_+ X, F) \simeq F(X)\,,
    \end{equation*}
    which is natural in $X$ and $F$.
\end{lemma}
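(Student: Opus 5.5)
The plan is to construct a natural comparison map and show that both sides are excisive homotopy invariant functors of $X$ that agree on a point. Then we induct over cells, and finally pass to retracts.

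\textbf{The map.} Let $X$ be a compact Hausdorff space. Since $F$ is homotopy invariant, the assignment $Y \mapsto F(Y)$ is a functor on compact Hausdorff spaces that sends homotopy equivalences to equivalences. First take $X$ homotopy equivalent to a finite CW complex (this holds for finite simplicial complexes), or more generally any space. Its singular simplicial set gives a colimit presentation $\Sigma^\infty_+ X \simeq \colim_{\Delta^n \to X} \Sigma^\infty_+ \pt$.

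The cleaner route is to write $\underline{\Hom}_\Sp(\Sigma^\infty_+X,F(\pt)) = \lim_{\Delta^n\to X} F(\pt)$. Homotopy invariance gives $F(\pt)\simeq F(\Delta^n)$, and restriction along $\sigma\colon\Delta^n\to X$ gives maps $F(X)\to F(\Delta^n)$. Together these produce a natural map
\[
\alpha_X\colon F(X)\to \lim_{(\Delta^n\to X)} F(\Delta^n)\simeq \underline{\Hom}_\Sp(\Sigma^\infty_+X,F(\pt)).
\]
Since $F(X)$ is connective, $\alpha_X$ factors through $\tau_{\geq 0}$. Note that the intended right-hand side of the statement is $\underline{\Hom}(\Sigma^\infty_+X,F(\pt))$, and $\alpha_X$ is natural in both $X$ and $F$. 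The target $G(X):=\tau_{\geq0}\underline{\Hom}(\Sigma^\infty_+X,F(\pt))$ is homotopy invariant and has $G(\emptyset)=0$.

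\textbf{Cell induction.} Both $G$ and $F$ send the pushout $X=U\cup_{U\cap V}V$ of a closed cover by subcomplexes to a pullback. For $F$ this is excision applied to the Milnor square of Banach algebras from the preceding example. For $\underline{\Hom}(\Sigma^\infty_+(-),F(\pt))$ it holds because $\Sigma^\infty_+$ preserves such homotopy pushouts. However, $\tau_{\geq0}$ preserves pullbacks only up to $\pi_0$-surjectivity, and this is the main obstacle. Negative homotopy groups of the untruncated mapping spectrum can appear, for instance when cohomological degree exceeds $\pi_0$.

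I would handle this by working with the untruncated statement: show that $F(X)\to \underline{\Hom}(\Sigma^\infty_+X,F(\pt))$ induces isomorphisms on $\pi_i$ for $i\ge 0$. To do so, use the long exact Mayer--Vietoris sequences for both sides and compare them by the five lemma. At the $\pi_0$ end the terms continue to the right on the mapping-spectrum side, while for $F$ the Mayer--Vietoris sequence ends with $\pi_0F(U)\oplus\pi_0F(V)\to\pi_0F(U\cap V)$. The five lemma still applies at $\pi_0$, provided we know injectivity at $\pi_{-1}$ of the $U\cap V$ term. To get this, I would strengthen the induction hypothesis to include: ``$\pi_{-1}$ of the mapping spectrum maps injectively, compatibly with the connecting map.'' An alternative is to reduce via the cofiber sequence to a statement about $F$ on pairs $(D^n,S^{n-1})$. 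Then the induction runs over the number of cells, with base cases $\emptyset$ and $\pt$, attaching one cell at a time. Here $X=Y\cup D^n$ with $Y\cap D^n=S^{n-1}$, thickened to closed collars so that the intersection deformation retracts, and the collar is handled by homotopy invariance. The spheres $S^{n-1}$ have fewer cells, so the induction closes.

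\textbf{Retracts.} Finally, if $X$ is a homotopy retract of a finite CW complex $K$, with maps $X\to K\to X$ composing to a map homotopic to $\id_X$, then naturality and homotopy invariance of both sides exhibit $\alpha_X$ as a retract of the equivalence $\alpha_K$. Hence $\alpha_X$ is an equivalence after $\tau_{\ge0}$. Naturality in $F$ is immediate from the construction of $\alpha$.
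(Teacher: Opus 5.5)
Your overall strategy is the paper's: a natural comparison map built from the category of simplices, induction over cells via collared closed covers, then passage to homotopy retracts. The paper's proof simply adapts Bunke's Lemma~3.1 \cite{bunke2023survey} to connective spectra.

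The flaw is in the inductive step. You identify an obstacle that does not exist, and the repair you propose is not well-posed. It is false that $\tau_{\geq 0}$ preserves pullbacks only under a $\pi_0$-surjectivity condition. The functor $\tau_{\geq 0}\colon \Sp\to\Sp_{\geq 0}$ is \emph{right} adjoint to the inclusion $\Sp_{\geq 0}\subseteq\Sp$, so it preserves all limits. Pullbacks in $\Sp_{\geq 0}$ are exactly $\tau_{\geq 0}$ of pullbacks in $\Sp$. What fails to preserve pullbacks without a $\pi_0$-hypothesis is the inclusion, and this is precisely the caveat in the paper's one-line proof.

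Hence $G=\tau_{\geq 0}\underline{\Hom}_{\Sp}(\Sigma^\infty_+(-),F(\pt))$ sends your cell-attachment squares to pullbacks in $\Sp_{\geq 0}$. That is the same sense in which the $\Sp_{\geq 0}$-valued functor $F$ is excisive. The inductive step is therefore formal. The map $\alpha$ gives a map of pullback diagrams in $\Sp_{\geq 0}$ that is an equivalence on the three corners of the cospan:
\begin{itemize}
\item the collared copy of $Y$, by induction and homotopy invariance;
\item the sphere $S^{n-1}$, by induction;
\item the disk, by homotopy invariance and the point case.
\end{itemize}
So it is an equivalence $F(X)\to G(X)$ on the pullbacks.

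Even in your untruncated five-lemma formulation, no information in degree $-1$ is needed; your proviso misplaces the five-lemma window. To conclude at $\pi_0F(X)$ one uses the row
\[
\pi_1F(U)\oplus\pi_1F(V)\to\pi_1F(U\cap V)\to\pi_0F(X)\to\pi_0F(U)\oplus\pi_0F(V)\to\pi_0F(U\cap V)
\]
and the corresponding row for $G$ (or for the untruncated mapping spectrum). Both rows are exact at the three middle terms, because $\pi_0$ of a pullback in $\Sp_{\geq 0}$ agrees with $\pi_0$ of the pullback in $\Sp$. The rightmost vertical map is injective because it is an isomorphism by the induction hypothesis.

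Your proposed strengthening (``$\pi_{-1}$ maps injectively'') is not meaningful on the $F$ side anyway, since $\pi_{-1}F(U\cap V)=0$. Drop it, together with the vague alternative reduction to pairs $(D^n,S^{n-1})$. With that correction, the construction of $\alpha$, the cell induction and the retract argument all go through.
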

\begin{proof}
    The proof is a connective spectrum version of \cite[Lemma 3.1]{bunke2023survey}, keeping in mind that $\Sp_{\geq0} \to \Sp$ doesn't preserve limits, but has the coreflection $\tau_{\geq 0}$.
\end{proof}
An inductive application of Mayer--Vietoris therefore yields:
\begin{corollary}
    \label{cor:homotopicalSwan}
    If $X$ is a homotopy finitely dominated compact Hausdorff space, there is an equivalence
    \begin{equation*}
    \tau_{\geq 0} \underline{\Hom}_{\Sp}(\Sigma^\infty_+ X, k_A) \simeq k_A(X)\,,
    \end{equation*}
    which is natural in $X$ and $A$.    
\end{corollary}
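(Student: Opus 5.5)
The statement to prove is Corollary~\ref{cor:homotopicalSwan}. The plan is to apply Lemma~\ref{lem:excisivefunctor} to the functor
\begin{equation*}
    F\colon \CHaus \to \Sp_{\geq 0}\,, \qquad X \longmapsto k_A(X)\,,
\end{equation*}
where $k_A(X)$ is the $K$-theory of $\ModEn_A(X)$. The equivalence then follows directly, so the work consists of checking the three hypotheses of that lemma. First, I would identify $F$ with $X\mapsto k_{C(X;A)}$. Proposition~\ref{prop:serreswan} gives an isomorphism of Banach categories $\ModEn_A(X)\cong \ModEn_{C(X;A)}$, and these isomorphisms are compatible with pullback of bundles along continuous maps $Y\to X$ and with extension of scalars along $C(X;A)\to C(Y;A)$. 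Hence $F$ is the composite of $X\mapsto C(X;A)$, a functor $\CHaus^{\op}\to\Alg(\Ban)$, with $k$. Naturality in $A$ comes for free because $C(X;-)$ is functorial in $A$.

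The hypotheses are then checked in turn.
\begin{enumerate}
    \item $F(\emptyset)=0$: the algebra $C(\emptyset;A)$ is the zero algebra, whose only finitely generated projective module is $0$, so its $K$-theory vanishes.
    \item Homotopy invariance: a homotopy $Y\times\Delta^1\to X$ corresponds to a homomorphism $C(X;A)\to C(\Delta^1;C(Y;A))$, i.e.\ a homotopy of Banach algebra homomorphisms. Since $k$ is homotopy invariant (Theorem~\ref{thm:omnibusktheory}), it sends homotopic maps to homotopic maps, hence homotopy equivalences of spaces to equivalences.
    \item Excision: I read this as the Mayer--Vietoris property for closed covers $X=U\cup V$ used in Lemma~\ref{lem:excisivefunctor}. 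The pasting lemma gives a pullback $C(X;A)\cong C(U;A)\times_{C(U\cap V;A)}C(V;A)$. The restriction $C(U;A)\to C(U\cap V;A)$ is surjective by the Tietze extension theorem for Banach-space-valued functions, which follows from Dugundji's extension theorem; this is exactly the Banach-algebra-valued version the preceding example asserts. So the square is a Milnor square, and Corollary~\ref{cor:excision} turns it into a pullback of connective spectra.
\end{enumerate}

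The step I expect to be the main obstacle is matching exactly the notion of ``excisive'' that Lemma~\ref{lem:excisivefunctor} requires, namely Mayer--Vietoris for closed covers together with pushouts along cofibrations of finite CW complexes. One also needs to ensure that the connective truncation does not break the inductive cell-attachment argument. My approach would be to reduce to closed covers by replacing each cell attachment with a closed cover by a thickened cell and a collar, and to use homotopy invariance to pass to homotopy retracts for finitely dominated $X$. With that in place, the remaining argument is the inductive Mayer--Vietoris induction already carried out in Lemma~\ref{lem:excisivefunctor}.
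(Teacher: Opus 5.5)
Your proposal is correct and follows the paper's argument: the paper also applies Lemma~\ref{lem:excisivefunctor} to $X\mapsto k_A(X)\simeq k_{C(X;A)}$, using homotopy invariance and the closed-cover Mayer--Vietoris square obtained from Corollary~\ref{cor:excision}, so your last paragraph re-deriving the cell-attachment induction is not needed. One minor fix: Dugundji's extension theorem requires a metrizable domain, so either note that excision is only used on finite CW complexes (the finitely dominated case then follows from homotopy invariance by a retract argument), or justify surjectivity of $C(U;A)\to C(U\cap V;A)$ for a general compact Hausdorff $U$ by extending into the closed convex hull of the image. That hull is compact, convex and metrizable, hence an absolute extensor for normal spaces.
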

\begin{proof}
Follows since $k_A$ is excisive and homotopy invariant, compare ~\cite[Proposition 3.2]{bunke2023survey}.
\end{proof}
\begin{example}
    Let $X$ be a compact Hausdorff space that is homotopy finitely dominated, for example a finite CW complex, and consider $A=\R$, $\ModEn_A=\VectEn_\R$.
    Then the functor $X\mapsto N^{\mathrm{hc}}\VectEn(X)^{\cong}$ satisfies descent and is homotopy invariant.
    Its classifying space is $N^{\mathrm{hc}}\VectEn(\pt)^{\cong}=N^{\mathrm{hc}}(\VectEn^{\cong})\simeq \bigsqcup_{n} BO(n)$.
    Moreover, we have $\VectEn(X)^{\cong,\gp}\simeq [\Sigma_+^\infty X,ko]$.
    In particular, $ko^0(X)\cong K_0(\VectOne(X))$ is the classical topological $KO$-theory group.
\end{example}
\begin{definition}
An $A$-representation $\pi\colon A\to \End(M)$ is \emph{topologically irreducible} if the only closed subrepresentations are $0$ and $M$.
The \emph{radical} of $A$ is the two-sided ideal
\begin{equation*}
    \mathrm{rad}(A)=\bigcap_{\pi \text{ irrep}} \ker(\pi) \,.
\end{equation*}
$A$ is \emph{semisimple} if $\mathrm{rad}(A)=0$.
\end{definition}
All $C^*$-algebras are semisimple and $A/\mathrm{rad}(A)$ is semisimple.
For more details on radicals see~\cite[Ch.~III]{MR423029} or ~\cite[Ch.~4.3]{MR1270014}.
\begin{proposition}
    \label{prop:semisimplekthy}
    The quotient map $A\to A/\mathrm{rad}(A)$ induces an equivalence $k_A\simeq k_{A/\mathrm{rad}(A)}$.
\end{proposition}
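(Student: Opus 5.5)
The plan is to use excision (Corollary~\ref{cor:excision}) together with the telescopic description of Lemma~\ref{lem:modulestelescopic}. Write $I=\mathrm{rad}(A)$ and $p\colon A\to A/I$. Since $k_A\simeq K_0(A)\times B\GL_\infty(A)$ as spaces, functorially in $A$, it suffices to show two things: first, that $p$ induces an isomorphism $K_0(A)\cong K_0(A/I)$; second, that $\GL_\infty(A)\to\GL_\infty(A/I)$ is a weak homotopy equivalence. An equivalence of underlying spaces of grouplike $\E_\infty$-spaces is an equivalence of connective spectra.

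\textbf{Step 1: the general group map.} For every $n$, $M_n(I)=\mathrm{rad}(M_n(A))$ is a closed ideal, and $1+M_n(I)\subseteq \GL_n(A)$ since radical elements are quasi-invertible. Surjectivity of $p$ together with Example~\ref{ex:serrefunctor} and Lemma~\ref{lem:serrefunctor} shows that $\GL_n(A)\to \GL_n(A/I)$ is a Serre fibration. It is surjective because any lift of an invertible element is invertible modulo the radical, hence invertible. Its fiber is $1+M_n(I)$. This fiber is contractible via the straight-line homotopy $t\mapsto 1+tx$, which stays in $1+M_n(I)$ and hence among the invertibles. The long exact sequence then gives $\GL_n(A)\simeq\GL_n(A/I)$ weakly, and passing to the colimit gives the same for $\GL_\infty$.

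\textbf{Step 2: $K_0$.} Idempotents lift modulo the radical, since the radical is a closed ideal in a Banach algebra of quasi-invertibles. This can be done by the holomorphic functional calculus applied to a lift $x$ of an idempotent $\bar e$. The spectrum of $x$ in $A$ equals that of $\bar e$ in $A/I$, because spectra are unchanged modulo the radical, so it lies in $\{0,1\}$. Applying the indicator function $\chi$ gives an idempotent lift. Two idempotents with conjugate (or homotopic) images are conjugate in $A$: if $\bar e=\bar z\bar f\bar z^{-1}$, lift $\bar z$ to an invertible $z$. Then $e$ and $zfz^{-1}$ are idempotents with the same image modulo $I$, so their difference lies in $M_n(I)$. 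The standard element $v=ee'+(1-e)(1-e')$ is then $\equiv 1$ mod the radical, hence invertible, and it conjugates one to the other. This yields bijections on isomorphism classes of finitely generated projectives in each matrix size, so $K_0(A)\cong K_0(A/I)$.

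\textbf{Alternative and main obstacle.} A cleaner alternative route is to note that Step 1 already shows the map is a $\pi_{\ge1}$-isomorphism, so only Step 2 is needed. Step 2 is the real obstacle: one must justify that spectra, and hence the functional calculus, behave correctly modulo the radical in $M_n(A)$. I would do this by citing that an element is invertible iff it is invertible modulo $\mathrm{rad}$, applied to $\lambda-x$.
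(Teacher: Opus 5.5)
Your proof is correct and follows the paper's argument. The Serre functor $\ModEn_A\to\ModEn_{A/\mathrm{rad}(A)}$ gives a surjective fibration $\GL_n(A)\to\GL_n(A/\mathrm{rad}(A))$ with contractible kernel $1+M_n(\mathrm{rad}(A))$, which handles $\pi_{\geq 1}$, and $K_0$ is settled by idempotent lifting, which you spell out (functional calculus and the conjugating element $ee'+(1-e)(1-e')$) in more detail than the paper. One cosmetic point: excision, announced in your first sentence, plays no role in the argument.
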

\begin{proof}
It follows from the proof of \cite[Prop.~2.3.11.]{Karoubi68}, which we now spell out.
    The Banach functor $\ModEn_A \to \ModEn_{A/{\mathrm{rad}(A)}}$ is a Serre functor.
    It hence induces a surjective fibration
    \begin{equation}
    \label{eq:radicalquotient}
         \GL_n(A) \to \GL_n(A/\mathrm{rad}(A)) \,,
    \end{equation}
    whose kernel is the subspace $\{1+x:x\in M_n(\mathrm{rad}(A))\}$. 
    Note that any such matrix is indeed invertible and that this subspace is contractible.
    Therefore, \eqref{eq:radicalquotient} is a homotopy equivalence.
    We can conclude that $K_i(A) \cong K_{i}(A/\mathrm{rad}(A))$ for $i\geq 1$.
    The $K_0$ case follows by idempotent lifting (or by Bott periodicity).
\end{proof}

\subsubsection{Filtered colimits}
\label{sec:filteredcolimits}
The category of Banach algebras and contractive homomorphisms $\Alg(\Ban_{\leq})$ has all limits and colimits.
The filtered colimit can be computed in the underlying Banach space~\cite{MR4370428} and so is easy to describe: Let $I\ni i\mapsto A_i$ be a filtered system of Banach algebras with contractive transition maps $\phi_{ij}:A_i\to A_j$.
The filtered colimit of underlying sets is the $\R$-algebra $A^{\mathrm{alg}}_\infty=\bigcup_i A_i/\sim$, where the equivalence relation is $a_i\sim \phi_{ij}(a_i)$.
It has a submultiplicative seminorm given by $\lVert [a_i]\rVert = \inf_{j\geq i} \lVert \phi_{ij}(a_i)\rVert_{A_j}$\,.
Let $N=\{a\in A^{\mathrm{alg}}_\infty:\lVert a\rVert =0 \}$.
This defines a closed two-sided ideal.
The filtered colimit is the completion of the quotient with respect to the norm:
\begin{equation}
    \label{eq:filteredcolimdef}
    \colim_I A_i = \overline{A^{\mathrm{alg}}_\infty/N}^{\lVert-\rVert} \,.
\end{equation}

\begin{warning}
    Consider the dual numbers $\C[\epsilon]/(\epsilon^2)$ with norm $|a+b\epsilon|=|a|+|b|$.
    Fix $1>t>0$ and consider the contractive homomorphism $g_t:\epsilon \mapsto t\epsilon$.
    The filtered system $\N \to \Alg(\Ban_{\leq})$ where each transition map is $g_t$ has colimit $\C$.
    In $\Alg(\Ban)$, $g_t$ has an inverse and the actual colimit is $\C[\epsilon]/(\epsilon^2)$.
\end{warning}
\begin{convention}
    By the term \emph{filtered colimit of Banach algebras} we refer to the filtered colimit in $\Alg(\Ban_{\leq})$ as in~\eqref{eq:filteredcolimdef} for which we will write 
    $  \colim^{\leq}_I A_i $.
\end{convention}

A very general group level version of the following Proposition for local Banach algebras and injective transition maps appears in~\cite[Thm~2.62]{MR2340673}.
For complex $C^*$-algebras, a proof of a spectral version along the same lines may be found in~\cite[Prop.~10.3]{bunkeKKEth}.
\begin{proposition}
    \label{prop:filteredcolimits}
    Let $i\in I\mapsto A_i$ be a filtered system of Banach algebras with contractive transition maps $\phi_{ij}\colon A_i\to A_j$.
    Then there is a natural equivalence
    \begin{equation}
        \label{eq:filteredcolim}
        \colim_{i\in I} k_{A_i} \xrightarrow{\simeq} k_{\colim^\leq_{i\in I} A_i} \,.
    \end{equation}
\end{proposition}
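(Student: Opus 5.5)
We show that the comparison map induces an isomorphism on every homotopy group, working with the telescopic description of Lemma~\ref{lem:modulestelescopic}. Write $A := \colim^\leq_I A_i$, with canonical contractive maps $\phi_i\colon A_i\to A$. Since each $N^{\mathrm{hc}}(\ModEn_{A_i}^{\cong})$ is telescopic, $k_{A_i}$ has underlying space $K_0(A_i)\times B\GL_\infty(A_i)$, and similarly for $A$. Group completion commutes with colimits, and filtered colimits in $\Sp_{\geq 0}$ are computed on underlying spaces. It therefore suffices to prove two statements: that $\colim_I K_0(A_i)\to K_0(A)$ is a bijection, and that $\colim_I \GL_\infty(A_i)\to \GL_\infty(A)$ is a weak equivalence of spaces. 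The colimit here is taken in $\Spc$, so it is a homotopy colimit of topological groups, computed as a filtered colimit of the homotopy groups. By the identification $\pi_n B\GL_\infty = \pi_{n-1}\GL_\infty$, the second statement amounts to showing that $\colim_I \pi_n \GL_m(A_i)\to \pi_n\GL_m(A)$ is a bijection for all $n$, after stabilizing in $m$.

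The key analytic input is that $A^{\mathrm{alg}}_\infty/N$ is dense in $A$, and likewise at the matrix level, where $M_m(A)$ is the contractive filtered colimit of the $M_m(A_i)$. Invertibility and idempotence are open conditions that are stable under small perturbation:
\begin{itemize}
\item an element close to an invertible element is invertible;
\item an almost-idempotent $x$ with small $\|x^2-x\|$ can be moved by holomorphic functional calculus (a contour integral around $1$) to a genuine idempotent that depends continuously on $x$.
\end{itemize}

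\textbf{Surjectivity.} Take a map $S^n\to \GL_m(A)$. By compactness, and using a partition of unity subordinate to a finite cover together with density, approximate it uniformly by a map $S^n\to M_m(A^{\mathrm{alg}}_\infty/N)$ that factors through a finite-dimensional subspace. Choose finitely many lifts to some $M_m(A_j)$. The approximant is invertible in $A$, but its lift need not be invertible in $A_j$. This is handled using the definition of the seminorm as an infimum: for a finite-dimensional family, norms in $A_k$ for large $k$ converge to the norms in $A$, uniformly on compact sets in the finite-dimensional span. So we push forward to $A_k$ with $k$ large and then correct to an invertible element. The same argument applies to $K_0$ via idempotents.

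\textbf{Injectivity.} Apply the surjectivity argument to homotopies $S^n\times[0,1]\to \GL_m(A)$ whose boundary values already lift.

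I expect the main obstacle to be exactly this uniform norm convergence. For a fixed element $a\in A_i$ we have $\|\phi_{ij}(a)\|\searrow \|\phi_i(a)\|$. For a finite-dimensional compact family we need uniform convergence, and that follows from Dini's theorem, since the functions $a\mapsto\|\phi_{ij}(a)\|$ are decreasing in $j$ and continuous.

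We also need to control inverses. If $x\in M_m(A_k)$ maps to an invertible element of $M_m(A)$ with inverse approximated by some $y$, then $\|xy-1\|$ becomes small in $A_k$ for $k$ large, and a Neumann series corrects $x$ to an invertible element. Naturality of the resulting map follows from the construction.
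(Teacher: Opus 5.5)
Your proof is correct. Its analytic core matches the paper's: approximate by images of finite stages, then use the infimum formula for the colimit norm to upgrade approximate invertibility in $A$ to genuine invertibility in some $A_k$. It differs from the paper in two places.

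\textbf{Handling $\pi_0$.} The paper proves that $\colim_I \GL_n(A_i)\to\GL_n(A)$ is a weak equivalence and then obtains $\pi_0$ from Bott periodicity. You instead treat $K_0$ directly by perturbing almost-idempotents with holomorphic functional calculus. This is the ``longer version'' mentioned in the paper's footnote, and its advantage is that your argument does not depend on Bott periodicity.

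\textbf{Handling families.} The paper rewrites $[X,\GL_n(A_i)]$ as $\pi_0\GL_n(C(X;A_i))$ and then says one may take $X$ to be a point. That step tacitly uses $C(X;\colim^{\leq}_I A_i)\cong\colim^{\leq}_I C(X;A_i)$, which needs two things:
\begin{itemize}
\item a partition-of-unity density argument;
\item the identity $\inf_j\sup_{x\in X}\|\phi_{ij}(f(x))\|=\sup_{x\in X}\inf_j\|\phi_{ij}(f(x))\|$ for continuous $f\colon X\to A_i$.
\end{itemize}
Your direct treatment of maps $S^n\to\GL_m(A)$ via Dini's theorem supplies exactly this uniformity. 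It can be simplified: all the functions $a\mapsto\|\phi_{ij}(a)\|$ are $1$-Lipschitz by contractivity, so pointwise convergence is automatically uniform on compacta. So your version is more self-contained, while the paper's is shorter and more modular.

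\textbf{One correction.} A small $\|xy-1\|$ only gives $x$ a right inverse, which in a general Banach algebra does not make $x$ invertible. You also need $\|yx-1\|$ small, which follows by the same argument. Once both hold, $x$ itself is invertible, so no Neumann-series ``correction'' of $x$ is needed.
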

\begin{proof}
    Denote the filtered colimit by $A_\infty=\colim^\leq_I A_i$ with maps $\phi_i\colon A_i\to A_\infty$.
    Note that $\bigcup \phi_i(A_i)$ defines a dense subset and the norm of an element $\phi_i(a_i), a_i\in A_i$ is given by $\lVert \phi_i(a_i)\rVert_{A_\infty}=\inf_{j\geq i} \lVert \phi_{ij}(a_i)\rVert_{A_j}$.
    The claim follows once we have proven that
    \begin{equation*}
        \colim_I \GL_n(A_i) \to \GL_n(A_\infty)
    \end{equation*}
    is a weak homotopy equivalence of topological spaces for all $k\in \N\cup \{\infty\}$, since then $\colim_I \Omega k_{A_i}=\colim_I \GL_{\infty}(A_i)$ is equivalent to $\Omega k_{A_\infty}=\GL_\infty(A_\infty)$.
    From this one can conclude using Bott periodicity that a fortiori the map~\eqref{eq:filteredcolim} is an equivalence.\footnote{There is a longer version of the proof that avoids using Bott periodicity and gives a separate argument on $\pi_0$ using lifting of idempotents.}

    Let $X$ be a compact Hausdorff space. We have to show that the map between homotopy classes of maps
    \begin{equation*}
        \colim_I [X,\GL_n(A_i)] \to [X,\GL_n(A_\infty)]
    \end{equation*}
    is a bijection.
    We can rewrite this as 
    \begin{equation*}
        \colim_I\pi_0 \GL_n(C(X;A_i)) \to \pi_0 \GL_n(C(X;A_\infty)) \,.
    \end{equation*}
    We may therefore assume that $X$ is a point.
    To show surjectivity, we let $u\in M_k(A_\infty)$ be invertible.
    We prove that for $\epsilon>0$ there is $j\in I$ and $v\in \GL_n(A_j)$ such that $\lVert \phi_j(v)-u\rVert_{A_\infty} < \epsilon$.
    Since invertible elements are locally path-connected, choosing $\epsilon$ small yields a path from $\phi_j(v)$ to $u$.
    
    Since the images $\phi_i(M_k(A_i))$ are dense in $M_k(A_\infty)$, for any $\epsilon>0$ there exists $i\in I,v\in M_k(A_i)$ with $\lVert \phi_i(v)-u\rVert_{A_\infty} < \epsilon$.
    We claim that there exists $j\in I$ such that $\phi_{ij}(v)$ is invertible.
    Since the invertible elements are open in a Banach algebra, we may assume that $\phi_i(v)$ is invertible in $M_k(A_\infty)$ by choosing $\epsilon$ small enough.
    Again, there is a $j\in I, j\geq i$ and $v'\in M_k(A_j)$ with $\phi_j(v')$ close to $\phi_i(v)^{-1}$.
    Without loss of generality, we assume $i=j$, by trading $v$ for $\phi_{ij}(v)$.
    Also we can assume that $j$ is large enough so that
    \begin{align*}
        \lVert vv' -1\rVert_{M_k(A_j)} &\leq \lVert \phi_j(v)\phi_j(v')-1\rVert_{M_k(A_\infty)}+\epsilon \\
        & \leq \lVert \phi_j(v)(\phi_j(v')-\phi_j(v)^{-1})\rVert_{M_k(A_\infty)} \leq 2\epsilon \,.
    \end{align*}
    Since $vv'$ is close to $1$, it is invertible in the Banach algebra $M_k(A_j)$.
    This proves that $v$ has a right inverse. Similarly, one proves that it has a left inverse, i.e.\ $v\in \GL_n(A_j)$.
    This proves surjectivity.
    
    To prove injectivity, let $u_t\colon [0,1]\to \GL_n(A_\infty)$ be a path from $u_0=\phi_i(v)$ to $u_1=\phi_{i'}(v')$.
    We may immediately assume $i=i'$.
    By the above, applied to the filtered system $C([0,1];A_i)$, by possibly enlarging $j$ there is a path $v_t\in C([0,1];M_k(A_j))$ with $\sup_t \lVert \phi_j(v_t)-u_t \rVert_{M_k(A_\infty)} \leq \epsilon$.
    By choosing $j$ large enough we can guarantee $\lVert v-v_0 \rVert_{M_k(A_j)} \leq \lVert \phi_j(v-v_0)\rVert_{M_k(A_\infty)}+\epsilon \leq 2\epsilon$ and similarly $\lVert v'-v_1\rVert_{M_k(A_j)}\leq 2\epsilon$.
    Since again $\GL_n(A_j)$ is locally path-connected, there is a path in $\GL_n(A_j)$ from $v$ to $v_0$ to $v_1$ to $v'$.
    This proves injectivity.
\end{proof}

\subsubsection{Topological \texorpdfstring{$K$}{K}-theory via Kan extension}
\label{sec:kan}
Here we prove that connective topological $K$-theory may be characterized as the initial homotopy invariant functor with a natural transformation $\mathcal{K}^{\mathrm{alg}}\to k$.

\begin{equation*}\begin{tikzcd}
	\Alg(\Ban) && {\Sp_{\geq 0}} \\
	\\
	\BanAlgInf
	\arrow["{\mathcal{K}^{\mathrm{alg}}}", from=1-1, to=1-3]
	\arrow["i"', from=1-1, to=3-1]
	\arrow[""{name=0, anchor=center, inner sep=0}, "k"', from=3-1, to=1-3]
	\arrow[Rightarrow, from=1-1, to=0,shorten >=4, shorten <=2]
\end{tikzcd} \,.\end{equation*}
For every cocomplete $(\infty,1)$-category $\calD$ there is an adjunction
\begin{equation*}
    \begin{tikzcd}
        \Fun(\BanAlgInf, \calD) \ar[r,hookrightarrow,shift right=1ex,"i^*"'] & \Fun(\Alg(\Ban), \calD) \ar[l,"i_!"',shift right=1ex]
    \end{tikzcd} \,, 
\end{equation*}
where the precomposition functor $i^*$ is an equivalence onto the subcategory of functors mapping $W$ to equivalences in $\calD$.
The left Kan extension $i_!F$ can be interpreted as the initial homotopy invariant functor with a map $F\to i^*i_!F$.
If $\calD$ is (presentably) symmetric monoidal, then $i_!$ is symmetric monoidal with respect to Day convolution~\cite[Lemma~3.4]{MR4679205} and hence refines to a map between categories of lax symmetric monoidal functors.

\begin{theorem}
    \label{thm:kanextension}
    The left Kan extension $i_!\mathcal{K}^{\mathrm{alg}}$ can naturally be identified with connective topological $K$-theory $k$.
    That is, $k$ is the initial homotopy invariant functor on Banach algebras under algebraic $K$-theory $\mathcal{K}^{\mathrm{alg}}$.
\end{theorem}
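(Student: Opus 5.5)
The plan is to compute the left Kan extension $i_!\mathcal{K}^{\mathrm{alg}}$ along the localization $i\colon \Alg(\Ban)\to\BanAlgInf$ by an explicit simplicial resolution, and then match the result with formula~\eqref{eq:def2}, which Proposition~\ref{prop:compareK} already identifies with $k_A$. Proposition~\ref{prop:localizationofBanAlg} and Remark~\ref{rmk:banalgsimplicialdescription} give the model
\[
\BanAlgInf\simeq N^{\mathrm{hc}}\BanAlgEn\simeq\colim_{[n]\in\Delta^{\op}} j(\BanAlgEn)_n ,
\]
in which morphisms of $j(\BanAlgEn)_n$ from $A$ to $B$ are homomorphisms $A\to C(\Delta^n;B)$. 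I will therefore prove that the functor $F(A):=\colim_{\Delta^{\op}}\mathcal{K}^{\mathrm{alg}}(C(\Delta^\bullet;A))$, together with the evident map $\mathcal{K}^{\mathrm{alg}}\to F$ from the $0$-simplices, has the universal property of $i^*i_!\mathcal{K}^{\mathrm{alg}}$.

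The argument has three steps.

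\textbf{Step 1: $F$ is homotopy invariant.} It suffices to show that $A\to C(\Delta^1;A)$ induces an equivalence. For this I will use the standard extra-degeneracy argument for simplicial objects of the form $C(\Delta^\bullet\times\Delta^1;A)$. A straight-line contraction of $\Delta^1$ gives a simplicial homotopy between the identity of $C(\Delta^\bullet;C(\Delta^1;A))$ and the composite through $C(\Delta^\bullet;A)$. Simplicial homotopies become equivalences after geometric realization, and realization is applied levelwise after $\mathcal{K}^{\mathrm{alg}}$. The same argument works for a general homotopy $H\colon A\to C(\Delta^1;B)$, so $F$ inverts $W$ and hence factors through $\BanAlgInf$.

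\textbf{Step 2: $F$ is initial.} Let $G$ be homotopy invariant with a transformation $\mathcal{K}^{\mathrm{alg}}\to G$. Then
\[
\mathcal{K}^{\mathrm{alg}}(C(\Delta^n;A))\to G(C(\Delta^n;A))\simeq G(A),
\]
and these maps assemble into a map $F(A)\to|G(A)|_{\mathrm{const}}=G(A)$. This produces a map $F\to G$ under $\mathcal{K}^{\mathrm{alg}}$. To show it is unique up to contractible choice, I will argue abstractly that $i^*F\simeq i^*i_!\mathcal{K}^{\mathrm{alg}}$, via a pointwise formula for the Kan extension.

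For a localization one has
\[
i_!\mathcal{K}^{\mathrm{alg}}(A)\simeq\colim_{(B,\ i(B)\to A)}\mathcal{K}^{\mathrm{alg}}(B),
\]
with the colimit taken over the comma category $\Alg(\Ban)\times_{\BanAlgInf}(\BanAlgInf)_{/A}$. I will show that the simplicial diagram $[n]\mapsto (C(\Delta^n;A),\ \ev)$ is cofinal in this comma category. The key input is that morphisms into $A$ in $\BanAlgInf$ are homotopy classes coherently represented by maps into $C(\Delta^n;A)$; this is the adjunction~\eqref{eq:adjunctionhomomorphisms} used in Proposition~\ref{prop:localizationofBanAlg}. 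This is the same argument as in the proof of \cite[Prop.~3.5]{bunkeKKEth}, where the analogous Kan extension formula $\colim_{\Delta^{\op}}F(C(\Delta^\bullet;-))$ appears, and I will follow it.

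\textbf{Step 3: identification and monoidality.} Proposition~\ref{prop:compareK} identifies $F$ with $k$. Lax monoidality then follows because $i_!$ is symmetric monoidal for Day convolution and $\mathcal{K}^{\mathrm{alg}}$ is lax monoidal by Theorem~\ref{th:algebraicktheory}.

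The main obstacle is the cofinality statement in Step 2: making precise that the Dwyer--Kan localization at $W$ is computed by the simplicial resolution $C(\Delta^\bullet;-)$, as opposed to the easier fact that this resolution merely produces a homotopy invariant functor. I would try first to deduce it directly from Remark~\ref{rmk:banalgsimplicialdescription}, writing the Kan extension as a realization of Kan extensions along the levelwise inclusions $\Alg(\Ban)\to j(\BanAlgEn)_n$.
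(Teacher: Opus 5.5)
Your proposal is correct and follows essentially the paper's route. The paper computes $i^*i_!\mathcal{K}^{\mathrm{alg}}(A)\simeq\colim_{\Delta^{\op}}\mathcal{K}^{\mathrm{alg}}(C(\Delta^\bullet;A))$ from the resolution of Lemma~\ref{lem:simplicialresBanAlg} via the coend formula and the Derived Mapping Space Lemma (Lemma~\ref{lem:leftKanextension}), then concludes with Proposition~\ref{prop:compareK}; your comma-category cofinality reduces via Quillen's Theorem~A to the same identification $\colim_n\Hom(B,C(\Delta^n;A))\simeq\Hom_{\BanAlgInf}(B,A)$, provided you take the structure maps $C(\Delta^n;A)\to A$ to be inverses of the constant maps in $\BanAlgInf$, since vertex evaluations are not compatible with all face maps, and your Step~1 and hand-built map $F\to G$ are harmless but redundant because the pointwise formula already exhibits $F$ as the restriction of a functor on $\BanAlgInf$.
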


For the proof we use simplicial resolutions:

\begin{definition}
    \label{def:resolution}
    Let $(\calC,W)$ be a relative category, i.e.\ a category with a subcategory $W$ of ``weak equivalences''.
    A \emph{simplicial resolution of $Y\in \calC$} is a diagram $Y_\bullet\colon \Delta^{\op} \to \calC_{Y/}$ such that 
    \begin{enumerate}
        \item The structure maps $Y\to Y_n$ are weak equivalences for all $n$.
        \item \label{cond:2}The functor $\colim_{\Delta^{\op}}\Hom_{\calC}(-,Y_\bullet)$ maps weak equivalences in $\calC$ to equivalences in $\Spc$.
    \end{enumerate}
\end{definition}

We will use the Derived Mapping Space Lemma~\ref{lem:derivedmappingspace} and its consequence Lemma~\ref{lem:leftKanextension} to compute mapping spaces in localizations via simplicial resolutions.

\begin{lemma}
    \label{lem:simplicialresBanAlg}
    For every Banach algebra $A$, the diagram 
    \begin{align*}
        \Delta^{\op} &\longrightarrow \Alg(\Ban)_{A/} \\
        [n] & \longmapsto C(\Delta^n;A)
    \end{align*}
    is a simplicial resolution.
\end{lemma}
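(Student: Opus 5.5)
We have to check the two conditions of Definition~\ref{def:resolution}. Note first that $[n]\mapsto C(\Delta^n;A)$ is a simplicial object of $\Alg(\Ban)$, since $\Delta^\bullet$ is cosimplicial in $\CHaus$ and $C(-;A)$ is contravariant. It lies under $A$ via the constant-function maps $A\to C(\Delta^n;A)$, and these are compatible with all face and degeneracy maps.

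Condition (1) is immediate. The map $c\colon A\to C(\Delta^n;A)$ has left inverse $\ev_v$ for any vertex $v$. The composite $c\circ\ev_v$ is homotopic to the identity via the straight-line contraction of $\Delta^n$ onto $v$, which is the homomorphism
\[
C(\Delta^n;A)\to C(\Delta^1;C(\Delta^n;A)),\qquad f\mapsto \bigl(s\mapsto f((1-s)x+sv)\bigr).
\]
Hence $c$ is a homotopy equivalence, so it lies in $W$.

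For condition (2), I would identify the functor $B\mapsto \colim_{\Delta^{\op}}\Hom_{\Alg(\Ban)}(B,C(\Delta^\bullet;A))$. By the adjunction~\eqref{eq:adjunctionhomomorphisms}, $\Hom(B,C(\Delta^n;A))\cong\Top(\Delta^n,\Hom(B,A))$, naturally in $[n]$ and in $B$. The simplicial set $[n]\mapsto \Hom(B,C(\Delta^n;A))$ is therefore $\Sing\Hom_{\BanAlgEn}(B,A)$, where the hom-space carries the $k$-ified compact-open topology. The colimit over $\Delta^{\op}$ of a simplicial set, viewed as a diagram of discrete spaces, is its underlying space. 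So the functor in question is the weak homotopy type of $\Hom_{\BanAlgEn}(B,A)$, i.e.\ the mapping space $\Map_{\BanAlgInf}(B,A)$ from Definition~\ref{def:BanAlgInf}.

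It remains to show that this mapping space sends homotopy equivalences $g\colon B\to B'$ to equivalences. Since $\BanAlgInf$ is the Dwyer--Kan localization at $W$ (Proposition~\ref{prop:localizationofBanAlg}), with $W$ saturated, $g$ becomes an equivalence in $\BanAlgInf$, and so $g^*$ is an equivalence on mapping spaces. Alternatively, one can argue directly. It suffices to show that homotopic maps $g_0,g_1$ induce homotopic maps $g_i^*\colon\Top$-hom spaces. A homotopy $H\colon B\to C(\Delta^1;B')$ gives a continuous map
\[
\Hom(B',A)\times\Delta^1\to \Hom(B,A),\qquad (\varphi,s)\mapsto \varphi\circ\ev_s\circ H.
\]
With homotopy inverses this makes $g^*$ a homotopy equivalence of spaces.

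The main obstacle is the continuity of this composition map in the $k$-ified compact-open topology. I would check it via the adjunction~\eqref{eq:adjunctionhomomorphisms}, testing against compact Hausdorff $X$: a family $X\to\Hom(B',A)$ is a homomorphism $B'\to C(X;A)$. Composing it with $H$ gives $B\to C(\Delta^1;B')\to C(\Delta^1\times X;A)$, which is continuous. This shows the construction is natural and continuous on compactly generated spaces.
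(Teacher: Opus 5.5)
Your proof is correct and is essentially the paper's: you identify $\colim_{\Delta^{\op}}\Hom_{\Alg(\Ban)}(B,C(\Delta^\bullet;A))$ with the weak homotopy type of $\Hom_{\BanAlgEn}(B,A)$ via the adjunction~\eqref{eq:adjunctionhomomorphisms}, and then use that homotopies of homomorphisms induce homotopies on these hom-spaces. Beyond that you only fill in what the paper leaves implicit, namely the explicit contraction for condition (1) and the continuity check the paper calls clear. Your direct argument suffices on its own, so the detour through Proposition~\ref{prop:localizationofBanAlg} is unnecessary. Saturation of $W$ also plays no role there, since every map in $W$ becomes invertible in the localization by definition.
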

\begin{proof}
    The maps $A\to C(\Delta^n;A)$ are homotopy equivalences.
    We are left to prove that 
    \begin{equation*}
        \colim_{\Delta^{\op}}\Hom_{\Alg(\Ban)}(-,C(\Delta^\bullet;A))
    \end{equation*}
preserves homotopy equivalences in the first variable.
    The algebra homomorphisms from $B$ to $A$ are topologized so that
    \begin{equation*}
         \Hom_{\Alg(\Ban)}(B,C(\Delta^n;A))\cong \Top(\Delta^n,\Hom_{\BanAlgEn}(B,A))\,.
    \end{equation*}
    Therefore we obtain an equivalence of spaces $\colim_{\Delta^{\op}} \Hom_{\Alg(\Ban)}(B,C(\Delta^n;A)) \simeq \Hom_{\BanAlgEn}(B,A)$ .
    Clearly, $\BanAlgEn$ maps homotopies of algebra homomorphisms to homotopies.
\end{proof}

\begin{proof}[Proof of Theorem \ref{thm:kanextension}]
    We know that $k$ is homotopy invariant and there is a natural transformation $\mathcal{K}^{\mathrm{alg}}\to i^*k$, which is adjoint to a natural transformation $i_!\mathcal{K}^{\mathrm{alg}}\to k$.
    We claim that this transformation is an equivalence of functors.
    Let $A\to C(\Delta^\bullet;A)$ be the simplicial resolution of Lemma~\ref{lem:simplicialresBanAlg}.
    By Lemma~\ref{lem:leftKanextension}, there is a natural equivalence $i^*i_!\mathcal{K}^{\mathrm{alg}}(A)\simeq \colim_{\Delta^{\op}}\mathcal{K}^{\mathrm{alg}}(C(\Delta^n;A))$.
    We can conclude by Proposition~\ref{prop:compareK}.
\end{proof}
\subsubsection{Extending to nonunital algebras}
\label{sec:nonunital}
Here, we explain how the theory we have developed thus far for unital Banach algebras extends to nonunital Banach algebras without any problems.

The forgetful functor from unital to nonunital Banach algebras has a left adjoint $A\mapsto A_+ = A \oplus_{\ell^1} \R$ called unitization.
\begin{equation*}
\begin{tikzcd}
	\Alg(\Ban) && {\Alg(\Ban)^{\mathrm{nu}}}
	\arrow[hook,"i"', from=1-1, to=1-3,shift right=1.3]
	\arrow["{(-)_+}"', shift right=1.3, from=1-3, to=1-1]
\end{tikzcd} \,.
\end{equation*}
The nonunital Banach algebra $0$ is both initial and terminal, i.e.\ it has a zero object.
A category with a zero object is called \emph{pointed}.
\begin{definition}
    Let $\calC$ be pointed.
    A functor $F\colon \Alg(\Ban)^{\mathrm{nu}}\to \calC$ is called \emph{reduced} if $F(0)=0$.
    It is \emph{exact} if it maps short exact sequences of Banach algebras to fiber sequences of spectra.
\end{definition}
If $F\colon \Alg(\Ban)^{\mathrm{nu}}\to \calC$ is any functor, then $\tilde{F}(A)=\fib(F(A)\to F(0))$ defines a reduced functor.
This fits into a chain of adjunctions of functor categories:
\begin{equation*}
    \begin{tikzcd}[column sep = 40]
        \Fun^{\mathrm{red}}(\Alg(\Ban)^{\mathrm{nu}},\calC) \ar[r,hookrightarrow,shift left=1ex]& \Fun(\Alg(\Ban)^{\mathrm{nu}},\calC) \ar[l,shift left=1ex,"\widetilde{(-)}"] \ar[r,shift left=1ex,"i^*"]&\Fun(\Alg(\Ban),\calC) \ar[l,shift left=1ex,"i_*=(-)_+^*"]  
    \end{tikzcd} \,. 
\end{equation*}
\begin{lemma}
\label{lem:reducedextension}
    If $F\colon \Alg(\Ban)\to \calC$ preserves filtered colimits, then so does the reduced extension $\tilde{F}\colon \Alg(\Ban)^{\mathrm{nu}} \to \calC$, where
    \begin{equation*}
        \tilde{F}(A)=\fib\left(F(A_+) \to F(\R) \right) \,.
    \end{equation*}
    If $F$ is homotopy invariant, then so is $\tilde{F}$.
    If $F$ satisfies excision, then $\tilde{F}$ is exact.
    Moreover, the adjunction restricts to an equivalence
    \begin{equation*}
        \Fun^{\mathrm{red,exact}}(\Alg(\Ban)^{\mathrm{nu}},\calC) \simeq \Fun^{\mathrm{excisive}}(\Alg(\Ban),\calC) \,.
    \end{equation*}
    This equivalence further restricts to the subcategories of homotopy invariant and filtered colimit preserving functors.
\end{lemma}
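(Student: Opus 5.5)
The plan is to treat the four claims in turn: preservation of filtered colimits, homotopy invariance, exactness, and the equivalence of functor categories. All of them reduce to formal properties of the unitization adjunction combined with the split short exact sequence $A \to A_+ \to \R$.

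\emph{Filtered colimits.} For a filtered system $A_i$ in $\Alg(\Ban)^{\mathrm{nu}}$ with contractive transition maps, unitization with the $\ell^1$-norm commutes with $\colim^{\leq}$. The reason is that $(-)_+$ is a left adjoint on the contractive categories, and one can also see it directly from the explicit description~\eqref{eq:filteredcolimdef}. Hence $F((\colim A_i)_+) \simeq \colim F((A_i)_+)$. Since fibers commute with filtered colimits in stable $\calC$ (and in $\Sp_{\geq 0}$ after the relevant coreflection), and $F(\R)$ is constant, $\tilde F$ preserves filtered colimits.

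\emph{Homotopy invariance.} A nonunital homotopy $H\colon A\to C(\Delta^1;B)$ unitizes to a unital homomorphism $A_+\to C(\Delta^1;B)_+ \subseteq C(\Delta^1;B_+)$. Therefore unitization sends nonunital homotopy equivalences to unital homotopy equivalences compatible with the augmentation to $\R$, and $\tilde F$ inherits homotopy invariance.

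\emph{Exactness.} Given a short exact sequence $I\to A\to A/I$ of nonunital algebras, I would form the square
\begin{equation*}
\begin{tikzcd}
A_+ \ar[r]\ar[d] & (A/I)_+ \ar[d]\\
\R \ar[r] & \R\,.
\end{tikzcd}
\end{equation*}
This square is not a pullback, so instead I would use the square with corners $I_+$, $A_+$, $\R$ and $(A/I)_+$. This is a pullback of Banach algebras, since $I_+=\{(a,\lambda)\}$ with $a\mapsto \lambda$ in $(A/I)_+$. Its leg $A_+\to (A/I)_+$ is surjective, so it is a Milnor square. Excision then gives $F(I_+)\simeq F(A_+)\times_{F((A/I)_+)}F(\R)$, and taking fibers over $F(\R)$ yields the fiber sequence $\tilde F(I)\to\tilde F(A)\to\tilde F(A/I)$.

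\emph{Equivalence of functor categories.} I would show that the unit and counit of the restricted adjunction $i^*\dashv \widetilde{(-)_+^*}$ are equivalences. For unital $A$ there is an isomorphism $A_+\cong A\times\R$ via $(a,\lambda)\mapsto(a+\lambda,\lambda)$, and $A\times \R$ is the pullback of $A\to 0 \leftarrow \R$ along surjections. Excision, together with $F(0)$, then gives $F(A_+)\simeq F(A)\times F(\R)$ fiberwise over $F(0)$. Hence $\tilde F(A)\simeq F(A)$ when $F(0)=0$, which I would also have to deduce; it is not automatic. The obstacle I expect is exactly this zero-ring bookkeeping. For excisive $F$ on unital algebras, the pullback $0\times_0 0$ forces nothing, so the precise statement should compare $F$ with $\fib(F\to F(0))$. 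One either includes the $F(0)$ term or notes that the relevant functors kill the zero algebra.

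In the other direction, for a reduced exact $G$, the split sequence $A\to A_+\to\R$ gives $G(A)\simeq \fib(G(A_+)\to G(\R))$. Finally, homotopy invariance and filtered colimit preservation transfer in both directions by the first two steps, together with their obvious converses.
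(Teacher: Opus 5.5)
Your proposal follows the paper's proof essentially step for step: unitization commuting with filtered colimits as a left adjoint, with fibers commuting with filtered colimits; homotopies unitized; the Milnor square with corners $I_+$, $A_+$, $\R$, $(A/I)_+$ for exactness; and, for the unit and counit, the split sequence $A\to A_+\to\R$ together with the Milnor square exhibiting $A_+\cong A\times\R$ over the zero algebra. Your caveat is also correct, and the paper's counit step passes over it silently: excision only gives $\fib(F(A_+)\to F(\R))\simeq\fib(F(A)\to F(0))$, and a constant functor at a nonzero object is excisive but is sent to $0$ by $\widetilde{(-)}$, so the equivalence really holds only for excisive functors with $F(0)\simeq 0$ (this is harmless for $k$, since the zero algebra has vanishing $K$-theory).
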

\begin{proof}
    For the first statement, note that $(-)_+$ preserves all colimits as a left adjoint and $\fib$ commutes with filtered colimits.
    Homotopy invariance of $\tilde{F}$ is immediate.
    Let $F$ satisfy excision and let $0\to A\to B\to C\to 0$ be an exact sequence of nonunital Banach algebras.
    Then there is an induced Milnor square, i.e.\ a pullback square of unital Banach algebras, in which one leg is a surjection:
    \begin{equation*}
        \begin{tikzcd}
            A_+ \ar[r]\ar[d]& \R \ar[d] \\
            B_+ \ar[r,twoheadrightarrow] & C_+
    \arrow["\lrcorner"{anchor=center, pos=0.125}, draw=none, from=1-1, to=2-2]
        \end{tikzcd} \,.
    \end{equation*}
    By excision, this induces a pullback square
    \begin{equation*}
        \begin{tikzcd}
            F(A_+) \ar[r]\ar[d]& F(\R) \ar[d] \\
            F(B_+) \ar[r] & F(C_+)
    \arrow["\lrcorner"{anchor=center, pos=0.125}, draw=none, from=1-1, to=2-2]
        \end{tikzcd} \,.
    \end{equation*}
    We obtain a fiber sequence $\tilde{F}(A)\to\tilde{F}(B) \to \tilde{F}(C)$ by the following diagram, in which the left face is a pullback since it is obtained as a fiber of two pullback squares:
    \begin{equation*}
        \begin{tikzcd}[row sep={40,between origins}, column sep={40,between origins}]
          & \tilde{F}(B) \ar[rr]\ar{dd} & & F(B_+) \ar{dd} \ar[rr]& & F(\R) \ar[dd]\\
        \tilde{F}(A) \ar[crossing over]{rr} \ar{dd}\ar[ru]\ar[dd] & & F(A_+) \ar[rr,crossing over]\ar[ru]\ar[dd]& & F(\R) \ar[ru]\ar[dd,crossing over] &\\
          & \tilde{F}(C)  \ar[rr] \ar{rr} & &  F(C_+) \ar{rr} && F(\R) \\
        0 \ar[rr,equals]\ar[ru,equals] && F(\R)\ar[ru] \ar[rr]\ar[from=uu,crossing over] && F(\R) \ar[ru] &
    \end{tikzcd} \,.
    \end{equation*}
    
    We are left to show that the unit and counit of the adjunction are equivalences.
    Let $G\colon \Alg(\Ban)^{\mathrm{nu}}\to \calC$ be exact and reduced.
    Then the unit $G(A)\to \widetilde{i^*G}(A)$ is the map
    \begin{equation*}
        G(A) \to \fib\left( G(A_+) \to G(\R) \right) \,,
    \end{equation*}
    which is an equivalence by exactness of $A\to A_+\to \R$.
    Let $F\colon \Alg(\Ban)\to \calC$ be excisive. 
    The counit is the map $i^*\tilde{F}(A) \to F(A)$ given by
    \begin{equation*}
        \fib(F(A_+) \to F(\R)) \to F(A) \,,
    \end{equation*}
    which is an equivalence since 
    \begin{equation*}
        \begin{tikzcd}
            A_+ \ar[r]\ar[d]& \R \ar[d] \\
            A \ar[r,twoheadrightarrow] & 0
    \arrow["\lrcorner"{anchor=center, pos=0.125}, draw=none, from=1-1, to=2-2]
        \end{tikzcd} 
    \end{equation*}
    is a Milnor square exhibiting $A_+$ as the product of Banach algebras $A\times \R$.
\end{proof}
\begin{corollary}
    The connective topological $K$-theory functor on unital Banach algebras extends to a reduced exact functor $k\colon \Alg(\Ban)^{\mathrm{nu}}\to \Sp_{\geq 0}$ which is homotopy invariant and preserves filtered colimits (along contractive homomorphisms).
    The extension is given by
    \begin{equation*}
        k_A=\fib( k_{A_+} \to k_{\R} ) \,.
    \end{equation*}
\end{corollary}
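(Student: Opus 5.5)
The plan is to apply Lemma~\ref{lem:reducedextension} to $F = k$ on unital Banach algebras. By Theorem~\ref{thm:omnibusktheory}, $k$ is excisive (Corollary~\ref{cor:excision}) and homotopy invariant. It also preserves filtered colimits along contractive homomorphisms (Proposition~\ref{prop:filteredcolimits}). The lemma then produces the reduced extension
\begin{equation*}
\tilde{k}(A) = \fib\left(k_{A_+} \to k_\R\right),
\end{equation*}
which is reduced, exact, homotopy invariant and filtered-colimit preserving. We define $k$ on $\Alg(\Ban)^{\mathrm{nu}}$ to be $\tilde{k}$. That it extends the unital functor is the counit equivalence $i^*\tilde{k}\simeq k$ from the same lemma, which uses the Milnor square exhibiting $A_+\cong A\times\R$.

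The lemma is stated for a general pointed target $\calC$, whereas our functor lands in $\Sp_{\geq 0}$. I would therefore check that the fiber may be computed in $\Sp$ and still land in connective spectra. The map $k_{A_+}\to k_\R$ is split by the unit $\R\to A_+$, so it is $\pi_0$-surjective. Hence its fiber in $\Sp$ is connective, and by Lemma~\ref{lem:fibercofibersequences} it agrees with the fiber in $\Sp_{\geq 0}$. The same splitting shows that for a short exact sequence the resulting fiber sequence is also a cofiber sequence. Thus exactness holds in either sense.

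The filtered colimit statement needs a short remark. Unitization is a left adjoint on the contractive categories, so $(\colim^{\leq} A_i)_+ \cong \colim^{\leq}(A_i)_+$ with the $\ell^1$-norm. The transition maps $(\phi_{ij})_+$ are again contractive, so Proposition~\ref{prop:filteredcolimits} applies to the unitized system. Fibers commute with filtered colimits in $\Sp$, and since all the fibers are connective they commute in $\Sp_{\geq0}$ as well.

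The main obstacle is only bookkeeping: the reduced-extension lemma must be applied in the contractive setting for colimits and in the bounded setting for excision and homotopy invariance. The unitization functor is compatible with both, because it preserves surjections, pullbacks and the homotopies $A\to C(\Delta^1;A)$ (note $C(\Delta^1;A)_+\to C(\Delta^1;A_+)$ is a homotopy equivalence of unital algebras). So no new input beyond the cited results should be needed.
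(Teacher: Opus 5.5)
Your argument is the paper's: the corollary is obtained by feeding $k$ into Lemma~\ref{lem:reducedextension}, with excision (Corollary~\ref{cor:excision}), homotopy invariance and Proposition~\ref{prop:filteredcolimits} as inputs. Your extra checks are fine, namely that $\fib(k_{A_+}\to k_\R)$ is connective because the augmentation is split, and that the unitized filtered system is again contractive.

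One sentence is wrong and should be deleted. The splitting of $k_{B_+}\to k_\R$ only makes $\tilde k(B)$ a direct summand of $k_{B_+}$. It says nothing about $\pi_0$-surjectivity of $\tilde k(B)\to\tilde k(C)$. So a short exact sequence does \emph{not} in general give a cofiber sequence, nor a fiber sequence in $\Sp$.

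For a counterexample, take the cone extension $0\to C_0((0,1))\to C_0([0,1))\to\R\to 0$. It yields $\tau_{\geq 0}\Sigma^{-1}ko\to 0\to ko$. This is a fiber sequence in $\Sp_{\geq 0}$, but its cofiber is $\tau_{\geq 1}ko\not\simeq ko$, and the fiber of $0\to ko$ computed in $\Sp$ is $\Sigma^{-1}ko$.

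This does not affect the corollary. ``Exact'' only asks for fiber sequences in the target $\Sp_{\geq 0}$, which is exactly what the lemma delivers.
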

\begin{example}
    Consider the real $C^*$-algebras $M_n(\R)$ with nonunital transition map $M_n(\R) \hookrightarrow M_{n+1}(\R)$.
    Then $\colim_n^{\leq} M_n(\R)=\mathbb{K}(H)$ is the $C^*$-algebra of compact operators on a separable real Hilbert space $H$.
    The contractive transition maps induce equivalences $k_{M_n(\R)} \simeq k_{M_{n+1}(\R)}$.
    We obtain $k_{\mathbb{K}(H)}\simeq ko$.
\end{example}
\subsection{Morita functoriality}
\label{sec:moritafunctoriality}
The goal of this section is to enhance the functoriality of connective $K$-theory to a lax symmetric monoidal functor 
\begin{equation*}
    \BimBanInf \to \Sp_{\geq 0}
\end{equation*} 
from a Morita $(\infty,1)$-category of Banach algebras, Banach bimodules and bimodule maps.
In particular, an $(A,B)$-bimodule $M$ induces a functor $M\otimes_B-:\ModOne_B \to \ModOne_A$ and thereby a map of spectra $k_B\to k_A$.
Our starting point is the symmetric monoidal $(2,1)$-category $\BimBanOne$ defined in Definition~\ref{def:bimbamrm}:
Firstly, since $\Sp_{\geq 0}$ is an $(\infty,1)$-category, we need to restrict to invertible bimodule maps.
Secondly, we have to restrict to $(A,B)$-bimodules $M$ that are finitely generated as $A$-modules, since only those bimodules induce functors $M \otimes_B (-) \colon \ModOne_B \to \ModOne_A$ on categories of finitely generated projective modules.

Homotopies between algebra homomorphisms $A \to B$ induce homotopies between the maps $k_A \to k_B$.
This is not yet encoded in a functor from $\BimBanOne$.
Therefore we will need to add ``continuous paths of bimodules'' as additional $2$-morphisms in $\BimBanInf$.
Also adding in higher simplices, we arrive at the $(\infty,1)$-category $\BimBanInf$ (Definition~\ref{def:BimBaninfty}).
$\BimBanInf$ is close to a bivariant $K$-theory: We prove that the group completion $\Hom_{\BimBanInf}(C(X),\mathbb{F})$ recovers the $K$-homology of the space $X$ in Theorem~\ref{th:segal}.

We will provide a universal definition of $\BimBanInf$ as a Dwyer--Kan localization, but we will also be able to identify its mapping spaces concretely as sketched above.

\begin{definition}
    A \emph{bimodule homotopy} is a morphism $M\colon A \rightsquigarrow C(\Delta^1; B)$ in $\Bim(\Ban)$, i.e.\ a $C(\Delta^1;B)$-$A$-bimodule which is finitely generated projective as a left module.
    We say that two bimodules $N_0,N_1\colon A\rightsquigarrow B$ are \emph{bimodule-homotopic} if there exists a bimodule homotopy $M$ and bimodule isomorphisms $N_0\cong \ev_0\circ M$ and $N_1\cong \ev_1 \circ M$.
    $M\colon A\rightsquigarrow B$ is a \emph{homotopy equivalence} if there exists $N\colon B\rightsquigarrow A$ and homotopies $N\circ M\simeq \id_A$, $M\circ N \simeq \id_B$.
\end{definition}
Note that the restrictions $\ev_0\circ M$ and $\ev_1\circ M$ are isomorphic as left $B$-modules by Lemma~\ref{lem:idempotents}.
Being homotopic defines an equivalence relation on isomorphism classes of bimodules which is stable under pre- and postcomposition. 
The class $W$ of bimodule homotopy equivalences satisfies the $2$-out-of-$3$-property and it is $\otimes$-stable, i.e.\ $f \otimes \id_A \in W$ if $f \in W$.
It is the smallest class with these properties containing $A\rightsquigarrow C(\Delta^1;A)$ for all $A$.

\begin{remark}
Similarly to Remark~\ref{rem:generatinghomotopy}, the class $W$ is in fact the smallest containing the bimodule $\R \rightsquigarrow C(\Delta^1)$ if we restrict to $C^*$-algebras.
\end{remark}

\begin{definition}
\label{def:BimBaninfty}
    The \emph{Morita $(\infty,1)$-category of Banach algebras and bimodules} is defined as the localization of $\BimBanOne$ at the homotopy equivalences $\BimBanInf:=\BimBanOne[W^{-1}]$.
\end{definition}

The homotopy category $\Ho(\BimBanInf)$ has objects Banach algebras and morphisms $A\rightsquigarrow B$ are bimodule-homotopy classes of $(B,A)$-bimodules that are finitely generated projective on the left.

\begin{theorem}
    \label{thm:localizationbimban}
    The localization $\BimBanOne\to \BimBanInf$ is symmetric monoidal.
    The mapping spaces in the target category are computed as 
    \begin{align}
        \label{eq:mappingspacesbimban1}
            \Hom_{\BimBanInf}(A,B) &\simeq \colim_{[n]\in\Delta^{\op}} \Hom_{\BimBanOne}(A, C(\Delta^n;B)) \\
         \label{eq:mappingspacesbimban2}
            &\simeq \bigsqcup_{[P]} \Hom_{\BanAlgInf}(A,\End_B(P)^{\op}) \sslash \mathsf{Aut}_B(P)
            \,,
    \end{align}
    where $[P]$ runs over isomorphism classes of finitely generated projective left $B$-modules and $\sslash$ denotes the homotopy quotient.
    There is a symmetric monoidal functor $\BanAlgInf \to \BimBanInf$.
\end{theorem}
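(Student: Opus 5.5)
The plan is to imitate the proof of Proposition~\ref{prop:localizationofBanAlg}, with the simplicial resolution $B\to C(\Delta^\bullet;B)$ of Lemma~\ref{lem:simplicialresBanAlg} now used in the Morita setting. Concretely, I would build a $\Top$- or $\sSet$-enriched model $\BimBanEn$ of the localization. Its objects are Banach algebras. Its mapping spaces are the geometric realizations of the simplicial groupoids
\[
[n]\mapsto \Hom_{\BimBanOne}(A,C(\Delta^n;B)).
\]
Composition is defined by tensoring over the diagonal: given $M\colon A\rightsquigarrow C(\Delta^n;B)$ and $N\colon B\rightsquigarrow C(\Delta^n;C)$, form $C(\Delta^n;N)\otimes_{C(\Delta^n;B)}M$. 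This is strictly associative only up to coherent isomorphism, so I would make the composition coherent by working with the $(2,1)$-categorical simplicial object $[n]\mapsto \BimBanOne^{(n)}$, whose morphisms $A\rightsquigarrow B$ are bimodules into $C(\Delta^n;B)$. I would then take the colimit over $\Delta^{\op}$ in $\Cat_\infty$, as in Remark~\ref{rmk:banalgsimplicialdescription}.

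Next, I need that the canonical functor $\BimBanOne\to\BimBanInf$ is a Dwyer--Kan localization at $W$. By the Derived Mapping Space Lemma~\ref{lem:derivedmappingspace}, it suffices to check that $C(\Delta^\bullet;B)$ is a simplicial resolution of $B$ in the relative $(2,1)$-category $(\BimBanOne,W)$, in the sense of Definition~\ref{def:resolution}. Condition (1) holds because $B\rightsquigarrow C(\Delta^n;B)$ is induced by a homotopy equivalence of algebras. For condition (2), I would show that $A\mapsto |\Hom_{\BimBanOne}(A,C(\Delta^\bullet;B))|$ sends bimodule homotopies to equivalences. A homotopy $M\colon A\rightsquigarrow C(\Delta^1;A')$, composed with the $C(\Delta^\bullet)$-structure, gives a simplicial homotopy by the usual prism decomposition of $\Delta^n\times\Delta^1$. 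This is exactly where one uses that the mapping spaces are built from $C(\Delta^\bullet;-)$. This gives~\eqref{eq:mappingspacesbimban1}. Symmetric monoidality then follows from $\otimes$-stability of $W$, as in Proposition~\ref{prop:localizationofBanAlg}: the localization of a symmetric monoidal category at a $\otimes$-stable class is symmetric monoidal. The functor $\BanAlgInf\to\BimBanInf$ is obtained from the universal property of $\BanAlgInf$ (Proposition~\ref{prop:localizationofBanAlg}). To apply it, note that $f\mapsto B_f$ sends homotopy equivalences of algebras to bimodule homotopy equivalences.

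For~\eqref{eq:mappingspacesbimban2}, I would decompose the groupoid $\Hom_{\BimBanOne}(A,C(\Delta^n;B))$ by the isomorphism class of the underlying left $C(\Delta^n;B)$-module. A bimodule that is finitely generated projective on the left is the same as a finitely generated projective left module $P$ together with an algebra map $A\to\End(P)^{\op}$ (Convention~\ref{conv:leftright}). Thus this groupoid is $\bigsqcup_{[Q]}\Hom(A,\End(Q)^{\op})/\!\!/\Aut(Q)$, with $Q$ ranging over modules over $C(\Delta^n;B)$. By Corollary~\ref{cor:contractibleSerreSwan}, every such $Q$ is $C(\Delta^n;P)$ for a $B$-module $P$, and $\End(C(\Delta^n;P))=C(\Delta^n;\End_B(P))$. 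Hence the $n$-th term becomes
\[
\bigsqcup_{[P]}\Hom_{\Alg(\Ban)}\bigl(A,C(\Delta^n;\End_B(P)^{\op})\bigr)/\!\!/\GL(C(\Delta^n;\End_B P)).
\]
Realizing in $n$ commutes with coproducts and homotopy quotients. The realization of the numerator is $\Hom_{\BanAlgInf}(A,\End_B(P)^{\op})$ by Lemma~\ref{lem:simplicialresBanAlg}, and the realization of the group is the topological group $\mathsf{Aut}_B(P)$.

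I expect the main obstacle to be condition (2), together with the coherence of composition. Bimodules only compose up to isomorphism, so the simplicial homotopy must be constructed in simplicial $(2,1)$-categories rather than in simplicial sets. Moreover, the prism argument requires identifying $C(\Delta^1;C(\Delta^n;B))$-bimodules with families over $\Delta^n\times\Delta^1$, which uses Proposition~\ref{prop:serreswan}. The step I would attempt first is to reduce, via~\eqref{eq:mappingspacesbimban2}, to the already-established algebra case, Lemma~\ref{lem:simplicialresBanAlg}, applied to the targets $\End_B(P)^{\op}$.
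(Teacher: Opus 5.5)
Your proposal is correct and follows the paper's proof. Symmetric monoidality comes from $\otimes$-stability of $W$, and \eqref{eq:mappingspacesbimban1} from the Derived Mapping Space Lemma applied to the resolution $C(\Delta^\bullet;B)$ in $\BimBanOne$; the paper's homotopy $M\mapsto C(\Delta^1;M)\otimes_{C(\Delta^1;B')}\Phi$ followed by evaluation is exactly your prism argument. The functor $\BanAlgInf\to\BimBanInf$ comes from the universal property, and \eqref{eq:mappingspacesbimban2} from a levelwise Serre--Swan equivalence of simplicial groupoids.

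The one superfluous part is the enriched model in your first paragraph. Since $\BimBanInf$ is defined as $\BimBanOne[W^{-1}]$, the Derived Mapping Space Lemma computes its mapping spaces directly, so neither the model nor the coherence of its diagonal composition is needed.
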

\begin{proof}
    By~\cite[Prop. 4.1.7.4]{HA}, the localization is symmetric monoidal since $W$ is $\otimes$-stable.
    The derived mapping space Lemma~\ref{lem:derivedmappingspace} together with the simplicial resolution from Lemma~\ref{lem:simplicalresolution} imply  Equation~\eqref{eq:mappingspacesbimban1}.
    The symmetric monoidal functor $\Alg(\Ban)\to \BimBanOne$ maps homotopy equivalences to bimodule homotopy equivalences and hence induces a functor on localizations.

    For $P$ a finitely generated projective $B$-module, we can consider the topological action groupoid $\mathcal{A}_P=\Hom_{\BanAlgInf}(A,\End_B(P)^{\op}) \rtimes \mathsf{Aut}_B(P)$, with $\mathsf{Aut}_B(P)$ acting by conjugation.
    There is a map of simplicial groupoids
    \begin{align*}
        \bigsqcup_{[P]} \Sing(\mathcal{A}_P)_n &\longrightarrow \Hom_{\BimBanOne}(A,C(\Delta^n;B)) \,,
    \end{align*}
    which on objects maps $\rho:A\to C(\Delta^n;\End_B(P)^{\op})$ to the bimodule $(C(\Delta^n;P),\rho)$ with constant $C(\Delta^n;B)$-action and left $A$-action given by $\rho$.
    On morphisms, the morphism $(f,\rho):\rho \to f\rho f^{-1}$ for $f:\Delta^n\to  \mathsf{Aut}_B(P)$ is mapped to the bimodule isomorphism $f:(C(\Delta^n;P),\rho)\to (C(\Delta^n;P),f\rho f^{-1})$.
    This is easily seen to be natural in $[n]$ and to be essentially surjective and fully faithful, i.e.\ an equivalence.
    Therefore, the geometric realizations of both sides agree and we obtain \eqref{eq:mappingspacesbimban2}.
\end{proof}

We can consider the lax symmetric monoidal functor 
\begin{equation*}
\ModOne=\Hom_{\BimBanOne}(\mathbbm{1},-)\colon \BimBanOne \to \Cat_1^\Sigma \,.
\end{equation*}
and the composite lax symmetric monoidal algebraic $K$-theory functor 
\begin{equation*}
    \BimBanOne \xrightarrow{\ModOne} \Cat_1^\Sigma \xrightarrow{\mathcal{K}^{\mathrm{alg}}} \Sp_{\geq 0}\,,
\end{equation*}
which we also denote by $A\mapsto \mathcal{K}^{\mathrm{alg}}_A$.
\begin{definition}
\label{def:ungradedBimBanKth}
    We define the \emph{topological $K$-theory functor} $k\colon \BimBanInf \to \Sp_{\geq 0}$ as the initial homotopy invariant functor under $\mathcal{K}^{\mathrm{alg}}$, i.e.\ as the left Kan extension of $\mathcal{K}^{\mathrm{alg}}$ along $\gamma\colon \BimBanOne \to \BimBanInf$.
\end{definition}
Since the localization $\BimBanOne\to \BimBanInf$ is symmetric monoidal, $k$ admits a canonical refinement to a lax symmetric monoidal functor.
The inclusion $\Alg(\Ban) \to \BimBanOne$ induces a map $\BanAlgInf\to \BimBanInf$ upon localizing both at the homotopy equivalences.
\begin{theorem}
\label{th:Kthextension}
    The restriction of $k\colon \BimBanInf\to \Sp_{\geq 0}$ to $\BanAlgInf$ coincides with the topological $K$-theory of Banach algebras in the sense of \ref{thm:kanextension}.
\end{theorem}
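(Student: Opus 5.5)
The plan is to compute both functors pointwise using the simplicial resolution $C(\Delta^\bullet;-)$ and compare them with the formula of Theorem~\ref{thm:kanextension}. Write $\gamma\colon \BimBanOne\to\BimBanInf$ and $\gamma'\colon \Alg(\Ban)\to\BanAlgInf$ for the localizations and $\iota\colon \Alg(\Ban)\to\BimBanOne$, $\bar\iota\colon\BanAlgInf\to\BimBanInf$ for the inclusions, so that $\gamma\iota=\bar\iota\gamma'$. The mapping spaces of $\BimBanInf$ are computed in Theorem~\ref{thm:localizationbimban} through the resolution $B\mapsto C(\Delta^\bullet;B)$, which is a simplicial resolution in $\BimBanOne$ (Lemma~\ref{lem:simplicalresolution}). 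Lemma~\ref{lem:leftKanextension} therefore gives a natural equivalence
\begin{equation*}
    (\gamma_!\mathcal{K}^{\mathrm{alg}})(B)\simeq \colim_{[n]\in\Delta^{\op}}\mathcal{K}^{\mathrm{alg}}(\ModOne_{C(\Delta^n;B)}) = \colim_{\Delta^{\op}}\mathcal{K}^{\mathrm{alg}}(C(\Delta^n;B)) \,,
\end{equation*}
because $\ModOne=\Hom_{\BimBanOne}(\mathbb{F},-)$. By Proposition~\ref{prop:compareK} and Theorem~\ref{thm:kanextension}, the right-hand side is $k_B=(\gamma'_!\mathcal{K}^{\mathrm{alg}})(B)$. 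This gives the objectwise identification.

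To make the identification natural, I would construct a comparison map rather than just match values. The composite $\bar\iota^*\gamma_!\mathcal{K}^{\mathrm{alg}}$ is homotopy invariant, since it is a functor out of $\BanAlgInf$. Precomposing the unit $\mathcal{K}^{\mathrm{alg}}\to\gamma^*\gamma_!\mathcal{K}^{\mathrm{alg}}$ with $\iota$ gives a transformation $\mathcal{K}^{\mathrm{alg}}\to\gamma'^*\bar\iota^*\gamma_!\mathcal{K}^{\mathrm{alg}}$ of functors on $\Alg(\Ban)$. By the universal property of $\gamma'_!$, this is adjoint to a transformation $\Phi\colon k=\gamma'_!\mathcal{K}^{\mathrm{alg}}\to\bar\iota^*\gamma_!\mathcal{K}^{\mathrm{alg}}$. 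It then suffices to check that $\Phi_B$ is an equivalence for each $B$. Both sides are geometric realizations over the same resolution $C(\Delta^\bullet;B)$: the resolution of Lemma~\ref{lem:simplicialresBanAlg} is the image under $\iota$ of the resolution in $\BimBanOne$. So I expect $\Phi_B$ to be the colimit over $\Delta^{\op}$ of the identity maps $\mathcal{K}^{\mathrm{alg}}(C(\Delta^n;B))\to\mathcal{K}^{\mathrm{alg}}(C(\Delta^n;B))$.

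The main obstacle is this last compatibility: showing that the equivalences produced by Lemma~\ref{lem:leftKanextension} in the two localizations are compatible with $\bar\iota$ and with $\Phi$. My approach would use that the formula in Lemma~\ref{lem:leftKanextension} comes from the mapping space identification $\Hom(\gamma -, \gamma B)\simeq\colim_n\Hom(-,C(\Delta^n;B))$. The map $\bar\iota$ on mapping spaces sends $\colim_n\Hom_{\Alg(\Ban)}(A,C(\Delta^n;B))$ to $\colim_n\Hom_{\BimBanOne}(A,C(\Delta^n;B))$ levelwise via $f\mapsto C(\Delta^n;B)_f$. Because the resolutions match under $\iota$, the pointwise Kan extension formulas (colimits over comma categories, which are cofinally replaced by the resolution) are intertwined by $\iota$. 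This yields the required compatibility.

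A fallback, if the cofinality bookkeeping is messy, is to argue directly. Homotopy invariance gives a factorization $\Alg(\Ban)\to\BanAlgInf$, and $\Phi$ is a map of functors whose underlying values are identified by the two colimit formulas with the same natural $\Delta^{\op}$-diagram $[n]\mapsto\mathcal{K}^{\mathrm{alg}}(C(\Delta^n;B))$. The transformation $\mathcal{K}^{\mathrm{alg}}\to k$ is the inclusion of the $0$-simplices in both cases, so $\Phi$ restricts to the identity on that diagram and is an equivalence on colimits.
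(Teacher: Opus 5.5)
Your proposal is correct and follows the paper's own route. The paper's proof only notes that $A \rightsquigarrow C(\Delta^\bullet;A)$ is a simplicial resolution in $\BimBanOne$ (Lemma~\ref{lem:simplicalresolution}) and then argues as in Theorem~\ref{thm:kanextension}, via Lemma~\ref{lem:leftKanextension} and Proposition~\ref{prop:compareK}; your construction of $\Phi$ by adjunction, and your check that it agrees with the unit maps on the resolution, make explicit the naturality the paper leaves implicit.
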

\begin{proof}
    As in the proof of Theorem~\ref{thm:kanextension}, it is sufficient to prove that $A\rightsquigarrow C(\Delta^n;A)$ is a simplicial resolution in the category $\BimBanOne$.
    This is the content of Lemma~\ref{lem:simplicalresolution}.
\end{proof}

\begin{warning}
    It is not true that $k$ is excisive on the $(2,1)$-category $\BimBanOne$.
    For example, let $M_1,M_2$ be nonisomorphic irreducible modules of a finite-dimensional algebra $A$.
    Then
    \begin{equation*}
    \begin{tikzcd}[column sep=45]
        0 \ar[r] \ar[d] & \VectEn \ar[d,"V \mapsto V \otimes M_1"]
        \\
        \VectEn \ar[r,"V \mapsto V \otimes M_2"] & \ModEn_A
    \arrow["\lrcorner"{anchor=center, pos=0.125}, draw=none, from=1-1, to=2-2]
    \end{tikzcd}
    \end{equation*}
    is a pullback of Banach categories.
    Indeed, the iso-comma category has no nonzero objects, as those would give isomorphisms $M_1^{k_1} \cong M_2^{k_2}$ for some positive integers $k_1,k_2$.
    Observe that for the functors to $\ModOne_A$ to be Serre functors, we need that $\End(\R^k)\twoheadrightarrow\End_A(M_i^k)$ surjects, which happens if and only if $\End_A(M_i)\cong \R$ is an irreducible representation of real type.
    By taking $A = \R^{\times s}$ we see that applying connective $K$-theory to the above diagram yields the square
    \begin{equation*}
    \begin{tikzcd}
        0 \ar[r] \ar[d] & ko \ar[d]
        \\
        ko \ar[r] & k_A \cong ko^{\oplus s}
    \end{tikzcd} \,,
    \end{equation*}
    which is obviously not a pullback.
    We explain this failure by noting that algebra homomorphisms have the additional property of inducing quasi-surjective functors.
\end{warning}

We will now consider the relative category $(\calC,W)$ to be $\BimBanOne$ equipped with the class of homotopy equivalences.
\begin{lemma}
    \label{lem:simplicalresolution}
    For every Banach algebra $A$, the diagram 
    \begin{align*}
        \Delta^{\op} &\longrightarrow (\BimBanOne)_{A/} \\
        [n] &\longmapsto C(\Delta^n;A)
    \end{align*}
    is a simplicial resolution.
\end{lemma}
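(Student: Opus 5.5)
We follow the proof of Lemma~\ref{lem:simplicialresBanAlg}. For the first condition of Definition~\ref{def:resolution}, observe that the structure map $A \rightsquigarrow C(\Delta^n;A)$ is the image of the algebra homomorphism $A \to C(\Delta^n;A)$ (constant functions) under $\Alg(\Ban)\to\BimBanOne$. This homomorphism is a homotopy equivalence with inverse $\ev_v$ at a vertex. The straight-line contraction of $\Delta^n$ gives a homomorphism homotopy $\id \simeq \mathrm{const}\circ\ev_v$, and homomorphism homotopies induce bimodule homotopies via $B_f$. Hence $A\rightsquigarrow C(\Delta^n;A)$ lies in $W$.

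The main work is the second condition. For any algebra $B$ we must show that
\begin{equation*}
    H(B):=\colim_{[n]\in\Delta^{\op}} \Hom_{\BimBanOne}(B, C(\Delta^n;A))
\end{equation*}
sends bimodule homotopy equivalences $B\rightsquigarrow B'$ to equivalences of spaces.

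The first step is to identify $H(B)$ concretely. Each $\Hom_{\BimBanOne}(B,C(\Delta^n;A))$ is the groupoid of $(C(\Delta^n;A),B)$-bimodules that are finitely generated projective on the left. By Proposition~\ref{prop:serreswan}, together with Lemma~\ref{lem:idempotents} on the contractible $\Delta^n$, such a left module is isomorphic to $C(\Delta^n;P)$ for a finitely generated projective $A$-module $P$. The right $B$-action is then a homomorphism $B\to C(\Delta^n;\End_A(P))$, with the appropriate $\op$ from Convention~\ref{conv:leftright}. Arguing as in the second half of the proof of Theorem~\ref{thm:localizationbimban}, we identify this simplicial groupoid, levelwise up to equivalence, with $\bigsqcup_{[P]}\Sing_n$ of the topological action groupoid $\Hom_{\BanAlgEn}(B,\End_A(P)^{\op})\rtimes \mathsf{Aut}_A(P)$. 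Taking the colimit over $\Delta^{\op}$ via Lemma~\ref{lem:homotopycoherentnerve} then gives
\begin{equation*}
H(B)\simeq\bigsqcup_{[P]}\Hom_{\BanAlgEn}(B,\End_A(P)^{\op})\sslash\mathsf{Aut}_A(P).
\end{equation*}
In particular, $H(-)$ is simply the geometric realization of the simplicial-groupoid-valued presheaf $[n]\mapsto\Hom_{\BimBanOne}(-,C(\Delta^n;A))$.

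The second step is functoriality and homotopy invariance. Precomposition with a bimodule $M\colon B'\rightsquigarrow B$ acts levelwise by $N\mapsto N\otimes_B M$, so $H$ is a functor on $\BimBanOne$, and isomorphic bimodules induce homotopic maps. Since $W$ is generated under $2$-out-of-$3$ by bimodule homotopies, it suffices to show that bimodule-homotopic $N_0,N_1\colon B'\rightsquigarrow B$ induce homotopic maps $H(B)\to H(B')$. It is enough to prove that the two maps $H(B)\to H(C(\Delta^1;B))\cdots$, i.e.\ precomposition with $\ev_0$ and $\ev_1$ viewed as bimodules, are homotopic, since then precomposing with a homotopy $M$ finishes the argument. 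To do this we build an explicit simplicial homotopy. A $(C(\Delta^n;A),C(\Delta^1;B))$-bimodule $N$ is sent to a family over $\Delta^n\times\Delta^1$. Using the standard simplicial decomposition of $\Delta^n\times\Delta^1$ together with the adjunction $C(\Delta^n;C(\Delta^1;-))\cong C(\Delta^n\times\Delta^1;-)$ (Remark~\ref{rmk:banalgsimplicialdescription}), we get a simplicial homotopy $\Delta^1\times(\text{simplicial groupoid})\to(\text{simplicial groupoid})$ between $\ev_0^*$ and $\ev_1^*$. Equivalently, in the concrete model of the first step, precomposition with $B\to C(\Delta^1;B)$ is a homotopy equivalence $\Hom(C(\Delta^1;B),\End_A(P)^{\op})\to\Hom(B,\End_A(P)^{\op})$ on topological mapping spaces, compatibly with the $\mathsf{Aut}_A(P)$-action, exactly as in Lemma~\ref{lem:simplicialresBanAlg}.

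The main obstacle is that a general bimodule $M\colon B'\rightsquigarrow B$ is not a homomorphism. Its action on the concrete model sends $(P,\rho)$ to $(P\otimes_\rho M, \ldots)$, which changes $P$, so the level-by-level comparison must be done with the simplicial-groupoid description rather than the homomorphism-space description. We expect this to be manageable, because the simplicial homotopy above is defined purely in terms of tensoring with $C(\Delta^\bullet;-)$ and $\otimes_B$ commutes with it.
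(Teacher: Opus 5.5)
Your proof is correct, but it is organized differently from the paper's. The paper handles an arbitrary bimodule homotopy $\Phi\colon B\rightsquigarrow C(\Delta^1;B')$ in one stroke. It sends $M$ to $C(\Delta^1;M)\otimes_{C(\Delta^1;B')}\Phi$, applies this levelwise along the resolution, and then composes with an evaluation map ``$\Hom(B,C(\Delta^1;A))\times\Delta^1\to\Hom(B,A)$'' on realizations, which it does not spell out. You instead use only that $H$ is a presheaf on $\BimBanOne$ to write $H(N_i)\simeq H(M)\circ H(\ev_i)$. This reduces everything to the universal homotopy $\id\colon C(\Delta^1;B)\to C(\Delta^1;B)$ between the \emph{homomorphisms} $\ev_0,\ev_1$.

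Your first treatment of that universal case, via the prism decomposition of $\Delta^n\times\Delta^1$, is exactly the evaluation map the paper leaves implicit, so there you are more complete. Your second treatment is genuinely different. You front-load the action-groupoid model from the second half of Theorem~\ref{thm:localizationbimban}; this is not circular, since that identification does not use the present lemma. For the constant inclusion $c\colon B\to C(\Delta^1;B)$, the map $c^*$ is an $\mathsf{Aut}_A(P)$-equivariant homotopy equivalence of topological hom-spaces, as in Lemma~\ref{lem:simplicialresBanAlg}, so $H(c)$ is an equivalence. Then $H(c)\circ H(\ev_i)\simeq\id$ forces $H(\ev_0)\simeq H(\ev_1)$; you should state this last step explicitly.

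What each route buys: the paper's avoids Serre--Swan and the concrete model altogether. Yours confines all bimodule-specific content to formal functoriality and reduces the analytic input to the homomorphism case that is already proven.

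In particular, the ``main obstacle'' in your last paragraph is not one. After your reduction, a general $M$ enters only through the existence of $H(M)$ and its compatibility with composition and invertible $2$-morphisms. That is automatic, since $H$ is a colimit of representable presheaves on the $(2,1)$-category, so you never need to compute $H(M)$ in the concrete model.

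Two small corrections:
\begin{enumerate}
\item The prism homotopy should start from a $(C(\Delta^n;A),B)$-bimodule $N$ and use $C(\Delta^1;N)$ as a family over $\Delta^n\times\Delta^1$. It should not start from a $(C(\Delta^n;A),C(\Delta^1;B))$-bimodule.
\item The simplicial groupoid $[n]\mapsto\Sing_n$ of a topological action groupoid has object sets varying with $n$, so Lemma~\ref{lem:homotopycoherentnerve} does not literally apply. Realize the levelwise nerves instead; this gives the homotopy quotient.
\end{enumerate}
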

\begin{proof}
    Clearly, $A\to C(\Delta^n;A)$ is a homotopy equivalence.
    We are left to check condition~\ref{cond:2} of Definition~\ref{def:resolution}.
    It suffices to prove:
    The colimit of mapping spaces
    \begin{equation*}
        \colim_{\Delta^{\op}} \Hom_{\BimBanOne}(-, C(\Delta^n;A))
    \end{equation*}
    maps bimodule homotopies to homotopies in $\Spc$. It will then automatically map homotopy equivalences to equivalences.

    Let $\Phi\colon B\rightsquigarrow C(\Delta^1;B')$ be a homotopy between $\Phi_0,\Phi_1\colon B\rightsquigarrow B'$.
    We regard $\Phi$ as an element of $\Hom_{\BimBanOne}(B, C(\Delta^1;B'))$.
    Define a map  
    \begin{equation}
        \begin{aligned}
        H_\Phi:\Hom_{\BimBanOne}(B',A) & \longrightarrow \Hom_{\BimBanOne}(B, C(\Delta^1;A)) \\
        M & \longmapsto \left( C(\Delta^1;M)\right) \otimes_{ C(\Delta^1;B')} \Phi
    \end{aligned} \,.
    \end{equation}
    
    It is easy to see that this indeed defines a finitely generated projective left module and is natural in $A$.
    Furthermore, for the maps $\ev_i\colon C(\Delta^1;A)\to A$, one can see that the composite
    \begin{equation*}
        \begin{tikzcd}
            \Hom_{\BimBanOne}(B',A) \ar[r,"H_{\Phi}"]& \Hom_{\BimBanOne}(B, C(\Delta^1;A)) \ar[r,"(\ev_i)_*"]& \Hom_{\BimBanOne}(B,A)
        \end{tikzcd}
    \end{equation*}
    is naturally equivalent to $(\Phi_i)^*$, by means of the isomorphism
    \begin{equation*}
        A_{\ev_i} \otimes_{C(\Delta^1;A)}  C(\Delta^1;M) \otimes_{ C(\Delta^1;B')} \Phi \cong M_{\ev_i} \otimes_{C(\Delta^1;B')}\Phi \cong M\otimes_{B'} \Phi_i \,.
    \end{equation*}
    Upon applying $H_{\Phi}$ to the entire resolution we obtain:
    \begin{align*}
        \colim_{\Delta^{\op}} \Hom_{\BimBanOne}(B', C(\Delta^n;A)) & \longrightarrow  \colim_{\Delta^{\op}} \Hom_{\BimBanOne}(B, C(\Delta^1;C(\Delta^n;A))) \,.
    \end{align*}
    Using $C(\Delta^1;C(\Delta^n;A)\cong C(\Delta^n;C(\Delta^1;A)))$
    that amounts to a map
    \begin{align*}
        \mathcal{H}_{\Phi}\colon \Hom_{\BimBanInf}(B', A) \longrightarrow \Hom_{\BimBanInf}(B,C(\Delta^1;A)) \,,
    \end{align*}
    which we may regard as a homotopy $\Phi_0^*\simeq \Phi_1^*$:
    \begin{equation*}
        \Hom_{\BimBanInf}(B',A)\times \Delta^1 \xrightarrow{\mathcal{H}_\Phi\times \id} \Hom_{\BimBanInf}(B, C(\Delta^1;A))\times \Delta^1 \xrightarrow{\ev} \Hom_{\BimBanInf}(B,A)\,.
    \end{equation*}
\end{proof}

\paragraph{Geometric construction of \texorpdfstring{$\BimBanInf$}{BimBan}}
For every topological space $X$ there is a symmetric monoidal $(2,2)$-category $\BimBanOne(X)$ whose objects are Banach algebras $A$, interpreted as trivial algebra bundles over $X$.
The category of morphisms $B\rightsquigarrow A$ is the category of $(A,B)$-bimodule bundles over $X$ which are locally trivial as $A$-module bundles, i.e.\ locally isomorphic to $U\times P$, where $U\subseteq X$ and $P$ is a finitely generated projective left $A$-module.
The fiberwise tensor product equips this with a symmetric monoidal structure.

This assignment is contravariantly functorial in $X$ by pullback, i.e.\ defines a diagram
\begin{align*}
    \CHaus^{\op} \longrightarrow& \CMon(\Cat_{(2,1)}) \subseteq \CMon(\Cat_\infty)\\
    X \longmapsto & \BimBanOne(X)
    \,.
\end{align*}
Note that the objects are not allowed to vary in $X$.
Restricting to geometric simplices $\Delta^{\op} \subseteq \CHaus^{\op}$ we obtain a simplicial diagram of symmetric monoidal $(\infty,1)$-categories.
The colimit of this diagram can be identified with $\BimBanInf$.
\begin{equation*}
    \colim_{\Delta^{\op}} \BimBanOne(\Delta^n) \simeq \BimBanInf \,.
\end{equation*}
\begin{proof}[Sketch]
    Sifted colimits of symmetric monoidal $(\infty,1)$-categories are computed underlying~\cite[Cor.~3.2.3.2]{HA}.
The set of objects is constant and the diagram is sifted. 
Using~\cite[Prop. 1.3.4.10]{HA}, we can conclude that the colimit has the same set of objects and mapping spaces in the colimit are the colimits of the mapping spaces, as in Equation~\eqref{eq:mappingspacesbimban1}.
However, Lurie only proves a version for simplicially enriched categories.
\end{proof}

\begin{remark}
    In the paper~\cite{kristel2022insidious}, interesting subtleties surrounding bicategories of $X$-families of algebras and bimodules were pointed out.
    There are two reasons why our setup does not suffer from similar problems.
    Firstly, we can reinterpret our families of $(A,B)$-bimodules as bundles of finitely generated projective left $A$-modules, or equivalently, as locally trivial $A$-module bundles as discussed in Section~\ref{sec:SerreSwan}.
    However, they are in general not locally trivial as right $B$-module bundles.
    Secondly, we only consider families of bimodules, not bundles of algebras.
    Our construction does immediately give a symmetric monoidal bicategory of algebra bundles with constant fiber, which however does not satisfy descent in contrast to \cite{kristel2022insidious}.
\end{remark}

In the current setup, we can reformulate Segal's model for $K$-homology~\cite[Proposition~1.1,~1.3]{MR515311} as follows.

\begin{theorem}
\label{th:segal}
Let $X$ be a finite CW complex.
    The group completion of the $\E_\infty$-space 
    \begin{equation}
        \label{eq:segal}
        (\Hom_{\BimBanInf}(C(X),\R),\oplus)    
    \end{equation}
     of finite-dimensional $C(X)$-modules is equivalent to $ko \otimes \Sigma_+^\infty X$.
\end{theorem}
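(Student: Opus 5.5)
By \eqref{eq:mappingspacesbimban2}, since finitely generated projective $\R$-modules are the $\R^n$, I will first rewrite the $\E_\infty$-space \eqref{eq:segal} as
\begin{equation*}
    \bigsqcup_{n\geq 0} \Hom_{\BanAlgInf}(C(X),M_n(\R)^{\op})\sslash \GL_n(\R) \,,
\end{equation*}
with $\oplus$ given by block sum. By Lemma~\ref{lem:gelfandhom} and its real analogue for matrix targets, a bounded homomorphism $C(X)\to M_n(\R)$ is a finite-dimensional real representation of $C(X)$. Such a representation decomposes into a finite family of commuting idempotents summing to $1$, with the support points in $X$ and the multiplicities as data. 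So the monoid will be identified with configurations of points in $X$ labelled by vector spaces, the points being allowed to collide and add labels. The homotopy quotient by $\GL_n(\R)\simeq O(n)$ gives exactly Segal's space $C(X;\mathrm{Vect}_\R)$ of \cite{MR515311}, the configuration space with labels in the topological category of real vector spaces. In this step I will check that the point-norm topology on homomorphisms, Definition~\ref{def:BanAlgInf}, matches the configuration-space topology, in which a point of multiplicity $k$ may split. The $k$-ification and the adjunction \eqref{eq:adjunctionhomomorphisms} should make this routine.

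Next I will show that $X\mapsto$ the group completion of \eqref{eq:segal} is a reduced-by-basepoint homology theory, with value $ko$ at a point. For $X=\pt$ the monoid is $\bigsqcup_n BO(n)$, so Example~\ref{ex:ku} gives $ko$. The functor is homotopy invariant because homotopic maps give homotopic restriction maps in $\BimBanInf$. For the homology property I will follow Segal's argument: for a cofibration $A\subseteq X$ of finite CW complexes, the map of configuration spaces $C(X)\to C(X/A)$, sending labels over $A$ to the basepoint, should have fiber $C(A)$ after group completion. Segal proves this quasifibration property directly. Equivalently, his Propositions~1.1 and~1.3 give $C(X;\mathrm{Vect}_\R)^{\gp}\simeq \Omega^\infty(ko\otimes\Sigma^\infty_+X)$, and I may cite them once the identification of the first step is in place. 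A natural comparison map from $ko\otimes\Sigma^\infty_+X$ comes from the assembly map $X_+\wedge ko \to \Hom(C(X),\R)^{\gp}$, induced by the evaluations $\ev_x$ tensored with vector spaces. The natural transformation of homology theories is an equivalence on a point, hence on all finite CW complexes by induction over cells.

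I expect the main obstacle to be the quasifibration and excision step, which does not follow from excision of $k$ (Corollary~\ref{cor:excision}) since the functor is contravariant in $C(X)$ in a non-Milnor way. If a direct argument fails, I will fall back on citing Segal after the topological identification. I will also need to handle group completion of non-telescopic pieces, using a telescope argument with Proposition~\ref{prop:cyclicinvariance}, since the $\pi_1$ are the abelian groups $\pi_0$ of $O$.
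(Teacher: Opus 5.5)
Your architecture is the paper's: identify \eqref{eq:segal} with Segal's configuration-space model, prove that its group completion is a homology theory, and compare with $ko\otimes\Sigma^\infty_+X$ through the assembly map, which is an equivalence on a point. However, the step you flag as the main obstacle is where all the content lies, and neither of your two plans closes it.

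In the paper's notation, \eqref{eq:segal} is $\widetilde F(X_+)$, where $\widetilde F(Y)=\colim_n\Hom(C_0(Y,y),M_n(\R))$ is Segal's \emph{reduced} functor, in which labels at the basepoint disappear. Your unbased $C(X;\mathrm{Vect}_\R)$ is this functor evaluated at the disconnected space $X_+$. Segal's Proposition~1.1, as the paper uses it, says that $\widetilde F(Y)$ is itself group complete and equivalent to $\Omega^\infty(ko\otimes\Sigma^\infty Y)$ for \emph{connected} $Y$. But $X_+$ is never connected, which is exactly why \eqref{eq:segal} is not group complete (already $\widetilde F(S^0)\simeq\bigsqcup_n BO(n)$). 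So ``citing Segal after the topological identification'' does not give the theorem.

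Your direct route has the same defect. Written consistently, your sequence is $\widetilde F(A_+)\to\widetilde F(X_+)\to\widetilde F(X/A)$. The Dold--Thom/Segal quasifibration argument for such sequences needs the subcomplex to be connected, so that adding particles acts by homotopy equivalences, and this fails for $A_+$. Moreover, ``has fiber $C(A)$ after group completion'' does not follow formally, since group completion does not preserve fiber sequences (compare the warning after Theorem~\ref{thm:fibergroupcompletion}).

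The missing idea is the suspension trick the paper uses. For every finite pointed CW complex $Y$, the natural map $\widetilde F(Y)\to\Omega\widetilde F(\Sigma Y)$ is a group completion. Because $\Sigma Y$ is connected, $Y\mapsto\pi_*\Omega\widetilde F(\Sigma Y)$ is a reduced homology theory. The paper takes both facts, and also the identification of $\widetilde F(Y)$ with the configuration-space model, from \cite{zbMATH01827571,zbMATH05225707}. The assembly map $ko\otimes\Sigma^\infty Y\to\widetilde F(Y)^{\gp}\simeq\Omega\widetilde F(\Sigma Y)$ is then a map of reduced homology theories which is an equivalence at $Y=S^0$, and one specializes to $Y=X_+$.

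Your telescope remark is also off. For $X\neq\pt$ the fundamental groups of components are not just $\pi_0O(n)$: the one-point configurations labelled by $\R$ already form $X\times BO(1)$. In any case, telescopicity would only compute $M^{\gp}$ as a mapping telescope; it would not supply the excision property you need.
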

The proof rests on the realization that \eqref{eq:segal} is a configuration space.
More precisely, finite-dimensional representations of $C(X)$ are completely reducible into one-dimensional representations of $C(X)$ given by evaluation at a finite number of points $x_1,\dots,x_n \in X$; the one-dimensional representations are exactly the points of $X$ (e.g.\ Lemma~\ref{lem:gelfandhom} with $Y = \pt$).
Such a representation can be reinterpreted as a configuration space of finite-dimensional vector spaces, i.e.\ a skyscraper sheaf over $X$ which is supported at finitely many points.
When points collide, the vector spaces add.
\begin{lemma}
    Let $A$ be a unital Banach algebra.
    Then the mapping spaces in $\BimBanInf$ can be identified with the spaces of nonunital algebra homomorphisms
    \begin{equation}
        \label{eq:mappingspaceasunion}
        \Hom_{\BimBanInf}(A,\R) \simeq \colim_n \Hom^{\mathrm{nu}}(A,M_n(\R))  \,,
    \end{equation}
    where the colimit is along the nonunital block inclusions $M_n(\R)\hookrightarrow M_{n+1}(\R)$.
\end{lemma}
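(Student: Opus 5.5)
We start from the second description of mapping spaces in Theorem~\ref{thm:localizationbimban}. With $B=\R$ the finitely generated projective left $\R$-modules are the $\R^n$, and $\End_\R(\R^n)^{\op}\cong M_n(\R)^{\op}\cong M_n(\R)$ via transposition. Formula~\eqref{eq:mappingspacesbimban2} then gives
\begin{equation*}
\Hom_{\BimBanInf}(A,\R)\simeq \bigsqcup_{n\geq 0}\Hom_{\BanAlgInf}(A,M_n(\R))\sslash \GL_n(\R).
\end{equation*}
The task is to compare this disjoint union of homotopy quotients with the colimit of the spaces $\Hom^{\mathrm{nu}}(A,M_n(\R))$ along the nonunital block inclusions $\iota_n\colon M_n(\R)\hookrightarrow M_{n+1}(\R)$, $X\mapsto X\oplus 0$.

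The key step is to decompose nonunital homomorphisms. A nonunital homomorphism $\varphi\colon A\to M_n(\R)$ sends $1_A$ to an idempotent $e=\varphi(1)$. It corestricts to a unital homomorphism $A\to eM_n(\R)e\cong \End(\im e)$. So $\Hom^{\mathrm{nu}}(A,M_n(\R))$ fibres over the space of idempotents in $M_n(\R)$, which is $\bigsqcup_k \GL_n(\R)/(\GL_k(\R)\times\GL_{n-k}(\R))$. The fibre over $e$ of rank $k$ is $\Hom(A,M_k(\R))$. By Lemma~\ref{lem:idempotents}, idempotents in a fixed component are conjugate through paths depending continuously on the base point. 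This makes the fibration locally trivial, and we get
\begin{equation*}
\Hom^{\mathrm{nu}}(A,M_n(\R))\simeq \bigsqcup_{k\leq n}\Hom(A,M_k(\R))\times_{\GL_k\times\GL_{n-k}}\GL_n(\R).
\end{equation*}
Here $\GL_{n-k}$ acts trivially on the first factor. In the topology of Definition~\ref{def:BanAlgInf}, $\Hom^{\mathrm{nu}}$ should carry the $k$-ified compact-open topology, and the decomposition should be checked in that topology, using the adjunction~\eqref{eq:adjunctionhomomorphisms} on $C(\Delta^m)$-families.

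Next we pass to the colimit over $n$. Under $\iota_n$ the rank $k$ stays fixed while $n-k$ increases by one. The $k$-th summand therefore becomes $\Hom(A,M_k(\R))\times_{\GL_k}\bigl(\GL_n/\GL_{n-k}\bigr)/\!\sim$, a Borel construction with fibre the Stiefel-type space $\GL_n/\GL_{n-k}\simeq V_k(\R^n)$. As $n\to\infty$ this space becomes contractible and carries a free $\GL_k$-action. The colimit of the $k$-th summand is thus $\Hom(A,M_k(\R))\times_{\GL_k}E\GL_k\simeq \Hom(A,M_k(\R))\sslash\GL_k(\R)$. This matches the $k$-th summand above, and we take the union over $k$. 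We identify $\Hom(A,M_k(\R))$ with $\Hom_{\BanAlgInf}(A,M_k(\R))$ through the Top-enriched model. We must also check that the comparison map comes from the simplicial resolution in~\eqref{eq:mappingspacesbimban1}: the image of the module $C(\Delta^m;\R^n)$ in $\Hom^{\mathrm{nu}}$ is independent of choices up to contractible choice, namely the choice of an embedding $\R^k\hookrightarrow\R^\infty$.

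The main obstacle is the point-set part of this argument. We need the colimit of topological spaces along closed inclusions to be a homotopy colimit; the inclusions should be closed cofibrations, since they are closed sub-bundles. We also need the map from the fibration to the product description to be a genuine fibre bundle in the compact-open topology. I would handle this by working simplicially throughout: replace each space by its $\Sing$ and apply Lemma~\ref{lem:idempotents} to $C(\Delta^m;M_n(\R))$-valued idempotents. This lifts any $\Delta^m$-family of idempotents to a family of conjugating invertibles, which gives the Kan fibration property directly. The remaining identification, namely that the colimit of the $V_k(\R^n)$ is contractible, is standard.
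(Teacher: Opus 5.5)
Your proposal is correct and takes essentially the paper's route: you decompose $\Hom^{\mathrm{nu}}(A,M_n(\R))$ by the rank $k$ of $\varphi(1)$, write each piece as a Borel construction $\Hom(A,M_k(\R))\times_{\GL_k(\R)} Y_n$ over a Stiefel-type space $Y_n$ that becomes a contractible free $\GL_k(\R)$-space as $n\to\infty$, and match the result with~\eqref{eq:mappingspacesbimban2}. Your $Y_n=\GL_n(\R)/\GL_{n-k}(\R)$, which also remembers the complement $\ker\varphi(1)$, is in fact more accurate than the paper's $\mathrm{Fr}_k(\R^n)$; the latter gives the identification only up to homotopy rather than as the claimed homeomorphism, since e.g.\ the rank-one idempotents in $M_2(\R)$ form a noncompact space while $\mathrm{Fr}_1(\R^2)/\GL_1(\R)$ is a circle, and your point-set remarks are extra care that the paper omits.
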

\begin{proof}
    Firstly, any nonunital homomorphism $\phi$ maps $1\in A$ to an idempotent and factors unitally through the corner subalgebra $\phi(1)M_n(\R) \phi(1)$.
    Let $\Hom^{\mathrm{nu}}_k(A,M_n(\R))$ be the subspace where $\rank(\phi(1))=\tr(\phi(1))=k$.
    We obtain $\Hom^{\mathrm{nu}}(A,M_n(\R))\cong \bigsqcup_k \Hom^{\mathrm{nu}}_k(A,M_n(\R))$.
    The unital factorisation leads to a homeomorphism
    \begin{equation*}
        \mathrm{Fr}_k(\R^n)\times_{\GL_k}\Hom(A,M_k(\R)) \xrightarrow{\cong} \Hom^{\mathrm{nu}}_k(A,M_n(\R)) \,,
    \end{equation*}
    where $\mathrm{Fr}_k(\R^n)$ is the space of $k$-frames in $\R^n$ with principal $\GL_k$-action on the frames, where $\GL_k$ acts on $M_k(\R)$ by conjugation.
    Finally, since $\colim_n \mathrm{Fr}_k(\R^n)\simeq E\GL_k$ we obtain
    \begin{align*}
        \colim_n \Hom^{\mathrm{nu}}(A,M_n(\R)) &\cong \bigsqcup_k   \colim_n \mathrm{Fr}_k(\R^n)\times_{\GL_k} \Hom(A,M_k(\R)) \\
        &\simeq \bigsqcup_k \Hom(A,M_k(\R))\sslash \GL_k \\
        &\simeq \Hom_{\BimBanInf}(A,\R) \,.
    \end{align*}
\end{proof}
\begin{remark}
    \label{rem:segal}
    Let $(Y,y)$ be a compact connected pointed Hausdorff space and let $C_0(Y,y)$ denote the \emph{nonunital} Banach algebra of functions vanishing at $y$.
    Segal proves that $\widetilde{F}(Y):=\colim_n \Hom(C_0(Y,y),M_n(\R))$ is group-complete and coincides with the \emph{reduced} $ko$-homology $ko\otimes \Sigma^\infty Y$~\cite[Prop.~1.1]{MR515311}. 
    
    Let $X$ be compact Hausdorff and let $X_+=X\sqcup \pt$.
    Note that $F(X):=\widetilde{F}(X_+)$ coincides with \eqref{eq:mappingspaceasunion}.
    However, $F(X)$ is not group-complete and because $X_+$ is not connected we cannot directly apply Segal's theorem.
    Thus, the group completion in Theorem~\ref{th:segal} is a sort of basepoint issue.
\end{remark}
\begin{proof}[Proof of Theorem~\ref{th:segal}]
    We use the notation of Remark~\ref{rem:segal}.
    Let $Y$ denote a finite pointed CW complex.
    Note that $\widetilde{F}(S^0)=F(\pt)^\gp=ko$ and there is an assembly natural transformation
    \begin{equation*}
        \alpha_Y:ko\otimes \Sigma^\infty Y \to \widetilde{F}(Y)^{\gp} \,.
    \end{equation*}
    To show that this is an equivalence for finite CW complexes $X$, we will rely on results from \cite{zbMATH01827571,zbMATH05225707} on configuration spaces.
    
    By \cite[Ex.~7(c)]{zbMATH05225707}, $\widetilde{F}(Y)$ is identified with the configuration space model used in loc. cit.
    and by \cite[Prop.~4.5]{zbMATH01827571} the natural map $\widetilde{F}(Y)\to \Omega \widetilde{F}(\Sigma Y)$ is a group completion.
    Moreover, $Y\mapsto \pi_*\Omega \widetilde{F}(\Sigma Y)$ defines a reduced homology theory on finite pointed CW complexes.
    We conclude that $\alpha_Y$ is an equivalence because $\pi_*\alpha$ is a natural transformation of homology theories which agree on $Y=S^0$.
    The theorem follows for $Y=X_+$.
\end{proof}

\begin{remark}
    Consider the group completion $k^{\mathrm{fin}}(B,A)$ of the $\E_\infty$-monoid $(\Hom_{\BimBanInf}(A,B)^{\cong},\oplus)$ of $(B,A)$-bimodules which are finitely generated as $B$-modules. 
    We have observed that for $A = \R$, the spectrum $k^{\mathrm{fin}}(B,A)$ is the connective $K$-theory spectrum of $B$.
    It follows by Segal's Theorem~\ref{th:segal} that for $A = C(X)$ and $B = \R$ the spectrum $k^{\mathrm{fin}}(B,A)$ agrees with connective $ko$-homology of $X$.
    Therefore it is tempting to conjecture that $k^{\mathrm{fin}}(B,A)$ is a spectral refinement of ``connective Kasparov theory''.
However, this is not the case in general since $(B,A)$-bimodules that are finitely generated and projective are ``too small'' to detect delicate infinite-dimensional aspects necessary to capture $KK$-theory.
For example, the Cuntz algebra $\mathcal{O}_n$ does not admit any finite-dimensional representations and so $\Hom_{\BimBanInf}(A,B) = \{0\}$.
However, for $n >2$, the $K$-homology of $\mathcal{O}_n$ is nontrivial, see e.g. \cite{MR3792521}.
\end{remark}

\newpage
\section{Graded \texorpdfstring{$K$}{K}-theory by group completion}
\label{sec:graded kthy}
This section will extend the setup of Section~\ref{sec:ungradedkthy} to the setting of graded Banach algebras.
We discuss our conventions on graded algebras and signs (\ref{sec:graded alg}) and we record important structure theorems.
In particular, we will quickly adapt the definitions from the ungraded case to define the $(\infty,1)$-category $\sBanAlgInf$ and the $(\infty,1)$-Morita category $\sBimBanInf$ of graded Banach algebras.

The decisive difference is that graded $K$-theory is not just the $K$-theory of the category of graded modules $k^{C_2}_A=k(\ModEn^{C_2}_A)$.
We develop a spectral refinement $k^{\ABS}$ of the Atiyah--Bott--Shapiro definition in Section~\ref{sec:connectivesuperkthy}.
As discussed there, making this into a lax symmetric monoidal functor requires an infinite chain of coherences.
The technical implementation of this is Section~\ref{sec:smith}, which exhibits a Smith ideal (Definition \ref{def:smith}) in the $(\infty,1)$-category $\InnInf$ of inner product spaces.
We deem this to be an important standalone result.
Using the symmetric monoidal functor $\Cl\colon \InnInf \to \sBanAlgInf \to \sBimBanInf$ constructed in Proposition~\ref{prop:Clfunctor} we can transport the Smith ideal to graded algebras and prove monoidality of $k^{\ABS}$.
We state a connective version of Bott periodicity in Theorem~\ref{thm:gradedbottperiodicity}.
This is used to define periodic graded $K$-theory in Section~\ref{sec:periodickthy}.
The section contains a statement of our \ThmB.
\ThmC\ is stated in Section~\ref{sec:picard}.
The longer proofs are gathered in Section~\ref{sec:proofs}.

\subsection{Graded algebra}
\label{sec:graded alg}

\paragraph{Graded vector spaces and graded algebras}

We fix a field $\mathbb{F}\in \{\R,\C\}$.
A \emph{graded vector space} is a vector space with a grading into even and odd degrees $V=V_{0}\oplus V_1$.
We write $|v|=i$ if $v\in V_i$. 
We denote by $\mathbb{F}^{p|q}$ the vector space $\mathbb{F}^{p+q}$ with $p$ even and $q$ odd coordinates.

Morphisms of graded vector spaces are required to preserve the grading.
The \emph{grading involution} of a graded vector space $V$ is defined as the morphism $v\mapsto (-1)^{|v|}v$.
The category of graded vector spaces and grading-preserving maps $\sVectOne$ carries a graded tensor product $\grotimes$
\begin{equation}
    \label{eq:gradedtensorproduct}
    (V\grotimes W)_0 = V_0\grotimes W_0 \oplus V_1\grotimes W_1\,, \quad  (V\grotimes W)_1=V_1\grotimes W_0 \oplus V_0\grotimes W_1 \,,
\end{equation}
which can be enhanced into a symmetric monoidal structure.
Importantly, the braiding of the category is given by the Koszul sign rule:
\begin{equation}
\label{eq:koszul}
    \begin{aligned}
        \beta: V\grotimes W &\longrightarrow W\grotimes V \\
    v\otimes w & \longmapsto (-1)^{|v||w|}w\otimes v \,.
    \end{aligned} \,.
\end{equation}
A \emph{graded subspace} $W \subseteq V$ is a subspace such that $W_i := W \cap V_i$ defines a grading.
\begin{convention}
\label{conv:graded}
    We say \emph{graded} for $\Z/2$-graded throughout; no other grading groups occur.
    The Koszul `super' sign~\eqref{eq:koszul} enters only through the braiding: it appears in opposite algebras, in the multiplication on tensor products of algebras and in bimodules, but not in the tensor product of the underlying graded vector spaces.
    Several standard category names retain their customary super ``s'' (e.g.\ $\sVectOne$).
    We reserve $C_2$-decorations for group actions and equivariance.
\end{convention}

A \emph{graded algebra} is an algebra object in $(\sVectOne,\grotimes)$.
Explicitly, a graded algebra is an ordinary algebra $A$ together with a grading $A=A_0\oplus A_1$ such that $|ab|=|a|+|b|$ modulo $2$ and $1 \in A_0$.
The main example of graded algebras are the Clifford algebras, see Definition~\ref{def:clifford} for our conventions.
Morphisms in $\Alg(\sVectOne)$ are grading-preserving algebra homomorphisms.
Using the Koszul sign, the multiplication on the tensor product $A\grotimes B$ of two graded algebras is
\begin{equation}
\label{gradedtensoralgebra}
    (a_1\grotimes b_1)(a_2\grotimes b_2)=(-1)^{|a_2||b_1|} a_1a_2\grotimes b_1b_2 \,.
\end{equation}

Let $\Mor(\sVectOne)$ be the symmetric monoidal $(2,2)$-category  of graded algebras, graded bimodules, and grading-preserving bimodule maps.
Explicitly, an $(A,B)$-graded bimodule is a bimodule $M$ which is a graded vector space $M = M_0 \oplus M_1$ such that the action respects the grading in the sense that $A_i M_j \subseteq M_{i+j}$ and $M_j B_i \subseteq M_{j+i}$ for $i,j \in \Z/2$.

\paragraph{Graded Banach algebras}

A \emph{graded Banach space} is a Banach space $X$ together with a \emph{grading}, which is a direct sum decomposition $X = X_0 \oplus X_1$ into closed subspaces.\footnote{In order to ensure algebras in graded Banach spaces agree with the usual notion of graded Banach algebras, we do not require this to be a product or coproduct in $\Ban_\leq$; the norm on $X$ need not be formed as an $\ell^1$ or $\ell^\infty$-norm of certain norms on $X_0$ and $X_1$. However, for any decomposition $X = X_0 \oplus X_1$ into closed subspaces the norm on $X$ is equivalent to the $\ell^1$-sum of $X_0$ and $X_1$, so topologically the distinction is irrelevant.}
Analogously to the ungraded case, one defines the symmetric monoidal $1$-categories $\sBan_{\leq}$, resp. $\sBan$, of \emph{graded Banach spaces} and grading-preserving contractive, resp. bounded, linear maps.
The symmetric monoidal structure is the completed projective graded tensor product $\grotimes$ as in Equation~\eqref{eq:gradedtensorproduct} and the contractive Koszul braiding~\eqref{eq:koszul}.
A \emph{graded Banach algebra} is an algebra object in $\sBan_{\leq}$, i.e.\ a graded algebra with a submultiplicative Banach norm.
Every algebra object in $\sBan$ is isomorphic to a graded Banach algebra by rescaling the norm.

\begin{notation}
    Let $A$ be a graded (Banach) algebra.
    We write $|A|$ for the underlying ungraded (Banach) algebra.\footnote{
    The functor $|-|:\sVectOne \to \VectOne$ that forgets the grading is monoidal but \emph{not} braided monoidal.
    Therefore it defines a (non-monoidal) functor $\Alg(\sBan) \to \Alg(\Ban)$.}
\end{notation}
If $f, g \colon X \to Y$ are grading-preserving bounded linear maps between graded Banach spaces, then $\overline{\im(f-g)} \hookrightarrow Y$ is a grading-preserving inclusion and hence $Y/\overline{\im(f-g)}$ is still the coequalizer in $\sBan_{\leq}$ and $\sBan$.
    Since this coequalizer is preserved by the projective graded tensor product, we can apply Lemma~\ref{lem:morexists} to obtain
    \begin{enumerate}
        \item the symmetric monoidal $1$-category of graded Banach algebras and grading-preserving bounded homomorphisms $(\Alg(\sBan),\grotimes)$.
        \item the symmetric monoidal $(2,2)$-category $\Mor(\sBan)$ of graded Banach algebras, graded bimodules and grading-preserving bimodule maps.
    \end{enumerate}

\begin{definition}
\label{def:parity}
    If $M$ is a graded (Banach) $(A,B)$-bimodule its \emph{parity shift} is the $(A,B)$-bimodule $\Pi M := \Pi \mathbb{F} \otimes M$.
\end{definition}
The parity shift $\Pi M$ is equal to $M$ as a vector space, but with $M_0$ and $M_1$ reversed.
The right $B$-action is the same, but the left $A$-action is modified by the grading automorphism $a\mapsto (-1)^{|a|}a$.
\begin{remark}
    Tensoring from the left and the right is equivalent by applying the braiding, which results in a comparison sign.
\end{remark}

We let $A^{p|q}=A^p\oplus \Pi A^q \cong \mathbb{F}^{p|q}\grotimes A$.

\begin{definition}
    A graded $A$-module is \emph{finitely generated projective} if it is isomorphic (via a grading-preserving $A$-linear map) to a direct summand of $A^{p|q}$.
\end{definition}
\begin{notation}
    The category of finitely generated projective graded $A$-modules is denoted $\ModOne^{C_2}_A$.
    The ungraded variant is denoted by $\ModOne_{|A|}$.
    We hope this results in less confusion than using the potentially ambiguous notation $\ModOne_A$.
\end{notation}

\begin{convention}
\label{conv:graded-linear}
The category of finitely generated projective graded modules $\ModOne^{C_2}_A$ is enriched over $\sBan$.
Explicitly, the enrichment consists of those bounded linear maps $T \colon M \to N$ satisfying 
\begin{equation}
\label{eq:graded-linear}
    T(am) = (-1)^{|T||a|} a T(m) \,,
\end{equation}
where $|T|$ is the degree of $T$ (not necessarily grading preserving).
    Whether the sign in Equation~\eqref{eq:graded-linear} is required for odd maps varies throughout the literature. 
Our sign was forced from categorical principles, after fixing the conventions of left modules, left tensoring, and acting with operators from the left, see Lemma \ref{lem:selfenrichment}.
\end{convention}

\begin{convention}
\label{conv:End-op}
As in the ungraded case (Convention~\ref{conv:leftright}), $A$-linear endomorphisms of $A^{p|q}$ are given by multiplications with matrices from the right, e.g.\ $\End_A(A) = A^{\op}$.
Since endomorphisms act from the left, the $(p|q)$-dimensional matrix algebra $M_{p|q}(A) = M_{p|q}(\R) \grotimes A$ is the opposite graded Banach algebra of endomorphisms of the $A$-module $A^{p|q}$. 
This distinction is important because e.g.\ $\Cl_{n} \ncong (\Cl_n)^{\op}$ in general.
\end{convention}

\begin{definition}
    A \emph{homotopy} of graded Banach $(A,B)$-bimodules is a graded Banach $(C(\Delta^1;A), B)$-bimodule which is finitely generated projective as an $C(\Delta^1;A)$-module.
\end{definition}
\begin{definition}
    Analogously to the ungraded case, we denote 
    \begin{itemize}
        \item the topologically enriched category of graded Banach algebras and grading-preserving homomorphisms by $\sBanAlgEn$. Here, the set of bounded algebra homomorphisms $\Hom(A,B)$ is equipped with the $k$-ification of the compact-open topology.
    \item the associated $(\infty,1)$-category by $\sBanAlgInf$.
    \item the $(2,1)$-category of graded Banach algebras $\sBimBanOne$, where a $1$-morphism $A\rightsquigarrow B$ is a graded $(B,A)$-bimodule which is finitely generated and projective as a $B$-module or, equivalently, those $1$-morphisms which are right adjoints in $\Mor(\sBan)$.
    The $2$-morphisms are grading-preserving bimodule isomorphisms.
    \item the symmetric monoidal $(\infty,1)$-category obtained by localizing the underlying $(2,1)$-category of $\sBimBanOne$ with respect to those graded bimodules that are homotopy equivalences by $\sBimBanInf$
    \item the Morita $(2,1)$-functor taking finitely generated projective modules by $\ModOne^{C_2} \colon \sBimBanOne \to \Cat_1^{\Sigma}$ and its simplicially enriched version by $\ModEn_A^{C_2}$.
    \end{itemize}
\end{definition}
Applying Appendix~\ref{app:algebras}  as in Section~\ref{sec:Banach}, there are functors $\sBanAlgOne \to \sBimBanOne$ and $(\sBanAlgOne)^{\op}_{\mathrm{fgp}} \to \sBimBanOne$ given by inducing bimodules by homomorphisms on either side, which induce functors of $(\infty,1)$-categories after localizing homotopy equivalences.
Regarding a Banach space as trivially graded induces a symmetric monoidal functor $\Ban \to \sBan$ and further functors $\Alg(\Ban) \to \sBanAlgOne, \BanAlgInf \to \sBanAlgInf \BimBanOne \to \sBimBanOne$ and $\BimBanInf \to \sBimBanInf$.
If $A$ is an ungraded Banach algebra, we will often implicitly consider it as a graded Banach algebra by applying one of these functors, i.e.\ we take $A_0 = A$ and $A_1 = 0$.

\paragraph{Graded \texorpdfstring{$C^*$}{C*}-algebras}

\begin{definition}
    A \emph{graded $C^*$-algebra} is a $C^*$-algebra $A$ which is also a graded algebra $A=A_0\oplus A_1$ such that $|a^*|=|a|$.
\end{definition}

\begin{warning}
\label{warning:opposite}
Given any symmetric monoidal category $(\mathcal{C}, \otimes,\beta)$ and an algebra $A \in \mathcal{C}$, there is a notion of \emph{opposite algebra $A^{\op}$} of $A$.
This algebra has the same underlying object but its multiplication map $A^{\op} \otimes A^{\op} \to A^{\op}$ is changed by the braiding $\beta$.
    With this definition, the opposite of an ungraded algebra is the same algebra with multiplication $a^{\op} b^{\op} = (ba)^{\op}$ as usual. One could define a $C^*$-structure on an algebra to be a $\C$-antilinear algebra homomorphism $* \colon A \to A^{\op}$ satisfying some conditions.
    
    The opposite $A^{\op}$ of a graded algebra $A$ on the other hand has multiplication $a^{\op}b^{\op}=(-1)^{|a||b|}(ba)^{\op}$, as dictated by the Koszul sign rule.
    The $*$ of a graded $C^*$-algebra $A$ therefore \emph{does not} induce a ($\C$-antilinear) algebra isomorphism $*:A\to A^{\op}$ in the category of graded algebras.
In particular, a graded $C^*$-algebra is \emph{not} a $*$-algebra object in $(\sVectOne, \grotimes)$.
    If $A$ is a $C^*$-algebra, then $A^{\op}$ is a $C^*$-algebra with $(a^{\op})^* := (-1)^{|a|} (a^*)^{\op}$.
\end{warning}

However, it is customary to use the Koszul sign on the tensor product, forcing some ad hoc modifications: (see~\cite[Sec.~14.]{MR1656031})

\begin{definition}
    The \emph{tensor product $A \grotimes B$} of graded $C^*$-algebras $A,B$ is the tensor product of underlying graded algebras equipped with $(a \grotimes b)^* = (-1)^{|a||b|} a^* \grotimes b^*$, completed under the maximal tensor product norm.
\end{definition}

The map $(A \grotimes B)^{\op} \to A^{\op} \grotimes B^{\op}$ given by $(a \otimes b)^{\op} \mapsto (-1)^{|a||b|} a^{\op} \otimes b^{\op}$ is a natural $*$-isomorphism.

\paragraph{Graded Wedderburn--Artin}
We need graded versions of Schur's lemma and the Wedderburn--Artin theorem, also see~\cite[Chapter 12]{MR2165457}, \cite[Theorem C.6]{MR3119923}, and~\cite{MR1701598}.
The following definitions are only applicable to finite-dimensional graded algebras.
A \emph{graded submodule} $N \subseteq M$ is a graded subspace which is also a submodule.
A graded module is \emph{simple} if it has no graded submodules.
A graded module is \emph{semisimple} if any graded submodule is a direct summand.
A graded algebra $A$ is \emph{simple} if it is simple as an $A \grotimes A^{\op}$-module or equivalently, if it has no proper closed two-sided graded ideal. 
A graded algebra $A$ is \emph{semisimple} if it is semisimple as an $A$-module.

\begin{example}
    The graded algebra $\Cxl_1$ is simple even though $|\Cxl_1|$ is not simple.
\end{example}

\begin{definition}
    A graded algebra $A = A_0 \oplus A_1$ is a \emph{graded division algebra} if every homogeneous nonzero element $a \in A_i, i = 0,1$ is invertible.
\end{definition}

\begin{lemma}[Graded Schur]
    If $M$ is a simple graded $A$-module, then $\End_A M$ is a graded division algebra.
\end{lemma}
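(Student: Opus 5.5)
The argument mirrors the ungraded Schur lemma, with one extra step: we have to keep track of the homogeneous components of an endomorphism. Let $M$ be a simple graded $A$-module and let $T \in \End_A M$ be a nonzero homogeneous element of degree $|T| \in \{0,1\}$. By Convention~\ref{conv:graded-linear}, $T$ is a linear map satisfying $T(am) = (-1)^{|T||a|} a T(m)$. Since $T$ is homogeneous, it maps $M_i$ into $M_{i+|T|}$. We must show that $T$ is invertible in $\End_A M$.

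First consider the kernel. Because $T$ is homogeneous, $\ker T = (\ker T \cap M_0) \oplus (\ker T \cap M_1)$, so $\ker T$ is a graded subspace. For $m \in \ker T$ and homogeneous $a$ we have $T(am) = \pm a T(m) = 0$, so $\ker T$ is also a submodule. Thus $\ker T$ is a graded submodule.

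Next consider the image. The image $\im T$ is graded, with $(\im T)_j = T(M_{j+|T|})$. It is a submodule because $a T(m) = \pm T(am)$ for homogeneous $a$.

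Simplicity of $M$, meaning it has no proper nonzero graded submodules, now settles both cases. Since $T \neq 0$, we get $\ker T = 0$ and $\im T = M$. Hence $T$ is a bijection of vector spaces.

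It remains to check that the set-theoretic inverse $T^{-1}$ lies in $\End_A M$ and is homogeneous of the same degree. Homogeneity is clear, because $T$ maps $M_i$ bijectively onto $M_{i+|T|}$. For the sign rule, put $n = T(m)$. Then
\[
T\bigl(a T^{-1}(n)\bigr) = (-1)^{|T||a|} a n,
\]
and applying $T^{-1}$ gives $T^{-1}(an) = (-1)^{|T||a|} a T^{-1}(n)$. This is exactly the required sign rule, since $|T^{-1}| = |T|$.

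In the Banach setting we also need $T^{-1}$ to be bounded. This is a minor point. The module $M$ carries its Banach topology as a finitely generated projective module, and for finite-dimensional $A$ it is finite-dimensional. In general, boundedness follows from the open mapping theorem applied to the bounded bijection $T$.

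Finally, $\End_A M$ is a graded algebra: its even and odd parts are the degree $0$ and degree $1$ maps, and composition adds degrees. Since every nonzero homogeneous element is invertible, $\End_A M$ is a graded division algebra. No step is a real obstacle. The only thing requiring care is checking that kernel and image are graded submodules with the Koszul sign for odd $T$, which works because the sign is a unit.
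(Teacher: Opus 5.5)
Your proof is correct and follows the same route as the paper: homogeneity of $T$ makes $\ker T$ and $\im T$ graded submodules, and simplicity then forces $\ker T=0$ and $\im T=M$. You additionally check that $T^{-1}$ is homogeneous of degree $|T|$, satisfies the Koszul sign rule, and is bounded; the paper's one-line proof leaves these details implicit.
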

\begin{proof}
    Let $f \colon M \to M$ be a nonzero homogeneous endomorphism.
    Then $\ker f$ and $\im f$ are graded submodules of $M$.
    Since $f \neq 0$ and $M$ is simple, we obtain $\ker f = 0$ and $\im f = M$, and so $f$ is invertible.
\end{proof}

\begin{proposition}[Graded Wedderburn--Artin]
    \label{prop:wedderburn}
    If $A$ is a finite-dimensional simple graded algebra, then $A \cong M_{p|q}(D)$ for some $p,q \geq 0$ and $D$ a graded division algebra.
    Moreover, if $A$ is finite-dimensional and semisimple, then it is a finite direct sum of simple graded algebras.
\end{proposition}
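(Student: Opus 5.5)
The plan is to mimic the classical ungraded proof, replacing ordinary submodules and ideals by graded ones throughout. Let $A$ be finite-dimensional and simple, and choose a nonzero graded left ideal $M \subseteq A$ of minimal dimension; it is a simple graded $A$-module. By graded Schur, $D' := \End_A(M)$ is a graded division algebra, and its graded opposite $D$ is again a graded division algebra. Inverses of homogeneous elements are homogeneous, and the Koszul sign does not affect invertibility.

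The first step is to show $A$ is semisimple as a graded $A$-module and that every simple graded module is a parity shift of $M$. The sum $\sum_{a \in A \text{ homogeneous}} Ma$ is a graded two-sided ideal. Each $Ma$ with $a$ homogeneous is the image of $M$ under right multiplication by $a$, which is $A$-linear up to the parity shift $\Pi^{|a|}$. Hence each $Ma$ is $0$ or isomorphic to $M$ or $\Pi M$. By simplicity of $A$ this sum is all of $A$, and the standard finite-dimensional argument (pick a maximal direct sub-sum) gives a grading-preserving isomorphism $A \cong M^{p} \oplus (\Pi M)^{q}$ of left $A$-modules.

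Next I compute endomorphisms, using Convention~\ref{conv:End-op}: $A^{\op} \cong \End_A(A) \cong \End_A(M^p \oplus \Pi M^q)$. Graded-linear maps between $M$ and $\Pi M$ are identified with $D'$ with parity shifted, so this endomorphism algebra is $M_{p|q}(D')$ in the graded sense. Taking graded opposites, and using $M_{p|q}(\R)^{\op} \cong M_{p|q}(\R)$ via supertranspose, yields $A \cong M_{p|q}(D)$ with $D = D'^{\op}$.

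\textbf{Main obstacle.} Verifying the sign bookkeeping in the identification $\End_A(M^p\oplus \Pi M^q) \cong M_{p|q}(\R)\grotimes D'$. This requires the signs of \eqref{eq:graded-linear} for odd maps and the Koszul rule in $\grotimes$. I would check it on elementary matrices $E_{ij}\otimes d$, since the parity of $E_{ij}$ records whether it crosses between the $M$ and $\Pi M$ blocks.

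For the semisimple statement, I take the graded two-sided ideals of $A$ and argue via central idempotents. Semisimplicity as a graded module lets me write $A = \bigoplus_i I_i$ as a sum of isotypic components. Each component collects the simple graded summands isomorphic to a given $M$ or $\Pi M$; here $M$ and $\Pi M$ must be grouped together, since right multiplication by odd elements mixes them. Each $I_i$ is a graded two-sided ideal, because $\Hom_A(M,N)=0$ for non-isomorphic simple graded modules that are not parity shifts of one another. Hence $1 = \sum e_i$ with even central idempotents $e_i$, giving $A \cong \prod_i I_i$. Each $I_i$ is simple as a graded algebra, because a graded ideal of $I_i$ is a graded ideal of $A$ and hence a sum of isotypic pieces, and $I_i$ has only one.
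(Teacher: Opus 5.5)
Your proof is correct, but it is organized differently from the paper's. For the simple case the paper gives no argument of its own and cites \cite[Theorem~2.10.10]{MR2046303}. For the semisimple case it decomposes ${}_AA\cong\bigoplus_i M_i^{p_i|q_i}$ into simple graded modules, grouped up to parity shift. It then reads off $A\cong\End_A(A)^{\op}\cong\bigoplus_i\End_A(M_i^{p_i|q_i})^{\op}$ from graded Schur. So the block decomposition and the matrix form of each block come from one endomorphism computation, and the graded simplicity of each $M_{p|q}(D)$ is left implicit.

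You instead prove the simple case from scratch. This includes a step the paper never addresses: a simple graded algebra is semisimple over itself, because $A=\sum_a Ma$. In the semisimple case you work intrinsically with isotypic components and even central idempotents, so simplicity of the blocks is proved rather than inferred from their matrix form. Your sign bookkeeping goes through. With \eqref{eq:graded-linear}, the map $\End(\R^{p|q})\grotimes\End_A(M)\to\End_A(\R^{p|q}\grotimes M)$ given by $S\otimes T\mapsto\bigl(v\otimes m\mapsto(-1)^{|T||v|}Sv\otimes Tm\bigr)$ is an isomorphism of graded algebras. Then $(X\grotimes Y)^{\op}\cong X^{\op}\grotimes Y^{\op}$ together with the supertranspose finishes the argument.

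One step deserves an extra line: the claim that a graded two-sided ideal $J\subseteq A$ is a sum of full isotypic components. Let $S\subseteq J$ be a simple graded summand and $S'$ another simple summand with $S'\cong S$ or $S'\cong\Pi S$.
\begin{itemize}
\item Take a nonzero homogeneous $A$-linear map $\phi\colon S\to A$ with image $S'$. With the sign convention \eqref{eq:graded-linear}, the tautological map $S\to\Pi S$ is an odd graded-linear isomorphism, so such a $\phi$ exists in both cases.
\item Extend $\phi$ by zero on a graded complement of $S$. This complement exists because $A$ is semisimple.
\item Every homogeneous endomorphism $\Phi$ of ${}_AA$ has the form $\Phi(a)=(-1)^{|x||a|}ax$ with $x=\Phi(1)$.
\end{itemize}
Hence $S'=Sx\subseteq J$, so $J$ contains the whole isotypic component of $S$.
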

\begin{proof}
The former result is shown in~\cite[Theorem 2.10.10]{MR2046303}.
    If $A$ is finite-dimensional and semisimple, there is an isomorphism of left $A$-modules $A\cong \bigoplus_{i=1}^n M_i^{p_i|q_i}$ for $M_i$ a list of simple $A$-modules.
    Then by Schur's lemma $A\cong \End_A(A)^{\op} \cong \bigoplus_{i=1}^n \End_A(M_i^{p_i|q_i})^{\op} \cong \bigoplus_{i=1}^n M_{p_i|q_i}(D_i)^{\op}$ where $D_i = \End_A (M_i)^{\op}$, using~\eqref{eq:endop}.
\end{proof}

\begin{theorem}[{\cite{MR167498}}]
    There are ten isomorphism classes of finite-dimensional graded division algebras:
    \begin{equation*}
    A = \C, \, \Cxl_1, \quad \R,\, \Cl_1,\, \Cl_2, \,\Cl_3, \,\mathbb{H}, \,\Cl_{-3},\,\Cl_{-2},\,\Cl_{-1}
    \end{equation*}
\end{theorem}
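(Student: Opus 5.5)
The plan is to classify a finite-dimensional real graded division algebra $D=D_0\oplus D_1$ by first classifying its even part $D_0$ and then classifying the possible odd extensions of it. If $\mathbb{F}=\C$ is intended, the same argument applies with $D_0=\C$ and yields $\C$ and $\Cxl_1$; these also reappear as real graded algebras below.

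\textbf{Step 1: the even part and its odd extensions.} Every nonzero element of $D_0$ is homogeneous and hence invertible, so $D_0$ is a finite-dimensional real division algebra. By Frobenius, $D_0\in\{\R,\C,\mathbb{H}\}$. If $D_1=0$, this gives $\R$, $\C$ and $\mathbb{H}$.

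If $D_1\neq 0$, I choose a nonzero $u\in D_1$, which is invertible. Right multiplication by $u$ is a bijection $D_0\to D_1$, since its inverse is right multiplication by $u^{-1}$. Hence $D_1=D_0u$ and $\dim D=2\dim D_0$. Conjugation $\sigma(x)=uxu^{-1}$ is an automorphism of $D_0$. The element $c:=u^2\in D_0^\times$ satisfies $\sigma(c)=c$ and $\sigma^2=\mathrm{Ad}_c$. Replacing $u$ by $wu$ with $w\in D_0^\times$ changes these data to $\sigma'=\mathrm{Ad}_w\circ\sigma$ and $c'=w\sigma(w)c$. The classification therefore reduces to normal forms for pairs $(\sigma,c)$ under this action.

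\textbf{Step 2: case analysis.}

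For $D_0=\R$, $\sigma$ is the identity and $c\in\R^\times$. Rescaling $u$ gives $u^2=\pm1$, which yields $\Cl_{1}$ and $\Cl_{-1}$. Matching the signs with Definition~\ref{def:clifford} is bookkeeping.

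For $D_0=\C$, $\sigma$ is either the identity or complex conjugation.
\begin{itemize}
\item If $\sigma=\mathrm{id}$, then $u$ is central and $c\in\C^\times$. Choosing $w=c^{-1/2}$ gives $u^2=1$, so $D\cong\C\otimes_\R\Cl_1=\Cxl_1$.
\item If $\sigma$ is complex conjugation, then $\bar c=c$, so $c\in\R^\times$. Rescaling gives $c'=|w|^2c$, so only the sign of $c$ survives. With $e_1=u$ and $e_2=iu$ one checks $e_1e_2=-e_2e_1$ and $e_1^2=e_2^2=\pm1$. This gives $\Cl_{\pm2}$.
\end{itemize}

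For $D_0=\mathbb{H}$, Skolem--Noether gives $\sigma=\mathrm{Ad}_q$. Replacing $u$ by $q^{-1}u$ makes $\sigma=\mathrm{id}$. Then $c$ commutes with $\mathbb{H}$, so $c\in\R^\times$, and rescaling gives $c=\pm1$. Thus $D\cong\mathbb{H}\grotimes\Cl_{\pm1}$ with $\mathbb{H}$ even. I would identify these with $\Cl_{\mp3}$ by exhibiting three anticommuting odd generators, for instance $iu$, $ju$ and $ku$, and computing their squares.

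\textbf{Step 3: conversely, existence and distinctness.} Each of the ten algebras is a graded division algebra. This holds because every homogeneous element has the form $x$ or $xu$ with $x\in D_0$, and both are invertible when nonzero. The ten algebras are pairwise non-isomorphic, since they are distinguished by the triple (isomorphism type of $D_0$, whether $D_1=0$, whether $u$ centralises $D_0$) together with the sign of $u^2$. That sign is an invariant: within a fixed case the rescaling freedom changes $c$ only by a positive factor, so two algebras with opposite signs cannot be isomorphic.

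\textbf{Main obstacle.} I expect the hardest step to be the sign bookkeeping that matches $\mathbb{H}\grotimes\Cl_{\pm1}$ and the $\C$-conjugation case with the correct members of $\Cl_{\pm2},\Cl_{\pm3}$ under the paper's Clifford conventions. I would settle this by explicitly computing the squares of the chosen odd generators.
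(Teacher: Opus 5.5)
Your argument is correct. It is genuinely different from what the paper does: the paper gives no proof and defers to Wall's work on graded Brauer groups. There the list comes out of the theory of graded central simple algebras. A graded division algebra is graded central simple over its supercentre, which is $\R$ or $\C$, and one reads off division-algebra representatives of $\mathrm{BW}(\R)\cong\Z/8$ and $\mathrm{BW}(\C)\cong\Z/2$.

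You instead argue directly. Frobenius pins down $D_0$, and a nonzero odd $u$ makes $D_1=D_0u$ free of rank one. Hence $D$ is determined by $(\sigma,c)=(\mathrm{Ad}_u|_{D_0},u^2)$ modulo $(\sigma,c)\mapsto(\mathrm{Ad}_w\circ\sigma,\,w\sigma(w)c)$. The case split is then $\mathrm{Out}(\R)=\mathrm{Out}(\mathbb{H})=1$ (Skolem--Noether) versus $\mathrm{Out}(\C)=\Z/2$.

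This route is short and self-contained, and it settles the sign bookkeeping you flagged. Taking $(e_1,e_2)=(u,iu)$ gives $e_1^2=e_2^2=c$. For central $u$ one has $(iu)^2=(ju)^2=(ku)^2=-c$. Together these give exactly $\Cl_{\pm2}$ and $\mathbb{H}\grotimes\Cl_{\pm1}\cong\Cl_{\mp3}$, matching Definition~\ref{def:clifford} and Example~\ref{ex:baby8per}.

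What Wall's route buys in addition is the behaviour under $\grotimes$ up to Morita equivalence: the eight central ones form the cyclic group generated by $\Cl_1$, which is the structure behind Section~\ref{sec:picard}. Your argument says nothing about products, but the statement does not need it.

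One thing to tighten in Step~3 is your choice of invariants. As phrased, ``whether $u$ centralises $D_0$'' and ``the sign of $u^2$'' depend on the choice of $u$ when $D_0=\mathbb{H}$. For instance, $u=qu_0$ with $u_0$ central has $u^2=q^2c$, which is generally not real. The invariants should instead be:
\begin{itemize}
\item whether \emph{some} nonzero odd element centralises $D_0$, i.e.\ whether $\sigma$ is inner, which does not depend on $u$;
\item the sign of the square of such an element.
\end{itemize}
A graded isomorphism carries centralising odd elements to centralising ones and fixes the real scalar $c$. This also covers isomorphisms that restrict to a nontrivial automorphism of $D_0$, which your rescaling analysis alone does not. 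Alternatively, distinctness is immediate from three pieces of data: the dimension, whether $D_1=0$, and the underlying ungraded algebras $|\Cl_1|\cong\R\oplus\R$, $|\Cl_{-1}|\cong\C$, $|\Cl_2|\cong M_2(\R)$, $|\Cl_{-2}|\cong\mathbb{H}$, $|\Cxl_1|\cong\C\oplus\C$, $|\Cl_3|\cong M_2(\C)$, $|\Cl_{-3}|\cong\mathbb{H}\oplus\mathbb{H}$.
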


We conclude that any finite-dimensional semisimple graded algebra is a finite direct sum of finitely many matrix algebras over the above ten graded division algebras.

\begin{warning}
We have that $M_{1|0}(\Cxl_1) \cong M_{0|1}(\Cxl_1)$ as graded algebras, and so the choice of $p$ and $q$ in the graded Wedderburn--Artin theorem is not unique in general.
However, the unordered list of graded division algebras $D$ occurring in the decomposition of a semisimple graded algebra is unique.
\end{warning}

\begin{remark}
\label{rem:semsim}
    The Morita $(2,1)$-category of finite-dimensional semisimple graded algebras, finite-dimensional bimodules and bimodule isomorphisms admits an inclusion functor to $\sBimBanOne$.
    In fact, all such algebras admit a graded $C^*$-algebra structure, and the induced norm is unique up to isomorphism.
    For finite-dimensional semisimple algebras, finite-dimensional bimodules agree with modules that are finitely generated projective on the left, and bimodule actions and bimodule isomorphisms are automatically continuous.
\end{remark}

\subsection{Smith ideal structure on Clifford algebras}
\label{sec:smith}
This section constructs a Smith ideal (Definition~\ref{def:smith}) on the arrow $\Cl_{-1}\to^{\op} \R$ in $\sBanAlgInf^{\op}$, the crucial input for lax monoidality of graded $K$-theory.

To achieve this we construct a symmetric monoidal functor 
\begin{equation*}
    \Cl\colon \InnInf \longrightarrow \sBanAlgInf 
\end{equation*}
from the $(\infty,1)$-category of inner product spaces and isometric embeddings to the $(\infty,1)$-category of graded Banach algebras and even homomorphisms. 
The functor $\Cl$ assigns to an inner product space $(V,\langle-,-\rangle)$ the negative Clifford algebra on $V$, i.e.\ the unit vectors will satisfy $v^2=-1$.
The Smith ideal will be the image of the Smith ideal $\R^1 \to^{\op} \R^0$ in $\InnInf^{\op}$ under the symmetric monoidal functor $\Cl$.

The existence of this Smith ideal has been sketched in~\cite{TheMscThesis}.
Here, we provide a rigorous proof.
\subsubsection{Inner product spaces}
\begin{definition}
    The \emph{category of inner product spaces} $\InnEn$ is the topologically enriched category whose objects are pairs $(V,\langle\cdot  ,\cdot \rangle )$, where $V$ is a finite-dimensional real vector space equipped with an inner product $\langle\cdot  ,\cdot \rangle$ and an orthonormal basis.
    The morphisms are inner product preserving linear maps (not necessarily preserving the basis),
    which carry a Euclidean topology.
    $\InnEn$ carries a symmetric monoidal structure induced by direct sum $\oplus$.
    We denote by $\InnInf=N^{\mathrm{hc}}(\InnEn)$ the associated symmetric monoidal $(\infty,1)$-category.
\end{definition}
\begin{remark}
    The symmetric monoidal categories of inner product spaces with and without chosen basis are equivalent by simply forgetting the basis.
    We keep the choice of basis for conveniently formulating the constructions below.
\end{remark}
\begin{definition}
     An isometric embedding $f\colon V\to W$ in $\InnEn$ is \emph{coordinate-positive} if it maps each basis vector to a linear combination of basis vectors in which each coefficient is nonnegative.
     We let $\InnEn_{\geq 0}\subseteq \InnEn$ be the wide symmetric monoidal subcategory on the coordinate-positive maps.
     Let $\mathsf{J}$ be the full topological suboperad of $\Op(\InnEn_{\geq 0}^{\op})$ on the objects $\R^0,\R^1$ (and sequences of these), see Appendix \ref{app:categories} for our notation for operads.
\end{definition}
\begin{example}
    The space of orthogonal linear embeddings of $\R^1$ into $\R^2$ is $\Hom_{\InnEn}(\R^1,\R^2)\cong S^1$.
    On the other hand, the space of coordinate-positive orthogonal linear embeddings is restricted to the coordinate-positive quadrant $\InnEn_{\geq 0}(\R^1,\R^2)\cong S^1\cap [0,\infty)\times [0,\infty)\cong D^1$.
\end{example}
\begin{example}
    We can explicitly describe the underlying operad $\mathsf{J}\to \Fin_*$ as follows:
    \begin{itemize}
        \item An object over $\langle n\rangle$ is a map $f\colon \langle n \rangle^\circ\to \{0,1\} $, i.e.\ an ordered sequence of $\R^0$'s and $\R^1$'s.
        \item A morphism $(\langle n \rangle,f )\to (\langle m\rangle, g)$ over $\gamma\colon \langle n\rangle \to \langle m \rangle $ is a collection of coordinate-positive orthogonal linear embeddings 
        \begin{equation*}
           i_j\colon \R^{g(j)} \longrightarrow \bigoplus_{i\in \gamma^{-1}(j)} \R^{f(i)} \,.
        \end{equation*}
        Such a morphism exists if and only if for $j=1,\dots,m$ there exists $i\in \gamma^{-1}(j)$ with $g(j)\leq f(i)$.
        The topological space of morphisms is 
        \begin{equation*}
            \bigsqcup_{\gamma}\prod_{j=1}^m \InnEn_{\geq 0}\left(\R^{g(j)},\bigoplus_{i\in \gamma^{-1}(j)} \R^{f(i)}\right) \,.
        \end{equation*}
        This topological space is either empty or contractible.
    \end{itemize}
\end{example}

We will recall briefly here the theory of Smith ideals in commutative rings, see~\cite[Section~2]{hebestreit2025notehigherringtheory} for a more detailed discussion. 

\begin{definition}
\label{def:smith}
    The symmetric monoidal $1$-category $[1]_{\mathrm{min}}$ is the walking arrow category $0\to 1$ together with the symmetric monoidal structure given by $i\otimes j=\min(i,j) \in \{0,1\}$. 
    A (commutative) \emph{Smith ideal in a symmetric monoidal $(\infty,1)$-category $\calC$} is a lax symmetric monoidal functor $[1]_{\mathrm{min}}\to \calC$.
\end{definition}
The underlying functor will assign an arrow $I\to R$ in $\calC$.
The lax monoidal structure yields maps $R\otimes R\to R$, $I\otimes R\to I$, etc.\ as well as higher coherences.
This equips $R$ with a commutative algebra structure, $I$ with a nonunital commutative algebra structure and with a compatible $R$-bimodule structure.

A quotient of a ring by an ideal is a ring and, conversely, the fiber of a map of rings is an ideal.
The higher algebraic version of this is:
\begin{lemma}[{\cite[Prop.~2.3]{hebestreit2025notehigherringtheory}}]
    Let $\calC$ be pointed with finite colimits.
    Let $I\to R$ be a Smith ideal.
    Then $\cofib(I\to R)$ attains the structure of a commutative algebra in $\calC$ such that
    \begin{equation*}
        R\to \cofib(I\to R)
    \end{equation*}
    is a map in $\CAlg(\calC)$.
\end{lemma}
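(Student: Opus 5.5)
The plan is to realize the cofiber as a relative tensor product, which avoids building the multiplication on the cofiber by hand. A Smith ideal is a lax symmetric monoidal functor $F\colon [1]_{\min}\to \calC$ with $F(0)=I$ and $F(1)=R$. Since $1\in[1]_{\min}$ is the unit for $\min$, the object $R=F(1)$ inherits a commutative algebra structure: this is the restriction of $F$ along the symmetric monoidal inclusion $\{1\}\to[1]_{\min}$ of the trivial category. Lax monoidality then makes the whole arrow $I\to R$ a map of $R$-modules in $\calC$. Here $I\otimes R\to I$ is the lax structure map $F(0)\otimes F(1)\to F(0\otimes 1)=F(0)$, and compatibility with $I\to R$ comes from naturality. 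Work in $\Fun([1],\calC)$ with the pointwise tensor product. The goal is to exhibit $R\to \cofib(I\to R)$ as the image of a commutative algebra map under a symmetric monoidal, colimit-preserving functor.

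The key tool is the equivalence, due to Hovey and recorded in \cite[Section~2]{hebestreit2025notehigherringtheory}, between Smith ideals and commutative algebras in $\Fun([1],\calC)$ equipped with the Day convolution for $([1],\max)$, i.e.\ the pushout-product tensor structure. This equivalence is implemented by the cofiber functor $\Fun([1],\calC)_{\min\text{-Day}}\to \Fun([1],\calC)_{\max\text{-Day}}$, $(I\to R)\mapsto (R\to \cofib(I\to R))$. The steps are as follows.

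(i) Identify lax monoidal functors $[1]_{\min}\to\calC$ with commutative algebras in $\Fun([1],\calC)$ with Day convolution for $\min$. This is the universal property of Day convolution, valid since $\calC$ has the finite colimits needed; Day convolution with respect to $\min$ only involves finite colimits over the slices of $[1]$.

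(ii) Show that the cofiber functor, which is an equivalence of $(\infty,1)$-categories because $\calC$ is stable, or at least pointed so that we can work on the appropriate subcategory, is symmetric monoidal from the $\min$-Day structure to the $\max$-Day structure. Concretely, for arrows $I\to R$ and $J\to S$ the $\min$-Day tensor is $(I\otimes S\sqcup_{I\otimes J} R\otimes J)\to R\otimes S$. Its cofiber is $\cofib(I\to R)\otimes\cofib(J\to S)$, which is exactly the target of the $\max$-Day (pushout-product) tensor of the two cofiber arrows. To upgrade this to a symmetric monoidal equivalence, I would use that both sides are given by Day convolution and that $\cofib$ is a left Kan extension/colimit-preserving functor compatible with them.

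(iii) Conclude. A commutative algebra for the $\max$-Day structure is an arrow $R\to Q$ that is a map in $\CAlg(\calC)$, because a lax monoidal functor $[1]_{\max}\to\calC$ is a commutative algebra map: $0$ is the unit for $\max$. Applying the monoidal functor from (ii) to the algebra from (i) therefore gives $R\to\cofib(I\to R)$ in $\CAlg(\calC)$.

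The main obstacle is step (ii) in the merely pointed (nonstable) setting with finite colimits. There the cofiber functor is not an equivalence, and it must be promoted to a symmetric monoidal functor rather than just checked on objects. This also needs $\otimes$ to preserve finite colimits in each variable, which I would assume as implicit in the statement. I would first try to construct the symmetric monoidal structure directly as a lax structure induced by the universal property of cofibers, and then verify it is strong on objects by the pushout computation above.
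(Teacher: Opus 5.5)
\textbf{Verdict: your overall strategy is the right one, but step (ii), which carries all the content of the lemma, is left as a plan, so the proposal has a genuine gap.} The paper does not prove this lemma itself; it quotes it from the cited note. Your strategy is the standard Hovey-style one, and the following parts are correct. Step (i) holds: Smith ideals are commutative algebras for the $\min$-Day convolution on $\Fun([1],\calC)$, whose tensor product is the pushout product. The object-level computations in (ii) hold: the cofiber of $f\,\square\,g$ is $\cofib(f)\otimes\cofib(g)$, and the unit $(0\to\mathbbm{1})$ goes to $(\mathbbm{1}\xrightarrow{=}\mathbbm{1})$. Step (iii) holds: commutative algebras for the $\max$-Day structure are arrows in $\CAlg(\calC)$. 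One slip: the $\max$-Day convolution is the \emph{pointwise} tensor product, not the pushout product as you say in your second paragraph. Your computation in (ii), where the source of the output arrow is $R\otimes S$, and your step (iii) only make sense with the pointwise identification.

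The gap is step (ii), which you leave as a plan. Computing the binary tensor and the unit on objects, and taking comparison maps from the universal property of cofibers, gives structure maps only up to homotopy. A lax symmetric monoidal functor of $(\infty,1)$-categories needs all higher coherences, and nothing in your sketch supplies them. Three further points:
\begin{itemize}
\item Stability is not among the hypotheses, and an equivalence is not needed anyway: a \emph{lax} structure suffices, since lax symmetric monoidal functors preserve $\CAlg$.
\item The appeal to ``$\cofib$ is a left Kan extension compatible with Day convolution'' fails as stated. Kan extension along maps of indexing posets only produces colimits over down-closed pieces of $[1]$, i.e.\ $0$, $I$ or $R$, never $R\sqcup_I 0$. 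The zero object has to come from the pointedness of $\calC$.
\item One way to close the gap is to reduce to a universal case. Pass to $\mathrm{Ind}(\calC)$; this is harmless because Day convolution over $[1]$ uses only finite colimits. Then $\Fun([1],\calC)\simeq\Fun([1],\Spc_*)\otimes\calC$ compatibly with both Day structures, and the cofiber functor becomes $\cofib\otimes\id_{\calC}$. With its $\min$-Day structure, $\Fun([1],\Spc_*)$ is the free pointed presentably symmetric monoidal $\infty$-category on $([1]^{\op},\min)$. So it suffices to refine the restriction of $\cofib$ along the pointed Yoneda embedding to a symmetric monoidal functor. That restriction sends the representables $(\ast\to S^0)$ and $(S^0\xrightarrow{=}S^0)$ to the pointwise unit $(S^0\xrightarrow{=}S^0)$ and to $(S^0\to\ast)$, joined by the unit map. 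Since $(S^0\to\ast)$ is an idempotent commutative algebra and $\min$ is the coproduct on $[1]^{\op}$, this is exactly a symmetric monoidal functor $([1]^{\op},\min)\to\Fun([1],\Spc_*)$. Hence the cofiber functor is (even strongly) symmetric monoidal from the $\min$-Day to the pointwise structure, and your steps (i) and (iii) then finish the proof.
\end{itemize}
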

Upon identifying lax symmetric monoidal functors $\calC\to \calD$ with commutative algebras with respect to Day convolution we obtain: 
\begin{corollary}
    \label{cor:cofiberlaxmon}
    Let $\calC, \calD$ be symmetric monoidal $(\infty,1)$-categories.
    Let $I\to R$ be a Smith ideal in $\Fun(\calC,\calD)$ with the Day convolution symmetric monoidal structure and suppose $\calD$ is pointed with finite colimits.
    Then the functor $\cofib(I\to R)\colon \calC \to \calD$ is lax symmetric monoidal and $R\to \cofib(I\to R)$ is a lax symmetric monoidal natural transformation.
\end{corollary}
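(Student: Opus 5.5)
The plan is to reduce the statement to the preceding lemma \cite[Prop.~2.3]{hebestreit2025notehigherringtheory}, applied in the functor category $\Fun(\calC,\calD)$ with its Day convolution structure. We use the standard identification (\cite[Ex.~2.2.6.9]{HA}, or the universal property of Day convolution \cite[Sec.~2.2.6]{HA}) of lax symmetric monoidal functors $\calC\to\calD$ with commutative algebra objects in $(\Fun(\calC,\calD),\circledast_{\mathrm{Day}})$. Under this identification, lax symmetric monoidal natural transformations correspond to maps in $\CAlg$.

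To use the lemma we must check that $\Fun(\calC,\calD)$ is pointed with finite colimits. Colimits in functor categories are computed pointwise, so these properties follow from those of $\calD$. The cofiber in $\Fun(\calC,\calD)$ is therefore the pointwise cofiber, and it agrees with the functor $c\mapsto\cofib(I(c)\to R(c))$ named in the statement.

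The lemma also needs Day convolution to exist and to be compatible with the relevant colimits. Concretely, the tensor product should preserve finite colimits separately in each variable, since the proof of \cite[Prop.~2.3]{hebestreit2025notehigherringtheory} uses that $-\otimes X$ preserves cofibers. Day convolution is a left Kan extension of $\otimes_\calD\circ(F\times G)$ along $\otimes_\calC$. It therefore exists and preserves colimits in each variable whenever $\calD$ has the relevant colimits and $\otimes_\calD$ preserves them in each variable. I expect this to be the only real obstacle, because the statement as written assumes only that $\calD$ is pointed with finite colimits. I would handle it by either:
\begin{itemize}
\item adding the implicit hypothesis that $\calD$ is presentably symmetric monoidal, which holds in the intended application $\calD=\Sp_{\geq 0}$ or $\Sp$; or
\item embedding $\calD$ into a presentable symmetric monoidal category via a symmetric monoidal Yoneda or Ind-completion, doing the construction there, and noting that the pointwise cofiber lands back in $\calD$.
\end{itemize}

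Once this is settled, the lemma makes $\cofib(I\to R)$ a commutative algebra in $\Fun(\calC,\calD)$ with $R\to\cofib(I\to R)$ a map in $\CAlg$. Translating back through the Day convolution identification, $\cofib(I\to R)$ is lax symmetric monoidal and $R\to\cofib(I\to R)$ is a lax symmetric monoidal natural transformation.
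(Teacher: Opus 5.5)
Your proposal is correct and is the paper's argument: apply the cited Smith-ideal lemma in $\Fun(\calC,\calD)$ with Day convolution (pointed, with finite colimits computed pointwise), then translate commutative algebras and algebra maps back into lax symmetric monoidal functors and monoidal transformations; you are also right that this tacitly needs $\otimes_\calD$ to preserve colimits in each variable, which the paper leaves implicit and which holds for $\calD=\Sp,\Sp_{\geq 0}$. Only your first fix is sound: the Yoneda embedding into presheaves does not preserve cofibers, and any fully faithful, finite-colimit-preserving symmetric monoidal embedding into a category with colimit-compatible tensor product would force $\otimes_\calD$ to preserve cofibers already, so no embedding can remove this hypothesis (Ind-completion only helps once it holds).
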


We now construct a Smith ideal in $\InnInf^{\op}$.

\begin{lemma}
\label{lem:operadequivJ}
    The functor
    \begin{align}
        \label{eq:operadmap}
        \begin{aligned}
            \mathsf{J} &\longrightarrow \Op([1]_{\mathrm{min}}) \,, \\
        (\langle n\rangle,f) &\longmapsto (\langle n \rangle , 1-f) \,,\\
        [\gamma,(i_j)] & \longmapsto \gamma 
        \end{aligned}
    \end{align}
    defines a weak equivalence of topological operads.
\end{lemma}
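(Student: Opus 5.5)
Since $\Op([1]_{\mathrm{min}})$ is a discrete operad (its nerve is a $1$-category), a map of topological operads into it is a weak equivalence exactly when it is bijective on objects (over each $\langle n\rangle$) and induces weak equivalences on all multimorphism spaces. I will check three things in turn: that \eqref{eq:operadmap} is a well-defined map of operads over $\Fin_*$, that it is a bijection on colors, and that each multimorphism space of $\mathsf{J}$ is either empty or contractible, being nonempty exactly when the corresponding set in $\Op([1]_{\mathrm{min}})$ is a singleton. Together these show that every mapping space maps by a weak equivalence onto a point or onto the empty set.

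On colors there is nothing to do: the objects over $\langle n\rangle$ on both sides are functions $\langle n\rangle^\circ\to\{0,1\}$, and $f\mapsto 1-f$ is a bijection. For morphisms over $\gamma\colon\langle n\rangle\to\langle m\rangle$, a multimorphism in $\Op([1]_{\mathrm{min}})$ from $(1-f(i))_{i\in\gamma^{-1}(j)}$ to $1-g(j)$ is a morphism $\min_{i\in\gamma^{-1}(j)}(1-f(i))\to 1-g(j)$ in $[1]$, with the empty minimum equal to the unit $1$. There is at most one such morphism, and it exists iff $\min_i(1-f(i))\le 1-g(j)$, i.e.\ iff $g(j)\le \max_{i\in\gamma^{-1}(j)}f(i)$. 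This matches the criterion from the example above: a coordinate-positive isometric embedding $\R^{g(j)}\to\bigoplus_{i\in\gamma^{-1}(j)}\R^{f(i)}$ exists iff either $g(j)=0$ or some $f(i)=1$. I will double-check the edge case $\gamma^{-1}(j)=\emptyset$. There the target is $\R^0$, so a map exists iff $g(j)=0$. On the $[1]_{\mathrm{min}}$ side the empty minimum is $1$, and $1\to 1-g(j)$ exists iff $g(j)=0$. So the two agree.

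Contractibility of the nonempty spaces reduces to the factorwise statement. If $g(j)=0$, the factor is a point. If $g(j)=1$, the factor is the set of unit vectors in $\R^k_{\ge 0}$, where $k=\#\{i\in\gamma^{-1}(j): f(i)=1\}\ge1$. This is a spherical simplex, homeomorphic to $\Delta^{k-1}$ by normalization, and hence contractible. A finite product of contractible spaces is contractible, and the morphism space over $\gamma$ is this product. Compatibility of the assignment with composition and units is automatic, because in the discrete target all parallel multimorphisms are equal; the only thing to verify is that $\mathsf{J}$ maps nothing where the target is empty, which the existence criterion already guarantees. The symmetric group actions are compatible for the same reason.

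The main obstacle is bookkeeping rather than mathematics. I must pin down the exact conventions of the operadic nerve $\Op(-)$ from Appendix~\ref{app:categories}, namely the direction of morphisms in $\InnEn_{\ge0}^{\op}$ and the role of the inert and empty-fiber cases, so that the "min" on the $[1]_{\mathrm{min}}$ side correctly matches the "exists an $i$ with $g(j)\le f(i)$" condition under the flip $f\mapsto 1-f$. Once those are aligned, the argument is the three checks above.
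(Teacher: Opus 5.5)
Your proposal is correct and follows the paper's argument: bijectivity on colors via $f\mapsto 1-f$, together with the observation that each multimorphism space of $\mathsf{J}$ over $\gamma$ is empty or contractible exactly when the corresponding hom-set of $\Op([1]_{\mathrm{min}})$ is empty or a point. Your explicit check of the case $\gamma^{-1}(j)=\emptyset$ (empty minimum equal to the unit $1$) is more careful than the paper's criterion ``there exists $i\in\gamma^{-1}(j)$'', which fails literally there, although both sides still match.
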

\begin{proof}
    For the ease of comparison we spell out $\Op([1]_{\mathrm{min}})$ explicitly:
    \begin{itemize}
        \item An object over $\langle n\rangle$ is a map $f\colon \langle n\rangle^\circ \to \{0,1\}$, i.e.\ an ordered sequence of objects in $[1]_{\mathrm{min}}$.
        \item A morphism $(\langle n \rangle,f )\to (\langle m\rangle, g)$ over $\gamma\colon \langle n\rangle \to \langle m \rangle $ is a collection of maps in $[1]_{\mathrm{min}}$
        \begin{equation*}
            \min_{i\in \gamma^{-1}(j)} f(i) \to g(j) \,.
        \end{equation*}
        There is a unique such morphism if and only if there exists $i\in \gamma^{-1}(j)$ with $f(i)\leq g(j)$.
    \end{itemize}
    We observe that the map~\eqref{eq:operadmap} is essentially surjective and the spaces of multimorphisms on both sides are either empty or contractible at the same time.
\end{proof}
\begin{corollary}
    \label{cor:smithidealInn}
    The arrow $\R^1 \to^{\op} \R^0$ in the symmetric monoidal $(\infty,1)$-category $\InnInf^{\op}$ admits a refinement to a Smith ideal.
    It is the lax symmetric monoidal functor exhibited by the zig-zag of topological operads
\begin{equation*}
    \Op([1]_{\mathrm{min}}) \xleftarrow{\simeq} \mathsf{J} \to \Op(\InnEn^{\op}) \,.
\end{equation*}
\end{corollary}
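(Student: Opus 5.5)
The plan is to build the Smith ideal as the composite of two maps of $\infty$-operads, inverting the weak equivalence of Lemma~\ref{lem:operadequivJ}. Concretely, there is an inclusion of topological operads $\mathsf{J} \hookrightarrow \Op(\InnEn_{\geq 0}^{\op}) \to \Op(\InnEn^{\op})$. The first map exists because $\mathsf{J}$ is by definition a full suboperad. The second is induced by the wide symmetric monoidal inclusion $\InnEn_{\geq 0}\subseteq\InnEn$, which is compatible with $\oplus$ and hence with the cocartesian operad structure.

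\textbf{Step 1: pass to $\infty$-operads.} Apply the operadic nerve. Since all mapping spaces are topological, we get maps of $\infty$-operads
\begin{equation*}
N^{\otimes}\Op([1]_{\mathrm{min}}) \leftarrow N^{\otimes}\mathsf{J} \to (\InnInf^{\op})^{\otimes}.
\end{equation*}
The operadic nerve of $\Op(\InnEn^{\op})$ is the symmetric monoidal $(\infty,1)$-category $(N^{\mathrm{hc}}\InnEn)^{\op}=\InnInf^{\op}$, using that $N^{\mathrm{hc}}$ commutes with taking opposites and with the monoidal structure.

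\textbf{Step 2: invert the left map.} By Lemma~\ref{lem:operadequivJ}, the left map is essentially surjective and induces weak equivalences on all multimorphism spaces; both sides are empty or contractible simultaneously. Hence it becomes an equivalence of $\infty$-operads after the operadic nerve, because weak equivalences of fibrant simplicial colored operads are sent to equivalences. Choosing an inverse and composing gives a map of $\infty$-operads $[1]_{\mathrm{min}}^{\otimes}\to(\InnInf^{\op})^{\otimes}$, that is, a lax symmetric monoidal functor, which is precisely a Smith ideal by Definition~\ref{def:smith}.

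\textbf{Step 3: identify the underlying arrow.} Under \eqref{eq:operadmap}, the object $0\in[1]_{\mathrm{min}}$ corresponds to $f=1$, i.e.\ $\R^1$, and $1$ corresponds to $\R^0$. The unique morphism $0\to1$ corresponds to the contractible space $\InnEn_{\geq 0}(\R^0,\R^1)=\{0\}$, namely the embedding $\R^0\to\R^1$, read in the opposite category as $\R^1\to^{\op}\R^0$. The unit of $\R^0$ and the multiplications $\R^1\otimes\R^1\to\R^1$ are the evident coordinate-positive embeddings into sums; for example $\R^1\hookrightarrow\R^1\oplus\R^1$ ranges over a contractible arc.

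\textbf{Main obstacle.} The delicate point is the rigor of Step~1: that $\mathsf{J}$, defined as a full suboperad of $\Op(\InnEn^{\op}_{\geq 0})$, is a fibrant topological operad whose nerve makes sense, and that $\Op(\InnEn^{\op})$ really presents the symmetric monoidal $\infty$-category $\InnInf^{\op}$. I would handle this via the appendix conventions on topological operads (Appendix~\ref{app:categories}). First I would check that all spaces are Kan after applying $\Sing$. Then I would invoke the standard comparison that the operadic nerve of a simplicial symmetric monoidal category is its symmetric monoidal $\infty$-category (e.g.\ \cite[2.1.1]{HA}), which yields the zig-zag as stated.
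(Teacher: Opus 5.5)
Your proposal is correct and follows the paper's proof. You apply $\Sing$ and the operadic nerve to the zig-zag $\Op([1]_{\mathrm{min}}) \leftarrow \mathsf{J} \to \Op(\InnEn^{\op})$ and use Lemma~\ref{lem:operadequivJ} to see that the back leg becomes an equivalence of $\infty$-operads. The paper phrases this step via Arakawa's theorem that the localization of fibrant simplicial operads models $\infty$-operads, which comes to the same thing. You then read the composite as a lax symmetric monoidal functor $[1]_{\mathrm{min}} \to \InnInf^{\op}$; your explicit identification of the underlying arrow $\R^1 \to^{\op} \R^0$ and of the structure maps is accurate and is left implicit in the paper.
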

\begin{proof}
    Applying $\Sing$ turns this zig-zag into a zig-zag of fibrant simplicial operads, where the back leg is again a weak equivalence using Lemma~\ref{lem:operadequivJ}. 
    \cite[Thm.~B]{arakawa2025relativeoperadsmodelinftyoperads} proves that the localization of fibrant simplicial operads at the weak equivalences is a model for the $(\infty,1)$-category of operads $\mathcal{O}p_\infty$.
    The operadic nerve~\cite[Def.~2.1.1.23]{HA} (which coincides with the homotopy coherent nerve on the underlying category)
    is an explicit equivalence to Lurie's model of $\infty$-operads.   
    Finally, we use that for symmetric monoidal $(\infty,1)$-categories a lax symmetric monoidal functor $\calC\to \calD$ is the same as a functor of operads $\Op(\calC)\to \Op(\calD)$.
\end{proof}
\subsubsection{Clifford algebras as a functor}
In this section, we will construct a symmetric monoidal functor from the category of inner product spaces with $\oplus$ to the category of graded Banach algebras that assigns the Clifford algebra.
Much of the content of this section is well-known, see~\cite[Chapter 1.1]{lawsonmichelsohn} and~\cite[Lemma 5.9]{MR3548460}.
\begin{definition}
\label{def:clifford}
Let $\mathbb{F}$ be either $\R$ or $\C$.
    Let $V$ be a finite-dimensional vector space over $\mathbb{F}$ with a nondegenerate symmetric bilinear form $B$.
    The \emph{Clifford algebra} $\Cl(V,B)$ is defined as the free (noncommutative) algebra on the vector space $V$ modulo the relations $vw + wv = 2 B(v,w)$ for all $v,w\in V$.
    We write $\Cl_n=\Cl_{+n}$ for $\Cl(\R^n,\langle,\rangle)$ and $\Cl_{-n}$ for $\Cl(\R^n,-\langle,\rangle)$, where $\langle,\rangle$ is the standard Euclidean inner product.
    We write $\Cxl_n = \Cl(\C^n, (,))$, where $(,)$ is the standard complex nondegenerate symmetric bilinear form.
\end{definition}
The free algebra on $V$ has a natural $\Z_{\geq0 }$-grading in which generators have degree one.
Since the defining relation of the Clifford algebra is of even degree, $\Cl(V,B)$ is a graded algebra where the vectors $V \subseteq \Cl(V,B)$ have odd degree.
By definition Clifford algebras satisfy the following universal property: an algebra homomorphism $\Cl(V,B) \to A$ is equivalent to a linear map $f \colon V \to A$ such that $f(v)f(w) + f(w) f(v) = 2B(v,w)$ for all $v,w\in V$.
If $A$ is a graded algebra, then the homomorphism is even if and only if $f$ lands in $A_1$.
\begin{example}
    Note that $\Cl_{-1} = \R[e]/(e^2 + 1)$, where $e \in \R^1$ is a norm one vector.
    This algebra is isomorphic to $\C$ as an ungraded algebra.
    However, this is clearly not a graded isomorphism as we consider $\C$ to be purely even.
    We have that $(\Cl_{+1})^{\op} \cong \Cl_{-1} \ncong \Cl_{+1}$ as graded $C^*$-algebras, also see Warning~\ref{warning:opposite}.
\end{example}

\begin{example}
\label{ex:baby8per}
    There are isomorphisms of graded algebras $\mathbb{H} \grotimes \Cl_{\pm 1} \cong \Cl_{\mp 3}$ for both signs given by 
    \begin{align*}
        i &\mapsto e_1 e_2
        \\
        j &\mapsto e_2 e_3 \qquad  f \mapsto e_1 e_2 e_3
        \\
        k &\mapsto e_1 e_3 \,,
    \end{align*}
    where $e_1,e_2,e_3 \in \Cl_{\mp 3}$ is the orthonormal basis of $\R^3$ and $f \in \Cl_{\pm 1}$ of $\R$.
\end{example}

As a vector space $\Cl(V,B)\cong \bigwedge^*V$ and so $\dim_\R(\Cl(V,B))=2^{\dim V}$.

Given a real vector space $V$ with nondegenerate symmetric bilinear form, let $V = V_+ \oplus V_-$ be its orthogonal decomposition into the positive definite and negative definite part of $B$.
The map $\id_{V_+} \oplus -\id_{V_-}$ extends uniquely to a $\C$-antilinear map on the complexification $V \otimes_\R \C$, which comes with a complex bilinear form $(.,.)$ which is $B$ complexified.
By the universal property, it extends uniquely to a $\C$-antilinear antihomomorphism $\Cl(V \otimes_\R \C) \to \Cl(V \otimes_\R \C)$.
This map is explicitly given by $(v_1 \dots v_k)^* = (-1)^{\# \{i: v_i \in V_-\}}\,\overline{v}_k \dots \overline{v}_1$ for homogeneous $v_i \in V_{\pm}$.
We see that the antihomomorphism squares to 1 and is grading-preserving.
It defines a complex graded $C^*$-algebra structure on $\Cxl_n$ and hence a real $C^*$-algebra structure on $\Cl(V)$ which corresponds to the real structure $v \mapsto \overline{v}$ on $\Cxl_n$ induced by the canonical real structure on $V \otimes_\R \C$.
This in particular makes $\Cl(V)$ into Banach algebras with norm
\begin{equation}
\label{Cliffordnorm}
\|v_1 \dots v_k\|^2 = \|(v_1 \dots v_k)^* v_1 \dots v_k \| = (v_1,\overline{v}_1)  \dots (v_k,\overline{v}_k).
\end{equation}
As special cases, we see that $\Cl_{+n}$ is a $C^*$-algebra using $v^* = v$ for $v \in \R^n$ and $v^* = -v$ in $\Cl_{-n}$.

\begin{example}
\label{ex:(1,1)per}
    Let $V$ be a finite-dimensional vector space and equip $V \oplus V^*$ with the symmetric hyperbolic form.
    The Fock space construction gives a canonical isomorphism $\mathcal{F} \colon \Cl(V \oplus V^*) \cong \End(\bigwedge V)$ of graded algebras.
    Here $\End(\bigwedge V)$ is graded using the graded vector space structure on $\bigwedge V$ given by even/odd number of wedges.
    Concretely it maps $v \in V \subseteq \Cl(V \oplus V^*)$ to the creation operator $a_v^\dagger = v \wedge (-)$ and $v^* \in V^*$ to the annihilation operator $i_{v^*}$.

    If $V$ is a complex Hilbert space, then $\End(\bigwedge V)$ is a $C^*$-algebra using the induced Hilbert space structure on $\bigwedge V$.
    The Hilbert space structure also induces a real structure on $V \oplus V^*$ and hence a $C^*$-algebra structure on $\Cl(V \oplus V^*)$.
    The even algebra homomorphism $\mathcal{F}$ is a $*$-homomorphism.

    Picking a basis $\{e_1, \dots, e_n\}$ of $V$ and using the dual basis on $V^*$, we see that the hyperbolic form has signature $(n,n)$.
    We therefore get an isomorphism $\Cl_{n,n} \cong \End(\R^{2^{n-1}|2^{n-1}})$ of real graded algebras.
\end{example}

Recall the $8$-periodicity of real Clifford algebras:
\begin{lemma}
    We have that $\Cl_{p + 8,q} \cong \Cl_{p,q + 8}$ are graded Morita equivalent to $\Cl_{p,q}$.
\end{lemma}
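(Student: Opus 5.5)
The plan is to reduce both claims to one algebraic input, $\Cl_{8}\cong M_{8|8}(\R)\cong \Cl_{-8}$, together with the obvious additivity of Clifford algebras, $\Cl_{p,q}\grotimes\Cl_{r,s}\cong\Cl_{p+r,q+s}$. The latter holds because $\Cl$ turns orthogonal direct sums of quadratic spaces into graded tensor products. Once we know that $\Cl_8$ and $\Cl_{-8}$ are matrix superalgebras over $\R$, the isomorphisms $\Cl_{p+8,q}\cong\Cl_{p,q}\grotimes\Cl_{8}\cong M_{8|8}(\Cl_{p,q})$ and $\Cl_{p,q+8}\cong\Cl_{p,q}\grotimes\Cl_{-8}\cong M_{8|8}(\Cl_{p,q})$ follow at once. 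This gives both the isomorphism $\Cl_{p+8,q}\cong\Cl_{p,q+8}$ and a Morita equivalence with $\Cl_{p,q}$, via the invertible bimodule $\Cl_{p,q}^{8|8}$; here $M_{p|q}(A)\cong\End_A(A^{p|q})^{\op}$ by Convention~\ref{conv:End-op}.

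To show $\Cl_{\pm 8}$ is a matrix algebra, I would use the hyperbolic identification of Example~\ref{ex:(1,1)per}, $\Cl_{n,n}\cong\End(\R^{2^{n-1}|2^{n-1}})$. For the positive case, write $\Cl_{8}\cong\Cl_{4}\grotimes\Cl_{4}$ and find an isomorphism $\Cl_{4}\grotimes\Cl_{4}\cong\Cl_{4}\grotimes\Cl_{-4}$. In the same way $\Cl_{-8}\cong\Cl_{-4}\grotimes\Cl_{-4}\cong\Cl_{4}\grotimes\Cl_{-4}$. Then $\Cl_{4}\grotimes\Cl_{-4}=\Cl_{4,4}\cong M_{8|8}(\R)$.

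The key lemma is therefore the isomorphism $\Cl_{-4}\grotimes\Cl_{4}\cong\Cl_{4}\grotimes\Cl_{4}$, and its mirror with signs flipped. I would prove it by the standard volume-element trick. Let $\omega=e_1e_2e_3e_4$ be the volume element of the first factor. It is even, squares to $+1$, and graded-commutes with every element of the second factor. Replace each odd generator $f_i$ of the second factor by $\omega f_i$. These new elements are odd and pairwise anticommute. They also anticommute with the $e_j$: indeed $e_j\omega=-\omega e_j$, and in the graded tensor product $e_j$ and $f_i$ anticommute, so $e_j\,\omega f_i=-\omega e_j f_i=\omega f_i e_j$, and I would need to recheck this sign carefully. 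If this works, the universal property of Clifford algebras gives an even homomorphism between two algebras of the same dimension, and injectivity (for example by simplicity) finishes the argument.

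I expect the main obstacle to be exactly this sign bookkeeping. The previous computation shows that $\omega f_i$ \emph{commutes} with $e_j$, so the naive substitution does not produce a Clifford algebra directly. The sign conventions of the graded tensor product~\eqref{gradedtensoralgebra} must be handled carefully. What I would try first is the alternative route via Example~\ref{ex:baby8per}: $\Cl_{\mp3}\cong\mathbb{H}\grotimes\Cl_{\pm1}$. From it I would get
\begin{equation*}
\Cl_{\pm 4}\cong\mathbb{H}\grotimes\Cl_{\mp1}\grotimes\Cl_{\pm1}\cong\mathbb{H}\grotimes M_{1|1}(\R).
\end{equation*}
Hence $\Cl_{\pm8}\cong\mathbb{H}\grotimes\mathbb{H}\grotimes M_{2|2}(\R)$, and the classical ungraded fact $\mathbb{H}\otimes\mathbb{H}\cong M_4(\R)$, with $\mathbb{H}$ purely even, gives $\Cl_{\pm8}\cong M_{8|8}(\R)$. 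This route uses only statements already recorded in the paper plus $\Cl_{1,1}\cong M_{1|1}(\R)$ from Example~\ref{ex:(1,1)per}, so I would adopt it as the primary argument.
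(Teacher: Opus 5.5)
Your adopted argument is correct and is essentially the paper's proof. Both use Examples~\ref{ex:baby8per} and~\ref{ex:(1,1)per} to write $\Cl_{\pm 4}\cong \mathbb{H}\grotimes \Cl_{1,1}$ (the paper phrases this as $\Cl_4\cong\Cl_{-4}$, both Morita equivalent to $\mathbb{H}$), then square and use $\mathbb{H}\otimes\mathbb{H}\cong M_4(\R)$; you additionally keep the explicit isomorphism $\Cl_{\pm 8}\cong M_{8|8}(\R)$, which makes $\Cl_{p+8,q}\cong\Cl_{p,q+8}$ explicit rather than implicit, and the abandoned volume-element detour is unnecessary (as you noticed it fails across the two factors, whereas inside a single factor $e_i\mapsto \omega e_i$ with $\omega=e_1e_2e_3e_4\in\Cl_{-4}$ does give $\Cl_{4}\cong\Cl_{-4}$ directly).
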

\begin{proof}
    Since $\Cl_{p + 8,q} \cong \Cl_{p,q} \grotimes \Cl_{8}$ it suffices to show that $\Cl_8$ and $\Cl_{-8}$ are Morita trivial.
    It follows from Example~\ref{ex:baby8per} and Example~\ref{ex:(1,1)per} that 
    \begin{equation*}
    \Cl_{4} \cong \Cl_1 \grotimes \Cl_3 \cong \Cl_1 \grotimes \Cl_{-1} \otimes \mathbb{H} \cong \Cl_{-4} \,,
    \end{equation*}
    and so the isomorphic graded algebras $\Cl_{-4}$ and $\Cl_{4}$ are Morita equivalent to $\mathbb{H}$.
    Therefore $\Cl_{\pm 8} = \Cl_{\pm 4} \grotimes \Cl_{\pm 4}$ is Morita equivalent to $\mathbb{H} \otimes \mathbb{H} \cong M_4(\R)$.
\end{proof}

We denote the inclusion homomorphisms 
\begin{equation}
\label{eq:i}
i_{\pm} \colon A \hookrightarrow A \grotimes \Cl_{\pm 1}.
\end{equation}
Actions by Clifford algebras with positive squares can be interpreted as grading operators:

\begin{proposition}
\label{prop:gradingvsclifford}
Let $A$ be a graded Banach algebra.
    There is a natural equivalence of (Banach) categories
    \begin{equation*}
    \ModEn^{C_2}_A\cong \ModEn_{|A\grotimes \Cl_{+1}|} \,,
    \end{equation*}
    which commutes with the functors to $\ModOne_{|A|}$ given by forgetting the grading and $i_+^*$.
\end{proposition}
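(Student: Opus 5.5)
The plan is to write down explicit mutually inverse Banach functors between $\ModEn^{C_2}_A$ and $\ModEn_{|A\grotimes \Cl_{+1}|}$ that are the identity on underlying $|A|$-modules. Write $e\in \Cl_{+1}$ for the odd generator, so $e^2=1$. In $A\grotimes\Cl_{+1}$, the Koszul rule \eqref{gradedtensoralgebra} gives $(1\grotimes e)(a\grotimes 1)=(-1)^{|a|}a\grotimes e$. An ungraded module over $|A\grotimes\Cl_{+1}|$ is therefore the same as an $|A|$-module $M$ together with a bounded operator $\epsilon$ (the action of $1\grotimes e$) satisfying $\epsilon^2=1$ and $\epsilon a=(-1)^{|a|}a\epsilon$.

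\textbf{Step 1 (objects).} Given a graded $A$-module $M=M_0\oplus M_1$, let $\epsilon$ be its grading involution. The relation $A_iM_j\subseteq M_{i+j}$ gives exactly $\epsilon a=(-1)^{|a|}a\epsilon$. Conversely, given $\epsilon$, set $M_i=\ker(\epsilon-(-1)^i)$. These are closed subspaces, and the projections $\tfrac12(1\pm\epsilon)$ are bounded, so this is a Banach grading compatible with the $A$-action. The two constructions are inverse to each other on the nose.

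\textbf{Step 2 (morphisms).} Morphisms in $\ModEn^{C_2}_A$ are grading-preserving (even) maps, since $\ModOne^{C_2}_A$ is a category of grading-preserving maps, with $\sBan$-enrichment only on the hom-spaces. Even $A$-linear maps are exactly the $|A|$-linear maps commuting with the grading involutions, i.e.\ the $|A\grotimes\Cl_{+1}|$-linear maps. Both hom-spaces are the same closed subspace of $\Hom_{|A|}(M,N)$ with the same operator norm, so the correspondence is an isomorphism of Banach categories. Both functors are the identity on underlying $|A|$-modules, so the claimed compatibility with forgetting the grading and with $i_+^*$ holds strictly. Naturality in $A$, for grading-preserving homomorphisms $A\to B$, also holds: extension of scalars $B\otimes_A M$ carries the tensor grading, whose involution is $1\otimes\epsilon$, and this matches extending the $\Cl_{+1}$-action.

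\textbf{Step 3 (finitely generated projective).} This is the main point to check: the correspondence must match finitely generated projective objects on both sides.

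If $M$ is a graded summand of $A^{p|q}$, then as a $|A\grotimes\Cl_{+1}|$-module it is a summand of $(A\grotimes\Cl_{+1})^{p+q}$. To see this, note that $A\grotimes\Cl_{+1}$, viewed as a module over itself, corresponds under the dictionary to the graded module $A\oplus\Pi A$. Hence $A^{p|q}$ is a summand of $(A\oplus\Pi A)^{p+q}$.

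Conversely, if $M$ is a $|A\grotimes\Cl_{+1}|$-summand of a free module $(A\grotimes\Cl_{+1})^n$, translating gives a graded $A$-module that is a summand of $(A\oplus \Pi A)^n=A^{n|n}$. The idempotent is even because it commutes with $\epsilon$.

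Finally, the appropriate module structures must be of finite type over the appropriate algebras. Since $\Cl_{+1}$ is two-dimensional, $A\grotimes\Cl_{+1}$ is itself a finitely generated free left $A$-module, so no finiteness issue arises.
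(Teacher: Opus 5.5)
Your proposal is correct and follows essentially the same route as the paper: gradings on a Banach space correspond to bounded involutions, the condition $A_iM_j\subseteq M_{i+j}$ is exactly $\epsilon a=(-1)^{|a|}a\epsilon$, even maps are exactly those commuting with the involutions, and finitely generated projective objects match. That last point the paper only asserts, while you verify it via the correspondence $A\grotimes\Cl_{+1}\leftrightarrow A\oplus\Pi A$. One small slip in your naturality remark: the involution on $B\otimes_A M$ is $\epsilon_B\otimes\epsilon$, where $\epsilon_B$ is the grading involution of $B$, not $1\otimes\epsilon$; the latter is not well defined once $A$ has odd elements, whereas $\epsilon_B\otimes\epsilon$ is precisely the action of $1\grotimes e$ under $|B\grotimes\Cl_{+1}|\otimes_{|A\grotimes\Cl_{+1}|}M\cong B\otimes_A M$, so your conclusion stands.
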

\begin{proof}
There is a one-to-one correspondence between gradings of a Banach space $X$ and involutions $e \colon X \to X$.
Indeed, the grading corresponding to an involution is given by its eigenspace decomposition $X = X_+ \oplus X_-$, into closed subspaces $X_{\pm} = \ker (1 \pm e)$.
    Moreover, if $T \colon X \to Y$ is a bounded linear map between Banach spaces with involutions $e_X$ and $e_Y$, then $T$ is grading-preserving if and only if $Te_X = e_Y T$.

    Lifting a Banach $|A|$-module $M$ to a Banach $|A\grotimes \Cl_{+1}|$-module amounts to giving an involution $e \colon M \to M$ such that $ea = (-1)^{|a|} ae$ for all $a \in A$.
    This condition is equivalent to the grading $M = M_+ \oplus M_-$ corresponding to $e$ being an $A$-graded module.
    The notions of finitely generated projective correspond to each other in the constructed bijection.
\end{proof}
\begin{warning}
    The equivalence of Proposition~\ref{prop:gradingvsclifford} does not behave well with respect to graded tensor products.
\end{warning}
\begin{remark}
\label{rem:C2vsCliffnatural}
We record the functor $\ModOne_{|B \grotimes \Cl_{+1}|} \to \ModOne_{|A \grotimes \Cl_{+1}|}$ which corresponds under the isomorphism of Proposition~\ref{prop:gradingvsclifford} with tensoring with a graded $(A,B)$-bimodule $M$.
It is given by the ungraded tensor product $N \mapsto M \otimes_{|B|} N$ equipped with the diagonal $\Cl_1$-action $e(n \otimes m) = en \otimes em$.
Here we used the ungraded diagonal homomorphism
\begin{equation*}
|\Cl_1| \to |\Cl_1| \otimes |\Cl_1| \ncong |\Cl_1 \grotimes \Cl_1|
\end{equation*}
\end{remark}

Recall that $\sBanAlgEn$ is the symmetric monoidal topological category of Banach algebras, where the set $\Hom(A,B)$ of algebra homomorphisms is topologized with the $k$-ification of the compact-open topology.

\begin{proposition}
\label{prop:Clfunctor}
The Clifford algebra assignment defines a symmetric monoidal $\Top$-enriched functor
\begin{equation}
    \Cl\colon (\InnEn, \oplus) \longrightarrow (\sBanAlgEn, \grotimes) \,.
\end{equation}
\end{proposition}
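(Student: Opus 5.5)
We build $\Cl$ on objects and morphisms by the universal property of Clifford algebras, then check continuity on mapping spaces and supply the symmetric monoidal structure.

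\textbf{Objects and morphisms.} To an object $(V,\langle-,-\rangle)$ of $\InnEn$ we assign $\Cl(V,-\langle-,-\rangle)$. It carries the real $C^*$-norm \eqref{Cliffordnorm}, which makes it a graded Banach algebra. For an isometric embedding $f\colon V\to W$, the composite $V\xrightarrow{f}W\subseteq \Cl(W)_1$ satisfies
\[
f(v)f(w)+f(w)f(v)=-2\langle f v,f w\rangle=-2\langle v,w\rangle .
\]
By the universal property it therefore extends uniquely to an even homomorphism $\Cl(f)\colon\Cl(V)\to\Cl(W)$. Uniqueness of these extensions gives $\Cl(g\circ f)=\Cl(g)\Cl(f)$ and $\Cl(\id)=\id$. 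Each $\Cl(f)$ is bounded because $\Cl(V)$ is finite-dimensional. In fact $\Cl(f)$ is a $*$-homomorphism, since $f$ commutes with the real structures, and is hence contractive.

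\textbf{Enrichment.} We must show that $\InnEn(V,W)\to\Hom(\Cl V,\Cl W)$ is continuous, where the target has the $k$-ified compact-open topology. By the adjunction \eqref{eq:adjunctionhomomorphisms}, it suffices to show the following: for every compact Hausdorff $X$ and every continuous family $f\colon X\to\InnEn(V,W)$, the induced map $\Cl(V)\to C(X;\Cl W)$ is a bounded homomorphism. This follows by applying the universal property to the linear map
\[
V\to C(X;\Cl(W))_1,\qquad v\mapsto (x\mapsto f_x(v)),
\]
which satisfies the Clifford relation pointwise. Continuity in $x$ holds because $\Cl(W)\cong\bigwedge W$ is finite-dimensional and the map is polynomial in the matrix entries of $f_x$ in the chosen bases. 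Since the domain $\InnEn(V,W)$ is a compact manifold, no $k$-ification subtlety arises. Compatibility with composition is already known on the underlying 1-categories, so we obtain a $\Top$-enriched functor.

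\textbf{Monoidal structure.} The structure maps
\[
\mu_{V,W}\colon \Cl(V)\grotimes\Cl(W)\to\Cl(V\oplus W),\qquad v\otimes w\mapsto vw,
\]
are multiplication maps. These are even homomorphisms precisely because of the Koszul sign in \eqref{gradedtensoralgebra}: odd $v$ and $w$ anticommute in $\Cl(V\oplus W)$ since $\langle v,w\rangle=0$. A dimension count ($2^{\dim V}\cdot 2^{\dim W}$), together with surjectivity onto generators, shows each $\mu_{V,W}$ is an algebraic isomorphism. By finite-dimensionality it is then a Banach isomorphism in $\sBan$; we do not need it to be isometric, because $\sBanAlgEn$ uses bounded homomorphisms. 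The unit is $\Cl(0)=\R$.

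Naturality of $\mu$ in isometric embeddings is checked on generators, and the associativity and unit coherences are likewise immediate on generators. For symmetry we compare two composites on generators $v\otimes w$:
\[
\Cl(\mathrm{swap})(vw)=wv=-vw,\qquad \mu_{W,V}\bigl(\beta(v\otimes w)\bigr)=\mu_{W,V}\bigl((-1)^{1\cdot1}w\otimes v\bigr)=-wv .
\]
These agree once we use $wv=-vw$ in $\Cl(V\oplus W)$, so the Koszul braiding is respected. Because $\Cl(V)\grotimes\Cl(W)$ is generated by $V\otimes1$ and $1\otimes W$, agreement on generators suffices for all of these checks.

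The main obstacle is the continuity step: one must match the enrichment topology of $\sBanAlgEn$, defined via \eqref{eq:adjunctionhomomorphisms}, with families of embeddings. The universal property applied to $C(X;\Cl W)$ handles this uniformly. The remaining verifications are routine checks on generators.
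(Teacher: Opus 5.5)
Your construction is essentially the paper's. Morphisms come from the universal property. The monoidal structure is the isomorphism $\Cl(V\oplus W)\cong\Cl(V)\grotimes\Cl(W)$, with the flip matched to the Koszul braiding. You write this isomorphism in the multiplication direction and conclude with a dimension count, while the paper writes the inverse via the universal property of $\Cl(V\oplus W)$.

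The only real variation is continuity. The paper observes that, since $\Cl(V)$ is finite-dimensional, the hom-space carries the operator-norm topology, in which $T\mapsto\Cl(T)$ is polynomial in $T$. You instead verify continuity through the defining adjunction $\Top(X,\Hom(A,B))\cong\Hom(A,C(X;B))$, applying the universal property of $\Cl(V)$ with target $C(X;\Cl(W))$. That is valid, and it avoids having to identify the $k$-ified compact-open topology with the norm topology.

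One step as written is wrong, though easily repaired: your symmetry computation. Since $\Cl(\mathrm{swap})$ is an algebra homomorphism, it preserves the order of products. Writing $v,w$ also for their images in $\Cl(W\oplus V)$, one gets $\Cl(\mathrm{swap})(vw)=vw$, not $wv$. With your value the two composites come out as $-vw$ and $-wv=vw$, which disagree. So the display as written contradicts the conclusion you draw from it.

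The correct check is
\[
\Cl(\mathrm{swap})\bigl(\mu_{V,W}(v\otimes w)\bigr)=vw=-wv=\mu_{W,V}(-w\otimes v)=\mu_{W,V}\bigl(\beta(v\otimes w)\bigr),
\]
using that the orthogonal vectors $v,w$ anticommute in $\Cl(W\oplus V)$.

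Even simpler, as your final sentence suggests: both composites are algebra homomorphisms. This is where the Koszul sign enters, since it is what makes $\mu$ and $\beta$ multiplicative. So it suffices to compare the composites on $v\otimes 1$ and $1\otimes w$, where agreement is immediate.
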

\begin{proof}
The functor $\Cl$ on $\InnEn$ assigns to each inner product space $V$ the negative Clifford algebra $\Cl(V,-\langle ,\rangle)$.
For any isometric embedding $V\to W$ the universal property of $\Cl(V)$ induces an even homomorphism $\Cl(V) \to \Cl(W)$ explicitly given by $v_1 \dots v_k \mapsto T v_1 \dots T v_k$.
This assignment is functorial.

$\Hom_{\sBanAlgEn}(\Cl(V), \Cl(W))$ has the operator norm, since $\Cl(V)$ is finite-dimensional.
The map $\Hom_{\InnEn}(V,W) \to \Hom_{\sBanAlgEn}(\Cl(V), \Cl(W))$ is continuous for the operator norm from \eqref{Cliffordnorm} and from the fact that $T$ preserves $(.,.)$.

The obvious map $V \oplus W \to \Cl(V,B)\grotimes \Cl(W,C)$ satisfies the universal property and induces an isomorphism
    $\Cl(V\oplus W,B\oplus C)\cong \Cl(V,B)\grotimes \Cl(W,C)$.
    Under this isomorphism the flip map $V \oplus W \to W \oplus V$ corresponds to the Koszul braiding $a_1 \otimes a_2 \mapsto (-1)^{|a_1||a_2|} a_2 \otimes a_1$ for $a_1 \in \Cl(V,B)$ and $a_2 \in \Cl(W,C)$ by the anti-commutativity of elements in $V$ and $W$.
\end{proof}

\begin{example}
    The algebra $\Cl_{-2}$ is isomorphic to the quaternions as an ungraded algebra.
    Note that even though $\Cl_{-1} \grotimes \Cl_{-1} \cong \Cl_{-2}$ by Proposition~\ref{prop:Clfunctor}, we have $\Cl_{-1} \otimes \Cl_{-1} = \C \otimes \C \cong \C \oplus \C \ncong \mathbb{H}$.
    We conclude that the Koszul sign in the graded tensor product was of vital importance in Proposition~\ref{prop:Clfunctor}.
\end{example}

We also write $\Cl$ for the induced symmetric monoidal functor $\InnInf \to \sBanAlgInf$.
\begin{proposition}
    \label{prop:cliffordsmithideal}
    The arrow $\Cl_{-1} \to^{\op} \R$ in $\sBanAlgInf^{\op}$ refines to a Smith ideal.
\end{proposition}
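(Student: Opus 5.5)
The plan is to transport the Smith ideal of Corollary~\ref{cor:smithidealInn} along the Clifford functor. A Smith ideal in a symmetric monoidal $(\infty,1)$-category $\calC$ is by Definition~\ref{def:smith} a lax symmetric monoidal functor $[1]_{\mathrm{min}}\to\calC$. Lax symmetric monoidal functors compose, and a strong symmetric monoidal functor is in particular lax. So it suffices to produce a symmetric monoidal functor $\InnInf^{\op}\to\sBanAlgInf^{\op}$ sending the arrow $\R^1\to^{\op}\R^0$ to $\Cl_{-1}\to^{\op}\R$, and then postcompose the Smith ideal $[1]_{\mathrm{min}}\to\InnInf^{\op}$ with it.

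The first step is to pass from Proposition~\ref{prop:Clfunctor} to $(\infty,1)$-categories. The symmetric monoidal $\Top$-enriched functor $\Cl\colon(\InnEn,\oplus)\to(\sBanAlgEn,\grotimes)$ gives, after applying $\Sing$ (which preserves products by Convention~\ref{conv:topenr}), a map of fibrant simplicial operads $\Op(\InnEn)\to\Op(\sBanAlgEn)$ preserving cocartesian edges. Taking operadic nerves, as in the proof of Corollary~\ref{cor:smithidealInn}, yields a symmetric monoidal functor $\Cl\colon\InnInf\to\sBanAlgInf$. Passing to opposites, which is compatible with symmetric monoidal structures, gives $\Cl^{\op}\colon\InnInf^{\op}\to\sBanAlgInf^{\op}$.

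The second step is to identify the image of the arrow. $\Cl(\R^0)=\R$ and $\Cl(\R^1)$ is the negative Clifford algebra $\Cl(\R,-\langle,\rangle)=\Cl_{-1}$. The unique isometric embedding $0\to\R^1$ maps to the unit $\R\to\Cl_{-1}$, and the opposite of this map is the arrow in the statement.

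Concretely, the whole argument is the zig-zag of topological operads
\begin{equation*}
\Op([1]_{\mathrm{min}})\xleftarrow{\simeq}\mathsf{J}\to\Op(\InnEn^{\op})\xrightarrow{\Cl}\Op(\sBanAlgEn^{\op}),
\end{equation*}
passed through $\Sing$ and inverted in the localization of Arakawa's model. The main obstacle is bookkeeping rather than mathematics. One must check that the enriched operad $\Op(\sBanAlgEn^{\op})$ is fibrant after $\Sing$, which holds automatically for $\Sing$ of spaces. One must also check that the operadic nerve of $\Op(\sBanAlgEn)$ models the symmetric monoidal structure on $\sBanAlgInf$ defined by localization. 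For the latter, I would invoke the same argument as Proposition~\ref{prop:localizationofBanAlg}, in its graded form, identifying $N^{\mathrm{hc}}\sBanAlgEn$ with the monoidal localization.
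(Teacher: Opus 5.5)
Your proposal is correct and matches the paper's proof: the paper also pushes the Smith ideal $\R^1 \to^{\op} \R^0$ of Corollary~\ref{cor:smithidealInn} forward along the symmetric monoidal Clifford functor $\Cl$ (a strong monoidal functor composed with a lax one is lax), which sends this arrow to $\Cl_{-1} \to^{\op} \R$. Your version only adds the operadic and $\Sing$-level bookkeeping that the paper's one-line argument leaves implicit.
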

\begin{proof}
    Use that $\Cl$ is symmetric monoidal and preserves Smith ideals. $\Cl_{-1} \to^{\op} \R$ is the image of the Smith ideal $\R^1 \to^{\op} \R^0$ from Corollary~\ref{cor:smithidealInn}.
\end{proof}

\subsection{Graded \texorpdfstring{$K$}{K}-theory}

\subsubsection{Connective graded \texorpdfstring{$K$}{K}-theory}
\label{sec:connectivesuperkthy}
This section will provide a definition of connective $K$-theory for graded algebras along the lines of Atiyah--Bott--Shapiro~\cite{ABS}.
The most obvious guess for the definition of the graded $K$-theory spectrum of a graded Banach algebra $A$ is to take the $K$-theory $k(\ModEn^{C_2}_A)=:k^{C_2}_A$ of the topologically enriched category of graded modules $\ModEn^{C_2}_A$.
This however does \emph{not} reproduce what has been called graded $K$-theory on homotopy groups; the graded $K$-theory also takes into account the $\sVectOne$-enrichment on $\ModEn^{C_2}_A$ through the parity shift.
For example, for an ungraded Banach algebra $A$, the spectrum $k^{C_2}_A \cong k_A \oplus k_A$ (Proposition~\ref{prop:kC_2}) does not reproduce the connective $K$-theory spectrum in the ungraded sense, whereas it does for our definition as we will see in Example~\ref{example:ungraded}.

As Atiyah, Bott and Shapiro observed for the case of $A = \Cl_{-n}$, the ``correct'' definition of the $K$-theory groups requires a further quotient of $k_A^{C_2}$ by those $A$-modules that extend to graded $A \grotimes \Cl_{-1}$-modules.
A subtle fact which is well-known among experts~\cite{MR2490588} is that this definition is no longer ``correct'' for arbitrary graded Banach algebras $A$, see Section~\ref{sec:cofib} for a further discussion.
We will lift this ``wrong'' definition to spectra and prove that it is only a $\pi_0$-issue, which disappears upon periodification.

\begin{definition}
\label{def:ABSKth}
    Recall the notation $i_-$ from \eqref{eq:i}.
    The \emph{ABS $K$-theory} of a graded Banach algebra $A$ is the connective spectrum
\begin{equation}
    k^{\ABS}_A := \cofib\left(k^{C_2}_{A\grotimes \Cl_{-1}} \xrightarrow{i_-^*} k^{C_2}_A\right) \,.
\end{equation}
\end{definition}
\begin{remark}
    In~\cite[Section~5]{ABS} (see also~\cite[Ch.~1, Sec.~9]{lawsonmichelsohn}), the groups
    \begin{equation*}
        K_0(\ModOne^{C_2}_{\Cl_{-n}})/i_-^*K_0(\ModOne^{C_2}_{\Cl_{-n-1}})
    \end{equation*}
    are introduced.
    They coincide with $\pi_0k^{\ABS}_{\Cl_{-n}}$.
\end{remark}
\begin{notation}
    The notations $\ModOne_A^{C_2}$ and $k_A^{C_2}$ are motivated by equivariant $K$-theory.
    Specifically, the category of graded algebras $A = A_0 \oplus A_1$ is equivalent to the category of algebras $A$ with a $C_2$-action $\phi \colon A \to A, \phi^2 = \id_A$.
    The functor in one direction is given by taking $\phi$ to be the grading homomorphism, and the functor in the other direction is given by taking the $\pm$-eigenvalue decomposition $A = A_0 \oplus A_1$ of $\phi$.
    The crossed product algebra $A \rtimes C_2$ is isomorphic to $A \grotimes \Cl_{+1}$~\cite[Proposition 14.5.4]{MR1656031}, and so $\ModOne_A^{C_2}$ can be identified with the category of $ A\rtimes C_2$-modules. 
    By the Green--Julg theorem~\cite[Section 11.7]{MR1656031} the spectrum $k_A^{C_2}$ can be identified with the $C_2$-equivariant $K$-theory spectrum of $A$.
\end{notation}
\begin{lemma}
    \label{lem:ABSpizero}
    The grouplike commutative monoid $k^{\ABS}_A$ can equivalently be realized as a cofiber in $\CMon(\Spc)$
    \begin{equation*}
        k^{\ABS}_A \simeq \cofib\left( N^{\mathrm{hc}}(\ModEn_{A\grotimes \Cl_{-1}}^{C_2,\cong}) \to N^{\mathrm{hc}}(\ModEn_A^{C_2,\cong}) \right) \,.
    \end{equation*}
    The abelian group $\pi_0 k^{\ABS}_A$ admits the following description as a quotient of commutative monoids.
    Its elements are equivalence classes $[M]$ of finitely generated projective $A$-modules equipped with the operation of direct sum and $M\oplus M'\sim 0$ if $M\oplus M'$ admits the structure of a $A\grotimes \Cl_{-1}$-module. 
    We have $[\Pi M]=-[M]$.
\end{lemma}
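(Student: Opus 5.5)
The plan has three steps: move the cofiber from connective spectra to $\E_\infty$-spaces, compute $\pi_0$ with the explicit quotient of Definition~\ref{def:monoidquotient}, and then identify $[\Pi M]$ as $-[M]$.

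\textbf{Step 1: the cofiber in $\CMon(\Spc)$.} By definition, $k^{C_2}_{A\grotimes\Cl_{-1}}$ and $k^{C_2}_A$ are the group completions of $M' := N^{\mathrm{hc}}(\ModEn^{C_2,\cong}_{A\grotimes\Cl_{-1}})$ and $M := N^{\mathrm{hc}}(\ModEn^{C_2,\cong}_A)$. Group completion $(-)^{\gp}\colon \CMon(\Spc)\to \CGrp(\Spc)\simeq\Sp_{\geq 0}$ is a left adjoint, so it preserves cofibers. Hence
\[
k^{\ABS}_A\simeq \cofib(M'\to M)^{\gp}.
\]
To drop the outer $(-)^{\gp}$, it is enough to show that $C:=\cofib(M'\to M)$, taken in $\CMon(\Spc)$, is already grouplike, i.e.\ that $\pi_0 C$ is a group.

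\textbf{Step 2: computing $\pi_0$.} The functor $\pi_0\colon\CMon(\Spc)\to\CMon$ is a left adjoint, as already used in the proof of Lemma~\ref{lem:triples}. So $\pi_0 C$ is the quotient monoid $\pi_0 M/i_-^*(\pi_0M')$ of Definition~\ref{def:monoidquotient}. Two modules $N,N'$ are identified if $N\oplus i_-^*P\cong N'\oplus i_-^*P'$ for some graded $A\grotimes\Cl_{-1}$-modules $P,P'$. In particular $N\sim 0$ whenever $N$ itself extends to an $A\grotimes\Cl_{-1}$-module. This gives the description in the statement; the relation stated there generates the same equivalence relation, since $N\oplus i_-^*P\sim N'\oplus i_-^*P'$ amounts to adding summands that are trivial.

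\textbf{Step 3: inverses and the parity shift.} This is the key step. For each graded $A$-module $N$, I will show that $N\oplus\Pi N$ carries an $A\grotimes\Cl_{-1}$-structure. Let $e$ act by the odd operator
\[
e=\begin{pmatrix}0&-\sigma\\ \sigma&0\end{pmatrix},
\]
where $\sigma\colon N\to\Pi N$ is the identity on underlying spaces, which is odd. Then $e^2=-1$.

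The main obstacle is the sign bookkeeping. The module structure on $N\oplus\Pi N$ must make $e$ supercommute with $A$, i.e.\ $e a=(-1)^{|a|}ae$, as dictated by the multiplication in $A\grotimes\Cl_{-1}$ in~\eqref{gradedtensoralgebra}. This relies on the fact that the left $A$-action on $\Pi N$ is twisted by the grading automorphism (Definition~\ref{def:parity}). Then:
\begin{itemize}
\item If $a$ is even, $a$ acts identically on both summands, so it commutes with $\sigma$.
\item If $a$ is odd, $a$ acts by $a$ on $N$ and by $-a$ on $\Pi N$, which produces exactly the required anticommutation $ea=-ae$.
\end{itemize}
I will check these signs against Convention~\ref{conv:graded-linear}; flipping the sign of $\sigma$ if needed does no harm.

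Given this, $[N]+[\Pi N]=0$ in $\pi_0C$. So every element has an inverse, $\pi_0 C$ is a group, $C$ is grouplike, $C\simeq C^{\gp}\simeq k^{\ABS}_A$, and $[\Pi N]=-[N]$.
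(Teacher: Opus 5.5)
Your proposal is correct and follows essentially the same route as the paper: group completion commutes with cofibers, $\pi_0$ turns the cofiber in $\CMon(\Spc)$ into the quotient monoid, and grouplikeness follows because $N\oplus\Pi N$ carries an $A\grotimes\Cl_{-1}$-structure via an off-diagonal odd operator squaring to $-1$. Your matrix differs from the paper's only by an overall sign, which is immaterial, and your check of $ea=(-1)^{|a|}ae$ against the grading-twisted $A$-action on $\Pi N$ is right.
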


\begin{proof}
    Group completion commutes with cofibers.
    The first claim follows, once we show that the cofiber of commutative monoids on the right hand side is group complete.
    This is a $\pi_0$-condition, i.e.\ we have to show that $\pi_0$ of the cofiber is a group.
    $\pi_0\colon \CMon(\Spc) \to \CMon(\Set)$ maps cofibers to quotients.
    The quotient of commutative monoids $\pi_0\ModEn_A^{C_2} / \pi_0 \ModEn_{A\grotimes \Cl_{-1}}^{C_2}$ is exactly what is explicitly described in the Lemma.
    It suffices to show that the inverse of $[M]$ is $[\Pi M]$.
    This follows since $M\oplus \Pi M$ admits an $A\grotimes \Cl_{-1}$-module structure with odd generator $f$ acting by $\begin{pmatrix}
        & 1 \\
        -1
    \end{pmatrix}$ .
\end{proof}
\begin{example}
\label{example:ungraded}
    Let $A$ be an ungraded algebra considered as a purely even graded algebra.
    Then there are isomorphisms of Banach categories $\ModEn^{C_2}_A \cong (\ModEn_{|A|})^{\times 2}$ and an equivalence $\ModOne^{C_2}_{A\grotimes \Cl_{-1}}\cong \ModOne_{|A\grotimes \Cl_{1,1}|} \simeq \ModOne_{|A|}$, using that $|A\grotimes \Cl_{1,1}|\cong A\otimes M_2(\R) \simeq A$. The forgetful functor $i_-^*$ identifies with the diagonal functor, see Section~\ref{innergraded} for details.
    We obtain
    \begin{equation*}
        k^{\ABS}_A = \cofib(k_A\xrightarrow{\Delta} k_A^{\times 2}) \simeq k_A \,.
    \end{equation*}
    Explicitly: a graded $A$-module $M$ is equivalent to two ungraded $A$-modules $M_0$ and $M_1$, its even and odd parts.
    The module extends to a graded $A \grotimes \Cl_{-1}$-module if and only if $M_0 - M_1$ is zero in the Grothendieck group completion of the monoid $\pi_0 \ModOne_A$ of isomorphism classes of ungraded $A$-modules.
\end{example}

We now extend $k^{C_2}_A$ to a functor.
Informed by Section~\ref{sec:moritafunctoriality}, we can be quick in our definitions.
We let $\mathcal{K}^{C_2,\mathrm{alg}}\colon \sBimBanOne \to \Sp_{\geq 0}$ be the lax symmetric monoidal composition
\begin{equation*}
\sBimBanOne \xrightarrow{\ModOne^{C_2} = \Hom_{\sBimBanOne}(\mathbbm{1},-)} \Cat_1^{\Sigma} \xrightarrow{\mathcal{K}^{\mathrm{alg}}} \Sp_{\geq 0}.
\end{equation*}

\begin{definition}
    We let $k^{C_2}$ be the initial homotopy invariant functor with a natural transformation $\mathcal{K}^{C_2,\mathrm{alg}}\to k^{C_2}$, i.e.\ the left Kan extension of $\mathcal{K}^{C_2,\mathrm{alg}}$ along $\sBimBanOne\to \sBimBanInf$.
\end{definition}

Let $\ModEn^{C_2}_A(X)$ be the Banach category of (finitely generated projective) graded $A$-module bundles over a compact Hausdorff space $X$.

\begin{proposition}
    \label{prop:kC_2}
    The spectrum $k^{C_2}_A = k(\ModEn_A^{C_2})=(N^{\mathrm{hc}}(\ModEn^{C_2,\cong}_A),\oplus)^{\gp}$ enhances to a lax symmetric monoidal functor $k^{C_2}\colon \sBimBanInf \to \Sp_{\geq 0}$,
    can also be computed as 
    \begin{equation*}
        k^{C_2}_A = \colim_{\Delta^{\op}} \mathcal{K}^{C_2,\mathrm{alg}}_{C(\Delta^n;A)} \,,
    \end{equation*}
    and takes values in connective $ko$-modules.
    Moreover:
    \begin{enumerate}
        \item There is an equivalence $k^{C_2}_A \simeq k_{|A\grotimes \Cl_{+1}|}$, natural in graded Banach algebra homomorphisms.
        In particular, $k^{C_2}$ is homotopy invariant.
        \item Writing $k^{C_2}_A(X) := k\left(\ModEn^{C_2}_A(X)\right)$, there is a natural equivalence $k^{C_2}_A(X) \simeq k^{C_2}_{C(X;A)}$.
        \item The restriction $k^{C_2}\colon \sBanAlgOne \to \Sp_{\geq 0}$ is excisive and preserves filtered colimits along contractive homomorphisms.
        \item If $A$ is trivially graded, then $k^{C_2}_A \simeq k_{|A|}^{\times 2}$.
    \end{enumerate}
\end{proposition}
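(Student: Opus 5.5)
The strategy is to transport everything from the ungraded theory of Section~\ref{sec:ungradedkthy} along Proposition~\ref{prop:gradingvsclifford}, which identifies $\ModEn^{C_2}_A \cong \ModEn_{|A\grotimes \Cl_{+1}|}$ as Banach categories. The identification is natural for grading-preserving homomorphisms $f\colon A\to B$, since $f\grotimes\id$ induces $|A\grotimes\Cl_{+1}|\to|B\grotimes\Cl_{+1}|$. Applying $k(-)$ of Definition~\ref{def:topktheory2} gives $k(\ModEn^{C_2}_A)\simeq k_{|A\grotimes\Cl_{+1}|}$, which is item (1).

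Homotopy invariance then follows from Theorem~\ref{thm:omnibusktheory}. We use $C(\Delta^1;A)\grotimes\Cl_{+1}\cong C(\Delta^1;A\grotimes\Cl_{+1})$, valid because $\Cl_{+1}$ is finite-dimensional. For the same reason $|-\grotimes\Cl_{+1}|$ preserves surjections, pullbacks of Banach algebras and contractive filtered colimits (up to equivalent norms). Excision and filtered colimits in item (3) are therefore inherited from Corollary~\ref{cor:excision} and Proposition~\ref{prop:filteredcolimits}.

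For the colimit formula, I would repeat the proof of Proposition~\ref{prop:compareK}, using graded modules in place of ungraded ones. Define the functor $j(\ModEn^{C_2}_A)_n\to\ModOne^{C_2}_{C(\Delta^n;A)}$ by $M\mapsto C(\Delta^n;M)$. It is fully faithful by the extension/restriction adjunction, and essentially surjective by Corollary~\ref{cor:contractibleSerreSwan} applied to $|A\grotimes\Cl_{+1}|$ via item (1). Theorem~\ref{th:comparisontopktheory} then gives $k(\ModEn^{C_2}_A)\simeq\colim_{\Delta^{\op}}\mathcal{K}^{C_2,\mathrm{alg}}_{C(\Delta^n;A)}$.

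To identify this with the left Kan extension, I would copy Lemma~\ref{lem:simplicalresolution} verbatim in $\sBimBanOne$. This shows that $A\rightsquigarrow C(\Delta^\bullet;A)$ is a simplicial resolution, and Lemma~\ref{lem:leftKanextension} then computes $k^{C_2}_A$ as the same geometric realization. Lax symmetric monoidality comes for free: $\mathcal{K}^{C_2,\mathrm{alg}}$ is lax symmetric monoidal, the localization is symmetric monoidal because $W$ is $\otimes$-stable, and left Kan extension along it is monoidal for Day convolution.

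The $ko$-module structure uses the lax monoidal map $k^{C_2}_\R\otimes k^{C_2}_A\to k^{C_2}_{\R\grotimes A}=k^{C_2}_A$ together with the ring map $ko=k_\R\to k^{C_2}_\R$. This ring map is induced by the symmetric monoidal inclusion $\BimBanInf\to\sBimBanInf$ and the unit of $k^{C_2}$. More precisely, $k^{C_2}_\R$ is an $\E_\infty$-ring receiving a ring map from $k_\R$, so every $k^{C_2}_\R$-module is a $ko$-module.

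For item (2), I would rerun Proposition~\ref{prop:serreswan} with graded bundles. Alternatively, via item (1), one can use $C(X;A)\grotimes\Cl_{+1}\cong C(X;A\grotimes\Cl_{+1})$ and note that graded bundles correspond to $|A\grotimes\Cl_{+1}|$-module bundles. For item (4), a graded module over a purely even $A$ is just a pair $(M_0,M_1)$ of ungraded modules, and grading-preserving maps are pairs of maps. Hence $\ModEn^{C_2}_A\cong\ModEn_{|A|}^{\times2}$ as additive Banach categories, and group completion preserves products.

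The main obstacle is the identification of the Kan extension over the Morita category with the concrete group completion. It needs the resolution Lemma~\ref{lem:simplicalresolution} and the comparison of mapping spaces in the graded setting, and one must check that graded bimodule homotopies act by homotopies. I expect this to be a sign-bookkeeping check rather than a new idea.
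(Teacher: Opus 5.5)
Your proposal is correct and follows the paper's proof essentially step by step: item (1) via Proposition~\ref{prop:gradingvsclifford}, items (2)--(3) by transporting Serre--Swan, excision and filtered colimits along $A\mapsto|A\grotimes\Cl_{+1}|$, the colimit formula and the Kan-extension identification by rerunning Proposition~\ref{prop:compareK} and Lemma~\ref{lem:simplicalresolution} in the graded setting, and item (4) via Example~\ref{example:ungraded}. The only cosmetic difference concerns the $ko$-module structure: you restrict the $k^{C_2}_\R$-module structure coming from lax monoidality along a ring map $ko\to k^{C_2}_\R$, whereas the paper reads it off directly from the $\VectEn_\R$-action on $\ModEn^{C_2}_A$; your route makes the functorial upgrade to $\ModInf(ko)$ immediate from the lax monoidality already established.
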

\begin{proof}
The localization functor $\sBimBanOne \to \sBimBanInf$ is symmetric monoidal as in Theorem~\ref{thm:localizationbimban}.
The functor $\ModOne^{C_2}$ is lax symmetric monoidal by Proposition~\ref{prop:daniel} and so $\mathcal{K}^{C_2,\mathrm{alg}}$ is as well.
The formula for $k^{C_2}_A$ follows as in Proposition~\ref{prop:compareK} and Theorem~\ref{thm:omnibusktheory}.
Agreement with the object level definition of $k^{C_2}_A$ is analogous to Theorem~\ref{th:Kthextension}. 
The $ko$-module structure comes from the $\VectEn_{\R}$-module structure on $\ModEn^{C_2}_A$.

The first point in the list is Proposition~\ref{prop:gradingvsclifford} applied to the definition of $k^{C_2}$; naturality along bimodules rather than homomorphisms is described by Remark~\ref{rem:C2vsCliffnatural}.
Since $C(X;|A\grotimes \Cl_{+1}|)=|C(X;A)\grotimes \Cl_{+1}|$, the second part follows from the first and the Serre--Swan theorem, Proposition~\ref{prop:serreswan}.
For excision, note that $-\grotimes \Cl_{+1}$ preserves Milnor squares, since it preserves pullbacks and surjections, so the claim follows from the first part and Corollary~\ref{cor:excision}.
Similarly, $A\mapsto |A\grotimes \Cl_{+1}|$ preserves filtered colimits and contractive homomorphisms, so preservation of filtered colimits follows from the first part and Proposition~\ref{prop:filteredcolimits}.
The last part is the isomorphism $\ModEn^{C_2}_A\cong (\ModEn_{|A|})^{\times 2}$ of Example~\ref{example:ungraded}.
\end{proof}

\begin{corollary}
The spectrum
    $k^{\ABS}_A$ enhances to a functor $k^{\ABS} \colon \sBimBanInf \to \Sp_{\geq 0}$ with
    \begin{equation*}
        k^{\ABS}=\cofib(k^{C_2}_{-\grotimes \Cl_{-1}} \to k^{C_2}) \,.
    \end{equation*}
\end{corollary}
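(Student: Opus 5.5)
The plan is to build the functor from two already available ingredients: the functor $k^{C_2}\colon \sBimBanInf \to \Sp_{\geq 0}$ of Proposition~\ref{prop:kC_2}, and an endofunctor $-\grotimes \Cl_{-1}$ of $\sBimBanInf$ equipped with a natural transformation to the identity. We first define the morphism $i_-\colon A \to A \grotimes \Cl_{-1}$ in $\sBanAlgInf$ as $\id_A \grotimes (\R \to \Cl_{-1})$, using the symmetric monoidal structure of $\sBanAlgInf$. Pushing it forward along the contravariant functor $(\sBanAlgOne)^{\op}_{\mathrm{fgp}} \to \sBimBanOne$ gives the restriction bimodule ${}_{i_-}(A\grotimes\Cl_{-1})$. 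This bimodule is finitely generated projective as a left $A$-module, because $A\grotimes\Cl_{-1}\cong A^{1|1}$ as a left $A$-module. It therefore defines a morphism $A\grotimes\Cl_{-1}\rightsquigarrow A$ in $\sBimBanOne$, and hence in $\sBimBanInf$, whose induced functor on $\ModOne^{C_2}$ is $i_-^*$.

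For naturality in $A$, we realize the family of these morphisms as a natural transformation of functors $\sBimBanInf\to\sBimBanInf$. Set $c := (\Cl_{-1}\rightsquigarrow \R)$, the image of $\R\to\Cl_{-1}$ under the contravariant functor; it is a single morphism in the symmetric monoidal $(\infty,1)$-category $\sBimBanInf$. For any symmetric monoidal $\infty$-category $\calC$ and any morphism $c\colon X\to \mathbbm{1}$, the assignment $A\mapsto (\id_A\otimes c)$ is a natural transformation $(-)\otimes X\Rightarrow (-)\otimes\mathbbm{1}\simeq \id$. This holds because $\otimes$ is a functor $\calC\times\calC\to\calC$, and we restrict it along $\calC\times\Delta^1\to \calC\times\calC$ picked out by $c$.

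We then whisker this transformation with $k^{C_2}$. The result is a map of functors $k^{C_2}_{-\grotimes\Cl_{-1}}\Rightarrow k^{C_2}$ in $\Fun(\sBimBanInf,\Sp_{\geq 0})$. Its cofiber, computed pointwise in the cocomplete category $\Sp_{\geq 0}$, is the desired functor $k^{\ABS}$.

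The remaining step, which I expect to be the only real check, is that evaluating this construction at $A$ recovers Definition~\ref{def:ABSKth}. Two identifications are needed. First, $\id_A\otimes c$ must agree with the restriction bimodule ${}_{i_-}(A\grotimes\Cl_{-1})$. This follows because $\sBanAlgOne^{\op}\to\sBimBanOne$ is symmetric monoidal, so it sends $\id_A\grotimes(\R\to\Cl_{-1})$ to $A\grotimes c$. Second, $k^{C_2}$ of this bimodule must be $i_-^*$ on the object-level spectra, via the identification of $k^{C_2}$ with $k(\ModEn^{C_2}_{-})$ from Proposition~\ref{prop:kC_2}. This holds because on $\ModOne^{C_2}$ the bimodule acts by $i_-^*$ by construction.
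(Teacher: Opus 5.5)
Your proposal is correct and takes essentially the paper's approach. The paper gives no separate proof of this corollary, but in the proof of Theorem~\ref{thm:abslaxmonoidal} it builds the same arrow by restricting $\grotimes$ along $[1]_{\mathrm{min}}\times\sBimBanInf\to\sBimBanInf\times\sBimBanInf$, using the wrong-way bimodule $\Cl_{-1}\rightsquigarrow\R$, composing with $k^{C_2}$ and taking the cofiber pointwise; your $\calC\times\Delta^1$ restriction is the non-monoidal version of this, and your identification of $\id_A\grotimes c$ with the restriction bimodule (projective over $A$ since it is $A^{1|1}$) correctly recovers Definition~\ref{def:ABSKth} objectwise.
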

Even though $k^{C_2}$ is lax symmetric monoidal, this does not obviously follow for $k^{\ABS}$.

\paragraph{Lax monoidality}
Lax monoidality of the functor $k^{\ABS}$ is furnished by the Smith ideal of Proposition~\ref{prop:cliffordsmithideal}.
To see why this is needed, consider the problem of supplying structure maps 
\begin{equation}
\label{eq:ABSlax}
    k^{\ABS}_A\otimes k^{\ABS}_B \to k^{\ABS}_{A\grotimes B}.
\end{equation}
Since tensor product in $\Sp_{\geq 0}$ preserves colimits, it maps the tensor of cofibers $k^{\ABS}_A$ to an iterated (total) cofiber.
This can be computed as the cofiber of a pushout.
The lax monoidality data \eqref{eq:ABSlax} thus amounts to providing a map
\begin{equation*}
    \begin{tikzcd}
    \cofib\left( k^{C_2}_A \otimes k^{C_2}_{B\grotimes \Cl_{-1}} \sqcup_{k^{C_2}_{A\grotimes \Cl_{-1}} \otimes k^{C_2}_{B\grotimes \Cl_{-1}}} k^{C_2}_{A\grotimes \Cl_{-1}} \otimes k^{C_2}_{B} \to k^{C_2}_A \otimes k^{C_2}_{B}\right) \ar[d]\\
    \cofib\left(k^{C_2}_{A\grotimes B\grotimes \Cl_{-1}} \to k^{C_2}_{A\grotimes B}\right) 
    \end{tikzcd} \,.
\end{equation*}
The map that immediately comes to mind is the tensor product $\grotimes$ on the level of cofiber diagrams, which is inherited from $k^{C_2}$, but by the universal property of the pushout one then needs to provide a homotopy witnessing the commutativity of
\begin{equation}
    \label{eq:monoidalitydiagram}
    \begin{tikzcd}
        k^{C_2}_{A\grotimes \Cl_{-1}} \otimes k^{C_2}_{B\grotimes \Cl_{-1}} \ar[d,"i_-^*\times \id"]\ar[r,"\id\times i_-^*"]& k^{C_2}_{A\grotimes \Cl_{-1}} \otimes k^{C_2}_{B} \ar[d,"\grotimes"]\\
    k^{C_2}_{A} \otimes k^{C_2}_{B\grotimes \Cl_{-1}} \ar[r,"\grotimes"] & k^{C_2}_{A\grotimes B\grotimes \Cl_{-1}}
    \end{tikzcd} \,.
\end{equation}
On the level of module categories, the two composites admit the following description.
Let $(M,f_1)$ and $(N,f_2)$ be an $A\grotimes \Cl_{-1}$-module and a $B\grotimes \Cl_{-1}$-module, respectively.
We choose to write the $\Cl_{-1}$-generator $f_1$ with $f_1^2=-1$ as a graded $A$-linear odd operator on the $A$-module $M$. 
We obtain two different $A\grotimes B\grotimes \Cl_{-1}$-module structures on $M\grotimes N$ with the two $\Cl_{-1}$-actions given by
$f_1\grotimes 1$ and $1\grotimes f_2$, respectively.
There is a homotopy witnessing the commutativity of \eqref{eq:monoidalitydiagram}:
\begin{equation}
\label{eq:essentialhomotopy}
    f_t= \cos(t) f_1\grotimes 1+ \sin(t)1\grotimes f_2\,, \quad t\in[0,\pi/2]\,.
\end{equation}
This defines a family of $\Cl_{-1}$-actions since $f_1\grotimes 1,1\grotimes f_2$ anticommute.
So by construction they define equivalent morphisms in $\BanAlgInf$ and hence in $\BimBanInf$, so that we obtain the desired monoidality data $k^{\ABS}_A\otimes k^{\ABS}_B \to k^{\ABS}_{A\grotimes B}$.

However, to show lax monoidality of the whole functor $k^{\ABS}$, we still need to show that the homotopy \eqref{eq:essentialhomotopy} can be assigned in a functorial and coherent way witnessing all higher associativities.

\begin{theorem}
    \label{thm:abslaxmonoidal}
    The Atiyah--Bott--Shapiro $K$-theory functor $k^{\ABS}\colon \sBimBanInf\to \Sp_{\geq 0}$ admits a lax symmetric monoidal refinement such that $k^{C_2}\to k^{\ABS}$ is a monoidal natural transformation.

    The restriction to ungraded algebras along $\BimBanInf \to \sBimBanInf$ can naturally be identified with the lax monoidal connective topological $K$-theory functor $k$ from Definition~\ref{def:ungradedBimBanKth}.
    \begin{equation*}
    \begin{tikzcd}
        \BimBanInf \ar[rr,"k"] \ar[rd] & & \Sp_{\geq 0} \\
        & \sBimBanInf \ar[ru,"k^{\ABS}"']&
    \end{tikzcd}
    \end{equation*}
\end{theorem}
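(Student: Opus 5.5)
The plan is to realize $k^{\ABS}$ as the cofiber of a Smith ideal in the Day convolution category $\Fun(\sBimBanInf,\Sp_{\geq 0})$ and then apply Corollary~\ref{cor:cofiberlaxmon}. First, transport the Smith ideal of Proposition~\ref{prop:cliffordsmithideal} from $\sBanAlgInf^{\op}$ to $\sBimBanInf^{\op}$. Consider the symmetric monoidal functor $(\sBanAlgOne)^{\op}_{\mathrm{fgp}}\to\sBimBanOne$, $f\mapsto {}_fB$. This applies to $i_-\colon\R\to\Cl_{-1}$, since $\Cl_{-1}$ is finite-dimensional, hence finitely generated projective over $\R$. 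It likewise applies to every coordinate-positive embedding in the image of $\Cl$, because $\Cl(V)\to\Cl(W)$ makes $\Cl(W)$ a free module of finite rank. So I restrict $\Cl$ to the full subcategory of inner product spaces, where all morphisms are fgp, localize, and obtain a symmetric monoidal functor $\InnInf^{\op}\to\sBimBanInf$. Composing with the lax monoidal zig-zag of Corollary~\ref{cor:smithidealInn} gives a Smith ideal $\Cl_{-1}\rightsquigarrow\R$ in $\sBimBanInf$, now covariantly, whose underlying arrow is restriction of scalars $i_-^*$. The homotopies~\eqref{eq:essentialhomotopy} are exactly what the contractible mapping spaces of $\mathsf J$ encode: the $D^1$ of coordinate-positive embeddings $\R^1\to\R^2$ is the path $\cos t\,e_1+\sin t\,e_2$.

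Next, I would produce a Smith ideal of functors. For a Smith ideal $I\to R$ in a symmetric monoidal $\calC$, the functor $c\mapsto (I\otimes c\to R\otimes c)$, precomposed with a lax monoidal $F\colon\calC\to\calD$, should define a Smith ideal in $\Fun(\calC,\calD)$ with Day convolution. Abstractly, the lax monoidal functor $[1]_{\min}\times\calC\to\calC$, $(j,c)\mapsto S(j)\otimes c$, is adjoint to a lax monoidal functor $[1]_{\min}\to\Fun^{\otimes\text{-}\mathrm{lax}}$, and lax monoidal functors $[1]_{\min}\times\calC\to\calD$ are commutative algebras in $\Fun([1]_{\min},\Fun(\calC,\calD))$ under Day convolution, i.e.\ Smith ideals. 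Applying this with $\calC=\sBimBanInf$, $F=k^{C_2}$ (lax monoidal by Proposition~\ref{prop:kC_2}) and the Smith ideal above gives the Smith ideal $k^{C_2}_{-\grotimes\Cl_{-1}}\to k^{C_2}$ with the correct underlying map. Corollary~\ref{cor:cofiberlaxmon} then yields the lax symmetric monoidal structure on $k^{\ABS}$ and shows that $k^{C_2}\to k^{\ABS}$ is monoidal.

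For the comparison with $k$, I would restrict along the symmetric monoidal $\BimBanInf\to\sBimBanInf$. By Example~\ref{example:ungraded} the underlying functor agrees with $k$. For the monoidal structure, I would use that $k$ is the left Kan extension of $\mathcal K^{\mathrm{alg}}$, which is symmetric monoidal for Day convolution, so that lax monoidal maps out of $k$ correspond to lax monoidal maps out of $\mathcal K^{\mathrm{alg}}$. It therefore suffices to give a lax monoidal transformation $\mathcal K^{\mathrm{alg}}\to k^{C_2}|\to k^{\ABS}|$ whose Kan-extended map is the equivalence of Example~\ref{example:ungraded}. Take $M\mapsto M$ in even degree: this is strong monoidal into $\ModOne^{C_2}$, and its composite to $k^{\ABS}$ is an equivalence.

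I expect the main obstacle to be the second step: making precise, at the level of $\infty$-operads, that tensoring a Smith ideal with the identity and postcomposing with a lax monoidal functor gives a Smith ideal in Day convolution. The first thing to try is to write everything as maps of $\infty$-operads $\Op([1]_{\min})\times\Op(\calC)\to\Op(\calD)$, using that $\Op([1]_{\min})$ is the product operad corresponding to $\min$, and to cite the Day convolution universal property for the currying.
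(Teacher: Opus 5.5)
Your proposal is correct and follows the paper's proof essentially step by step. The paper likewise transports the Clifford Smith ideal of Proposition~\ref{prop:cliffordsmithideal} to $\sBimBanInf$ by wrong-way functoriality, forms the lax monoidal composite $[1]_{\mathrm{min}}\times\sBimBanInf\to\sBimBanInf\times\sBimBanInf\xrightarrow{\grotimes}\sBimBanInf\xrightarrow{k^{C_2}}\Sp$, and concludes with Corollary~\ref{cor:cofiberlaxmon}, leaving implicit the Day-convolution currying you flag. For the ungraded comparison, it likewise uses the symmetric monoidal even-degree inclusion $\ModOne_A\to\ModOne^{C_2}_A$, descends the resulting monoidal transformation to $k\to k^{C_2}\to k^{\ABS}$, and checks via Example~\ref{example:ungraded} that this is an objectwise equivalence.
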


\begin{proof}
    By Proposition~\ref{prop:cliffordsmithideal} we can endow $i_-^{\op}\colon \Cl_{-1}\to \R$ with the structure of a Smith ideal, i.e.\ it can be upgraded to a lax symmetric monoidal functor $[1]_{\mathrm{min}}\to \sBanAlgInf^{\mathrm{fgp},\op}$.
    By using wrong-way functoriality, we may view this as a lax symmetric monoidal functor to the $(\infty,1)$-category $\sBimBanInf$.
    Using Lemma~\ref{prop:kC_2}, we obtain a lax monoidal composite
    \begin{equation*}
        \begin{tikzcd}
            \left[1\right]_{\mathrm{min}} \times \sBimBanInf \ar[r]& \sBimBanInf \times \sBimBanInf \ar[r,"\grotimes"]& \sBimBanInf \ar[r,"k^{C_2}"] & \Sp \,,
        \end{tikzcd} \,,
    \end{equation*}
    with underlying functor $\sBimBanInf\to \Sp^{[1]}$ given by $A \mapsto (k^{C_2}_{A\grotimes \Cl_{-1}} \to k^{C_2}_A)$.
    We conclude using Corollary~\ref{cor:cofiberlaxmon}.

    There is a symmetric monoidal functor $\BimBanOne \to \sBimBanOne$ that after $\Hom(\mathbbm{1},-)$ gives the functor $\ModOne_A\to \ModOne^{C_2}_A$ regarding an $A$-module as a purely even graded module.
    These assemble into a symmetric monoidal natural transformation of functors $\BimBanOne\to \Cat_1^{\Sigma}$.
    This induces a natural transformation $\mathcal{K}^{\mathrm{alg}}\to \mathcal{K}^{C_2,\mathrm{alg}}$ between functors on $\BimBanOne$, 
    which descends to a monoidal natural transformation $k\to k^{C_2}$ of functors on $\BimBanInf$ and further to a map $k\to k^{C_2}\to k^{\ABS}$.
    This map is given by
    \begin{equation*}
        k_A \to \cofib \left( k^{C_2}_{A\grotimes \Cl_{-1}} \to k^{C_2}_A \right) 
    \end{equation*}
    on objects which was shown to be an equivalence in Example~\ref{example:ungraded}.
\end{proof}

The lax monoidality provides a $ko=k^{\ABS}_{\R}$-action on $k^{\ABS}_A$ and so---by $\R$-linearity of all maps involved---Theorem \ref{thm:abslaxmonoidal} enhances to a lax monoidal functor to $ko$-modules.

\subsubsection{Bott periodicity and the Clifford suspension isomorphism}
\label{sec:bott}
    We will now explain the connective version of Bott periodicity in the setting of graded Banach algebras.
    Below we will construct a specific element $\alpha\in \pi_1k^{\ABS}_{\Cl_{+1}}$ only depending on a choice of one of the two Morita equivalences $\Cl_{1,1} \Morita \R$.
    It will turn out that $\pi_1k^{\ABS}_{\Cl_{+1}}\cong\Z$, and $\alpha$ is a generator.
    Note that $\alpha$ is equivalent to a $ko=k^{\ABS}_{\R}$-module map
    \begin{equation*}
        \alpha\colon ko \longrightarrow \Omega k^{\ABS}_{\Cl_{+1}} \,,
    \end{equation*}
    where we choose to write $\Omega=\tau_{\geq 0}\Sigma^{-1}$ since we are working in connective spectra.
    Using the lax symmetric monoidal structure of $k^{\ABS}$, we obtain a map
    \begin{equation*}
        \overline{\alpha}\colon k^{\ABS}_A \simeq k^{\ABS}_A\otimes_{ko} ko \xrightarrow{\alpha} k^{\ABS}_A \otimes_{ko} \Omega k^{\ABS}_{\Cl_{+1}} \to \Omega k^{\ABS}_{A\grotimes \Cl_{+1}} \,,
    \end{equation*}
    that is natural in $A$.
    Moreover, $\overline{\alpha}$ is multiplicative, in the sense that there are commutative diagrams
    \begin{equation*}
        \begin{tikzcd}
            k^{\ABS}_A \otimes k^{\ABS}_B \ar[d,"\id\otimes \overline{\alpha}_B"]\ar[r]& k^{\ABS}_{A\grotimes B} \ar[d,"\overline{\alpha}_{A\grotimes B}"] \\
            k^{\ABS}_A \otimes \Omega k^{\ABS}_{B\grotimes \Cl_{+1}} \ar[r]& \Omega k^{\ABS}_{A\grotimes B \grotimes \Cl_{+1}}
        \end{tikzcd} \,.
    \end{equation*}
    Succinctly:
    \begin{proposition}
        $\overline{\alpha}\colon k^{\ABS}_{-}\to \Omega k^{\ABS}_{-\grotimes \Cl_{+1}}$ is a map of $k^{\ABS}$-modules in $\Fun(\sBimBanInf,\Sp_{\geq 0})$ with the Day convolution symmetric monoidal structure.
    \end{proposition}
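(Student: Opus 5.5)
The plan is to recognize $\overline{\alpha}$ as an instance of a completely formal construction in the Day convolution category $\Fun(\sBimBanInf,\Sp_{\geq 0})$. By Theorem~\ref{thm:abslaxmonoidal}, $k^{\ABS}$ is a commutative algebra for Day convolution. For any fixed object $C$ of $\sBimBanInf$ (here $C=\Cl_{+1}$), the shifted functor $F_C := k^{\ABS}_{-\grotimes C}$ is a $k^{\ABS}$-module. The module structure map $k^{\ABS}_A\otimes k^{\ABS}_{B\grotimes C}\to k^{\ABS}_{A\grotimes B\grotimes C}$ is the lax structure of $k^{\ABS}$ composed with the associator of $\grotimes$. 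Formally, $F_C$ is the precomposition of $k^{\ABS}$ with the functor $-\grotimes C$, which is a module over the monoidal unit functor in a coherent way. Equivalently, $F_C$ is the restriction along $\sBimBanInf\to\sBimBanInf$, $A\mapsto A\grotimes C$, of the $k^{\ABS}$-module obtained from the algebra $k^{\ABS}$ viewed as a module over itself.

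Next I would show that $\Omega=\tau_{\geq 0}\Sigma^{-1}$, applied pointwise, preserves $k^{\ABS}$-modules. The functor $\Sigma^{-1}\colon\Sp\to\Sp$ is $\Sp$-linear, i.e.\ there are coherent equivalences $X\otimes\Sigma^{-1}Y\simeq\Sigma^{-1}(X\otimes Y)$. For connective $X$, the functor $\tau_{\geq 0}$ is lax compatible with tensoring, since $X\otimes\tau_{\geq 0}Y$ is connective and maps to $X\otimes Y$, so it factors through $\tau_{\geq 0}(X\otimes Y)$. Hence $\Omega$ is a lax $\Sp_{\geq 0}$-linear endofunctor. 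Postcomposing with it sends Day-convolution $k^{\ABS}$-modules to $k^{\ABS}$-modules, which gives the module structure on $\Omega F_{\Cl_{+1}}$.

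For the map itself, I would use the free/forgetful adjunction for modules: a $k^{\ABS}$-module map $k^{\ABS}\to \Omega F_{\Cl_{+1}}$ is the same as a map from the Day unit into $\Omega F_{\Cl_{+1}}$. The Day unit is the functor corepresented by $\R$ and tensored with the sphere. By Yoneda, such a map is the same as a point of $\Omega k^{\ABS}_{\Cl_{+1}}$, that is, the class $\alpha\in\pi_1k^{\ABS}_{\Cl_{+1}}$. Taking $\overline{\alpha}$ to be the module map adjoint to $\alpha$ gives, on objects, the composite $k^{\ABS}_A\otimes \mathbb S\to k^{\ABS}_A\otimes\Omega k^{\ABS}_{\Cl_{+1}}\to\Omega k^{\ABS}_{A\grotimes\Cl_{+1}}$. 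It remains to check that this agrees with the formula in the text, where $\otimes_{ko}ko$ replaces $\otimes\mathbb S$. This holds because the $ko$-action is itself the restriction of the $k^{\ABS}$-action along $ko=k^{\ABS}_\R$, and $\alpha$ factors through the unit.

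The main obstacle is the coherence in the first step: making $A\mapsto k^{\ABS}_{A\grotimes C}$ into a $k^{\ABS}$-module functorially, and not just objectwise. I would handle it by writing $F_C=\mathrm{ev}\circ(k^{\ABS}\otimes\underline{C})$-style left Kan extension formulas in Day convolution. Concretely, $F_C$ is the left Kan extension-free restriction $(-\grotimes C)^*k^{\ABS}$, and $(-\grotimes C)^*$ is lax linear over Day convolution, because $-\grotimes C$ is a module functor for $\grotimes$ acting on itself. Once this is set up, the commuting squares in the display are exactly the associativity constraint of the resulting module structure.
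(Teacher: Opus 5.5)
Your argument is correct. It uses the same two ingredients as the paper, the lax structure of $k^{\ABS}$ from Theorem~\ref{thm:abslaxmonoidal} and the class $\alpha$, but organizes them differently.

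The paper defines $\overline{\alpha}$ objectwise as
\[
k^{\ABS}_A\simeq k^{\ABS}_A\otimes_{ko}ko\to k^{\ABS}_A\otimes_{ko}\Omega k^{\ABS}_{\Cl_{+1}}\to\Omega k^{\ABS}_{A\grotimes\Cl_{+1}}.
\]
It then records the multiplicativity squares and states the proposition as a succinct reformulation of them. The module structure on the target stays implicit; it appears only as the bottom arrow of those squares, and higher coherences are not addressed.

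You instead first build the $k^{\ABS}$-module structure on $\Omega k^{\ABS}_{-\grotimes\Cl_{+1}}$ in two steps:
\begin{enumerate}
\item restrict the regular module $k^{\ABS}$ along the $\grotimes$-linear functor $-\grotimes\Cl_{+1}$;
\item postcompose with the lax $\Sp_{\geq 0}$-linear functor $\Omega$.
\end{enumerate}
You then define $\overline{\alpha}$ as the free-module adjoint of a point $\mathbb{S}\to\Omega k^{\ABS}_{\Cl_{+1}}$ representing $\alpha$, using the Day unit $\mathbb{S}\otimes\Hom(\R,-)$. This makes the module-map property, including all higher coherences, tautological.

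The price is the comparison with the paper's $\otimes_{ko}$-formula. You handle it correctly by observing that the $ko$-action is the restriction of the $k^{\ABS}$-action along $ko=k^{\ABS}_{\R}$. This comparison should be made as natural transformations, not just on objects; it goes through since both are built from the same natural maps.

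Two small points to tighten:
\begin{itemize}
\item The lax linearity of $\tau_{\geq 0}$ should come coherently from its being right adjoint to the strongly $\Sp_{\geq 0}$-linear inclusion $\Sp_{\geq 0}\hookrightarrow\Sp$. The objectwise factorization you give does not by itself supply the coherences.
\item The appeal to ``left Kan extension formulas'' in your last paragraph is unnecessary. The only input needed there is that $(-\grotimes C)^*$ is lax linear for Day convolution, which you already state.
\end{itemize}
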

    We summarize the key properties of $\overline{\alpha}$.
    \begin{theorem}
        \label{thm:gradedbottperiodicity}
        For every real graded Banach algebra $A$, the map $\overline{\alpha}_A\colon k^{\ABS}_A\to \Omega k_{A\grotimes \Cl_{+1}}^{\ABS}$ induces an isomorphism on all homotopy groups $\pi_i$ , $i\geq 1$.
    \end{theorem}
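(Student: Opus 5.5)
We reduce the statement to a comparison between fiber sequences of group completions. By Lemma~\ref{lem:fibercofibersequences} and Lemma~\ref{lem:ABSpizero}, the cofiber sequence $k^{C_2}_{A\grotimes\Cl_{-1}}\to k^{C_2}_A\to k^{\ABS}_A$ is also a fiber sequence in $\Sp_{\geq0}$, because the second map is $\pi_0$-surjective. Hence $\pi_i k^{\ABS}_A$ for $i\geq 1$ is controlled by the long exact sequence of $i_-^*$.

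To identify $\Omega k^{\ABS}_{A\grotimes\Cl_{+1}}$ with a fiber, we use Theorem~\ref{thm:fibergroupcompletion}. Set
\[
f=i_-^*\colon N^{\mathrm{hc}}(\ModEn^{C_2,\cong}_{A\grotimes\Cl_{+1}\grotimes\Cl_{-1}})\to N^{\mathrm{hc}}(\ModEn^{C_2,\cong}_{A\grotimes\Cl_{+1}}).
\]
Since $\Cl_{+1}\grotimes\Cl_{-1}=\Cl_{1,1}$ is Morita trivial, the source is equivalent to $\ModEn^{C_2}_A$. Moreover, $f$ is quasi-surjective: every $M$ satisfies $M\oplus\Pi M\in\operatorname{im} f$ by the construction in Lemma~\ref{lem:ABSpizero}. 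By Proposition~\ref{prop:kC_2} and Lemma~\ref{lem:modulestelescopic}, these monoids are telescopic. Theorem~\ref{thm:fibergroupcompletion} then gives $k(f)\simeq\fib(f^{\gp})$, and therefore
\[
\Omega k^{\ABS}_{A\grotimes\Cl_{+1}}\simeq\tau_{\geq0}\Sigma^{-1}\cofib(f^{\gp}).
\]
Its higher homotopy is described through Karoubi triples $(M_1,M_2,\phi)$. Here $M_i$ are graded $A\grotimes\Cl_{1,1}$-modules, equivalently graded $A$-modules, and $\phi$ is a path between the underlying $A\grotimes\Cl_{+1}$-structures.

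The core step is to write $\overline\alpha$ concretely as a map of fiber sequences. Take $\alpha$ to be the class in $\pi_1 k^{\ABS}_{\Cl_{+1}}$ obtained from the loop of $\Cl_{-1}$-actions $\cos t\,e+\sin t\,f$ on the Morita generator of $\Cl_{1,1}$. Under the Morita identification, the map $\overline\alpha_A$ sends a graded $A$-module $M$ to the triple consisting of $M\grotimes S$ with the two extensions given by the Clifford generators, together with the rotation path. Here $S$ is the spinor module $\R^{1|1}$.

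This turns $\overline\alpha$ into a morphism of long exact sequences. The top row comes from
\[
k^{C_2}_{A\grotimes\Cl_{-1}}\to k^{C_2}_A\to k^{\ABS}_A,
\]
and the bottom row from the loop of the analogous sequence for $A\grotimes\Cl_{+1}$. On the $k^{C_2}$ terms we use the equivalence $k^{C_2}_B\simeq k_{|B\grotimes\Cl_{+1}|}$ of Proposition~\ref{prop:kC_2}. The comparison maps then become Bott-type maps $k_{|B\grotimes\Cl_{+1}|}\to\Omega k_{|B\grotimes\Cl_{+2}|}$ between ungraded $K$-theories. These are the classical Karoubi/Wood maps $\GL(\cdot)\to\Omega(\cdot)$, known to induce isomorphisms on $\pi_{\geq1}$ by the Karoubi fundamental theorem applied to Banach algebras.

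We expect the main obstacle to be this last step. We must verify that the squares genuinely commute up to coherent homotopy, and that the ungraded comparison is precisely Karoubi's equivalence $\Omega\GL$ versus the space of idempotent/Clifford extensions. We would first attempt the identification on the level of Karoubi triples via Theorem~\ref{thm:fibergroupcompletion}, reducing everything to spaces of $\Cl$-actions on a fixed module; these spaces are homogeneous spaces of $\GL$ of the endomorphism algebra.

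With the ladder in hand, the $5$-lemma gives isomorphisms on $\pi_i$ for $i\geq2$. For $i=1$, we use injectivity on $\pi_0$ of the two $k^{C_2}$ terms, which follows from the same classical argument. Surjectivity at $\pi_0$ fails exactly on the cokernel, which is why the theorem stops at $i\geq1$.
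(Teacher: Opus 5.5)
There is a genuine gap in your core step. The ladder of long exact sequences cannot work, because the absolute terms are not Bott periodic. Your comparison maps on the $k^{C_2}$-terms would have to be $\pi_{\geq 1}$-isomorphisms $k^{C_2}_B\to\Omega k^{C_2}_{B\grotimes\Cl_{+1}}$. Already for $A=\R$, however, $k^{C_2}_{\R}\simeq k_{|\Cl_{+1}|}\simeq ko\oplus ko$ while $\Omega k^{C_2}_{\Cl_{+1}}\simeq\Omega k_{M_2(\R)}\simeq\Omega ko$. Their $\pi_1$ are $\Z/2\oplus\Z/2$ and $\Z/2$, and their $\pi_2$ are $\Z/2\oplus\Z/2$ and $0$. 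The same failure occurs for $k^{C_2}_{\Cl_{-1}}\simeq ko$ against $\Omega k^{C_2}_{\Cl_{1,1}}\simeq\Omega ko\oplus\Omega ko$. Karoubi's fundamental theorem does not assert any periodicity of $k_{|B\grotimes\Cl_{+1}|}$ versus $k_{|B\grotimes\Cl_{+2}|}$. It concerns the relative groups of a forgetful functor, i.e.\ the fiber itself. Relatedly, $\overline{\alpha}$ is not induced by maps from the absolute terms into their loops. The class $\alpha$ is detected in $\pi_0k^{C_2}_{\Cl_{1,1}}$ as $[V]-[\overline{V}]$, two $\Cl_{1,1}$-structures on the same $\Cl_{+1}$-module, not by a loop. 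Moreover, $\cos t\,e+\sin t\,f$ is not a family of $\Cl_{-1}$-actions, since it squares to $\cos 2t$. In the paper, $\overline{\alpha}$ is induced on cofibers by a square from $k^{C_2}_{A\grotimes\Cl_{-1}}\to k^{C_2}_A$ into the diagonal model $X\to X\times_YX$ of $\Omega\cofib$, which commutes only up to a rotation homotopy. Even there the middle comparison is $ko^{2}\to ko^{3}$ for $A=\R$, so the five lemma has nothing to act on.

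Two ingredients are missing: (i) a way to get from $\pi_0$ to $\pi_i$, and (ii) applying Karoubi's theorem to the relative term. For (i), the paper uses that $\Omega k^{\ABS}=k^{\mathrm{vD}}$ is excisive, as a fiber of the excisive functors $k^{C_2}_{-\grotimes\Cl_{-1}}\to k^{C_2}$, and homotopy invariant. Via $F(C(X;A))\simeq\tau_{\geq0}\underline{\Hom}(\Sigma^\infty_+X,F(A))$, a natural transformation between such functors is then an equivalence once it is an isomorphism on $\pi_0$. Note that Karoubi triples describe only $\pi_0$ of the fiber, not its higher homotopy as you claim. For (ii), the paper first replaces the homotopy-commutative square by a strictly commutative one using module bundles over $(D^1,S^0)$. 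It then identifies $\pi_0$ of both fibers with Karoubi triples via Theorem~\ref{thm:fibergroupcompletion}; your quasi-surjectivity and telescopicity checks are exactly the right input here. Finally, it shows the explicit map $[M,f,f_1,f_2]\mapsto[M,\cos\theta\, f+\sin\theta\, f_1,\cos\theta\, f+\sin\theta\, f_2]$ is bijective by Karoubi's Theorem~2.2.2.
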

    The above theorem is a version of strong Bott periodicity. We will prove it in Section~\ref{sec:proofs} by reducing the statement to Karoubi's version of Bott periodicity.
    We will discuss the failure to be an isomorphism on $\pi_0$ in Section~\ref{sec:cofib}.

     We write $\overline{\alpha}_A^{(n)}$ for the composition 
    \begin{equation*}
        k^{\ABS}_A \xrightarrow{\overline{\alpha}} \Omega k^{\ABS}_{A\grotimes \Cl_{+1}} \xrightarrow{\Omega\overline{\alpha}} \Omega^2 k^{\ABS}_{A\grotimes \Cl_{+2}} \dots \xrightarrow{\Omega^{n-1} \overline{\alpha}} \Omega^n k^{\ABS}_{A\grotimes \Cl_{+n}} \,.
    \end{equation*}
    Pick a Morita equivalence $\Cl_{+4} \Morita \mathbb{H}$ which gives a map
        \begin{equation}
        \label{eq:ksp}
            \overline{\alpha}^{(4)}\colon ko \to \Omega^4k^{\ABS}_{\Cl_{+4}} \simeq \Omega^4k_{\mathbb{H}}
        \end{equation}
        relating to the $ko$-module $k_{\mathbb{H}}$ also known as $ksp$.
        The square of $\Cl_{+4} \Morita \mathbb{H}$ gives a Morita equivalence 
        \begin{equation}
        \label{eq:moritaCl8}
        \Cl_{+8} \cong \Cl_{+4} \grotimes \Cl_{+4} \Morita \mathbb{H} \otimes \mathbb{H} \cong M_4(\R) \Morita \R
        \end{equation}
        and hence a map
        \begin{equation}
        \label{eq:alphabott}
            \overline{\alpha}^{(8)}\colon ko \to \Omega^8k^{\ABS}_{\Cl_{+8}} \simeq \Omega^8 ko
        \end{equation}
        The maps \eqref{eq:ksp} and \eqref{eq:alphabott} are equivalences:
    
    \begin{corollary}
        \label{cor:ktheoryomegaspectrum}
        The connective $K$-theory of real Clifford algebras provides a delooping of $ko$:
        \begin{equation*}
            \overline{\alpha}^{(n)}\colon ko \xrightarrow{\simeq } \Omega^n k^{\ABS}_{\Cl_{+n}} \,.
        \end{equation*}
        The equivalence \eqref{eq:alphabott} is given by multiplication by the Bott class $\beta\in \pi_8ko$.
    \end{corollary}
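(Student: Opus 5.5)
The plan is to deduce the corollary from Theorem~\ref{thm:gradedbottperiodicity}, then settle the one remaining homotopy group by an explicit computation, and finally use $ko$-linearity to identify \eqref{eq:alphabott} with the Bott class.

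\textbf{Step 1: positive degrees.} Write $\overline{\alpha}^{(n)}$ as the composite of the maps $\Omega^{j}\overline{\alpha}_{\Cl_{+j}}$ for $0\le j<n$. On $\pi_i$, the map $\Omega^j\overline{\alpha}_{\Cl_{+j}}$ is $\pi_{i+j}(\overline{\alpha}_{\Cl_{+j}})$. By Theorem~\ref{thm:gradedbottperiodicity} this is an isomorphism whenever $i+j\ge 1$. Hence every factor with $j\ge 1$ is an equivalence of connective spectra. The factor with $j=0$ is an isomorphism on $\pi_i$ for $i\ge 1$. So the whole problem reduces to showing that
\begin{equation*}
\pi_0(\overline{\alpha}_{\R})\colon \Z=\pi_0 ko\to \pi_1 k^{\ABS}_{\Cl_{+1}}
\end{equation*}
is an isomorphism.

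\textbf{Step 2: computing the target group.} I would compute $\pi_1 k^{\ABS}_{\Cl_{+1}}$ from the defining cofiber sequence
\begin{equation*}
k^{C_2}_{\Cl_{+1}\grotimes\Cl_{-1}}\xrightarrow{i_-^*} k^{C_2}_{\Cl_{+1}}\to k^{\ABS}_{\Cl_{+1}} .
\end{equation*}
For the first term, $\Cl_{1,1}\cong M_{1|1}(\R)$ by Example~\ref{ex:(1,1)per}. Morita invariance together with Proposition~\ref{prop:kC_2}(4) then gives $k^{C_2}_{\Cl_{1,1}}\simeq k^{C_2}_{\R}\simeq ko^{\oplus 2}$. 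The two summands correspond to the simple graded modules $\R^{1|1}$ and $\Pi\R^{1|1}$.

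For the second term, Proposition~\ref{prop:kC_2}(1) gives $k^{C_2}_{\Cl_{+1}}\simeq k_{|\Cl_{+2}|}$. Since $|\Cl_{+2}|\cong M_2(\R)$, this is $ko$.

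Both simple $\Cl_{1,1}$-modules restrict to the unique simple graded $\Cl_{+1}$-module. So $i_-^*$ becomes the fold map $ko^{\oplus 2}\to ko$, which is surjective on $\pi_0$ and $\pi_1$. The long exact sequence (Lemma~\ref{lem:fibercofibersequences}) then yields
\begin{equation*}
\pi_1 k^{\ABS}_{\Cl_{+1}}\cong \ker(\Z^2\to\Z)\cong\Z ,
\end{equation*}
generated by $[\R^{1|1}]-[\Pi\R^{1|1}]$.

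\textbf{Step 3: the generator (main obstacle).} The hardest step is to check that $\alpha=\overline{\alpha}_{\R}(1)$ equals this generator up to sign. This requires unwinding the construction of $\alpha$ from the chosen Morita equivalence $\Cl_{1,1}\Morita\R$. I would first trace $\alpha$ through the boundary map of the cofiber sequence to a class in $\pi_0 k^{C_2}_{\Cl_{1,1}}\cong\Z^2$. Then I would show that it lands on the difference of the two simple modules, which is exactly what the choice of Morita equivalence singles out. If this direct identification proves awkward, a fallback is to compare with Karoubi's Bott map via Theorem~\ref{thm:fibergroupcompletion}, as in the proof of Theorem~\ref{thm:gradedbottperiodicity}.

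\textbf{Step 4: the Bott class.} The map $\overline{\alpha}^{(8)}\colon ko\to\Omega^8 ko$ is a $ko$-module map. It is therefore determined by the image of $1$ in $\pi_0\Omega^8 ko=\pi_8 ko\cong\Z$, and is multiplication by that element. Since Steps 1--3 show the map is an equivalence, this element is a generator, which is the Bott class $\beta$ (fixing its sign by this choice). For $n=4$, the identification $\Omega^4 k^{\ABS}_{\Cl_{+4}}\simeq\Omega^4 k_{\mathbb{H}}$ follows from Morita invariance together with Theorem~\ref{thm:abslaxmonoidal}.
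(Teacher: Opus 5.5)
Your Steps~1--2 agree with the paper. The looped factors $\Omega^j\overline{\alpha}_{\Cl_{+j}}$ with $j\geq 1$ are equivalences by Theorem~\ref{thm:gradedbottperiodicity}, and your computation $\pi_1 k^{\ABS}_{\Cl_{+1}}\cong\Z$ is correct; in fact $k^{\ABS}_{\Cl_{+1}}\simeq\cofib(ko^{\oplus 2}\to ko)\simeq\Sigma ko$ as $ko$-modules. This also means a $ko$-linear map $ko\to\Omega k^{\ABS}_{\Cl_{+1}}\simeq ko$ hitting a generator on $\pi_0$ is already an equivalence, so you don't need Theorem~\ref{thm:gradedbottperiodicity} for $j=0$. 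The paper gets this base case directly from Lemma~\ref{lem:alphaequivalence}.

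Step~3, however, is only a plan, and it leaves out the fact that makes it work. That fact is that $V$ and $\overline{V}$ are the two non-isomorphic simple graded $\Cl_{1,1}$-modules, i.e.\ $\overline{V}\cong\Pi V$. To see this, note that $ef$ is an inner grading of $\Cl_{1,1}$: it acts as the grading operator on $V$ but as its negative on $\overline{V}$. Hence under $\Cl_{1,1}\Morita\R$ the pair $(V,\overline{V})$ becomes $(\R,\Pi\R)$, and $\alpha$ maps to $\pm([V]-[\Pi V])$, which generates $\ker(\Z^2\to\Z)$. This holds for either choice of $V$; the choice only affects the sign, so it is not something the Morita equivalence ``singles out''.

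The genuine gap is in Step~4. ``The Bott class'' is not an arbitrary generator of $\pi_8 ko$. It is the canonical one determined by $a^2=4\beta$ for a generator $a\in\pi_4 ko$, and this does not depend on the sign of $a$. By contrast, the sign of $\overline{\alpha}^{(8)}(1)\in\pi_8 ko$ does depend on the chosen Morita equivalence $\Cl_{+8}\Morita\R$. Composing it with the parity-shift bimodule $\Pi\R$, which acts as $-1$ on $k^{\ABS}_{\R}=ko$, flips the sign. So you cannot fix the sign by choice; you must show that the specific equivalence \eqref{eq:moritaCl8} yields $+\beta$.

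The paper does this using that \eqref{eq:moritaCl8} is the square of $\Cl_{+4}\Morita\mathbb{H}$, combined with the purely even equivalence $\mathbb{H}\otimes\mathbb{H}\cong M_4(\R)\Morita\R$. By multiplicativity of $\overline{\alpha}$ one gets $\overline{\alpha}^{(8)}(1)=x^2$, where $x=\overline{\alpha}^{(4)}(1)$ generates $\pi_4 k_{\mathbb{H}}$. The class $x^2$ is insensitive to the sign of $x$. To identify $x^2$ with $\beta$, use two facts:
\begin{itemize}
\item the unit $ko\to k_{\mathbb{H}}$ sends $a\mapsto\pm 4x$;
\item the composite $ko\otimes ko\to k_{\mathbb{H}}\otimes k_{\mathbb{H}}\to ko$ is four times the multiplication, because $(\mathbb{H}\otimes V)\otimes(\mathbb{H}\otimes W)$ corresponds to $\R^4\otimes V\otimes W$.
\end{itemize}
Together these give $16x^2=4a^2=16\beta$, so $x^2=\beta$ since $\pi_8 ko$ is torsion-free.
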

    \begin{proof}
        $\overline{\alpha}_\R$ is an equivalence by Lemma~\ref{lem:alphaequivalence} and $\Omega^n\overline{\alpha}_A$ is always an equivalence for $n\geq 1$ by Theorem~\ref{thm:gradedbottperiodicity}.
        It must also map to a generator of $\pi_8ko$.
        Now $\beta$ is the preferred generator of $\beta \in \pi_8ko\cong \Z$ with the property that $4\beta = \alpha^2$ where $\alpha \in \pi_4ko$ is a generator~\cite[Section 2.3.D]{MR2723113}.
        Since we picked the choice of Morita equivalence \eqref{eq:moritaCl8} to be the square of a Morita equivalence $\Cl_{+4} \Morita \mathbb{H}$ and $\mathbb{H} \otimes_\R \mathbb{H} \cong M_4(\R) \Morita \R$ is an ungraded Morita equivalence, we took this canonical choice.\footnote{The choice of $V$ in the definition of $\alpha$~\eqref{eq:alphadef} does change the sign of $\alpha$ but not of $\beta$ because only its eighth root enters.}
    \end{proof}
    \begin{corollary}
        \label{cor:kuennethforclifford}
        For any graded Banach algebra, the monoidality constraint
        \begin{equation*}
            k^{\ABS}_{A}\otimes_{ko} k^{\ABS}_{\Cl_{-n}} \to k^{\ABS}_{A \grotimes \Cl_{-n}}
        \end{equation*}
        is an equivalence.
    \end{corollary}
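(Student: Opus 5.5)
The idea is to reduce to the case $n=1$ and then use Bott periodicity from Theorem~\ref{thm:gradedbottperiodicity} together with Corollary~\ref{cor:ktheoryomegaspectrum}.

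\textbf{Reduction to $n=1$.} Since $\Cl_{-n}\cong\Cl_{-1}^{\grotimes n}$ and the lax monoidal structure is associative, the constraint for $\Cl_{-n}$ factors as a composite of constraints of the form
\begin{equation*}
k^{\ABS}_{A\grotimes \Cl_{-(n-1)}}\otimes_{ko} k^{\ABS}_{\Cl_{-1}}\to k^{\ABS}_{A\grotimes\Cl_{-n}},
\end{equation*}
each tensored over $ko$ with identities. Here we use $k^{\ABS}_{\Cl_{-n}}\simeq k^{\ABS}_{\Cl_{-(n-1)}}\otimes_{ko}k^{\ABS}_{\Cl_{-1}}$, which is the case $A=\Cl_{-(n-1)}$ of the claim, proved by induction. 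So it suffices to treat $n=1$ with $A$ arbitrary.

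\textbf{Identifying $k^{\ABS}_{\Cl_{-1}}$.} Since $\Cl_{-1}\grotimes\Cl_{+1}\cong\Cl_{1,1}$ is Morita trivial, Corollary~\ref{cor:ktheoryomegaspectrum} (via $\Cl_{-1}\grotimes\Cl_{+7}\Morita\Cl_{+6}$ up to $8$-periodicity) gives a concrete model: $k^{\ABS}_{\Cl_{-1}}\simeq\Omega^7 k^{\ABS}_{\Cl_{+8}}\simeq\Omega^7 ko$ as a $ko$-module. One can check the homotopy groups directly: $\pi_0 k^{\ABS}_{\Cl_{-1}}=0$, because every graded $\Cl_{-1}$-module is a $\Cl_{-2}$-module up to adding $\Pi$. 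Thus $k^{\ABS}_{\Cl_{-1}}$ is the connective cover $\tau_{\geq0}\Sigma^{-1}ko$. This connective $\Sigma^{-1}$ does not commute with $\otimes_{ko}$, which is the main obstacle.

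\textbf{Main step: compare via $\overline{\alpha}$.} Use the multiplicativity square for $\overline{\alpha}$ with $B=\Cl_{-1}$. Applying $\overline{\alpha}^{(1)}$ maps both source and target of the constraint into $\Omega$ of the corresponding objects for $\Cl_{-1}\grotimes\Cl_{+1}\Morita\R$. There the constraint becomes the identity $k^{\ABS}_A\otimes_{ko}ko\simeq k^{\ABS}_A$, up to $\Omega$. Because $\overline{\alpha}$ is a $\pi_{\geq1}$-isomorphism (Theorem~\ref{thm:gradedbottperiodicity}) and tensoring over $ko$ with a connective module preserves $\pi_{\geq1}$-isomorphisms in the relevant range, the constraint is a $\pi_{\geq1}$-isomorphism.

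It remains to handle $\pi_0$. On the source, $\pi_0(k^{\ABS}_A\otimes_{ko}k^{\ABS}_{\Cl_{-1}})=\pi_0k^{\ABS}_A\otimes\pi_0k^{\ABS}_{\Cl_{-1}}=0$. On the target, $\pi_0k^{\ABS}_{A\grotimes\Cl_{-1}}=0$ by the same argument as for $\Cl_{-1}$: for a graded $A\grotimes\Cl_{-1}$-module $M$ with generator $f$, the sum $M\oplus\Pi M$ carries a compatible anticommuting second $\Cl_{-1}$-generator, so $[M]=0$ by Lemma~\ref{lem:ABSpizero}. Hence both $\pi_0$'s vanish and the map is an equivalence.

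The delicate point is making the step ``$\otimes_{ko}$ preserves $\pi_{\geq1}$-isomorphisms'' precise. To avoid it, I would instead argue with the cofiber of $\overline{\alpha}$, which is concentrated in degree $0$ and can be computed by Corollary~\ref{cor:quotientcondition}, and tensor that cofiber over $ko$ with the connective module $k^{\ABS}_{\Cl_{-1}}$, whose $\pi_0$ vanishes.
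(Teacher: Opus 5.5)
\textbf{The proposal has genuine gaps, and in fact the statement as written cannot be proved.}

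\textbf{Errors in your argument.} Your $\pi_0$ computations are wrong. We have $\pi_0 k^{\ABS}_{\Cl_{-1}}\cong\Z/2$, generated by the irreducible $1|1$-dimensional module (the class $\eta$). This agrees with your own correct identification $k^{\ABS}_{\Cl_{-1}}\simeq\tau_{\geq 0}\Sigma^{-1}ko=\Omega ko$; the model $\Omega^7 ko$ is wrong. Extending $M\oplus\Pi M$ to an $A\grotimes\Cl_{-2}$-module only shows $[M]+[\Pi M]=0$, which is Lemma~\ref{lem:ABSpizero}, not $[M]=0$. For $A=\Cl_{+1}$ the target is $\pi_0 k^{\ABS}_{\Cl_{1,1}}\cong\pi_0 ko\cong\Z$. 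Also, tensoring over $ko$ with a connective module does not preserve $\pi_{\geq 1}$-isomorphisms. The zero map $0\to H\mathbb{F}_2$ is a $\pi_{\geq1}$-isomorphism, yet $H\mathbb{F}_2\otimes_{ko}H\mathbb{F}_2$ has homotopy in degrees $0,\dots,6$. The cofiber of a $\pi_{\geq 1}$-isomorphism lives in degrees $0,1$, but its tensor product with a connective module can have homotopy in all higher degrees. So your fallback via $\cofib(\overline{\alpha})$ fails as well.

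\textbf{The connective statement is false.} The obstacle you flagged, that connective $\Omega=\tau_{\geq 0}\Sigma^{-1}$ does not commute with $\otimes_{ko}$, is fatal rather than technical. Take $A=\Cl_{-4}$ and $n=4$. Then $\pi_0(k^{\ABS}_{\Cl_{-4}}\otimes_{ko}k^{\ABS}_{\Cl_{-4}})\cong\pi_0k^{\ABS}_{\Cl_{-4}}\otimes_{\Z}\pi_0k^{\ABS}_{\Cl_{-4}}\cong\Z$. It is generated by $[M_\alpha]\otimes[M_\alpha]$, where $M_\alpha$ is the irreducible graded module of dimension $4|4$. The constraint sends this to $[M_\alpha\grotimes M_\alpha]\in\pi_0 k^{\ABS}_{\Cl_{-8}}\cong\Z$. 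Since $M_\alpha\grotimes M_\alpha$ has dimension $32|32$, it is a sum of four irreducible $8|8$-dimensional modules, each of class $\pm$ the generator. Its class is therefore even (it is $4[M_\beta]$ by Example~\ref{ex:ringstructureko}, reflecting $\alpha^2=4\beta$), so the map is not surjective on $\pi_0$.

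Your base case also fails. For $A=\Cl_{-1}$, $n=1$, the class of $\eta\otimes\eta^2$ in the K\"unneth spectral sequence gives a nonzero element of $\pi_1(\Omega ko\otimes_{ko}\Omega ko)$, whereas $\pi_1 k^{\ABS}_{\Cl_{-2}}\cong\pi_3 ko=0$.

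\textbf{The paper's own proof.} It places the constraint in the naturality square for $\overline{\alpha}^{(n)}$ together with $\Cl_{n,n}\Morita\R$, and it breaks at the same spot. Its right vertical map $k^{\ABS}_A\otimes_{ko}\Omega^n ko\to\Omega^n k^{\ABS}_A$ is not an equivalence in general: for $A=\Cl_{-4}$, $n=4$ it is $ksp\otimes_{ko}ksp\to\Omega^4 ksp\simeq ko$. Its bottom map $\overline{\alpha}^{(n)}_{A\grotimes\Cl_{-n}}$ is in general only a $\pi_{\geq 1}$-isomorphism (cf.\ Example~\ref{ex:circle}).

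After inverting $\beta$, every map in that square is an equivalence. So what actually holds, and what Corollary~\ref{cor:periodicKunneth} needs, is the periodic statement $K^{\mathrm{gr}}_A\otimes_{KO}K^{\mathrm{gr}}_{\Cl_{-n}}\simeq K^{\mathrm{gr}}_{A\grotimes\Cl_{-n}}$.
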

    \begin{proof}
        By naturality of $\overline{\alpha}^{(n)}$ and the Morita equivalence $\Cl_{n,n}\Morita \R$ we have the commutative diagram
        \begin{equation*}
        \begin{tikzcd}[column sep = 40]
              k_{A}^{\ABS} \otimes_{ko} k_{\Cl_{-n}}^{\ABS} \ar[r,"\id\otimes\overline{\alpha}^{(n)}","\simeq"'] \ar[d]& k_{A}^{\ABS} \otimes_{ko} \Omega^n k_{\Cl_{n,n}}^{\ABS} \ar[d]\ar[r,"\simeq"] &k_{A}^{\ABS} \otimes_{ko} \Omega^n ko \ar[d,"\simeq"]\\
              k_{A\grotimes \Cl_{-n}}^{\ABS} \ar[r,"\overline{\alpha}^{(n)}","\simeq"'] & \Omega^n k^{\ABS}_{A\grotimes \Cl_{n,n}}  \ar[r,"\simeq"]& \Omega^n k^{\ABS}_{A} 
        \end{tikzcd} \,.
        \end{equation*}
        We see that the leftmost vertical map is an isomorphism.
    \end{proof}
    \subsubsection{Periodic graded \texorpdfstring{$K$}{K}-theory}
    \label{sec:periodickthy}
    \begin{definition}
        \label{def:periodick}
        The \emph{periodic graded $K$-theory of $A$} is the spectrum
        \begin{equation*}
            K^{\mathrm{gr}}_A :=  \colim_{n} \left(k^{\ABS}_A\xrightarrow{\overline{\alpha}} \Sigma^{-1}k^{\ABS}_{A\grotimes \Cl_{+1}} \xrightarrow{\Sigma^{-1} \overline{\alpha}}  \Sigma^{-2}k^{\ABS}_{A\grotimes \Cl_{+2}} \dots \right) \,.
        \end{equation*}
    \end{definition}
    It follows by Theorem~\ref{thm:gradedbottperiodicity} that $\Sigma^{-k} \overline{\alpha}$ is an isomorphism on $\pi_{i}$ for $i \geq -k+1$.
    In particular, the connective cover of the periodic graded $K$-theory is
    \begin{equation*}
        \tau_{\geq 0}K^{\mathrm{gr}}_A = \Omega k^{\ABS}_{A\grotimes \Cl_{+1}}  \,,
    \end{equation*}
    and it receives a map from $k^{\ABS}_A$ which is an equivalence on $\pi_i$ for $i\geq 1$.
    We study the map on $\pi_0$ in Section~\ref{sec:cofib}.

Applying Corollary~\ref{cor:ktheoryomegaspectrum}, we obtain $K^{\mathrm{gr}}_A = k_A^{\ABS}[\beta^{-1}]$; it follows by naturality in $A$ that $\alpha^8_A$ is given by acting by $\beta \in \pi_8 ko$.
It therefore also follows by commuting colimits that
\begin{equation*}
K^{\mathrm{gr}}_A = \cofib(k^{C_2}_{A \grotimes \Cl_{-1}} \to k_A^{C_2})[\beta^{-1}] = \cofib(k_{A \grotimes \Cl_{-1}}^{C_2}[\beta^{-1}] \to k_A^{C_2}[\beta^{-1}]) = \cofib(K_{A \grotimes \Cl_{-1}}^{C_2} \to K_A^{C_2}),
\end{equation*}
where $K_A^{C_2} := k_A^{C_2}[\beta^{-1}]$; recall from Proposition~\ref{prop:kC_2} that $k^{C_2}_A$ is a $ko$-module.
    \begin{remark}
        One can rephrase the definition of $K^{\mathrm{gr}}_A$ as the explicit $\Omega$-spectrum with connecting maps $\overline{\alpha}$
        \begin{equation*}
            (\Omega^{\infty+1} k^{\ABS}_{A\grotimes \Cl_{+1}},\dots,\Omega^{\infty+n+1}k^{\ABS}_{A\grotimes \Cl_{n+1}},\dots)\,.
        \end{equation*}
    \end{remark}

    \begin{theorem}
        \label{thm:periodicKtheorylax}
        Periodic graded $K$-theory admits a natural refinement to a lax symmetric monoidal functor such that $k^{\ABS}\to K^{\mathrm{gr}}$ is a monoidal natural transformation.
        It refines to a lax symmetric monoidal functor from the Morita $(\infty,1)$-category of graded Banach algebras and bimodules to $KO$-module spectra
        \begin{equation*}
            K^{\mathrm{gr}}\colon \sBimBanInf \longrightarrow \ModInf(KO) \,.
        \end{equation*}
        $K^{\mathrm{gr}}$ is homotopy invariant.
        The restrictions $k^{\ABS}, K^{\mathrm{gr}}\colon \sBanAlgInf \to \ModInf(KO)$ preserve filtered colimits along contractive homomorphisms and $K^{\mathrm{gr}}$ is excisive.
    \end{theorem}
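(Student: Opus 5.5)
We prove the statement using the description $K^{\mathrm{gr}}_A \simeq k^{\ABS}_A[\beta^{-1}]$ established above, i.e.\ $K^{\mathrm{gr}} \simeq k^{\ABS}\otimes_{ko} KO$. The first step is to rewrite this as a functor: $K^{\mathrm{gr}}$ is the composite
\begin{equation*}
\sBimBanInf \xrightarrow{k^{\ABS}} \ModInf(ko)_{\geq 0} \xrightarrow{-\otimes_{ko}KO} \ModInf(KO).
\end{equation*}
By Theorem~\ref{thm:abslaxmonoidal} and the remark following it, $k^{\ABS}$ is lax symmetric monoidal with values in $ko$-modules. Base change along the $\E_\infty$-map $ko\to KO$ is strong symmetric monoidal. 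So the composite is lax symmetric monoidal, and the unit $\mathrm{id}\to -\otimes_{ko}KO$ makes $k^{\ABS}\to K^{\mathrm{gr}}$ a monoidal natural transformation. One point needs checking: the telescope in Definition~\ref{def:periodick}, built from $\overline{\alpha}$, must agree naturally in $A$ with the one built from $\beta$. This follows from Corollary~\ref{cor:ktheoryomegaspectrum} together with the fact that $\overline{\alpha}$ is a map of $k^{\ABS}$-modules, so $\overline{\alpha}^{(8)}$ is multiplication by $\beta$ naturally in $A$, up to the Morita identification \eqref{eq:moritaCl8}. Homotopy invariance is then automatic, since $K^{\mathrm{gr}}$ is defined on $\sBimBanInf$, where homotopy equivalences are already inverted.

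For filtered colimits along contractive homomorphisms, first treat $k^{\ABS}$. It is the cofiber of $k^{C_2}_{-\grotimes\Cl_{-1}}\to k^{C_2}$. The functor $-\grotimes\Cl_{-1}$ preserves contractive filtered colimits: $\Cl_{-1}$ is finite dimensional, so the completed tensor product commutes with the colimit~\eqref{eq:filteredcolimdef}. By Proposition~\ref{prop:kC_2}(3), $k^{C_2}$ preserves these colimits, and cofibers commute with colimits, so $k^{\ABS}$ does too. For $K^{\mathrm{gr}}$, note that $-\otimes_{ko}KO$ is a left adjoint and hence preserves colimits.

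Excision is the main obstacle, because $k^{\ABS}$ itself is not excisive: its $\pi_0$ is a quotient that does not behave well in Milnor squares. I would instead use the cofiber description
\begin{equation*}
K^{\mathrm{gr}}_A\simeq \cofib\bigl(K^{C_2}_{A\grotimes\Cl_{-1}}\to K^{C_2}_A\bigr).
\end{equation*}
\begin{itemize}
\item Take a Milnor square $A\to A_1, A_2\to B$ in $\sBanAlgOne$. Then $-\grotimes\Cl_{-1}$ preserves it, since it preserves pullbacks (as underlying Banach spaces it is $-\otimes\R^{1|1}$) and surjections.
\item By Proposition~\ref{prop:kC_2}(3), $k^{C_2}$ sends both squares to pullbacks of connective spectra. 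Pullbacks in $\Sp_{\geq0}$ are $\tau_{\geq0}$ of pullbacks in $\Sp$, so here is the subtlety. For algebra homomorphisms the $\pi_0$-surjectivity condition of Corollary~\ref{cor:pullbackpushoutsquare} holds, because the induced functors are quasi-surjective; after replacing the targets by essential images, as in Lemma~\ref{lem:groupcompletionpullback}, they are $\pi_0$-surjective. Hence these squares are pushouts, hence pullbacks, in $\Sp$.
\item The functor $-\otimes_{ko}KO$ preserves colimits. Therefore $K^{C_2}$ sends the squares to pushouts, hence pullbacks, in the stable category $\ModInf(KO)$.
\item Finally, cofibers of maps of pullback squares are pullback squares in a stable category. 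This gives excision for $K^{\mathrm{gr}}$.
\end{itemize}
If the $\pi_0$-surjectivity argument turns out to be delicate, the fallback is to work one level up. Bott periodicity (Theorem~\ref{thm:gradedbottperiodicity}) identifies $\tau_{\geq0}K^{\mathrm{gr}}_A$ with $\Omega k^{\ABS}_{A\grotimes\Cl_{+1}}$, and its homotopy groups are computed by the excisive $k^{C_2}$ in positive degrees. Periodicity then shifts any degree into the range where the comparison is an isomorphism.
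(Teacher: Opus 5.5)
Your treatment of lax monoidality, homotopy invariance and filtered colimits matches the paper: you write $K^{\mathrm{gr}}$ as $k^{\ABS}$ followed by the symmetric monoidal localization $-\otimes_{ko}KO$. The excision argument, however, has a step that fails.

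Corollary~\ref{cor:pullbackpushoutsquare} needs $p+q$ to be surjective on $\pi_0$ of the \emph{group completions}. Quasi-surjectivity of the induced functors does not give this. Passing to essential images does not change group completions either, so it cannot produce surjectivity. Here is a concrete counterexample. Take the Milnor square exhibiting $C(S^1)$ as the pullback of $C([0,1])\to\R\times\R\leftarrow\R$, trivially graded so that $k^{C_2}=k\times k$. The map $\pi_0k_{C([0,1])}\oplus\pi_0k_{\R}\to\pi_0k_{\R\times\R}\cong\Z^2$ has image the diagonal and cokernel $\Z$. Hence the pullback in $\Sp$ has $\pi_{-1}\cong\Z$. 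So the $k^{C_2}$-square is a pullback in $\Sp_{\geq0}$ but neither a pullback nor a pushout in $\Sp$. The conclusion of your second bullet is therefore false, and your third bullet has no pushout to preserve.

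The missing ingredient is the one the paper uses: the composite $\ModInf_{\Sp_{\geq0}}(ko)\hookrightarrow\ModInf(ko)\xrightarrow{-\otimes_{ko}KO}\ModInf(KO)$ preserves finite limits. The reason is as follows.
\begin{itemize}
\item A pullback in connective $ko$-modules is $\tau_{\geq0}P$, where $P$ is the pullback in $\ModInf(ko)$.
\item The discrepancy $\tau_{\leq-1}P$ is coconnective.
\item $-\otimes_{ko}KO$ kills coconnective modules, since $\pi_*(N\otimes_{ko}KO)\cong\colim_k\pi_{*+8k}N$.
\end{itemize}
With this, your route works. Since $k^{C_2}$ is excisive into $\Sp_{\geq0}$, $K^{C_2}$ is excisive into $\ModInf(KO)$, and your cofiber step finishes. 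The paper instead applies the same observation to $\Omega k^{\ABS}_{-\grotimes\Cl_{+1}}$, which is excisive by Lemma~\ref{lem:loopsofabsexcisive}; the two routes are equivalent.

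Your fallback is this same argument in another form, and it is correct once stated precisely:
\begin{itemize}
\item $\tau_{\geq0}K^{\mathrm{gr}}\simeq\Omega k^{\ABS}_{-\grotimes\Cl_{+1}}$ is excisive into $\Sp_{\geq0}$, being a fiber of $k^{C_2}$-terms.
\item $\tau_{\geq0}$ preserves pullbacks, so the comparison map from $K^{\mathrm{gr}}_A$ to the pullback in $\ModInf(KO)$ is an isomorphism on $\pi_{\geq0}$.
\item Both sides are $KO$-modules, so $\beta$-periodicity extends the isomorphism to all degrees.
\end{itemize}
This should be your main argument rather than a backup.
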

    Recall that a functor $F\colon \sBanAlgOne\to \calC$ is excisive if it maps Milnor squares in the sense of Definition~\ref{def:excisive} to pullback squares.
    \begin{proof}
        Regarding $ko\in \CAlg(\Sp)$, we realize $\ModInf(KO)$ as the full subcategory of $\ModInf(ko)$ on the modules where $\beta$ acts invertibly.
        $M\mapsto M[\beta^{-1}]$ provides a left adjoint to the inclusion and is a smashing localization, i.e.\ $M[\beta^{-1}]=M\otimes_{ko}ko[\beta^{-1}]$. 
        Using the results in~\cite[Section~3]{Gepner2015}, this equips $KO=ko[\beta^{-1}]$ with a $\E_\infty$-multiplication such that $(-)[\beta^{-1}]:\ModInf(ko)\to \ModInf(KO)$ is symmetric monoidal.
        Now we need to exercise some care with connectivity.
        By definition, $K^{\mathrm{gr}}$ is the composition
        \begin{equation*}
            \begin{tikzcd}
                \Bim(\sBan)\ar[r,"k^{ABS}"]& \ModInf_{\Sp_{\geq 0}}(ko) \ar[r] & \ModInf(KO)
            \end{tikzcd} \,,
        \end{equation*}
        where the second map factors as
        \begin{equation}
            \label{eq:invertingbottconnective}
                        \ModInf_{\Sp_{\geq 0}}(ko) \hookrightarrow \ModInf(ko) \xrightarrow{-\otimes_{ko} KO} \ModInf(KO) \,.
        \end{equation}
        We claim that the composite \eqref{eq:invertingbottconnective} preserves finite limits.
        Let $M\to N\to L$ be a fiber sequence in $\ModInf_{\Sp_{\geq 0}}(ko)$.
        Then $M=\tau_{\geq 0}M'$, where $M'\to N\to L$ is a fiber sequence in $\ModInf(ko)$.
        It suffices to show that $M\otimes_{ko}KO\simeq M'\otimes_{ko}KO$.
        Since $-\otimes_{ko}KO$ is exact, it suffices to show that it annihilates coconnective $ko$-modules.
        This follows since $\pi_*(N\otimes_{ko}KO) \cong \colim_{k} \pi_{*+8k}N$.
        Then $K^{\mathrm{gr}}$ inherits excision from $\Omega k^{\ABS}_{-\grotimes \Cl_{+1}}$, which is excisive by Lemma~\ref{lem:loopsofabsexcisive}.
        Preservation of filtered colimits along contractive homomorphisms holds for $k^{C_2}$ by Proposition~\ref{prop:kC_2}, hence for $k^{\ABS}$ since cofibers commute with filtered colimits, and hence for $K^{\mathrm{gr}}$ since localizing at $\beta$ is itself a filtered colimit.
    \end{proof}
    \begin{proposition}
        There is a unique reduced and exact extension $A \mapsto \fib(K^{\mathrm{gr}}_{A_+} \to K^{\mathrm{gr}}_\R)$ of $K^{\mathrm{gr}} \colon \sBanAlgOne \to \Sp$ to nonunital graded Banach algebras.
        This extension also preserves filtered colimits along contractive homomorphisms.
    \end{proposition}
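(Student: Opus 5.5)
The natural route is to transport the unital functor through the equivalence of Lemma~\ref{lem:reducedextension}, now for graded algebras. That lemma identifies reduced exact functors on nonunital algebras with excisive functors on unital ones, via $F\mapsto \tilde F$ with $\tilde F(A)=\fib(F(A_+)\to F(\R))$. Its proof never used the grading, only formal properties of unitization, so I would first check that it carries over to graded Banach algebras. The graded unitization is $A_+=A\oplus_{\ell^1}\R$, with $\R$ placed in even degree; it is left adjoint to the forgetful functor $\sBanAlgOne\to\sBanAlgOne^{\mathrm{nu}}$. Short exact sequences $0\to A\to B\to C\to 0$ of graded nonunital algebras still give Milnor squares
\begin{equation*}
\begin{tikzcd}
A_+ \ar[r]\ar[d] & \R \ar[d]\\
B_+ \ar[r,twoheadrightarrow] & C_+
\end{tikzcd}
\end{equation*}
in $\sBanAlgOne$. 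Likewise $A_+\cong A\times\R$ whenever $A$ is unital, since the graded product is also computed underlying. With these checks, the argument of Lemma~\ref{lem:reducedextension} goes through verbatim.

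Next I apply this graded version to $F=K^{\mathrm{gr}}$. By Theorem~\ref{thm:periodicKtheorylax}, $K^{\mathrm{gr}}$ is excisive on $\sBanAlgOne$, so $\tilde F$ is reduced and exact. Its restriction to unital algebras recovers $K^{\mathrm{gr}}$ through the counit, which is an equivalence. For uniqueness, let $G$ be any reduced exact extension. Applying $G$ to the split exact sequence $A\to A_+\to\R$ and using exactness gives $G(A)\simeq\fib(G(A_+)\to G(\R))=\fib(K^{\mathrm{gr}}_{A_+}\to K^{\mathrm{gr}}_\R)$. This is precisely the statement that the unit of the adjunction is an equivalence on reduced exact functors, so the equivalence of functor categories yields uniqueness up to contractible choice.

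For filtered colimits along contractive homomorphisms, I use the first part of Lemma~\ref{lem:reducedextension}. Unitization preserves colimits in $\Alg(\sBan_{\leq})^{\mathrm{nu}}$, because it is a left adjoint and the $\ell^1$-norm keeps the induced unital maps contractive. Hence $\colim^{\leq}(A_i)_+\cong(\colim^{\leq}A_i)_+$. Since $K^{\mathrm{gr}}$ preserves such colimits (Theorem~\ref{thm:periodicKtheorylax}) and fibers commute with filtered colimits in $\Sp$, the extension preserves them as well.

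The main obstacle is the graded adaptation, specifically confirming that unitization of graded algebras stays within graded contractive algebras and that the $\ell^1$-sum norm makes the adjunction work in $\Ban_{\leq}$. This is routine, but it is the only step that is not formal. One further point deserves care. The target here is $\Sp$ rather than $\Sp_{\geq 0}$, and exactness there means fiber sequences. I would take $\calC=\Sp$ in Lemma~\ref{lem:reducedextension}, which is stable, so excision of $K^{\mathrm{gr}}$ as pullback squares in $\Sp$ is exactly what the lemma requires.
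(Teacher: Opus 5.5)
Your proposal is correct and follows the paper's route. The paper's proof simply states that the argument is analogous to Lemma~\ref{lem:reducedextension}, and you carry out exactly that transport to graded algebras, using the excision and filtered-colimit preservation of $K^{\mathrm{gr}}$ from Theorem~\ref{thm:periodicKtheorylax}; your checks on the graded unitization and on working with target $\Sp$ supply details the paper leaves implicit.
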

    \begin{proof}
        The proof is analogous to Lemma~\ref{lem:reducedextension}.
    \end{proof}
    \begin{proposition}
    \label{prop:clifshift}
    For every graded Banach algebra $A$, the generator $\alpha\in \pi_1k^{\ABS}_{\Cl_{+1}}$ induces isomorphisms of $KO$-modules
        \begin{equation}
            K^{\mathrm{gr}}_{A\grotimes \Cl_{p,q}} \simeq \Sigma^{p-q} K^{\mathrm{gr}}_A \,.
        \end{equation}
\end{proposition}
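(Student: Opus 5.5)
Since $\Cl_{p,q}\cong \Cl_{+p}\grotimes\Cl_{-q}$, the isomorphism is the composite of a positive and a negative Clifford shift, so I would prove the two cases separately and then combine them.

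\textbf{Positive shift.} First I show $K^{\mathrm{gr}}_{A\grotimes\Cl_{+1}}\simeq \Sigma K^{\mathrm{gr}}_A$. By Definition~\ref{def:periodick}, $K^{\mathrm{gr}}_A$ is the colimit of the sequence $\Sigma^{-n}k^{\ABS}_{A\grotimes\Cl_{+n}}$ along the maps $\Sigma^{-n}\overline{\alpha}$. Shifting this sequence by one index gives the sequence defining $\Sigma K^{\mathrm{gr}}_{A\grotimes\Cl_{+1}}$, up to a single outer $\Sigma^{-1}$. Cofinality of the shifted index then gives
\[
K^{\mathrm{gr}}_A\simeq \Sigma^{-1}K^{\mathrm{gr}}_{A\grotimes \Cl_{+1}},
\]
which is the claim. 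The identification must be one of $KO$-modules. For this I use that $\overline{\alpha}$ is a map of $k^{\ABS}$-modules (the Proposition preceding Theorem~\ref{thm:gradedbottperiodicity}), hence of $ko$-modules, and that inverting $\beta$ is the symmetric monoidal localization used in the proof of Theorem~\ref{thm:periodicKtheorylax}. A cleaner alternative is to note that $\overline{\alpha}$ becomes an equivalence after inverting $\beta$, by Theorem~\ref{thm:gradedbottperiodicity} and the colimit description, so that $K^{\mathrm{gr}}_A\to\Sigma^{-1}K^{\mathrm{gr}}_{A\grotimes\Cl_{+1}}$ is induced by $\alpha$ directly. Iterating gives $K^{\mathrm{gr}}_{A\grotimes\Cl_{+p}}\simeq\Sigma^{p}K^{\mathrm{gr}}_A$.

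\textbf{Negative shift.} Next I show $K^{\mathrm{gr}}_{A\grotimes\Cl_{-1}}\simeq\Sigma^{-1}K^{\mathrm{gr}}_A$. Apply the positive case to the algebra $A\grotimes\Cl_{-1}$:
\[
\Sigma K^{\mathrm{gr}}_{A\grotimes\Cl_{-1}}\simeq K^{\mathrm{gr}}_{A\grotimes\Cl_{1,1}}.
\]
The Morita equivalence $\Cl_{1,1}\Morita\R$ of Example~\ref{ex:(1,1)per} is an invertible morphism in $\sBimBanInf$. Tensoring it with $A$, and using that $K^{\mathrm{gr}}$ is a functor on $\sBimBanInf$ (Theorem~\ref{thm:periodicKtheorylax}), gives $K^{\mathrm{gr}}_{A\grotimes\Cl_{1,1}}\simeq K^{\mathrm{gr}}_A$. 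Combining the two equivalences yields the negative shift. Both are natural in $A$ along bimodules, so composing $p$ positive and $q$ negative shifts gives the general statement for $\Cl_{p,q}$.

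\textbf{Expected obstacle.} The hard step is making the index-shift identification genuinely $KO$-linear and canonical, rather than only an equivalence of underlying spectra. I would handle it with the Day-convolution module structure. The element $\alpha$ gives a map of $k^{\ABS}$-modules $k^{\ABS}_{-}\to\Omega k^{\ABS}_{-\grotimes\Cl_{+1}}$. After applying the symmetric monoidal functor $(-)[\beta^{-1}]$, and using that this functor inverts the truncation issues (coconnective parts die, as in the proof of Theorem~\ref{thm:periodicKtheorylax}), it becomes a $KO$-module equivalence $K^{\mathrm{gr}}_{-}\to\Sigma^{-1}K^{\mathrm{gr}}_{-\grotimes\Cl_{+1}}$. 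That this map is an equivalence reduces to checking homotopy groups, where Theorem~\ref{thm:gradedbottperiodicity} applies in all degrees after the shift.
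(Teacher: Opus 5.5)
Your proof is correct and follows the same route as the paper. The positive shift comes from reindexing the colimit in Definition~\ref{def:periodick}, and negative shifts are absorbed by graded Morita invariance under $\Cl_{k,k}\Morita\R$. The only difference is bookkeeping: the paper reduces to $p\geq q$ and removes $\Cl_{q,q}$ in one step, whereas you peel off one $\Cl_{1,1}$ at a time. Your argument for $KO$-linearity via $k^{\ABS}[\beta^{-1}]$ spells out what the paper leaves as ``essentially by Definition''.
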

\begin{proof}
It follows essentially by Definition~\ref{def:periodick} that $K^{\mathrm{gr}}_{A \grotimes \Cl_{p,q}} \cong \Sigma^{p-p'} K^{\mathrm{gr}}_{A \grotimes \Cl_{p',q}}$ for all $p,p',q$.
    Therefore we can assume that $p \geq q$ without loss of generality.
    The result now follows since $K^{\mathrm{gr}}_{A \grotimes \Cl_{k,k}} \cong K^{\mathrm{gr}}_A$ by graded Morita invariance.
\end{proof}

    We collect the properties of graded $K$-theory established so far.
    \begin{theorem}[Theorem B]
        \label{thm:omnibusgradedktheory}
        The functors $k^{\ABS}$ and $K^{\mathrm{gr}}$ refine to lax symmetric monoidal functors
        \begin{equation*}
            k^{\ABS}\colon \sBimBanInf \to \ModInf(ko) \,, \qquad K^{\mathrm{gr}}\colon \sBimBanInf \to \ModInf(KO) \,,
        \end{equation*}
        such that the natural transformations $k^{C_2}\to k^{\ABS}\to K^{\mathrm{gr}}$ are monoidal.
        The restriction of $k^{\ABS}$ along $\BimBanInf \to \sBimBanInf$ is naturally identified with the connective topological $K$-theory functor $k$ of Theorem~\ref{thm:omnibusktheory}.
        Both functors are homotopy invariant and their restrictions to $\sBanAlgOne$ preserve filtered colimits along contractive homomorphisms; the restriction of $K^{\mathrm{gr}}$ to $\sBanAlgInf$ is moreover excisive.
        Finally, there is a natural equivalence of $KO$-modules
        \begin{equation*}
            K^{\mathrm{gr}}_{A\grotimes \Cl_{p,q}} \simeq \Sigma^{p-q} K^{\mathrm{gr}}_A \,.
        \end{equation*}
    \end{theorem}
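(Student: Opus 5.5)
This statement is an omnibus collecting results already established earlier in the section, so the proof will assemble them, checking only that the pieces fit together.

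\textbf{Lax monoidality.} Theorem~\ref{thm:abslaxmonoidal} supplies the lax symmetric monoidal refinement of $k^{\ABS}$ on $\sBimBanInf$, with $k^{C_2}\to k^{\ABS}$ monoidal. Its target can be upgraded to $\ModInf(ko)$: the lax structure makes $k^{\ABS}$ a module over the commutative algebra $k^{\ABS}_{\R}\simeq ko$ in Day convolution. For this we use that $\R$ is the unit of $\sBimBanInf$, and that $k^{\ABS}_\R\simeq ko$ by Example~\ref{example:ungraded} and Example~\ref{ex:ku}. Since $\mathbbm{1}$ is initial among commutative algebras in $\sBimBanInf$, lax symmetric monoidal functors into $\Sp_{\geq0}$ factor through $\ModInf(F(\mathbbm{1}))$. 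Theorem~\ref{thm:periodicKtheorylax} then gives $K^{\mathrm{gr}}$ as the composite with the symmetric monoidal localization $(-)[\beta^{-1}]\colon\ModInf(ko)\to\ModInf(KO)$, with $k^{\ABS}\to K^{\mathrm{gr}}$ monoidal. Composing the two monoidal transformations shows that $k^{C_2}\to k^{\ABS}\to K^{\mathrm{gr}}$ is monoidal.

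\textbf{Restriction, homotopy invariance, colimits, excision.} The identification of the restriction of $k^{\ABS}$ along $\BimBanInf\to\sBimBanInf$ with $k$ is the second half of Theorem~\ref{thm:abslaxmonoidal}. We need to check that this identification is compatible with the lax structures. I would argue this by noting that $k\to k^{C_2}\to k^{\ABS}$ is built from monoidal transformations and is an equivalence on objects, hence a monoidal equivalence.

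Homotopy invariance is automatic, since both functors are defined on the localization $\sBimBanInf$. Preservation of filtered colimits along contractive homomorphisms comes from Proposition~\ref{prop:kC_2}(3): cofibers and the sequential colimit defining $\beta$-inversion commute with filtered colimits. Excision of $K^{\mathrm{gr}}$ is part of Theorem~\ref{thm:periodicKtheorylax}, which rests on Lemma~\ref{lem:loopsofabsexcisive} together with the fact that $-\otimes_{ko}KO$ kills coconnective parts.

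\textbf{Clifford shift.} The natural equivalence $K^{\mathrm{gr}}_{A\grotimes\Cl_{p,q}}\simeq\Sigma^{p-q}K^{\mathrm{gr}}_A$ is Proposition~\ref{prop:clifshift}. Two points need care:
\begin{enumerate}
\item \emph{Naturality in $A$.} The maps involved are $\overline\alpha$, which is a map of $k^{\ABS}$-modules in Day convolution and hence natural on $\sBimBanInf$, together with the Morita equivalence $\Cl_{k,k}\Morita\R$ from Example~\ref{ex:(1,1)per}, tensored with $A$.
\item \emph{$KO$-linearity.} This holds because $\overline\alpha$ is a $ko$-module map and localization preserves it.
\end{enumerate}

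\textbf{Main obstacle.} The only genuinely delicate point is the passage from the lax monoidal functor to $\Sp_{\geq0}$ to one valued in $ko$-modules compatible with the later $\beta$-localization. I would handle it through the Day-convolution description of Corollary~\ref{cor:cofiberlaxmon}.
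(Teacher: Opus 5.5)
Your proposal is correct and follows the paper's proof. That proof likewise only assembles Theorem~\ref{thm:abslaxmonoidal}, Theorem~\ref{thm:periodicKtheorylax} and Proposition~\ref{prop:clifshift}; your extra remarks (the factorization through $\ModInf(k^{\ABS}_{\R})$ and the pointwise-invertible monoidal transformation $k\to k^{C_2}\to k^{\ABS}$) make explicit what the paper leaves implicit, though your closing pointer to Corollary~\ref{cor:cofiberlaxmon} is not what handles the $ko$-upgrade. That upgrade comes from the unit-module factorization you already stated, whereas the corollary only supplies the lax structure on the cofiber.
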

    \begin{proof}
        Lax symmetric monoidality of $k^{\ABS}$, monoidality of $k^{C_2}\to k^{\ABS}$ and the identification of the ungraded restriction are Theorem~\ref{thm:abslaxmonoidal}.
        The corresponding statements for $K^{\mathrm{gr}}$, together with homotopy invariance, excision and preservation of filtered colimits, are Theorem~\ref{thm:periodicKtheorylax}. Homotopy invariance and preservation of filtered colimits for $k^{\ABS}$ also follow from that theorem.
        The last equivalence is Proposition~\ref{prop:clifshift}.
    \end{proof}

    \begin{corollary}
        \label{cor:strongmonoidalfd}
        Periodic graded $K$-theory defines a (strong) symmetric monoidal functor
        \begin{equation*}
            K^{\mathrm{gr}} \colon \Mor(\sVectOne^{\mathrm{fd}}_\R) \to \ModInf(KO)
        \end{equation*}
        from the $(2,1)$-category of finite-dimensional semisimple graded algebras.
    \end{corollary}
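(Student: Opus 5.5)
Since $K^{\mathrm{gr}}\colon \sBimBanInf \to \ModInf(KO)$ is already lax symmetric monoidal by Theorem~\ref{thm:periodicKtheorylax}, the plan is to restrict it along the inclusion of finite-dimensional semisimple graded algebras (Remark~\ref{rem:semsim}). Concretely, the composite $\Mor(\sVectOne^{\mathrm{fd}}_\R) \to \sBimBanOne \to \sBimBanInf \xrightarrow{K^{\mathrm{gr}}} \ModInf(KO)$ is lax symmetric monoidal. The remaining task is to show that its structure maps
\begin{equation*}
K^{\mathrm{gr}}_A \otimes_{KO} K^{\mathrm{gr}}_B \to K^{\mathrm{gr}}_{A\grotimes B}
\end{equation*}
are equivalences whenever $A,B$ are finite-dimensional semisimple. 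The unit map $KO \to K^{\mathrm{gr}}_\R$ is an equivalence by Corollary~\ref{cor:ktheoryomegaspectrum}.

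The reduction goes in three steps. First, by graded Wedderburn--Artin (Proposition~\ref{prop:wedderburn}) and the Wall classification, $A$ is a finite product of matrix algebras $M_{p|q}(D)$ over the ten graded division algebras. Each matrix algebra is Morita equivalent to its $D$, and Morita equivalences are invertible in $\sBimBanInf$. By naturality of the lax structure in both variables, we may therefore replace $A$ and $B$ by products of division algebras. Second, both $\grotimes$ and $K^{\mathrm{gr}}$ preserve finite products: $A_1\times A_2$ is a Milnor square over $0$, $K^{\mathrm{gr}}$ is excisive, and $\otimes_{KO}$ is exact in each variable. Products thus reduce to single division algebras $D,D'$. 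Third, each real graded division algebra is Morita equivalent to some $\Cl_{-n}$; the complex ones are $\C \simeq \Cl_{1,0}\grotimes\Cl_{0,1}$-type objects up to Morita equivalence, and otherwise are reached via $\C\grotimes\Cl_{-n}$. Corollary~\ref{cor:kuennethforclifford} says the structure map $k^{\ABS}_A\otimes_{ko}k^{\ABS}_{\Cl_{-n}}\to k^{\ABS}_{A\grotimes\Cl_{-n}}$ is an equivalence for every $A$. Inverting $\beta$ is symmetric monoidal (proof of Theorem~\ref{thm:periodicKtheorylax}), so the periodic structure map is an equivalence whenever one factor is a Clifford algebra. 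This handles all real division algebras.

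For the complex cases $\C$ and $\Cxl_1\simeq\C\grotimes\Cl_{-1}$, we use associativity of the lax structure. This reduces to showing $K^{\mathrm{gr}}_\C\otimes_{KO}K^{\mathrm{gr}}_{B}\to K^{\mathrm{gr}}_{\C\grotimes B}$ is an equivalence for $B\in\{\C,\R,\Cl_{-n}\}$. Only $B=\C$ is new, and here I expect the main obstacle. The plan is to write $\C\cong|\Cl_{-1}|$ and use Proposition~\ref{prop:gradingvsclifford} to identify $K^{\mathrm{gr}}_\C = KU$, which is $KO\otimes\Sigma^{-2}C\eta$ by Wood's theorem (Theorem~\ref{thm:wood}). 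Then $KU\otimes_{KO}KU\simeq KU\times KU\simeq K^{\mathrm{gr}}_{\C\oplus\C}$, and one checks on $\pi_0$ that the multiplication map realizes this iso, via $\C\otimes_\R\C\cong\C\times\C$. If this explicit check is unwieldy, the fallback is to verify equivalences on homotopy groups using the Künneth theorem for $\C$ (Corollary~\ref{cor:periodicKunneth}).

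Finally, strong monoidality is a property of a lax functor, not extra structure, so checking it objectwise suffices. That completes the proof.
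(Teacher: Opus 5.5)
Your architecture is the same as the paper's. The paper gets the lax structure by exactly the restriction you describe. It then obtains strong monoidality as a special case of Corollary~\ref{cor:periodicKunneth} together with $KO\simeq K^{\mathrm{gr}}_\R$. The proof of that corollary is your reduction: graded Wedderburn--Artin, Morita invariance, finite direct sums, and the real division algebras via Corollary~\ref{cor:kuennethforclifford}. The two arguments diverge in the complex case, and there your primary argument has a genuine gap.

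\textbf{The gap.} You want to show that $KU\otimes_{KO}KU\to K^{\mathrm{gr}}_{\C\otimes_\R\C}\simeq KU\times KU$ is an equivalence by a $\pi_0$-check ``via $\C\otimes_\R\C\cong\C\times\C$''. Two things go wrong.
\begin{enumerate}
\item To pass from $\pi_0$ to all $\pi_*$, you would need the map to be $KU$-linear (through the left unit of $KU\otimes_{KO}KU$) and both sides to have even homotopy. You do not say this.
\item More seriously, the check cannot be carried out with the classes you have. Under $z\otimes w\mapsto(zw,z\bar w)$, the two inclusions $\C\to\C\otimes_\R\C$ become $z\mapsto(z,z)$ and $w\mapsto(w,\bar w)$. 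Conjugation acts by $(-1)^k$ on $\pi_{2k}KU$. Hence an external product $x\otimes y$ with $x=mu^k$, $y=nu^{-k}$ maps to $(mn,(-1)^k mn)$. In particular $1\otimes 1\mapsto(1,1)$ and $u\otimes u^{-1}\mapsto(1,-1)$. The span of all external products therefore lands in the index-two sublattice $\{(a,b): a\equiv b \bmod 2\}$ of $\Z^2$. The missing class of $\pi_0(KU\otimes_{KO}KU)\cong\Z^2$ comes from the top cell of $C\eta$, so surjectivity cannot be read off from $\C\otimes_\R\C\cong\C\times\C$.
\end{enumerate}
The abstract equivalence $KU\otimes_{KO}KU\simeq KU\times KU$, deduced from $KU\simeq KO\otimes C\eta$, says nothing about this particular map. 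That this map is an equivalence is the $C_2$-Galois property of $KO\to KU$.

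\textbf{How the paper handles it.} The paper avoids any element chase by using naturality of the Wood sequence. The complexification square gives the map of cofiber sequences \eqref{eq:WoodKU}, from $K_A\otimes_{KO}(\Sigma KO\xrightarrow{\eta}KO\xrightarrow{c}KU)$ to $\Sigma K_A\to K_A\to K_{A\otimes_\R\C}$. Its first two vertical maps are equivalences, so $K_A\otimes_{KO}KU\to K_{A\otimes_\R\C}$ is an equivalence for every ungraded $A$ (Corollary~\ref{cor:complexnumbersbootstrap}). Taking $A=\C$ gives your missing case. Lemma~\ref{lem:ungradedbootstrap} then handles all graded partners of $\C$ and of $\Cxl_1=\C\grotimes\Cl_{+1}$ at once, so your associativity reduction is unnecessary.

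Your fallback of citing Corollary~\ref{cor:periodicKunneth} is not circular, since it is proved independently in Section~\ref{sec:Wood}. But it already contains the full statement, which makes your preceding reduction redundant; with it, you have exactly the paper's two-line proof.

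\textbf{Two smaller slips.}
\begin{enumerate}
\item $\C$ is not Morita equivalent to anything of ``$\Cl_{1,0}\grotimes\Cl_{0,1}$-type'': $\Cl_{1,1}$ is Morita trivial.
\item $K^{\mathrm{gr}}_\C\simeq KU$ follows directly from Example~\ref{example:ungraded}, since $\C$ is purely even. Proposition~\ref{prop:gradingvsclifford} is not needed.
\end{enumerate}
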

    \begin{proof}
        The existence of a lax monoidal functor follows by composing the symmetric monoidal functor $\Mor(\sVectOne^{\mathrm{fd}}_\R) \to \sBimBanOne$ of Remark~\ref{rem:semsim} with $\sBimBanOne \to \sBimBanInf$ and Theorem~\ref{thm:periodicKtheorylax}.
        Strong monoidality is a special case of Theorem~\ref{cor:periodicKunneth} together with $KO \cong K^{\mathrm{gr}}_{\R}$.
    \end{proof}
    \begin{proposition}
        If $X$ is a compact Hausdorff space and homotopy finitely dominated, there is an equivalence 
        \begin{equation*}
        \underline{\Hom}_{\Sp}(\Sigma^\infty_+ X, K^{\mathrm{gr}}_A) \cong K^{\mathrm{gr}}_{C(X;A)}
        \end{equation*}
    \end{proposition}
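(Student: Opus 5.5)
The plan is to mimic Lemma~\ref{lem:excisivefunctor} and Corollary~\ref{cor:homotopicalSwan}, now in the periodic graded setting, where no connective cover is needed. Define the functor
\begin{equation*}
F\colon \CHaus^{\op}\to \ModInf(KO)\,,\qquad F(X):=K^{\mathrm{gr}}_{C(X;A)}\,.
\end{equation*}
There is a natural assembly map
\begin{equation*}
K^{\mathrm{gr}}_{C(X;A)}\to \underline{\Hom}_{\Sp}(\Sigma^\infty_+X,K^{\mathrm{gr}}_A)\,,
\end{equation*}
obtained from the same construction as in~\cite[Lemma 3.1]{bunke2023survey}. Concretely, the evaluations $\ev_x\colon C(X;A)\to A$ assemble, using homotopy invariance and the topology on $\Hom_{\sBanAlgEn}$ (the adjunction~\eqref{eq:adjunctionhomomorphisms}), into a map of spaces $X\to \Hom_{\sBanAlgInf}(C(X;A),A)$. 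Composing with the functor $K^{\mathrm{gr}}$ on mapping spaces gives $X\to \Hom(K^{\mathrm{gr}}_{C(X;A)},K^{\mathrm{gr}}_A)$, whose adjoint is the desired map. The goal is to show this map is an equivalence for finitely dominated $X$.

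First, verify the hypotheses on $F$. Homotopy invariance holds because a homotopy $X\times\Delta^1\to Y$ induces a homotopy of homomorphisms $C(Y;A)\to C(X;A)$, and $K^{\mathrm{gr}}$ is homotopy invariant by Theorem~\ref{thm:periodicKtheorylax}. For excision, take a closed cover $X=U\cup V$. This gives the pullback square of graded Banach algebras with vertices $C(X;A)$, $C(U;A)$, $C(V;A)$ and $C(U\cap V;A)$. The restriction $C(X;A)\to C(U;A)$ is surjective by Tietze's extension theorem for Banach-valued functions (Dugundji), applied to the even and odd parts separately. So this is a Milnor square, and excision of $K^{\mathrm{gr}}$ (Theorem~\ref{thm:periodicKtheorylax}) gives a Mayer--Vietoris pullback. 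Finally, $F(\emptyset)=K^{\mathrm{gr}}_0=0$.

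Since $\underline{\Hom}(\Sigma^\infty_+(-),K^{\mathrm{gr}}_A)$ also sends homotopy pushouts to pullbacks, the class of $X$ for which the assembly map is an equivalence is closed under the following operations:
\begin{itemize}
\item homotopy equivalence;
\item gluing along closed covers, so in particular cell attachments $X\cup_{S^{n-1}}D^n$, written as a closed cover by a collar neighbourhood of $D^n$ and the rest;
\item retracts, since both sides are functorial.
\end{itemize}
The map is tautologically an equivalence for $X=\pt$. Induction on the number of cells then covers finite CW complexes, and closure under retracts extends the result to homotopy finitely dominated compact spaces.

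The main obstacle is making the assembly map natural and compatible with the Mayer--Vietoris squares, so that the induction (a five-lemma argument on the two pullback squares) is legitimate. I would handle this by working in the functor category $\Fun(\CHaus^{\op},\ModInf(KO))$ and invoking the universal property used in~\cite[Lemma 3.1, Prop.~3.2]{bunke2023survey}. That result says that $\underline{\Hom}(\Sigma^\infty_+X,F(\pt))$ is the right Kan extension from the point among homotopy invariant excisive functors on finite complexes, so the comparison map is automatic once $F$ is known to be excisive and homotopy invariant. This avoids the $\tau_{\geq0}$ correction needed in the connective case, because $\ModInf(KO)$ is stable.
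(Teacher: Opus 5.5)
Your proposal is correct and follows the paper's route: the paper's proof is just that $X\mapsto K^{\mathrm{gr}}_{C(X;A)}$ is homotopy invariant, reduced and excisive (Theorem~\ref{thm:periodicKtheorylax}), so the argument of Corollary~\ref{cor:homotopicalSwan} applies. That argument rests on \cite[Lemma 3.1]{bunke2023survey}, here without the $\tau_{\geq 0}$ because $\ModInf(KO)$ is stable. Your explicit cell induction and retract argument simply unpack that lemma. One small caveat: Dugundji's extension theorem is stated for metrizable spaces, but this suffices, since your induction only uses closed covers of finite CW complexes.
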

    \begin{proof}
        Follows as in Corollary~\ref{cor:homotopicalSwan} by excisiveness.
    \end{proof}
    All of the above results have analogues over complex Banach algebras which can now be phrased as corollaries after observing the complex analogue of Corollary~\ref{cor:ktheoryomegaspectrum} with 2-fold periodicity of complex Clifford algebras.
    We immediately get:
    \begin{theorem}
        Restricting the lax symmetric monoidal functors $k^{\ABS},K^{\mathrm{gr}}$ to complex graded Banach algebras, we obtain lax symmetric monoidal functors
        \begin{equation*}
            k^{\ABS}\colon \Bim(\sBan_{\C})_\infty \to \ModInf(ku) \,, \qquad K^{\mathrm{gr}}\colon \Bim(\sBan_\C)_\infty \to \ModInf(KU)\,.
        \end{equation*}
        Both functors are homotopy invariant and preserve filtered colimits.
        $K^{\mathrm{gr}}$ is excisive.
        Let $A$ be a complex Banach algebra. Then:
        \begin{enumerate}
            \item $k^{\ABS}_{A\grotimes_{\C} \Cxl_{+1}}\simeq k^{\ABS}_{A\grotimes \Cl_{+1}}\to \Omega k^{\ABS}_A$ is a $\pi_{\geq 1}$-equivalence.
            \item $K^{\mathrm{gr}}_{A\grotimes_{\C} \Cxl_{p,q}} \simeq \Sigma^{q-p}K^{\mathrm{gr}}_A$ and so $K_A^{\mathrm{gr}}$ is $2$-periodic.
            \item $K^{\mathrm{gr}}_A=k^{\ABS}_A[u^{-1}]$, where $u\in \pi_2ku$ is the complex Bott class.
        \end{enumerate}
    \end{theorem}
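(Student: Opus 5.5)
The plan is to transport every real statement to the complex setting by base change, so that little new has to be proved. First, I observe that the forgetful functor from complex graded Banach algebras to real ones is symmetric monoidal for the relevant tensor products only laxly, so instead I will regard a complex graded Banach algebra $A$ as a real one together with the central even element $i$. The categories $\ModEn^{C_2}_A$ are then automatically $\VectEn_\C$-modules, and $k^{C_2}_A$ and $k^{\ABS}_A$ become modules over $k^{\ABS}_\C = ku$. For this I will use Example~\ref{example:ungraded} together with Example~\ref{ex:ku}. Rerunning the proof of Theorem~\ref{thm:abslaxmonoidal} with $\grotimes_\C$ in place of $\grotimes_\R$, and with the complex Clifford functor $V\mapsto \Cxl(V\otimes\C)$, gives a Smith ideal $\Cxl_{-1}\to^{\op}\C$ and hence lax monoidality of the functor into $\ModInf(ku)$. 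Homotopy invariance, preservation of filtered colimits and excision are properties of underlying spectra, so they are inherited verbatim from Theorem~\ref{thm:periodicKtheorylax}.

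For item (1), I will note that over $\C$ one has $\Cxl_{+1}\cong\Cxl_{-1}$ via $e\mapsto ie$. Moreover, $A\grotimes_\C\Cxl_{+1}\cong A\grotimes_\R\Cl_{+1}$ as real graded algebras, because $\C\grotimes_\R\Cl_{+1}\cong \Cxl_1$. Hence $k^{\ABS}_{A\grotimes_\C\Cxl_{+1}}\simeq k^{\ABS}_{A\grotimes\Cl_{+1}}$. I also need the fact that $A\grotimes_\C \Cxl_2 \cong A\grotimes_\C M_{1|1}(\C)$, which is the complex analogue of Example~\ref{ex:(1,1)per}. Combining this with Morita invariance and Theorem~\ref{thm:gradedbottperiodicity} applied to the real algebra $A\grotimes\Cl_{+1}$ gives a map
\[
k^{\ABS}_{A\grotimes\Cl_{+1}}\to\Omega k^{\ABS}_{A\grotimes\Cl_{+2}}\simeq\Omega k^{\ABS}_{A\grotimes_\C\Cxl_2}\simeq\Omega k^{\ABS}_A,
\]
which is a $\pi_{\ge1}$-isomorphism.

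Item (2) then follows from Proposition~\ref{prop:clifshift} applied to the real algebra $A$ and the identification $\Cxl_{p,q}\cong \C\grotimes\Cl_{p,q}$. The $2$-periodicity comes from $\Cxl_2\Morita\C$. The sign conventions are where care is needed, since the statement writes $\Sigma^{q-p}$. I expect this to be fixed by tracking whether $\Cxl_{+1}$ or $\Cxl_{-1}$ is adjoined, and both agree up to the isomorphism $e\mapsto ie$ combined with $2$-periodicity.

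For item (3), I will write the complex $u\in\pi_2 ku$ as the image of the square of the complex $\alpha$, mirroring Corollary~\ref{cor:ktheoryomegaspectrum}. Then $\beta$ maps to $u^4$ under $ko\to ku$, so inverting $\beta$ on a $ku$-module is the same as inverting $u$. The main obstacle is the monoidal refinement into $ku$-modules. To handle it, I would first try to exhibit $\C$ as a commutative algebra in $\sBimBanInf$, and to identify complex graded algebras with modules over it. Then $k^{\ABS}$ sends $\C$-modules to $k^{\ABS}_\C$-modules by lax monoidality, and the claim follows without rerunning the Smith ideal construction.
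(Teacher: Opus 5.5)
Your items (1)--(3) follow the route the paper has in mind; the paper simply declares the complex statements immediate from the real ones. The ingredients are $A\grotimes_\R\Cl_{p,q}\cong A\grotimes_\C\Cxl_{p,q}$, $\Cxl_2\cong M_{1|1}(\C)\Morita\C$, Theorem~\ref{thm:gradedbottperiodicity} and Proposition~\ref{prop:clifshift}. For (2) no sign-tracking is needed: $\Sigma^{p-q}$ and $\Sigma^{q-p}$ differ by $\Sigma^{2(p-q)}$, which is trivial by $2$-periodicity.

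\textbf{The gap.} The weak spot is the $ku$-linear monoidal refinement, and it comes from a misstep in your first sentence. You discard restriction along the forgetful functor $U$ because it is only lax, but lax is exactly what is needed.
\begin{itemize}
\item $U\colon(\Bim(\sBan_\C),\grotimes_\C)\to(\Bim(\sBan_\R),\grotimes_\R)$ is lax symmetric monoidal. Its structure maps are the bimodules induced by the homomorphisms $\R\to\C$ and $U(A)\grotimes_\R U(B)\to U(A\grotimes_\C B)$; these are free of rank one on the left, so they lie in $\Bim$.
\item $U$ preserves bimodule homotopy equivalences. Hence $k^{\ABS}\circ U$ is lax symmetric monoidal on $\Bim(\sBan_\C)_\infty$.
\item Like every lax symmetric monoidal functor into $\Sp$, it lifts to modules over the image of the unit, $k^{\ABS}_\C\simeq ku$. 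This is the same principle by which the paper gets $ko$-modules from $k^{\ABS}_\R\simeq ko$.
\end{itemize}
This is what ``restricting'' means in the statement. It also gives for free the fact your item (3) silently uses. The $ko$-action with respect to which $K^{\mathrm{gr}}_A=k^{\ABS}_A[\beta^{-1}]$ is defined is then the restriction of this $ku$-action along $ko\to ku$. So $\beta$ acts by $u^4$, $K^{\mathrm{gr}}_A\simeq k^{\ABS}_A\otimes_{ku}KU$, and $K^{\mathrm{gr}}$ lands lax monoidally in $\ModInf(KU)$.

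\textbf{Why your replacements fall short.} Rerunning the Smith-ideal construction with $\Cxl$ produces a second $ku$-module structure on $k^{\ABS}_A$. Before you can apply `$\beta\mapsto u^4$', you owe a comparison of this structure with the real $ko$-structure coming from the real Smith ideal. One way would be to note that the complex Smith ideal is the image of the real one under $\C\otimes_\R-$ and compare the resulting actions. Your proposal does not supply this.

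The fallback of identifying complex graded algebras with modules over $\C$ in $\sBimBanInf$ should be dropped. It would need an identification of module objects in the Morita $\infty$-category with complex structures. It would also need relative tensor products over $\C$ in $\sBimBanInf$ that recover $\grotimes_\C$. Neither is available.
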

    \subsection{The Picard spectra of graded Banach algebras and \texorpdfstring{$KO$}{KO}-theory}
    \label{sec:picard}
    We denote by $\sBimBanInf^{\mathrm{fd}}\subseteq \sBimBanInf$ the full symmetric monoidal subcategory on the finite-dimensional semisimple graded Banach algebras.
    Recall that an object $c$ in a symmetric monoidal $(\infty,1)$-category $(\calC,\otimes)$ is \emph{$\otimes$-invertible} if it is dualizable and the evaluation and coevaluation maps $c^\vee \otimes c\to \mathbbm{1}$ and $\mathbbm{1}\to c\otimes c^\vee$ are equivalences.
    \begin{definition}
        \label{def:picard}
        The \emph{Picard $\infty$-groupoid} $\Pic(\calC)$ of a symmetric monoidal $(\infty,1)$-category $(\calC,\otimes)$ is the maximal subgroupoid of $\calC$ on the $\otimes$-invertible objects.
        \end{definition}

    $\Pic(\calC)$ is a grouplike $\E_\infty$-monoid and hence a connective spectrum.

    \begin{theorem}
        \label{thm:picardgroupoid}
        The restriction of $K^{\mathrm{gr}}$ to the Picard $\infty$-groupoid of $\grotimes$-invertible finite-dimensional semisimple graded algebras is a symmetric monoidal functor
            \begin{equation*}
                \Pic(\Bim(\sBan_{\R})^{\mathrm{fd}}) \longrightarrow \Pic(\ModInf(KO)) \,.
            \end{equation*}
        The map is an isomorphism on $\pi_0,\pi_1,\pi_2$ and $\Pic(\Bim(\sBan_{\R})^{\mathrm{fd}})$ is $2$-coconnective.
        The inclusion $\Pic(\Mor(\sVectOne))\to \Pic(\Bim(\sBan_{\R})^{\mathrm{fd}})$ is an equivalence.
        Hence, $K^{\mathrm{gr}}$ witnesses a splitting of Picard $\infty$-groupoids:
        \begin{equation}
            \label{eq:splitting}
            \Pic(\ModInf(KO)) \simeq \Pic(\Bim(\sBan_{\R})^{\mathrm{fd}}) \times \tau_{\geq 3}\Pic(\ModInf(KO))\,.
        \end{equation}
    \end{theorem}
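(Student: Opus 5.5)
Restricting $K^{\mathrm{gr}}$ to $\Pic(\Bim(\sBan_{\R})^{\mathrm{fd}})$ produces a symmetric monoidal map of Picard spectra: by Corollary~\ref{cor:strongmonoidalfd}, $K^{\mathrm{gr}}$ is strong monoidal on finite-dimensional semisimple graded algebras, and strong monoidal functors preserve $\otimes$-invertible objects and their equivalences. I would then compute homotopy groups on both sides and compare them degree by degree in degrees $0,1,2$.

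\textbf{Source.} Every $\grotimes$-invertible finite-dimensional semisimple graded algebra must be central simple, so by graded Wedderburn--Artin (Proposition~\ref{prop:wedderburn}) and the classification of graded division algebras it is Morita equivalent to one of the eight central ones $\Cl_{n}$. Hence $\pi_0=\Z/8$ (the Brauer--Wall group).
\begin{itemize}
\item $\pi_1$ is $\pi_0$ of the space of invertible self-bimodules of $\R$. Using \eqref{eq:mappingspacesbimban2}, this space is $\bigsqcup_{P}\Hom(\R,\End(P)^{\op})\sslash\Aut(P)$ restricted to invertible $P$, i.e.\ the graded lines $\R^{1|0},\R^{0|1}$ modulo $\Aut=\R^\times$. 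This gives $\pi_0=\Z/2$ and $\pi_1=\pi_0(\R^\times)=\Z/2$.
\item So $\pi_1\cong\Z/2$ and $\pi_2\cong\Z/2$, and the groups $\pi_k$ for $k\ge 3$ vanish because $\R^\times$ is homotopy discrete.
\end{itemize}
The same computation shows that the inclusion from $\Pic(\Mor(\sVectOne))$ is an equivalence: the $(2,1)$-category already has these homotopy groups and the mapping spaces agree. This proves $2$-coconnectivity.

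\textbf{Target.} Recall the known values $\pi_0\Pic(\ModInf(KO))=\Z/8$ (Gepner--Lawson/Mathew--Stojanoska), $\pi_1=\pi_0(KO)^\times=\Z/2$, and $\pi_2=\pi_1KO=\Z/2$.

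\textbf{Comparing degree by degree.}
\begin{itemize}
\item On $\pi_0$, Proposition~\ref{prop:clifshift} gives $K^{\mathrm{gr}}_{\Cl_{n}}\simeq\Sigma^{n}KO$. This sends the generator $\Cl_1$ to $\Sigma KO$, which generates $\Z/8$.
\item On $\pi_1$, the parity shift $\Pi\R$ acts as $-1$ on $K^{\mathrm{gr}}_\R=KO$ by Lemma~\ref{lem:ABSpizero} ($[\Pi M]=-[M]$), and $-1$ is the nontrivial unit.
\item On $\pi_2$, the loop $-1\in\R^\times$ acting on the trivial bimodule maps to the element of $\pi_1 KO=\pi_0\GL_1(\R)$ given by the class of $-1$, i.e.\ the generator $\eta$. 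This is detected via $\pi_1$ of $B\GL_\infty(\R)$ in Lemma~\ref{lem:modulestelescopic}.
\end{itemize}

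\textbf{Splitting and main obstacle.} Since the source is $2$-truncated and the map is an isomorphism on $\pi_{\le 2}$, the map $\Pic(\ModInf(KO))\to\tau_{\le2}\Pic(\ModInf(KO))$ composed with the inverse of the equivalence from the source gives a retraction. This retraction splits off the fiber $\tau_{\ge3}$, yielding \eqref{eq:splitting}. The main obstacle I expect is the $\pi_2$ identification: tracking the automorphism $-1$ through the localization and the Bott colimit to $\eta$ needs care with the identification $\Omega^{\infty}KO\simeq\Z\times BO$. I would check this by restricting to the ungraded part, where $k^{\ABS}=k$ (Theorem~\ref{thm:abslaxmonoidal}) and the action of $\R^\times$ on $\VectEn_\R$ is explicit.
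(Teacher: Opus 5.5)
Your route is essentially the paper's. You compute the source through the mapping-space formula \eqref{eq:mappingspacesbimban2}, obtaining $\Omega_{\mathbbm{1}}\Pic\simeq \Z/2\times B\GL_1(\R)$. You then match $\pi_0$ via $\Cl_n\mapsto\Sigma^nKO$, $\pi_1$ via $[\Pi M]=-[M]$, and $\pi_2$ via the class of $-1$ in $\pi_0\GL_\infty(\R)\cong\pi_1KO$; the last is the paper's M\"obius-bundle argument in different language. Your explicit retraction through $\tau_{\leq 2}\Pic(\ModInf(KO))$ makes the splitting precise where the paper leaves it implicit.

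One step fails as you justify it: the claim that $\Pic(\Mor(\sVectOne))\to\Pic(\Bim(\sBan_\R)^{\mathrm{fd}})$ is an equivalence because ``the mapping spaces agree''. In the $(2,1)$-category $\Mor(\sVectOne)$ the $2$-morphisms are bimodule isomorphisms without topology. So the automorphisms of the identity bimodule $\R$ form the \emph{discrete} group $\R^\times$, and $\pi_2\Pic(\Mor(\sVectOne))=\R^\times$. The topology only enters through the localization $\sBimBanOne\to\sBimBanInf$, which produces $\pi_1B\GL_1(\R)=\pi_0(\R^\times)=\Z/2$. On $\pi_2$ the inclusion is therefore the sign map $\R^\times\to\Z/2$, which is not injective. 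The paper records exactly this phenomenon in the complex case, in the remark after Theorem~\ref{thm:complexpicardgroupoid} ($\pi_2=\C^\times$ versus a $3$-type). For this comparison your computation only yields isomorphisms on $\pi_0$ and $\pi_1$. An equivalence requires topologizing the hom-groupoids of $\Mor(\sVectOne)$, and the paper's proof does not address this clause either.

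A smaller omission: you assert that every $\grotimes$-invertible finite-dimensional semisimple graded algebra is central simple. Invertibility is meant in the localized category, so the algebraic Azumaya characterization is not directly available. The paper argues this point as follows:
\begin{itemize}
\item $A^{\op}$ is a dual in $\sBimBanOne$, since semisimple implies separable and so $\coev_A$ is a $1$-morphism there;
\item Wedderburn--Artin gives $A\Morita\prod_{i}D_i$, and comparing $K_0$ of $A^{\op}\grotimes A$ with $K_0$ of $\R$ forces a single factor;
\item $\C$ and $\Cxl_1$ are excluded because $\C\grotimes_\R\C\cong\C\oplus\C$ and $\Cxl_1\grotimes_\R\Cxl_1\Morita\C\oplus\C$ are not equivalent to $\R$.
\end{itemize}
Alternatively, since $K^{\mathrm{gr}}$ is strong monoidal on this subcategory, $K^{\mathrm{gr}}_A$ must be an invertible $KO$-module, and a rational-homotopy count rules out products and complex summands.

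Likewise, $\pi_0=\Z/8$ for the source is a priori only an upper bound, since homotopy equivalence could in principle identify Morita classes. This is harmless: the source $\pi_0$ is cyclic, generated by $\Cl_1$, and maps onto $\pi_0\Pic(\ModInf(KO))\cong\Z/8$.
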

        The $2$-type $\Pic(\Bim(\sBan_{\R})^{\mathrm{fd}})$ has the following description:
        The equivalence classes of objects are generated by the Clifford algebras:
        \begin{equation*}
            \pi_0\Pic(\Bim(\sBan_{\R})^{\mathrm{fd}})=\{\Cl_0,\dots,\Cl_7\}\cong \Z/8 \,.
        \end{equation*}
        The loop space $\Omega_{\mathbbm{1}}(\Pic(\Bim(\sBan_{\R})^{\mathrm{fd}}))$ is the $1$-type $\sLine_{\R}$ of real graded lines with
        \begin{align*}
            \pi_1\Pic(\Bim(\sBan_{\R})^{\mathrm{fd}})&=\pi_0 N^{\mathrm{hc}}(\sLine_{\R}^{\cong})=\{\R, \Pi \R\} \cong \Z/2 \\
            \pi_2\Pic(\Bim(\sBan_{\R})^{\mathrm{fd}})&=\pi_1 N^{\mathrm{hc}}(\sLine_{\R}^{\cong})=\pi_0\frac{\{\text{inv. linear isomorphisms } \R\to \R\}}{\text{homotopy}}\cong\Z/2 \,.
        \end{align*}

        \begin{remark}
            In ordinary Morita bicategories every algebra is dualizable with dual $A^{\op}$, where the evaluation is given by $A$ regarded as a morphism $\ev\colon A^{\op}\otimes A\rightsquigarrow k$, i.e.\ as a vector space with a right $A^{\op}\otimes A$-action.
            Usually, only the separable algebras are \emph{fully} dualizable, which means that the evaluation and coevaluation have adjoints.
            An algebra is \emph{separable} if it is projective as an $A\grotimes A^{\op}$-module.
            In our Morita $(\infty,1)$-category we have restricted the $1$-morphisms and so not all objects are dualizable.
            
            Still, for all semisimple finite-dimensional graded Banach algebras $A$ the opposite algebra $A^{\op}$ provides a dual in $\sBimBanOne$ and hence also in $\sBimBanInf$: finite-dimensional implies that $\ev_A$ is a well-defined morphism in $\sBimBanOne$.
            Semisimple implies separable (analogous to the ungraded case \cite[Corollary 11.12]{MR2894798}) and hence $\coev_A$ is a morphism in $\sBimBanOne$.
        \end{remark}
    \begin{proof}
        $K^{\mathrm{gr}}$ restricts to a symmetric monoidal functor on $\sBimBanInf^{\mathrm{fd}}$ by Theorem~\ref{cor:periodicKunneth}.
        We have argued that finite-dimensional semisimple graded algebras are dualizable.
        By Wedderburn--Artin~\ref{prop:wedderburn}, any such dualizable $A$ is Morita equivalent to a finite product of graded division algebras $A\Morita \prod D_i$ with dual $A^{\op}\Morita \prod_{i=1}^n D_i^{\op}$.
        In order to be invertible, the evaluation and coevaluation have to induce equivalences
        \begin{equation*}
            \prod_{i,j}  D_i\grotimes D_j^{\op} \Morita A^{\op} \grotimes A \xrightarrow{\ev} \R \,. 
        \end{equation*}
        This forces $n=1$, e.g.\ by comparing $K_0$ of both sides.
        Hence, $A$ must be equivalent to a graded division algebra.
        Using the classification of graded division algebras, we are left to rule out $A\Morita \C$ and $A\Morita \Cxl_1$.
        The dual $A^{\op}$ is also $\C$ and $\Cxl_1$, respectively.
        They cannot be invertible since $\C\grotimes_{\R} \C =\C\oplus \C \notMorita \R$ and $\Cxl_1\grotimes_\R \Cxl_1 = \Cxl_2 \otimes_\R \C \Morita \C \oplus \C \notMorita \R$.

        To understand $\Omega_{\mathbbm{1}}\Pic(\sBimBanInf)$, we look at the space of endomorphisms of the unit using the graded version of Equation~\eqref{eq:mappingspacesbimban1}:
        \begin{align*}
            \Hom_{\sBimBanInf}(\R,\R)
            &\simeq N^{\mathrm{hc}}(\sVectOne^{\cong}_{\R})
            \\
            &\simeq\bigsqcup_{p,q} B\GL_p(\R) \times B\GL_q(\R) \,.
        \end{align*}
        In the space $\Omega_{\mathbbm{1}}\Pic(\sBimBanInf)$ composition translates into tensor product of graded vector spaces.
        The only graded vector spaces invertible under $\grotimes$ are ${\R,\Pi \R}$, corresponding to the summands $p=1,q=0$ and $p=0,q=1$, i.e.\ 
        \begin{equation*}
            \Aut_{\sBimBanInf}(\R) \simeq \Z/2\times B\GL_{1}(\R) \simeq N^{\mathrm{hc}}(\sLine_{\R}^{\cong}) \,.
        \end{equation*}
        $K^{\mathrm{gr}}$ is symmetric monoidal when restricted to the bootstrap class (cf.\ Section~\ref{sec:bootstrap}) and the bootstrap class contains all the Clifford algebras.
        Hence, $K^{\mathrm{gr}}$ indeed restricts to a symmetric monoidal functor $\Pic(\sBimBanInf)\to \ModInf(KO)$ which must factor through $\Pic(\ModInf(KO))$.
        We are left to note that $K^{\mathrm{gr}}$ does induce an equivalence on $\pi_0$ which maps $\Cl_{n}$ to $\Sigma^n KO$.
        Upon taking based loops at the unit, we are left with the functor
        \begin{align*}
            N^{\mathrm{hc}}(\sLine_{\R}^{\cong}) \longrightarrow & \GL_1(KO) \\ 
            L \longmapsto & (L\grotimes -:KO \to KO) \,.
        \end{align*}
        On $\pi_1\Pic(\sBimBanInf)=\pi_0\sLine_{\R}$, the generating bimodule $\Pi \R$ maps to the involution $-1\colon KO\to KO$ since for every $V\in \sVectOne$, $\Pi \R \otimes V$ represents the negative in $k^{\ABS}_{\R}$ by Lemma~\ref{lem:ABSpizero}.
        Finally, on $\pi_2\Pic(\sBimBanInf)=\pi_1\sLine_{\R}$. 
        The isomorphism $-1\colon \R\to \R$ of graded lines also classifies a line bundle over $S^1$---the M\"{o}bius bundle---the generator of $\pi_1$.
        \begin{equation*}
            S^1 \to \sLine_{\R} \to \GL_1(KO) \subseteq \Omega^\infty KO \,.
        \end{equation*}
        The M\"{o}bius bundle represents a generator in $\widetilde{KO}^0(S^1)$.
    \end{proof}

    \begin{remark}
        In~\cite{beardsley2023brauerwallgroupstruncatedpicard}, the authors obtain a splitting $B\GL_1(KO)\simeq \tau_{\leq 2}B\GL_1(KO) \times \tau_{\geq 3}B\GL_1(KO)$ and state that they do not know whether there is an analogous splitting for $\Pic(\ModInf(KO))$.
        We obtained this splitting in \eqref{eq:splitting}.
        It is a folklore result, proven e.g.\ in~\cite[Thm~5.27]{beardsley2023brauerwallgroupstruncatedpicard}, that $\Pic(\Mor(\sVectOne)) \simeq \tau_{\leq 2}\Pic(\ModInf(KO))$ are abstractly equivalent as spectra, and our result proves that the map $K^{\mathrm{gr}}$ induces this equivalence.
        Freed~\cite[(1.40)]{FreedTwistedKTheoryOrientifolds} computes the homotopy groups of the Picard $2$-groupoid of real invertible graded algebras, but leaves its $k$-invariants undetermined~\cite[(1.121)]{FreedTwistedKTheoryOrientifolds}.
    \end{remark}
    \begin{remark}
        We expect that the full subcategory inclusion $\Pic(\Bim(\sBan_{\R})^{\mathrm{fd}})\to \Pic(\Bim(\sBan_\R))$ is an equivalence.
        However, there are many more (equivalent) invertible objects on the right.
        Examples for this are the nonsemisimple dual numbers $\R[\epsilon]/(\epsilon^2)$ which are homotopy equivalent to $\R$ and the infinite-dimensional Banach algebras $C(X)$, where $X$ is contractible.
    \end{remark}

    The same explicit calculation as in the real case shows:
    \begin{theorem}
        \label{thm:complexpicardgroupoid}
        The restriction of $K^{\mathrm{gr}}$ to $\grotimes_{\C}$-invertible complex graded algebras is a symmetric monoidal functor of Picard $\infty$-groupoids
            \begin{equation*}
                \Pic(\Bim(\sBan_{\C})^{\mathrm{fd}}) \longrightarrow \Pic(\ModInf(KU)) \,.
            \end{equation*}
        The map is an isomorphism on $\pi_0,\pi_1,\pi_2,\pi_3$ and $\Pic(\Bim(\sBan_{\C})^{\mathrm{fd}})$ is $3$-coconnective.
        Hence, $K^{\mathrm{gr}}$ witnesses a splitting of Picard $\infty$-groupoids:
        \begin{equation}
            \label{eq:complexsplitting}
            \Pic(\ModInf(KU)) \simeq \Pic(\Bim(\sBan_{\C})^{\mathrm{fd}}) \times \tau_{\geq 4}\Pic(\ModInf(KU))\,.
        \end{equation}
    \end{theorem}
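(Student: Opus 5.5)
I would mirror the proof of Theorem~\ref{thm:picardgroupoid} step by step, now over $\C$. I would first obtain the symmetric monoidal functor. Its domain consists of the $\grotimes_\C$-invertible finite-dimensional semisimple complex graded algebras, and its target is $\ModInf(KU)$. Strong monoidality on Clifford algebras and their matrix algebras follows from the complex analogue of the Künneth statement (Theorem~\ref{cor:periodicKunneth}), so the restriction lands in $\Pic(\ModInf(KU))$.

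For $\pi_0$, I would use graded Wedderburn--Artin (Proposition~\ref{prop:wedderburn}) together with the classification of complex graded division algebras, which are only $\C$ and $\Cxl_1$. As in the real case, the equivalence $\ev\colon A^{\op}\grotimes A\Morita\C$ forces a single summand, for instance by comparing $K_0$. Both $\C$ and $\Cxl_1$ are invertible, since $\Cxl_1\grotimes_\C\Cxl_1=\Cxl_2\cong M_{1|1}(\C)\Morita\C$. Hence $\pi_0=\Z/2$. By the complex Bott statement $K^{\mathrm{gr}}_{\Cxl_1}\simeq\Sigma KU$, and $\pi_0\Pic(\ModInf(KU))=\Z/2$ is generated by $\Sigma KU$, so $\pi_0$ is an isomorphism.

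For the higher groups, I would compute $\Omega_{\mathbbm 1}$ through the graded version of \eqref{eq:mappingspacesbimban1}. The endomorphisms of the unit are $N^{\mathrm{hc}}(\sVectOne_\C^{\cong})\simeq\bigsqcup_{p,q}BU_p\times BU_q$. The invertible part is $\Z/2\times B\GL_1(\C)\simeq\Z/2\times BU(1)$, which has $\pi_0=\Z/2$ given by $\{\C,\Pi\C\}$, $\pi_1=0$ and $\pi_2=\Z$, with nothing above. This shows that the Picard groupoid has homotopy $\Z/2,\Z/2,0,\Z$ in degrees $0$ through $3$ and is $3$-coconnective. The target has $\pi_{k}\Pic(\ModInf(KU))=\pi_{k-1}KU$ for $k\ge2$, and $\pi_1=\pi_0(KU)^\times=\Z/2$; this gives $\Z/2,\Z/2,0,\Z$ as well.

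On $\pi_1$, the line $\Pi\C$ goes to $-1$ by Lemma~\ref{lem:ABSpizero}, which is an isomorphism. The map on $\pi_2$ is trivially an isomorphism since both sides vanish. The remaining step, which I expect to be the main one, is $\pi_3$. There the generator of $\pi_2BU(1)$, the tautological line bundle on $S^2=\C P^1$, maps into $\pi_2\GL_1(KU)=\pi_2KU$. I would identify its image with $[L]-1$, which is the Bott class $u$ and hence a generator. Verifying that this identification is compatible with the $\E_\infty$ structure, so that the unit-component shift $L\mapsto L-1$ is the correct comparison, is the delicate point.

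The splitting \eqref{eq:complexsplitting} then follows formally. The functor out of a $3$-coconnective connective spectrum that is an iso on $\pi_{\le3}$ identifies the source with $\tau_{\le3}\Pic(\ModInf(KU))$, and the map provides a section of the truncation $\Pic(\ModInf(KU))\to\tau_{\le 3}\Pic(\ModInf(KU))$. In a stable setting a section splits the fiber sequence $\tau_{\ge4}\to\Pic\to\tau_{\le3}$.
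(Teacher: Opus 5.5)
Your proposal is correct and follows the paper's own route: the real-case proof of Theorem~\ref{thm:picardgroupoid} redone over $\C$, with the same ingredients you use on each homotopy group (the invertible objects are exactly $\C$ and $\Cxl_1$ on $\pi_0$, $\Pi\C\mapsto -1$ on $\pi_1$, and the tautological line bundle on $S^2$ giving the Bott class on $\pi_3$ in place of the M\"obius bundle). The point you flag as delicate is routine: for $n\geq 1$ the homotopy groups of $\Omega^\infty KU$ at the component of $1$ are identified with $\pi_n KU$ by translation by $-1$, so the image of the tautological bundle is $[L]-1$, a generator up to sign.
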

        The $3$-type $\Pic(\Bim(\sBan_{\C})^{\mathrm{fd}})$ has the following description:
        The equivalence classes of objects are generated by the complex Clifford algebras:
        \begin{equation*}
            \pi_0\Pic(\Bim(\sBan_{\C})^{\mathrm{fd}})=\{\Cxl_0,\Cxl_1\}\cong \Z/2 \,.
        \end{equation*}
        The loop space $\Omega_{\mathbbm{1}}(\Pic(\Bim(\sBan_{\C})^{\mathrm{fd}}))$ is the $2$-type $\sLine_{\C}$ of complex graded lines with
        \begin{align*}
            \pi_1\Pic(\Bim(\sBan_{\C})^{\mathrm{fd}})&=\pi_0 N^{\mathrm{hc}}(\sLine_{\C}^{\cong})=\{\C, \Pi \C \} \cong \Z/2 \\
            \pi_2\Pic(\Bim(\sBan_{\C})^{\mathrm{fd}})&=\pi_1 N^{\mathrm{hc}}(\sLine_{\C}^{\cong})=\pi_0\frac{\{\text{inv. linear isomorphisms } \C\to \C\}}{\text{homotopy}} = 0 \\
            \pi_3\Pic(\Bim(\sBan_{\C})^{\mathrm{fd}}) &= \pi_1\frac{\{\text{inv. linear isomorphisms } \C\to \C\}}{\text{homotopy}} = \Z \,,
        \end{align*}
    since invertible linear isomorphisms $\C\to \C$ are given the Euclidean topology.
    Note that there is a difference between $\Pic(\Mor(\sVectOne_\C))$ and $\Pic(\Bim(\sBan_{\C})^{\mathrm{fd}})$ which is less prominent in the real case: the former is a $2$-type with $\pi_2=\C^\times$ whereas the latter is a $3$-type which accounts for the topology of $\C^\times$.
\subsection{Proofs}
\label{sec:proofs}

\paragraph{Proof strategy}

This section proves Theorem~\ref{thm:gradedbottperiodicity}: the map
$\overline{\alpha}_A \colon k^{\ABS}_A \to \Omega k^{\ABS}_{A \grotimes \Cl_{+1}}$
is an isomorphism on $\pi_i$ for $i \geq 1$.

Throughout it is more convenient to work with fibers than with cofibers.

\begin{definition}
\label{def:vDspectrum}
The \emph{(connective) van Daele $K$-theory spectrum} of a graded Banach algebra $A$ is
\begin{equation*}
k_A^{\mathrm{vD}} := \fib\left(i_-^* \colon k^{C_2}_{A \grotimes \Cl_{-1}} \to k^{C_2}_A\right) \,.
\end{equation*}
\end{definition}
The reader is referred to Section~\ref{sec:vD} for further details on van Daele $K$-theory.
Since $k^{\ABS}_A$ is the cofiber of the same map, $\fib \cong \Omega \cofib$ gives a natural equivalence $k^{\mathrm{vD}}_A \simeq \Omega k^{\ABS}_A$.
The proof of Theorem~\ref{thm:gradedbottperiodicity} then proceeds in three steps.

\begin{enumerate}
    \item \emph{Reduction to $\pi_0$.}
    The functor $\Omega k^{\ABS} = k^{\mathrm{vD}}$ is excisive and homotopy invariant, and a
    natural transformation between such functors is an equivalence as soon as it is an
    isomorphism on $\pi_0$
    (Lemmas~\ref{lem:eqofspectra}--\ref{lem:loopsofabsexcisive}).
    It therefore suffices to compute $\overline{\alpha}$ on $\pi_0$ after looping once.

    \item \emph{A strictly commutative model.}
    The class $\alpha \in \pi_1 k^{\ABS}_{\Cl_{+1}}$ is constructed from a graded irreducible
    $\Cl_{1,1}$-module, and the induced map is unravelled into a square of Banach categories
    commuting up to an explicit rotation homotopy (Lemma~\ref{lem:tviamodules}).
    Trading this homotopy for a family of modules over the interval rectifies the square
    into a strictly commutative one involving module bundles over $(D^1,S^0)$
    (Proposition~\ref{prop:bundlemodel} and Lemma~\ref{lem:tandalphasquare}).

    \item \emph{Karoubi triples.}
    Both sides of the rectified square are fibers of group completions of quasi-surjective
    maps (Lemma~\ref{lem:i-quasisur}), so Theorem~\ref{thm:fibergroupcompletion} describes
    their $\pi_0$ in terms of Karoubi triples.
    Under this description $\pi_0(\Omega \overline{\alpha})$ becomes the explicit
    formula~\eqref{eq:t0}, and the fact that it is an isomorphism is Karoubi's Bott
    periodicity theorem~\cite[Thm~2.2.2]{Karoubi68}.
\end{enumerate}

Apart from Karoubi's theorem, the only inputs are the group completion machinery of
Section~\ref{sec:fibersgroupcompletion} and excision for $k^{C_2}$.

\paragraph{Homotopy theory lemmas}

To apply the theory of Karoubi triples of Section~\ref{sec:fibersgroupcompletion}, it will be convenient to rewrite cofibers as fibers by using that $\fib f \cong \Omega \cofib f$ naturally in $f$.
The fiber within connective spectra is the connective cover of the fiber in all spectra, so the equation also holds in connective spectra by taking the connective cover of both sides.
\begin{lemma}
\label{lem:cofibdiag}
    Let $f:X\to Y$ be a map of spectra. The cofiber of $\Delta_f:X\to X\times_Y X$ is equivalent to $\fib f \cong \Omega \cofib f$.
\end{lemma}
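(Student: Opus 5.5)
We work in the stable $(\infty,1)$-category $\Sp$. We compare the cofiber of $\Delta_f$ with $\fib f$ via the pullback square defining $X\times_Y X$. In a stable category pullback squares are pushout squares, and finite limits and colimits commute with one another. So we may manipulate the defining square freely.

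\textbf{Step 1: identify the fiber of a projection.} Consider the projection $p_1\colon X\times_Y X\to X$. Pullbacks paste, so the fiber of $p_1$ over the basepoint agrees with the fiber of $f\colon X\to Y$. Explicitly, the square with corners $X\times_Y X$, $X$, $X$, $Y$ is a pullback, so $\fib(p_1)\simeq \fib(f)$.

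\textbf{Step 2: split off the diagonal.} The diagonal $\Delta_f$ is a section of $p_1$, since $p_1\circ\Delta_f=\id_X$. In a stable category a fiber sequence $F\to E\xrightarrow{p} X$ with a section $s$ splits as $E\simeq X\oplus F$. Under this splitting $s$ becomes the inclusion of the summand $X$, so $\cofib(s)\simeq F$. Concretely, the composite
\begin{equation*}
\fib(p_1)\to X\times_Y X\to \cofib(\Delta_f)
\end{equation*}
is an equivalence. This holds because $\fib(p_1)\oplus X\to X\times_Y X$, given by the inclusion plus $\Delta_f$, is an equivalence. That map is an equivalence by the long exact sequence of homotopy groups, or equivalently the five lemma applied to the map of fiber sequences over $X$. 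Combining with Step 1 gives $\cofib(\Delta_f)\simeq\fib(f)$.

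\textbf{Step 3: the shift.} The remaining identification $\fib f\simeq\Omega\cofib f$ is the standard rotation of the fiber/cofiber sequence $\fib f\to X\to Y\to\cofib f$ in a stable category.

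\textbf{Main obstacle: naturality in $f$.} Naturality is needed later, so the choice of section-splitting must be functorial in the arrow $f$. To get this we will phrase the composite $\fib(p_1)\to\cofib(\Delta_f)$ as a natural transformation of functors $\Fun(\Delta^1,\Sp)\to\Sp$. It is built from the natural maps $\fib(p_1)\to X\times_Y X\to\cofib(\Delta_f)$. Each is constructed by a functorial limit or colimit, so there is no choice involved. We then check it is an equivalence pointwise, which is Step 2.

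For connective spectra we apply $\tau_{\geq0}$ to the equivalence, as in the remark preceding the lemma. This step also needs the fact that $\cofib(\Delta_f)$, computed in $\Sp_{\geq0}$, is the connective cover of the one in $\Sp$ whenever $\pi_0$-surjectivity holds, as in Lemma~\ref{lem:fibercofibersequences}.
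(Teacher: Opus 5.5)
Your proof is correct and is essentially the paper's argument: $\Delta_f$ is a section of the projection $p_1$, which splits $X\times_Y X\simeq X\oplus\fib f$ with $\Delta_f$ becoming the inclusion of the first summand; you additionally spell out the pasting identification $\fib(p_1)\simeq\fib f$ and the naturality in $f$. Your closing remark on connective spectra needs neither a $\pi_0$-surjectivity hypothesis nor Lemma~\ref{lem:fibercofibersequences}, because cofibers in $\Sp_{\geq 0}$ agree with those in $\Sp$ and $\tau_{\geq 0}(X\oplus\fib f)\simeq X\oplus\tau_{\geq 0}\fib f$.
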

\begin{proof}
The diagonal $\Delta_f \colon X \to X \times_Y X$ is a section of either projection map $p_1 \colon X \times_Y X \to X$ and so splits the pullback.
We can thus identify $X \times_Y X \cong X \oplus \fib f$ under which $\Delta_f$ becomes the inclusion in the first factor.
\end{proof}
\begin{lemma}
    \label{lem:eqofspectra}
    Let $t\colon E\to F$ be a map of connective spectra with the property that
    \begin{equation*}
        t_*: \pi_0\Hom(\Sigma_+^\infty(X),E)\to  \pi_0\Hom(\Sigma_+^\infty(X),F)
    \end{equation*}
    is an isomorphism for all compact Hausdorff spaces $X$ which are homotopy finitely dominated.
    Then $t$ is an equivalence.
\end{lemma}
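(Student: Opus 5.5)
The plan is to test $t$ on spheres and read off its homotopy groups. For each $n\geq 0$ the sphere $S^n$ is a compact Hausdorff space and a finite CW complex, hence homotopy finitely dominated; the point $X=\pt$ is a special case. So the hypothesis applies to $X=S^n$.

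The first step is to split the suspension spectrum. The based sphere gives a stable splitting
\begin{equation*}
\Sigma^\infty_+ S^n \simeq \Sigma^\infty S^0 \oplus \Sigma^\infty S^n \simeq \mathbb{S}\oplus \Sigma^n\mathbb{S},
\end{equation*}
natural with respect to the collapse $S^n\to \pt$ and the basepoint inclusion $\pt\to S^n$. Taking mapping spectra, this gives
\begin{equation*}
\pi_0\Hom(\Sigma^\infty_+S^n,E)\cong \pi_0E\oplus \pi_{n}E,
\end{equation*}
compatibly with $t_*$, since the splitting is natural in the target spectrum. Here $\Hom$ is the mapping spectrum in $\Sp$, and $\pi_0$ of it computes $[\Sigma^\infty_+X,E]$; connectivity of $E$ plays no role in this identification.

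Next I apply the hypothesis. For $X=\pt$ it says $\pi_0 t$ is an isomorphism. For $X=S^n$ with $n\geq 1$, the map $t_*$ is a direct sum of $\pi_0t$ and $\pi_nt$, because it preserves the summand decomposition. A direct sum of homomorphisms is an isomorphism only if each summand is, so $\pi_nt$ is an isomorphism. If one prefers not to use the sum decomposition, one can instead use the retraction $\pt\to S^n\to\pt$: it exhibits $\pi_n E$ as the kernel of the restriction to the basepoint, and $t_*$ preserves these kernels.

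Thus $\pi_nt$ is an isomorphism for all $n\geq 0$. Both spectra are connective, so their negative homotopy groups vanish, and Whitehead's theorem for spectra gives that $t$ is an equivalence.

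The main point to be careful about is that the splitting isomorphism is natural in $E$ and that the hypothesis covers spheres. The latter holds because spheres are finite CW complexes. No real obstacle is expected.
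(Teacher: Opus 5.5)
Your proof is correct and follows the paper's argument. The paper also tests on $X=S^n$ and uses the split cofiber sequence $\Sigma^\infty_+\pt\to\Sigma^\infty_+S^n\to\mathbb{S}^n$, which gives a split short exact sequence on $\pi_0\Hom(-,E)$ and hence that $\pi_n t$ is an isomorphism for all $n\geq 0$; connectivity then finishes the proof, exactly as in your kernel-of-restriction variant.
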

\begin{proof}
    We just have to show that $t$ induces an isomorphism on all nonnegative homotopy groups $\pi_n$, which suggests to consider $X = S^n$.
   There is a disjoint basepoint issue, but we can write $\mathbb{S}^n=\cofib(\Sigma_+^\infty(\pt)\to \Sigma_+^\infty S^n)$. Since this cofiber sequence is split, there is a short exact sequence:
    \begin{equation*}
        \begin{tikzcd}
            0 \ar[r]& \pi_0\Hom(\mathbb{S}^n,E) \ar[d,"t_*"] \ar[r] & \pi_0\Hom(\Sigma_+^\infty(S^n),E) \ar[d,"t_*","\cong"']\ar[r]& \pi_0\Hom(\Sigma_+^\infty(\pt),E) \ar[d,"t_*","\cong"']\ar[r]& 0 \\
            0 \ar[r]& \pi_0\Hom(\mathbb{S}^n,F) \ar[r]& \pi_0\Hom(\Sigma_+^\infty(S^n),F) \ar[r]& \pi_0\Hom(\Sigma_+^\infty(\pt),F) \ar[r]& 0 
        \end{tikzcd} \,.
    \end{equation*}
    This implies that $t_*:\pi_nE\to \pi_n F$ is an isomorphism.
\end{proof}
\begin{lemma}
    \label{lem:eqoffunctors}
    Let $F,G\colon \sBanAlgInf \to \Sp_{\geq 0}$ be two functors that are excisive and homotopy invariant and let $F\Rightarrow G$ be a natural transformation that induces an isomorphism on $\pi_0$.
    Then $F\Rightarrow G$ is an equivalence.
\end{lemma}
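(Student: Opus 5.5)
The plan is to reduce to Lemma~\ref{lem:eqofspectra}, applied objectwise. Fix a graded Banach algebra $A$; it suffices to show that for every homotopy finitely dominated compact Hausdorff space $X$ the map
\begin{equation*}
\pi_0\Hom(\Sigma^\infty_+X, F(A)) \to \pi_0\Hom(\Sigma^\infty_+X, G(A))
\end{equation*}
is an isomorphism. Since $\Sigma^\infty_+X$ is connective, these mapping groups can equally be computed through $\tau_{\geq 0}\underline{\Hom}_{\Sp}(\Sigma^\infty_+X,-)$. The whole argument therefore rests on a graded version of Lemma~\ref{lem:excisivefunctor}: a natural equivalence
\begin{equation*}
\tau_{\geq 0}\underline{\Hom}_{\Sp}(\Sigma^\infty_+X, F(A)) \simeq F(C(X;A)),
\end{equation*}
natural in both $X$ and $F$, valid for every excisive, homotopy invariant $F\colon \sBanAlgInf\to\Sp_{\geq0}$.

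To establish this identification, I would consider the functor $X\mapsto F(C(X;A))$ on $\CHaus$. It is homotopy invariant because $F$ is, and because a homotopy $X\times\Delta^1\to Y$ induces a homotopy of graded homomorphisms $C(Y;A)\to C(\Delta^1;C(X;A))$; the grading is pointwise, so this is a homotopy of graded maps. It sends closed covers $X=U\cup V$ to pullbacks. Indeed, $C(X;A)\to C(U;A)\times_{C(U\cap V;A)} C(V;A)$ is a pullback of graded Banach algebras, and $C(X;A)\to C(U;A)$ is surjective by the vector-valued Tietze/Dugundji extension theorem. Here I would take $X$ metrizable, or else reduce to finite CW complexes and retracts. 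The square is therefore a Milnor square in $\sBanAlgOne$, and excision of $F$ applies. The argument of Lemma~\ref{lem:excisivefunctor}, a connective version of \cite[Lemma 3.1]{bunke2023survey}, then goes through verbatim. One runs induction over cells of a finite CW complex, using the fact that both sides send pushouts along cell attachments to pullbacks, with the $\tau_{\geq0}$ handling the connective limits. One then passes to homotopy retracts, which both sides preserve. The base case $X=\pt$ is tautological, and for $X=\emptyset$ we have $C(\emptyset;A)=0$, which is the terminal algebra. Excision applied to the Milnor square $0\to0$, $0\to0$ forces $F(0)\simeq F(0)\times F(0)$, so $F(0)=0$.

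Granting the identification, $\pi_0$ of the left side is $\pi_0F(C(X;A))$, and by naturality in $F$ the comparison map is $\pi_0$ of $F(C(X;A))\to G(C(X;A))$. This map is an isomorphism by hypothesis, since the hypothesis holds for all graded Banach algebras, in particular for $C(X;A)$. Lemma~\ref{lem:eqofspectra} then gives that $F(A)\to G(A)$ is an equivalence.

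I expect the main obstacle to be the descent step. Specifically, one has to verify that restriction along closed subsets of a compact space is surjective for $A$-valued functions and arrange an inductive class of spaces on which this works. The clean route is to prove the equivalence first for finite simplicial complexes, using cover by closed subcomplexes where extension is explicit, and then extend to finitely dominated $X$ by homotopy invariance and retract-closure. A secondary subtlety is making the equivalence natural in $F$, which the construction gives, since it is built from the maps $F(C(X;A))\to F(C(Y;A))$ functorially.
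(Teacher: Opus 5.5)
Your route is the same as the paper's. You identify $F(C(X;A))$ with $\tau_{\geq 0}\underline{\Hom}_{\Sp}(\Sigma^\infty_+X,F(A))$ by the argument of Lemma~\ref{lem:excisivefunctor} and Corollary~\ref{cor:homotopicalSwan}, read off the $\pi_0$-isomorphism on $C(X;A)$, and conclude with Lemma~\ref{lem:eqofspectra}.

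The one step that fails is your derivation of $F(0)\simeq 0$. The all-zero square is a Milnor square, but excision applied to it only gives $F(0)\simeq F(0)\times_{F(0)}F(0)$, which holds tautologically. It does not give $F(0)\times F(0)$: the corner you pull back over is again $0$, not an object on which $F$ vanishes. In fact reducedness cannot be derived at all. A constant functor with value a connective spectrum $E$ sends every square to a square of identities, so it is excisive and homotopy invariant. The constant transformation $ko\to\tau_{\leq 0}ko\simeq H\Z$ is then a $\pi_0$-isomorphism but not an equivalence. So the statement as written needs the extra hypothesis $F(0)\simeq 0\simeq G(0)$. Your identification really uses it: already for $X=S^0$, excision gives $F(A\times A)\simeq F(A)\times_{F(0)}F(A)$, which is $F(A)\times F(A)$ only when $F(0)\simeq 0$.

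The paper's proof uses this hypothesis tacitly. It invokes Corollary~\ref{cor:homotopicalSwan}, which rests on Lemma~\ref{lem:excisivefunctor} and its assumption $F(\emptyset)=0$. For the functors the lemma is actually applied to, which are built from $k^{C_2}$, reducedness holds: the only finitely generated projective module over the zero algebra is $0$, so $k^{C_2}_0\simeq 0$.

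If you add reducedness of $F$ and $G$ as a hypothesis and drop your excision argument for it, your proof is correct and matches the paper's. One simplification: Lemma~\ref{lem:eqofspectra} only uses $X=\pt$ and $X=S^n$. You therefore only need the identification for spheres, and the passage to finitely dominated spaces via retracts can be skipped.
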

\begin{proof}
    By Corollary~\ref{cor:homotopicalSwan}, we know that there is a natural equivalence
    \begin{equation*}
        F(C(X;A))\simeq \tau_{\geq 0} \underline{\Hom}_{\Sp}(\Sigma_+^\infty X,F(A))
    \end{equation*}
    for all homotopy finitely dominated compact Hausdorff spaces $X$.
    Thus, we know that there is a natural isomorphism $\pi_0\Hom(\Sigma_+^\infty X,F(A))\cong \pi_0\Hom(\Sigma_+^\infty X,G(A))$.
    We can conclude using Lemma~\ref{lem:eqofspectra}.
\end{proof}
\begin{lemma}
    \label{lem:loopsofabsexcisive}
    The functor $\Omega k^{\ABS}\colon \sBanAlgInf \to \Sp_{\geq 0}$ is excisive.
\end{lemma}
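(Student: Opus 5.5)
We work with the fiber description: by Definition~\ref{def:vDspectrum} and $\fib\simeq\Omega\cofib$, we have $\Omega k^{\ABS}_A\simeq k^{\mathrm{vD}}_A=\fib\bigl(i_-^*\colon k^{C_2}_{A\grotimes\Cl_{-1}}\to k^{C_2}_A\bigr)$, naturally in $A$. Both $A\mapsto k^{C_2}_{A\grotimes\Cl_{-1}}$ and $A\mapsto k^{C_2}_A$ are excisive by Proposition~\ref{prop:kC_2}: the functor $-\grotimes\Cl_{-1}$ preserves Milnor squares, since it preserves pullbacks and surjections. Take a Milnor square $A=A_1\times_B A_2$ with $A_1\twoheadrightarrow B$. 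It gives two pullback squares of connective spectra and the map $i_-^*$ between them. The plan is to take fibers objectwise and check that the resulting square of fibers is again a pullback in $\Sp_{\geq 0}$.

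Fibers, pullbacks and limits commute in $\Sp$, but not in connective spectra. The fiber in $\Sp_{\geq 0}$ is $\tau_{\geq0}$ of the fiber in $\Sp$, and a pullback in $\Sp_{\geq0}$ is $\tau_{\geq0}$ of the pullback in $\Sp$. Write $F'(A)$ for the fiber of $i_-^*$ computed in $\Sp$. Since $\tau_{\geq 0}$ is a right adjoint and preserves limits, $\tau_{\geq0}$ applied to the pullback square of the $F'$'s is the required square $\Omega k^{\ABS}_A\simeq \Omega k^{\ABS}_{A_1}\times_{\Omega k^{\ABS}_B}\Omega k^{\ABS}_{A_2}$ in $\Sp_{\geq 0}$. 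It therefore suffices to show that the squares of $k^{C_2}$'s are pullbacks in all of $\Sp$, and not only after truncation.

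The obstacle is exactly here. Corollary~\ref{cor:excision} gives pullbacks in $\Sp_{\geq0}$, that is, $k_A\simeq\tau_{\geq0}(k_{A_1}\times_{k_B}k_{A_2})$. The $\Sp$-pullback can differ only in degree $-1$, through $\operatorname{coker}(\pi_0k_{A_1}\oplus\pi_0k_{A_2}\to\pi_0k_B)$. By the second part of Corollary~\ref{cor:pullbackpushoutsquare} this cokernel vanishes whenever $\pi_0(k_{A_1})\oplus\pi_0(k_{A_2})\to\pi_0(k_B)$ is surjective. That holds because $A_1\twoheadrightarrow B$ is surjective, so $\ModEn_{A_1}\to\ModEn_B$ is quasi-surjective. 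Thus $-[B^n]$ lies in the image, and for any idempotent $e$ over $B$ the class $[e]$ is $[1]-[1-e]$.

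I would actually avoid this cokernel argument and instead compare long exact sequences. Writing $\Omega k^{\ABS}$ directly as a fiber, the $\pi_{-1}$ discrepancy is killed once one checks surjectivity of $\pi_0k^{C_2}_{A_1}\oplus\pi_0 k^{C_2}_{A_2}\to\pi_0k^{C_2}_B$. By Proposition~\ref{prop:kC_2}(1) this reduces to the ungraded algebras $|A_i\grotimes\Cl_{+1}|\twoheadrightarrow|B\grotimes\Cl_{+1}|$, where the same $\pi_0$-surjectivity applies, and likewise after tensoring with $\Cl_{-1}$. With both squares pullbacks in $\Sp$, the fiber square is a pullback in $\Sp$, and $\tau_{\geq0}$ finishes the proof.
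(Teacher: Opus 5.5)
Your first paragraph already is the paper's whole proof: $\Omega k^{\ABS}$ is the fiber, taken in $\Sp_{\geq 0}$, of the natural transformation $k^{C_2}_{-\grotimes\Cl_{-1}}\to k^{C_2}$, and both functors send Milnor squares to pullbacks in $\Sp_{\geq 0}$. The trouble starts with your claim that fibers and pullbacks do not commute in connective spectra. That claim is false. $\Sp_{\geq 0}$ is complete, since its limits are $\tau_{\geq 0}$ of limits in $\Sp$, and limits commute with limits in any $\infty$-category. So the $\Sp_{\geq 0}$-fiber of a map between pullback squares in $\Sp_{\geq 0}$ is again a pullback square in $\Sp_{\geq 0}$, and that is exactly excision for $\Omega k^{\ABS}$. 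What fails is only that $\Sp_{\geq 0}\hookrightarrow\Sp$ does not preserve limits, and you never need to leave $\Sp_{\geq 0}$.

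The detour you take instead is not merely unnecessary; its key claim is false. For a Milnor square, $\pi_0 k_{A_1}\oplus\pi_0 k_{A_2}\to\pi_0 k_B$ need not be surjective, so the $k$- and $k^{C_2}$-squares are in general \emph{not} pullbacks in $\Sp$.
\begin{itemize}
\item For a counterexample, take $A_1=C([0,1])\twoheadrightarrow B=\R\times\R$ by evaluation at the endpoints and $A_2=\R\to B$ the diagonal, so that $A=C(S^1)$. Both maps on $\pi_0$ are the diagonal $\Z\to\Z^2$, so the cokernel is $\Z$.
\item Since everything is trivially graded, $k^{C_2}\simeq k\times k$ inherits the failure, and the $\Sp$-pullback has nonzero $\pi_{-1}$.
\item Your justification conflates quasi-surjectivity with surjectivity on $K_0$. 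Quasi-surjectivity holds for every homomorphism, surjective or not. It only says that each object, after adding a suitable summand, becomes isomorphic to one in the image. Writing $[e]=[1]-[1-e]$ does not help, because $[1-e]$ need not lift.
\end{itemize}
Drop your second and third paragraphs and replace them with the observation that limits commute with limits in $\Sp_{\geq 0}$.
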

\begin{proof}
The functor $k^{C_2}$ is excisive by Proposition~\ref{prop:kC_2}.
    $\Omega k^{\ABS}_A=\fib(k^{C_2}_{A\grotimes \Cl_{-1}}\to k^{C_2}_A)$ is excisive as a fiber of excisive functors.
    This also uses that $-\grotimes \Cl_{-1}$ is excisive.
\end{proof}

\paragraph{A bundle model for \texorpdfstring{$\Omega k^{\ABS}_A$}{Omega k\textasciicircum ABS}}

Recall from Proposition~\ref{prop:kC_2} that $k^{C_2}_A(X)\simeq k^{C_2}_{C(X;A)}$.
We want to rewrite $\Omega k_A^{C_2}$ in terms of $\ModEn^{C_2}_A(S^1)$ for which we need to overcome a minor basepoint problem:

\begin{warning}
\label{rem:rewriteloops}
    If $B$ is an ungraded Banach algebra, it is not true that $\Omega k_B $ is the same as $k_{B}(S^1)$ because $\Omega$ takes based loops:
    \begin{equation*}
        k_{B}(S^1) \simeq \underline{\Hom}_{\Sp_{\geq 0}}(\Sigma_+^\infty S^1, k_B) \simeq \underline{\Hom}_{\Sp_{\geq 0}}(\mathbb{S}^1 \vee \mathbb{S}^0, k_B)\simeq \Omega k_B \oplus k_B.    
    \end{equation*}
\end{warning}
However, $\Omega k_B$ can be expressed as 
    \begin{equation*}
        \Omega k_B \simeq \fib 
        \left( k_{B}(D^1) \to k_{B}(S^0) \right)\,.
    \end{equation*}
    The same applies to $k^{C_2}_B$ for any graded algebra $B$, by Proposition~\ref{prop:kC_2}; applying this to $B = A \grotimes \Cl_{+1}$, we see that $\Omega k_A^{C_2} = \fib(k_A^{C_2}(D^1) \to k_A^{C_2}(S^0))$.
Since $D^1$ is contractible, the constant bundle functor $\ModEn^{C_2}_{A\grotimes \Cl_{-1}}\to \ModEn^{C_2}_{A\grotimes \Cl_{-1}}(D^1)$ is an equivalence after passing to the underlying symmetric monoidal $\infty$-groupoids $N^{\mathrm{hc}}\circ (-)^{\cong}$.
Similarly, $\ModEn^{C_2}_A(S^0)\cong (\ModEn_A^{C_2})^{\times 2}$.
Under this identification, $\ModEn_A^{C_2}(D^1) \to \ModEn_A^{C_2}(S^0)$ is the diagonal map.

\begin{proposition}
\label{prop:bundlemodel}
    There is a natural equivalence
    \begin{equation*}
    \Omega k^{\ABS}_A \simeq \cofib(k_{A \grotimes \Cl_{-1}}^{C_2}(D^1) \to k_{A \grotimes \Cl_{-1}}^{C_2}(S^0) \times_{k_{A}^{C_2}(S^0)} k_{A}^{C_2}(D^1)) \,.
    \end{equation*}
\end{proposition}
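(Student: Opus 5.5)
The plan is to combine Lemma~\ref{lem:cofibdiag} with the bundle identifications recorded just before the statement. Write $X := k^{C_2}_{A\grotimes \Cl_{-1}}$, $Y := k^{C_2}_A$ and $f := i_-^*\colon X \to Y$, so that by definition $\Omega k^{\ABS}_A \simeq \fib(f) = k^{\mathrm{vD}}_A$. Lemma~\ref{lem:cofibdiag} gives a natural equivalence $\fib(f) \simeq \cofib(\Delta_f\colon X \to X\times_Y X)$. The whole task is then to identify the diagonal $\Delta_f$ with the map in the statement, induced by restriction along $S^0 \hookrightarrow D^1$.

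\emph{Step 1 (bundle versions of the corners).} By Proposition~\ref{prop:kC_2}(2) we have $k^{C_2}_B(Z) \simeq k^{C_2}_{C(Z;B)}$, naturally in $Z$ and $B$. Since $C(D^1;B) \to B$ (evaluation at a point) is a homotopy equivalence, homotopy invariance of $k^{C_2}$ makes the constant-bundle maps $X \to X(D^1)$ and $Y \to Y(D^1)$ equivalences. Here $X(Z) := k^{C_2}_{A\grotimes\Cl_{-1}}(Z)$ and $Y(Z) := k^{C_2}_A(Z)$. At the level of categories we have $\ModEn^{C_2}_B(S^0) \cong (\ModEn^{C_2}_B)^{\times 2}$. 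Since $k$ preserves finite products, this gives $X(S^0) \simeq X\times X$ and $Y(S^0) \simeq Y\times Y$. Under these identifications, restriction $D^1 \to S^0$ becomes the diagonal, and $f$ acts componentwise, because $i_-^*$ commutes with pullback of bundles.

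\emph{Step 2 (identifying the pullback and the map).} Next, $X(S^0)\times_{Y(S^0)} Y(D^1) \simeq (X\times X)\times_{Y\times Y} Y$, with $Y \to Y\times Y$ the diagonal. A formal pasting of pullback squares identifies this with $X\times_Y X$. Under this identification, the map from $X(D^1) \simeq X$, whose components are restriction and $f$, is exactly $(\id,\id)$ over $f$, namely $\Delta_f$. Feeding this into Lemma~\ref{lem:cofibdiag} yields the claim.

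\emph{Connectivity caveat.} Pullbacks in $\Sp_{\geq 0}$ are $\tau_{\geq 0}$ of pullbacks in $\Sp$. However, $\Delta_f$ is a section of $p_1$, so $X\times_Y X \simeq X \oplus \fib f$ holds already in spectra. Taking connective covers, the cofiber of $\Delta_f$ in $\Sp_{\geq 0}$ is $\tau_{\geq 0}\fib f$, which is the connective fiber. So the lemma applies verbatim in $\Sp_{\geq 0}$.

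\emph{Main obstacle.} I expect the main difficulty to be naturality in $A$ rather than objectwise correctness. The identifications of Step~1 pass through the homotopy inverse of the constant-bundle equivalence, so one must check they assemble into a natural transformation of functors on $\sBanAlgInf$ (or $\sBimBanInf$). I would handle this by never inverting anything. Instead I would write the whole comparison as a zig-zag of natural maps out of the strictly defined diagram $X(D^1) \to X(S^0)\times_{Y(S^0)} Y(D^1)$, namely
\begin{equation*}
X \to X(D^1), \qquad X\times_Y X \to X(S^0)\times_{Y(S^0)} Y(D^1).
\end{equation*}
Both are natural in $A$, since they come from the functors $\ModEn^{C_2}_B \to \ModEn^{C_2}_B(Z)$. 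Both are equivalences objectwise by Step~1. Hence the induced map on cofibers is a natural equivalence.
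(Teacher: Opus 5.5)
Your proposal is correct and follows essentially the paper's route. The paper also uses the constant-bundle and $S^0$-product identifications to rewrite the target as $\cofib\bigl(X \to (X\times X)\times_{Y\times Y} Y\bigr)$, with $X = k^{C_2}_{A\grotimes\Cl_{-1}}$ and $Y = k^{C_2}_A$, and then applies Lemma~\ref{lem:cofibdiag}; your connectivity caveat and your zig-zag argument for naturality supply details the paper leaves implicit.
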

\begin{proof}
    Write $k^{C_2}_A=Y$, $k^{C_2}_{A\grotimes \Cl_{-1}}=Y$.
    The right hand side is of the form $\cofib(\Delta:X\to (X\times X) \times_{Y\times Y} Y)$.
    By Lemma~\ref{lem:cofibdiag} this is just $\fib(X\to Y)=\Omega\cofib(X\to Y)$.
\end{proof}

\begin{lemma}
\label{lem:fibKcommute}
      There is an equivalence
    \begin{equation*}
        k\left(\ModEn^{C_2}_{A\grotimes \Cl_{-1}}(S^0) \times_{\ModEn_A^{C_2}(S^0)} \ModEn_{A}^{C_2}(D^1)\right) \simeq k^{C_2}_{A\grotimes \Cl_{-1}}(S^0) \times_{k_A^{C_2}(S^0)} k_{A}^{C_2}(D^1) \,.
    \end{equation*}
    natural in $A$.
\end{lemma}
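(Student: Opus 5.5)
The plan is to deduce this from Corollary~\ref{cor:kthypullback}: first exhibit the fiber product of Banach categories on the left as a Serre pullback, and then check the quasi-surjectivity and telescopicity hypotheses. Write $\mathsf{C}^1 = \ModEn^{C_2}_A(D^1)$, $\mathsf{C}^0 = \ModEn^{C_2}_A(S^0)$ and $\mathsf{C}^2 = \ModEn^{C_2}_{A\grotimes\Cl_{-1}}(S^0)$. The horizontal functor $\mathsf{C}^2\to\mathsf{C}^0$ is $i_-^*$, and the vertical functor $p\colon \mathsf{C}^1\to\mathsf{C}^0$ is restriction of bundles. Let $\mathsf{C}^{12}$ denote the pullback in $\Cat_1(\Ban)$, modelled by the iso-comma category. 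By Serre--Swan (Proposition~\ref{prop:serreswan} and Proposition~\ref{prop:kC_2}), $p$ is identified with the functor $\ModEn^{C_2}_{C(D^1;A)}\to\ModEn^{C_2}_{C(S^0;A)}$ induced by the surjective homomorphism $C(D^1;A)\to C(S^0;A)$. By Proposition~\ref{prop:gradingvsclifford} this is in turn identified with the ungraded functor for the surjection $|C(D^1;A)\grotimes\Cl_{+1}|\to|C(S^0;A)\grotimes\Cl_{+1}|$. Example~\ref{ex:serrefunctor} then shows that $p$ is a Serre functor, so the square is a Serre pullback.

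For quasi-surjectivity, the functor $p$ is extension of scalars along an algebra homomorphism, so its image contains all free modules; hence it is quasi-surjective, as in the proof of Corollary~\ref{cor:excision}. The functor $i_-^*$ is quasi-surjective because of the identity $M\oplus\Pi M = i_-^*(\cdot)$ from Lemma~\ref{lem:ABSpizero}: every $M$ has a complement $\Pi M$ such that $M\oplus\Pi M$ lies in the image. The same argument applies pointwise over $S^0$. For the maps out of $\mathsf{C}^{12}$, note that an object of the iso-comma is a triple $(N,E,\phi)$ with $\phi\colon i_-^*N\cong E|_{S^0}$.
\begin{itemize}
\item For $\mathsf{C}^{12}\to\mathsf{C}^1$: given a bundle $E$ over $D^1$, I add $\Pi E$. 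Then $E\oplus\Pi E$ carries the $\Cl_{-1}$-structure $f=\begin{pmatrix} & 1\\ -1 & \end{pmatrix}$ over all of $D^1$, so its restriction to $S^0$ is the image of an object of $\mathsf{C}^2$, and it lifts to $\mathsf{C}^{12}$.
\item For $\mathsf{C}^{12}\to\mathsf{C}^2$: given $N$, I take $E$ to be the constant extension to $D^1$ of $i_-^*N$ restricted to one endpoint. This requires the two endpoint values of $N$ to agree. I would arrange that by adding the object $(N'$ swapped$)$, after which the sum is of the form (something constant)$\oplus$(something constant) up to isomorphism, since $S^0$ components commute under sum.
\end{itemize}
If this direct argument becomes awkward, I would instead pass to $\pi_0$-surjective essential images as in the proof of Lemma~\ref{lem:groupcompletionpullback}, since cofinal replacement does not change group completions.

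For telescopicity, I would show that $N^{\mathrm{hc}}(\mathsf{C}^{12,\cong})$ is telescopic by the same argument as in the claim within the proof of Theorem~\ref{thm:fibergroupcompletion}. By Lemma~\ref{lem:homotopypullbackofbanachcategories} it is a pullback of the three $\E_\infty$-monoids $N^{\mathrm{hc}}(\mathsf{C}^{i,\cong})$. Each of these is telescopic, being a product of copies of $\ModEn_B^{\cong}$ for Banach algebras $B$ by Proposition~\ref{prop:gradingvsclifford} and Lemma~\ref{lem:modulestelescopic}. Localization commutes with the pullback, and hypoabelian fundamental groups are stable under pullback (Lemma~\ref{lem:hypoabelianSerre}), so Proposition~\ref{prop:cyclicinvariance} gives telescopicity. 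Alternatively, the Remark after Lemma~\ref{lem:groupcompletionpullback} lets one assume telescopicity only of the other three corners. With all hypotheses in place, Corollary~\ref{cor:kthypullback} yields the claimed equivalence. Naturality in $A$ holds because every construction used is functorial in graded homomorphisms.

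I expect the main obstacle to be the quasi-surjectivity of the maps out of the pullback $\mathsf{C}^{12}$; the Serre and telescopic parts are essentially formal. My first approach there is the $\Pi$-doubling trick together with the $\pi_0$-surjective essential-image replacement described above.
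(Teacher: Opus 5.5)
Your proposal is correct and follows the paper's own route. The paper's proof just applies Lemma~\ref{lem:groupcompletionpullback} and calls the quasi-surjectivity checks routine. Your argument (via Corollary~\ref{cor:kthypullback}) fills in what the paper omits: the Serre-functor step, the $\Pi$-doubling and swap arguments that give quasi-surjectivity of all four maps, and the telescopicity of the pullback corner.
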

\begin{proof}
We apply Lemma~\ref{lem:groupcompletionpullback}. 
It is routine to verify that all maps are quasi-surjective.
\end{proof}

\paragraph{The generator \texorpdfstring{$\alpha\in \pi_1k^{\ABS}_{\Cl_{+1}}$.}{alpha.}}
            
Recall the definition of connective graded $K$-theory:
\begin{equation*}
    k^{\ABS}_A=\cofib\left(k^{C_2}_{A\grotimes \Cl_{-1}}\xrightarrow{i_-^*}k^{C_2}_A \right) \,.
\end{equation*}
By Lemma~\ref{lem:cofibdiag} we obtain 
\begin{equation}
\label{eq:diagrewrite}
    \Omega k^{\ABS}_A = \cofib\left(k^{C_2}_{A\grotimes \Cl_{-1}}\xrightarrow{\Delta_{i_-^*}} k^{C_2}_{A\grotimes \Cl_{-1}}\times_{k^{C_2}_A} k^{C_2}_{A\grotimes \Cl_{-1}} \right) \,.
\end{equation} 
To define $\alpha$ it suffices to define an element in $\pi_0\left(k^{C_2}_{\Cl_{+1} \grotimes \Cl_{-1}}\times_{k^{C_2}_{\Cl_{+1}}} k^{C_2}_{\Cl_{+1}\grotimes \Cl_{-1}}\right)$. 
We construct it as the image of an element
\begin{equation*}
    \label{eq:alphadef}
    (V,\overline{V})\in\ModEn^{C_2}_{\Cl_{1,1}} \times_{\ModEn^{C_2}_{\Cl_{+1}}} \ModEn^{C_2}_{\Cl_{1,1}} \,.
\end{equation*}
Here, we let $V$ be a choice of graded irreducible $\Cl_{1,1}$-module, which induces the Morita equivalence $V\grotimes-:\sVectOne\simeq \ModOne^{C_2}_{\Cl_{1,1}}$.
There is another such choice $\overline{V}$ which is $V$ pulled back along the algebra isomorphism
\begin{align*}
    \Cl_{1,1} & \longrightarrow \Cl_{1,1} \\
    e &\longmapsto e\\
    f& \longmapsto -f \,.
\end{align*}
The identity is an evident equivalence that witnesses $i_-(\overline{V})\cong i_-(V)$, so that the pair indeed defines an element in the pullback.

\begin{lemma}
    \label{lem:alphaequivalence}
    $\alpha$ induces an equivalence of $ko$-modules
    \begin{equation*}
        \alpha\colon ko\to \Omega k^{\ABS}_{\Cl_{+1}} \,.
    \end{equation*}
\end{lemma}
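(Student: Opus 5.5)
The plan is to identify both sides explicitly through module categories and check that the map is the evident equivalence. By \eqref{eq:diagrewrite},
\begin{equation*}
\Omega k^{\ABS}_{\Cl_{+1}} \simeq \cofib\bigl(k^{C_2}_{\Cl_{1,1}} \xrightarrow{\Delta} k^{C_2}_{\Cl_{1,1}}\times_{k^{C_2}_{\Cl_{+1}}} k^{C_2}_{\Cl_{1,1}}\bigr).
\end{equation*}
By Lemma~\ref{lem:cofibdiag} this equals $\fib(i_-^*\colon k^{C_2}_{\Cl_{1,1}}\to k^{C_2}_{\Cl_{+1}})$. Proposition~\ref{prop:gradingvsclifford} gives $k^{C_2}_{\Cl_{+1}}\simeq k_{|\Cl_{+2}|}$. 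Since $|\Cl_{+2}|\cong M_2(\R)$, this is Morita equivalent to $\R$, so $k^{C_2}_{\Cl_{+1}}\simeq ko$. The Morita equivalence $V\grotimes-$ of Example~\ref{ex:(1,1)per} gives $k^{C_2}_{\Cl_{1,1}}\simeq k^{C_2}_\R\simeq ko\oplus ko$, by Proposition~\ref{prop:kC_2}(4).

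I would then compute $i_-^*$ under these identifications. A graded vector space $W=W_0\oplus W_1$ goes to $V\grotimes W$ regarded as a $\Cl_{+1}$-module, which is an ungraded $M_2(\R)$-module of rank $\dim W$. The map $ko\oplus ko\to ko$ is therefore the fold map $(a,b)\mapsto a+b$. It is at least $\pi_0$-surjective, so its fiber is $ko$, embedded antidiagonally. The source of $\alpha$ is then easy to locate: the class $(V,\overline V)$ in the pullback lies over $0$ in the cofiber of $\Delta$ exactly when $V\cong\overline V$ as $\Cl_{1,1}$-modules. In fact $\overline V\cong \Pi V$, because flipping $f$ swaps the two irreducible graded $\Cl_{1,1}$-modules. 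So $\alpha$ corresponds to the pair $(V,\Pi V)$ over the identity path, which is the point $([\R]-[\Pi\R])$ of the fiber.

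Next I would check that this is a generator of $\pi_0\fib\cong\pi_0 ko\cong\Z$. Under the fold map, $[\R]-[\Pi\R]$ maps to $0$, and it generates the kernel $\{(n,-n)\}$ of the fold map. It is not divisible, since the isomorphism $V\cong\Pi V$ as $\Cl_{+1}$-modules is realized by the identity. The map $\alpha\colon ko\to\Omega k^{\ABS}_{\Cl_{+1}}$ is the $ko$-module map determined by this class. Both sides are free rank-one $ko$-modules, namely $ko$ and the antidiagonal copy of $ko$, and the $ko$-action on the fiber is the restriction of the diagonal action on $ko\oplus ko$. A $ko$-module map $ko\to ko$ sending $1$ to a unit is an equivalence, so $\alpha$ is an equivalence.

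The main obstacle is coherence. The identification of $\fib(i_-^*)$ with the cofiber of $\Delta$ must be compatible with the specific path, namely the identity isomorphism $i_-(\overline V)\cong i_-(V)$, that is used to define $\alpha$. A different identification of $\overline V$ with $\Pi V$ could alter the class by an element of $\pi_1 k^{C_2}_{\Cl_{+1}}=\pi_1 ko=\Z/2$. I would handle this using the explicit Karoubi-triple description in Theorem~\ref{thm:fibergroupcompletion} and Lemma~\ref{lem:triples}. Such an indeterminacy only shifts the element by a $2$-torsion $\pi_0$-ambiguity from the boundary, and that cannot affect whether the element generates $\Z$. This suffices, because being a generator is all the argument needs.
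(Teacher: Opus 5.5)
Your proposal is correct and follows essentially the paper's argument: the same Morita reductions $\Cl_{1,1}\Morita\R$ and $|\Cl_{+2}|\Morita\R$, and the same identification of $(V,\overline V)$ with $(\R,\Pi\R)$. The only cosmetic difference is that you pass to $\fib(i_-^*)$, i.e.\ the fiber of the fold map $ko^{\times 2}\to ko$, via Lemma~\ref{lem:cofibdiag}, whereas the paper computes $\cofib(ko^{\times 2}\to ko^{\times 3})$ directly using Lemma~\ref{lem:groupcompletionpullback}. Your path-ambiguity worry is in fact vacuous: the boundary map $\pi_1 ko\to\pi_0\fib(i_-^*)\cong\Z$ is zero, since the fold map is surjective on $\pi_1$ and in any case $\Z$ is torsion-free, so the class $[\R]-[\Pi\R]$ does not depend on the chosen path.
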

\begin{proof}
We will use the equivalence $ko\simeq k^{\ABS}_\R$ to reason about $\VectEn$-modules.
Applying Equation~\eqref{eq:diagrewrite}, we want to construct $\alpha$ as coming from taking cofibers of a commutative diagram
\begin{equation*}
\begin{tikzcd}
    0 \ar[d] \ar[r] & k^{C_2}_{\Cl_{1,1}}  \ar[d] \\
    k_{\R} \ar[r] & k^{C_2}_{\Cl_{1,1}} \times_{k^{C_2}_{\Cl_{+1}}} k^{C_2}_{\Cl_{1,1}}
\end{tikzcd}
\end{equation*}
obtained from group completing a diagram on the module category level.
    For this, we apply the graded Morita equivalence $\Cl_{1,1}\Morita \R$ and the ungraded Morita equivalence $|\Cl_{+2}|\Morita \R$ to obtain an equivalence of $\VectEn$-modules and (Banach) categories
    \begin{equation*}
        \ModEn^{C_2}_{\Cl_{+1}}\simeq \ModEn_{|\Cl_{+2}|} \simeq \VectEn \,.
    \end{equation*}
    The pair $(V,\overline{V})$ is identified with $(\R,\Pi \R)\in \sVectOne\times_{\VectOne} \sVectOne$.
    It therefore suffices to observe that the diagram
    \begin{equation*}
        \begin{tikzcd}[column sep = 35]
            0\ar[d] \ar[r]& \sVectEn \ar[d,"\Delta"] \\
            \VectEn \ar[r,"{(\id,\Pi)}"]& \sVectEn \times_{\VectEn} \sVectEn
        \end{tikzcd}
    \end{equation*}
    induces an equivalence on vertical cofibers after applying $k$. 
    This follows from the fact that right hand side group completes to $\cofib(ko^{\times 2}\to ko^{\times 3})$, using Lemma~\ref{lem:groupcompletionpullback}.
\end{proof}

\paragraph{Action of \texorpdfstring{$\alpha$}{alpha} by explicitly writing out \texorpdfstring{$k^{\ABS}$}{k\textasciicircum ABS}-module structure}

We write $t$ for the composite of $\overline{\alpha}$ with the Morita equivalence $\Cl_{1,1}\Morita \R$:
\begin{equation}
    \label{eq:tandalpha}
    t_A\colon k^{\ABS}_{A\grotimes \Cl_{-1}} \xrightarrow{\overline{\alpha}_{A\grotimes \Cl_{-1}}} \Omega k^{\ABS}_{A\grotimes \Cl_{1,1}} \simeq \Omega k^{\ABS}_{A} \,.
\end{equation}

\begin{lemma}
\label{lem:tviamodules}
  The map $t_A$ is induced on cofibers by the following square of Banach functors, which commutes up to a specified homotopy:
    \begin{equation}
        \label{sq:2}
        \begin{tikzcd}[row sep = 28, column sep = 40]
            \ModEn^{C_2}_{A\grotimes \Cl_{-2}} \ar[r,"i_-"]\ar[d,"i_-"]& \ModEn^{C_2}_{A\grotimes \Cl_{-1}} \ar[d,"\Delta"] \\
            \ModEn^{C_2}_{A\grotimes \Cl_{-1}} \ar[r,"{(\id,\overline{(-)})}"] \ar[ru,Rightarrow, "h",shorten >=1ex,shorten <=4ex]& \ModEn^{C_2}_{A\grotimes \Cl_{-1}}\times_{\ModEn^{C_2}_A} \ModEn^{C_2}_{A\grotimes \Cl_{-1}}
        \end{tikzcd} \,.
    \end{equation}
    Here, the bottom functor maps a graded $A\grotimes \Cl_{-1}$-module $N$ to $(N,\overline{N})$, where $\overline{N}$ has a sign-flipped $\Cl_{-1}$-action.
\end{lemma}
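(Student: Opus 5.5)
The plan is to unwind the definition of $t_A$ in \eqref{eq:tandalpha} in three steps: the module structure of $\overline{\alpha}$, then the description of $\Omega k^{\ABS}$ via the diagonal cofiber \eqref{eq:diagrewrite}, then the Morita equivalence $\Cl_{1,1}\Morita\R$. Throughout I take $A\grotimes\Cl_{-1}$ in place of $A$.

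\textbf{Step 1: reduce the module structure to the generator $\alpha$.} By construction, $\overline{\alpha}_{A\grotimes\Cl_{-1}}$ is the composite of $\id\otimes\alpha$ with the lax structure map $k^{\ABS}_{A\grotimes\Cl_{-1}}\otimes\Omega k^{\ABS}_{\Cl_{+1}}\to\Omega k^{\ABS}_{A\grotimes\Cl_{-1}\grotimes\Cl_{+1}}$. Both are realized on cofibers:
\begin{itemize}
\item $\alpha$ is realized by the pair $(V,\overline V)$ in the pullback of Lemma~\ref{lem:alphaequivalence}.
\item The lax structure is realized by $\grotimes$ of modules, together with the rotation homotopy \eqref{eq:essentialhomotopy}. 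This homotopy is exactly the datum the Smith ideal of Proposition~\ref{prop:cliffordsmithideal} provides in the lowest coherence degree.
\end{itemize}
Only a single square, and hence only the first coherence, is needed here. So I would evaluate the Smith-ideal structure on the arrow $\Cl_{-1}\to^{\op}\R$ and its tensor square, and read off the $2$-simplex in $\InnInf$ given by the coordinate-positive path of embeddings $\R^1\to\R^2$, $t\mapsto(\cos t,\sin t)$. Applying $\Cl$ yields precisely $f_t$.

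\textbf{Step 2: produce the square of module categories.} Tensoring a graded $A\grotimes\Cl_{-1}\grotimes\Cl_{-1}$-module $M$ with $V$, resp.\ $\overline V$, gives two $A\grotimes\Cl_{-1}\grotimes\Cl_{1,1}$-modules. These agree after forgetting the second $\Cl_{-1}$ factor. Transporting along the Morita equivalence $V\grotimes-$ identifies:
\begin{itemize}
\item $\ModEn^{C_2}_{A\grotimes\Cl_{-1}\grotimes\Cl_{1,1}}$ with $\ModEn^{C_2}_{A\grotimes\Cl_{-1}}$;
\item $\ModEn^{C_2}_{A\grotimes\Cl_{-1}\grotimes\Cl_{+1}}$, via $\Cl_{-1}\grotimes\Cl_{+1}=\Cl_{1,1}$, with $\ModEn^{C_2}_A$.
\end{itemize}
Under these identifications, the pair coming from $V$ and $\overline V$ becomes $(N,\overline N)$, since $\overline V$ differs only by the sign of $f$. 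Likewise, $\Cl_{-1}\grotimes\Cl_{-1}=\Cl_{-2}$ turns the two ways of forgetting into the two maps $i_-$ of \eqref{sq:2}, one for each generator. The diagonal on the right-hand side is $\Delta_{i_-^*}$ from \eqref{eq:diagrewrite}. The square then commutes up to the homotopy $h$ obtained as the image of $f_t$, i.e.\ the rotation from the first $\Cl_{-1}$ generator to the second.

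\textbf{Step 3: pass to cofibers.} Apply $k$ to the square; by Lemma~\ref{lem:fibKcommute}, $k$ of the pullback category is the pullback of spectra. Taking horizontal cofibers recovers $k^{\ABS}_{A\grotimes\Cl_{-1}}\to\Omega k^{\ABS}_A$ via \eqref{eq:diagrewrite}, and naturality in $A$ is clear.

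\textbf{Main obstacle.} I expect the hardest step to be Step~1: checking that the coherent lax structure produced abstractly by Corollary~\ref{cor:cofiberlaxmon}, restricted to this single square, is exactly the explicit homotopy $h$ and not some other homotopy in the same class. I would first check that the space of such homotopies coming from $\mathsf J$ is contractible (Lemma~\ref{lem:operadequivJ}), so that any choice factoring through coordinate-positive embeddings, in particular $f_t$, represents the structure map.
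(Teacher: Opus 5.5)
Your route is the paper's: its proof also unravels $\overline{\alpha}$ as tensoring with the pair $(V,\overline V)$. It fills the resulting square with the rotation homotopy from the lax structure, and then applies the Morita equivalences $A\grotimes\Cl_{1,2}\Morita A\grotimes\Cl_{-1}$ and $A\grotimes\Cl_{1,1}\Morita A$ to reach \eqref{sq:2}. Your identification of that homotopy with the image under $\Cl$ of the coordinate-positive quarter turn in $\mathsf{J}$ (pinned down up to contractible choice by Lemma~\ref{lem:operadequivJ}) makes explicit what the paper only asserts.

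One step does not hold as written: the appeal to Lemma~\ref{lem:fibKcommute} in Step~3. That lemma concerns the pullback over $(D^1,S^0)$, whose leg $\ModEn^{C_2}_A(D^1)\to\ModEn^{C_2}_A(S^0)$ is a Serre functor by Example~\ref{ex:serrefunctor}; for such a pullback the nerve of the strict pullback is a homotopy pullback. The leg $i_-^*$ in the corner of \eqref{sq:2} is not a Serre functor, since an even $A$-linear endomorphism need not commute with the odd generator. Consequently the strict pullback of Banach categories is the wrong object. Already for $A=\R$ and underlying module $\R^{1|1}$, pairs of odd operators $f,f'$ with $f^2=f'^2=-1$, taken modulo simultaneous even conjugation, form a continuum $\R^\times$ of isomorphism classes. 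These classes are distinguished by the conjugacy class of the even operator $f'f^{-1}$, an invariant compatible with $\oplus$, so they survive group completion. The homotopy pullback, by contrast, has countable $\pi_0$.

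So you must read the corner as the pullback of the $\E_\infty$-spaces $N^{\mathrm{hc}}(\ModEn^{C_2,\cong})$ in $\CMon(\Spc)$. Then identify its group completion with the pullback in \eqref{eq:diagrewrite} by applying Lemma~\ref{lem:groupcompletionpullback} directly. Its hypotheses hold: $i_-^*$ is quasi-surjective by Lemma~\ref{lem:i-quasisur}, both projections are $\pi_0$-surjective because of the diagonal, and the three outer corners are telescopic. This is how the paper itself treats such corners (compare the proof of Lemma~\ref{lem:alphaequivalence}). The bundle model of Lemma~\ref{lem:tandalphasquare}, which the paper passes to next, is exactly a pullback along a Serre functor.
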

\begin{proof}
The proof consists of unraveling the tensoring by the explicit element $\alpha$.
Recall from Section~\ref{sec:bott} that the $k^{\ABS}$-module structure $k_{A}^{\ABS}\otimes k_B^{\ABS} \to k^{\ABS}_{A\grotimes B}$
is induced by the diagram
\begin{equation*}
    \begin{gathered}
        \begin{tikzcd}[column sep=large, row sep=large]
            \ModEn^{C_2}_{A\grotimes \Cl_{-1}}\times \ModEn^{C_2}_{B\grotimes \Cl_{-1}} \ar[r,"\id\times i_-^*"]\ar[d,"i_-^*\times \id"]& \ModEn^{C_2}_{A\grotimes \Cl_{-1}} \times \ModEn^{C_2}_{B} \ar[d,"{\grotimes}"]\\
            \ModEn^{C_2}_{A} \times \ModEn^{C_2}_{B\grotimes \Cl_{-1}} \ar[r,"{\grotimes}"] & \ModEn^{C_2}_{A\grotimes B\grotimes \Cl_{-1}}
        \end{tikzcd} \\
        \\
        \begin{tikzcd}[column sep=large, row sep=large]
            \left((M,f_1),(N,f_2) \right) \ar[rr,mapsto]\ar[d,mapsto]& &\left( (M,f_1),N\right) \ar[d,mapsto]\\
            \left(M,(N,f_2)\right) \ar[r,mapsto] & (M\grotimes N,f_2) & (M\grotimes N,f_1) 
        \end{tikzcd}
    \end{gathered}
\end{equation*}
that can be filled by the homotopy of $A\grotimes B \grotimes \Cl_{-1}$-modules
\begin{equation*}
    \theta\mapsto \left( M\grotimes N,f_1\cos(\theta)+f_2\sin(\theta)\right)\,, \quad \theta\in[0,\pi/2]\,.
\end{equation*}
Similarly we can explicitly unravel the action of $k^{\ABS}_A$ on $\Omega k^{\ABS}_B=\cofib (\Delta_{i_-})$ as induced from the diagrams\footnote{The pushout in the upper left corner does not need to exist for the diagram to make sense.}
\begin{equation*}
    \begin{tikzcd}[column sep=tiny]
         \mathcal{P}
         \ar[r]\ar[d]& \ModEn^{C_2}_{A\grotimes B\grotimes \Cl_{-1}} \ar[d,"\Delta_{i_-^*}"]\\
        \ModEn^{C_2}_{A}\times \left(\ModEn^{C_2}_{B\grotimes \Cl_{-1}}\times_{\ModEn^{C_2}_{B}} \ModEn^{C_2}_{B\grotimes \Cl_{-1}}\right) \ar[r,"\grotimes"]& \ModEn^{C_2}_{A\grotimes B\grotimes \Cl_{-1}}\times_{\ModEn^{C_2}_{A\grotimes B}} \ModEn^{C_2}_{A\grotimes B\grotimes \Cl_{-1}}
    \end{tikzcd} \,,
\end{equation*}
where the top left corner is the pushout
\begin{equation*}
    \mathcal{P}= \ModEn^{C_2}_{A} \times \ModEn^{C_2}_{B\grotimes \Cl_{-1}}
         \sqcup_{\ModEn^{C_2}_{A\grotimes \Cl_{-1}}\times \ModEn^{C_2}_{B\grotimes \Cl_{-1}}} \ModEn^{C_2}_{A\grotimes \Cl_{-1}} \times \left(\ModEn^{C_2}_{B\grotimes \Cl_{-1}}\times_{\ModEn^{C_2}_{B}} \ModEn^{C_2}_{B\grotimes \Cl_{-1}} \right) \,.
\end{equation*}
Taking $B=\Cl_{+1}$ and the chosen element $(V,\overline{V})$ we see that the map $\alpha$ is induced on cofibers by the diagram
\begin{equation*}
    \begin{tikzcd}[row sep=28, column sep = 50]
        \ModEn^{C_2}_{A\grotimes \Cl_{-1}} \ar[r,"-\grotimes i_-^*(V)"]\ar[d]& \ModEn^{C_2}_{A\grotimes \Cl_{1,1}} \ar[d,"\Delta"] \\
        \ModEn^{C_2}_A \ar[r,"{(-\grotimes V,-\grotimes \overline{V})}"]\ar[ru,Rightarrow, "h",shorten >=2ex,shorten <=6ex]\ar[d]& \ModEn^{C_2}_{A\grotimes \Cl_{1,1}} \times_{\ModEn^{C_2}_{A\grotimes \Cl_{+1}}} \ModEn^{C_2}_{A\grotimes \Cl_{1,1}} \ar[d] \\
        k^{\ABS}_A \ar[r,"\alpha"]& \Omega k^{\ABS}_{A\grotimes \Cl_{+1}}
    \end{tikzcd} \,.
\end{equation*}
The diagram commutes up to the homotopy $h$ which witnesses that for $(M,f_1) \in \ModEn^{C_2}_{A\grotimes \Cl_{-1}}$ there is a path of $A\grotimes \Cl_{1,1}$-module structures on $M\grotimes V$, in which the $\Cl_{-1}$-action is:
\begin{equation*}
    f(\theta)=\cos(\theta) f_1+\sin(\theta)f\,, \quad \theta\in [0,\pi] \,,
\end{equation*}
where $f$ is the $\Cl_{-1}$-action on $V$.
We use the graded Morita equivalences $A\grotimes {\Cl_{1,2}}\Morita A\grotimes \Cl_{-1}$ and $A\grotimes \Cl_{1,1}\Morita A$ to arrive at the desired diagram.
\end{proof}

\paragraph{Re-expressing \texorpdfstring{$\alpha$}{alpha} geometrically}
We want to replace diagrams that commute up to a specified homotopy with diagrams that strictly commute.

\begin{lemma}
    \label{lem:tandalphasquare}
    $t_A$ is induced upon applying $k=(-)^{\gp}\circ N^{\mathrm{hc}}\circ (-)^{\cong}$ and taking cofibers vertically from the commutative square of Banach functors
    \begin{equation}
        \label{eq:bundlediagram}
        \begin{tikzcd}[column sep = 40]
            \ModEn^{C_2}_{A\grotimes \Cl_{-2}} \ar[r,"{(M,f_1,f_2)\mapsto (M,f(\theta))}"]\ar[d,"i_-^*"]& \ModEn^{C_2}_{A\grotimes \Cl_{-1}}(D^1) \ar[d,"{((-)|_{S^0},i_-^*)}"] \\
            \ModEn^{C_2}_{A\grotimes \Cl_{-1}} \ar[r,"t"]& \ModEn^{C_2}_{A\grotimes \Cl_{-1}}(S^0) \times_{\ModEn_A^{C_2}(S^0)} \ModEn_{A}^{C_2}(D^1)
        \end{tikzcd} \,,
    \end{equation}
    where $f(\theta)=f_1\cos(\theta) +f_2 \sin(\theta)$ is the $\Cl_{-1}$-action varying along $D^1=[0,\pi]$ and 
        \begin{equation*}
            t\colon (M,f) \longmapsto \left( (M,\pm f), \underline{M}\to D^1\right) \,.
        \end{equation*}
\end{lemma}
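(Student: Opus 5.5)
The plan is to start from the homotopy-commutative square of Lemma~\ref{lem:tviamodules} and rectify it. The filler $h$ there is a path of $\Cl_{-1}$-actions $f(\theta)=\cos(\theta)f_1+\sin(\theta)f_2$ on the same underlying graded $A$-module. We package this path as a single module bundle over $D^1=[0,\pi]$. Concretely, the top functor of \eqref{eq:bundlediagram} sends $(M,f_1,f_2)$ to the trivial bundle $\underline{M}\to D^1$ with the varying action $f(\theta)$.

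First, we check that this functor is well defined. Since $f_1,f_2$ anticommute and square to $-1$, each $f(\theta)$ is an odd $A$-linear operator with $f(\theta)^2=-1$. It varies continuously in $\theta$, so we get a locally trivial graded $A\grotimes\Cl_{-1}$-module bundle, using Proposition~\ref{prop:serreswan} or Lemma~\ref{lem:idempotents}. The functor is functorial in morphisms, since a morphism of $A\grotimes\Cl_{-2}$-modules commutes with both $f_1$ and $f_2$.

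Next, we verify that \eqref{eq:bundlediagram} commutes strictly. Going right and then down, restricting to $S^0=\{0,\pi\}$ gives $(M,f_1)$ and $(M,-f_1)$. Forgetting the Clifford action over $D^1$ gives the constant bundle $\underline{M}$ of graded $A$-modules. Going down and then right, $t(M,f_1)$ is $((M,f_1),(M,-f_1))$ together with $\underline{M}$. These agree on the nose, with identity gluing data in the iso-comma category.

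We then compare with \eqref{sq:2} after applying $k$. By the discussion preceding Proposition~\ref{prop:bundlemodel}, the constant bundle functor $\ModEn^{C_2}_{A\grotimes\Cl_{-1}}\to\ModEn^{C_2}_{A\grotimes\Cl_{-1}}(D^1)$ becomes an equivalence on $N^{\mathrm{hc}}(-)^{\cong}$, and $\ModEn(S^0)\cong\ModEn^{\times 2}$. Lemma~\ref{lem:fibKcommute} identifies $k$ of the lower right corner with the fiber product of $K$-theories. Under these identifications, the right column of \eqref{eq:bundlediagram} becomes the diagonal $\Delta$ of \eqref{sq:2}, and $t$ becomes $(\id,\overline{(-)})$. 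By Proposition~\ref{prop:bundlemodel}, the vertical cofiber of the right column is then $\Omega k^{\ABS}_A$.

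The main obstacle is showing that the top functor, transported through the equivalence $\ModEn(D^1)\simeq\ModEn$, agrees with $i_-$ together with the homotopy $h$ as coherent data, and not merely objectwise. We would argue this via the evaluation-at-$0$ functor $\ev_0\colon\ModEn(D^1)\to\ModEn$, which is inverse to the constant bundle functor on cores. Composing the top functor with $\ev_0$ gives $i_-$ strictly. The comparison of the two fillers comes from the paths $\theta\mapsto\ev_\theta$. These form a homotopy in the $\sSet$-enrichment, and this homotopy is exactly $h$ after evaluating at $\pi$. It remains to check that the resulting identification of the two squares in $\Fun([1]\times[1],\CMon(\Spc))$ induces the same map on vertical cofibers, which follows because a homotopy-commutative square with its filler determines the cofiber map. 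Finally, the resulting map equals $t_A$ by Lemma~\ref{lem:tviamodules}.
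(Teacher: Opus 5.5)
Your proposal is correct and follows essentially the same route as the paper. The paper also identifies the filler of \eqref{sq:2} as induced by the path of homomorphisms $\Cl_{-1}\to\Cl_{-2}$, $f\mapsto \cos(\theta)f_1+\sin(\theta)f_2$ (the Clifford functor applied to rotating $\R\to\R^2$), and then relates \eqref{sq:2} to \eqref{eq:bundlediagram} by a cube that induces equivalences on $N^{\mathrm{hc}}\circ(-)^{\cong}$; your constant-bundle and $\ev_0$ comparison together with the path $\theta\mapsto\ev_\theta$ is exactly that cube made explicit, with $\theta\mapsto\ev_\theta$ adjoint under \eqref{eq:adjunctionhomomorphisms} to the paper's homotopy of algebra homomorphisms.
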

\begin{proof}
    Abstractly, the bottom horizontal functor in Diagram~\eqref{sq:2} is the pullback along the automorphism of $\Cl_{-1}$ given by $f\mapsto -f$, which is the Clifford functor applied to the orientation reversal $-1 \colon \R\to \R$.
    There is a homotopy making the diagram commute which is itself induced by a homotopy between algebra homomorphisms $\Cl_{-1}\to \Cl_{-2}$.
    Namely, the standard embedding $f\mapsto f_1$ is homotopic to $f\mapsto -f_1$:
    This corresponds to a homotopy between the two orthogonal embeddings $\R \to \R^2$
    given by $x\mapsto (x,0)$ and $x\mapsto (-x,0)$: explicitly this is
    \begin{equation*}
        h(t,x)=(\cos(t)x,\sin(t)x) \,.
    \end{equation*}
    To relate squares~\eqref{eq:bundlediagram} and \eqref{sq:2}, one notes that there is a commutative cube (with prescribed homotopies) from \eqref{sq:2} to \eqref{eq:bundlediagram} which induces equivalences after applying $N^{\mathrm{hc}}\circ (-)^{\cong}$.
\end{proof}

\paragraph{Spelling out Karoubi triples}

We need to understand the map $\Omega t_A$ on $\pi_0$ using the description of Lemma~\ref{lem:tandalphasquare}.
We want to use the Karoubi $K$-theory machinery, cf.\ Section~\ref{sec:fibersgroupcompletion} and so we want to rewrite $\Omega t_A$ as a map between fibers of group completions.
For this, we use $\fib f \cong \Omega \cofib f$ to obtain
\begin{equation*}
                \Omega t_A\colon \fib\left(i_-^{\gp}\colon k^{C_2}_{A\grotimes \Cl_{-2}} \to k^{C_2}_{A\grotimes \Cl_{-1}}\right) \to \fib\left(k^{C_2}_{A\grotimes \Cl_{-1}}(D^1) \to k^{C_2}_{A\grotimes \Cl_{-1}}(S^0)\times_{k^{C_2}_A(S^0)} k^{C_2}_A(D^1)\right) \,.
    \end{equation*}
            To apply Theorem~\ref{thm:fibergroupcompletion}, we need 
            \begin{lemma}
            \label{lem:i-quasisur}
                The map $i_-^* \colon \ModOne^{C_2}_{A \grotimes \Cl_{-1}} \to \ModOne^{C_2}_A$ is quasi-surjective.
            \end{lemma}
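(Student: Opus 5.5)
The plan is to unwind the definition of quasi-surjectivity. For every finitely generated projective graded $A$-module $P$ we must produce a graded $A$-module $P'$ such that $P\oplus P'$ lies in the essential image of $i_-^*$. That means $P\oplus P'$ should admit an odd $A$-linear operator $f$ with $f^2=-1$ (in the sign convention of~\eqref{eq:graded-linear}). The construction is the one already used in the proof of Lemma~\ref{lem:ABSpizero}: take $P'=\Pi P$. On $P\oplus \Pi P$ we let the generator act by the matrix
\begin{equation*}
f=\begin{pmatrix} & 1\\ -1 & \end{pmatrix}\,,
\end{equation*}
where the off-diagonal entries are the identity maps $P\to \Pi P$ and $\Pi P\to P$, viewed as odd maps of the underlying Banach spaces.

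The steps, in order, are as follows. First, check that $f$ is odd: it exchanges the even and odd parts of $P\oplus \Pi P$, because $(\Pi P)_i=P_{i+1}$. Second, check the twisted $A$-linearity $f(am)=(-1)^{|a|}af(m)$. Recall that the left $A$-action on $\Pi P$ is the action on $P$ precomposed with the grading automorphism (Definition~\ref{def:parity}). So the identity map $P\to\Pi P$ satisfies exactly the required sign rule for odd maps, and the same holds for the map $\Pi P\to P$. The signs can be fixed, if needed, by composing with the grading involution on one summand; this does not affect $f^2=-1$. Third, $f^2=-1$ holds by direct computation. By the universal property of the Clifford algebra together with the graded tensor product rule~\eqref{gradedtensoralgebra}, the data $(A\text{-action}, f)$ is precisely a graded $A\grotimes\Cl_{-1}$-module structure. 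This module is finitely generated projective, since it is a summand of $(A\grotimes\Cl_{-1})^{n}$. Indeed $A^{p|q}\oplus\Pi A^{p|q}\cong A^{p+q|p+q}$, and $A^{1|1}$ carries this Clifford action, being free of rank one over $A\grotimes \Cl_{-1}$.

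The main obstacle is bookkeeping of the Koszul signs in the second and third steps, so that the operator really defines a \emph{graded} $A\grotimes\Cl_{-1}$-action rather than an ungraded one. To settle the signs cleanly I would first treat the free case $P=A^{p|q}$ and write $A^{p|q}\oplus \Pi A^{p|q}\cong (A\grotimes\Cl_{-1})\grotimes_{\R}\R^{p|q}$ explicitly. Projectivity of the output then follows for a general summand $P$ of $A^{p|q}$, because the construction is additive: the idempotent cutting out $P$ commutes with $f$. Finally, $\pi_0$-level quasi-surjectivity of the monoid map follows directly, since the element $[P]$ plus $[\Pi P]$ equals $\pi_0(i_-^*)$ of the constructed module.
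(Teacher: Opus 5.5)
Your proof is correct, but it takes a different route from the paper's.

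\textbf{The paper's route.} The paper never builds a complement for an arbitrary $P$. It notes that the free modules $(A\grotimes\Cl_{-1})^n$ restrict under $i_-^*$ to $A^{n|n}$, and that these are cofinal among finitely generated projective graded $A$-modules. So the composite of the inclusion of free $A\grotimes\Cl_{-1}$-modules with $i_-^*$ is quasi-surjective, and Remark~\ref{rem:qsur} passes this property to $i_-^*$. Only free modules occur, so no Koszul signs need checking and no non-free $A\grotimes\Cl_{-1}$-module has to be shown projective.

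\textbf{Your route.} You supply, object by object, the explicit complement $\Pi P$ together with the Clifford structure on $P\oplus\Pi P$ from Lemma~\ref{lem:ABSpizero}. This costs the sign and projectivity checks. In return it gives a canonical witness $[P]+[\Pi P]\in\im\pi_0(i_-^*)$, which is exactly what yields $[\Pi P]=-[P]$ in $\pi_0k^{\ABS}_A$.

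\textbf{Two small improvements.}
\begin{enumerate}
\item You can shorten the projectivity step. Your module is $i_-^*$ applied to the extension of scalars $(A\grotimes\Cl_{-1})\otimes_A P\cong P\oplus\Pi P$, and extension of scalars preserves finitely generated projective modules.
\item Drop the hedge about fixing signs with the grading involution; it is not needed. Under convention~\eqref{eq:graded-linear}, with the twisted action on $\Pi P$, the identity maps between $P$ and $\Pi P$ already obey the odd sign rule. The hedge is also wrong as stated: precomposing $f$ with the grading involution of one summand makes $f^2\neq -1$.
\end{enumerate}
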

            \begin{proof}
                We can apply Remark~\ref{rem:qsur} to the inclusion of the graded $A \grotimes \Cl_{-1}$-modules $(A \grotimes \Cl_{-1})^n$ into all graded $A$-modules, and the map from graded $A \grotimes \Cl_{-1}$-modules to graded $A$-modules.
Indeed, the $A \grotimes \Cl_{-1}$-modules $(A \grotimes \Cl_{-1})^n$ restrict to the graded $A$-modules $A^{n|n}$ which are cofinal in graded $A$-modules.
            \end{proof}

Explicitly spelling out Karoubi triples using Lemma~\ref{lem:triples} gives:
    \begin{itemize}
        \item $K^{\mathrm{vD}}_{A\grotimes \Cl_{-1}}:=\pi_0\fib(i_-^{\gp})$.
        A generic element is a tuple $[M,f,f_1,f_2]$, where $(M,f)$ is a $A\grotimes \Cl_{-1}$-module and $f_1,f_2$ are two extensions to an $A\grotimes \Cl_{-2}$-module.
        An elementary triple is one such that $f_1$ is homotopic to $f_2$ within $A\grotimes \Cl_{-2}$-module structures on $(M,f)$.
        We have $[M,f,f_1,f_2]=0$ if and only if there is an elementary triple $\tau$ such that $f_1\oplus \tau \simeq f_2\oplus \tau$, i.e.\ $[M,f,f_1,f_2]\oplus \tau$ is elementary.
        \item $K^{\mathrm{vD}}_A(D^1,S^0):=\pi_0\fib\left(k^{C_2}_{A\grotimes \Cl_{-1}}(D^1) \to k^{C_2}_{A\grotimes \Cl_{-1}}(S^0)\times_{k^{C_2}_A(S^0)} k^{C_2}_A(D^1)\right)$.
        By Lemma~\ref{lem:fibKcommute}, a generic element of this fiber is a triple $[M,f_1(\theta),f_2(\theta)]$, where $M$ is an $A$-module and $f_1(\theta),f_2(\theta),\theta\in D^1=[0,\pi]$ are two $A\grotimes \Cl_{-1}$-structures on the trivial bundle $\underline{M}\to D^1$ satisfying $f_1|_{S^0}=f_2|_{S^0}$.
        We have $[M,f_1(\theta),f_2(\theta)]=0$ if and only if $f_1(\theta)\oplus \tau(\theta) \simeq f_2(\theta)\oplus \tau(\theta)$, where the restriction to $S^0$ remains fixed along the homotopy.
        \end{itemize}

        The map $\pi_0(\Omega t_A)$ is therefore explicitly given by
        \begin{equation}
        \label{eq:t0}
        \begin{aligned}
             t:K^{\mathrm{vD}}_0(A\grotimes \Cl_{-1}) & \longrightarrow K_A^{\mathrm{vD}}(D^1,S^0)\\
             [M,f,f_1,f_2]&\longmapsto [M,\cos(\theta)f+\sin(\theta)f_1,\cos(\theta)f+\sin(\theta)f_2]
        \end{aligned} \,.
        \end{equation}

\paragraph{Karoubi's theorem}

\begin{theorem}[{\cite[Thm~2.2.2]{Karoubi68}}]
\label{th:karoubishift}
    The map $t$ of \eqref{eq:t0} is an isomorphism.
\end{theorem}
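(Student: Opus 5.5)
This statement is cited from Karoubi, so the plan is to identify $t$ in \eqref{eq:t0} with the map appearing in \cite[Thm~2.2.2]{Karoubi68} rather than reprove it from scratch. I would first translate both sides into Karoubi's language. Write $B := A\grotimes \Cl_{-1}$ and view $\ModEn^{C_2}_{B}\to \ModEn^{C_2}_{A}$ as a Banach functor $\varphi$. The source $K^{\mathrm{vD}}_0(A\grotimes\Cl_{-1})$ is, by Lemma~\ref{lem:triples} and Lemma~\ref{lem:i-quasisur}, the group of Karoubi triples for the restriction functor $\ModEn^{C_2}_{A\grotimes \Cl_{-2}}\to \ModEn^{C_2}_{A\grotimes \Cl_{-1}}$. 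Equivalently, these are the classes $[M,f,f_1,f_2]$ of a module together with two extensions of the Clifford structure by one more anticommuting generator, modulo elementary triples. This is exactly Karoubi's group $K^{p,q+1}$, or in his notation the Grothendieck group of the functor $\mathcal{C}^{p,q+1}\to\mathcal{C}^{p,q}$. The target $K^{\mathrm{vD}}_A(D^1,S^0)$ is Karoubi's relative group of the Banach category of $B$-module bundles over $D^1$ relative to $S^0$. Using Proposition~\ref{prop:kC_2} and Lemma~\ref{lem:fibKcommute}, it can also be described as $\pi_0$ of the fiber defining $\Omega$ of the relevant $K$-theory, that is, $K^{p,q}$ of the suspension.

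Second, I would check that under these identifications \eqref{eq:t0} is precisely Karoubi's map. His map sends a pair of extra Clifford generators $f_1,f_2$ to the path $\cos\theta\, f+\sin\theta\, f_i$ joining $f$ to $-f$. This matches our formula for $\theta\in[0,\pi]$, because at the endpoints we get $\pm f$, independently of $i$, which is exactly the boundary condition $f_1|_{S^0}=f_2|_{S^0}$. Along the way I would reconcile sign conventions: the signature $\Cl_{-1}$ versus $\Cl_{+1}$, left versus right actions (Convention~\ref{conv:graded-linear}), and the grading operator versus a $\Cl_{+1}$-action (Proposition~\ref{prop:gradingvsclifford}). Since $A\grotimes \Cl_{-2}$-structures on a fixed $A\grotimes\Cl_{-1}$-module are the same data as Karoubi's ``additional generator'' structures, this step is bookkeeping.

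Third, I would verify that $t$ is well defined and a homomorphism. Direct sums are respected pointwise in $\theta$. A homotopy $f_1\simeq f_2$ through $\Cl_{-2}$-structures produces a homotopy of the paths rel $S^0$, so elementary triples go to zero.

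The main obstacle is the hypothesis check in the second step. Karoubi works with additive Banach categories that are pseudo-abelian and with Banach functors that are quasi-surjective, and his theorem is stated for his relative groups $K(\varphi)$. I must confirm three things. First, our graded categories $\ModEn^{C_2}_{A}$ are of the form $\mathcal{C}^{p,q}$ for the ungraded Banach category $\ModEn_{|A|}$, with the grading encoded as an extra Clifford generator via Proposition~\ref{prop:gradingvsclifford}. Second, our relative group over $(D^1,S^0)$ agrees with his $K^{-1}$-type group defined by loops or suspensions. Third, the homotopies allowed in our Karoubi triples, which are paths in $N^{\mathrm{hc}}$ of the iso-core, coincide with his notion of homotopy of isomorphisms. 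If a direct citation is insufficient, I would follow Karoubi's proof. It builds an explicit inverse to $t$ from a loop of $\Cl_{-1}$-structures by choosing the ``midpoint'' generator $f_i$ via a spectral, polar-decomposition argument. Idempotents in Banach algebras are rigid (Lemma~\ref{lem:idempotents}), and that rigidity is what makes such a deformation possible.
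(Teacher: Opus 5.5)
Your overall strategy is the paper's: translate conventions and invoke \cite[Thm~2.2.2]{Karoubi68}. Your checks (well-definedness of $t$, the $\Cl_{+1}$ versus $\Cl_{-1}$ sign, and the notion of homotopy of triples) are the right ones. The gap is in declaring the rest bookkeeping.

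The paper does not use Karoubi's theorem as a black box. It re-runs his surjectivity argument, and it has to, because \cite[Lemma~2.2.8]{Karoubi68} contains an error in its formulas. That lemma is the step reducing a Laurent-polynomial clutching function to an affine one. The paper's fix proceeds as follows.
\begin{enumerate}
\item Write $\alpha=\alpha_{-1}\zeta^{-1}+\dots+\alpha_{2n-1}\zeta^{2n-1}$.
\item Deform $\alpha\oplus(\zeta^{-1})^{\oplus n}$ by elementary row and column operations into a matrix of the form $A\zeta+B\zeta^{-1}$.
\item Remove the added summands using \cite[Lemma~2.2.7]{Karoubi68}. By that lemma, $(\zeta_{M^\star}^{-1})^{\oplus n}$ with $M^\star=(M,-f,f_1)$ represents the inverse of $(\zeta_M^{-1})^{\oplus n}$.
\end{enumerate}
So a proof that only translates conventions and cites Karoubi leaves exactly this substantive step unjustified as written.

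Your fallback would not close the gap, because it misidentifies the mechanism. Here $A$ is merely a graded Banach algebra, so there is no involution and no polar decomposition. Moreover, the anticommuting part $\tfrac12(J-fJf^{-1})$ of the midpoint $J$ of an arbitrary path from $f$ to $-f$ can even vanish: take a path passing through $f$ at $\theta=\pi/2$. Rigidity of the conjugation orbit of $f$ (the analogue of Lemma~\ref{lem:idempotents}) only supplies a preliminary normal form. After straightening the first path to $\cos\theta f+\sin\theta f_1$, one writes the second as $\alpha(\theta)f\alpha(\theta)^{-1}$ with $\alpha(0)=\id$ and $\alpha(\pi)f=-f\alpha(\pi)$ (\cite[Lemmas~2.2.3, 2.2.4]{Karoubi68}).

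The actual argument is of Atiyah--Bott clutching type:
\begin{enumerate}
\item Extend $\alpha$, using its symmetries, to the circle.
\item Expand it in a Fourier series in $\zeta_M=\cos\phi+\sin\phi\,ff_1$, where $\theta=2\phi$, and truncate to a Laurent polynomial.
\item Lower the pole order by adding the elementary triple $[M,\cos\theta f+\sin\theta f_1,\zeta f\zeta^{-1}]$ and using $\alpha\oplus\zeta\simeq\alpha\zeta^2\oplus\zeta^{-1}$.
\item Linearize as above to $\alpha=\cos\phi+g\sin\phi$.
\item Deform $g$, whose spectrum misses $\R$, to an even $g_1$ with $g_1^2=-1$ anticommuting with $f$ (\cite[Lemma~2.2.9]{Karoubi68}).
\end{enumerate}
Then $f_2:=g_1f$ is a second $A\grotimes\Cl_{-2}$-structure, and the class equals $t[M,f,f_1,f_2]$. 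Injectivity is obtained along the same lines, not from an explicit inverse.
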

\begin{proof}
    Up to some minor modifications, this is proven in~\cite[Thm~2.2.2]{Karoubi68}, see the formula right before Lemma~2.2.3 in~\cite{Karoubi68} for his version of formula~\eqref{eq:t0}.
    We will now indicate the minor modifications and how to prove surjectivity of $t$.
    The injectivity is proven along the same lines and also contained in~\cite{Karoubi68}.
    \begin{enumerate}
        \item Karoubi uses $\Cl_{+1}$ instead of $\Cl_{-1}$.
        \item Every element of $K^{\mathrm{vD}}_A(D^1,S^0)$ can be written in the form 
                \begin{equation*}
                    [M,\cos(\theta)f+\sin(\theta)f_1,f_2(\theta)]\,,
                \end{equation*}
                where $(M,f,f_1)$ is a $A\grotimes \Cl_{-2}$-module and $f_2(0)=-f_2(\pi)=f$.
             This is proven in~\cite[Lemma~2.2.3]{Karoubi68}.
        \item  Every element of $K^{\mathrm{vD}}_A(D^1,S^0)$ can be written in the form 
                \begin{equation}
                \label{eq:triplealpha}
                     [M,\cos(\theta)f+\sin(\theta)f_1,\alpha(\theta) f \alpha(\theta)^{-1}]\,,
                \end{equation}
                where $(M,f,f_1)$ is a $A\grotimes \Cl_{-2}$-module and where $\alpha(\theta)\colon M\to M, \theta\in [0,\pi]$ is a continuous family of even $A$-module isomorphisms satisfying
                \begin{enumerate}
                \item \label{item1} $\alpha(0)=\id$.
                \item \label{item2} $\alpha(\pi)f=-f\alpha(\pi)$.
            \end{enumerate}
            This is proven in~\cite[Lemma~2.2.4]{Karoubi68}.
        \item Consider an $A\grotimes \Cl_{-2}$-module $(M,f,f_1)$ together with $\alpha(\theta),\theta\in[0,\pi]$ a family of $A$-linear automorphisms satisfying \ref{item1} and \ref{item2}.
            Writing $\theta/2=\phi$, we may uniquely extend the function $\alpha(\phi)$ defined on a quarter circle to a function on $\zeta = e^{i\phi}\in U(1) \cong [0,2\pi]/\sim$ satisfying
            \begin{enumerate}
                \item $\alpha(-\zeta)=-\alpha(\zeta)$
                \item $\alpha(\overline{\zeta})=f^{-1}\alpha(\zeta)f$.
            \end{enumerate}
        \item Using $(ff_1)^2=-1$, we can use the fact that the even endomorphisms of $M$ form a Banach algebra to write $\alpha$ as a Fourier series in $\zeta_M=\cos(\phi)+\sin(\phi)ff_1$. 
            \begin{equation*}
                \alpha(\zeta) = \sum_{n\in \Z} \alpha_n \zeta_M^n \,. 
            \end{equation*}
            All even terms $\alpha_{2n}$ vanish.
            By abuse of notation we will further write $\zeta \equiv \zeta_M$.
        \item $\alpha$ is homotopic to a Laurent series, i.e.\ a finite sum 
            \begin{equation*}
                \alpha(\zeta) = \sum_{n=-N}^N \alpha_n \zeta^n \,. 
            \end{equation*}
            This does not change the class of the triple in Equation~\eqref{eq:triplealpha}.
            This is proven in~\cite[Lemma~2.2.5]{Karoubi68}.
        \item It follows by a trigonometric computation that 
        \begin{equation}
        \label{eq:trig}
            \zeta f \zeta^{-1}=\cos(\theta)f+\sin(\theta)f_1.
        \end{equation}
        We can therefore add the elementary triple 
             $[M,\cos(\theta)f+\sin(\theta)f_1,\zeta f \zeta^{-1}]$ and preserve the class of the triple~\eqref{eq:triplealpha}.
             This replaces $\alpha$ by $\alpha \oplus \zeta$ acting on $M \oplus M$.
             There is an elementary homotopy of block matrices $\alpha \oplus \zeta \simeq \alpha \zeta^2 \oplus \zeta^{-1}$.
             We can thus inductively lower the pole order of $\alpha$ to obtain a pole of order $1$:
             \begin{equation*}
                \alpha(\zeta) = \sum_{n=-1}^N \alpha_n \zeta^n \,. 
            \end{equation*}
            This is proven in~\cite[Lemma~2.2.6]{Karoubi68}.
        \item The next step is to reduce to \emph{affine} $\alpha=a_{-1}\zeta^{-1}+a_1\zeta$ as follows.
            In~\cite[Lemma~2.2.8]{Karoubi68} there is a minor error in the formulas, but the following is the fix:
            Let $\alpha=\alpha_{-1}\zeta^{-1}+\dots+\alpha_{2n-1}\zeta^{2n-1}$.
            Elementary row and column operations provide a homotopy
            \begin{equation*}
                \alpha\oplus (\zeta^{-1})^{\oplus n}=\begin{pmatrix}
                    \alpha & & & \\
                    & \zeta^{-1}& & \\
                    & & \ddots & \\
                    & & & \zeta^{-1}
                \end{pmatrix} 
                =
                \begin{pmatrix}
                    \alpha \zeta & & & \\
                    & 1& & \\
                    & & \ddots & \\
                    & & & 1
                \end{pmatrix} \zeta^{-1}
                \simeq
                \begin{pmatrix}
                    \alpha_{-1} & \alpha_1 & \dots &\alpha_{2n-1} \\
                    -\zeta^2 & 1 & 0 & 0 \\
                    \vdots & \ddots & \ddots &\vdots\\
                    0 & \dots & -\zeta^2 & 1
                \end{pmatrix} \zeta^{-1}
                \,.
            \end{equation*}
            The right hand side is of the form $A \zeta+B\zeta^{-1}$.
            Now by~\cite[Lemma 2.2.7]{Karoubi68}, the additive inverse of $(\zeta_M^{-1})^{\oplus n}$ is the class represented by $(\zeta^{-1}_{M^\star})^{\oplus n}$.\footnote{Let $(M,f,f_1)$ be an $A\grotimes \Cl_{-2}$-module. Then $M^{\star}:=(M,-f,f_1)$.
            One has $[E^\star,\zeta_{E^*},\zeta_{E^\star}^{-1}]=-[E,\zeta_E,\zeta_E^{-1}]$.
            The notation is $[E,\zeta_E,\alpha]\equiv [E,\zeta_E f \overline{\zeta}_E,\alpha f \alpha^{-1}]$.
            Note that $\zeta_Ef\overline{\zeta_E}=\cos(\theta)f+\sin(\theta)f_1$, so this re-expresses Equation~\eqref{eq:triplealpha}.}
            That is, $\alpha$ is in the image of the affine cocycles.
            \item We can now conclude that $t$ is surjective as follows.
                Let $\alpha$ be affine. By \ref{item1}, we may write $\alpha = \cos(\phi)+g\sin(\phi)$.
                Since $\alpha$ is invertible, the spectrum of $g$ does not intersect the real axis.
                Using functional calculus, the spectrum may be deformed to $\{\pm i\}$:
                By~\cite[Lemma~2.2.9]{Karoubi68}, there is a homotopy $g_t$ of even $A$-linear operators such that $\alpha_t=\cos \phi + g_t\sin \phi $ satisfies \ref{item1} and \ref{item2} with $g_0=g$ and $g_1^2=-1$.
                \ref{item2} implies that $g_t$ anticommutes with $f$.
                Setting $f_2:=g_1f$ yields another odd graded $A$-linear operator that anticommutes with $f$ and squares to $-1$, and so a second graded $A \grotimes \Cl_{-2}$-module structure lifting the given $A \grotimes \Cl_{-1}$-module.
                Furthermore $\alpha_1f\alpha_1^{-1}=\cos(2\phi)f+\sin(2\phi)f_2$ by the same computation as \eqref{eq:trig}.
                Comparing with Equation~\eqref{eq:t0}, we see that the triple 
                \begin{equation*}
                    [M,\cos(\theta)f+\sin(\theta)f_1,\alpha(\theta) f \alpha(\theta)^{-1}] = [M,\cos(\theta)f+\sin(\theta)f_1,\cos(\theta)f+\sin(\theta)f_2]
                \end{equation*}
                lies in the image of $t$.
                This shows that $t$ is surjective.\qedhere
    \end{enumerate}
\end{proof}

\begin{proof}[Proof of Theorem~\ref{thm:gradedbottperiodicity}]
Using Equation~\eqref{eq:tandalpha}, $\overline{\alpha}_{A \grotimes \Cl_{-1}}$ is an equivalence if we can show $\Omega t_A$ is an equivalence.
By setting $A = B \grotimes \Cl_{+1}$ and using the Morita equivalence $\Cl_{1,1} \Morita \R$ this would then prove that $\overline{\alpha}_{B}$ is an equivalence for all $B$.
By Lemma~\ref{lem:eqoffunctors}, it is sufficient to prove that $\Omega t_A\colon \Omega k^{\ABS}_{A\grotimes \Cl_{-1}}\to \Omega^2 k_A^{\ABS}$ induces an isomorphism on $\pi_0$.

    By Lemma~\ref{lem:tandalphasquare} we need to show that 
    \begin{equation*}
                \Omega t_A\colon \fib\left(i_-^{\gp}\colon k^{C_2}_{A\grotimes \Cl_{-2}} \to k^{C_2}_{A\grotimes \Cl_{-1}}\right) \to \fib\left(k^{C_2}_{A\grotimes \Cl_{-1}}(D^1) \to k^{C_2}_{A\grotimes \Cl_{-1}}(S^0)\times_{k^{C_2}_A(S^0)} k^{C_2}_A(D^1)\right)
    \end{equation*}
            induces an isomorphism on $\pi_0$.
            Using the explicit expression for $\pi_0 \Omega t_A$ derived in Equation~\eqref{eq:t0}, we are done by Karoubi's result \ref{th:karoubishift}.
    \end{proof}

\section{Applications}
\label{sec:applications}
\subsection{The bootstrap class and K\"{u}nneth formula}
\label{sec:bootstrap}

In this and the following sections, graded Banach algebras are not assumed to be unital, see Section~\ref{sec:nonunital}.
Let $A,B$ be graded Banach algebras.
It is natural to ask when the natural map
\begin{equation}
    \label{eq:kuennethmap}
    K^{\mathrm{gr}}_A \otimes_{KO} K^{\mathrm{gr}}_B \to K^{\mathrm{gr}}_{A\grotimes B}
\end{equation}
is an equivalence of spectra.
If \eqref{eq:kuennethmap} is an equivalence, we say that \emph{the K\"{u}nneth formula} holds for $A,B$.
\begin{definition}
    Let $A$ be a graded Banach algebra.
    $A$ is said to lie in the \emph{graded bootstrap class} if the K\"{u}nneth formula~\eqref{eq:kuennethmap} holds for every graded Banach algebra $B$.
\end{definition}
There is a version of the previous definition for ungraded Banach algebras, in which case we will say that $A$ lies in the \emph{ungraded bootstrap class}.
The corresponding notion for complex nuclear $C^*$-algebras has been thoroughly investigated, see~\cite[Def.~22.3.4 and Sec.~23.1]{MR1656031} for further reference and~\cite{uuye2012notekunneththeoremnonnuclear} in the nonnuclear case.
For a modern account, see~\cite[3.2]{bunke2023survey}.
\begin{remark}
    For any commutative ring spectrum $R$ and $R$-modules $M,N$ there is a strongly convergent spectral sequence~\cite[Ch.~IV, Thm.~4.1]{EKMM},\cite[Prop.~7.2.1.19]{HA}
    \begin{equation}
        E^2_{p,q}=\Tor_{p,q}^{\pi_* R}(\pi_*M,\pi_*N) \Rightarrow \pi_{p+q}(M\otimes_R N) \,,
    \end{equation}
    which can be used to compute the homotopy groups of $K^{\mathrm{gr}}_A\otimes_{KO} K^{\mathrm{gr}}_B$.

    The above spectral sequence simplifies drastically when working with the $K$-theory of complex (graded) Banach algebras or general modules over $KU$, since $\pi_*KU=\Z[u,u^{-1}]$ is a principal ideal domain.
    Hence, $\Tor_{p,q}=0$ for $p\geq 2$ and the spectral sequence collapses on the $E^2$-page to yield the short exact sequence
    \begin{equation*}
        0 \to \pi_*M\otimes_{\pi_* KU} \pi_*N \to \pi_*(M\otimes_{KU}N) \to \Tor^{\pi_*KU}_{1,*}(M,N) \to 0 \,.
    \end{equation*}
    For $M=K_A$ and $N=K_B$ this is classically known as the K\"{u}nneth theorem for ungraded complex $K$-theory, cf.\ \cite[Thm.~23.1.3]{MR1656031}.
\end{remark}

\begin{lemma}
    \label{lem:ungradedbootstrap}
    Let $A$ be an ungraded algebra that lies in the ungraded bootstrap class.
    Then $A$, regarded as a graded algebra, also lies in the \emph{graded} bootstrap class.
\end{lemma}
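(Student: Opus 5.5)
Fix an ungraded $A$ in the ungraded bootstrap class and an arbitrary graded $B$; we must show that $K^{\mathrm{gr}}_A\otimes_{KO}K^{\mathrm{gr}}_B\to K^{\mathrm{gr}}_{A\grotimes B}$ is an equivalence. The idea is to reduce the graded factor $B$ to an ungraded algebra via Clifford algebras. By Proposition~\ref{prop:gradingvsclifford} and Proposition~\ref{prop:kC_2}, $k^{C_2}_B\simeq k_{|B\grotimes \Cl_{+1}|}$. Since $A$ is purely even, the graded tensor product with $A$ carries no Koszul signs. Hence $|A\grotimes B\grotimes \Cl_{+1}|\cong A\otimes|B\grotimes\Cl_{+1}|$ as ungraded Banach algebras, and similarly for $B\grotimes\Cl_{-1}$ in place of $B$.

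\textbf{Step 1: Künneth for $k^{C_2}$.} Inverting $\beta$, the identification above gives $K^{C_2}_{A\grotimes B}\simeq K_{A\otimes|B\grotimes\Cl_{+1}|}$. The ungraded bootstrap hypothesis on $A$ then identifies this with $K_A\otimes_{KO}K_{|B\grotimes\Cl_{+1}|}\simeq K^{\mathrm{gr}}_A\otimes_{KO}K^{C_2}_B$, using $K^{\mathrm{gr}}_A\simeq K_A$ by Example~\ref{example:ungraded}. One must check that this chain is the lax monoidal structure map of $k^{C_2}$ composed with $k\to k^{C_2}$ of Theorem~\ref{thm:abslaxmonoidal}. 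On module categories both send $(P,N)\mapsto P\otimes N$, with $N$ carrying its grading operator (equivalently its $\Cl_{+1}$-action), so the compatibility holds already on the level of Banach functors.

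\textbf{Step 2: pass to cofibers.} The formula $K^{\mathrm{gr}}_C\simeq\cofib(K^{C_2}_{C\grotimes\Cl_{-1}}\to K^{C_2}_C)$ from Section~\ref{sec:periodickthy} is natural. The monoidal transformation $k^{C_2}\to k^{\ABS}$ is compatible with the $k^{\ABS}_A$-module structures, and these are induced from the Smith ideal construction with $A$ acting on both terms of the cofiber. Since $K^{\mathrm{gr}}_A\otimes_{KO}-$ preserves cofibers, we obtain a map of cofiber sequences
\[K^{\mathrm{gr}}_A\otimes_{KO}K^{C_2}_{B\grotimes\Cl_{-1}}\to K^{\mathrm{gr}}_A\otimes_{KO}K^{C_2}_B\to K^{\mathrm{gr}}_A\otimes_{KO}K^{\mathrm{gr}}_B\]
over the corresponding sequence for $A\grotimes B$. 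The first two comparison maps are equivalences by Step 1, applied to $B\grotimes\Cl_{-1}$ and to $B$. By the five lemma the third map, which is the Künneth map~\eqref{eq:kuennethmap}, is an equivalence.

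\textbf{Main obstacle.} The hard part is verifying that the map induced on cofibers is really the lax monoidal structure map of $K^{\mathrm{gr}}$, coherently, rather than merely some equivalence. This requires that the Smith-ideal-induced action of $k^{C_2}_A$ on the arrow $k^{C_2}_{B\grotimes\Cl_{-1}}\to k^{C_2}_B$ be strictly the tensoring by $A$-modules. I would argue this from Corollary~\ref{cor:cofiberlaxmon}: for a purely even $A$, the relevant component $[1]_{\min}\times\{A\}$ only uses the unit $\R\to\R$ on the $A$ side, so no rotation homotopies~\eqref{eq:essentialhomotopy} enter. Consequently the induced map on cofibers is the cofiber of the levelwise $k^{C_2}$ structure maps.
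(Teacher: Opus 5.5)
Your proof is correct and follows the paper's argument. Like the paper, you first obtain $K_A\otimes_{KO}K^{C_2}_B\simeq K^{C_2}_{A\grotimes B}$ from $K^{C_2}_B\simeq K_{|B\grotimes\Cl_{+1}|}$ and the ungraded hypothesis, then compare the bifiber sequences $K^{C_2}_{B\grotimes\Cl_{-1}}\to K^{C_2}_B\to K^{\mathrm{gr}}_B$ after tensoring with $K_A\simeq K^{\mathrm{gr}}_A$, and conclude by the five lemma.

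Your extra check that the induced map on cofibers is the actual lax structure map is something the paper leaves implicit, and it is right. The reason is that restricting the product on $\cofib(I\to R)$ along $R\to\cofib(I\to R)$ is the cofiber of the $R$-module map $I\to R$. This holds for every $A$; evenness of $A$ is needed only to make the restriction along $K_A\to K^{\mathrm{gr}}_A$ an equivalence.
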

\begin{proof}
    By assumption, we have $K_A\otimes_{KO} K_B \simeq K_{A\otimes B}$ for all ungraded Banach algebras $B$.
    Note that there is a natural equivalence $K_A\simeq K^{\mathrm{gr}}_A$.
    Let $B$ be a graded Banach algebra.
    There is a bifiber sequence of $KO$-modules
    \begin{equation*}
        K^{C_2}_{B\grotimes \Cl_{-1}} \to K^{C_2}_B \to K^{\mathrm{gr}}_B \,.
    \end{equation*}
    Firstly, the map $K_A\otimes_{KO} K^{C_2}_B \to K^{C_2}_{A\grotimes B}$ is an equivalence.
    This follows since we can naturally identify $K^{C_2}_B$ with $K_{|B\grotimes \Cl_{+1}|}$ as $KO$-modules and since $A$ lies in the ungraded bootstrap class by commutativity of the following diagram:
    \begin{equation*}
        \begin{tikzcd}
            K_A \otimes_{KO} K^{C_2}_B \ar[d,"\simeq"]\ar[rr]& &K^{C_2}_{A\grotimes B} \ar[d,"\simeq"] \\
            K_A \otimes_{KO} K_{|B\grotimes \Cl_{+1}|} \ar[r,"\simeq"] & K_{A\otimes|B\grotimes \Cl_{+1}|}\ar[r,"\simeq"]&K_{|A\grotimes B \grotimes \Cl_{+1}|}
        \end{tikzcd} \,.
    \end{equation*}
    Secondly, we obtain the following diagram, in which the horizontal sequences are bifiber sequences and the two vertical maps on the left are equivalences by the above:
    \begin{equation*}
        \begin{tikzcd}
            K_A\otimes_{KO} K^{C_2}_{B\grotimes \Cl_{-1}} \ar[r]\ar[d] & K_A\otimes_{KO} K^{C_2}_{B} \ar[r]\ar[d] & K_A\otimes_{KO} K^{\mathrm{gr}}_B \ar[d] \\
            K^{C_2}_{A\grotimes B \grotimes \Cl_{-1}} \ar[r]& \ar[r] K^{C_2}_{A\grotimes B} & K^{\mathrm{gr}}_{A\grotimes B}
        \end{tikzcd} \,.
    \end{equation*}
    This proves that $K^{\mathrm{gr}}_A\otimes_{KO} K^{\mathrm{gr}}_B \simeq K^{\mathrm{gr}}_{A\grotimes B}$ and hence $A$ lies in the graded bootstrap class.
\end{proof}

We do not expect the following proposition to be sharp.
\begin{proposition}
    The graded bootstrap class is closed under Morita equivalence, filtered colimits, finite direct sums and under $2$-out-of-$3$ for linearly split short exact sequences.
    It contains the ungraded bootstrap class.
    It contains all finite-dimensional Banach algebras.
\end{proposition}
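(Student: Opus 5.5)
We check the closure properties one at a time; each reduces to a formal property of the Künneth map \eqref{eq:kuennethmap}, viewed as a natural transformation of functors of $A$ for a fixed graded Banach algebra $B$:
\begin{equation*}
K^{\mathrm{gr}}_{(-)}\otimes_{KO}K^{\mathrm{gr}}_B\Rightarrow K^{\mathrm{gr}}_{(-)\grotimes B}.
\end{equation*}
This transformation is natural in bimodules, because $K^{\mathrm{gr}}$ is lax symmetric monoidal on $\sBimBanInf$ by Theorem~\ref{thm:periodicKtheorylax}.

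\emph{Morita equivalence.} A Morita equivalence $A\Morita A'$ is an equivalence in $\sBimBanInf$, and $-\grotimes B$ carries it to an equivalence $A\grotimes B\Morita A'\grotimes B$. Naturality then transports the Künneth equivalence from $A$ to $A'$.

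\emph{Filtered colimits.} Take a filtered system of graded Banach algebras with contractive transition maps. Two inputs are needed:
\begin{itemize}
\item $K^{\mathrm{gr}}$ preserves such filtered colimits (Theorem~\ref{thm:periodicKtheorylax}), and $-\otimes_{KO}K^{\mathrm{gr}}_B$ commutes with colimits.
\item $\colim^{\le}_i(A_i\grotimes B)\cong(\colim^{\le}_iA_i)\grotimes B$. This is the step to verify: the projective tensor product with a fixed Banach space is a left adjoint on $\sBan_{\le}$, hence preserves colimits in $\Alg(\sBan_{\le})$. Filtered colimits of algebras are computed on underlying Banach spaces, so this suffices.
\end{itemize}

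\emph{Two-out-of-three for linearly split exact sequences.} Let $0\to I\to A\to A/I\to0$ be linearly split. By the nonunital extension Proposition following Theorem~\ref{thm:periodicKtheorylax}, $K^{\mathrm{gr}}$ is exact on such sequences. Linear splitness ensures that $0\to I\grotimes B\to A\grotimes B\to (A/I)\grotimes B\to 0$ stays exact, since the projective tensor product does not preserve exactness in general. Both sides of the Künneth map then sit in compatible fiber sequences, and two equivalences force the third.

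\emph{Finite direct sums.} These are the case of a split extension $A\oplus A'$, or follow from additivity of $K^{\mathrm{gr}}$ via excision applied to the product Milnor square.

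\emph{Ungraded bootstrap class.} This is Lemma~\ref{lem:ungradedbootstrap}.

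\emph{Finite-dimensional algebras.} Proposition~\ref{prop:semisimplekthy} and its graded analogue would let us replace $A$ by $A/\mathrm{rad}(A)$. This uses that the radical is a closed graded ideal and that the ungraded argument goes through via Proposition~\ref{prop:kC_2}. It gives an equivalence on $K^{\mathrm{gr}}$ both for $A$ and for $A\grotimes B\to (A/\mathrm{rad}A)\grotimes B$, the latter having contractible kernel group. A safer alternative is to use the linearly split sequence $\mathrm{rad}(A)\to A\to A/\mathrm{rad}(A)$ and show that a finite-dimensional nilpotent algebra, and its tensor with $B$, has vanishing $K^{\mathrm{gr}}$. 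The vanishing holds because $t\mapsto tx$ contracts a nilpotent nonunital algebra, though tensoring with $B$ needs care with norms. So the unipotent case reduces to semisimple $A$.

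By graded Wedderburn--Artin (Proposition~\ref{prop:wedderburn}) and Morita invariance, a semisimple $A$ reduces to finite sums of the ten graded division algebras. Via Example~\ref{ex:baby8per} and $\Cl$-periodicity these are $\C$, $\Cxl_1$, $\mathbb{H}$ and the $\Cl_{\pm n}$.
\begin{itemize}
\item For $\Cl_{-n}$, Corollary~\ref{cor:kuennethforclifford} periodified by $[\beta^{-1}]$ gives the result.
\item $\mathbb{H}\grotimes\Cl_{\pm1}\cong\Cl_{\mp3}$ is Clifford, so $\mathbb{H}$ is handled by the same Corollary together with the fiber sequence $K^{C_2}_{-\grotimes\Cl_{-1}}\to K^{C_2}_{-}\to K^{\mathrm{gr}}_-$. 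Alternatively $\mathbb{H}$ is ungraded and $\mathbb{H}\otimes\mathbb{H}\Morita\R$, so $\mathbb{H}$ is Morita invertible and Künneth follows formally from strong monoidality on invertibles.
\item $\C$ and $\Cxl_1=\C\grotimes\Cl_1$ reduce to $\C$. Here one uses that $K_\C=KU=KO\otimes\Sigma^{-2}C\eta$, a finite $KO$-cell module, realized as $\R\to\C$ with cofiber $\Cl$-shifts (Wood). Alternatively $\C\Morita\Cl_{-1}\grotimes\Cl_{-1}$-even parts.
\end{itemize}

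I expect the main obstacle to be the case of $\C$ and the radical reduction. For $\C$, I would try a graded Wood fiber sequence $\R\to\C$ that is natural in $B$ and exhibits $K^{\mathrm{gr}}_{\C\grotimes B}$ as a cofiber of Clifford shifts of $K^{\mathrm{gr}}_B$.
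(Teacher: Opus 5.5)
The closure part of your proof matches the paper's own argument for this proposition. That covers Morita invariance by naturality, filtered colimits via $\colim^{\leq}(A_i\grotimes B)\cong(\colim^{\leq}A_i)\grotimes B$, two-out-of-three from exactness of $K^{\mathrm{gr}}$ together with preservation of linearly split sequences by $-\grotimes B$, direct sums, and Lemma~\ref{lem:ungradedbootstrap}. The paper proves the finite-dimensional claim only later, in Corollary~\ref{cor:periodicKunneth}, by the reduction you give: radical, graded Wedderburn--Artin, the ten graded division algebras, and Corollary~\ref{cor:kuennethforclifford}. Your radical step, using that $\mathrm{rad}(A)\grotimes B$ is a nilpotent ideal so the kernel of $\GL_n(A\grotimes B)\to\GL_n((A/\mathrm{rad}A)\grotimes B)$ is contractible, is fine and more explicit than the paper.

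The genuine gap is the case of $\C$, and hence $\Cxl_1$, which you flag but do not prove. Your fallback ideas cannot close it. First, $\C$ is not graded Morita equivalent to any real Clifford algebra. The even part of the center, a graded Morita invariant, is $\C$ for $\C$ but $\R$ for every $\Cl_{p,q}$. So no ``even part'' manipulation reduces it to Corollary~\ref{cor:kuennethforclifford}. Second, a graded Wood sequence natural in an arbitrary graded $B$ is not available at this stage, because $|B\grotimes\Cl_{-1}|\not\cong|B|\otimes\C$ once $B$ has odd elements. In the paper the graded Wood sequence (Theorem~\ref{thm:wood}) is a \emph{consequence} of $\C$ being bootstrap, not an input.

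The missing idea is to avoid graded test algebras altogether. By Lemma~\ref{lem:ungradedbootstrap} it suffices to show that $\C$ lies in the \emph{ungraded} bootstrap class, and for ungraded $A$ one does have $|A\grotimes\Cl_{-1}|\cong A\otimes\C$.
\begin{itemize}
\item Apply the Karoubi bifiber sequence $K^{\mathrm{gr}}_{B^{\op}}\to K^{C_2}_B\to K_{|B|}$ to $B=A\grotimes\Cl_{-1}$.
\item Use $K^{C_2}_{A\grotimes\Cl_{-1}}\simeq K_{|A\grotimes\Cl_{1,1}|}\simeq K_A$ and $K^{\mathrm{gr}}_{A^{\op}\grotimes\Cl_{+1}}\simeq\Sigma K_{A^{\op}}\simeq\Sigma K_A$. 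This gives a natural bifiber sequence $\Sigma K_A\to K_A\xrightarrow{c}K_{A\otimes\C}$.
\item The complexification square $\ModEn_A\times\VectEn_{\R}\to\ModEn_A\times\VectEn_{\C}$ turns the K\"unneth maps into a map of bifiber sequences out of $K_A\otimes_{KO}(\Sigma KO\xrightarrow{\eta}KO\to KU)$. Its first two components are equivalences, so the third is too (Corollary~\ref{cor:complexnumbersbootstrap}).
\item $\Cxl_1=\C\grotimes\Cl_{+1}$ then follows because the bootstrap class is closed under $\grotimes$.
\end{itemize}

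Two smaller points. Your ``safer alternative'' is wrong as stated: $x\mapsto tx$ is not multiplicative unless $N^2=0$. Filter by powers of $N$ instead, or just use your contractible-kernel argument. Also, deducing K\"unneth for $\mathbb{H}$ ``formally from strong monoidality on invertibles'' is circular, since Corollary~\ref{cor:strongmonoidalfd} is itself derived from Corollary~\ref{cor:periodicKunneth}. This is unnecessary anyway, because $\mathbb{H}\Morita\Cl_{\pm 4}$.
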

\begin{proof}
    Morita-stability is clear.
    Stability under extensions, ideals and quotients for linearly split short exact sequences follows from the fact that $K^{\mathrm{gr}}$ is exact and that $-\grotimes B$ preserves these sequences.
    Theorem~\ref{thm:periodicKtheorylax} proves that $K^{\mathrm{gr}}$ commutes with filtered colimits. 
    Using that $-\grotimes B$ commutes with filtered colimits proves stability of the K\"{u}nneth formula under filtered colimits.
\end{proof}
\begin{remark}
    The bootstrap class heavily depends on the choice of tensor product via Equation~\eqref{eq:kuennethmap}.
    The $C^*$-tensor products $\otimes_{\mathrm{min}},\otimes_{\mathrm{max}}$ and their graded variants have preferable properties and larger bootstrap classes~\cite[Sec.~23.1]{MR1656031}.
\end{remark}
\subsection{Wood sequence and finite-dimensional K\"unneth theorem}
\label{sec:Wood}
We prove a version of the Wood sequence for graded Banach algebras $A$.
For a homotopical proof in the case $A =\R$ see~\cite{MR3515195}, and for an equivariant version see~\cite[Section 9.2]{MR3570153}.
A version for the $K$-theory of $C(X)$ is in~\cite[III, Thm.~5.18]{karoubi_k-theory_1978}.
A version for ungraded real $C^*$-algebras is~\cite[Theorem 1.18.]{MR1935138}, and a version for Kasparov theory is proven in~\cite[Section 3.2]{guerin2019exact}.

\begin{theorem}[Wood sequence]
    \label{thm:wood}
    Let $A$ be a graded Banach algebra.
    Then there is a natural bifiber sequence of $KO$-modules
    \begin{equation*}
        \Sigma K^{\mathrm{gr}}_A \xrightarrow{\eta} K^{\mathrm{gr}}_A \xrightarrow{c} K^{\mathrm{gr}}_{A\otimes_\R \C} \,,
    \end{equation*}
    where $c$ is induced from complexification and $\eta$ is multiplication by $\eta\in \pi_1KO \cong \pi_1\mathbb{S}$.
    Moreover, using complex Bott periodicity and rotating, we obtain
    \begin{equation*}
        \Sigma^{-2}K^{\mathrm{gr}}_A \xrightarrow{c} \Sigma^{-2}K^{\mathrm{gr}}_{A\otimes_\R \C}\simeq  K^{\mathrm{gr}}_{A\otimes_\R \C} \xrightarrow{r} K^{\mathrm{gr}}_A \,,
    \end{equation*}
    where the map $r$ is induced by forgetting the complex structure.
\end{theorem}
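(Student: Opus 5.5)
The plan is to realize the Wood sequence by tensoring with a fixed cofiber sequence of $KO$-modules and then using lax monoidality together with the K\"unneth results. The classical Wood sequence $\Sigma KO \xrightarrow{\eta} KO \xrightarrow{c} KU$ is the case $A=\R$. I will first establish this case within the present framework, then transport it to general $A$ via the $KO$-module structure.

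\textbf{Step 1: the map to $K^{\mathrm{gr}}_{A\otimes_\R\C}$ as a base change.} Complexification $A\to A\otimes_\R\C = A\grotimes\C$ is the algebra map $\id_A\grotimes(\R\to\C)$. By lax monoidality (Theorem~\ref{thm:periodicKtheorylax}), $c$ factors as
\[
K^{\mathrm{gr}}_A\simeq K^{\mathrm{gr}}_A\otimes_{KO}KO\to K^{\mathrm{gr}}_A\otimes_{KO}K^{\mathrm{gr}}_\C\to K^{\mathrm{gr}}_{A\grotimes\C}.
\]
The last map is an equivalence because $\C$ is finite-dimensional and so lies in the graded bootstrap class (proposition above, or Corollary~\ref{cor:periodicKunneth}). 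Also $K^{\mathrm{gr}}_\C\simeq KU$ by Example~\ref{ex:ku} and periodicity. It therefore suffices to show that $\cofib(KO\to KU)\simeq\Sigma^2KO$ as $KO$-modules, with the connecting map identified with $\eta$; equivalently, that $\fib(c)\simeq\Sigma KO$ via $\eta$. Tensoring over $KO$ with $K^{\mathrm{gr}}_A$ preserves fiber sequences and yields the first sequence.

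\textbf{Step 2: the case $A=\R$, done algebraically.} I would use the Clifford picture. Ungraded $\C$ is $|\Cl_{-1}|$, and one can identify $\C$ with $\Cl_{-1}\grotimes\Cl_{+1}$ up to Morita equivalence, since $|\Cl_{-1}|\cong\C$ and the ungraded algebra is obtained from graded data. More concretely, Proposition~\ref{prop:gradingvsclifford} gives $k^{C_2}_{B}\simeq k_{|B\grotimes\Cl_{+1}|}$, and with $B=\Cl_{-1}$ this gives $k^{C_2}_{\Cl_{-1}}\simeq k_{|\Cl_{1,1}|}\simeq ko$. Better still, the defining cofiber sequence $K^{C_2}_{\Cl_{-1}}\to K^{C_2}_\R\to K^{\mathrm{gr}}_\R$ with $A=\Cl_{-1}$ reads
\[
K^{C_2}_{\Cl_{-2}}\to K^{C_2}_{\Cl_{-1}}\to K^{\mathrm{gr}}_{\Cl_{-1}}\simeq\Sigma^{-1}KO.
\]
Here $K^{C_2}_{\Cl_{-1}}\simeq K_{|\Cl_{1,1}|}\simeq KO$ and $K^{C_2}_{\Cl_{-2}}\simeq K_{|\Cl_{1,2}|}\simeq K_{|\Cl_{-1}|\otimes M_2}\simeq KU$, using that $|\Cl_{1,2}|\cong M_2(\C)$. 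Under these identifications the first map is the forgetful functor from complex to real modules, i.e.\ $r$. Rotating gives the second sequence $KU\xrightarrow{r}KO\to\Sigma^{-1}KO$. Naturality in $A$ follows by replacing $\R$ with $A$ throughout, giving directly $K^{\mathrm{gr}}_{A\otimes\C}\xrightarrow{r}K^{\mathrm{gr}}_A\to\Sigma^{-1}K^{\mathrm{gr}}_A$, which is the second form once rotated. Dualizing via complex Bott periodicity, $\Sigma^{-2}K^{\mathrm{gr}}_{A\otimes\C}\simeq K^{\mathrm{gr}}_{A\otimes\C}$, then gives the first form.

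\textbf{Step 3: identifying the connecting map with $\eta$; this is the main obstacle.} The boundary map $\Sigma^{-1}K^{\mathrm{gr}}_A\to\Sigma K^{\mathrm{gr}}_A$, or its shifted version, is by construction $KO$-linear and natural, hence equals multiplication by some class in $\pi_1KO=\Z/2$. It therefore suffices to check that it is nonzero for $A=\R$. That holds because otherwise $KU\simeq KO\oplus\Sigma^2KO$, which fails on $\pi_1$: $\pi_1KU=0$ while $\pi_1KO=\Z/2$. The delicate point is to make sure the identifications in Step 2 (Morita equivalences $|\Cl_{1,2}|\Morita\C$, and forgetful functor versus $r$) are compatible with the $KO$-module structures coming from Theorem~\ref{thm:abslaxmonoidal}. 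I would handle this by noting that all these maps are induced by bimodules in $\sBimBanInf$ that are $\R$-linear, so they commute with the $\VectEn_\R$-action.
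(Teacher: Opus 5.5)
Your Step~1 is the same final move the paper makes: tensor the case $A=\R$ with $K^{\mathrm{gr}}_A$ over $KO$. The problem is the one substantive input, that $K^{\mathrm{gr}}_A\otimes_{KO}K^{\mathrm{gr}}_\C\to K^{\mathrm{gr}}_{A\grotimes\C}$ is an equivalence for every graded $A$. You assume it, and the results you cite for it depend on this very theorem. The finite-dimensional clause of the closure proposition is only substantiated by Corollary~\ref{cor:periodicKunneth}. That corollary handles the complex division algebras $\C$ and $\Cxl_1$ through Corollaries~\ref{cor:complexcliffordbootstrap} and~\ref{cor:complexnumbersbootstrap}, and these are deduced from the ungraded Wood sequence via diagram~\eqref{eq:WoodKU}. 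Corollary~\ref{cor:kuennethforclifford} does not help: it covers only real Clifford algebras, and $\C$ is not graded Morita equivalent to one, since its graded center is $\C$. Once the case $A=\R$ is known, ``$\C$ is bootstrap'' is essentially equivalent to the theorem. So this is exactly where the work lies, and as written your argument is circular.

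Your remark in Step~2 that one may ``replace $\R$ by $A$ throughout'' would supply the missing input, but only for \emph{ungraded} $A$. For graded $A$ the sequence $K^{C_2}_{A\grotimes\Cl_{-2}}\to K^{C_2}_{A\grotimes\Cl_{-1}}\to K^{\mathrm{gr}}_{A\grotimes\Cl_{-1}}$ becomes $K_{|A\grotimes\Cl_{-1}|}\to K_{|A|}\to\Sigma^{-1}K^{\mathrm{gr}}_A$, which involves ungraded $K$-theory of underlying algebras. For example, for $A=\Cl_{+1}$ one has $K_{|A|}\simeq KO\oplus KO\not\simeq\Sigma KO\simeq K^{\mathrm{gr}}_A$. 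Likewise $|\Cl_{-1}\grotimes\Cl_{-1}|\cong\mathbb{H}\ncong\C\oplus\C$.

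The repair is what the paper does. For ungraded $B$, use $|B\grotimes\Cl_{-1}|\cong B\otimes\C$ to obtain natural sequences. Map $K_B\otimes_{KO}(\Sigma KO\xrightarrow{\eta}KO\xrightarrow{c}KU)$ into them via the module-level square expressing that complexification commutes with the $\VectEn_\R$-action. Two of the three vertical maps are equivalences, so $K_B\otimes_{KO}KU\simeq K_{B\otimes\C}$. Then pass to graded $A$ with Lemma~\ref{lem:ungradedbootstrap}; only then does your Step~1 go through. This map of sequences is also what identifies the connecting map with $\eta$ for general $A$. Your Step~3 claim that a natural $KO$-linear map is automatically multiplication by a class in $\pi_1KO$ is justified only for $A=\R$.

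Finally, your Step~2 only produces the sequence containing $r$, and the sequence containing $c$ does not follow by ``dualizing''. The paper obtains it from a second, independent sequence: Karoubi's $K^{\mathrm{gr}}_{D^{\op}}\to K^{C_2}_D\to K_{|D|}$ at $D=B\grotimes\Cl_{-1}$. For $B=\R$ you could instead argue directly. The map $\Sigma^{-2}KO\to KU$ in the rotated sequence is $KO$-linear, hence $n$ times the composite of $c$ with multiplication by $u^{-1}$ for some $n\in\Z$. Exactness on $\pi_2$, using $c(\pi_4KO)=2\pi_4KU$ and $r$ surjective on $\pi_2$, then forces $n=\pm1$.
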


We will prove the above theorem by showing that $\C$ lies in the bootstrap class.
This we will deduce from the ungraded case, as the ungraded Wood sequence is a quick corollary of our setup:
\begin{lemma}[Ungraded Wood sequence]
    Let $A$ be an ungraded Banach algebra.
    Then there is a natural bifiber sequence of $KO$-modules
    \begin{equation*}
        \Sigma K_A \to K_A \to K_{A\otimes_\R \C} \,.
    \end{equation*}
\end{lemma}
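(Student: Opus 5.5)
We obtain the ungraded Wood sequence from the graded periodicity already proven, applied to a suitable graded algebra whose underlying ungraded $K$-theory is $K_{A\otimes_\R \C}$. The natural candidate is $\Cl_{-1}$. Ungradedly $|\Cl_{-1}|\cong\C$, and by Proposition~\ref{prop:gradingvsclifford} a graded $A\grotimes\Cl_{-1}$-module is the same as an ungraded module over $|A\grotimes\Cl_{-1}\grotimes\Cl_{+1}| \cong |A\grotimes \Cl_{1,1}|\cong M_2(A\otimes_\R\C)$-type data. More precisely, for $A$ trivially graded, $|A\grotimes \Cl_{+1}| = A\otimes\R[\Z/2]\cong A\times A$, so $k^{C_2}_A\simeq k_A\oplus k_A$. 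Likewise $|A\grotimes\Cl_{-1}\grotimes \Cl_{+1}|\cong A\otimes|\Cl_{1,1}|\cong M_2(A)$, so $k^{C_2}_{A\grotimes\Cl_{-1}}\simeq k_A$, which is Example~\ref{example:ungraded}. This case does not involve $\C$, so I instead apply the same analysis to $\Cl_{-2}$ and $\Cl_{-1}$ as coefficients.

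The cleanest route uses the defining cofiber sequence with $A$ replaced by $A\grotimes\Cl_{+1}$:
\[
k^{C_2}_{A\grotimes \Cl_{1,1}} \to k^{C_2}_{A\grotimes\Cl_{+1}} \to k^{\ABS}_{A\grotimes\Cl_{+1}} .
\]
By Proposition~\ref{prop:kC_2}(1) the middle term is $k_{|A\grotimes\Cl_{+2}|}=k_{A\otimes \mathbb{H}}$-like. I would rather use $\Cl_{-1}$ directly. We have $k^{C_2}_{A\grotimes\Cl_{-1}}\simeq k_{|A\grotimes\Cl_{1,1}|}\simeq k_A$, and $k^{C_2}_{A}\simeq k_A\oplus k_A$. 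The cofiber of the resulting map is $k^{\ABS}_A\simeq k_A$.

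Now apply the sequence to $A\otimes_\R\C$ viewed via $|A\grotimes \Cl_{+1}|$ with $A$ replaced by $A\grotimes \Cl_{-1}$, using $|A\grotimes \Cl_{-1}| = A\otimes_\R\C$. Proposition~\ref{prop:kC_2}(1) gives $k^{C_2}_{A\grotimes\Cl_{-2}}\simeq k_{|A\grotimes \Cl_{-2}\grotimes\Cl_{+1}|}$ and $k^{C_2}_{A\grotimes\Cl_{-1}}\simeq k_A$. Here $|\Cl_{-2}\grotimes\Cl_{+1}|=|\Cl_{1,2}|\cong M_2(\C)$, so $k^{C_2}_{A\grotimes\Cl_{-2}}\simeq k_{A\otimes\C}$. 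Hence the defining cofiber sequence for $k^{\ABS}_{A\grotimes\Cl_{-1}}$ reads
\[
k_{A\otimes_\R\C}\xrightarrow{\;r\;} k_A \to k^{\ABS}_{A\grotimes\Cl_{-1}} .
\]
I expect the first map to be identified with realification $r$, by tracing the forgetful functor $i_-^*$ through the Morita equivalences $M_2(\C)\Morita\C$ and $|\Cl_{1,1}|\Morita\R$. Invert $\beta$, which commutes with cofibers, and use Proposition~\ref{prop:clifshift}, which gives $K^{\mathrm{gr}}_{A\grotimes\Cl_{-1}}\simeq\Sigma^{-1}K_A$. This yields the bifiber sequence $K_{A\otimes\C}\xrightarrow{r}K_A\to\Sigma^{-1}K_A$. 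Rotating gives $\Sigma K_A\to \Sigma K_{A\otimes \C}\to \Sigma K_A\to K_A$, which matches the stated form only after further use of complex periodicity. Alternatively, the analogous computation with $\Cl_{+1}$ in place of $\Cl_{-1}$ yields complexification, $K_A\xrightarrow{c}K_{A\otimes\C}$, with cofiber $K^{\mathrm{gr}}_{A\grotimes \Cl_{+1}}$ shifted. Rotating then gives $\Sigma K_A\to K_A\to K_{A\otimes_\R\C}$.

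Naturality and the $KO$-linearity of all maps hold because every identification is a Morita or Clifford equivalence natural in $A$. The main obstacle is the identification of the connecting map $\Sigma K_A\to K_A$ with multiplication by $\eta$. This is not needed for the lemma as stated, but it is needed for Theorem~\ref{thm:wood}. By naturality and $KO$-linearity, the connecting map is $K_A\otimes_{KO}(-)$ applied to the case $A=\R$, so it suffices to show that the connecting map for $A=\R$ is a generator of $\pi_1 KO=\Z/2$. That follows because the sequence $\Sigma KO\to KO\to KU$ must be exact on $\pi_1$, and $\pi_1 KU=0$ while $\pi_1 KO\neq0$.
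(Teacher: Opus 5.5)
Your first route, completed with complex periodicity as you indicate, already proves the lemma as stated. Its core is exactly the first half of the paper's proof:
\begin{itemize}
\item plug $A\grotimes\Cl_{-1}$ into the defining cofiber sequence;
\item identify the terms via Proposition~\ref{prop:kC_2}(1) and $|\Cl_{1,2}|\cong M_2(\C)$, $|\Cl_{1,1}|\cong M_2(\R)$;
\item invert $\beta$ and apply Proposition~\ref{prop:clifshift} to get $K_{A\otimes\C}\xrightarrow{r}K_A\to\Sigma^{-1}K_A$.
\end{itemize}

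The difference is in how you finish. You rotate to $\Sigma K_A\to K_A\to\Sigma^2K_{A\otimes\C}$; your displayed rotation should begin with $\Sigma^{-1}K_A$, not $\Sigma K_A$. You then use the natural $KO$-linear equivalence $\Sigma^2K_{A\otimes\C}\simeq K_{A\otimes\C}$ given by the complex Bott class $u\in\pi_2 KU$. The paper instead brings in a second, independent sequence: Karoubi's $K^{\mathrm{gr}}_{B^{\op}}\to K^{C_2}_B\to K_{|B|}$ at $B=A\grotimes\Cl_{-1}$, together with $K_A\simeq K_{A^{\op}}$.

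Your finish is more economical. It uses only the defining sequence and the two periodicity statements, and it avoids the flopposite/duality input behind Proposition~\ref{prop:karvD}. The cost is that it yields the map $K_A\to K_{A\otimes\C}$ only as $u^{-1}\circ\Sigma\partial$, where $\partial\colon\Sigma^{-1}K_A\to\Sigma K_{A\otimes\C}$ is the connecting map. The paper's route exhibits this map directly as complexification $c$, which is what Theorem~\ref{thm:wood} asserts and what the $\eta$-argument via~\eqref{eq:WoodKU} needs.

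Two of your side claims should be dropped or repaired.

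First, the ``alternatively'' sentence does not work as described. Swapping $\Cl_{-1}$ for $\Cl_{+1}$ in the defining (contravariant) sequence only produces restriction functors between real algebras. For example, $|A\grotimes\Cl_{+1}|\cong A\times A$ and $|A\grotimes\Cl_{+2}|\cong M_2(A)$ (not $A\otimes\mathbb{H}$), so complexification never appears. Complexification is covariant extension along $A\to|A\grotimes\Cl_{-1}|$, i.e.\ forgetting the grading on graded $A\grotimes\Cl_{-1}$-modules. Identifying its cofiber requires the Karoubi sequence (Proposition~\ref{prop:karvD}), not an analogue of your computation.

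Second, ``naturality and $KO$-linearity'' alone do not make the connecting map for $A$ equal to $K_A\otimes_{KO}(-)$ applied to the one for $\R$. For that you need the compatibility of the whole sequence with the $\VectEn_\R$-action, as in~\eqref{eq:WoodKU}. This only matters for Theorem~\ref{thm:wood}, not for the present lemma.
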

\begin{proof}
    Consider the bifiber sequence of $KO$-modules
    \begin{equation*}
        K^{C_2}_{B\grotimes \Cl_{-1}} \to K^{C_2}_B \to K^{\mathrm{gr}}_B \,,
    \end{equation*}
    and plug in $B=A\grotimes \Cl_{-1}$.
    Then, this is naturally equivalent to the bifiber sequence
    \begin{equation*}
        K_{|A\grotimes \Cl_{-1}|} \to K_{|A|} \to \Sigma^{-1}K^{\mathrm{gr}}_{A} \,.
    \end{equation*}
    Upon rewriting $|A\grotimes \Cl_{-1}|\cong A\otimes \C$, we obtain
    \begin{equation}
        \label{eq:woodreal}
        K_{A\otimes \C} \xrightarrow{r} K_A \to \Sigma^{-1}K_A \,.
    \end{equation}
    It is easy to see by tracing through the identification that $r$ is induced from forgetting the complex structure.
    This is the rotated version of the Wood sequence.
    Consider the bifiber sequence of $KO$-modules coming from Karoubi $K$-theory (Section \ref{sec:karoubikt-h}):
    \begin{equation*}
        K^{\mathrm{gr}}_{B^{\op}} \to K_B^{C_2} \to K_{|B|} \,.
    \end{equation*}
    Apply this to $B=A\grotimes \Cl_{-1}$ to obtain
    \begin{equation}
        \label{eq:woodcomplex}
        \Sigma K_A\simeq \Sigma K_{A^{\op}} \to K_A \xrightarrow{c} K_{A\otimes \C}.
    \end{equation}
    Here we used that for ungraded algebras $K_A\simeq K_{A^{\op}}$ using the equivalence $\ModEn_A^{\cong} \to \ModEn_{A^{\op}}^{\cong}$ given by taking the adjoint $M \mapsto \Hom_A(M,A)$.
    It is again easy to see that the map $c$ is induced from complexification.

    Finally, we argue that the other map in \eqref{eq:woodcomplex} is induced from $\eta$.
    The case of \eqref{eq:woodreal} is analogous.
    For the special case $A=\R$, there are only two maps of $KO$-modules $\Sigma KO \to KO$ and hence it must coincide with $\eta$.
    For arbitrary $A$ we use naturality.
    Complexification on module categories fits into a commutative diagram:
    \begin{equation*}
        \begin{tikzcd}
            \ModEn_A \times \VectEn_{\R} \ar[r,"\id\times c"]\ar[d,"\otimes"] & \ModEn_A\times \VectEn_{\C} \ar[d,"\otimes"] \\
            \ModEn_A \ar[r,"c"]& \ModEn_{A\otimes \C} 
        \end{tikzcd} \,.
    \end{equation*}
    Upon applying $K$, this induces a map of bifiber sequences where the vertical maps are just multiplication:
    \begin{equation}
    \label{eq:WoodKU}
        \begin{tikzcd}
            K_A \otimes_{KO} \Sigma KO \ar[r,"\id\otimes \eta"]\ar[d,"\simeq"]& K_A\otimes_{KO} KO \ar[d,"\simeq"] \ar[r,"\id\otimes c"] & K_A\otimes_{KO} KU \ar[d] \\
            \Sigma K_A \ar[r]&K_A \ar[r,"c"]& K_{A\otimes \C}
        \end{tikzcd} \,.
    \end{equation}
    Since the cofiber of the zero map $\Sigma K_{\R} \to K_{\R}$ is not $KU$, this identifies $\Sigma K_A \to K_A$ with multiplication by $\eta$.
\end{proof}
\begin{corollary}
    \label{cor:complexnumbersbootstrap}
    The complex numbers $\C$ lie in the ungraded bootstrap class.
\end{corollary}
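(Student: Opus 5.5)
We have to show that for every ungraded Banach algebra $B$ the Künneth map $K_{\C}\otimes_{KO} K_B \to K_{\C\otimes B}$ is an equivalence. The approach is to compare two Wood sequences: the formal one obtained by tensoring the case $A=\R$ with $K_B$ over $KO$, and the natural one for $A=B$ from the preceding lemma. Diagram~\eqref{eq:WoodKU} already compares these.

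First, specialize the ungraded Wood sequence to $A=\R$. This gives a bifiber sequence of $KO$-modules $\Sigma KO \xrightarrow{\eta} KO \xrightarrow{c} K_{\C}$, so $K_{\C}\simeq KO/\eta$. Since $-\otimes_{KO} K_B$ is exact, tensoring with $K_B$ yields a bifiber sequence
\[
K_B\otimes_{KO}\Sigma KO \xrightarrow{\id\otimes\eta} K_B\otimes_{KO} KO \xrightarrow{\id\otimes c} K_B\otimes_{KO} K_{\C}.
\]

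Next, invoke diagram~\eqref{eq:WoodKU} with $A=B$. It gives a map from this sequence to the Wood sequence $\Sigma K_B \to K_B \to K_{B\otimes\C}$. The two left vertical maps are the unit equivalences $K_B\otimes_{KO}\Sigma KO\simeq\Sigma K_B$ and $K_B\otimes_{KO}KO\simeq K_B$. The right vertical map is the lax monoidal structure map $K_B\otimes_{KO}K_{\C}\to K_{B\otimes\C}$, after swapping factors via the symmetric structure. By the five lemma on cofibers, equivalently because a map of cofiber sequences that is an equivalence on two terms is one on the third, this structure map is an equivalence. Hence $\C$ lies in the ungraded bootstrap class.

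The main obstacle is making sure the square in~\eqref{eq:WoodKU} genuinely commutes as a map of bifiber sequences and not merely termwise. Concretely, two things must hold: the left square must commute with the identification of the connecting map as $\eta$, and the right vertical map must be the lax monoidal constraint of $K$ composed with $K_{\R}\to K_{\C}$. I would establish this by deriving the entire row from the natural bifiber sequence $K^{C_2}_{B'\grotimes\Cl_{-1}}\to K^{C_2}_{B'}\to K^{\mathrm{gr}}_{B'}$, which is natural in the Morita variable. The action of $\ModEn_A$ on this sequence, given by the lax monoidal structure of $K^{C_2}$ and $K^{\mathrm{gr}}$ from Theorem~\ref{thm:periodicKtheorylax}, then makes the whole row a map of fiber sequences of $K_A$-linear objects, and setting $B'=\Cl_{-1}$ gives the diagram. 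With that in hand, the rest is formal.
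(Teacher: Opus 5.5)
The core of your argument is exactly the paper's proof: the paper invokes \eqref{eq:WoodKU} and observes that the middle and left vertical maps are equivalences, so the right one is as well.

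You are right that the delicate point is whether \eqref{eq:WoodKU} is a map of bifiber sequences whose left vertical map is an equivalence; the paper treats this as clear. However, your plan for this step does not work as stated.

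Setting $B'=\Cl_{-1}$ in $K^{C_2}_{B'\grotimes\Cl_{-1}}\to K^{C_2}_{B'}\to K^{\mathrm{gr}}_{B'}$ and acting by $B$ does not produce the complexification rows of \eqref{eq:WoodKU}. Instead it produces the rows $K_{\C}\xrightarrow{r}KO\to\Sigma^{-1}KO$ and the realification row \eqref{eq:woodreal},
\[
K_{B\otimes\C}\xrightarrow{r}K_B\to\Sigma^{-1}K_B .
\]
The reason is that a graded $B\grotimes\Cl_{-2}$-module is determined by its even part, which is a $B\otimes\C$-module via the complex structure $f_1f_2$, and $i_-^*$ forgets this complex structure. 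The complexification row of \eqref{eq:WoodKU} comes instead from the Karoubi sequence $K^{\mathrm{gr}}_{D^{\op}}\to K^{C_2}_D\to K_{|D|}$ at $D=B\grotimes\Cl_{-1}$. Its last term $K_{|D|}$ is not one of the functors whose lax monoidal structure Theorem~\ref{thm:periodicKtheorylax} provides.

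Your idea does go through if you use the row you actually obtain. Since $k^{C_2}_{-\grotimes\Cl_{-1}}\to k^{C_2}\to k^{\ABS}$ comes from a Smith ideal, it is a cofiber sequence of $k^{C_2}$-modules for Day convolution (Corollary~\ref{cor:cofiberlaxmon}). After inverting $\beta$ and restricting the action along $K_B\to K^{C_2}_B$, you get an honest map of bifiber sequences
\[
K_B\otimes_{KO}\bigl(K_{\C}\xrightarrow{r}KO\to\Sigma^{-1}KO\bigr)\longrightarrow\bigl(K_{B\otimes\C}\xrightarrow{r}K_B\to\Sigma^{-1}K_B\bigr).
\]
The three vertical maps are as follows.
\begin{enumerate}
\item The middle one is the unit equivalence.
\item The right one is the K\"{u}nneth map $K^{\mathrm{gr}}_B\otimes_{KO}K^{\mathrm{gr}}_{\Cl_{-1}}\to K^{\mathrm{gr}}_{B\grotimes\Cl_{-1}}$. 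It is an equivalence by Corollary~\ref{cor:kuennethforclifford}; this is where Bott periodicity genuinely enters.
\item Under the even-part identifications, the left one is the structure map $K_B\otimes_{KO}K_{\C}\to K_{B\otimes\C}$, which is therefore an equivalence.
\end{enumerate}
In this version the unknown map sits on the left rather than the right. No identification of a connecting map with $\eta$ is needed, and commutativity as a map of bifiber sequences is automatic from the Smith-ideal construction.
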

\begin{proof}
    We can use diagram~\eqref{eq:WoodKU}.
    Since the middle and left vertical maps are clearly equivalences, the right one is an equivalence, too.
\end{proof}
\begin{corollary}
    \label{cor:complexcliffordbootstrap}
    The complex Clifford algebras lie in the graded bootstrap class.
\end{corollary}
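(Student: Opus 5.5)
We need to show that $\Cxl_n$ lies in the graded bootstrap class for every $n$. Complex Clifford periodicity gives $\Cxl_{n+2}\cong \Cxl_n\grotimes_\R \Cl_{1,1}$, which is graded Morita equivalent to $\Cxl_n$ via Example~\ref{ex:(1,1)per}. Since the graded bootstrap class is closed under Morita equivalence, it suffices to treat $\Cxl_0=\C$ and $\Cxl_1=\C\otimes_\R\Cl_{-1}$.

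For $\C$ the argument is immediate. By Corollary~\ref{cor:complexnumbersbootstrap}, $\C$ lies in the ungraded bootstrap class, and Lemma~\ref{lem:ungradedbootstrap} then places the trivially graded algebra $\C$ in the graded bootstrap class.

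For $\Cxl_1$ we want to identify $\Cxl_1\cong \C\grotimes\Cl_{-1}$ as graded real algebras, with $\C$ purely even. We then use the property that the Künneth map is compatible with the monoidal structure in both variables. Concretely, for any graded $B$ we consider the composite
\begin{equation*}
K^{\mathrm{gr}}_{\C}\otimes_{KO}K^{\mathrm{gr}}_{\Cl_{-1}}\otimes_{KO}K^{\mathrm{gr}}_B\longrightarrow K^{\mathrm{gr}}_{\C}\otimes_{KO}K^{\mathrm{gr}}_{\Cl_{-1}\grotimes B}\longrightarrow K^{\mathrm{gr}}_{\C\grotimes\Cl_{-1}\grotimes B}.
\end{equation*}
The first map is an equivalence by the periodic version of Corollary~\ref{cor:kuennethforclifford}. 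That corollary gives an equivalence $k^{\ABS}_B\otimes_{ko}k^{\ABS}_{\Cl_{-n}}\simeq k^{\ABS}_{B\grotimes\Cl_{-n}}$, and inverting $\beta$ preserves it because $(-)[\beta^{-1}]$ is symmetric monoidal. The second map is an equivalence because $\C$ is in the graded bootstrap class. By the associativity coherence of the lax symmetric monoidal structure of Theorem~\ref{thm:periodicKtheorylax}, this composite agrees with the Künneth map for $\Cxl_1$ and $B$, precomposed with the map
\begin{equation*}
K^{\mathrm{gr}}_{\C}\otimes_{KO}K^{\mathrm{gr}}_{\Cl_{-1}}\to K^{\mathrm{gr}}_{\Cxl_1}.
\end{equation*}
That last map is itself an equivalence, by the same Clifford Künneth statement taken with $B=\C$. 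By two-out-of-three, the Künneth map for $\Cxl_1$ and $B$ is an equivalence. The argument says more generally that the graded bootstrap class is closed under $\grotimes\Cl_{-n}$, and likewise under $\grotimes$ with any member of the class.

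The main obstacle is the coherence bookkeeping: checking that the two ways of bracketing the triple product agree as maps of $KO$-modules. This should follow formally from lax symmetric monoidality, but the identification $\Cxl_1\cong\C\grotimes\Cl_{-1}$ must be made in $\sBimBanInf$, and it should be checked to be compatible with the monoidal structure. A secondary point is confirming that the generator conventions (complex scalars even, odd generator squaring to $-1$) match the paper's definition of $\Cxl_1$. If $\Cxl_1$ is instead $\C\grotimes\Cl_{+1}$, the same argument applies, using $\Cl_{+1}\Morita\Cl_{-1}\grotimes\Cl_{2,0}$ together with the shift equivalence of Proposition~\ref{prop:clifshift}.
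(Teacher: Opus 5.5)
Your proof is correct and follows the paper's route. In both, $\C$ is graded bootstrap by Corollary~\ref{cor:complexnumbersbootstrap} and Lemma~\ref{lem:ungradedbootstrap}, and $\Cxl_n$ is then obtained as $\C$ tensored with a real Clifford algebra, using closure of the bootstrap class under $\grotimes$; the paper only asserts this closure, while your two-out-of-three argument with Corollary~\ref{cor:kuennethforclifford} spells it out. Your reduction to $n\in\{0,1\}$ and the sign-convention worry are unnecessary: sending each odd generator $e_j\mapsto i\otimes f_j$ (with $i$ even) gives $\C\grotimes\Cl_{+n}\cong\C\grotimes\Cl_{-n}$ for every $n$, so your argument applies to all $\Cxl_n$ at once.
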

\begin{proof}
    We have that $\C$ is in the graded bootstrap class by combining Corollary~\ref{cor:complexnumbersbootstrap} with Lemma~\ref{lem:ungradedbootstrap}.
    The bootstrap class is closed under tensor product, so $\C\mathrm{l}_n=\C\grotimes \Cl_{+n}$ lies in the bootstrap class.
\end{proof}

\begin{corollary}
        \label{cor:periodicKunneth}
        For every finite-dimensional graded Banach algebra $B$ and any graded Banach algebra $A$, we have the K\"{u}nneth formula
        \begin{equation}
            K^{\mathrm{gr}}_{A\grotimes B} \simeq K^{\mathrm{gr}}_A \otimes_{KO} K^{\mathrm{gr}}_B \,.
        \end{equation}
\end{corollary}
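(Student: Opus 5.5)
The strategy is to show that every finite-dimensional graded Banach algebra $B$ lies in the graded bootstrap class. We reduce $B$ first to a semisimple algebra, then to graded division algebras, and finally to Clifford algebras and the complex Clifford algebras. Each reduction uses one of the closure properties of the bootstrap class from the preceding Proposition: Morita invariance, finite direct sums, $2$-out-of-$3$ for linearly split short exact sequences, and closure under $\grotimes$-products.

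\textbf{Step 1: reduction to the semisimple case.} Since $B$ is finite-dimensional, the radical $\mathrm{rad}(B)$ is a nilpotent graded ideal. The sequence $0\to \mathrm{rad}(B)\to B\to B/\mathrm{rad}(B)\to 0$ is linearly split, because every finite-dimensional subspace is complemented. By $2$-out-of-$3$ it then suffices to treat $\mathrm{rad}(B)$ and $B/\mathrm{rad}(B)$. For the nilpotent nonunital algebra $I=\mathrm{rad}(B)$, the rescaling $s\mapsto$ (multiply degree-$j$ elements of a filtration by $s^j$) shows that $I_+\to \R$ is a homotopy equivalence. Alternatively, one can argue directly from the graded analogue of Proposition~\ref{prop:semisimplekthy}. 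Either way $K^{\mathrm{gr}}_I\simeq 0$ and $K^{\mathrm{gr}}_{A\grotimes I}\simeq 0$, since $A\grotimes I$ is again nilpotent, so the Künneth map for $I$ is trivially an equivalence. I expect this step to be the main obstacle: the contracting homotopy must be compatible with the grading and with the projective tensor norm, and one needs an honest graded version of Proposition~\ref{prop:semisimplekthy}. My first attempt would be to use an even splitting of the radical filtration $I\supset I^2\supset\cdots$ and scale the $k$-th layer by $s^k$. This is an algebra map only on the associated graded, so the homotopy may instead have to be built by the idempotent-lifting/invertibility argument ($1+\mathrm{rad}$ is contractible) applied to $|{-}\grotimes \Cl_{+1}|$. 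Here we use $k^{C_2}_A\simeq k_{|A\grotimes\Cl_{+1}|}$ from Proposition~\ref{prop:kC_2}, and the fact that $K^{\mathrm{gr}}$ is a cofiber of such terms.

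\textbf{Step 2: semisimple algebras reduce to division algebras.} By graded Wedderburn--Artin (Proposition~\ref{prop:wedderburn}), a semisimple $B$ is a finite direct sum of matrix algebras $M_{p|q}(D)$, and each of these is graded Morita equivalent to $D$. By closure under finite direct sums and Morita equivalence, it suffices to treat the ten graded division algebras $\R,\C,\mathbb H,\Cxl_1,\Cl_{\pm1},\Cl_{\pm2},\Cl_{\pm3}$.

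\textbf{Step 3: the division algebras are in the class.} Corollary~\ref{cor:kuennethforclifford} gives the connective Künneth equivalence for $\Cl_{-n}$, with $ko$-coefficients. Inverting $\beta$, which is smashing, turns this into $K^{\mathrm{gr}}_A\otimes_{KO}K^{\mathrm{gr}}_{\Cl_{-n}}\simeq K^{\mathrm{gr}}_{A\grotimes \Cl_{-n}}$, so every $\Cl_{-n}$ lies in the class. Next, $\Cl_{+n}$ is Morita equivalent to $\Cl_{-m}$ for suitable $m$ by the $8$-periodicity lemma, and $\mathbb H\grotimes\Cl_{1}\cong\Cl_{-3}$ together with $\mathbb H \Morita \Cl_{\pm4}$ handles $\mathbb H$. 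Finally, Corollary~\ref{cor:complexcliffordbootstrap} covers $\C$ and $\Cxl_1$. This exhausts the list and completes the proof.
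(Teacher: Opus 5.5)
Your proposal is correct and follows the paper's proof. Graded Wedderburn--Artin, together with closure under finite direct sums and Morita equivalence, reduces to the ten graded division algebras; the real ones come from Corollary~\ref{cor:kuennethforclifford} after inverting $\beta$, the complex ones from Corollary~\ref{cor:complexcliffordbootstrap}, and the radical is handled by the graded analogue of Proposition~\ref{prop:semisimplekthy}. Your packaging of that last step via $2$-out-of-$3$ and vanishing of $K^{\mathrm{gr}}$ on nilpotent ideals is, by exactness, the same as the paper's $K^{\mathrm{gr}}_B\simeq K^{\mathrm{gr}}_{B/\mathrm{rad}(B)}$. The rescaling homotopy you try first is not an algebra map unless the filtration splits multiplicatively, so the $1+\mathrm{rad}$ contractibility argument you fall back on is the one that actually carries that step.
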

\begin{proof}
    To show that all finite-dimensional semisimple algebras are bootstrap, we use Wedderburn--Artin~\ref{prop:wedderburn}, and stability under direct sums and Morita equivalence, to reduce to proving the K\"{u}nneth formula~\eqref{eq:kuennethmap} for the ten graded division algebras.
    We thus need a K\"{u}nneth theorem for the real and complex Clifford algebras.
    The real case follows from Corollary~\ref{cor:kuennethforclifford}. 
    The complex case follows from Corollary~\ref{cor:complexcliffordbootstrap}.
    Since $K^{\mathrm{gr}}_A\simeq K^{\mathrm{gr}}_{A/\mathrm{rad}(A)}$ by a proof similar to Proposition~\ref{prop:semisimplekthy}, we obtain that $A$ is bootstrap if and only if the semisimplification $A/\mathrm{rad}(A)$ is bootstrap.
\end{proof}

\subsection{Bott clock}
\label{sec:Bottclock}
Atiyah, Bott and Shapiro~\cite{ABS} computed the groups $\pi_n KO$ in terms of Clifford module quotients by explicitly constructing an isomorphism to $\widetilde{KO}^0(S^n)$.
This isomorphism refines to the level of spectra as $\Omega^n KO \cong K_{\Cl_{-n}}^{\mathrm{gr}}$ by Proposition~\ref{prop:clifshift}, further clarifying the relationship between Clifford algebras and $KO$-theory that has generated substantial discussion~\cite{MOhenriquesKOn, MOmathewclifford}.
In this section we in particular recover the loop spaces of $KO$ that Bott identified via Morse theory~\cite[IV.24]{MR163331} from representation theory of Clifford algebras.

There is a cofiber space of infinite loop spaces from the ABS Definition~\ref{def:ABSKth} of graded $K$-theory and Corollary~\ref{cor:ktheoryomegaspectrum}.
We can improve upon one of the key insights of~\cite{ABS}, namely upon applying $\pi_0$ to \eqref{eq:ABScofibseq}, we obtain $\pi_nko$ as the quotient $A_n$ of Grothendieck groups of Clifford modules. 
\begin{equation}
    \label{eq:ABScofibseq}
    k(\ModEn^{C_2}_{\Cl_{-n-1}})\to k(\ModEn^{C_2}_{\Cl_{-n}})\to  \Omega^n ko \,.
\end{equation}
Another useful sequence is the Karoubi fiber sequence of infinite loop spaces (hence also of spaces)
\begin{equation}
    \label{eq:Karoubifibseq}
    \Omega^nko\to k(\ModEn_{|\Cl_{n+1}|}) \to k(\ModEn_{|\Cl_{n}|}) \,,
\end{equation}
which allows to efficiently calculate all loop spaces $\Omega^nko=\Omega^n (\Z\times BO)$ as in Figure~\ref{fig:Cliffordtable} from the representation theory of $|\Cl_n|$.
We provide one example:
\begin{example}
    Using the identification
        \begin{equation*}
        \begin{tikzcd}
            \ModEn_{\Cl_{+4}}\ar[r]\ar[d,"\cong"]& \ModEn_{\Cl_{+3}}\ar[d,"\cong"] \\
            \VectEn_{\mathbb{H}} \ar[r]& \VectEn_\C  
        \end{tikzcd} \,,
        \end{equation*}
        we can easily continue this to a fiber sequence after group completion:
        \begin{equation*}
            U/Sp \to \Z\times BSp  \to \Z\times BU \,.
        \end{equation*}
        We can read off $k^{\Kar}_{\Cl_{+3}}\simeq U/Sp \simeq \Omega^3 ko$.
\end{example}
\begin{figure}[h!]
    \centering
    {
    \setlength{\tabcolsep}{4pt}
    \begin{tabular}{c|c|c|cc|cc|c}
     $n$ & ungraded $|\Cl_{n}|$ & $\Omega^{n} (\Z\times BO)$ & \makecell{\small \# ungraded\\irreps} & dimension & \makecell{\small \# graded\\irreps} & sdimension & $A_n$  \\
     \hline
     $0$ & $\R$ & $\Z\times BO$ & 1 & 1 & 2 & $\{1|0,0|1\}$ & $ \Z$ \\
     $-1$ & $\C$ & $U/O$ & 1 & 2 & 1 & $1|1$ & $ \Z/2$ \\
     $-2$ & $\mathbb{H}$ & $Sp/U$ & 1 & 4 & 1 & $2|2$ & $ \Z/2$ \\
     $-3$ & $\mathbb{H}\oplus \mathbb{H}$ & $Sp$ & 2 & 4 & 1 & $4|4$ & $ 0$ \\
     $-4$ & $M_2(\mathbb{H})$ & $\Z\times Sp/(Sp\times Sp)$ & 1 & 8 & 2 & $\{4|4,4|4\}$ & $ \Z$ \\
     $-5$ & $M_4(\C)$ & $U/Sp$ & 1 & 8 & 1 & $8|8$ & $ 0$ \\
     $-6$ & $M_8(\R)$ & $O/U$ & 1 & 8 & 1 & $8|8$ & $ 0$\\
     $-7$ & $M_8(\R)^{\oplus 2}$ & $O$ & 2 & 8 & 1 & $8|8$ & $ 0$ \\
     $-8$ & $M_{16}(\R)$ & $\Z\times BO$ & 1 & 16 & 2 & $8|8$ & $\Z$
    \end{tabular}}
    \caption{A table of Clifford algebras, their underlying ungraded algebras $|\Cl_n|$ and their graded and ungraded representations. There is a shift by one between graded and ungraded representations.
    One can read off the iterated loop spaces of the orthogonal group from the Clifford algebras. Making this precise is the subject of Section~\ref{sec:Bottclock}.}
    \label{fig:Cliffordtable}
\end{figure}
\begin{example}
    \label{ex:ringstructureko}
    One can also recover the ring structure on $\pi_*ko$ from Clifford modules, as first shown in~\cite[Thm.~6.9]{ABS}.
    Figure~\ref{fig:Cliffordtable} indicates the graded abelian group $\pi_*ko\cong A_*$, with ring structure
    \begin{equation*}
        \pi_*ko= \Z[\eta,\alpha,\beta]/(2\eta, \eta^3,\alpha\eta,\alpha^2=4\beta) \qquad |\eta|=1, |\alpha|=4, |\beta|=8 \,.
    \end{equation*}
    We indicate the reasoning:
    The generator $\eta$ of $A_1\cong \pi_1ko$ comes from an irreducible $\Cl_{-1}$-module $M_\eta$.
    Its square $M_{\eta}\grotimes M_{\eta}$ is an irreducible $\Cl_{-2}$-module, since its dimension is $2|2$.
    Let $\alpha\in \pi_4ko$ be a generator corresponding to an irreducible $\Cl_{-4}$-module $M_\alpha$ of dimension $4|4$.
    Its square has dimension $32|32$ and hence must be four copies of the irreducible $\Cl_{-8}$-module $M_\beta$ corresponding to the generator $\beta\in \pi_8 ko$.
\end{example} 
\subsection{ABS long exact sequence}
\label{sec:cofib}
Let $A$ be a graded Banach algebra.
Theorem~\ref{th:mastercomparison}, which is a slight reformulation of the definition of $k^{\ABS}_A$, produces a map of cofiber sequences, where the first two vertical maps are connective covers:
\begin{equation*}
\begin{tikzcd}
    k_{|A|} \ar[d] \ar[r] & k_{|A\grotimes \Cl_{+1}|} \ar[r] \ar[d] & k^{\ABS}_A \ar[d]
    \\
    K_{|A|} \ar[r] & K_{|A\grotimes \Cl_{+1}|} \ar[r] & K^{\gr}_A
\end{tikzcd} \,.
\end{equation*}
This induces long exact sequences
\begin{equation}
\label{eq:comparequotient}
\begin{tikzcd}
    \dots \ar[r]  & k_0(|A|) \ar[d,"f_0"] \ar[r,"d_1'"] & k_0(|A \grotimes \Cl_{+1}|) \ar[r,"d_2'"] \ar[d, "f_1"] & k_0^{\ABS}(A) \ar[r] \ar[d,"f_2"] & 0 \ar[d] \ar[r] &  \dots
    \\ 
    \dots  \ar[r] & \ar[r,"d_1"] K_0(|A|) & K_0(|A \grotimes \Cl_{+1}|) \ar[r, "d_2"] & K_0^{\ABS}(A) \ar[r, "d_3"]  & K_{-1}(|A|) \ar[r, "d_4"] & \dots
\end{tikzcd} \,,
\end{equation}
where we denoted $k_i(A) := \pi_i k_A$ here for notational consistency.
In the diagram $f_0$ and $f_1$ are isomorphisms.

\begin{lemma}
There is a long exact sequence
\begin{equation*}
0 \to k_0^{\ABS}(A) \xrightarrow{f_2} K_0^{\gr}(A) \xrightarrow{d_3} K_{-1}(|A|) \xrightarrow{d_4} K_{-1}(|A \grotimes \Cl_{+1}|)  \to \dots
\end{equation*}
\end{lemma}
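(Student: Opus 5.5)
This is a diagram chase in \eqref{eq:comparequotient}. The goal is to show that $f_2$ is injective and that its image equals $\ker d_3$. Once that is done, the rest of the sequence is simply the tail of the bottom long exact sequence starting at $K_0^{\gr}(A)$, which is exact by construction.

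The first step is to record that the top row ends in $0$. Since $k^{\ABS}_A$ is a cofiber of connective spectra, $d_2'$ is surjective (this is also visible from Lemma~\ref{lem:ABSpizero}). Next I use that $f_0$ and $f_1$ are isomorphisms: the first two vertical maps are connective covers, so they are isomorphisms on $\pi_0$. Commutativity of the squares, naturality of the map of cofiber sequences supplied by Theorem~\ref{th:mastercomparison}, then gives
\[
f_2 d_2' = d_2 f_1 \quad\text{and}\quad d_1 f_0 = f_1 d_1'.
\]

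\textbf{Image of $f_2$.} Since $d_2'$ is surjective, $\im f_2 = \im(f_2 d_2') = \im(d_2 f_1) = \im d_2$, using that $f_1$ is surjective. Exactness of the bottom row gives $\im d_2 = \ker d_3$. Hence $\im f_2 = \ker d_3$.

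\textbf{Injectivity of $f_2$.} Let $x \in k_0^{\ABS}(A)$ with $f_2(x)=0$, and write $x = d_2'(y)$. Then $d_2 f_1(y) = 0$, so by bottom exactness $f_1(y) = d_1(z)$ for some $z$. Since $f_0$ is surjective, $z = f_0(w)$, so
\[
f_1(y) = d_1 f_0(w) = f_1 d_1'(w).
\]
Injectivity of $f_1$ gives $y = d_1'(w)$, and then $x = d_2' d_1'(w) = 0$ by exactness of the top row.

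I expect no real obstacle. The only point that needs care is justifying that $f_0$ and $f_1$ are isomorphisms on $\pi_0$ and that the squares commute. Both come from the map of cofiber sequences in which the first two vertical maps are connective covers, as stated above the lemma. It is worth noting that $f_2$ itself is not a connective cover: the target cofiber is computed in $\Sp$, so $\pi_0$ of the cofiber acquires the extra term detected by $d_3$. This is exactly why the statement is a long exact sequence rather than an isomorphism.
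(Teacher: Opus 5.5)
Your proposal is correct and is essentially the paper's proof. The paper identifies $k_0^{\ABS}(A)$ with the cokernel of $d_1'$, transports this through the isomorphisms $f_0, f_1$ to the cokernel of $d_1$, and identifies that with the image of $d_2$, i.e.\ $\ker d_3$; your element chase for injectivity of $f_2$ and for its image spells out exactly this identification (and correctly lands on $\ker d_3$, where the paper's text has a typo reading ``kernel of $d_2$'').
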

\begin{proof}
Exactness everywhere except the first spot is clear.
    The top exact sequence says that the cokernel of $d_1'$ is identified with $k_0^{\ABS}$.
    The isomorphisms $f_1$ and $f_0$ identify this cokernel with the cokernel of $d_1$, which is identified with the kernel of $d_2$ by exactness.
    Hence $f_2$ gives an isomorphism $k_0^{\ABS} \cong \ker d_2$.
\end{proof}

We can conclude that $f_2$ is an inclusion
\begin{equation*}
\frac{\pi_0(k_{A}^{C_2})}{\pi_0(k_{A \grotimes \Cl_{-1}}^{C_2})} \hookrightarrow \pi_0 K^{\gr}_A
\end{equation*}
from the quotient of isomorphism classes of graded $A$-modules by graded $A \grotimes \Cl_{-1}$-modules into the graded $K$-theory of $A$, compare~\cite{MR2490588}.
In many cases (though not always) the inclusion $f_2$ is an isomorphism:

\begin{corollary}
\label{cor:quotientcondition}
    The following are equivalent:
\begin{enumerate}
        \item $f_2$ is an isomorphism
        \item $d_3 = 0$
        \item $d_4$ is injective
    \end{enumerate}
\end{corollary}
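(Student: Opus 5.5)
The plan is to read everything off the long exact sequence of the preceding lemma,
\begin{equation*}
0 \to k_0^{\ABS}(A) \xrightarrow{f_2} K_0^{\gr}(A) \xrightarrow{d_3} K_{-1}(|A|) \xrightarrow{d_4} K_{-1}(|A \grotimes \Cl_{+1}|) \to \dots,
\end{equation*}
which is obtained from the bottom row of \eqref{eq:comparequotient} together with the identification $k_0^{\ABS}(A)\cong \ker d_2$ via $f_2$. No further homotopy theory is needed; the corollary is a purely diagrammatic consequence of exactness at the three spots $K_0^{\gr}(A)$, $K_{-1}(|A|)$ and of injectivity of $f_2$.

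First, (1)$\Leftrightarrow$(2). By the lemma, $f_2$ is injective with image $\ker d_3$ (exactness at $K_0^{\gr}(A)$). Hence $f_2$ is surjective, i.e.\ an isomorphism, if and only if $\ker d_3 = K_0^{\gr}(A)$, i.e.\ if and only if $d_3=0$.

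Second, (2)$\Leftrightarrow$(3). Exactness at $K_{-1}(|A|)$ in the bottom row of \eqref{eq:comparequotient} gives $\operatorname{im} d_3 = \ker d_4$. Thus $d_3 = 0$ if and only if $\ker d_4 = 0$, i.e.\ $d_4$ is injective.

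The only point requiring care, and the place I would double-check, is that the first terms of the sequence are as claimed: namely that the lemma's identification really is $\operatorname{im} f_2 = \ker d_3$ (rather than merely $k_0^{\ABS}\cong\ker d_3$ abstractly), which follows because $f_2$ is the vertical map of the morphism of long exact sequences and the argument in the lemma identifies $k_0^{\ABS}(A)$ with $\operatorname{coker} d_1 = \ker d_2 = \operatorname{im}(d_2)\subseteq K_0^{\gr}(A)$ compatibly with $f_2$, and $\operatorname{im} d_2 = \ker d_3$ by exactness of the bottom row. With that checked, the two equivalences are immediate.
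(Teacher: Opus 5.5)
Your argument is correct and is exactly how the corollary follows from the preceding lemma: exactness at $K_0^{\gr}(A)$ together with injectivity of $f_2$ gives (1)$\Leftrightarrow$(2), and exactness of the bottom row at $K_{-1}(|A|)$ gives (2)$\Leftrightarrow$(3). One small slip in your last paragraph: $\operatorname{coker} d_1 \cong \operatorname{im} d_2 = \ker d_3$, not $\ker d_2$ (which equals $\operatorname{im} d_1$). The equality $\operatorname{im} f_2 = \ker d_3$ you need follows from $\operatorname{coker} d_1 \cong \operatorname{im} d_2$, using that $d_2'$ is surjective, $f_2 d_2' = d_2 f_1$, and $f_1$ is an isomorphism.
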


In particular, if the map
\begin{equation*}
K_{|A|} \to K_A^{C_2}\simeq K_{|A\grotimes \Cl_{+1}|}
\end{equation*}
of $KO$-modules admits a retraction, then the connecting map $d_3$ is zero and so $f_2$ is an isomorphism.

\begin{example}
    Let $A$ be an ungraded $C^*$-algebra. 
    Then $K_A \to K_A^{C_2} \cong K_A \oplus K_A$ is the diagonal map (see Example~\ref{example:ungraded}), and so the fiber sequence is split.
\end{example}

\begin{example}
    If $A$ is inner graded, then 
    \begin{equation*}
    |A \grotimes \Cl_{+1}| \cong |A| \otimes |\Cl_{+1}| \cong |A| \oplus |A|.
    \end{equation*}
    Under this identification, the homomorphism $|A| \to |A| \oplus |A|$ is the diagonal.
    Therefore as in the last example $K_{|A|} \to K_A^{C_2} \cong K_{|A|} \oplus K_{|A|}$ has a retraction and so
    \begin{equation*}
    \frac{\pi_0(k_{A}^{C_2})}{\pi_0(k_{A \grotimes \Cl_{-1}}^{C_2})} \cong \pi_0 K^{\gr}_A
    \end{equation*}
\end{example}

\begin{example}
If $A$ is a finite-dimensional graded $C^*$-algebra, then $f_2$ is an isomorphism.
Namely, $A$ is a finite direct sum of matrix algebras over graded division algebras by Proposition~\ref{prop:wedderburn}, so it suffices to consider the case where $A$ is a real or complex Clifford algebra.
This follows from the concrete descriptions of the comparison maps in Section~\ref{sec:Bottclock}.
\end{example}

The following example shows that the map $\overline{\alpha}_A\colon k^{\ABS}_A\to \Omega k^{\ABS}_{A\grotimes \Cl_{1}}$ is generally not an equivalence, compare~\cite[Example 3.61]{luukmasters}.
It also follows that $k^{\ABS}$ is not excisive.
\begin{example}
\label{ex:circle}
    Working over the complex numbers, we consider $A=C(S^1)\grotimes {\Cxl_{-1}}$.
    We then have 
    \begin{equation*}
        k^{C_2}_A\simeq k_{|C(S^1)\grotimes \Cxl_{1,1}|} \simeq ku(S^1) \,,
    \end{equation*}
    where $ku(S^1)=\underline{\Hom}_{\Sp_{\geq 0}}(\Sigma_+^\infty S^1,ku)$.
    Similarly,
    \begin{equation*}
        k^{C_2}_{A\grotimes \Cxl_{-1}}\simeq k^{C_2}(S^1)\simeq ku(S^1)^{\times 2} \,.
    \end{equation*}
    The map $i_-: k^{C_2}_{A\grotimes \Cxl_{-1}} \to  k^{C_2}_{A}$ can be identified with the sum $\oplus\colon ku(S^1)\times ku(S^1)\to ku(S^1)$.
    To see this, let $V$ be the unique graded irreducible representation of $\Cxl_{-2}$ inducing a Morita equivalence to $\C$.
    As a vector space $V=\C\oplus \Pi \C\cong \Cxl_{-1}$ with the canonical $\Cxl_{-2}\cong \Cxl_{-1}\grotimes \Cxl_{-1}^{\op}$-action. 
    \begin{equation*}
        \begin{tikzcd}
            \VectEn(S^1)^{\times 2} \ar[r,"\simeq"'] & \sVectEn(S^1) \ar[r,"-\grotimes V","\simeq"'] & \ModEn_{\Cxl_{-2}}^{C_2}(S^1) \ar[dd,bend left=15,"i_-"] \\
            (M,N) \ar[r,mapsto]& M\oplus \Pi N & \\
            &\VectEn(S^1) & \ModEn_{\Cxl_{-1}}^{C_2}(S^1) \ar[l,"\text{even part}","\simeq"']
        \end{tikzcd} \,.
    \end{equation*}
    It is easy to identify the even part $(M\oplus \Pi N)\grotimes V = M\oplus N$.
    In particular, we get that 
    \begin{equation*}
        k^{\ABS}_{A}\simeq \cofib(ku(S^1)^{\times 2}\to ku(S^1)) \simeq \Sigma ku(S^1) \,.
    \end{equation*}
    Furthermore, using the Morita equivalence $\Cxl_{1,1}\Morita \C$ and the comparison result for ungraded algebras we get:
    \begin{align*}
        k^{\ABS}_{A\grotimes \Cxl_{1}}\simeq k^{\ABS}_{C(S^1)}=k_{C(S^1)} = ku(S^1) \,.
    \end{align*}
    We obtain a $\pi_{\geq 1}$-equivalence
    \begin{equation*}
        \overline{\alpha}_A\colon \Sigma ku(S^1) \to \Omega ku(S^1) \,.
    \end{equation*}
    On $\pi_0$ this cannot be an isomorphism since $\pi_0\Sigma ku(S^1)=0$ and $\pi_0 \Omega ku(S^1)= \pi_0\Omega^2 ku= \Z$.
    This also proves that the map 
    \begin{equation*}
        \Sigma ku(S^1)\simeq k^{\ABS}_{C(S^1)\grotimes \Cxl_{-1}}\to \underline{\Hom}_{\Sp_{\geq 0}}(\Sigma_+^\infty S^1,k^{\ABS}_{\Cxl_{-1}}) \simeq \underline{\Hom}_{\Sp_{\geq 0}}(\Sigma_+^\infty S^1,\Omega ku)\simeq \Omega ku(S^1)
    \end{equation*}
    cannot be an equivalence and that $k^{\ABS}$ does not satisfy excision.
\end{example}

\begin{warning}
    If the equivalent Conditions of Corollary~\ref{cor:quotientcondition} hold for $A$, then they need not hold for $A \grotimes \Cl_{+1}$.
    For example for $A = C(S^1;\C)$ with trivial grading, we saw in Example~\ref{ex:circle} that $k_0^{\ABS}(C(S^1)) = 0$ even though $K_0^{\ABS}(C(S^1;\C)) = \Z$.
    In fact, in this case $d_3$ is an isomorphism with $K_{-1}(A) \cong KU^1(S^1) \cong \Z$.
\end{warning}

\subsection{Isomorphisms with other definitions of graded \texorpdfstring{$K$}{K}-theory}
\label{sec:isomorphismsdefs}

Graded $K$-theory of a graded ($C^*$-)algebra has been defined in several apparently different ways: by Karoubi~\cite{Karoubi68} using two gradings on a single ungraded module, by van Daele~\cite{MR947500, MR961241} using odd self-adjoint unitaries, and by Kasparov as $KK_*(\R,-)$~\cite{MR582160}.
The purpose of this section is to identify each of these with our Definitions~\ref{def:ABSKth} and~\ref{def:periodick}, up to explicit shifts and opposites.

Section~\ref{sec:vD} realizes van Daele $K$-theory as the $K$-theory of the Banach functor $\ModEn^{C_2}_{A \grotimes \Cl_{-1}} \to \ModEn^{C_2}_{A}$, which gives $K^{\mathrm{vD}}_0(A) \cong \pi_1 K^{\ABS}_A$ (Corollary~\ref{cor:vDvsABS}) and, combining with known comparisons of van Daele and Kasparov theory, the isomorphism $KK_i(\R,A) \cong K^{\ABS}_i(A)$ of Corollary~\ref{cor:KK}.
Section~\ref{sec:karoubikt-h} treats Karoubi $K$-theory, which turns out to be a shift of the graded $K$-theory of the opposite algebra (Proposition~\ref{prop:karvD}).
Two elementary tools are needed for these comparisons and are developed first: the \emph{Morita move} of Section~\ref{innergraded}, which trades graded modules for ungraded ones while reversing variance, and the \emph{flopposite} of Section~\ref{sec:flopping}, which converts between the sign conventions $\Cl_{+1}$ and $\Cl_{-1}$ at the cost of an opposite algebra.

Finally, Section~\ref{sec:atlas} assembles all comparison results into Table~\ref{table:mastercomparison}.
The variants of Definitions~\ref{def:ABSKth} and~\ref{def:periodick} obtained by independently interchanging fiber and cofiber, $\Cl_{-1}$ and $\Cl_{+1}$, contravariant and covariant functoriality, and graded and ungraded modules are all identified with shifts of $K^{\mathrm{gr}}_A$ or $K^{\mathrm{gr}}_{A^{\op}}$.
Several of these variants have, as far as we are aware, not been considered before.

\subsubsection{Kasparov \texorpdfstring{$KK$}{KK}-theory and van Daele \texorpdfstring{$K$}{K}-theory}
\label{sec:vD}

Throughout this section $A$ is a unital graded Banach algebra.
In this section we observe that van Daele's $K$-theory~\cite{MR947500, MR961241} is $K$-theory of the Banach functor $\ModEn_{A \grotimes \Cl_{-1}}^{C_2} \to \ModEn_A^{C_2}$, thus leading to a relationship with $k_{A}^{\ABS}$, see~\cite[Corollary 5.15]{MR3516985} and~\cite[Appendix D]{bourne2026analyticindextheoryspectral} for related results.
It was already known how to relate Kasparov's $KK$-theory~\cite{MR582160} to van Daele $K$-theory, and so we obtain a relationship between our approach and Kasparov theory in Corollary~\ref{cor:KK}.

\begin{definition}
\label{def:vDspectrumperiodic}
The \emph{periodic van Daele $K$-theory spectrum} is 
\begin{equation*}
K^{\mathrm{vD}}_A := \fib(K^{C_2}_{A \grotimes \Cl_{-1}} \to K^{C_2}_A).
\end{equation*}
\end{definition}

As already observed in Section~\ref{sec:proofs}, using $\Omega\cofib = \fib$ and $ko$-linearity in the periodic case immediately implies:

\begin{proposition}
\label{prop:ABSvsvD}
For any graded Banach algebra $A$, there are equivalences of spectra
    $k^{\mathrm{vD}}_A \cong \Omega k_A^{\ABS}$ and $K^{\mathrm{vD}}_A \cong \Omega K_A^{\mathrm{gr}}$.    
\end{proposition}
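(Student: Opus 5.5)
The statement asks for two equivalences: $k^{\mathrm{vD}}_A \simeq \Omega k^{\ABS}_A$ and $K^{\mathrm{vD}}_A \simeq \Omega K^{\mathrm{gr}}_A$. Both follow formally from the fact that fiber and cofiber of one map differ by a shift. For the connective case I take the definitions as given. Definition~\ref{def:vDspectrum} defines $k^{\mathrm{vD}}_A$ as the fiber of $i_-^*\colon k^{C_2}_{A\grotimes\Cl_{-1}}\to k^{C_2}_A$. Definition~\ref{def:ABSKth} defines $k^{\ABS}_A$ as the cofiber of the same map. In $\Sp$ the sequence $\fib f\to X\to Y\to\cofib f$ gives a natural equivalence $\fib f\simeq\Sigma^{-1}\cofib f$.

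The only care needed is connectivity, as noted at the start of the ``Homotopy theory lemmas'' paragraph. The fiber in $\Sp_{\geq 0}$ is $\tau_{\geq 0}$ of the fiber in $\Sp$, because $\tau_{\geq 0}$ is right adjoint to the inclusion and so preserves limits. The cofiber in $\Sp_{\geq0}$ agrees with the one in $\Sp$, because the inclusion is a left adjoint (see the proof of Lemma~\ref{lem:fibercofibersequences}). Applying $\tau_{\geq0}$ therefore gives
\[
k^{\mathrm{vD}}_A=\tau_{\geq0}\fib(i_-^*)\simeq\tau_{\geq0}\Sigma^{-1}\cofib(i_-^*)=\Omega k^{\ABS}_A ,
\]
with $\Omega=\tau_{\geq0}\Sigma^{-1}$ as in Section~\ref{sec:bott}. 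This equivalence is natural in $A$, since all the constructions involved are functorial.

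For the periodic case I use the identification from Section~\ref{sec:periodickthy}: $K^{\mathrm{gr}}_A\simeq\cofib(K^{C_2}_{A\grotimes\Cl_{-1}}\to K^{C_2}_A)$, where $K^{C_2}=k^{C_2}[\beta^{-1}]$. That identification comes from commuting the filtered colimit that inverts $\beta$ with cofibers, which is valid because both constructions are $ko$-linear. Definition~\ref{def:vDspectrumperiodic} defines $K^{\mathrm{vD}}_A$ as the fiber of this same map, now taken in $\ModInf(KO)\subseteq\Sp$ with no truncation. Since $KO$-modules form a stable category, $K^{\mathrm{vD}}_A\simeq\Sigma^{-1}K^{\mathrm{gr}}_A$ as $KO$-modules. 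This is the periodic $\Omega$ here, so the second equivalence holds, again naturally in $A$.

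There is no substantial obstacle. The one point to check is that the previously established identification of $K^{\mathrm{gr}}_A$ as a cofiber of periodic spectra is the one being used, rather than Definition~\ref{def:periodick}. An alternative route is to apply $(-)[\beta^{-1}]$ directly to the connective equivalence. In that case I would cite the argument in the proof of Theorem~\ref{thm:periodicKtheorylax} that $-\otimes_{ko}KO$ annihilates coconnective modules, so the truncation $\tau_{\geq0}$ disappears after inverting $\beta$.
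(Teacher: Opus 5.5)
Your proof is correct and follows the paper's own argument: both equivalences come from $\fib\simeq\Omega\cofib$ applied to the single map $i_-^*$. In the connective case one takes connective covers, and in the periodic case one uses the $ko$-linear identification $K^{\mathrm{gr}}_A\simeq\cofib(K^{C_2}_{A\grotimes\Cl_{-1}}\to K^{C_2}_A)$ from the periodic section. Your explicit treatment of $\tau_{\geq 0}$ only spells out what the paper states in one line at the start of its homotopy theory lemmas.
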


In order to show $\pi_0 K^{\mathrm{vD}}_A$ is isomorphic to van Daele's $K$-theory, we first introduce van Daele's constructions.
For $B$ a graded Banach algebra, we write
\begin{equation*}
\mathcal{F}_B^{\mathrm{vD}}(n)  := \{ x \in M_{n}(B)_{\mathrm{odd}}: x^2 = 1\}.
\end{equation*}
We are especially interested in the case $B = A \grotimes \Cl_{1,1}$.
From sums of graded $A \grotimes \Cl_{1,1}$-modules we get the maps $\mathcal{F}^{\mathrm{vD}}_{A \grotimes \Cl_{1,1}}(n) \times \mathcal{F}^{\mathrm{vD}}_{A \grotimes \Cl_{1,1}}(m) \to \mathcal{F}^{\mathrm{vD}}_{A \grotimes \Cl_{1,1}}(n+m)$ given by block diagonals $(x,y) \mapsto x \oplus y$.
We consider the transition maps $\mathcal{F}^{\mathrm{vD}}_{A \grotimes \Cl_{1,1}}(n) \to \mathcal{F}^{\mathrm{vD}}_{A \grotimes \Cl_{1,1}}(n+1)$ given by $x \mapsto x \oplus e$, where $e \in \mathcal{F}^{\mathrm{vD}}_{A \grotimes \Cl_{1,1}}(1)$ denotes the canonical generator corresponding to the $\Cl_{+1}$-factor.

\begin{definition}
\label{def:vDKgroup}
    The \emph{van Daele $K$-theory group of $A$} is the monoid
\begin{equation*}
DK(A) := \colim_{n} \pi_0 \mathcal{F}_{A \grotimes \Cl_{1,1}}^{\mathrm{vD}}(n)
\end{equation*}
under $\oplus$.
\end{definition}

The following remarks are for readers interested in a more detailed comparison with the literature, see~\cite[Section 5.1]{MR3665214} for a more detailed discussion.

\begin{remark}
The space $\mathcal{F}_{A \grotimes \Cl_{1,1}}^{\mathrm{vD}}(n)$ can be rewritten as matrices $x \in M_{n|n}(A)_{\mathrm{odd}}$ such that $x^2 = 1$ using $\Cl_{1,1} \cong M_{1|1}(\R)$.
     The transition maps are then formed by direct sum with the off-diagonal matrix 
    \begin{equation*}
    \begin{pmatrix}
        0 & 1 \\
        1 & 0
    \end{pmatrix}.
    \end{equation*}
\end{remark}

\begin{remark}
The original definition~\cite[Definition 3.1]{MR947500} of van Daele instead uses $\mathcal{F}_{A \grotimes \Cl_{2,2}}(n)$ in the colimit.
The reason is that he proves in~\cite[Proposition 2.11]{MR947500} that if $e \sim -e$ in $A \grotimes \Cl_{1,1}$, then $DK(A)$ is a group, and this is the case if $A = B \grotimes \Cl_{1,1}$.
However, this turns out to be unnecessary as it will follow from our results that $DK(A)$ is \emph{always a group}.
This can also be seen directly by applying~\cite[Proposition 5.2]{MR3516985} to the even unitary $1 \otimes ef$.
    Using $\mathcal{F}_{A \grotimes \Cl_{2,2}}(n)$ will give the same $K$-theory by $\Cl_{1,1}$-periodicity.
\end{remark}

\begin{remark}
Under the assumption on $A$ that $\mathcal{F}^{\mathrm{vD}}_A(1) \neq \emptyset$ one can define the van Daele $K$-theory group to be the colimit over $\mathcal{F}^{\mathrm{vD}}_A(n)$ using any fixed $e \in \mathcal{F}^{\mathrm{vD}}_A(1)$~\cite[Definition 2.8]{MR947500}.
    Our definition using $\mathcal{F}^{\mathrm{vD}}_{A \grotimes \Cl_{1,1}}$ with the preferred $e \in \mathcal{F}^{\mathrm{vD}}_{A \grotimes \Cl_{1,1}}(1)$ is equivalent in the special case that $\mathcal{F}^{\mathrm{vD}}_A(1) \neq \emptyset$, see~\cite[Proposition 3.3]{MR947500}.
\end{remark}

To show that $DK(A) \cong \pi_0 K^{\mathrm{vD}}_A$ we apply the following lemma, which rewrites Karoubi $K$-theory for systems with a special cofinal system.

\begin{lemma}    
\label{lem:seqcofin}
Let $\phi \colon M \to N$ be a quasi-surjective $\E_\infty$-map between telescopic monoids.
Suppose $e \in M$ is chosen so that $e^{\oplus n}$ is cofinal in $M$.
Then the fiber $K^{\mathrm{Kar}}(\phi)$ of $\phi^{\gp}:M^{\gp} \to N^{\gp}$ is equivalent as an $\E_\infty$-monoid to the colimit $\colim_n \fib_{\phi(e^{\oplus n})} (\phi)$ with connecting maps given by $(-)\oplus e$.
\end{lemma}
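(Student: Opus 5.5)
The plan is to combine Theorem~\ref{thm:fibergroupcompletion} with the explicit telescope description of group completions of telescopic monoids. Since $\phi$ is quasi-surjective and $M$ is telescopic, Theorem~\ref{thm:fibergroupcompletion} gives an $\E_\infty$-equivalence $k(\phi)\simeq \fib(\phi^{\gp})$. So it suffices to identify $\fib(\phi^{\gp})$, as a space with its $\E_\infty$-structure, with the sequential colimit $\colim_n \fib_{\phi(e^{\oplus n})}(\phi)$.

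For the underlying spaces, first reduce the telescopes to the single element $e$. By cofinality of $e^{\oplus n}$, the element $e$ generates $\pi_0M$ in the quasi-surjective sense. The argument of Lemma~\ref{lem:quasi-surjlocalization}, applied to the submonoid generated by $e$, shows $M[\pi_0M^{-1}]\simeq M[e^{-1}]=\colim(M\xrightarrow{\oplus e}M\to\cdots)$. Since $\phi$ is quasi-surjective, $\phi(e)^{\oplus n}$ is cofinal in $N$, so likewise $N[\pi_0N^{-1}]\simeq N[\phi(e)^{-1}]$. Telescopicity then gives $M^{\gp}\simeq M[e^{-1}]$ and $N^{\gp}\simeq N[\phi(e)^{-1}]$, and under these identifications $\phi^{\gp}$ is the colimit of the maps $\phi\colon M\to N$, which are compatible with $\oplus e$ and $\oplus\phi(e)$.

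Next take fibers. The fiber of $\phi^{\gp}$ is taken over $0\in N^{\gp}$; in the telescope this point is represented at stage $n$ by $\phi(e^{\oplus n})$ in the $n$-th copy of $N$, shifted back by $n$. Filtered colimits in $\Spc$ commute with finite limits, and in particular with fibers over a compatible system of basepoints. Hence $\fib_0(\phi^{\gp})\simeq \colim_n \fib_{\phi(e^{\oplus n})}(\phi)$, with transition maps $(-)\oplus e$. Only the basepoint bookkeeping needs care here, and one should check that the identification of $0$ is consistent across stages.

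The main obstacle is that the colimit $\colim_n \fib_{\phi(e^{\oplus n})}(\phi)$ carries no obvious $\E_\infty$-structure, because the transition maps are only $M$-module maps. Two routes seem possible. The first is to equip it with the structure transported through the space-level equivalence; this is legitimate because $\fib(\phi^{\gp})$ is a grouplike $\E_\infty$-space and the comparison map is compatible with the $M$-action, in the same way that the telescope of a telescopic monoid inherits its $\E_\infty$-structure. The second, more intrinsic, route is to compare with $k(\phi)=\cofib(\Delta\colon M\to\Gamma(\phi))$. In $\Gamma(\phi)$, localizing at $\Delta(e)$ computes $\Gamma(\phi)^{\gp}$, since $\Gamma(\phi)$ is telescopic as shown in the proof of Theorem~\ref{thm:fibergroupcompletion}. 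The cofiber by $\Delta(M)$ then amounts to fixing the first coordinate to $e^{\oplus n}$, which recovers the fibers $\fib_{\phi(e^{\oplus n})}(\phi)$ stagewise. I would use the first route and note that the resulting monoid structure agrees on $\pi_0$ with the direct-sum description in Lemma~\ref{lem:triples}.
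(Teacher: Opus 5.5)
Your proposal is correct and follows essentially the same route as the paper. The paper builds the ladder of fibers $\fib_{\phi(e^{\oplus n})}(\phi)\to M\to N$ with transition maps $\oplus e$ and $\oplus\phi(e)$, commutes the sequential colimit with the fiber, identifies the telescopes with $M^{\gp}$ and $N^{\gp}$ via cofinality, Lemma~\ref{lem:quasi-surjlocalization} and telescopicity, and concludes with Theorem~\ref{thm:fibergroupcompletion}. Like your first route, the paper's argument only produces a space-level identification and leaves the $\E_\infty$-structure on the colimit implicitly transported.
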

\begin{proof}
    \begin{equation*}
    \begin{tikzcd}
        \fib_0 \phi \ar[r] \ar[d, dashed] & M \ar[r,"\phi"] \ar[d,"\oplus e"] & N \ar[d,"\oplus \phi(e)"]
        \\
        \fib_{\phi(e)} \phi \ar[r] \ar[d,dashed] & M \ar[r,"\phi"] \ar[d,"\oplus e"] & N \ar[d,"\oplus \phi(e)"]
    \\
    \fib_{\phi(e^{\oplus 2})} \phi \ar[r] \ar[d,dashed] & M \ar[r,"\phi"] \ar[d, "\oplus e"] & N \ar[d,"\oplus \phi(e)"]
    \\
    \vdots & \vdots & \vdots
    \end{tikzcd} \,,
    \end{equation*}
    where the dashed arrows follow from the universal property of the fiber.
    Since filtered colimits commute with finite limits in $\mathcal{S}$, we obtain a fibration
    \begin{equation*}
    \begin{tikzcd}
    \colim_n \fib_{\phi(e^{\oplus n})} \ar[r] & M[\pi_0M^{-1}] \ar[r] \ar[d, "\simeq"] & N[\pi_0N^{-1}] \ar[d, "\simeq"]
    \\
    & M^{\gp} \ar[r] & N^{\gp}
    \end{tikzcd}.
    \end{equation*}
    Here we used cofinality of $e^{\oplus n}$ and Lemma~\ref{lem:quasi-surjlocalization} to identify the colimits with $M_\infty$ and $N[\pi_0N^{-1}]$, which we could identify with $M^{\gp}$ and $N^{\gp}$ since $M$ and $N$ are telescopic.
    The relation between the Karoubi $K$-theory space and fibers of group completions was shown in Theorem~\ref{thm:fibergroupcompletion}.
\end{proof}

\begin{corollary}
The monoid in sets $\colim_n \pi_0( \fib_{\phi(e^{\oplus n})}(\phi))$ is isomorphic to $K^{\Kar}(\phi)$.
In particular, it is a group.
\end{corollary}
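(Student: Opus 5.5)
The plan is to deduce the corollary directly from Lemma~\ref{lem:seqcofin} by applying $\pi_0$, together with the group property established in Lemma~\ref{lem:triples}.

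\textbf{Step 1: pass to $\pi_0$.} Lemma~\ref{lem:seqcofin} gives an equivalence of $\E_\infty$-monoids
\begin{equation*}
\colim_n \fib_{\phi(e^{\oplus n})}(\phi) \simeq \fib(\phi^{\gp}) \simeq k(\phi),
\end{equation*}
where the last equivalence is Theorem~\ref{thm:fibergroupcompletion}. Apply $\pi_0$ to both sides. The colimit on the left is a sequential (hence filtered) colimit of spaces, and $\pi_0$ commutes with filtered colimits of spaces. So the left side becomes $\colim_n \pi_0 \fib_{\phi(e^{\oplus n})}(\phi)$ as a set. The right side is $K^{\Kar}(\phi)=\pi_0 k(\phi)$ by Definition~\ref{def:abstractKar}.

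\textbf{Step 2: compatibility with the monoid structures.} The monoid structure on the set-level colimit is the one induced by block sum, $\fib_{\phi(e^{\oplus n})}\times \fib_{\phi(e^{\oplus m})}\to \fib_{\phi(e^{\oplus n+m})}$. This must agree with the monoid structure coming from the $\E_\infty$-structure on the colimit in Lemma~\ref{lem:seqcofin}. That lemma asserts an equivalence of $\E_\infty$-monoids, with the structure on the telescope inherited from $M[\pi_0M^{-1}]\to N[\pi_0N^{-1}]$, which is legitimately $\E_\infty$ because $M$ and $N$ are telescopic. Applying $\pi_0$ to an $\E_\infty$-equivalence yields an isomorphism of commutative monoids. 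This is the main point to check carefully: one has to confirm that the block-sum operation on the telescope is indeed what $\pi_0$ of that $\E_\infty$-structure induces. I would verify this by tracing the operation through the maps $\fib_{\phi(e^{\oplus n})}\to M\xrightarrow{\oplus} M$, which are compatible with the transition maps $\oplus e$ up to the braiding homotopies. These homotopies become invisible on $\pi_0$.

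\textbf{Step 3: the group property.} $K^{\Kar}(\phi)$ is a group by Lemma~\ref{lem:triples}. Alternatively, $k(\phi)\simeq\fib(\phi^{\gp})$ is a fiber of grouplike objects and therefore grouplike. Since the monoid $\colim_n \pi_0( \fib_{\phi(e^{\oplus n})}(\phi))$ is isomorphic to it, it is a group as well.
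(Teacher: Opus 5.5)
Your proposal is correct and follows the paper's argument. You apply $\pi_0$ to the $\E_\infty$-equivalence of Lemma~\ref{lem:seqcofin}, use that $\pi_0$ commutes with filtered colimits, identify the result with $K^{\Kar}(\phi)$ via Theorem~\ref{thm:fibergroupcompletion}, and cite Lemma~\ref{lem:triples} for the group property. Your Step~2 check on monoid structures is extra care the paper leaves implicit. One caution there: the braiding loops need not act trivially on $\pi_0$ of a single fiber, only after stabilizing in the colimit, so it is cleanest to transport the monoid structure along the $\E_\infty$-equivalence, as you suggest.
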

\begin{proof}
    Since $\pi_0$ commutes with filtered colimits, we obtain that 
    \begin{equation*}
    K^{\Kar}(\phi) \cong \pi_0 \fib(M^{\gp} \to N^{\gp}) \cong \colim_n \pi_0 \fib_{\phi(e^{\oplus n})}(\phi) \,,
    \end{equation*}
    which was shown to be a group in Lemma~\ref{lem:triples}.
\end{proof}

We will apply Lemma~\ref{lem:seqcofin} to $\phi \colon N^{\mathrm{hc}}(\ModEn^{C_2,\cong}_{A \grotimes \Cl_{-1}}) \to N^{\mathrm{hc}}(\ModEn^{C_2,\cong}_{A})$ and $e = A \grotimes \Cl_{-1}$.
Observe that $\phi(e) = A^{1|1}$.
We proceed to identify the colimit of the fiber in the case at hand, and verify the hypotheses:

\begin{lemma}
\label{lem:fiberphi}
    The fiber of $\phi$
    over $A^{n|n} \in \ModOne^{C_2}_{A}$ is $\mathcal{F}_{A \grotimes \Cl_{1,1}}^{\mathrm{vD}}(n)$.
\end{lemma}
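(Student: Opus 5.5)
We need to identify the homotopy fiber of $\phi\colon N^{\mathrm{hc}}(\ModEn^{C_2,\cong}_{A\grotimes\Cl_{-1}})\to N^{\mathrm{hc}}(\ModEn^{C_2,\cong}_A)$ over the point $A^{n|n}$. The plan is to compute this fiber as the space of lifts of a fixed graded $A$-module structure to a $A\grotimes\Cl_{-1}$-module structure. For that we use that $\phi$ is the core of a forgetful functor of topologically enriched groupoids whose automorphism maps are inclusions of closed subgroups. Then we translate such lifts into odd involutions in matrices over $A\grotimes\Cl_{1,1}$.

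\textbf{Step 1: the fiber is a space of structures.} Let $X_n$ be the space of $A\grotimes\Cl_{-1}$-module structures on the fixed graded $A$-module $A^{n|n}$. Concretely,
\[
X_n=\{f\in \End_A(A^{n|n})_{\mathrm{odd}} : f^2=-1\},
\]
where $\End$ means graded $A$-linear maps in the sense of Convention~\ref{conv:graded-linear}, topologized by the norm. The group $G=\GL^{\mathrm{even}}_A(A^{n|n})$ acts on $X_n$ by conjugation.

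Restricted to the component of $N^{\mathrm{hc}}(\ModEn^{C_2,\cong}_A)$ through $A^{n|n}$, the source of $\phi$ is equivalent to $X_n/\!\!/G$. Indeed, every $A\grotimes\Cl_{-1}$-module whose underlying graded $A$-module is isomorphic to $A^{n|n}$ can be transported onto $A^{n|n}$, and its automorphisms are exactly the stabilizers in $G$. The target component is $BG$. Hence $\fib_{A^{n|n}}(\phi)\simeq X_n$, by the standard fiber sequence $X\to X/\!\!/G\to BG$ for a topological group acting on a space. This step requires identifying $N^{\mathrm{hc}}$ of a topological action groupoid with its homotopy quotient, as was done in the proof of Theorem~\ref{thm:localizationbimban}.

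\textbf{Step 2: $X_n\cong\mathcal F^{\mathrm{vD}}_{A\grotimes\Cl_{1,1}}(n)$.} By Convention~\ref{conv:End-op}, $\End_A(A^{n|n})\cong M_{n|n}(A)^{\op}\cong (M_{n}(A)\grotimes\Cl_{1,1})^{\op}$. An odd element of square $-1$ in an algebra corresponds to an odd element of square $+1$ in its opposite, up to the Koszul sign: in $B^{\op}$ we have $(x^{\op})^2=(-1)^{|x||x|}(x^2)^{\op}=-(x^2)^{\op}$. Therefore
\[
X_n\cong\{x\in M_n(A\grotimes\Cl_{1,1})_{\mathrm{odd}}: x^2=1\}=\mathcal F^{\mathrm{vD}}_{A\grotimes\Cl_{1,1}}(n).
\]
This is a homeomorphism because all identifications are bounded linear isomorphisms. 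One alternatively uses the description $M_{n|n}(A)_{\mathrm{odd}}$ from the remark after Definition~\ref{def:vDKgroup}.

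\textbf{Step 3: compatibility with the transition maps.} The base point $\phi(e)=A^{1|1}$ carries the structure coming from $e=A\grotimes\Cl_{-1}$, which is the canonical generator. So $-\oplus e$ on fibers corresponds to $x\mapsto x\oplus e$.

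The main obstacle is the sign bookkeeping in Step 2. One has to check that the opposite-algebra Koszul sign indeed turns $f^2=-1$ into $x^2=+1$ under the conventions of Convention~\ref{conv:graded-linear}, rather than producing $\Cl_{2}$ or $\Cl_{-2}$ factors. I would first verify this for $n=1$, $A=\R$, where both sides should be two points matching the two structures $V,\overline V$.
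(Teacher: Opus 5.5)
Your Step 2 is the paper's proof. The paper notes that a lift to a graded $A\grotimes\Cl_{-1}$-module is an odd $T\in\End_A(A^{n|n})$ with $T^2=-1$. It converts this into an odd $x$ with $x^2=1$ via $\End_A(A^{n|n})\cong M_{n|n}(A)^{\op}$ (equivalently $\Cl_{-1}^{\op}\cong\Cl_{+1}$), and then uses $M_n(A\grotimes\Cl_{1,1})\cong M_{n|n}(A)$. Your Koszul-sign computation is right.

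The paper stops at identifying the points lying over $A^{n|n}$. The compatibility with $-\oplus e$ in your Step 3 is asserted separately, in the proof of Theorem~\ref{th:vDmonoid}. Incidentally, your planned check at $A=\R$, $n=1$ cannot detect a sign error: in $M_{1|1}(\R)$ the odd elements of square $+1$ and of square $-1$ both form a copy of $\R^\times$.

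Your Step 1 goes beyond the paper by treating the homotopy fiber rather than the strict one. That is the right concern, but the justification has a gap. $N^{\mathrm{hc}}$ is applied to an enriched groupoid with a \emph{discrete} set of objects. So transport of structure plus stabilizers only yields $\coprod_{[f]\in X_n/G} B\,\mathrm{Stab}_G(f)$. Its homotopy fiber over $BG$ is $\coprod_{[f]} G/\mathrm{Stab}_G(f)$, once $G\to G/\mathrm{Stab}_G(f)$ is a principal bundle.

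This is not $X_n$ with its norm topology in general; for a trivial action you would get $X_n$ with the discrete topology. So ``$N^{\mathrm{hc}}$ of a topological action groupoid is its homotopy quotient'' is false as a general principle. Theorem~\ref{thm:localizationbimban} does not supply it either, because there the objects of $\Sing(\mathcal{A}_P)_n$ are already $\Delta^n$-families.

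What you need is rigidity of the structures, which here takes one line. For $f'\in X_n$ near $f$, put $z:=\tfrac12(1-f'f)$. Then:
\begin{itemize}
\item $z$ is even and graded $A$-linear, and depends continuously on $f'$;
\item $z=1$ at $f'=f$, hence $z$ is invertible for $f'$ near $f$;
\item $f'z=\tfrac12(f'+f)=zf$.
\end{itemize}
So the $G$-orbits are open (hence also closed) in $X_n$, and $G\to G\cdot f$ has local sections. This gives $X_n\cong\coprod_{[f]}G/\mathrm{Stab}_G(f)$ and justifies replacing the source by $X_n\sslash G$.

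Alternatively, work in the simplicial model of Proposition~\ref{prop:compareK} (graded version in Proposition~\ref{prop:kC_2}). Its proof already contains the rigidity input through Corollary~\ref{cor:contractibleSerreSwan}. There the level-$k$ groupoid over $A^{n|n}$ is the action groupoid of $\Delta^k$-families of structures under $\Delta^k$-families in $G$, and its realization is the Borel construction you want.
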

\begin{proof}
An extension of a graded $A$-module $M$ to a graded $A \grotimes \Cl_{-1}$-module is equivalent to a choice of an odd $T \in \End_A M$ such that $T^2 = -1$.
When $M = A^{n|n}$, this corresponds to a $x \in M_{n|n}(A)$ such that $x^2 = 1$ by the discussion at Convention~\ref{conv:End-op} and $\Cl_{-1}^{\op} \cong \Cl_1$.
Using $\Cl_{1,1} \cong M_{1|1}(\R)$ so that $M_n(A \grotimes \Cl_{1,1}) \cong M_{n|n}(A)$, we can therefore identify points of $\ModOne^{C_2}_{A \grotimes \Cl_{-1}}$ lying over $A^{n|n} \in \ModOne^{C_2}_{A}$ with $\mathcal{F}_{A \grotimes \Cl_{1,1}}^{\mathrm{vD}}(n)$.
\end{proof}

\begin{remark}
    The fact that van Daele's $K$-theory group is defined using odd elements that square to $1$ but the van Daele $K$-theory spectrum is defined using $\Cl_{-1}$ is somewhat unintuitive.
    Since we are interested in real $K$-theory these subtle sign choices are necessary to spot; $KO_i \ncong KO_{-i}$ for $i = 1,2$.
\end{remark}

\begin{lemma}
\label{lem:cofinCliffmods}
The submonoid of $N^{\mathrm{hc}}(\ModEn_{A \grotimes \Cl_{-1}}^{C_2,\cong})$ consisting of $(A \grotimes \Cl_{-1})^n$ for $n \geq 0$ is cofinal.
\end{lemma}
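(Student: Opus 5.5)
The plan is to unpack cofinality (quasi-surjectivity of the inclusion of the submonoid) on $\pi_0$. We must show that for every finitely generated projective graded $A\grotimes \Cl_{-1}$-module $P$ there are a graded module $P'$ and an $n\geq 0$ with $P\oplus P'\cong (A\grotimes \Cl_{-1})^n$ as graded $A\grotimes\Cl_{-1}$-modules.

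By definition, a finitely generated projective graded module over $B:=A\grotimes\Cl_{-1}$ is a graded direct summand of some $B^{p|q}=B^p\oplus \Pi B^q$. So we may pick $P'$ with $P\oplus P'\cong B^{p|q}$, and it remains to absorb the parity-shifted copies. Adding $\Pi B^p$ and $B^{q}$ to $P'$ gives $P\oplus P''\cong B^{m|m}$ with $m=p+q$. Thus the key step is to show that $\Pi B\cong B$ as graded left $B$-modules.

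For this we use the odd generator $f\in\Cl_{-1}$, which is invertible with $f^2=-1$. Define $B\to \Pi B$ by right multiplication $b\mapsto bf$. This map is bijective, with inverse $b\mapsto -bf$, and it is bounded. It shifts parity, so as a map $B\to\Pi B$ it is grading preserving. We still have to check left $B$-linearity against the twisted action on $\Pi B$, in which $a\in B$ acts by $(-1)^{|a|}a$ as stated after Definition~\ref{def:parity}. Right multiplication commutes with the untwisted left action, so $R_f(ab)=a\,R_f(b)$. This does not match the twisted action on odd $a$, so $R_f$ alone is not equivariant, and this sign is the main obstacle.

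To fix the sign, compose $R_f$ with the grading involution $\gamma(b)=(-1)^{|b|}b$ of $B$. The map $\phi(b)=\gamma(b)f=(-1)^{|b|}bf$ satisfies $\phi(ab)=(-1)^{|a|+|b|}abf=(-1)^{|a|}a\,\phi(b)$, which is exactly linearity for the twisted action on $\Pi B$. It is bijective and bounded, so it gives the required isomorphism. The conventions here are those of Convention~\ref{conv:graded-linear}, and one must use the paper's definition of $\Pi$ via left tensoring with $\Pi\mathbb{F}$. If the intended action on $\Pi B$ were instead untwisted, then $R_f$ itself would already work, so either way an isomorphism exists.

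Hence $P\oplus P''\cong B^{m|m}\cong B^{2m}$. Passing to $\pi_0$ of $N^{\mathrm{hc}}(\ModEn^{C_2,\cong}_{B})$, where direct sum realizes the monoid operation, this is exactly the cofinality condition.
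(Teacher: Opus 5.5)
Your proof is correct and takes essentially the paper's route: first the modules $B^{m|m}$ are cofinal among finitely generated projective graded $B$-modules, and then an odd invertible element of $B=A\grotimes\Cl_{-1}$ identifies $\Pi B\cong B$, so the free modules $B^n$ are already cofinal. Your explicit sign-corrected isomorphism $b\mapsto(-1)^{|b|}bf$ spells out what the paper leaves implicit; the paper states its hypothesis as $\mathcal{F}^{\mathrm{vD}}_B(1)\neq\emptyset$, whereas your argument only needs an odd invertible $f$, here one with $f^2=-1$.
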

\begin{proof}
    First observe that for any graded Banach algebra $B$ the collection $B^{n|n}$ is cofinal in finitely generated projective graded $B$-modules.
    If moreover $\mathcal{F}^{\mathrm{vD}}_B(1) \neq \emptyset$, then $B^{1|0} \cong B^{1|0}$ and so $B^n$ is already cofinal.
    We now apply these results to $B = A \grotimes \Cl_{-1}$ to conclude.
\end{proof}

\begin{theorem}
\label{th:vDmonoid}
     $\pi_0 K^{\mathrm{vD}}_A$ is isomorphic to the monoid $DK(A)$.
\end{theorem}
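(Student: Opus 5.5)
The plan is to combine Lemma~\ref{lem:seqcofin}, Lemma~\ref{lem:fiberphi} and Lemma~\ref{lem:cofinCliffmods} for the map $\phi\colon M:=N^{\mathrm{hc}}(\ModEn^{C_2,\cong}_{A\grotimes\Cl_{-1}})\to N:=N^{\mathrm{hc}}(\ModEn^{C_2,\cong}_A)$ and $e=A\grotimes\Cl_{-1}$. First I will reduce to the connective fiber. Since $K^{C_2}=k^{C_2}[\beta^{-1}]$ and $\beta$-inversion is a $\pi_{\geq 0}$-isomorphism up to the shift $\Sigma^{-8}$, I will use Proposition~\ref{prop:ABSvsvD} together with the fact that $k^{\ABS}\to K^{\mathrm{gr}}$ is an isomorphism on $\pi_i$ for $i\geq 1$ (Theorem~\ref{thm:gradedbottperiodicity} and the discussion after Definition~\ref{def:periodick}). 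This gives
\[
\pi_0K^{\mathrm{vD}}_A\cong\pi_1K^{\mathrm{gr}}_A\cong\pi_1k^{\ABS}_A\cong\pi_0k^{\mathrm{vD}}_A=\pi_0\fib(\phi^{\gp}).
\]
So it suffices to identify $\pi_0\fib(\phi^{\gp})$ with $DK(A)$.

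Next I check the hypotheses of Lemma~\ref{lem:seqcofin}. Both monoids are telescopic by Lemma~\ref{lem:modulestelescopic}, applied via Proposition~\ref{prop:gradingvsclifford} to $|A\grotimes\Cl_{-1}\grotimes\Cl_{+1}|$ and $|A\grotimes\Cl_{+1}|$. The map $\phi$ is quasi-surjective by Lemma~\ref{lem:i-quasisur}, and the powers $e^{\oplus n}$ are cofinal in $M$ by Lemma~\ref{lem:cofinCliffmods}. Lemma~\ref{lem:seqcofin} and its corollary then give
\[
\pi_0\fib(\phi^{\gp})\cong\colim_n\pi_0\fib_{A^{n|n}}(\phi).
\]
Lemma~\ref{lem:fiberphi} identifies $\fib_{A^{n|n}}(\phi)$ with $\mathcal{F}^{\mathrm{vD}}_{A\grotimes\Cl_{1,1}}(n)$.

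It remains to match the structure maps, which I expect to be the main obstacle. Two things need checking.
\begin{itemize}
\item The transition maps $-\oplus e$ of Lemma~\ref{lem:seqcofin} must correspond to $x\mapsto x\oplus e$. This requires that the element of $\mathcal{F}^{\mathrm{vD}}(1)$ coming from the module $A\grotimes\Cl_{-1}$ over $A^{1|1}$ is exactly the canonical generator. Here I need care with the signs coming from $\Cl_{-1}^{\op}\cong\Cl_{+1}$ and right multiplication (Convention~\ref{conv:End-op}), and possibly a homotopy or conjugation by a sign to fix a mismatch $e$ versus $-e$.
\item The $\E_\infty$ sum on the fibers must be block-diagonal sum. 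This follows because $\oplus$ on modules over the fixed $A^{n|n}\oplus A^{m|m}\cong A^{n+m|n+m}$ is block sum of the odd involutions.
\end{itemize}
Since $\pi_0$ commutes with filtered colimits and the identifications are compatible with these maps, the monoid $\colim_n\pi_0\mathcal{F}^{\mathrm{vD}}_{A\grotimes\Cl_{1,1}}(n)=DK(A)$ is isomorphic to $\pi_0K^{\mathrm{vD}}_A$.
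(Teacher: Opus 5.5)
Your proposal is correct and matches the paper's proof: apply Lemma~\ref{lem:seqcofin} to $\phi$ with $e=A\grotimes\Cl_{-1}$, using Lemmas~\ref{lem:i-quasisur}, \ref{lem:cofinCliffmods} and~\ref{lem:fiberphi}. You also fill in steps the paper leaves implicit: you pass from the periodic $\pi_0K^{\mathrm{vD}}_A$ to the connective $\pi_0k^{\mathrm{vD}}_A$ via Proposition~\ref{prop:ABSvsvD} and the $\pi_{\geq 1}$-isomorphism $k^{\ABS}_A\to K^{\mathrm{gr}}_A$, you check telescopicity, and you check that the generator matches (this can be arranged by choosing the identification $\phi(e)\cong A^{1|1}$ suitably).
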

\begin{proof}
It follows from Lemma~\ref{lem:i-quasisur} that $\phi$ is quasi-surjective.
We apply Lemma~\ref{lem:seqcofin} to
\begin{equation*}
    \phi \colon N^{\mathrm{hc}}(\ModEn^{C_2,\cong}_{A \grotimes \Cl_{-1}}) \to N^{\mathrm{hc}}(\ModEn^{C_2,\cong}_{A})
\end{equation*}
and $e = A \grotimes \Cl_{-1}$.
We have shown in Lemma~\ref{lem:cofinCliffmods} that $e^{\oplus n}$ is a cofinal system.
In Lemma~\ref{lem:fiberphi} we identified the fiber of $\phi$ over $\phi(e^{\oplus n})$ with $\mathcal{F}^{\mathrm{vD}}_{A \grotimes \Cl_{1,1}}(n)$, and that identification intertwines the respective connecting maps.
We have thus identified $V$ with $M_A^{\mathrm{vD}}$.

\end{proof}

Combining Proposition~\ref{prop:ABSvsvD} and Theorem~\ref{th:vDmonoid} we obtain

\begin{corollary}
\label{cor:vDvsABS}
There is an isomorphism of groups
    $K^{\mathrm{vD}}_0(A) \cong \pi_1 K^{\ABS}_A$.
\end{corollary}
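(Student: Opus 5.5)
The plan is to combine the two preceding results directly. By Proposition~\ref{prop:ABSvsvD} we have $K^{\mathrm{vD}}_A \simeq \Omega K^{\mathrm{gr}}_A$ (the periodic version), and hence
\[
\pi_0 K^{\mathrm{vD}}_A \cong \pi_1 K^{\mathrm{gr}}_A .
\]
Here we read $K^{\ABS}_A$ as the periodic spectrum $K^{\mathrm{gr}}_A$. In positive degrees this makes no difference anyway, since $k^{\ABS}_A \to K^{\mathrm{gr}}_A$ is a $\pi_{\geq 1}$-isomorphism by Theorem~\ref{thm:gradedbottperiodicity}. So $\pi_1 k^{\ABS}_A \cong \pi_1 K^{\mathrm{gr}}_A$, and the statement holds for either reading.

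By Theorem~\ref{th:vDmonoid}, $\pi_0 K^{\mathrm{vD}}_A \cong DK(A)$ as monoids. The left-hand side is a group, so $DK(A)$ is a group and the monoid isomorphism is automatically a group isomorphism. Composing the two isomorphisms gives $K^{\mathrm{vD}}_0(A) := DK(A) \cong \pi_1 K^{\ABS}_A$.

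The one point needing care is that Theorem~\ref{th:vDmonoid} was proved using the connective fiber $\fib(k^{C_2}_{A\grotimes\Cl_{-1}} \to k^{C_2}_A)$. Via Theorem~\ref{thm:fibergroupcompletion} and Lemma~\ref{lem:seqcofin}, its $\pi_0$ is computed as the Karoubi group.

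We must check that $\pi_0$ of this connective fiber agrees with $\pi_0$ of the periodic fiber $K^{\mathrm{vD}}_A$. It does, because:
\begin{enumerate}
\item the connective van Daele spectrum $k^{\mathrm{vD}}_A \simeq \Omega k^{\ABS}_A$ has $\pi_0 = \pi_1 k^{\ABS}_A$;
\item this group agrees with $\pi_1 K^{\mathrm{gr}}_A = \pi_0 K^{\mathrm{vD}}_A$ by the $\pi_{\geq 1}$-isomorphism above.
\end{enumerate}

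This identification of connective and periodic $\pi_0$ is the only potential obstacle, and it is handled by Bott periodicity (Theorem~\ref{thm:gradedbottperiodicity}).
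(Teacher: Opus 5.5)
Your proposal is correct and is the paper's argument: the paper proves the corollary by combining Proposition~\ref{prop:ABSvsvD} with Theorem~\ref{th:vDmonoid}. Your extra check makes explicit a point the paper leaves implicit: the proof of Theorem~\ref{th:vDmonoid} actually computes $\pi_0$ of the connective fiber $k^{\mathrm{vD}}_A \simeq \Omega k^{\ABS}_A$, and this agrees with $\pi_1 K^{\mathrm{gr}}_A$ via the $\pi_{\geq 1}$-isomorphism $k^{\ABS}_A \to K^{\mathrm{gr}}_A$ coming from Theorem~\ref{thm:gradedbottperiodicity}.
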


\begin{remark}
    If we define higher van Daele $K$-groups as $K^{\mathrm{vD}}_i(A) := K^{\mathrm{vD}}(A \grotimes \Cl_{-i})$, then $K^{\mathrm{vD}}_i(A) \cong \pi_i K_A^{\mathrm{vD}}$ by our spectral suspension isomorphism \ref{prop:clifshift}.
    We thus recover the suspension theorem on the level of groups proven by van Daele in~\cite{MR961241}.
\end{remark}

\begin{corollary}
\label{cor:KK}
For all (real) graded $C^*$-algebras $A$ there are isomorphisms of groups
    $KK_i(\R,A) \cong K^{\ABS}_i(A)$ for all $i \in \Z$.
\end{corollary}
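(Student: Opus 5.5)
The plan is to reduce Corollary~\ref{cor:KK} to the known identification of Kasparov theory with van Daele $K$-theory, and then apply Corollary~\ref{cor:vDvsABS} together with the Clifford suspension isomorphism. Throughout, $K^{\ABS}_i(A)$ means $\pi_i K^{\mathrm{gr}}_A$. For $i\geq 1$ this agrees with $\pi_i k^{\ABS}_A$, because $k^{\ABS}\to K^{\mathrm{gr}}$ is a $\pi_{\geq 1}$-isomorphism. For all $i$ it is $\pi_0 K^{\mathrm{gr}}_{A\grotimes\Cl_{-i}}$ by Proposition~\ref{prop:clifshift}.

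\emph{Step 1: the degree-zero statement.} For a real graded $C^*$-algebra $A$, I would quote the literature comparison $KK_0(\R,A)\cong DK(A)$ between Kasparov's group and van Daele's group; see, e.g., \cite{MR3516985} and \cite[Appendix D]{bourne2026analyticindextheoryspectral}. The conventions (the $\Cl_{1,1}$ versus $\Cl_{2,2}$ stabilization) are reconciled by the remarks following Definition~\ref{def:vDKgroup}. By Theorem~\ref{th:vDmonoid} and Proposition~\ref{prop:ABSvsvD},
\begin{equation*}
DK(A)\cong \pi_0K^{\mathrm{vD}}_A\cong \pi_0\Omega K^{\mathrm{gr}}_A\cong \pi_1K^{\mathrm{gr}}_A .
\end{equation*}
This gives $KK_0(\R,A)\cong \pi_1 K^{\mathrm{gr}}_A$. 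The shift by one is then absorbed using Proposition~\ref{prop:clifshift} with one factor of $\Cl_{-1}$: $\pi_1K^{\mathrm{gr}}_A\cong\pi_0K^{\mathrm{gr}}_{A\grotimes\Cl_{+1}}$, and on the $KK$ side one uses Kasparov's Clifford periodicity $KK_0(\R,A)\cong KK_{\pm1}(\R,A\grotimes\Cl_{\mp1})$.

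\emph{Step 2: all degrees.} Kasparov's groups satisfy $KK_i(\R,A)\cong KK_0(\R,A\grotimes\Cl_{\pm i})$, with the sign fixed by Kasparov's convention; the sign must match ours since $KO_i\ncong KO_{-i}$. On our side, $\pi_iK^{\mathrm{gr}}_A\cong\pi_0K^{\mathrm{gr}}_{A\grotimes\Cl_{-i}}$ by Proposition~\ref{prop:clifshift}. Substituting $A\grotimes\Cl_{\pm i}$, which is still a graded $C^*$-algebra, into Step 1 yields the isomorphism for every $i\in\Z$, including negative $i$ via $\Cl_{p,q}$ with $q>p$.

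\emph{Main obstacle.} The main difficulty is bookkeeping of conventions in Step 1. Three choices interact: whether Kasparov and van Daele use $\Cl_{+1}$ or $\Cl_{-1}$; our convention that modules are left modules, so that endomorphism algebras are opposites (Convention~\ref{conv:End-op}); and the resulting possible replacement of $A$ by $A^{\op}$. I would first test the whole chain on $A=\R$ and $A=\Cl_{\pm1}$, where both sides are computable as $\pi_*KO$. This test should confirm both the direction of the shift and that no opposite algebra appears. If an opposite does appear, I would correct it using the Karoubi comparison of Proposition~\ref{prop:karvD}, since $K^{\mathrm{gr}}_{A^{\op}}$ differs from $K^{\mathrm{gr}}_A$ only by a shift.
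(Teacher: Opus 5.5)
Your skeleton matches the paper's: reduce to one degree by Clifford periodicity on both sides, then combine the literature identification of $KK$ with van Daele's group and Corollary~\ref{cor:vDvsABS}. However, Step~1 rests on a false input. The known comparison, due to Roe \cite{MR2082096} (see \cite[Theorem 5.11]{MR3516985}), is $KK_1(\R,A)\cong DK(A)$, not $KK_0(\R,A)\cong DK(A)$.

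Your version already fails for $A=\R$. By Lemma~\ref{lem:fiberphi} the relevant space is $\{x\in M_{n|n}(\R)_{\mathrm{odd}} : x^2=1\}\cong\GL_n(\R)$, so $DK(\R)\cong\pi_0\GL_\infty(\R)\cong\Z/2\cong\pi_1KO$, while $KK_0(\R,\R)\cong\Z$. Hence your intermediate conclusion $KK_0(\R,A)\cong\pi_1K^{\mathrm{gr}}_A$ is wrong.

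The proposed ``absorption'' of the shift cannot repair this. Clifford periodicity moves both theories by the same amount: $KK_j(\R,A\grotimes\Cl_{-1})\cong KK_{j+1}(\R,A)$ and $\pi_jK^{\mathrm{gr}}_{A\grotimes\Cl_{-1}}\cong\pi_{j+1}K^{\mathrm{gr}}_A$. So a uniform degree offset survives every substitution $A\mapsto A\grotimes\Cl_{p,q}$, and your Step~2 would output $KK_i(\R,A)\cong\pi_{i+1}K^{\mathrm{gr}}_A$.

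Separately, the displayed $\pi_1K^{\mathrm{gr}}_A\cong\pi_0K^{\mathrm{gr}}_{A\grotimes\Cl_{+1}}$ has the wrong sign. Proposition~\ref{prop:clifshift} gives $\pi_0K^{\mathrm{gr}}_{A\grotimes\Cl_{+1}}\cong\pi_{-1}K^{\mathrm{gr}}_A$, which contradicts your own correct opening paragraph.

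With the correct degree there is nothing to absorb: $KK_1(\R,A)\cong DK(A)\cong\pi_0K^{\mathrm{vD}}_A\cong\pi_1K^{\mathrm{gr}}_A$ is precisely the case $i=1$. For general $i$, use $KK_{q-p}(\R,A)\cong KK_0(\R,A\grotimes\Cl_{p,q})$. This gives $KK_i(\R,A)\cong KK_1(\R,A\grotimes\Cl_{p,q})$ whenever $q-p=i-1$. Compare this with $\pi_iK^{\mathrm{gr}}_A\cong\pi_1K^{\mathrm{gr}}_{A\grotimes\Cl_{p,q}}$ from Proposition~\ref{prop:clifshift}. The two periodicities have the same sign, so no opposite algebra enters and the detour through Proposition~\ref{prop:karvD} is unnecessary. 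This corrected argument is exactly the paper's proof.
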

\begin{proof}
Recall that $KK_{q-p}(\R,A) = KK_0(\R,A \grotimes \Cl_{p,q})$~\cite{MR1267059}. 
It follows that
\begin{equation*}
    K^{\ABS}_{q-p}(A) \cong K^{\ABS}_{0}(A \grotimes \Cl_{p,q})
\end{equation*}
from Corollary~\ref{prop:clifshift} on $\pi_0$.
It therefore suffices to check $i = 1$.
    It was shown by Roe in~\cite{MR2082096} that $KK_1(\R,A) \cong K^{\mathrm{vD}}_{0}(A)$, see \cite[Theorem 5.11]{MR3516985},~\cite{MR4140811} and~\cite[Theorem 2]{MR4121611} for details.
    We are done by Corollary~\ref{cor:vDvsABS}.
\end{proof}

\subsubsection{The Morita move}
\label{innergraded}

Let $A$ be a graded Banach algebra.
We will use the Morita triviality $\Cl_{1,1} \Morita \R$ to relate with definitions of graded $K$-theory that involve ungraded modules, such as Karoubi $K$-theory in Section~\ref{sec:karoubikt-h}.

The main observation (Lemma~\ref{lem:covVScontr}) will be that there is a commutative diagram of Banach categories 
\begin{equation}
\label{eq:psiA}
\begin{tikzcd}
    \ModEn_{|A \grotimes \Cl_{+1}|}   & \ModEn_{|A|} \ar[d, "\Psi_A"] \ar[l,"(\R \to \Cl_{+1})_*", swap]
    \\
    & \ModEn_{|A \grotimes \Cl_{+1} \grotimes \Cl_{-1}|}\ar[ul,"(\R \to \Cl_{-1})^*"]
\end{tikzcd} \,,
\end{equation}
which we will call the \emph{Morita move}.
Note that the Morita move additionally exchanges the covariant functoriality of $\R \to \Cl_{\pm 1}$ for the contravariant functoriality of $\R \to \Cl_{\mp 1}$.
$\Psi_A$ is an equivalence of categories.
Intuitively, $\Psi_A$ is defined using the fact that $\Cl_{+1} \grotimes \Cl_{-1} \cong \Cl_{1,1} \cong M_{1|1}(\R)$ is Morita trivial.
However, we have to carefully distinguish graded and ungraded Morita equivalence, for which it is convenient to use the theory of inner gradings.

\begin{definition}
    A graded algebra $A$ is called \emph{inner graded} (also called \emph{evenly graded}) if there is an element $g\in A$ with $g=g^{-1}$ and $gag^{-1}=(-1)^{|a|}a$ is the grading operator for all homogeneous elements in $A$.
    If $A$ is a $C^*$-algebra, we require $g$ to be self-adjoint.
\end{definition}

\begin{lemma}
\label{lem:inneriso} 
    If $B$ is inner graded, then there is an isomorphism of ungraded algebras $|A\grotimes B|\cong |A|\otimes |B|$ sending $a\grotimes b \mapsto a \otimes g^{|a|} b$, natural in $A$ for graded homomorphisms ($*$-homomorphisms in case $A$ is $C^*$).
\end{lemma}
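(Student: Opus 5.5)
We define the map $\Phi\colon |A|\otimes |B| \to |A\grotimes B|$ by $a\otimes b\mapsto a\grotimes g^{|a|}b$ on homogeneous $a$, extended linearly. It is the inverse of the map in the statement, since $g^{|a|}g^{|a|}=1$; both maps are bounded on the projective tensor product because multiplication by $g$ is bounded and the parity decomposition $A=A_0\oplus A_1$ is topological. Note that $|A\grotimes B|$ and $|A|\otimes|B|$ have the same underlying Banach space $A\otimes_\pi B$. So the only real content is multiplicativity, together with naturality and the $*$-compatibility.

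For multiplicativity, take homogeneous $a_1,a_2\in A$ and $b_1,b_2\in B$. In $|A|\otimes|B|$ the product $(a_1\otimes b_1)(a_2\otimes b_2)=a_1a_2\otimes b_1b_2$ maps to $a_1a_2\grotimes g^{|a_1|+|a_2|}b_1b_2$. On the other side,
\[
(a_1\grotimes g^{|a_1|}b_1)(a_2\grotimes g^{|a_2|}b_2)=(-1)^{|a_2|(|a_1|+|b_1|)}\,a_1a_2\grotimes g^{|a_1|}b_1g^{|a_2|}b_2,
\]
using \eqref{gradedtensoralgebra} and $|g|=0$, where $g$ is even since $gag^{-1}$ preserves degree and $g$ is its own conjugate. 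Now move $g^{|a_2|}$ past $b_1$ using $b_1g=(-1)^{|b_1|}gb_1$, which follows from $gb_1g^{-1}=(-1)^{|b_1|}b_1$. This gives $b_1g^{|a_2|}=(-1)^{|a_2||b_1|}g^{|a_2|}b_1$. The signs $(-1)^{|a_2||b_1|}$ cancel, leaving $(-1)^{|a_1||a_2|}a_1a_2\grotimes g^{|a_1|+|a_2|}b_1b_2$.

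The expected obstacle is exactly this leftover sign $(-1)^{|a_1||a_2|}$, which must be absorbed. We would handle it by commuting $g^{|a_1|}$ past $g^{|a_2|}$ (trivial) and past $b_1$ with bookkeeping, and we expect a sign correction to be needed. Concretely, we would check whether the correct formula is $a\grotimes b\mapsto a\otimes g^{|a|}b$ or instead $a\grotimes b\mapsto a\otimes b g^{|a|}$ (or a variant with $g^{|b|}$), and redo the computation for the candidate under which the sign identity $|a_2||b_1|$-terms cancel fully. The computation is finite and symmetric; in the standard inner-grading trick for $B=\Cl_{1,1}$ it works out. Units are clearly preserved, and linearity plus density of the algebraic tensor product give the statement on completions.

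Naturality in $A$ for a graded homomorphism $\varphi$ holds because $\varphi$ preserves degrees, so $|\varphi(a)|=|a|$ and the formula commutes with $\varphi\otimes\id$. In the $C^*$ case we check that $*$ is respected. Here $g=g^*$ and $(a\grotimes b)^*=(-1)^{|a||b|}a^*\grotimes b^*$, while $(g^{|a|}b)^*=b^*g^{|a|}=(-1)^{|a||b|}g^{|a|}b^*$. These signs match, so the isomorphism is a $*$-isomorphism and naturality holds for $*$-homomorphisms.
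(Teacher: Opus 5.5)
\textbf{Gap: the multiplicativity step.} Multiplicativity is the only step with real content, and the proposal leaves it unproved because of a sign slip.

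You correctly note that $g$ is even, so $g^{|a_1|}b_1$ is homogeneous of degree $|b_1|$. By \eqref{gradedtensoralgebra}, the Koszul sign in $(a_1\grotimes g^{|a_1|}b_1)(a_2\grotimes g^{|a_2|}b_2)$ is therefore $(-1)^{|a_2||b_1|}$, not $(-1)^{|a_2|(|a_1|+|b_1|)}$. The extra $|a_1|$ in your exponent silently treats $g$ as odd. With the correct sign, the factor $(-1)^{|a_2||b_1|}$ that you pick up from moving $g^{|a_2|}$ past $b_1$ cancels it exactly:
\[
\Phi(a_1\otimes b_1)\,\Phi(a_2\otimes b_2)=a_1a_2\grotimes g^{|a_1|+|a_2|}b_1b_2=a_1a_2\grotimes g^{|a_1a_2|}b_1b_2=\Phi(a_1a_2\otimes b_1b_2),
\]
where $g^2=1$ lets you read the exponent mod $2$.

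So the leftover sign $(-1)^{|a_1||a_2|}$ does not exist, and the formula $a\grotimes b\mapsto a\otimes g^{|a|}b$ is correct as stated. As written, though, your second and third paragraphs do not prove multiplicativity. They instead propose searching among alternative formulas ``to be checked'', which is not a proof and casts doubt on a true statement.

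\textbf{What is already fine.} $\Phi$ is inverse to the stated map because $g^{|a|}g^{|a|}=1$. Boundedness on $\otimes_\pi$ follows from the topological parity decomposition, since the map is $P_0\otimes\id+P_1\otimes L_g$. Naturality holds because graded homomorphisms preserve degree. Your $*$-check using $g=g^*$ and $b^*g^{|a|}=(-1)^{|a||b|}g^{|a|}b^*$ is correct.

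In the $C^*$ case, if you complete with the maximal graded tensor norm, say that an algebraic $*$-isomorphism is automatically isometric for the maximal $C^*$-norms, rather than appealing only to boundedness on $\otimes_\pi$.

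Once the sign is fixed, this direct verification is exactly the standard argument the paper defers to its references.
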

\begin{proof}
    The proof is easy, see~\cite[Proposition 14.5.1]{MR1656031}, \cite[Example 1.5]{MR2058474} and~\cite[Lemma 3.4]{MR3665214}.
\end{proof}

\begin{example}
    \label{ex:Cl11inner}
    The Clifford algebra $\Cl_{1,1}$ is inner graded.
    More explicitly, if $e,f$ are anticommuting odd generators with $e^2=1,f^2=-1$, then $g=ef$ provides an inner grading.
    Recall that $\Cl_{1,1}\cong \End(\R^{1|1})$ explicitly by 
    \begin{equation*}
    e \mapsto
    \begin{pmatrix}
    0 & 1 \\
    1 & 0
    \end{pmatrix} \quad 
    f \mapsto
    \begin{pmatrix}
        0 & -1 \\
        1 & 0
    \end{pmatrix} \,,
    \end{equation*}
    so that
    \begin{equation*}
    ef \mapsto 
    \begin{pmatrix}
        1 & 0 \\
        0 & -1
    \end{pmatrix}
    \end{equation*}
    is the standard grading operator on $\R^{1|1}$.
    The isomorphism 
    \begin{equation*}
        |A\grotimes\Cl_{1,1}|\cong |A|\otimes M_2(\R)\cong M_2(|A|)
    \end{equation*}
    is given by
    \begin{equation*}
    a \otimes x \mapsto 
    \begin{cases}
    a \otimes x & |a| = 0
    \\
    a \otimes ef x & |a| = 1,
    \end{cases} \,,
    \end{equation*}
    which is natural in graded ($*$-)homomorphisms, also see~\cite[Corollary 14.5.2 and Corollary 14.5.5]{MR1656031} and~\cite[Proposition 3.5]{MR3665214}.
\end{example}

\begin{example}
    The Clifford algebra $\Cl_{+2}$ is \emph{not} inner graded. Let $e_1,e_2$ be two anticommuting odd generators satisfying $e_i^2=1$. Then $g=e_1e_2$ does satisfy $gag^{-1}=(-1)^{|a|}a$, but $g= -g^{-1}$ and $g$ is skew-adjoint. 
    Note that for $A = \Cl_{-2}$ we have $|A\grotimes \Cl_{+2}|\ncong |A|\otimes |\Cl_{+2}|$; the left hand side equals $|\Cl_{2,2}| \cong M_4(\R)$ while the right hand side equals $|\Cl_{-2}| \otimes |\Cl_{2}| \cong M_2(\mathbb{H})$.
\end{example}

We obtain an equivalence of Banach categories given by the composition
    \begin{equation}
        \label{eq:gradedvsungradedfunctor}
         \Psi_A\colon \ModOne_{|A|} \xrightarrow{\simeq} \ModOne_{M_2(|A|)} \xrightarrow{\simeq} \ModOne_{|A\grotimes \Cl_{1,1}|} \,,
    \end{equation}
where the first functor uses the ungraded Morita equivalence between $|A|$ and $M_2(|A|)$ implemented by $|A|^{\oplus 2}$ and the second functor uses the isomorphism of Example~\ref{ex:Cl11inner}.
The previously observed naturality tells us that we get natural isomorphisms of functors $\BanAlgInf \to \BanCat$ and $C^*Alg_{\mathrm{gr}} \to \BanCat$.

\begin{lemma}
\label{lem:covVScontr}
    The Diagram~\eqref{eq:psiA} can be filled by a natural isomorphism.
\end{lemma}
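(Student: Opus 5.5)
Both composites in Diagram~\eqref{eq:psiA} start at $\ModEn_{|A|}$ and end at $\ModEn_{|A\grotimes \Cl_{+1}|}$. The plan is to compute each of them explicitly on an ungraded $|A|$-module $M$, and then write down an isomorphism between the results that is natural in $M$ and in $A$.

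The upper leg, $(\R\to\Cl_{+1})_*$, is extension of scalars along $|A|\to|A\grotimes\Cl_{+1}|$. It sends $M$ to
\[
|A\grotimes\Cl_{+1}|\otimes_{|A|}M\cong M\oplus eM,
\]
where the odd generator $e$ (with $e^2=1$) acts by swapping the two summands and inserting the grading sign on $A$.

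For the lower leg I unwind $\Psi_A$ from \eqref{eq:gradedvsungradedfunctor}. Under the Morita equivalence $|A|\Morita M_2(|A|)$, the module $M$ goes to $M^{\oplus 2}=M\otimes\R^2$. The isomorphism of Example~\ref{ex:Cl11inner}, $|A\grotimes\Cl_{1,1}|\cong M_2(|A|)$, then lets $\Cl_{1,1}\cong\End(\R^{1|1})$ act on the $\R^2$-factor, with odd elements of $A$ twisted by $ef$. So $\Psi_A(M)$ is $M\oplus M$ with these matrices acting. Restricting along $(\R\to\Cl_{-1})^*$ forgets the action of $f$, which leaves $M\oplus M$ with $e$ acting by $\begin{pmatrix}0&1\\1&0\end{pmatrix}$ and with $a\in A$ acting diagonally, twisted by $\mathrm{diag}(1,-1)$ in odd degree.

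The natural isomorphism is then $M\oplus eM\to M\oplus M$, $(m,em')\mapsto(m,m')$. The one real step is to check that this map intertwines the $|A\grotimes\Cl_{+1}|$-actions. The action of $e$ matches immediately. For $a\grotimes 1$ with $|a|=1$, the relation $ea=-ae$ forces the action on the $eM$-summand to be $-a$, and this is exactly the $\mathrm{diag}(1,-1)$ twist coming from $g=ef$. So the verification reduces to matching the sign conventions of Lemma~\ref{lem:inneriso} with the Koszul rule \eqref{gradedtensoralgebra}. That sign bookkeeping is where I expect the difficulty; if the signs turn out mismatched, I would precompose with the automorphism $e\mapsto -e$, or swap the two summands, to fix them.

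Naturality in $M$ is clear, since every map involved is given by a formula applied componentwise. Naturality in graded homomorphisms $A\to B$ follows from the naturality statements in Lemma~\ref{lem:inneriso} and Example~\ref{ex:Cl11inner}. Boundedness of the isomorphism is automatic, because it is an algebraic isomorphism of finitely generated projective modules, and these carry unique Banach structures with all module maps bounded. Finally, I would note that $\Psi_A$ is an equivalence because it is a composite of a Morita equivalence and an isomorphism of algebras.
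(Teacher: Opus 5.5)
Your proposal is correct and is essentially the paper's proof. The paper computes $\Psi_A(M)=M\oplus M$ with $a(m_1,m_2)=(am_1,(-1)^{|a|}am_2)$ and $e(m_1,m_2)=(m_2,m_1)$, and uses exactly the inverse of your map, $\xi_M(m_1,m_2)=1\otimes 1\otimes m_1+1\otimes e\otimes m_2$. The sign bookkeeping you were worried about needs no correction: by the Koszul rule, $(a\otimes 1)(1\otimes e\otimes m)=1\otimes e\otimes(-1)^{|a|}am$, which is precisely the $ef$-twist on the second summand.
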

\begin{proof}
Let $M$ be an $|A|$-module.
We have to give a natural isomorphism $\xi_M$ between the $|A \grotimes \Cl_{+1}|$-modules $|A \grotimes \Cl_{+1}| \otimes_{|A|} M$ and $\Psi_A(M)$ after forgetting the $\Cl_{-1}$-action.
    Using the formulas in Example~\ref{ex:Cl11inner}, we can compute explicitly that $\Psi_A(M) = M \oplus M$
with the $|A \grotimes \Cl_{1,1}|$-action 
    \begin{equation*}
    a(m_1,m_2) = (am_1, (-1)^{|a|} am_2), \quad e(m_1,m_2) = (m_2,m_1), \quad f(m_1,m_2) = (-m_2,m_1).
    \end{equation*}
    Consider the isomorphism $\xi_M \colon M \oplus M \cong |A \grotimes \Cl_{+1}| \otimes_{|A|} M$ of vector spaces given by 
    \begin{equation*}
    (m_1,m_2) \mapsto 1 \otimes 1 \otimes m_1 + 1 \otimes e \otimes m_2.
    \end{equation*}
    It follows by the computation $(a \otimes 1)(1 \otimes e \otimes m) = 1 \otimes e \otimes (-1)^{|a|} am$ that $\xi_M$ is an isomorphism of $|A \grotimes \Cl_{+1}|$-modules for the restricted $|A \grotimes \Cl_{+1}|$-action on $M \oplus M$.
    The isomorphism $\xi_M$ is natural in $|A|$-module maps.
\end{proof}

The analogue of Diagram~\eqref{eq:psiA} of opposite variance also commutes:

\begin{lemma}
    The diagram
   \begin{equation}
\begin{tikzcd}
    \ModEn_{|A \grotimes \Cl_{+1}|} \ar[r,"(\R \to \Cl_{+1})^*"] \ar[dr,"(\R \to \Cl_{-1})_*", swap]  & \ModEn_{|A|} \ar[d] 
    \\
    & \ModEn_{|A \grotimes \Cl_{+1} \grotimes \Cl_{-1}|}
\end{tikzcd}
\end{equation}
can be filled by a natural isomorphism.
\end{lemma}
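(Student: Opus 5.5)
We plan to mirror the proof of Lemma~\ref{lem:covVScontr}: evaluate both composites on an object, write down an explicit isomorphism of modules, and check that it is linear for the relevant action and natural in module maps. Let $N$ be an $|A\grotimes \Cl_{+1}|$-module, and write $e$ for the action of the $\Cl_{+1}$-generator, so that $e^2=1$ and $ea=(-1)^{|a|}ae$. The upper path restricts $N$ to an $|A|$-module $N|_{|A|}$ and then applies $\Psi_A$. By the formulas in the proof of Lemma~\ref{lem:covVScontr}, this gives $N\oplus N$ with
\begin{equation*}
a(n_1,n_2)=(an_1,(-1)^{|a|}an_2),\quad e(n_1,n_2)=(n_2,n_1),\quad f(n_1,n_2)=(-n_2,n_1).
\end{equation*}
The lower path gives the induced module $|A\grotimes\Cl_{1,1}|\otimes_{|A\grotimes\Cl_{+1}|}N$. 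Here $|A\grotimes\Cl_{+1}|\to|A\grotimes\Cl_{1,1}|$ is the ungraded homomorphism underlying $\id_A\grotimes(\R\to\Cl_{-1})$.

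As a right $|A\grotimes\Cl_{+1}|$-module, $|A\grotimes\Cl_{1,1}|$ is free on the basis $\{1,f\}$, because $\Cl_{1,1}=\Cl_{+1}\oplus \Cl_{+1}f$. Hence every element of the induced module can be written uniquely as $1\otimes n_1+f\otimes n_2$. We therefore propose the comparison map
\begin{equation*}
\xi_N\colon N\oplus N\to |A\grotimes\Cl_{1,1}|\otimes_{|A\grotimes\Cl_{+1}|}N,\qquad (n_1,n_2)\mapsto 1\otimes n_1+f\otimes n_2 .
\end{equation*}
By the freeness just noted, $\xi_N$ is bijective. It is also bounded, since both sides are finitely generated projective modules and module maps between these are automatically continuous.

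The remaining task is to check that $\xi_N$ intertwines the actions of $a\in A$, of $e$ and of $f$. This is a sign computation with the Koszul rule $(a\grotimes 1)(1\grotimes f)=a\grotimes f$ and $(1\grotimes f)(a\grotimes 1)=(-1)^{|a|}a\grotimes f$, together with $fe=-ef$ and $f^2=-1$. We expect this step to be the main obstacle, because the map $\xi_N$ written above need not be linear on the nose. One may have to replace it by a variant such as $(n_1,n_2)\mapsto 1\otimes n_1+f\otimes e n_2$, or swap or re-sign the two summands. The reason is that, in the identification of Example~\ref{ex:Cl11inner}, the odd elements of $A$ act through the inner grading $ef$, which mixes $e$ and $f$. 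Our plan is to first compute the action of $a$, $e$ and $f$ on the basis $\{1\otimes n, f\otimes n\}$, and then read off which of these variants matches the formulas for $\Psi_A(N|_{|A|})$.

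Once linearity holds, naturality in $|A\grotimes\Cl_{+1}|$-module maps $\phi$ is immediate, since $\xi$ commutes with $\phi\oplus\phi$ and with $\id\otimes\phi$. Naturality in graded homomorphisms of $A$ follows from the naturality of Example~\ref{ex:Cl11inner} recorded before Lemma~\ref{lem:covVScontr}. Together these give the natural isomorphism filling the diagram.
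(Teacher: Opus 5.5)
There is a gap: the step that carries all the content, producing the isomorphism, is left open, and every candidate you list fails. Write $E$ for the action of $e$ on $N$ and compute in the basis $\{1\otimes n, f\otimes n\}$ of $|A\grotimes\Cl_{1,1}|\otimes_{|A\grotimes\Cl_{+1}|}N$. You get $a(n_1,n_2)=(an_1,(-1)^{|a|}an_2)$ and $f(n_1,n_2)=(-n_2,n_1)$. These are exactly the formulas on $\Psi_A(N|_{|A|})$, so your $\xi_N$ is already $a$- and $f$-linear, and the diagnosis via the inner grading $ef$ is off.

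The mismatch is entirely in $e$. On the induced module $e(n_1,n_2)=(En_1,-En_2)$. On $\Psi_A(N|_{|A|})$ it is the swap $(n_1,n_2)\mapsto(n_2,n_1)$, because restricting to $|A|$ has forgotten $E$. Write a candidate as an operator matrix $\begin{pmatrix} p & q\\ r & s\end{pmatrix}$. Compatibility with $f$ forces $s=p$ and $r=-q$, and intertwining the swap with $\mathrm{diag}(E,-E)$ forces $q=Ep$. So every module map has the form $\begin{pmatrix} p & Ep\\ -Ep & p\end{pmatrix}$ with $p$ an $|A|$-linear map. Hence no map built from identities, swaps and signs can work. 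Your variant $(n_1,n_2)\mapsto 1\otimes n_1+f\otimes en_2$, i.e.\ $\mathrm{diag}(1,E)$, fails for $e$, and also for odd $a$ since $Ea=-aE$.

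The missing idea is that the isomorphism must mix the two summands through $E$. Taking $p=1$ gives
\[
(n_1,n_2)\longmapsto 1\otimes(n_1+en_2)+f\otimes(n_2-en_1),
\]
with inverse $\tfrac12\begin{pmatrix}1&-E\\E&1\end{pmatrix}$. One checks directly, using $Ea=(-1)^{|a|}aE$, that it intertwines $a$, $e$ and $f$. It is natural because module maps commute with $E$. Conceptually, $(n,0)$ goes to $1\otimes n-f\otimes en$, and these elements span the $+1$-eigenspace of $ef$; $\Psi_A$ is inverse to taking that eigenspace.

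With this map, your direct computation becomes a complete alternative to the paper's argument. The paper instead takes right adjoints in Lemma~\ref{lem:covVScontr}, so that $\Psi_A$ is replaced by its inverse and extension by restriction. It then identifies the resulting coextension of scalars with extension of scalars, using that $|B|\to|B\grotimes\Cl_{-1}|$ (with $B=A\grotimes\Cl_{+1}$) is a Frobenius extension with trace $b+b'f\mapsto b$ and dual basis $1\otimes1-f\otimes f$.
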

\begin{proof}
    Taking right adjoints of Diagram~\eqref{eq:psiA} we obtain a diagram of the correct shape since $\Psi_A$ is an equivalence.
    The upper horizontal arrow is correct since restriction of scalars is right adjoint to extension of scalars.
    However, the bottom left arrow will be given by coextension of scalars $M \mapsto \Hom(\Cl_{-1}, M)$ instead of the desired extension of scalars $M \mapsto M \otimes \Cl_{+1}$ along $A \hookrightarrow A \grotimes \Cl_{+1}$.
    By the general theory of adjoint pairs between categories of modules~\cite{MR190183}, it suffices to show that $A \hookrightarrow A \grotimes \Cl_{-1}$ is a Frobenius extension of ungraded algebras, see~\cite[Theorem 1.2]{MR1690111} for a textbook account.
    One can check by direct computation that the Frobenius trace $\lambda(a + bf) = a$ is an ungraded $(|A|,|A|)$-bimodule map, and $1 \otimes 1 - f \otimes f \in (|A \grotimes \Cl_{-1}| \otimes |A \grotimes \Cl_{-1}|)$ is a dual basis.
\end{proof}

\subsubsection{Flopposites}
\label{sec:flopping}

There are variations in the literature about the preferred signs of Clifford algebras $\Cl_{\pm 1}$, and we now discuss how to compare these signs in our setup.
In this section, we restrict our graded Banach algebras to be $C^*$.
Recall from Warning~\ref{warning:opposite} that the categorically correct definition of $A^{\op}$ for $A$ a graded $C^*$-algebra involves a Koszul sign on the multiplication.
We will also consider the ``wrong'' opposite:

\begin{definition}
The \emph{flopposite} $A^{\flop}$ of a graded algebra $A$ is its ungraded opposite considered as a graded algebra. In other words, it is equal to $A$ as a graded vector space, but equipped with the multiplication $a_1^{\flop}\cdot a_2^{\flop}=(a_2a_1)^{\flop}$.
\end{definition}

If $A$ is a graded $C^*$-algebra, its flopposite is a graded $C^*$-algebra with $(a^{\flop})^* := (a^*)^{\flop}$.
The $C^*$-structure gives a natural graded $*$-isomorphism $A \to A^{\flop}$ and hence a natural graded $*$-isomorphism between $A^{\op}$ and $A^{\fl} := (A^{\flop})^{\op}$.
Observe that the $C^*$-algebra $A^{\fl}$ has multiplication explicitly given by $a_1^{\fl} \cdot a_2^{\fl} = (-1)^{|a_1||a_2|} (a_1a_2)^{\fl}$.

The following lemma and remark achieve an algebra-level incarnation of the conjugation equivalence of~\cite[Lemma 3.25]{stehouwer2025free} under the equivalence between ungraded $|A \grotimes \Cl_{+1}|$-modules and graded $A$-modules with even maps.

\begin{lemma}
\label{lem:magic2}
    Given a graded $C^*$-algebra $A$, there is a natural isomorphism 
    \begin{equation*}
    \Phi_A \colon |A^{\op} \grotimes \Cl_{+1}| \cong |A \grotimes \Cl_{+1}|
    \end{equation*}
    between functors $\CStarAlg_{\mathrm{gr}} \to \CStarAlg$.
\end{lemma}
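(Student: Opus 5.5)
The plan is to write down an explicit map on generators and then check the relations. As an ungraded algebra, $|A^{\op}\grotimes\Cl_{+1}|$ is generated by $A$ (with the Koszul-twisted multiplication $a_1^{\op}a_2^{\op}=(-1)^{|a_1||a_2|}(a_2a_1)^{\op}$) together with an odd element $e$ with $e^2=1$. It satisfies $e a^{\op}=(-1)^{|a|}a^{\op}e$. I will not aim for an isomorphism that is the identity on $A$, because $A^{\op}$ and $A$ need not be isomorphic. Instead I will use the $C^*$-structure to build an antilinear-free anti-isomorphism and then undo the ``anti'' with the Clifford generator.

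Concretely, I define
\begin{equation*}
\Phi_A(a^{\op}\grotimes 1)=a^{*}\grotimes e^{|a|}\ \text{(up to a sign } \epsilon(|a|)\text{)}\,,\qquad \Phi_A(1\grotimes e)=1\grotimes e\,.
\end{equation*}
This is wrong for complex algebras, since $*$ is antilinear. So over $\mathbb{F}=\C$ I will instead use the graded $*$-isomorphism $A^{\op}\cong A^{\fl}$ recorded just before the lemma. That reduces the problem to building $|A^{\fl}\grotimes\Cl_{+1}|\cong|A\grotimes\Cl_{+1}|$. Here $A^{\fl}$ has the same underlying space as $A$, with product $a_1\cdot a_2=(-1)^{|a_1||a_2|}a_1a_2$. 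The candidate map is
\begin{equation*}
a^{\fl}\grotimes 1\longmapsto a\grotimes e^{|a|}\cdot c_{|a|}\,,\qquad 1\grotimes e\mapsto 1\grotimes e\,,
\end{equation*}
where $c_0=1$ and $c_1$ is a scalar to be fixed.

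First I check multiplicativity on $A^{\fl}$. In the target,
\begin{equation*}
(a_1\grotimes e^{|a_1|})(a_2\grotimes e^{|a_2|})=(-1)^{|a_1||a_2|}a_1a_2\grotimes e^{|a_1|+|a_2|}\,,
\end{equation*}
and $e^{2}=1$ reduces the exponent mod $2$. This matches $(-1)^{|a_1||a_2|}(a_1a_2)^{\fl}$, so $c_1=1$ already works. Second, I check the commutation with $e$. In the source, $e\,a^{\fl}=(-1)^{|a|}a^{\fl}e$. In the target, $(1\grotimes e)(a\grotimes e^{|a|})=(-1)^{|a|}a\grotimes e^{1+|a|}$, while $(a\grotimes e^{|a|})(1\grotimes e)=a\grotimes e^{|a|+1}$, so the relation is preserved. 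Third, for bijectivity I write down the inverse by the same formula. Fourth, I check the $*$-compatibility, using $(a\grotimes b)^*=(-1)^{|a||b|}a^*\grotimes b^*$ and $e^*=e$: for odd $a$, $(a\grotimes e)^*=-a^*\grotimes e$. This sign has to be matched against the $*$ on the source, and I expect it to force replacing $c_1$ by $\pm i$ in the complex case, or to require using $\Cl_{+1}$ rather than $\Cl_{-1}$ in the real case. Continuity is automatic, since the map is built from bounded pieces.

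Naturality is then clear, because the formula only involves the grading and the identity on $A$, and graded $*$-homomorphisms preserve both.

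The main obstacle is the $*$-compatibility together with the ungraded $C^*$-norm. The sign bookkeeping in the $*$ (Warning~\ref{warning:opposite}) is where a naive choice fails, and I would resolve it first by testing the formula on $A=\Cl_{\pm1}$.
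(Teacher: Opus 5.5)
Your construction is the paper's, read in the opposite direction. The paper defines $\Phi_A\colon |A\grotimes\Cl_{+1}|\to|A^{\fl}\grotimes\Cl_{+1}|$ by $a\otimes c\mapsto a^{\fl}\otimes e^{|a|}c$, and your map $a^{\fl}\grotimes c\mapsto a\grotimes e^{|a|}c$ is its inverse. Your multiplicativity, commutation and bijectivity checks are correct.

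\textbf{The gap.} The last step is part of the statement, since the isomorphism has to live in $\CStarAlg$, and you leave it open. You do not verify $*$-compatibility. Instead you predict that it fails and must be repaired by taking $c_1=\pm i$. That prediction is wrong, and the repair would break the construction. Your own odd--odd multiplicativity computation forces $c_1^2=c_0=1$. Moreover, $\pm i$ is not even available over $\R$.

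\textbf{The missing observation.} The involution on $A^{\fl}$, like the one on $A^{\op}$ (Warning~\ref{warning:opposite}), carries a sign: $(a^{\fl})^*=(-1)^{|a|}(a^*)^{\fl}$. This sign exactly cancels the Koszul sign you found in $(a\grotimes e)^*=-a^*\grotimes e$. For odd $a$,
\begin{equation*}
(a^{\fl}\grotimes 1)^* = -(a^*)^{\fl}\grotimes 1 \longmapsto -a^*\grotimes e = (a\grotimes e)^* \,.
\end{equation*}
The even case is immediate, and the self-adjoint $e$ is sent to $e$. Your map is multiplicative and both involutions are antimultiplicative, so checking on these generators suffices. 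This finishes the proof with $c_1=1$, exactly as in the paper.

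\textbf{The detour through $A^{\fl}$.} Your stated reason for it does not hold up. The identification $A^{\op}\cong A^{\fl}$ recorded before the lemma is $a^{\op}\mapsto(a^*)^{\fl}$, which is built from $*$. Over $\C$ it is therefore conjugate-linear, just like your first formula. In fact your first formula with $\epsilon=+1$ is precisely the composite of this identification with your second map. Over $\R$, the paper's standing setting, nothing here is antilinear and the issue is moot. Over $\C$, passing to $A^{\fl}$ does not remove the antilinearity you worried about.
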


\begin{proof}
    It suffices to show the analogous result replacing $A^{\op}$ by $A^{\fl}$.
    Define the map
    \begin{equation*}
    \Phi_A \colon |A \grotimes \Cl_{+1}| \longrightarrow |A^{\fl} \grotimes \Cl_{+1}|
    \end{equation*}
        for homogeneous $a$ by $a\otimes c\mapsto a^{\fl}\otimes e^{|a|}c$.
        This uniquely defines a linear map, which we show is an algebra isomorphism by noting that
        \begin{align*}
            \Phi_A(a_1\otimes c_1)\Phi_A(a_2\otimes c_2)&=(a_1^{\fl} \otimes e^{|a_1|}c_1) (a_2^{\fl} \otimes e^{|a_2|}c_2) \\
            &= (-1)^{|a_2|(|c_1|+|a_1|)} a_1^{\fl}\cdot a_2^{\fl} \otimes e^{|a_1|+|a_2|}c_1c_2 \\
            &= (-1)^{|a_2||c_1|} (a_1a_2)^{\fl}\otimes e^{|a_1a_2|}c_1c_2 \\
            &= \Phi_A( (-1)^{|a_1||c_2|}a_1a_2\otimes c_1c_2) \\
            &= \Phi_A((a_1\otimes c_1)(a_2\otimes c_2)) \,.
        \end{align*}
        Clearly $\Phi_A$ is natural in $A$.
        $\Phi_A$ is a $*$-homomorphism since the $*$ on $A^{\fl}$ is $(a^{\fl})^* = (-1)^{|a|} (a^*)^{\fl}$.
\end{proof}

\begin{warning}
    The isomorphism of Lemma~\ref{lem:magic2} cannot be lifted to a graded isomorphism.
    For example, for $A = \Cl_{+1}$, we have 
    \begin{equation*}
    \Cl_{+1}^{\op} \grotimes \Cl_{+1} \cong \Cl_{1,1} \ncong \Cl_{+2} \cong \Cl_{+1} \grotimes \Cl_{+1}
    \end{equation*}
    as graded algebras,
    even though $|\Cl_{1,1}| \cong M_2(\R) \cong |\Cl_{+2}|$ as ungraded algebras.
\end{warning}

\begin{lemma}
\label{lem:flopiso}
    There is a natural isomorphism which makes the diagram
\begin{equation*}
\begin{tikzcd}[column sep = 40]
    {|A^{\op} \grotimes \Cl_{+1}|} \ar[d,"\Phi_A"] \ar[r,"\R \to \Cl_{-1}"] & {|(A^{\op} \grotimes \Cl_{+1}) \grotimes \Cl_{-1}|} \ar[d,dashed]
    \\
    {|A \grotimes \Cl_{+1}|} \ar[r,"\R \to \Cl_{+1}"] & {|A \grotimes \Cl_{+1} \grotimes \Cl_{+1} |}
\end{tikzcd}
\end{equation*}
of ungraded $C^*$-algebras commute.
\end{lemma}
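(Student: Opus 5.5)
We have to produce an explicit ungraded $*$-isomorphism
\[
\Psi\colon |A^{\op}\grotimes\Cl_{+1}\grotimes\Cl_{-1}|\to|A\grotimes\Cl_{+1}\grotimes\Cl_{+1}|
\]
that restricts to $\Phi_A$ on the subalgebra $|A^{\op}\grotimes\Cl_{+1}|$. As in Lemma~\ref{lem:magic2}, we replace $A^{\op}$ by $A^{\fl}$ via the natural graded $*$-isomorphism $A^{\op}\cong A^{\fl}$. We write $e$ for the generator of the first $\Cl_{+1}$ factor, and $f$ (with $f^2=-1$) for the generator of $\Cl_{-1}$ on the left-hand side. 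On the right-hand side, $e$ generates the first $\Cl_{+1}$ and $e'$ the second.

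The key observation is that the left side is generated, as an ungraded algebra, by $|A^{\fl}\grotimes\Cl_{+1}|$ together with the single element $f$. The element $f$ squares to $-1$ and satisfies two relations: it anticommutes with $e$, and $f\,a = (-1)^{|a|}a\,f$ for homogeneous $a\in A^{\fl}$. Hence $|A^{\fl}\grotimes\Cl_{+1}\grotimes\Cl_{-1}|$ is a quotient of the crossed product generated by $|A^{\fl}\grotimes\Cl_{+1}|$ and an element $f$ subject to these relations. A dimension count over $|A^{\fl}\grotimes\Cl_{+1}|$ (free of rank $2$) shows it equals that algebra. So to define $\Psi$, we set $\Psi|=\Phi_A$ and choose an image $F$ of $f$ in the target satisfying:
\begin{enumerate}
\item $F^2=-1$;
\item $F$ anticommutes with $\Phi_A(e)=e$;
\item $F\,\Phi_A(a\otimes1)=(-1)^{|a|}\Phi_A(a\otimes1)\,F$.
\end{enumerate}
Recall $\Phi_A(a\otimes 1)=a^{\fl}\otimes e^{|a|}$; on the right side this is the element $a\grotimes e^{|a|}\grotimes 1$.

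The natural candidate is $F=e\,e'$, the product of the two $\Cl_{+1}$ generators on the right-hand side. We check each condition in turn.
\begin{itemize}
\item Since $e,e'$ are odd, anticommuting and square to $1$, we get $(ee')^2=-ee'e'e=-1$.
\item It anticommutes with $e$.
\item For the third condition: $e'$ graded-commutes with $A\grotimes\Cl_{+1}$, so $e'(a\otimes e^{|a|})=(-1)^{|a|+|a|}(a\otimes e^{|a|})e'$, i.e.\ $e'$ commutes with $\Phi_A(a\otimes 1)$. Meanwhile $e$ satisfies $e\,(a\otimes e^{|a|}) = (-1)^{|a|}a\otimes e\,e^{|a|}$. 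This produces the sign $(-1)^{|a|}$ after commuting $e$ past $a$, while $e$ commutes with $e^{|a|}$.
\end{itemize}
Thus $ee'$ has exactly the required sign behaviour. This bookkeeping is the step most prone to error; if a stray sign appears, I would use $F=e'e$ or $F=e'$ composed with the inner grading, and adjust until it matches.

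Next, the resulting homomorphism $\Psi$ must be shown to be an isomorphism. Its image contains $\Phi_A$'s image $|A\grotimes\Cl_{+1}|$ and $ee'$, hence contains $e'=e\cdot ee'$, so $\Psi$ is surjective. Both sides are free of rank $2$ over the respective images of $|A\grotimes\Cl_{+1}|$, with bases $\{1,f\}$ and $\{1,e'\}$ respectively. This makes $\Psi$ bijective, and by the open mapping theorem it is a Banach isomorphism. It is a $*$-map because $f^*=-f$ and $(ee')^*=e'^*e^*=e'e=-ee'$, given the graded $*$ conventions on $\Cl_{\pm1}$ and the Koszul $*$ on tensor products, combined with Lemma~\ref{lem:magic2} for $\Phi_A$.

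Finally, naturality in graded $*$-homomorphisms $A\to B$ is immediate: $\Psi$ is the identity on the Clifford generators and is natural on the $A$-part through $\Phi_A$. Commutativity of the square holds by construction, since $\Psi$ restricts to $\Phi_A$ on $|A^{\op}\grotimes\Cl_{+1}|$, which is exactly the image of the top horizontal map.
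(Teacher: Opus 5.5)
Your proof is correct, and it produces the same map as the paper by a different argument.

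\textbf{The paper's route.} The paper writes the dashed arrow as a composite of canonical isomorphisms:
\begin{itemize}
\item the braiding $\beta_{\Cl_{+1},\Cl_{-1}}$;
\item the identification $A^{\op}\grotimes\Cl_{-1}\cong(A\grotimes\Cl_{+1})^{\op}$, using $\Cl_{+1}^{\op}\cong\Cl_{-1}$;
\item the map $\Phi$ of Lemma~\ref{lem:magic2} for the graded algebra $A\grotimes\Cl_{+1}$;
\item the braiding $\beta_{\Cl_{+1},\Cl_{+1}}$.
\end{itemize}
Bijectivity, $*$-compatibility and naturality are then inherited from the pieces. The only thing left to check is that the square commutes, which the paper calls straightforward.

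\textbf{Your route.} You present $|B\grotimes\Cl_{-1}|$, with $B=A^{\fl}\grotimes\Cl_{+1}$, as $B$ with one adjoined element $f$ subject to $f^2=-1$ and $fb=(-1)^{|b|}bf$. You then extend $\Phi_A$ by $f\mapsto ee'$. Commutativity of the square becomes tautological. The work moves into checking the three relations, bijectivity and the $*$-condition, and you do all of these correctly.

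Tracing the paper's composite on $f$ gives $\pm ee'$. So the two constructions agree up to a sign that both tolerate.

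\textbf{Two small tidy-ups.}
\begin{itemize}
\item Your sign computation is right, so the hedge about adjusting $F$ should be removed. In any case $F=e'$ would not work, since $e'^2=+1$.
\item For bijectivity, do not rely on both sides being free of rank $2$. Surjective maps between free modules of equal rank over a noncommutative Banach algebra need not be injective. Instead say directly that $\Psi(b_0+b_1f)=\Phi_A(b_0)+\bigl(\Phi_A(b_1)e\bigr)e'$, and that $b_1\mapsto\Phi_A(b_1)e$ is bijective because $e$ is a unit.
\end{itemize}
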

\begin{proof}  
It is straightforward to check that the following composition for the dashed arrow works:
\begin{align*}
A^{\op} \grotimes \Cl_{+1} \grotimes \Cl_{-1} &\xrightarrow{\beta_{\Cl_{+1},\Cl_{-1}}} A^{\op} \grotimes \Cl_{-1} \grotimes \Cl_{+1} \cong (A \grotimes \Cl_{+1})^{\op} \grotimes \Cl_{+1}
\\
&\xrightarrow{\Phi_{A^{\op} \grotimes \Cl_{+1}}} A \grotimes \Cl_{+1} \grotimes \Cl_{+1} \xrightarrow{\beta_{\Cl_{+1}, \Cl_{+1}}} A \grotimes \Cl_{+1} \grotimes \Cl_{+1}.
\end{align*}
Here, $\beta$ denotes the braiding isomorphism $a \otimes b \mapsto (-1)^{|a||b|} b \otimes a$ of graded $C^*$-algebras.
\end{proof}

\subsubsection{Karoubi \texorpdfstring{$K$}{K}-theory}
\label{sec:karoubikt-h}

Karoubi in~\cite[II.2.1]{Karoubi68} defined a $K$-theory group $K_0^{\Kar}(A)$ associated to a graded Banach algebra $A$, see~\cite[Section 1]{MR2513335} for a recent introduction.
A virtue of Karoubi's definition is that it is always ``correct'', as opposed to the Atiyah--Bott--Shapiro definition and it directly generalizes to super Banach categories.

\begin{definition}
\label{def:Karoubispectral}
\,
\begin{itemize}
    \item The \emph{connective Karoubi $K$-theory spectrum of $A$} is $k^{\Kar}_A := \fib(k_{A}^{C_2} \to k_{|A|})$, where the map is the group completion of forgetting the grading $f:\ModOne_A^{C_2} \to \ModOne_{|A|}$.
    \item The \emph{periodic Karoubi $K$-theory spectrum of $A$} is $K^{\Kar}_A := \fib(K_{A}^{C_2} \to K_{|A|}) \cong k^{\Kar}_A[\beta^{-1}]$.
    \item The \emph{Karoubi $K$-theory group} $K_0^{\Kar}(A)$ is defined as $\pi_0K^{\Kar}_A$, cf.\ Definition~\ref{def:abstractKar}.
    \end{itemize}
\end{definition}

For defining the periodic version we used that $k_A^{C_2} \to k_{|A|}$ is $ko$-linear.
We unpack the definition of $K_0^{\Kar}(A)$ by applying the theory of Section~\ref{sec:fibersgroupcompletion}.
It follows by Theorem~\ref{thm:fibergroupcompletion} and Lemma~\ref{lem:triples} that
\begin{equation*}
K_0^{\Kar}(A) \cong \pi_0 k(f) \cong K^{\Kar}(f).
\end{equation*}
Elements of $K_0^{\Kar}(A)$ are therefore equivalence classes of triples $(M_1, M_2, \phi)$ where $M_1$ and $M_2$ are finitely generated projective graded $A$-modules and $\phi \colon |M_1| \to |M_2|$ is an ungraded isomorphism of $|A|$-modules.
The monoid structure is given by $(M_1, M_2, \phi) \oplus (M_1',M_2',\phi') = (M_1 \oplus M_1', M_2 \oplus M_2', \phi \oplus \phi')$.
Triples $(M_1, M_2, \phi)$ and $(M_1', M_2', \phi')$ are identified when they are homotopic\footnote{Karoubi only identifies triples when they are isomorphic, i.e.\ the diagram below commutes strictly. However, our weaker notion of homotopy is a consequence of his definition~\cite[Proposition II.2.15]{karoubi_k-theory_1978}.} in the sense that there are isomorphisms $M_1 \cong M_1'$ and $M_2 \cong M_2'$ together with a homotopy of $|A|$-bimodule isomorphisms filling the diagram
\begin{equation*}
\begin{tikzcd}
    {|M_1|} \ar[r,"{\phi}"] \ar[d] & {|M_2|} \ar[d]
    \\
    {|M_1'|}  \ar[r,"{\phi'}"] & {|M_2'|}
\end{tikzcd} \,.
\end{equation*}
Finally, a quotient is taken by the submonoid of triples (homotopic to those) of the form $(M,M,\id)$.
One can equivalently formulate the triples as having two gradings on the same ungraded $A$-module.

These spectra can be related to $k_A^{\mathrm{vD}}$ (and hence $k_A^{\ABS}$) using flopposites as follows.

\begin{proposition}
\label{prop:karvD}
    We have $k_A^{\Kar} = k^{\mathrm{vD}}_{A^{\op} \grotimes \Cl_{+1}}$ and so $K_A^{\Kar} = \Omega^{-1} K^{\mathrm{vD}}_{A^{\op}}$
\end{proposition}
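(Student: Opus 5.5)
The plan is to identify the two fiber sequences defining the two sides, using Proposition~\ref{prop:gradingvsclifford} together with the flopposite isomorphism of Lemma~\ref{lem:flopiso}.

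By definition, $k^{\mathrm{vD}}_{A^{\op}\grotimes \Cl_{+1}} = \fib\bigl(k^{C_2}_{A^{\op}\grotimes\Cl_{+1}\grotimes\Cl_{-1}} \to k^{C_2}_{A^{\op}\grotimes \Cl_{+1}}\bigr)$, where the map is restriction along $i_-$. Applying Proposition~\ref{prop:kC_2}(1), i.e.\ $k^{C_2}_B\simeq k_{|B\grotimes \Cl_{+1}|}$, rewrites this as the fiber of restriction of scalars
\[
k_{|A^{\op}\grotimes\Cl_{+1}\grotimes\Cl_{-1}\grotimes\Cl_{+1}|}\to k_{|A^{\op}\grotimes \Cl_{+1}\grotimes\Cl_{+1}|}.
\]
Naturality of Proposition~\ref{prop:gradingvsclifford} with respect to the homomorphism $i_-$ is what justifies this. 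The catch is that one must check the extra $\Cl_{+1}$ appears on the correct side after braiding.

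On the Karoubi side, $k^{\Kar}_A=\fib(k^{C_2}_A\to k_{|A|})$. Under Proposition~\ref{prop:gradingvsclifford} this is the fiber of restriction $k_{|A\grotimes \Cl_{+1}|}\to k_{|A|}$ along $(\R\to\Cl_{+1})^*$. The task is therefore to match two restriction functors of ungraded module categories up to equivalence of source and target, which I would do in two moves. First, apply the Morita move (Lemma~\ref{lem:covVScontr}) together with its opposite-variance variant to trade $|A|$-modules for $|A\grotimes\Cl_{1,1}|$-modules. Second, use Lemma~\ref{lem:flopiso}, which identifies $|A^{\op}\grotimes\Cl_{+1}|\to|A^{\op}\grotimes\Cl_{+1}\grotimes\Cl_{-1}|$ with $|A\grotimes\Cl_{+1}|\to|A\grotimes\Cl_{+1}\grotimes\Cl_{+1}|$, applied to the algebra $A$ itself, and then apply it again (or apply $\Phi$ to $A\grotimes\Cl_{+1}$) to absorb the remaining Clifford factor. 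The expected result is a commutative square of Banach categories whose vertical functors are equivalences. The square should be natural in $A$, so that applying $k$ and taking fibers yields the stated equivalence.

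The main obstacle is the bookkeeping in that square: one has to track which copy of $\Cl_{\pm1}$ is restricted along, and confirm that the composite of the isomorphisms $\Phi$, the braidings, and the Morita equivalence $|A\grotimes\Cl_{1,1}|\simeq M_2(|A|)\Morita |A|$ carries the restriction along $i_-$ exactly to the forgetful functor $|A\grotimes\Cl_{+1}|\to|A|$, rather than to some other inclusion. I would do this first at the level of algebra homomorphisms, using Lemma~\ref{lem:flopiso}, and only then pass to module categories. A commuting square of algebras induces commuting restriction functors, so coherence of the resulting natural isomorphism is automatic.

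The periodic statement then follows by inverting $\beta$, since both constructions are $ko$-linear and localization commutes with fibers. That gives $K^{\Kar}_A\simeq K^{\mathrm{vD}}_{A^{\op}\grotimes\Cl_{+1}}$. Proposition~\ref{prop:clifshift} together with Proposition~\ref{prop:ABSvsvD} (so that $K^{\mathrm{vD}}=\Omega K^{\mathrm{gr}}$) turns the $\Cl_{+1}$ factor into a single shift, which yields $\Omega^{-1}K^{\mathrm{vD}}_{A^{\op}}$.
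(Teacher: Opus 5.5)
Your ingredients are the same as the paper's: Proposition~\ref{prop:gradingvsclifford}, Lemma~\ref{lem:flopiso}, and $\Cl_{1,1}\Morita\R$. Reducing to a comparison of the two defining fiber sequences is also the right strategy. The gap is that the square you defer as ``bookkeeping'' is the entire content of the proposition, and the recipe you give for it does not work.
\begin{itemize}
\item Lemma~\ref{lem:flopiso} applied to $A$ itself has top row $|A^{\op}\grotimes\Cl_{+1}|\to|A^{\op}\grotimes\Cl_{+1}\grotimes\Cl_{-1}|$. By Proposition~\ref{prop:gradingvsclifford}, restriction along this is the van Daele map of $A^{\op}$, not of $A^{\op}\grotimes\Cl_{+1}$.
\item $\Phi$ is only an ungraded isomorphism (see the warning after Lemma~\ref{lem:magic2}), so it cannot be tensored with the extra graded $\Cl_{+1}$. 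Re-applying the same instance therefore does not absorb that factor.
\item Your fallback $\Phi_{A\grotimes\Cl_{+1}}$ is the wrong instance, since $(A\grotimes\Cl_{+1})^{\op}\cong A^{\op}\grotimes\Cl_{-1}$.
\item Lemma~\ref{lem:covVScontr} and its variant trade restriction for extension of scalars. Used on the Karoubi side alone, they leave you comparing an extension functor with a restriction functor, not two restrictions.
\end{itemize}

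The fix is to use the instance whose opposite is $A^{\op}\grotimes\Cl_{+1}$. After braiding the inserted $\Cl_{-1}$ to the last slot, your van Daele functor is restriction along $|A^{\op}\grotimes\Cl_{+1}\grotimes\Cl_{+1}|\to|A^{\op}\grotimes\Cl_{+1}\grotimes\Cl_{+1}\grotimes\Cl_{-1}|$. Apply Lemma~\ref{lem:flopiso} to $A\grotimes\Cl_{-1}$, using $(A\grotimes\Cl_{-1})^{\op}\cong A^{\op}\grotimes\Cl_{+1}$. This identifies your functor with restriction along $|C|\to|C\grotimes\Cl_{+1}|$ for $C=A\grotimes\Cl_{-1}\grotimes\Cl_{+1}$. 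By Proposition~\ref{prop:gradingvsclifford}, that is the grading-forgetful functor of $C$, so $k^{\mathrm{vD}}_{A^{\op}\grotimes\Cl_{+1}}\simeq k^{\Kar}_{A\grotimes\Cl_{1,1}}$.

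Plain Morita invariance of $k^{\Kar}$ under $\Cl_{1,1}\Morita\R$ then gives $k^{\Kar}_A$; this holds because forgetting the grading commutes with relative tensor products. No Morita move is needed. This is exactly the paper's proof, which first shows $k^{\mathrm{vD}}_B\simeq k^{\Kar}_{B^{\op}\grotimes\Cl_{+1}}$ for every $B$ and then sets $B=A^{\op}\grotimes\Cl_{+1}$.

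If you prefer to keep the Morita move, you must use it on both sides: Lemma~\ref{lem:covVScontr} on the van Daele side and its variant on the Karoubi side. Both become extension functors, namely $|E|\to|E\grotimes\Cl_{+1}|$ with $E=A^{\op}\grotimes\Cl_{+1}$, and $|A\grotimes\Cl_{+1}|\to|A\grotimes\Cl_{+1}\grotimes\Cl_{-1}|$. A single application of Lemma~\ref{lem:flopiso} to $A^{\op}$ identifies them. Your periodic step is fine.
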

\begin{proof}
    Consider the commutative diagrams:
    \begin{equation*}
    \begin{tikzcd}
        k_A^{C_2} \ar[d,"\simeq"] & k^{C_2}_{A \grotimes \Cl_{-1}} \ar[l] \ar[d,"\simeq"]
        \\
        k_{A^{\op}}^{C_2} \ar[d,"\simeq"] & k_{A^{\op} \grotimes \Cl_{+1}}^{C_2} \ar[dl] \ar[l]
        \\
        k_{|A^{\op} \grotimes \Cl_{+1}|} & 
    \end{tikzcd} \,,
    \end{equation*}
    where the square is obtained from taking the group completion of the core of Lemma~\ref{lem:flopiso}, and the bottom triangle comes from Proposition~\ref{prop:gradingvsclifford}.
    Taking fibers we obtain
    \begin{equation*}
    k^{\mathrm{vD}}_A \cong k^{\Kar}_{A^{\op} \grotimes \Cl_{+1}}.
    \end{equation*}
    Using $\Cl_{+1}^{\op} = \Cl_{-1}$ and the Morita equivalence $\Cl_{1,1} \Morita \R$ we get 
    \begin{align*}
        k^{\mathrm{vD}}_{A^{\op} \grotimes \Cl_{+1}} \cong k^{\Kar}_{(A^{\op} \grotimes \Cl_{+1})^{\op} \grotimes \Cl_{+1}} = k^{\Kar}_A.
    \end{align*}
    The second point follows from the Clifford suspension theorem of periodic $K$-theory.
\end{proof}

By Proposition~\ref{prop:ABSvsvD} and Clifford periodicity, we get:

\begin{corollary}
    $k_A^{\Kar} \cong \Omega k_{A^{\op} \grotimes \Cl_{+1}}^{\ABS}$ and $K^{\Kar}_A \cong K_{A^{\op}}^{\mathrm{gr}}$
\end{corollary}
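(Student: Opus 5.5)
The corollary has two claims: a connective identification $k_A^{\Kar} \simeq \Omega k^{\ABS}_{A^{\op}\grotimes \Cl_{+1}}$ and a periodic one $K^{\Kar}_A \simeq K^{\mathrm{gr}}_{A^{\op}}$. The plan is to compose the van Daele comparison with the Karoubi–van Daele comparison. Both have already been established, so no new geometric input is needed. Each claim is handled by chaining equivalences that are natural in $A$.

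For the connective statement, I would start from Proposition~\ref{prop:karvD}, which gives
\[
k^{\Kar}_A \simeq k^{\mathrm{vD}}_{A^{\op}\grotimes \Cl_{+1}}.
\]
Next I apply Proposition~\ref{prop:ABSvsvD} to the graded algebra $B = A^{\op}\grotimes\Cl_{+1}$. That proposition gives $k^{\mathrm{vD}}_B \simeq \Omega k^{\ABS}_B$, which is just $\fib\simeq\Omega\cofib$ applied to $i_-^*$ and read in connective spectra. Composing the two equivalences yields the first claim. The only care needed is that $\Omega$ here means $\tau_{\geq 0}\Sigma^{-1}$. Both sides are connective covers of the same fiber, so this convention introduces no discrepancy.

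For the periodic statement, I would use the second half of Proposition~\ref{prop:karvD}, namely $K^{\Kar}_A \simeq \Sigma\, K^{\mathrm{vD}}_{A^{\op}}$ (written there as $\Omega^{-1}$). The periodic half of Proposition~\ref{prop:ABSvsvD} then gives $K^{\mathrm{vD}}_{A^{\op}} \simeq \Omega K^{\mathrm{gr}}_{A^{\op}}$. Hence
\[
K^{\Kar}_A \simeq \Sigma\Omega K^{\mathrm{gr}}_{A^{\op}} \simeq K^{\mathrm{gr}}_{A^{\op}},
\]
since $K^{\mathrm{gr}}$ is a nonconnective spectrum and $\Sigma\Omega\simeq\id$ there.

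Alternatively, one can invert $\beta$ in the connective statement. This gives $K^{\Kar}_A \simeq \Sigma^{-1}K^{\mathrm{gr}}_{A^{\op}\grotimes\Cl_{+1}}$, and Proposition~\ref{prop:clifshift} then gives $K^{\mathrm{gr}}_{A^{\op}\grotimes\Cl_{+1}}\simeq \Sigma K^{\mathrm{gr}}_{A^{\op}}$, so the two shifts cancel. I would present this second route as a consistency check.

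The main point requiring attention is the connectivity bookkeeping. One must check that $k^{\Kar}_A[\beta^{-1}] \simeq K^{\Kar}_A$, which Definition~\ref{def:Karoubispectral} records. One must also check that inverting $\beta$ commutes with $\Omega$ on connective $ko$-modules. This holds by the argument in the proof of Theorem~\ref{thm:periodicKtheorylax}: $-\otimes_{ko}KO$ annihilates coconnective modules, so truncation is invisible after periodification.

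Everything else is formal. All the maps involved are natural in $A$, so the resulting equivalences are natural as well. The corollary follows because the paper has restricted to graded $C^*$-algebras, and the flopposite isomorphism used in Proposition~\ref{prop:karvD} requires exactly that hypothesis.
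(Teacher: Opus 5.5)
Your proposal is correct and follows the paper's route. The paper also composes Proposition~\ref{prop:karvD} with Proposition~\ref{prop:ABSvsvD}, applied to $A^{\op}\grotimes\Cl_{+1}$ in the connective case and to $A^{\op}$ in the periodic case, and uses Clifford periodicity $K^{\mathrm{gr}}_{A^{\op}\grotimes\Cl_{+1}}\simeq\Sigma K^{\mathrm{gr}}_{A^{\op}}$. One small overstatement: the $C^*$ hypothesis is needed only for the flopposite argument. The paper's closing remark in the comparison atlas replaces that argument by the duality $M\mapsto\Hom_A(A,M)$ for general graded Banach algebras.
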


\subsubsection{Comparison atlas of graded \texorpdfstring{$K$}{K}-theory}
\label{sec:atlas}

In this section we consider alternatives to defining graded $K$-theory that naturally arise in our setup.
More specifically, in our Definitions~\ref{def:ABSKth} and ~\ref{def:periodick}, we could consider:
\begin{itemize}
    \item replacing cofiber by fiber
    \item replacing $\Cl_{-1}$ by $\Cl_{+1}$
    \item replacing contravariant functoriality of $\R \to \Cl_{-1}$ by covariant functoriality.
    \item replacing graded modules by ungraded modules
\end{itemize}
as well as analogues of Karoubi $K$-theory comparing graded and ungraded modules.
In this section we show how all such versions can be related.
Some reproduce the comparison results we have proven so far in this section and several other versions have not been considered in the literature as far as we are aware.
We hence prove that graded $K$-theory fits into a multitude of fiber and cofiber sequences.

For notational convenience, we will mostly replace $C_2$-gradings by $\Cl_{+1}$-actions in this section by invoking Proposition~\ref{prop:gradingvsclifford}, keeping in mind the naturality spelled out in Remark~\ref{rem:C2vsCliffnatural}.
The reader who is interested in the graded versions of results in this section is referred to Remark~\ref{rem:ungradede}.

\begin{lemma} 
\label{lem:connectivemaster}
    We have the following equivalences of $ko$-modules that are natural in the graded $C^*$-algebra $A$.
    \begin{enumerate}
        \item $\fib(k_{|A|} \to k_{|A \grotimes \Cl_{+1}}|) \cong k_A^{\mathrm{vD}}$
        \item $\fib( k_{|A \grotimes \Cl_{-1}|} \to k_{|A|}) \cong k^{\mathrm{vD}}_{A \grotimes \Cl_{-1}}$
        \item $\fib(k_{|A \grotimes \Cl_{+1}|} \to k_{|A|}) \cong k^{\mathrm{vD}}_{A^{\op} \grotimes \Cl_{+1}}$
        \item  $\fib(k_{|A|} \to k_{|A \grotimes \Cl_{-1}|}) \cong k^{\mathrm{vD}}_{A^{\op} \grotimes \Cl_{+2}}$
    \end{enumerate}
\end{lemma}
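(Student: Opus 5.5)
Throughout, $k^{\mathrm{vD}}_B=\fib(i_-^*\colon k^{C_2}_{B\grotimes\Cl_{-1}}\to k^{C_2}_B)$ (Definition~\ref{def:vDspectrum}). The plan is to reduce every item to this one definition for a suitable $B$. The first step is to translate the two terms of that fiber into ungraded $K$-theory with Proposition~\ref{prop:gradingvsclifford}, which gives $k^{C_2}_B\simeq k_{|B\grotimes\Cl_{+1}|}$. Under this equivalence $i_-^*$ becomes restriction along $|B\grotimes\Cl_{+1}|\to|B\grotimes\Cl_{-1}\grotimes\Cl_{+1}|$. In the ungraded language the fibers in the statement have two shapes: restriction along some $\R\to\Cl_{\pm1}$ (contravariant), or extension along it (covariant). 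The Morita move (Lemma~\ref{lem:covVScontr}) and the flopposite (Lemma~\ref{lem:flopiso}) are exactly the tools that convert between these shapes, and both give natural isomorphisms of Banach functors. Every equivalence is therefore obtained on module categories, hence is natural and $ko$-linear, and then passed through $k$.

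\textbf{Item (2).} Take $B=A\grotimes\Cl_{-1}$. Then $k^{C_2}_{B\grotimes\Cl_{-1}}\simeq k_{|A\grotimes\Cl_{-2}\grotimes\Cl_{+1}|}$ and $k^{C_2}_B\simeq k_{|A\grotimes\Cl_{-1}\grotimes\Cl_{+1}|}$. Since $\Cl_{-1}\grotimes\Cl_{+1}\cong\Cl_{1,1}$ is inner graded, Lemma~\ref{lem:inneriso} and the Morita equivalence \eqref{eq:gradedvsungradedfunctor} identify these with $k_{|A\grotimes\Cl_{-1}|}$ and $k_{|A|}$. The map $i_-^*$ becomes restriction along $|A|\to|A\grotimes\Cl_{-1}|$. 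One must check that $\Psi$ intertwines the restrictions; this holds by naturality of Example~\ref{ex:Cl11inner} in graded homomorphisms.

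\textbf{Item (1).} Here the map $k_{|A|}\to k_{|A\grotimes\Cl_{+1}|}$ is extension along $\R\to\Cl_{+1}$. Lemma~\ref{lem:covVScontr} rewrites it as $\Psi_A$ followed by restriction along $\R\to\Cl_{-1}$, from $|A\grotimes\Cl_{+1}\grotimes\Cl_{-1}|$ to $|A\grotimes\Cl_{+1}|$. That is exactly the ungraded description of $i_-^*\colon k^{C_2}_{A\grotimes\Cl_{-1}}\to k^{C_2}_A$, after swapping the two Clifford factors with the braiding. Since $\Psi_A$ is an equivalence, the fiber is $k^{\mathrm{vD}}_A$.

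\textbf{Item (3).} First apply $\Phi_A$ (Lemma~\ref{lem:magic2}) to replace $|A\grotimes\Cl_{+1}|$ by $|A^{\op}\grotimes\Cl_{+1}|$. The restriction map then becomes the Karoubi forgetful map $k^{C_2}_{A^{\op}}\to k_{|A^{\op}|}$. Proposition~\ref{prop:karvD} identifies this fiber with $k^{\mathrm{vD}}_{(A^{\op})^{\op}\grotimes\Cl_{+1}}$. This is not yet the claimed form, so I will instead run the argument directly: use Lemma~\ref{lem:flopiso} to turn restriction along $\R\to\Cl_{+1}$ into restriction along $\R\to\Cl_{-1}$ for the algebra $A^{\op}\grotimes\Cl_{+1}$. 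That is item (2) applied to $A^{\op}\grotimes\Cl_{+1}$, up to the $\Cl_{\pm1}$ bookkeeping.

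\textbf{Item (4).} Apply the opposite-variance Morita move (the Frobenius lemma following Lemma~\ref{lem:covVScontr}) to convert extension along $\R\to\Cl_{-1}$ into restriction along $\R\to\Cl_{+1}$ for $A\grotimes\Cl_{+1}$. Then apply item (3) to $A\grotimes\Cl_{+1}$, giving $k^{\mathrm{vD}}_{(A\grotimes\Cl_{+1})^{\op}\grotimes\Cl_{+1}}$, and rewrite $(A\grotimes\Cl_{+1})^{\op}\cong A^{\op}\grotimes\Cl_{-1}$.

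\textbf{Main obstacle.} The hard part will be the sign and opposite bookkeeping in items (3) and (4): getting exactly $A^{\op}\grotimes\Cl_{+1}$ and $A^{\op}\grotimes\Cl_{+2}$ rather than a Morita-equivalent variant. In (4), for instance, the rewriting yields $A^{\op}\grotimes\Cl_{-1}\grotimes\Cl_{+1}\cong A^{\op}\grotimes\Cl_{1,1}$, which is not literally $A^{\op}\grotimes\Cl_{+2}$. So the choice of where the braidings $\beta$ enter, and whether a $\Cl_{1,1}$-factor may be cancelled, has to be made carefully. I would track the factors explicitly with the formulas of Lemma~\ref{lem:flopiso}, check each case on $A=\R$ against known homotopy groups, and only invoke Morita cancellation of $\Cl_{1,1}$ where $k^{\mathrm{vD}}$ is functorial for it.
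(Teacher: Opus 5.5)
Your items (1) and (2) are fine and are essentially the paper's argument: the Morita move, then cancelling $\Cl_{-1}\grotimes\Cl_{+1}\cong\Cl_{1,1}\Morita\R$.

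Item (3), however, is left unresolved, and both of your attempts compute the wrong spectrum. The missing observation is that no flopping is needed. By Proposition~\ref{prop:gradingvsclifford}, restriction along $|A|\to|A\grotimes\Cl_{+1}|$ \emph{is} the forgetful functor $\ModEn^{C_2}_A\to\ModEn_{|A|}$. So the left-hand side of (3) is by definition $k^{\Kar}_A$, and Proposition~\ref{prop:karvD} gives exactly $k^{\mathrm{vD}}_{A^{\op}\grotimes\Cl_{+1}}$.

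Your first attempt is false as stated. $\Phi_A$ sends $a\otimes 1$ to $a^{\fl}\otimes e^{|a|}$, so it does not intertwine the inclusions of $|A|$ and $|A^{\op}|$. Indeed $k^{\Kar}_A\not\simeq k^{\Kar}_{A^{\op}}$ in general: for $A=\Cl_{+1}$ the two periodize to $\Omega KO$ and $\Sigma KO$.

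Your second attempt applies Lemma~\ref{lem:flopiso} with base $A$. That lemma concerns restriction along $|A\grotimes\Cl_{+1}|\to|A\grotimes\Cl_{+1}\grotimes\Cl_{+1}|$, not along $|A|\to|A\grotimes\Cl_{+1}|$. What you actually compute is $\fib(k^{C_2}_{A\grotimes\Cl_{+1}}\to k^{C_2}_A)\simeq k^{\mathrm{vD}}_{A^{\op}\grotimes\Cl_{+1}\grotimes\Cl_{-1}}\simeq k^{\mathrm{vD}}_{A^{\op}}$, which is one Clifford factor off. To use that lemma you would have to apply it to $A\grotimes\Cl_{-1}$, whose product with $\Cl_{+1}$ is Morita equivalent to $A$.

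Item (4) has the same defect, and it is not a matter of sign and opposite bookkeeping. The opposite-variance Morita move with base $B$ relates restriction along $|B|\to|B\grotimes\Cl_{+1}|$ to extension along $|B\grotimes\Cl_{+1}|\to|B\grotimes\Cl_{+1}\grotimes\Cl_{-1}|$. To reach extension out of $|A|$ you therefore need $B\grotimes\Cl_{+1}\Morita A$, i.e.\ $B=A\grotimes\Cl_{-1}$, not $A\grotimes\Cl_{+1}$. With that choice the left-hand side of (4) is $k^{\Kar}_{A\grotimes\Cl_{-1}}$. Then (3) gives $k^{\mathrm{vD}}_{(A\grotimes\Cl_{-1})^{\op}\grotimes\Cl_{+1}}=k^{\mathrm{vD}}_{A^{\op}\grotimes\Cl_{+2}}$, since $\Cl_{-1}^{\op}\cong\Cl_{+1}$; this is the paper's proof.

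Your answer $A^{\op}\grotimes\Cl_{1,1}\Morita A^{\op}$ is not a Morita variant of the claim but a genuinely different spectrum. For $A=\R$ the left-hand side is $\fib(ko\to ku)$, which periodizes to $\Sigma KO$ by the Wood sequence, whereas $k^{\mathrm{vD}}_{\R}$ periodizes to $\Omega KO$. The $A=\R$ check you propose would already have flagged this. By contrast, cancelling a $\Cl_{1,1}$ factor is never the problem: $k^{\mathrm{vD}}$ is the fiber of a natural transformation of functors on $\sBimBanInf$, so it is Morita invariant.
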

\begin{proof}
    From the Morita move we get
a commutative diagram
    \begin{equation}
    \label{moritamove}
    \begin{tikzcd}
    k_{A}^{C_2}    & k_{|A|} \ar[l] \ar[d,"\simeq"]
    \\
    k_{A}^{C_2} \ar[u,equals] & k_{A \grotimes \Cl_{-1}}^{C_2} \ar[l] 
    \end{tikzcd}
    \end{equation}
    of $ko$-modules.
    Taking fibers, we obtain 1.
We get $2$ by using $\Cl_{1,1} \Morita \R$ on both domain and codomain:
\begin{equation*}
k^{\mathrm{vD}}_{A \grotimes \Cl_{-1}} = \fib(k_{|A \grotimes \Cl_{-1} \grotimes \Cl_{-1} \grotimes \Cl_{+1}|} \to k_{|A \grotimes \Cl_{-1} \grotimes \Cl_{+1}}|) \cong \fib( k_{|A \grotimes \Cl_{-1}|} \to k_{|A|}).
\end{equation*}
Note that the left hand side of point 3 is $k_A^{\Kar}$ and so point 3 was shown in Proposition~\ref{prop:karvD} .
The version of Diagram~\eqref{moritamove} of the other variance identifies $k_A^{\Kar}$ with $\fib(k_{|A \grotimes \Cl_{+1}|} \to k_{|A \grotimes \Cl_{+1} \grotimes \Cl_{-1}|})$.
We get
\begin{equation*}
k^{\Kar}_{A \grotimes \Cl_{-1}} =
\fib(k_{|A \grotimes \Cl_{-1} \grotimes \Cl_{+1}|} \to k_{|A \grotimes \Cl_{-1}|}) \cong
\fib(k_{|A|} \to k_{|A \grotimes \Cl_{-1}|}). 
\end{equation*}
So we in particular obtain
\begin{equation*}
\fib(k_{|A|} \to k_{|A \grotimes \Cl_{-1}|}) \cong k^{\Kar}_{A \grotimes \Cl_{-1}} \cong k^{\mathrm{vD}}_{A^{\op} \grotimes \Cl_{+2}} \,.
\end{equation*}
All isomorphisms applied are natural in $A$.
\end{proof}

\begin{theorem}
    \label{th:mastercomparison}
    Let $A$ be a graded $C^*$-algebra.
    There are equivalences of $KO$-modules as listed in Table~\ref{table:mastercomparison} and they are natural in $A$.
\end{theorem}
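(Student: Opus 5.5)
The plan is to reduce every entry of Table~\ref{table:mastercomparison} to the four connective identifications of Lemma~\ref{lem:connectivemaster}, and then to pass to periodic versions and apply Clifford periodicity. Each entry is the fiber or cofiber of a map $K_{|B|}\to K_{|B\grotimes \Cl_{\pm1}|}$ or $K_{|B\grotimes\Cl_{\pm1}|}\to K_{|B|}$, induced covariantly or contravariantly by $\R\to\Cl_{\pm1}$. Throughout we use Proposition~\ref{prop:gradingvsclifford} to replace graded modules by ungraded $|{-}\grotimes\Cl_{+1}|$-modules.

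\textbf{Step 1: periodify the connective fibers.} The four equivalences of Lemma~\ref{lem:connectivemaster} are $ko$-linear and natural in $A$. Since $-\otimes_{ko}KO$ is exact and inverts $\beta$, the argument in the proof of Theorem~\ref{thm:periodicKtheorylax} shows that inverting $\beta$ commutes with taking these fibers. Hence the same four equivalences hold with $k$ replaced by $K$ and $k^{\mathrm{vD}}$ replaced by $K^{\mathrm{vD}}$.

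\textbf{Step 2: express everything via $K^{\mathrm{gr}}$.} By Proposition~\ref{prop:ABSvsvD} we have $K^{\mathrm{vD}}_B\simeq\Omega K^{\mathrm{gr}}_B$. By Proposition~\ref{prop:clifshift}, $K^{\mathrm{gr}}_{B\grotimes\Cl_{p,q}}\simeq\Sigma^{p-q}K^{\mathrm{gr}}_B$. Applying these to $B=A$, $A\grotimes\Cl_{-1}$, $A^{\op}\grotimes\Cl_{+1}$ and $A^{\op}\grotimes\Cl_{+2}$ gives the four fiber entries, namely $\Omega K^{\mathrm{gr}}_A$, $\Sigma^{-2}K^{\mathrm{gr}}_A$, $K^{\mathrm{gr}}_{A^{\op}}$ and $\Sigma K^{\mathrm{gr}}_{A^{\op}}$, up to the sign conventions for $\Cl_{p,q}$. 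The cofiber entries then follow for free: in $KO$-modules we have $\cofib f\simeq\Sigma\fib f$, so each cofiber entry is the corresponding fiber entry shifted by one. This is the point where stability of $\ModInf(KO)$ is essential, and why only the periodic table is stated. I would also double-check entry 1 against Definition~\ref{def:ABSKth}, which gives $\cofib(K_{|A|}\to K_{|A\grotimes\Cl_{+1}|})\simeq K^{\mathrm{gr}}_A$ directly; this is the map of cofiber sequences used in Section~\ref{sec:cofib}.

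\textbf{Step 3: naturality.} All identifications used (the Morita move $\Psi_A$, the flop isomorphism $\Phi_A$, and the Clifford shift) are natural in graded $*$-homomorphisms. The fiber and cofiber constructions are functorial, so the resulting equivalences are natural in $A$.

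\textbf{Main obstacle.} The hardest part is bookkeeping rather than any conceptual step. For each cell one must check that the degree shift is correct and that the opposite appears exactly where the flop (Lemma~\ref{lem:flopiso}) enters, that is, for the covariant/$\Cl_{+1}$-type maps versus the contravariant ones. I would handle this by writing out each cell explicitly, tracing which of the four cases of Lemma~\ref{lem:connectivemaster} applies and which Clifford degree accumulates, and then cross-checking the result against the case $A=\R$. There $\pi_*KO$ is not symmetric under $i\mapsto -i$ (for example $KO_1\neq KO_{-1}$), so this check distinguishes the signs.
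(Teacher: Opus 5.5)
Your proposal is correct and follows the paper's own proof. Both arguments identify the four fiber entries via Lemma~\ref{lem:connectivemaster} together with $K^{\mathrm{vD}}\simeq\Omega K^{\mathrm{gr}}$, apply the Clifford suspension isomorphism (Proposition~\ref{prop:clifshift}), and obtain the cofiber entries from $\fib=\Omega\cofib$. Your shifts match the table in all eight cells, and your explicit check that inverting $\beta$ commutes with these fibers supplies a step the paper leaves implicit.
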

\begin{proof}
    Comparing versions with fibers and cofibers is immediate using $\fib = \Omega \cofib$, and so it suffices to prove one of the versions for each quadrant.
    Recall that $k^{\ABS}_A \cong \cofib(k_{|A|} \to k_{|A \grotimes \Cl_{+1}|})$ indeed gives the top left entry of Table~\ref{table:mastercomparison}.
    We can compare the left with the right column in the table diagonally by using Lemma~\ref{lem:connectivemaster}.
    We now apply the Clifford suspension isomorphism \ref{prop:clifshift}.
\end{proof}

\begin{table}[ht]
\mastercomparisonbody

\caption{Comparison of different conventions for defining the graded
$K$-theory of the graded Banach algebra $A$. The entry labeled
\emph{ABS} is Definition~\ref{def:periodick} of graded $K$-theory,
motivated by Atiyah--Bott--Shapiro~\cite{ABS}. The entry labeled
\emph{van Daele} is the spectral refinement
\ref{def:vDspectrumperiodic}. The entry labeled \emph{Karoubi} is the
spectral refinement \ref{def:Karoubispectral} of Karoubi $K$-theory.}
\label{table:mastercomparison}
\end{table}

\begin{remark}
     There are connective variants of Theorem~\ref{th:mastercomparison}.
     Beware that the map $k^{\mathrm{ABS}}_A\to \Omega^j k^{\mathrm{ABS}}_{A\grotimes\Cl_{+j}}$ is in general only an isomorphism on $\pi_{\geq j+1}$.
\end{remark}

\begin{remark}
    Let $p,q \geq 0$ be fixed integers.
    Consider the inclusions $\Cl_{p,q} \hookrightarrow \Cl_{p+1,q}$ and $\Cl_{p,q} \hookrightarrow \Cl_{p,q+1}$ given by tensoring $\R \hookrightarrow \Cl_{+1}$  and $\R \hookrightarrow \Cl_{-1}$ with the identity on $\Cl_{p,q}$.
    By taking fiber/cofiber of the map on $K$-theory induced covariantly/contravariantly by these homomorphisms after further tensoring with the identity on some graded $C^*$-algebra $B$, we obtain eight spectra that correspond to the entries of Table~\ref{table:mastercomparison} with $A = B \grotimes \Cl_{p,q}$. 
    Using Clifford periodicity $K^{\mathrm{gr}}_{A \grotimes \Cl_{p,q}} \cong \Sigma^{p-q} K_A^{\mathrm{gr}}$, we see that also these constructions all lead to a shift of $K^{\mathrm{gr}}_B$ or $K^{\mathrm{gr}}_{B^{\op}}$ that can easily be computed.
\end{remark}

\begin{remark}
\label{rem:ungradede}
    In this section we chose to work ungraded to simplify the exposition.
    One can translate Lemma~\ref{lem:connectivemaster} and Theorem~\ref{th:mastercomparison} to versions involving gradings by applying the dictionary~\ref{prop:gradingvsclifford} comparing $\Cl_{+1}$-actions and gradings.
    For example, the ``$\Cl_{+1}$-version'' of ABS differs by an opposite using point 3 of Lemma~\ref{lem:connectivemaster}:
    \begin{equation*}
    \cofib(k^{C_2}_{A \grotimes\Cl_{+1}} \to k^{C_2}_{A}) \cong k_{A^{\op}}^{\ABS}.
    \end{equation*}

\end{remark}

\begin{remark}
If $A$ is a graded Banach algebra that is not $C^*$, the isomorphisms in Table~\ref{table:mastercomparison} for those entries that don't involve $A^{\op}$ follow by the same proof.
However, the flopping techniques developed in Section~\ref{sec:flopping} that we used to get the top right and lower left entries need to be replaced:
Consider the contravariant equivalence $\ModOne_A^{C_2} \to (\ModOne_{A^{\op}}^{C_2})^{\op}$ given by $M \mapsto \Hom_A(A,M)$, where we converted the right $A$-action into a left $A^{\op}$-action.\footnote{Recall the signs in Convention \ref{conv:graded-linear}.}
After converting to a covariant equivalence on the core and group completing, we get an equivalence $k_A^{C_2} \cong k_{A^{\op}}^{C_2}$ of spectra.
It follows by a direct computation that there is an equivalence $\Hom(A, M \grotimes \Cl_1) \cong  \Hom(A, M) \grotimes \Cl_{-1}$ of $(A \grotimes \Cl_1)^{\op} \cong A^{\op} \grotimes \Cl_{-1}$-modules, where $M \grotimes \Cl_1$ denotes the extension of scalars along $A \to A \grotimes \Cl_1$.
This equivalence obviously respects the respective inclusions and so we obtain a commutative diagram
\begin{equation*}
\begin{tikzcd}
    N^{\mathrm{hc}}(\ModEn_A^{C_2,\cong}) \ar[d, "\simeq"] \ar[r] & N^{\mathrm{hc}}(\ModEn_{A \grotimes \Cl_{+1}}^{C_2,\cong}) \ar[d,"\simeq"]
    \\
    N^{\mathrm{hc}}(\ModEn_{A^{\op}}^{C_2,\cong}) \ar[r] & N^{\mathrm{hc}}(\ModEn_{A^{\op} \grotimes \Cl_{-1}}^{C_2,\cong})
\end{tikzcd} \,.
\end{equation*}
The group completion of this diagram is all we need to get a graded Banach algebra version of Lemma~\ref{lem:connectivemaster} and Theorem~\ref{th:mastercomparison}.
\end{remark}

\newpage

\appendix
\section{Appendix}

\subsection{Enriched categories}
\label{sec:enriched}

Let $\mathcal{V}$ be a monoidal $1$-category.
We use~\cite{MR2177301} as a general reference for enriched category theory.
A \emph{$\mathcal{V}$-enriched category} $\mathsf{C}$ consists of a collection of objects together with hom-objects $\mathsf{C}(x,y) \in \mathcal{V}$, composition morphisms $\mathsf{C}(y,z) \otimes \mathsf{C}(x,y) \to \mathsf{C}(x,z)$ and units $\mathbbm{1} \to \mathsf{C}(x,x)$ satisfying the evident associativity and unit axioms. A \emph{$\mathcal{V}$-functor} is given on hom-objects by morphisms of $\mathcal{V}$ compatible with composition and units.
We denote the $(2,1)$-category of $\mathcal{V}$-enriched categories, $\mathcal{V}$-functors and invertible $\mathcal{V}$-natural transformations by $\Cat_1(\mathcal{V})$.
If $\mathcal{V} \to \mathcal{W}$ is a lax monoidal functor, there is an induced change-of-enrichment functor $\Cat_1(\mathcal{V}) \to \Cat_1(\mathcal{W})$.
For the functor $\Hom_{\mathcal{V}}(\mathbbm{1},-)\colon \mathcal{V} \to \Set$, this gives the \emph{underlying category}. 
If $\mathcal{V}$ is symmetric monoidal, then $\Cat_1(\mathcal{V})$ carries the symmetric monoidal structure $\boxtimes$ of~\cite[Section 1.4]{MR2177301}, given on objects by
$(\mathsf{C}\boxtimes\mathsf{D})\big((c_1,d_1),(c_2,d_2)\big) = \mathsf{C}(c_1,c_2) \otimes \mathsf{D}(d_1,d_2)$.

\paragraph{Enriched groupoids}
Enriched groupoids only make sense in cartesian monoidal categories.
Let $\mathcal{V}$ be a $1$-category with finite products, with the cartesian symmetric monoidal structure.

\begin{definition}
\label{def:enrichedgroupoid}
A \emph{$\mathcal{V}$-enriched groupoid} is a $\mathcal{V}$-enriched category $\mathsf{C}$ equipped with the \emph{additional datum} of inverse maps $i_{c,d} \colon \mathsf{C}(c,d)\to \mathsf{C}(d,c)$ such that
\begin{equation*}
\begin{tikzcd}[column sep =55]
    \mathsf{C}(c,d) \ar[d,"\Delta"] \ar[r] & \ast \ar[r,"\id_d"] & \mathsf{C}(d,d)
    \\
    \mathsf{C}(c,d) \times \mathsf{C}(c,d) \ar[rr, "\id \times i_{c,d}"] &&  \mathsf{C}(c,d) \times \mathsf{C}(d,c) \ar[u, "\circ"]
\end{tikzcd}
\end{equation*}
and the analogous diagram on the other side commute.
\end{definition}

\begin{example}
    Let $\mathcal{V}=(\Top,\times)$ be the category of compactly generated weak Hausdorff spaces with the Kelley product.
    A groupoid enriched in $\Top$ is a category $\mathsf{C}$ enriched in $\Top$ such that every $f\colon c\to d$ admits a (necessarily unique) two-sided inverse and such that the inverse map $\mathsf{C}(c,d)\to \mathsf{C}(d,c)$ is continuous.
\end{example}

\begin{definition}
\label{rmk:weakgroupoid}
A \emph{weak enriched groupoid} is a $\mathcal{V}$-enriched category whose underlying category is a groupoid.
    Not every weak enriched groupoid is a $\mathcal{V}$-enriched groupoid.
    For example, a $\Top$-enriched category with one object is a topological monoid $G$; its underlying category is a groupoid if and only if $G$ is a group in sets, but the inversion map need not be continuous.
\end{definition}
We write $\Grpd_1(\mathcal{V})$ for the $(2,1)$-category of weak $\mathcal{V}$-enriched groupoids. 
For some $\mathcal{V}$ the existence of the inverse maps is merely a condition, so that $\Grpd_1(\mathcal{V})\subseteq \Cat_1(\mathcal{V})$ is a full subcategory.

The composite~\eqref{eq:topktheory2} requires passing from enriched categories to enriched groupoids compatibly with products.
This is achieved by the iso-core, which we now construct for a general enriching category; the simplicial case is the one used in Section~\ref{sec:enrichKth}.

\begin{definition}
\label{def:fullsubobject}
    Write $V_0 := \Hom_{\mathcal{V}}(\mathbbm{1},V)$ for the set of points of $V \in \mathcal{V}$, as in the definition of the underlying category.
    We say that $\mathcal{V}$ \emph{has full subobjects} if for every $V \in \mathcal{V}$ and every subset $S \subseteq V_0$ there is a subobject $V|_S \hookrightarrow V$ such that a map $W \to V$ factors through $V|_S$ (necessarily uniquely) if and only if the image of $W_0 \to V_0$ is contained in $S$.
\end{definition}

\begin{example}
\label{ex:fullsubobject}
    In $\sSet$, the object $V|_S$ is the simplicial subset of those simplices all of whose vertices lie in $S$; a simplicial map lands in it precisely when it does so on vertices.
    In $\Top$, the object $V|_S$ is the subset $S$ with the subspace topology.
\end{example}

\begin{construction}
\label{constr:isocore}
    Let $\mathcal{V}$ have full subobjects and let $\mathsf{C} \in \Cat_1(\mathcal{V})$ have underlying category $\mathsf{C}_0$.
    The \emph{iso-core} $\mathsf{C}^{\cong}$ is the $\mathcal{V}$-enriched category with the same objects as $\mathsf{C}$ and hom-objects
    \begin{equation*}
        \mathsf{C}^{\cong}(x,y) := \mathsf{C}(x,y)\big|_{\mathrm{Iso}(x,y)} \,,
    \end{equation*}
    where $\mathrm{Iso}(x,y) \subseteq \mathsf{C}_0(x,y)$ is the set of isomorphisms of $\mathsf{C}_0$.
    The units factor through these subobjects because identities are isomorphisms.
    For the composition, recall that $\mathcal{V}$ is cartesian, so that a point of $\mathsf{C}^{\cong}(x,y) \times \mathsf{C}^{\cong}(y,z)$ is a pair of isomorphisms; its image in $\mathsf{C}_0(x,z)$ is invertible and Definition~\ref{def:fullsubobject} applies.
    Associativity and unitality are inherited from $\mathsf{C}$.
\end{construction}

Since $(\mathsf{C}^{\cong})_0$ is the core of $\mathsf{C}_0$, the iso-core is a weak enriched groupoid in the sense of Definition~\ref{rmk:weakgroupoid}.
It is in general not an enriched groupoid: inversion need not be a morphism of $\mathcal{V}$.

\begin{lemma}
\label{lem:isocore}
    The iso-core defines a $2$-functor
    \begin{equation*}
        (-)^{\cong} \colon \Cat_1(\mathcal{V}) \longrightarrow \Grpd_1(\mathcal{V}) \,,
    \end{equation*}
    right adjoint to the full inclusion $\Grpd_1(\mathcal{V}) \subseteq \Cat_1(\mathcal{V})$, with counit the inclusion $\mathsf{C}^{\cong} \hookrightarrow \mathsf{C}$.
    In particular $\Grpd_1(\mathcal{V})$ is coreflective in $\Cat_1(\mathcal{V})$ and $(-)^{\cong}$ preserves products.
\end{lemma}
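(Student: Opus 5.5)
The plan is to verify the three claims in Lemma~\ref{lem:isocore} directly from Construction~\ref{constr:isocore} and the universal property of full subobjects (Definition~\ref{def:fullsubobject}). We take them in turn: functoriality, the adjunction, and then the formal consequences.

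\emph{Functoriality.} Let $F\colon \mathsf{C}\to\mathsf{D}$ be a $\mathcal{V}$-functor. Its underlying functor $F_0$ sends isomorphisms to isomorphisms. Hence the composite
\begin{equation*}
    \mathsf{C}^{\cong}(x,y)\hookrightarrow \mathsf{C}(x,y)\xrightarrow{F}\mathsf{D}(Fx,Fy)
\end{equation*}
has points landing in $\mathrm{Iso}(Fx,Fy)$, so it factors uniquely through $\mathsf{D}^{\cong}(Fx,Fy)$. Compatibility of these factorizations with composition and units follows by uniqueness of factorizations, since the subobject inclusions are monomorphisms. This also gives strict functoriality $(GF)^{\cong}=G^{\cong}F^{\cong}$.

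A $\mathcal{V}$-natural isomorphism $\eta\colon F\Rightarrow G$ has components that are points $\mathbbm{1}\to\mathsf{D}(Fx,Gx)$. These points are isomorphisms, so they factor through $\mathsf{D}^{\cong}(Fx,Gx)$. Naturality squares can again be checked after composing with the monic inclusion into $\mathsf{D}$. Thus $(-)^{\cong}$ is a $2$-functor. It lands in $\Grpd_1(\mathcal{V})$ because $(\mathsf{C}^{\cong})_0$ is the core of $\mathsf{C}_0$; to see this, note that points of $V|_S$ are exactly $S$, by applying the universal property to $W=\mathbbm{1}$.

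\emph{Adjunction.} For $\mathsf{G}\in\Grpd_1(\mathcal{V})$, I will show that postcomposition with the counit $\mathsf{C}^{\cong}\hookrightarrow\mathsf{C}$ induces an isomorphism of hom-groupoids
\begin{equation*}
    \Cat_1(\mathcal{V})(\mathsf{G},\mathsf{C}^{\cong})\to\Cat_1(\mathcal{V})(\mathsf{G},\mathsf{C}).
\end{equation*}

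On objects the argument is as follows. A $\mathcal{V}$-functor $F\colon\mathsf{G}\to\mathsf{C}$ sends every point of $\mathsf{G}(x,y)$ to an isomorphism, since $\mathsf{G}_0$ is a groupoid. Therefore each map $\mathsf{G}(x,y)\to\mathsf{C}(Fx,Fy)$ factors uniquely through $\mathsf{C}^{\cong}(Fx,Fy)$, and the factored data is again a $\mathcal{V}$-functor by monicity.

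On morphisms the argument is the same as in the functoriality step: invertible natural transformations have components in $\mathrm{Iso}$. Faithfulness, i.e.\ uniqueness of lifts, comes from the inclusions being monomorphisms. Hence the map is bijective on objects and on morphisms, and is natural in $\mathsf{G}$ and $\mathsf{C}$.

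\emph{Consequences.} Coreflectivity is a restatement of the adjunction, since the inclusion $\Grpd_1(\mathcal{V})\subseteq\Cat_1(\mathcal{V})$ is full. Preservation of products follows because $(-)^{\cong}$ is a right adjoint. One should check that $\Grpd_1(\mathcal{V})$ is closed under products in $\Cat_1(\mathcal{V})$; this holds because the underlying category of a product is the product of underlying categories, using that $\Hom(\mathbbm{1},-)$ preserves products. It can also be seen directly: $(V\times W)|_{S\times T}\cong V|_S\times W|_T$ by the universal property, applied to the cartesian structure on $\mathcal{V}$.

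\emph{Main obstacle.} I expect the only delicate step to be ensuring that all the enriched coherence data, including composition in $\mathsf{C}^{\cong}$ and $\mathcal{V}$-naturality, is inherited. The recurring argument for each piece is that the subobject inclusions are monomorphisms in $\mathcal{V}$, together with the cartesianness of $\mathcal{V}$ for composition. I will state that monicity argument once and reuse it throughout.
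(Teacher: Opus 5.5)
Your proof is correct and follows essentially the same route as the paper. Both arguments use the universal property of full subobjects to factor functors, and invertible natural transformations, out of a weak groupoid uniquely through the iso-core, and both get product preservation from $(-)^{\cong}$ being a right adjoint.

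The only difference is the order of the steps. The paper proves the adjunction first and obtains $F^{\cong}$ by applying it to $\mathsf{C}^{\cong}\hookrightarrow\mathsf{C}\xrightarrow{F}\mathsf{D}$, whereas you construct $F^{\cong}$ directly. You also spell out details the paper leaves implicit: that the subobject inclusions are monic, that the points of $V|_S$ are exactly $S$, and that $\Grpd_1(\mathcal{V})$ is closed under products.
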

\begin{proof}
    Let $\mathsf{G}$ be a weak enriched groupoid and $F \colon \mathsf{G} \to \mathsf{C}$ a $\mathcal{V}$-functor.
    The image of $\mathsf{G}_0(x,y) \to \mathsf{C}_0(Fx,Fy)$ is contained in $\mathrm{Iso}(Fx,Fy)$.
    By Definition~\ref{def:fullsubobject} each $F(x,y)$ therefore factors uniquely through $\mathsf{C}^{\cong}(Fx,Fy)$, and these factorizations assemble into a unique $\mathcal{V}$-functor $\mathsf{G} \to \mathsf{C}^{\cong}$ lifting $F$.
    The components of an invertible $\mathcal{V}$-natural transformation factor by the same argument, so the induced map of hom-groupoids is an isomorphism.
    Applying this to $\mathsf{C}^{\cong} \hookrightarrow \mathsf{C} \xrightarrow{F} \mathsf{D}$ produces the functoriality $F^{\cong} \colon \mathsf{C}^{\cong} \to \mathsf{D}^{\cong}$, and the displayed bijection is the required adjunction.
    Right adjoints preserve limits.
\end{proof}

Since the monoidal structure on $\mathcal{V}$ is cartesian, $\boxtimes$ is the cartesian product of $\Cat_1(\mathcal{V})$, and likewise on the full subcategory $\Grpd_1(\mathcal{V})$.
Lemma~\ref{lem:isocore} thus exhibits $(-)^{\cong}$ as a symmetric monoidal functor, so it induces
\begin{equation*}
    (-)^{\cong} \colon \CMon(\Cat_1(\mathcal{V})) \longrightarrow \CMon(\Grpd_1(\mathcal{V})) \,.
\end{equation*}

\paragraph{Direct sums in enriched categories}

Enriched colimits make sense in case the enriching category has enough limits~\cite[Section 3.8]{MR2177301}.

\begin{definition}
\label{def:enrichedcopr}
Suppose $\mathcal{V}$ has finite products.
An \emph{enriched coproduct} of $x,y$ in a $\mathcal{V}$-category $\mathsf{C}$ is an object $x \oplus y$ together with isomorphisms
\begin{equation}
\label{eq:enrichedcoproduct}
\mathsf{C}(x \oplus y, z) \cong \mathsf{C}(x,z) \times \mathsf{C}(y,z)
\end{equation}
in $\mathcal{V}$, natural in $z$.
\end{definition}
Products and initial/terminal objects are defined analogously.
Change of enrichment for functors $\mathcal{V} \to \mathcal{W}$ that preserve products preserves enriched coproducts.

The underlying object of an enriched coproduct is a coproduct in the underlying category, since $\Hom_{\mathcal{V}}(\mathbbm{1},-)$ preserves limits.
The converse is a condition in general: the canonical morphism in~\eqref{eq:enrichedcoproduct} must be an isomorphism \emph{in $\mathcal{V}$}, not merely a bijection on underlying sets.
For Banach categories, however, no additional condition is needed:

\begin{lemma}
\label{lem:openmapping}
    Let $\mathsf{C}$ be a Banach category and let $x \oplus y$ be a direct sum in the underlying additive category.
    Then $x \oplus y$ is an enriched coproduct, i.e.\ the canonical map~\eqref{eq:enrichedcoproduct} is an isomorphism in $\Ban$.
\end{lemma}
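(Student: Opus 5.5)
The plan is to verify directly that the canonical comparison map is a bounded linear bijection between Banach spaces, and then invoke the open mapping theorem. Write $\iota_x\colon x\to x\oplus y$, $\iota_y\colon y\to x\oplus y$ for the coproduct inclusions in the underlying additive category, and $p_x,p_y$ for the projections making $x\oplus y$ a biproduct, so that $p_x\iota_x=\id_x$, $p_y\iota_y=\id_y$ and $\iota_xp_x+\iota_yp_y=\id_{x\oplus y}$. These are morphisms of the underlying category, hence elements of the Banach spaces $\mathsf{C}(-,-)$ with finite norms.

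First, the canonical map
\begin{equation*}
\Theta\colon \mathsf{C}(x\oplus y,z)\longrightarrow \mathsf{C}(x,z)\times\mathsf{C}(y,z),\qquad g\longmapsto (g\iota_x,g\iota_y)
\end{equation*}
is linear because composition is bilinear. It is bounded because composition is contractive: $\|g\iota_x\|\le\|g\|\,\|\iota_x\|$, and similarly for $\iota_y$. The product on the right is the finite product in $\Ban$, with any of the equivalent norms, say the $\ell^\infty$ norm. Next, $\Theta$ is bijective because $x\oplus y$ is a coproduct in the underlying category. Explicitly, the inverse is $(a,b)\mapsto ap_x+bp_y$, using the biproduct identities. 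This inverse is itself bounded, with norm at most $\|p_x\|+\|p_y\|$. So one does not even need the open mapping theorem, although it would also give boundedness of the inverse for free.

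Finally, naturality in $z$ is immediate from associativity of composition. Hence $\Theta$ is an isomorphism in $\Ban$, natural in $z$, which is exactly the definition of an enriched coproduct. The only point needing care is that biproduct projections exist in an additive category and lie in the enriched homs. This holds since the underlying category of a $\Ban$-enriched category is $\mathrm{Ab}$-enriched, so finite coproducts are biproducts. There is no real obstacle. Depending on which notion of isomorphism in $\Ban$ versus $\Ban_\le$ the paper intends, the result is an isomorphism in $\Ban$ but not necessarily an isometry, consistent with the statement.
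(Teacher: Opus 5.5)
Your proof is correct and matches the paper's argument: both show that the canonical map $g\mapsto (g\iota_x, g\iota_y)$ is a bounded linear bijection, with set-theoretic inverse $(a,b)\mapsto ap_x+bp_y$. The only difference is that the paper then invokes the open mapping theorem to get boundedness of the inverse, whereas you bound it directly by $\|p_x\|+\|p_y\|$; this is a valid and slightly more elementary shortcut.
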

\begin{proof}
    The canonical map $\mathsf{C}(x \oplus y, z) \to \mathsf{C}(x,z) \times_{\ell^\infty} \mathsf{C}(y,z)$, $f \mapsto (f\iota_x, f\iota_y)$, is a bounded linear bijection of Banach spaces: it is bounded since precomposition is, and its set-theoretic inverse is $(g,h) \mapsto g\pi_x + h\pi_y$.
    By the open mapping theorem it is a topological isomorphism.
\end{proof}

The isomorphism of Lemma~\ref{lem:openmapping} need not be isometric, so the statement holds in $\Cat_1(\Ban)$ but not immediately for $\Cat_1(\Ban_{\leq})$.
For $\mathcal{V} = \Top$ or $\sSet$ the analogous statement fails since a continuous bijection need not be a homeomorphism, and so enriched coproducts are genuinely stronger than underlying ones.

If $\mathcal{V}$ is additive, an object is an enriched coproduct if and only if it is an enriched product, if and only if it admits $\iota_j, \pi_j$ with $\pi_j \iota_k = \delta_{jk}$ and $\sum_j \iota_j\pi_j = \id$.
Since this characterization is by equations between morphisms and uses no limits, we obtain that enriched direct sums are absolute~\cite{MR749468}:

\begin{lemma}
\label{lem:enrichedbiproduct}
    Let $\mathcal{V}$ be additive.
    Every $\mathcal{V}$-functor between $\mathcal{V}$-categories preserves enriched direct sums.
\end{lemma}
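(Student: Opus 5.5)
The statement to prove is that every $\mathcal{V}$-functor $F\colon\mathsf{C}\to\mathsf{D}$ between $\mathcal{V}$-categories preserves enriched direct sums, for $\mathcal{V}$ additive. The plan is to use the equational characterization of enriched direct sums recalled just before the statement. An enriched direct sum of $x_1,\dots,x_n$ in an additive setting is an object $x$ together with morphisms $\iota_j\colon x_j\to x$ and $\pi_j\colon x\to x_j$ satisfying
\begin{equation*}
\pi_j\iota_k=\delta_{jk}\,, \qquad \sum_j\iota_j\pi_j=\id_x \,.
\end{equation*}
Conversely, any such data exhibits $x$ as an enriched coproduct and an enriched product. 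Such data is called a biproduct diagram.

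First, I make sure the characterization holds in the enriched sense. Given a biproduct diagram, I define $\mathsf{C}(x,z)\to\prod_j\mathsf{C}(x_j,z)$ by precomposition with the $\iota_j$. I define the inverse $\prod_j\mathsf{C}(x_j,z)\to\mathsf{C}(x,z)$ by $(g_j)\mapsto\sum_j g_j\pi_j$. This inverse is a morphism of $\mathcal{V}$, since precomposition with $\pi_j$ is a morphism of $\mathcal{V}$ and addition is a morphism of $\mathcal{V}$ by additivity, so finite products are biproducts there. That the two maps are mutually inverse is exactly the two identities, applied inside the hom-objects, and the construction is natural in $z$. Dually one obtains the product property. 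Conversely, from an enriched coproduct one extracts $\iota_j$ as the images of identities, and $\pi_j$ from the enriched coproduct property applied to $(\delta_{jk})_k$; this is the standard argument. This step is where care is needed: one must say what "additive $\mathcal{V}$" means, namely that hom-objects are abelian group objects and composition is bilinear in $\mathcal{V}$. Under that hypothesis, sums of morphisms make sense internally.

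Second, the main step is that a $\mathcal{V}$-functor preserves the equations. On underlying morphisms it preserves composition and identities. It preserves zero morphisms and finite sums because its component maps $\mathsf{C}(a,b)\to\mathsf{D}(Fa,Fb)$ are morphisms in the additive category $\mathcal{V}$ between biproduct-compatible objects. Hence these components are additive, and this holds in particular after applying $\Hom_{\mathcal{V}}(\mathbbm{1},-)$. Therefore $(Fx,F\iota_j,F\pi_j)$ again satisfies $F\pi_j\,F\iota_k=\delta_{jk}$ and $\sum_j F\iota_j\,F\pi_j=\id_{Fx}$. By the first step, $Fx$ is then an enriched direct sum of the $Fx_j$.

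The only subtle point I expect is the additivity of the components of $F$. If that is not automatic in the intended generality, I would argue via the absoluteness of such equational colimits, as in \cite{MR749468}, which is the reference the paper cites.
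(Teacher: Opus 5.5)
Your proposal is correct and follows the paper's own route. The paper likewise characterizes enriched direct sums by the equations $\pi_j\iota_k=\delta_{jk}$ and $\sum_j\iota_j\pi_j=\id$, and concludes by absoluteness \cite{MR749468}; you additionally spell out what the paper leaves implicit, namely that the underlying functor is additive because its components are morphisms of the additive category $\mathcal{V}$. The hypothesis you flag in your first step, that composition is bilinear (true for $\Ban$ because $\otimes_\pi$ is additive in each variable), is exactly what the enriched half of the characterization needs, and the paper assumes it implicitly as well.
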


\paragraph{Idempotent completion}

Let $\mathcal{V}$ be a symmetric monoidal $1$-category in which idempotents split. 
Our main example is $\mathcal{V} = \Ban$: a bounded idempotent $P$ on a Banach space $V$ splits as $V \cong \im(P) \oplus \ker(P)$, because $\im(P) = \ker(\id - P)$ is closed and hence itself a Banach space.
An \emph{idempotent} in a $\mathcal{V}$-category $\mathsf{C}$ is an idempotent endomorphism $e = e^2 \in \mathsf{C}(c,c)$ of the underlying category, and a \emph{splitting} of $e$ consists of morphisms $r\colon c \to d$ and $s \colon d \to c$ of the underlying category with $s\circ r = e$ and $r \circ s = \id_d$.
In contrast to direct sums (Lemma~\ref{lem:openmapping}), no enriched condition needs to be verified: if $e$ splits in the underlying category, then composition with $r$ and $s$ exhibits $\mathsf{C}(d,x)$ as a splitting in $\mathcal{V}$ of the idempotent $(-) \circ e$ on $\mathsf{C}(c,x)$, and similarly on the other side.
The underlying reason is that idempotents are always absolute colimits, while for direct sums this depends on $\mathcal{V}$.

We call $\mathsf{C}$ \emph{idempotent complete} if every idempotent splits.

\begin{definition}
\label{def:idemcompletion}
    The \emph{idempotent completion} (or \emph{Karoubi completion}) of a $\mathcal{V}$-category $\mathsf{C}$ is the $\mathcal{V}$-category $\Idem(\mathsf{C})$ whose objects are pairs $(c, e)$ of an object $c \in \mathsf{C}$ and an idempotent $e \in \mathsf{C}(c,c)$, and whose hom-objects
    \begin{equation*}
        \Idem(\mathsf{C})\big((c,e),(c',e')\big) := e' \circ \mathsf{C}(c,c') \circ e
    \end{equation*}
    are the splittings in $\mathcal{V}$ of the idempotents $f \mapsto e' \circ f \circ e$ on the hom-objects $\mathsf{C}(c,c')$.
    Composition is inherited from $\mathsf{C}$, and the identity of $(c,e)$ is $e$.
\end{definition}

\begin{warning}
\label{warn:idemnorm}
    A nonzero idempotent for $\Ban$ satisfies $\|e\| \geq 1$, and possibly $\|e\| > 1$, so the identities of $\Idem(\mathsf{C})$ need not be contractive when $\mathsf{C}$ is enriched in $\Ban_{\leq}$.
\end{warning}

\begin{lemma}
\label{lem:idemcompletion}

    Let $\mathsf{C}$ be a $\mathcal{V}$-category.
    \begin{enumerate}
        \item The $\mathcal{V}$-functor $\iota \colon \mathsf{C} \to \Idem(\mathsf{C})$, $c \mapsto (c, \id_c)$, is fully faithful, and $\Idem(\mathsf{C})$ is idempotent complete.
        \item $\Idem$ is a $(2,1)$-functor which is left adjoint to the inclusion of the idempotent-complete $\mathcal{V}$-categories. In particular, for every idempotent-complete $\mathcal{V}$-category $\mathsf{D}$, precomposition with $\iota$ is an equivalence
        \begin{equation*}
            \iota^* \colon \Fun_{\mathcal{V}}\big(\Idem(\mathsf{C}),\mathsf{D}\big) \xrightarrow{\ \simeq\ } \Fun_{\mathcal{V}}(\mathsf{C},\mathsf{D})
        \end{equation*}
        of categories of $\mathcal{V}$-functors and $\mathcal{V}$-natural transformations.
    \end{enumerate}
\end{lemma}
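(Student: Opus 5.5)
Part (1) comes first because it is direct. I will check that $\iota$ is fully faithful: by Definition~\ref{def:idemcompletion} we have $\Idem(\mathsf{C})((c,\id),(c',\id)) = \id\circ\mathsf{C}(c,c')\circ\id = \mathsf{C}(c,c')$, since the idempotent $f\mapsto f$ splits trivially. For idempotent completeness, take an idempotent $\epsilon$ on $(c,e)$ in $\Idem(\mathsf{C})$. Such an $\epsilon$ is a morphism of $\mathsf{C}$ with $e\epsilon e=\epsilon$ and $\epsilon^2=\epsilon$, so $\epsilon$ is itself an idempotent on $c$. I will split it through the object $(c,\epsilon)$, using $r=\epsilon\colon (c,e)\to(c,\epsilon)$ and $s=\epsilon\colon(c,\epsilon)\to(c,e)$. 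Both lie in the correct hom-objects because $\epsilon e=\epsilon=e\epsilon$. They satisfy $s\circ r=\epsilon$ and $r\circ s=\epsilon$, which is the identity of $(c,\epsilon)$. As remarked before the definition, no additional enriched condition has to be checked, since a splitting in the underlying category is automatically an enriched splitting.

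For part (2) I will construct the inverse of $\iota^*$. Let $F\colon\mathsf{C}\to\mathsf{D}$ with $\mathsf{D}$ idempotent complete. For each $(c,e)$ I choose a splitting $F(c)\xrightarrow{r_e} d_e\xrightarrow{s_e} F(c)$ of $F(e)$, and when $e=\id$ I choose the trivial splitting, so that the extension restricts to $F$ on the nose. On hom-objects, $\widetilde F$ is the composite
\begin{equation*}
e'\mathsf{C}(c,c')e\hookrightarrow \mathsf{C}(c,c')\xrightarrow{F}\mathsf{D}(Fc,Fc')\xrightarrow{r_{e'}\circ(-)\circ s_e}\mathsf{D}(d_e,d_{e'}) \,,
\end{equation*}
which is a morphism in $\mathcal{V}$. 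It preserves composition because a middle factor $s_{e'}r_{e'}=F(e')$ is absorbed by $e'f=f$, and it sends the unit $e$ to $r_eF(e)s_e=\id$.

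Essential surjectivity of $\iota^*$ then holds by construction. For full faithfulness I must show that every $\mathcal{V}$-natural $\alpha\colon G\iota\Rightarrow H\iota$ extends uniquely. I will use the fact that in $\Idem(\mathsf{C})$ the object $(c,e)$ is a retract of $\iota c$, via $e$ viewed as maps $(c,\id)\rightleftarrows(c,e)$. Naturality forces the component to be $\alpha_{(c,e)}=H(e)\circ\alpha_c\circ G(e)$, with $e$ read as these retraction maps, and this formula gives both uniqueness and existence. Naturality of the extension follows from naturality of $\alpha$ together with $e'fe=f$. The $(2,1)$-functoriality of $\Idem$ and the adjunction statement then follow formally from the universal property by the usual argument, as in \cite{MR2177301}.

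I expect the main obstacle to be purely bookkeeping: checking that the extended transformation is $\mathcal{V}$-natural, that is, that a diagram in $\mathcal{V}$ commutes rather than merely a diagram of underlying elements. I will handle this by observing that every map involved is pre- or post-composition with a fixed morphism. The required identities therefore hold in $\mathcal{V}$ as soon as the corresponding identities between those fixed morphisms hold in the underlying category.
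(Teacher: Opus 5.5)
Your proposal is correct and follows the same route as the paper. The paper cites the standard $\Set$-enriched construction and notes that the same formulas work verbatim once idempotents split in $\mathcal{V}$; you write out exactly those formulas: splitting $\epsilon$ through $(c,\epsilon)$, setting $\widetilde F(f)=r_{e'}F(f)s_e$, and $\alpha_{(c,e)}=H(e)\alpha_cG(e)$. Your observation that the required identities hold in $\mathcal{V}$ is the content the paper's ``verbatim'' leaves implicit: every map involved is built from $F$, from composition, and from pre- or post-composition with fixed points.
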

\begin{proof}
    For $\mathcal{V} = \Set$ one reference is~\cite[Propositions~1.9, 1.10, Theorem~1.12 and Corollary~1.13]{kammermeier2024higher}; the same formulas define the enriched structure verbatim, the only additional input being the splitting of idempotents in $\mathcal{V}$ described above.
    The construction goes back to~\cite[Theorem~I.6.10]{karoubi_k-theory_1978}, where it is carried out for additive and Banach categories under the name pseudo-abelian completion; see~\cite{MR0850527}~\cite[Section~5.5]{MR2177301} for idempotent completion as part of Cauchy completion in enriched category theory.
\end{proof}

\begin{remark}
\label{rmk:idemadditive}
    If $\mathsf{C}$ has finite enriched coproducts, then so does $\Idem(\mathsf{C})$: the coproduct of $(c,e)$ and $(c',e')$ is $(c \oplus c', e \oplus e')$.
    Moreover, if $\mathsf{C}$ and $\mathsf{D}$ are additive, then $\Idem(\mathsf{C} \boxtimes \mathsf{D})$ is additive even though $\mathsf{C} \boxtimes \mathsf{D}$ is not: the coproduct of $(c,d)$ and $(c',d')$ is the splitting of the idempotent $p \otimes q + p' \otimes q'$ on $(c \oplus c', d \oplus d')$, where $p, p'$ and $q, q'$ denote the complementary projections onto the summands.
\end{remark}

The relevance for this paper is the following description of the categories of finitely generated projective modules.

\begin{lemma}
\label{lem:idemfree}
    Let $A$ be a Banach algebra and let $\Free_A \subseteq \ModEn_A$ be the full Banach subcategory on the free modules $A^n$ for $n \in \N$.
    The inclusion extends to an equivalence of Banach categories
    \begin{equation*}
        \Idem(\Free_A) \xrightarrow{\ \simeq\ } \ModEn_A \,.
    \end{equation*}
    In particular, $\ModEn_A$ is idempotent complete.
    Moreover, we have 
    \begin{equation*}
        \Idem(\ModEn_A\boxtimes \ModEn_B) \xrightarrow{\simeq} \ModEn_{A\otimes_\pi B}
    \end{equation*}
\end{lemma}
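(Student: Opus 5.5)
The plan is to verify the universal property of idempotent completion (Lemma~\ref{lem:idemcompletion}) for the inclusion $\Free_A \hookrightarrow \ModEn_A$. I will check that the induced functor is fully faithful and essentially surjective; the tensor statement will then follow by the same argument applied to free modules over $A\otimes_\pi B$.

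\textbf{Step 1: constructing the functor.} By the discussion before Definition~\ref{def:idemcompletion}, idempotents split in $\Ban$. Hence $\ModEn_A$ is idempotent complete. Concretely, an idempotent $e\in\End_A(M)$ has closed image $\im(e)=\ker(1-e)$, which is again a finitely generated projective module, being a summand of a summand of some $A^n$. Lemma~\ref{lem:idemcompletion} then gives a Banach functor $\Idem(\Free_A)\to\ModEn_A$ extending the inclusion, given by $(A^n,e)\mapsto \im(e)$.

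\textbf{Step 2: equivalence.} For essential surjectivity, Definition~\ref{def:fgpmod} shows every $M$ is algebraically a direct summand of some $A^n$, i.e.\ $M\cong \im(e)$ for an idempotent $e\in M_n(A)^{\op}=\End_A(A^n)$.

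For full faithfulness, the hom-space $e'\circ\Hom_A(A^n,A^m)\circ e$ maps bijectively onto $\Hom_A(\im e,\im e')$: composing with the inclusion and retraction of the two splittings gives mutually inverse maps. Both maps are bounded, so this is an isomorphism of Banach spaces; the open mapping theorem gives this as well. This isomorphism is not isometric in general (Warning~\ref{warn:idemnorm}), which is why the statement is only an equivalence in $\Cat_1(\Ban)$.

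\textbf{Step 3: the tensor statement.} First compute $\Free_A\boxtimes\Free_B$. The hom-space from $(A^n,B^m)$ to $(A^{n'},B^{m'})$ is $M_{n\times n'}(A)\otimes_\pi M_{m\times m'}(B)$. Since the projective tensor product of finite matrix spaces over $A$ and $B$ is a finite direct sum of copies of $A\otimes_\pi B$, this is $M_{nm\times n'm'}(A\otimes_\pi B)$. Hence $\Free_A\boxtimes\Free_B\to\Free_{A\otimes_\pi B}$, $(A^n,B^m)\mapsto (A\otimes_\pi B)^{nm}$, is fully faithful. It is not essentially surjective: for example $(A\otimes_\pi B)^{p}$ with $p$ prime is missing. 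However, every free module over $A\otimes_\pi B$ is a summand of one in the image, so it becomes essentially surjective after applying $\Idem$.

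Next use $\Idem(\ModEn_A\boxtimes\ModEn_B)\simeq\Idem(\Idem(\Free_A)\boxtimes\Idem(\Free_B))\simeq\Idem(\Free_A\boxtimes\Free_B)$. The second equivalence holds because $\Free_A\boxtimes\Free_B\to\Idem(\Free_A)\boxtimes\Idem(\Free_B)$ is fully faithful and every object $((c,e),(d,f))$ of the target is a retract of an object in the image, namely the splitting of $e\otimes f$ on $(c,d)$. Combined with the first part of the lemma applied to $A\otimes_\pi B$, this gives the claim.

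The main obstacle is this last step, namely checking that the identification $M_{k}(A)\otimes_\pi M_{l}(B)\cong M_{kl}(A\otimes_\pi B)$ is compatible with composition, so that the resulting functor is really a Banach functor; compatibility with the identifications of matrix norms is also needed, but only up to equivalence. Everything else is formal from Lemma~\ref{lem:idemcompletion}.
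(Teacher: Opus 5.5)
Your proof is correct. The first part is exactly the paper's argument. The extension sends $(A^n,e)$ to $eA^n$, essential surjectivity comes from Definition~\ref{def:fgpmod}, and full faithfulness is the identification of $e'\circ\Hom_A(A^n,A^m)\circ e$ with $\Hom_A(eA^n,e'A^m)$, where the two norms are equivalent but not equal.

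For the tensor statement, the paper applies the same retract criterion directly to $\ModEn_A\boxtimes\ModEn_B\to\ModEn_{A\otimes_\pi B}$. It simply asserts that this functor is fully faithful and that its image contains the free modules. Full faithfulness on arbitrary finitely generated projective modules holds because $\otimes_\pi$ carries the complemented hom-spaces $e'\circ\Hom\circ e$ to complemented subspaces, so it reduces to the free case anyway. Your detour through $\Idem(\Idem(\Free_A)\boxtimes\Idem(\Free_B))\simeq\Idem(\Free_A\boxtimes\Free_B)$ makes that reduction explicit. What you call the main obstacle is routine: the Kronecker identification $M_{n\times n'}(A)\otimes_\pi M_{m\times m'}(B)\cong M_{nm\times n'm'}(A\otimes_\pi B)$ respects composition because $(X\otimes Y)(X'\otimes Y')=XX'\otimes YY'$.

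One aside is false. $(A\otimes_\pi B)^p$ for prime $p$ is not missing: it is the image of $(A^p,B^1)$. So $\Free_A\boxtimes\Free_B\to\Free_{A\otimes_\pi B}$ is already essentially surjective, hence an equivalence before idempotent completion. This does not affect your argument, which only needs every free module to be a retract of an object in the image.
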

\begin{proof}
    The extension sends $(A^n, e)$ to the direct summand $eA^n \subseteq A^n$, which is finitely generated projective and carries the subspace norm as its unique Banach topology; every finitely generated projective module is of this form by definition, so the functor is essentially surjective.
    It is fully faithful because module maps $eA^n \to e'A^m$ are exactly the maps $f \colon A^n \to A^m$ satisfying $e'fe = f$, and the two norms in question agree up to equivalence by the norm comparison in Section~\ref{sec:Banach}.
    The tensor product of modules defines a functor $\ModEn_A\boxtimes\ModEn_B \to \ModEn_{A\otimes_\pi B}$. 
    Its image contains all free modules and it is fully faithful.
    Hence, it is an equivalence after idempotent completion.
\end{proof}

\begin{remark}
\label{rmk:cauchyslogan}
    Since $\Free_A$ is in turn the free completion of the one-object Banach category with endomorphism algebra $A^{\op}$ (cf.~\eqref{eq:endop}) under finite direct sums, Lemma~\ref{lem:idemfree} identifies $\ModEn_A$ with the Cauchy completion of this one-object Banach category.
    For $\mathcal{V} = \mathrm{Ab}$ this recovers the classical description of the finitely generated projective modules over a ring as its Cauchy completion~\cite{MR0850527}.
\end{remark}

\paragraph{Symmetric monoidal enriched categories}

Let $\mathcal{V}$ be a symmetric monoidal $1$-category.
A \emph{symmetric monoidal $\mathcal{V}$-enriched category} is a commutative monoid object in the symmetric monoidal $(2,1)$-category $(\Cat_1(\mathcal{V}), \boxtimes)$, i.e.\ an object of $\CMon(\Cat_1(\mathcal{V}))$.
Unwinding the definition, this is the data of a $\mathcal{V}$-category $\mathsf{C}$ together with a $\mathcal{V}$-functor $\otimes\colon \mathsf{C}\boxtimes \mathsf{C}\to \mathsf{C}$, a unit, and the usual associators, unitors and symmetric braiding, satisfying the axioms up to coherent invertible $\mathcal{V}$-natural transformations.
We write $\Cat_1^{\otimes}(\mathcal{V}) := \CMon(\Cat_1(\mathcal{V}))$ and $\Grpd_1^{\otimes}(\mathcal{V}) := \CMon(\Grpd_1(\mathcal{V}))$.

The following construction produces the symmetric monoidal structure on an enriched category with enriched coproducts that underlies Definition~\ref{def:topktheory2}; there it is denoted $(-)^{\oplus}$.
The construction is a straightforward enriched generalization of the classical construction of the cocartesian symmetric monoidal structure.
\begin{construction}
\label{constr:oplus}
    Let $\mathcal{V}$ have finite products and let $\mathsf{C}$ be a $\mathcal{V}$-enriched category with enriched finite coproducts (Definition \ref{def:enrichedcopr}).
    By Definition~\ref{def:commutativemonoid} it suffices to construct $\mathsf{C}^\oplus \in \CMon(\Cat_1(\mathcal{V}))$ as a functor $\Fin_*\to \Cat_1(\mathcal{V})$ satisfying the Segal conditions.
    For $\langle n \rangle \in \Fin_*$, let $\mathsf{C}_n^\oplus$ be the $\mathcal{V}$-category whose objects $\underline{c}$ consist of a tuple $(c_1,\dots,c_n)\in \mathrm{Ob}(\mathsf{C})^n$ together with choices of coproducts $\bigoplus_{j\in S} c_{j}$ for all subsets $S \subseteq \langle n \rangle^\circ$ (as well as the coprojections), and whose hom-objects are
    \begin{equation*}
        \mathsf{C}_n^\oplus(\underline{c},\underline{d})=\prod_{i=1}^n \mathsf{C}(c_i,d_i) \,,
    \end{equation*}
    with componentwise composition.
    For $f\colon\langle n\rangle \to \langle m \rangle$ define $f_*\colon\mathsf{C}_n^\oplus \to \mathsf{C}_m^\oplus$ on objects by
    \begin{equation*}
    f_*(\underline{c})=\Big(\bigoplus_{j\in f^{-1}(1)} c_j, \dots ,\bigoplus_{j\in f^{-1}(m)}c_j \Big)\,,
    \end{equation*}
    with the choices of coproducts inherited from $\underline{c}$, and on hom-objects by the block direct sum
    \begin{equation*}
        \prod_{j\in f^{-1}(i)} \mathsf{C}(c_j,d_j) \to \mathsf{C}\Big(\bigoplus_{j\in f^{-1}(i)} c_j,\bigoplus_{j\in f^{-1}(i) } d_j  \Big) \,,
    \end{equation*}
    which is a morphism in $\mathcal{V}$ by~\eqref{eq:enrichedcoproduct}.
    The isomorphisms $f_*\circ g_* \cong (f\circ g)_*$ are furnished by the universal property of the coproduct, and coproducts are associative.
    The Segal maps $\mathsf{C}_n^{\oplus} \to (\mathsf{C}_1^{\oplus})^{\times n} = \mathsf{C}^{\times n}$ are equivalences (forgetting the choices of coproducts), so $\mathsf{C}^\oplus$ defines an object of $\CMon(\Cat_1(\mathcal{V}))$.
    This defines a functor
    \begin{equation}
        \label{eq:cocartfunctor}
        \Cat_1^\Sigma(\calV) \to \CMon(\Cat_1(\calV)) \,.
    \end{equation}
    It is possible to characterize the image of \eqref{eq:cocartfunctor}: It consists of those symmetric monoidal $\calV$-categories $(\mathsf{C},\otimes)$ such that $\mathbbm{1}\in \mathsf{C}$ is initial, $\otimes$ commutes with coproducts and such that the unique maps $c_i \to \mathbbm{1}\otimes \dots \otimes  c_i\otimes \dots \otimes \mathbbm{1} \to c_1 \otimes \dots \otimes c_n$ realize $c_1\otimes \dots \otimes c_n$ as the coproduct of $c_1,\dots ,c_n$.
    In this case we say that $(\mathsf{C},\otimes)$ carries a cocartesian monoidal structure.
    The forgetful functor 
    \begin{equation*}
        \CMon(\Cat_1(\calV))^{\mathrm{cocart}} \to \Cat_1^\Sigma(\calV)
    \end{equation*}
    is an equivalence, with explicit inverse \eqref{eq:cocartfunctor}.
    This connects it to \cite[Variant~2.4.3.12]{HA}.
\end{construction}

\paragraph{Pullbacks and the iso-comma category}

Given $\mathcal{V}$-functors $\mathsf{A} \xrightarrow{F} \mathsf{C} \xleftarrow{G} \mathsf{B}$, the \emph{iso-comma category}~\cite[Section 4]{MR998024} has objects triples $(a,b,\phi)$ with $a \in \mathsf{A}$, $b \in \mathsf{B}$ and $\phi \colon Fa \to Gb$ an isomorphism in the underlying category of $\mathsf{C}$, and hom-objects
\begin{equation*}
    (\mathsf{A}\times_{\mathsf{C}}\mathsf{B})\big((a,b,\phi),(a',b',\phi')\big)
    := \mathsf{A}(a,a') \times_{\mathsf{C}(Fa,Gb')} \mathsf{B}(b,b') \,,
\end{equation*}
the pullback in $\mathcal{V}$ along
\begin{align*}
\mathsf{A}(a,a') &\xrightarrow{F} \mathsf{C}(Fa,Fa') \xrightarrow{\phi'_*} \mathsf{C}(Fa,Gb')
\\
\mathsf{B}(b,b') &\xrightarrow{G} \mathsf{C}(Gb,Gb') \xrightarrow{\phi^*} \mathsf{C}(Fa,Gb') \,.
\end{align*}
This is an explicit model for the (\emph{weak}) pullback
\begin{equation*}
\begin{tikzcd}
    \mathsf{A} \times_{\mathsf{C}} \mathsf{B}\ar[r] \ar[d] & \mathsf{B} \ar[d,"G"]
    \\
    \mathsf{A} \ar[r,"F"] & \mathsf{C}
    \arrow["\lrcorner"{anchor=center, pos=0.125}, draw=none, from=1-1, to=2-2]
\end{tikzcd}
\end{equation*}
in the $(2,1)$-category $\Cat_1(\mathcal{V})$~\cite[Example 15]{MR2664622}.

\begin{remark}
    The underlying category of the iso-comma category is the iso-comma category of the underlying categories, since $\Hom_{\mathcal{V}}(\mathbbm{1},-)$ preserves limits.
    Conversely, the pullback of underlying categories only enhances to the pullback of $\mathcal{V}$-categories when the hom-sets are given the enrichment above.
\end{remark}

\begin{remark}
\label{rmk:isofibrationapproach}
    An alternative route to $(2,2)$-categorical pullbacks avoids the iso-comma construction: call a $1$-morphism $F\colon x \to y$ in a $(2,2)$-category $B$ an \emph{isofibration} if $B(z,x) \to B(z,y)$ is an isofibration of categories for every $z$~\cite{MR1223657}.
    A strict pullback in which one leg is an isofibration computes the $(2,1)$-categorical pullback, and the comparison functor from the iso-comma category is then an equivalence; for $B = \Cat_1$ this recovers the classical statement for the canonical model structure on categories~\cite{MR1173014}, and~\cite[Theorem 4.3]{MR2369168} treats general strict $2$-categories.
    We do not use this route: for the passage to $(\infty,1)$-categories we instead verify the fibrancy hypotheses of Lemma~\ref{lem:htpypullbackoftopcats} directly, which simultaneously handles the enrichment.
\end{remark}

\subsection{Lemmas about \texorpdfstring{$\infty$}{infinity}-categories}
\label{app:categories}

We freely use the theory of $(\infty,1)$-categories developed in~\cite{HTT} and~\cite{HA} and we made an effort for most arguments to be \emph{model-independent}, but if we need to be precise, we will use quasicategories.

We also use the language of $\infty$-operads in the ``category of operators'' model of~\cite{HA}.
Some facts we will repeatedly use are:
\begin{itemize}
    \item Let $\Fin_*$ denote the category of pointed finite sets. Let $\langle n\rangle$ denote the pointed set $\{*,1,\dots,n\}$ with basepoint $*$.
     An $\infty$-operad is a functor $\mathcal{O} \to \Fin_*$ satisfying the Segal condition $\mathcal{O}_{\langle n\rangle } \simeq \mathcal{O}_{\langle 1\rangle }^n$ and admitting cocartesian lifts of inert morphisms, where $\mathcal{O}_{\langle n\rangle }$ is the fiber of $\mathcal{O}$ over $\langle n\rangle$.
    \item There is a faithful functor $\Op$ from symmetric monoidal $(\infty,1)$-categories and symmetric monoidal functors to $\infty$-operads, whose image are precisely the $\infty$-operads which admit cocartesian lifts for \emph{all} morphisms in $\Fin_*$.
      \item A \emph{lax} symmetric monoidal functor $\calC\to \calD$ is the same as a map of operads $\Op(\calC)\to \Op(\calD)$.
    \item For $\calC,\calD$ symmetric monoidal $(\infty,1)$-categories, the category $\Fun(\calC,\calD)$ attains the Day convolution symmetric monoidal structure.
    There is an equivalence $\CAlg(\Fun(\calC,\calD)) \simeq \Fun^{\mathrm{lax}}(\calC,\calD)$ between commutative algebra objects with respect to Day convolution and lax symmetric monoidal functors.
\end{itemize}
The $n$-ary operations in the operad $\Op(\calC)$ associated to $(\mathcal{C},\otimes)$ are given by $\Hom(\bigotimes_i x_i, y)$ from some finite list $x_i \in \mathcal{C}$ to some $y \in \mathcal{C}$.

We will now use definitions and lemmas used throughout the paper.
\begin{definition}
    \label{def:commutativemonoid}
    Let $\calC$ be a symmetric monoidal $(\infty,1)$-category.
    A commutative monoid in $\calC$ is a functor $\Fin_* \to \calC$ satisfying the Segal condition $\prod_i \rho_i:F({\langle n\rangle }) \xrightarrow{\simeq} F({\langle 1\rangle })^n$.
    Here, $\rho_i:\langle n\rangle \to \langle 1\rangle$ is the inert map sending $i$ to $1$ and all other elements to the basepoint.
\end{definition}
\begin{lemma}
\label{lem:htpypullbackoftopcats}
Let $\Cat_1(\Top)$ denote the $1$-category of topologically enriched categories with the Dwyer--Kan/Bergner model structure, and let $\sSet_{\mathrm{Joyal}}$ denote the $1$-category of simplicial sets equipped with the Joyal model structure. Let
\begin{equation*}
\mathsf{C} \xrightarrow{f} \mathsf{E} \xleftarrow{g} \mathsf{D}
\end{equation*}
be a diagram in $\Cat_1(\Top)$ where $g$ is a fibration (i.e.\, $g$ is an isofibration and induces Serre fibrations on all topological mapping spaces). Let $N^{\mathrm{hc}}\colon \Cat_1(\Top) \to s\mathrm{Set}$ be the homotopy coherent nerve functor. Then the homotopy coherent nerve of the strict pullback is the homotopy pullback of quasicategories:
\begin{equation*}
N^{\mathrm{hc}}(\mathsf{C} \times_{\mathsf{E}} \mathsf{D}) \simeq N^{\mathrm{hc}}(\mathsf{C}) \times^h_{N^{\mathrm{hc}}(\mathsf{E})} N^{\mathrm{hc}}(\mathsf{D})
\end{equation*}
\end{lemma}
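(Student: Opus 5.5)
The plan is to reduce to a statement about the model structures and then check that the homotopy coherent nerve is a right Quillen functor that carries the given strict pullback to a homotopy pullback. First, replace $\Top$-enrichment by $\sSet$-enrichment via $\Sing$, as in Convention~\ref{conv:topenr}. Since $\Sing$ is a right adjoint, it preserves strict pullbacks of categories: the pullback $\mathsf{C}\times_{\mathsf{E}}\mathsf{D}$ has objects the pullback of object sets and mapping spaces the pullbacks $\mathsf{C}(c,c')\times_{\mathsf{E}(fc,fc')}\mathsf{D}(d,d')$. Moreover $\Sing$ carries Serre fibrations to Kan fibrations. The simplicially enriched functor $\Sing(g)$ is therefore a fibration in the Bergner model structure: it is an isofibration on homotopy categories and a Kan fibration on mapping spaces. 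All objects are fibrant, since the mapping spaces $\Sing(-)$ are Kan.

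Second, the Bergner model structure is right proper. This is automatic here because every object is fibrant, so the strict pullback along the fibration $\Sing(g)$ is a homotopy pullback in $\Cat_1(\sSet)$. One can also see this directly: mapping spaces of the strict pullback are strict pullbacks of Kan complexes along a Kan fibration, hence homotopy pullbacks. The isofibration condition ensures that on homotopy categories the strict pullback agrees with the pseudo-pullback, and in particular essential surjectivity of the comparison to any other model of the homotopy pullback holds.

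Third, $N^{\mathrm{hc}}\colon \Cat_1(\sSet)_{\mathrm{Bergner}}\to\sSet_{\mathrm{Joyal}}$ is the right adjoint of a Quillen equivalence \cite[Thm.~2.2.5.1]{HTT}. Right Quillen functors preserve strict pullbacks and fibrations, so $N^{\mathrm{hc}}(\mathsf{C}\times_{\mathsf{E}}\mathsf{D})=N^{\mathrm{hc}}\mathsf{C}\times_{N^{\mathrm{hc}}\mathsf{E}}N^{\mathrm{hc}}\mathsf{D}$ strictly. Here $N^{\mathrm{hc}}(g)$ is a fibration between fibrant objects (quasicategories) in the Joyal model structure. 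Since the Joyal model structure is right proper on fibrant objects, or simply because pullbacks along fibrations between fibrant objects compute homotopy pullbacks in any model category, this strict pullback is the homotopy pullback of quasicategories, which is the claim.

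The main obstacle is the first step: the notion of fibration in $\Cat_1(\Top)$ must match Bergner fibrations after $\Sing$. Specifically, the isofibration condition must be interpreted on $\pi_0$-homotopy categories rather than on underlying categories. I would resolve this by showing that any isomorphism in $\Ho(\mathsf{E})$ out of $g(d)$ lifts, using the Serre fibration property on mapping spaces to lift paths of morphisms and the underlying isofibration to lift strict isomorphisms. If this identification is delicate, an alternative is to avoid it by comparing mapping spaces and homotopy categories of both sides directly, using the Dwyer–Kan criterion for equivalences.
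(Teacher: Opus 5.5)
Your three steps are the paper's proof. You change enrichment along the right Quillen functor $\Sing$, and you note that a strict pullback along a Bergner fibration is a homotopy pullback. The paper cites right properness of the Bergner model structure for this; your remark that all objects involved are fibrant does the same job. Finally you transport along the right Quillen equivalence $N^{\mathrm{hc}}$. You unpack this step via preservation of strict pullbacks and fibrations plus the fibrant-cospan argument in the Joyal structure, whereas the paper just says right Quillen equivalences preserve homotopy pullbacks.

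What needs correcting is your ``main obstacle'' paragraph. There is nothing to prove there. The hypothesis is that $g$ is a fibration in the Dwyer--Kan model structure, and ``isofibration'' in that sense already means that homotopy equivalences (isomorphisms in $\pi_0$) out of $g(d)$ lift. So $\Sing(g)$ is a Bergner fibration simply because $\Sing$ is right Quillen, which is exactly how the paper argues.

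Your proposed derivation of this from lifting of strict isomorphisms plus Serre fibrations on mapping spaces is false, and the lemma itself fails under that weaker reading. Take $\mathsf{E}$ with objects $x,y$, with $\mathsf{E}(x,x)=\mathsf{E}(y,y)=[0,1]$ and $\mathsf{E}(x,y)=\mathsf{E}(y,x)=(0,1]$, composition $\max$ and identities $0$. All mapping spaces are contractible, so $x\cong y$ in $\pi_0\mathsf{E}$ and $N^{\mathrm{hc}}(\mathsf{E})\simeq *$, but the only strict isomorphisms are identities. The full inclusions $g$ of $\{x\}$ and $f$ of $\{y\}$ then satisfy your strict hypotheses (on mapping spaces $g$ is the identity). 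Yet the strict pullback is empty while the homotopy pullback is contractible.

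Here the homotopy equivalence $x\to y$ is neither a strict isomorphism nor path-connected to one, and $y$ is not in the image of $g$. So neither of your two lifting tools applies, and no Dwyer--Kan comparison can rescue a false statement either. For weak enriched groupoids, such as the iso-cores to which the paper applies the lemma, the two notions of isofibration coincide because every morphism is a strict isomorphism. That is why the distinction is harmless there.
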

One can forgo the isofibration assumption by using the iso-comma category.
\begin{proof}
    The Bergner model structure on the category of simplicially enriched categories $\Cat_1(\sSet)$ is right proper~\cite[Prop.~3.5]{bergner07}.
    Fibrations are functors which are isofibrations and which induce fibrations on mapping simplicial sets.
    In right proper model categories a strict pullback along a fibration is a homotopy pullback.
    Since $\Sing\colon \Top\to \sSet$ is also a right Quillen functor, changing enrichment maps $g$ to a fibration in the Bergner model structure and $\mathsf{C} \times_{\mathsf{E}} \mathsf{D}$ is a homotopy pullback in $\Cat_1(\sSet)$.
    \cite[Thm.~2.2.5.1]{HTT} proves that $N^{\mathrm{hc}}\colon \Cat_1(\sSet) \to \sSet_{\mathrm{Joyal}}$ is the right Quillen functor in a Quillen equivalence.
    It will hence preserve homotopy pullbacks.
\end{proof}

Let $\calC,\calD$ be $(\infty,1)$-categories.
The \emph{coend} of a functor $F\colon \calC \times \calC^{\op} \to \calD$
is the colimit over the twisted arrow category $\mathrm{Tw}(\calC)$ of the composite functor $\mathrm{Tw}(\calC) \to \calC\times \calC^{\op} \to \calD$, written suggestively as
\begin{equation*}
    \int^{c\in \calC} F(c,c) =\colim_{\mathrm{Tw}(\calC)} F \,.
\end{equation*}
The \emph{end} of a functor $F\colon \calC^{\op}\times \calC \to \calD$ is the limit over the composite functor $\mathrm{Tw}(\calC)^{\op} \to \calC^{\op}\times \calC \to \calD$.
We refer to~\cite{laxcolimits17} for a more in-depth treatment.
\begin{lemma}
    \label{lem:pointwiseKancoend}
    Let $\gamma\colon \calC \to \calD$ and $F\colon \calC\to \calE$ be functors and let $\calE$ be complete.
    The right Kan extension $\gamma_*F$ of $F$ along $\gamma$ is computed as the end
    \begin{equation*}
        \gamma_*F(d) = \int_{c\in \calC} F(c)^{\Hom_{\calD}(d,\gamma(c))} \,,
    \end{equation*}
    where $(-)^K$ is the cotensoring of $\calE$ over $\Spc$.
    If dually $\calE$ is cocomplete,
    then the left Kan extension $\gamma_!F$ of $F$ along $\gamma$ is given by the formula
    \begin{equation*}
        \gamma_!F (d) = \int^{c\in \calC} F(c)\otimes \Hom_\calD(\gamma(c),d) \,,
    \end{equation*}
    where $\otimes$ denotes the tensoring of $\calE$ over $\Spc$.
    In particular, we have
    \begin{equation*}
        F(d) = \int^{c\in \calC} F(c)\otimes \Hom(c,d) \,.
    \end{equation*}
\end{lemma}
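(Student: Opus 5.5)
The plan is to reduce the statement to the two formal properties of ends and coends in $(\infty,1)$-categories: (co)ends are (co)limits over twisted arrow categories, and pointwise Kan extensions are (co)limits over comma categories. I treat the left Kan extension first; the right Kan extension then follows by passing to opposite categories, which exchanges $\calE$ with $\calE^{\op}$, tensoring with cotensoring, and $\mathrm{Tw}(\calC)$ with $\mathrm{Tw}(\calC)^{\op}$.

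First, since $\calE$ is cocomplete, I would invoke the pointwise formula for left Kan extensions \cite[Def.~4.3.2.2, Lemma~4.3.2.13]{HTT}, which gives
\begin{equation*}
\gamma_!F(d) \simeq \colim_{\calC_{/d}} F,
\end{equation*}
where $\calC_{/d} = \calC\times_{\calD}\calD_{/d}$. Second, I would rewrite this comma colimit as a weighted colimit. Consider the left fibration $\calC_{/d}\to\calC$; it is classified by the functor $\Hom_\calD(\gamma(-),d)\colon \calC^{\op}\to\Spc$. A colimit over the total space of a left fibration equals the colimit over the base of the left Kan extension along the fibration. This yields the weighted colimit
\begin{equation*}
\colim_{\calC_{/d}} F \simeq \colim^{W} F, \qquad W = \Hom_\calD(\gamma(-),d).
\end{equation*}
The third step is the standard identification of a $W$-weighted colimit with the coend $\int^{c} F(c)\otimes W(c)$. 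For this I would cite \cite{laxcolimits17}, where coends over $\mathrm{Tw}(\calC)$ are compared with weighted colimits, or argue directly. The direct argument uses the projection $\mathrm{Tw}(\calC)\to\calC$, and the key input is that the fibers of $\mathrm{Tw}(\calC)\times_{\calC^{\op}}\mathrm{el}(W)\to \calC_{/d}$ have contractible cofinality. This step is where I expect the real work to lie, because coherence in the $\infty$-setting must be tracked; I would try to avoid it entirely by quoting the existing comparison.

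Finally, the special case follows by taking $\gamma=\id_\calC$ (so $\calD=\calC$). Then $\id_!F\simeq F$, and the formula becomes the co-Yoneda lemma $F(d)\simeq\int^{c}F(c)\otimes\Hom(c,d)$. Alternatively, it follows directly because $\calC_{/d}$ has the terminal object $\id_d$.
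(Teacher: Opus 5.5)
Your proposal is correct in outline but takes a different route from the paper, and runs in the opposite direction.

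\textbf{The paper's route.} The paper proves the right Kan extension formula first and never uses comma categories. After a Yoneda reduction to $\calE=\Spc$, it writes $\gamma_*F(d)\simeq\mathrm{Nat}(\Hom_{\calD}(d,-),\gamma_*F)\simeq\mathrm{Nat}(\Hom_{\calD}(d,\gamma(-)),F)$, using the Yoneda lemma and the adjunction $\gamma^*\dashv\gamma_*$. Its only non-formal input is \cite[Prop.~5.1]{laxcolimits17}, that natural transformations are ends. The left Kan extension then follows by passing to opposites.

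\textbf{Your route.} You start from the pointwise formula $\colim_{\calC_{/d}}F$ and read it as the colimit weighted by $W=\Hom_{\calD}(\gamma(-),d)$. All the content then sits in the comparison ``weighted colimit $=$ coend'', which you only cite. Your route gives the explicit comma-category description and avoids reducing to $\Spc$-valued functors; the paper's is shorter.

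\textbf{Closing your crux.} You can close that step with the paper's own ingredient instead of the vague ``contractible cofinality'' sketch. Map both sides into $e\in\calE$.
\begin{itemize}
\item The coend becomes $\int_c\Hom_{\Spc}(W(c),\Hom_{\calE}(F(c),e))\simeq\mathrm{Nat}(W,\Hom_{\calE}(F(-),e))$ by \cite[Prop.~5.1]{laxcolimits17}.
\item $\colim_{\calC_{/d}}F$ becomes the limit of $\Hom_{\calE}(F(-),e)$ over the opposite of the category of elements of $W$. This is again $\mathrm{Nat}(W,\Hom_{\calE}(F(-),e))$, because left Kan extending the terminal presheaf along that fibration returns $W$.
\end{itemize}

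\textbf{Two small slips.} First, $\calC_{/d}\to\calC$ is a right fibration, not a left one; that is what it means to be classified by the presheaf $W$. Second, the sentence about left Kan extending along the fibration is not what produces the weighted colimit. For a genuine left fibration with covariant $W$, that argument gives the plain colimit of $c\mapsto F(c)\otimes W(c)$, not a coend. The identification $\colim_{\calC_{/d}}F\simeq\colim^W F$ is simply the description of $W$-weighted colimits as colimits over the category of elements.
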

\begin{proof}
By a Yoneda argument it suffices to consider $\calE=\Spc$.
    From~\cite[Prop.~5.1]{laxcolimits17}, we have that mapping spaces of functors are computed as ends:
    \begin{equation*}
        \mathrm{Nat}(F,G)= \int_{\calC} \Hom_{\calC}(F(c),G(c)) \,.
    \end{equation*}
    We calculate using the Yoneda lemma:
    \begin{align*}
        \gamma_*F(d) &= \mathrm{Nat}(\Hom_\calD(d,-),\gamma_*F) \\
        &= \mathrm{Nat}(\Hom_{\calD}(d,\gamma(-)),F) \\
        &= \int_{\calC} \Hom_{\Spc}\left(\Hom_{\calD}(d,\gamma(c)),F(c)\right) \\
        &= \int_{\calC} F(c)^{\Hom_{\calD}(d,\gamma(c))} \,.
    \end{align*}
    The left Kan extension formula is obtained by passing to opposites.
\end{proof}
Let us recall the \emph{derived mapping space lemma}, which is proven in~\cite[Theorem~2.2]{arakawa2025derivedmappingspacesinftycategories}.
\begin{lemma}[Derived Mapping Space Lemma]
\label{lem:derivedmappingspace}
    Let $(\calC,W)$ be a relative category and $\gamma\colon \calC\to \calC[W^{-1}]$ its Dwyer--Kan localization.
    Let $Y\to Y_\bullet$ be a simplicial resolution, cf.\ Definition~\ref{def:resolution}. Then there is an equivalence
    \begin{equation*}
        \Hom_{\calC[W^{-1}]}(\gamma(X),\gamma(Y)) \xrightarrow{\simeq} \colim_{[n]\in \Delta^{\op}} \Hom(X,Y_n).
    \end{equation*}
\end{lemma}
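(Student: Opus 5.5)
The plan is a Yoneda argument in presheaf $(\infty,1)$-categories, identifying the presheaf $P:=\colim_{[n]\in\Delta^{\op}}\Hom_{\calC}(-,Y_n)$ on $\calC$ with the restriction along $\gamma$ of the presheaf represented by $\gamma(Y)$. Write $\mathcal{P}(\calC)=\Fun(\calC^{\op},\Spc)$. Precomposition $\gamma^*\colon \mathcal{P}(\calC[W^{-1}])\to \mathcal{P}(\calC)$ is fully faithful, and its essential image is the full subcategory of $W$-local presheaves, namely those $G$ sending every $w\in W$ to an equivalence. This is the universal property of the Dwyer--Kan localization applied to $\calC^{\op}\to\Spc$. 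The left adjoint $\gamma_!$ sends representables to representables: $\gamma_!h_Y\simeq h_{\gamma Y}$. Hence $L:=\gamma^*\gamma_!$ is a localization of $\mathcal{P}(\calC)$ onto $W$-local presheaves, and
\begin{equation*}
    L(h_Y)(X)\simeq \Hom_{\calC[W^{-1}]}(\gamma X,\gamma Y) \,.
\end{equation*}
So it suffices to show that the canonical map $h_Y\to P$, induced by the structure maps $Y\to Y_0\to Y_\bullet$, exhibits $P$ as $L(h_Y)$. One then checks that evaluating at $X$ gives the map in the statement, which is clear since that map is induced by applying $\gamma$ to morphisms $X\to Y_n$.

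First I would show that $P$ is $W$-local. This is exactly condition~\ref{cond:2} of Definition~\ref{def:resolution}, so nothing needs to be proved here.

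Second I would show that $h_Y\to P$ is an $L$-equivalence, i.e.\ that for every $W$-local $G$ the map $\mathrm{Map}(P,G)\to \mathrm{Map}(h_Y,G)$ is an equivalence. The left side is $\lim_{\Delta}\mathrm{Map}(h_{Y_n},G)\simeq\lim_{\Delta} G(Y_n)$ by Yoneda, and the right side is $G(Y)$. Each map $G(Y_n)\to G(Y)$ is induced by $Y\to Y_n\in W$ and is therefore an equivalence because $G$ is local. Compatibility with the simplicial structure makes the cosimplicial diagram $G(Y_\bullet)$ equivalent to the constant diagram at $G(Y)$. The limit of a constant diagram over $\Delta$ is $G(Y)$ because $\Delta$ is weakly contractible. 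Equivalently, $h_Y\to h_{Y_n}$ is an $L$-equivalence for each $n$, and $L$-equivalences are closed under colimits, with $\colim_{\Delta^{\op}}h_Y\simeq h_Y$ by weak contractibility of $\Delta^{\op}$.

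Combining the two steps, $P$ is a local object receiving an $L$-equivalence from $h_Y$, so $P\simeq L(h_Y)$; evaluating at $X$ gives the claim. I expect the main obstacle to be bookkeeping rather than substance. It consists of verifying that the equivalence obtained abstractly is the explicit comparison map $\colim_n\Hom(X,Y_n)\to \Hom(\gamma X,\gamma Y)$, given by $f\mapsto \gamma(\text{str})^{-1}\circ\gamma(f)$. It also requires care that the structure maps $Y\to Y_n$ assemble into a map of simplicial objects from the constant diagram, which is what the under-category formulation $Y_\bullet\colon\Delta^{\op}\to\calC_{Y/}$ provides.
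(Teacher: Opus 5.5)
Your proof is correct. It uses the same framework as the remark following the lemma in the paper, but it goes further than the paper does.

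In that remark, $P=\colim_{\Delta^{\op}}\Hom_{\calC}(-,Y_\bullet)$ lies in the essential image of $\gamma^*$ by condition~(2) of Definition~\ref{def:resolution}. Moreover $\gamma_!$ preserves representables, and $\gamma^*\gamma_!$ is the reflector onto $W$-local presheaves. Hence the unit at $P$ is an equivalence, which yields a comparison map $\gamma^*\gamma_!h_Y\to\gamma^*\gamma_!P\simeq P$. The paper stops there and cites Arakawa's Theorem~2.2 for the fact that this map is an equivalence.

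You prove that step directly. The map $h_Y\to P$ is inverted by $\gamma_!$, because each $h_Y\to h_{Y_n}$ is inverted ($\gamma$ inverts $Y\to Y_n$), and the colimit over $\Delta^{\op}$ of a constant diagram is its value since $|\Delta^{\op}|\simeq *$. Your dual check via $\lim_{\Delta}G(Y_\bullet)\simeq G(Y)$ for local $G$ is the same argument.

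What your route buys:
\begin{itemize}
\item It shows that once $W$-locality of $P$ is built into the definition of a resolution, the lemma is entirely formal.
\item It is self-contained, using only the $\infty$-categorical Yoneda lemma and the universal property of the localization.
\item It therefore applies verbatim when $\calC$ is an $(\infty,1)$-category rather than a $1$-category. This matters because the paper also applies the lemma to the $(2,1)$-category $\BimBanOne$ in Lemma~\ref{lem:simplicalresolution}.
\end{itemize}
The citation buys nothing extra for the lemma as stated. It would matter only in settings where the locality of $P$ has to be established rather than assumed.

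Your comparison map runs $\colim_n\Hom(X,Y_n)\to\Hom(\gamma X,\gamma Y)$, opposite to the direction in the statement. This is harmless: it is the inverse of the paper's map $\Hom(\gamma X,\gamma Y)\to\gamma^*\gamma_!P(X)\simeq P(X)$.
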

\begin{remark}
    By Condition~\ref{cond:2} and the universal property of $\calC[W^{-1}]$, the functor $\colim_{\Delta^{\op}} \Hom_\calC(-,Y_\bullet)$ is in the image of
    \begin{equation*}
        \gamma^* :\Fun(\calC[W^{-1}]^{\op},\Spc) \to \Fun(\calC^{\op},\Spc) \,.
    \end{equation*}
    The left adjoint $\gamma_!$ satisfies $\gamma_!\Hom_\calC(-,X)=\Hom_{\calC[W^{-1}]}(-,\gamma(X))$.
    The image of $\gamma^*$ spans a full reflective subcategory and $\gamma^*\gamma_!$ is the reflector.
    Hence, the right vertical map in the following diagram is an equivalence:
    \begin{equation*}
        \begin{tikzcd}
            \Hom_{\calC}(-,Y) \ar[r]\ar[d,"\eta"] & \colim_{\Delta^{\op}}\Hom_{\calC}(-,Y_\bullet) \ar[d,"\eta","\simeq"'] \\
             \gamma^*\gamma_!\Hom_{\calC}(-,Y) \ar[r] &  \gamma^*\gamma_!\colim_{\Delta^{\op}}\Hom_{\calC}(-,Y_\bullet)
        \end{tikzcd} \,.
    \end{equation*}
    This supplies the natural comparison map in Lemma~\ref{lem:derivedmappingspace}.
\end{remark}
\begin{lemma}
    \label{lem:leftKanextension}
    Let $\gamma\colon \calC \to \calC[W^{-1}]$ be the localization of $(\calC,W)$, $\calD$ cocomplete, and $F\colon \calC \to \calD$ a functor.
    Let $A_\bullet\colon I\to \calC_{A/}$ be a resolution. 
    Then we have 
    \begin{equation*}
        \gamma^*\gamma_!F (A)\simeq \colim_I F(A_\bullet).
    \end{equation*}
\end{lemma}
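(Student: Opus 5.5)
We prove Lemma~\ref{lem:leftKanextension} by combining the pointwise coend formula of Lemma~\ref{lem:pointwiseKancoend} with the Derived Mapping Space Lemma~\ref{lem:derivedmappingspace}. We read ``resolution'' in the sense of Definition~\ref{def:resolution}, with $I=\Delta^{\op}$ (any sifted $I$ would do). By Lemma~\ref{lem:pointwiseKancoend} applied to $\gamma\colon\calC\to\calC[W^{-1}]$ we have
\begin{equation*}
    \gamma_!F(\gamma A)\simeq\int^{c\in\calC}F(c)\otimes\Hom_{\calC[W^{-1}]}(\gamma c,\gamma A)\,.
\end{equation*}

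The first step replaces the mapping space in $\calC[W^{-1}]$ by data in $\calC$. Lemma~\ref{lem:derivedmappingspace} applied to the resolution $A\to A_\bullet$ gives an equivalence
\begin{equation*}
    \Hom_{\calC[W^{-1}]}(\gamma c,\gamma A)\simeq\colim_{I}\Hom_\calC(c,A_\bullet)\,.
\end{equation*}
This equivalence is natural in $c$, because it is the unit of the reflection $\gamma^*\gamma_!$ described in the remark following that lemma. Substituting it into the coend, we then commute the colimit over $I$ with the coend (both are colimits) and with $F(c)\otimes-$ (tensoring is a left adjoint). This yields
\begin{equation*}
    \gamma_!F(\gamma A)\simeq\colim_{i\in I}\int^{c}F(c)\otimes\Hom_\calC(c,A_i)\,.
\end{equation*}

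By the co-Yoneda formula, which is the last statement of Lemma~\ref{lem:pointwiseKancoend} applied with $\gamma=\id_\calC$, each inner coend is $F(A_i)$. The result is therefore $\colim_I F(A_\bullet)$. It remains to check that the composite equivalence is compatible with the canonical map $F(A)\to\gamma^*\gamma_!F(A)$. This holds because both the co-Yoneda identification and the unit $\eta$ are induced by the structure maps $A\to A_i$.

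The main obstacle is the naturality in the first step. We need the derived mapping space equivalence as an equivalence of presheaves on $\calC$, not merely objectwise, so that it can be substituted inside the coend. We would obtain this directly from the remark following Lemma~\ref{lem:derivedmappingspace}. That remark exhibits $\colim_I\Hom_\calC(-,A_\bullet)$ as lying in the essential image of $\gamma^*$ (Condition~\ref{cond:2}), and identifies the natural map from it to $\gamma^*\Hom(\gamma-,\gamma A)$ as a map between $W$-local presheaves which becomes an equivalence after applying the reflector. A map between local objects that the reflector inverts is already an equivalence, so this gives the naturality in $c$.
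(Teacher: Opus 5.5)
Your proof is correct and follows the paper's argument exactly. You use the pointwise coend formula of Lemma~\ref{lem:pointwiseKancoend}, substitute the Derived Mapping Space Lemma~\ref{lem:derivedmappingspace} inside the coend, commute the colimit over $I$ past the coend and the tensoring, and finish with co-Yoneda. Your extra justification that the derived mapping space equivalence is natural in $c$, via the remark following Lemma~\ref{lem:derivedmappingspace}, is sound; the paper leaves this point implicit.
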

\begin{proof}
    Using the Derived Mapping Space Lemma~\ref{lem:derivedmappingspace} and the pointwise formula for Kan extensions~\ref{lem:pointwiseKancoend}, we have 
    \begin{align*}
        \gamma_!F(\gamma(A))&\simeq \int^{B\in\calC} F(B)\otimes \Hom_{\calC[W^{-1}]}(\gamma(B),\gamma(A)) \\
        &\simeq \int^B F(B)\otimes \colim_I \Hom_{\calC}(B,A_\bullet) \\
        & \simeq \colim_I F(A_\bullet) \,.
    \end{align*}
\end{proof}

Any simplicially enriched category $\mathsf{C}$ determines a simplicial object in $\Cat_1$ where $[n]\mapsto j(\mathsf{C})_n$ is the category whose objects are those of $\mathsf{C}$ and whose set of morphisms $j(\mathsf{C})(c_0,c_1)$ is $\sSet(\Delta^n,\mathsf{C}(c_0,c_1))$.
\begin{lemma}
    \label{lem:homotopycoherentnerve}
    Let $\mathsf{C}$ be a fibrant simplicially enriched category.
    Then there is an equivalence of $(\infty,1)$-categories
    \begin{equation*}
        N^{\mathrm{hc}}(\mathsf{C}) \simeq \colim_{[n]\in\Delta^{\op}} j(\mathsf{C})_n
    \end{equation*}
\end{lemma}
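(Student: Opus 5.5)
The plan is to compare both sides through the Dwyer--Kan localization description of $N^{\mathrm{hc}}$, or more concretely through the Bergner/Joyal machinery already used in Lemma~\ref{lem:htpypullbackoftopcats}.

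First, regard $[n]\mapsto j(\mathsf{C})_n$ as a simplicial object in $\Cat_1\subseteq\Cat_\infty$. Its colimit can be computed in the model category of simplicially enriched categories. The key observation is that the diagonal of the bisimplicial structure recovers $\mathsf{C}$: view each $j(\mathsf{C})_n$ as a discretely enriched simplicial category, and $j(\mathsf{C})_\bullet$ as a simplicial object in $\Cat_1(\sSet)$ with constant object set. Its realization, formed levelwise on mapping simplicial sets by taking the diagonal of $[n]\mapsto\sSet(\Delta^n,\mathsf{C}(c_0,c_1))_\bullet$, is weakly equivalent to $\mathsf{C}(c_0,c_1)$, since the simplicial set $[n]\mapsto\sSet(\Delta^n,K)$ is $K$ itself.

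Second, I would justify that this levelwise realization computes the homotopy colimit in the Bergner model structure. The object sets are constant, and simplicial categories with a fixed object set $O$ form a simplicial model category in which weak equivalences and the relevant homotopy colimits are detected on mapping spaces. The restriction to the constant-object-set subcategory should be justified via \cite[Prop.~1.3.4.10]{HA}-type arguments (compare the sketch for $\BimBanInf$). There, a $\Delta^{\op}$-indexed colimit of categories with constant objects is computed on morphism objects, using siftedness of $\Delta^{\op}$ so that composition is compatible with the colimit.

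Third, I would apply $N^{\mathrm{hc}}$, which is the right Quillen equivalence of \cite[Thm.~2.2.5.1]{HTT} and therefore induces an equivalence of underlying $(\infty,1)$-categories preserving colimits. For discrete categories, $N^{\mathrm{hc}}$ is the ordinary nerve, so $\colim_{\Delta^{\op}} j(\mathsf{C})_n$ in $\Cat_\infty$ is identified with $N^{\mathrm{hc}}$ of the colimit in simplicial categories, namely $N^{\mathrm{hc}}(\mathsf{C})$. Fibrancy of $\mathsf{C}$ is needed so that $N^{\mathrm{hc}}(\mathsf{C})$ is a quasicategory with the correct mapping spaces.

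The main obstacle is the second step: showing that the colimit in $\Cat_\infty$ of a simplicial diagram with constant objects is computed hom-wise. My first attempt would use the fact that the functor from simplicial categories on a fixed object set to $\Cat_\infty$ preserves sifted colimits. This can be reduced, via the Segal-space model, to the statement that geometric realization commutes with finite products in $\Spc$, so composition descends. A cleaner alternative is to pass to complete Segal spaces. Here $j(\mathsf{C})_\bullet$ gives a bisimplicial Segal-type object whose realization in the $\Delta^{\op}$-direction, with objects constant, is the Segal space with mapping spaces $|\mathsf{C}(c_0,c_1)|$. Completeness follows after completion, which does not change the $(\infty,1)$-category.
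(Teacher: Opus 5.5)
Your proposal is correct and follows the same route as the paper. Both exhibit $\mathsf{C}$ as the homotopy colimit of the discrete categories $j(\mathsf{C})_n$ in the Bergner model structure, then transport this along the right Quillen equivalence $N^{\mathrm{hc}}$, which computes its derived functor here because $\mathsf{C}$ and the discretely enriched $j(\mathsf{C})_n$ are fibrant. The only difference is that the paper does not reprove your Steps 1--2 (the hom-wise computation of this sifted homotopy colimit with constant object set) but simply cites \cite[Prop.~1.3.4.14]{HA} for that statement.
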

\begin{proof}
    \cite[Prop.~1.3.4.14]{HA} proves that in the model structure on $\Cat_1(\sSet)$,
    $\mathsf{C}$ is the homotopy colimit of $j(\mathsf{C})_n \in \Cat_1\subseteq\Cat_1(\sSet)$.
    Using the fact that $N^{\mathrm{hc}}$ is the right adjoint in a Quillen equivalence, we obtain
    \begin{equation*}
        N^{\mathrm{hc}}(\mathsf{C})\simeq N^{\mathrm{hc}}(\colim_{\Delta^{\op}} j(\mathsf{C})) \simeq \colim_{\Delta^{\op}} N^{\mathrm{hc}}j(\mathsf{C})_n \simeq \colim_{\Delta^{\op}} j(\mathsf{C})_n \,,
    \end{equation*}
    where we identified the homotopy coherent nerve of a $1$-category with the inclusion $\Cat_1\subseteq \Cat_\infty$.
\end{proof}
\begin{lemma}\label{lem:geomrealizationmonoidal}
  Let $\mathcal{C}^{\otimes}$ be a symmetric monoidal $(\infty,1)$-category which admits
  geometric realizations, and suppose that $\otimes$ preserves them in each variable
  separately. Equip $\mathcal{C}^{\Delta^{\op}}$ with the pointwise symmetric monoidal
  structure. Then
  \begin{equation*}
    |-| \;=\; \colim_{\Delta^{\op}} \colon \mathcal{C}^{\Delta^{\op}} \longrightarrow \mathcal{C}
  \end{equation*}
  admits a canonical symmetric monoidal refinement.
\end{lemma}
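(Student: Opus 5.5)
The plan is to deduce the statement from three facts: Lurie's description of symmetric monoidal $(\infty,1)$-categories as cocartesian fibrations over $\Fin_*$; the fact that $\Delta^{\op}$ is sifted; and the hypothesis that $\otimes$ preserves geometric realizations in each variable. We regard $\mathcal{C}^{\otimes}\to\Fin_*$ as a cocartesian fibration and define the pointwise structure on $\mathcal{C}^{\Delta^{\op}}$ as the fibrewise functor category $(\mathcal{C}^{\Delta^{\op}})^{\otimes}:=\Fun(\Delta^{\op},\mathcal{C}^{\otimes})\times_{\Fun(\Delta^{\op},\Fin_*)}\Fin_*$, pulled back along the constant-diagram functor. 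This is again a symmetric monoidal $(\infty,1)$-category. The goal is a symmetric monoidal functor $(\mathcal{C}^{\Delta^{\op}})^{\otimes}\to\mathcal{C}^{\otimes}$ over $\Fin_*$ that restricts on fibres over $\langle 1\rangle$ to $\colim_{\Delta^{\op}}$.

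The main step is to invoke the general statement that, for a cocartesian fibration $p\colon\mathcal{C}^{\otimes}\to\Fin_*$ whose fibres admit $K$-indexed colimits preserved by all the cocartesian pushforward functors, the fibrewise colimit functors assemble into a left adjoint over $\Fin_*$ to the diagonal $\mathcal{C}^{\otimes}\to(\mathcal{C}^{K})^{\otimes}$. This is the relative colimit machinery of \cite[Section~4.3.1]{HTT}, or equivalently \cite[Cor.~4.8.1.13 ff.]{HA}. The diagonal is symmetric monoidal, and a left adjoint relative to $\Fin_*$ of such a functor is automatically symmetric monoidal as soon as it preserves cocartesian edges. So the proof reduces to checking two things:
\begin{enumerate}
\item the fibre over $\langle n\rangle$, namely $\mathcal{C}^{\times n}$, has fibrewise geometric realizations;
\item for every active map $\langle n\rangle\to\langle 1\rangle$, the pushforward $(c_1,\dots,c_n)\mapsto c_1\otimes\cdots\otimes c_n$ commutes with the colimit over $\Delta^{\op}$ applied diagonally.
\end{enumerate}

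Condition (1) holds because colimits in a product are computed componentwise. Condition (2) is the key point and the only real obstacle. The pushforward is a functor of $n$ variables, but the diagram $\Delta^{\op}\to\mathcal{C}^{\times n}$ is a single simplicial object, i.e.\ the colimit is taken along the diagonal $\Delta^{\op}\to(\Delta^{\op})^{n}$. Since $\otimes$ preserves geometric realizations in each variable separately, the multivariable colimit over $(\Delta^{\op})^n$ can be computed iteratively. Siftedness of $\Delta^{\op}$ \cite[Lemma~5.5.8.4]{HTT} says the diagonal $\Delta^{\op}\to(\Delta^{\op})^n$ is cofinal, so
\[
\colim_{\Delta^{\op}}\bigl(X^1_\bullet\otimes\cdots\otimes X^n_\bullet\bigr)\simeq\colim_{(\Delta^{\op})^n}\bigl(X^1_{\bullet_1}\otimes\cdots\otimes X^n_{\bullet_n}\bigr)\simeq |X^1_\bullet|\otimes\cdots\otimes|X^n_\bullet|.
\]
The case $n=0$ is handled separately: the unit is a constant simplicial object, and the geometric realization of a constant diagram is the object itself, again by siftedness (weak contractibility of $\Delta^{\op}$).

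Finally, we observe that the refinement is canonical, since it is given by a relative left adjoint, which is unique up to contractible choice. Restricted to the fibre over $\langle 1\rangle$ it is $|-|$ by construction. Alternatively, the same argument can be phrased via \cite[Prop.~3.2.3.1]{HA} on sifted colimits in symmetric monoidal categories compatible with them.
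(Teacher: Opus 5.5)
Your proposal is correct and is essentially the paper's argument. Both realize $|-|$ as the left adjoint of the symmetric monoidal constant-diagram functor. Both then reduce strong monoidality to weak contractibility of $\Delta^{\op}$ (for the unit), and to cofinality of the diagonal of the sifted category $\Delta^{\op}$ combined with separate preservation of geometric realizations (for the tensor).

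The difference is only in packaging. The paper uses the general fact that a left adjoint of a symmetric monoidal functor is canonically oplax, and checks that the unit and binary structure maps are equivalences. You instead assemble a relative left adjoint over $\Fin_*$ and verify the Beck--Chevalley condition for the active maps $\langle n\rangle\to\langle 1\rangle$. The relative-adjunction criterion you need is treated in \cite[\S 7.3.2]{HA}.
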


\begin{proof}
  The constant-diagram functor $\const\colon\mathcal{C}\to\mathcal{C}^{\Delta^{\op}}$ is
  restriction along $\Delta^{\op}\to\ast$ and is therefore symmetric monoidal for the
  pointwise structure.
  By assumption it admits the left
  adjoint $|-|$. The left adjoint of a symmetric monoidal functor is canonically
  \emph{oplax} symmetric monoidal~\cite[Prop.~A]{zbMATH07785229}. It therefore suffices to show that the oplax structure maps are
  equivalences.

  The unit of $\mathcal{C}^{\Delta^{\op}}$ is $\const_{\mathbbm{1}}$ and the
  structure map is the canonical map $\colim_{\Delta^{\op}}\const_{\mathbbm{1}}\to\mathbbm{1}$, an
  equivalence because $\Delta^{\op}$ is weakly contractible.

  Let $X,Y\in\mathcal{C}^{\Delta^{\op}}$ and write
  $X\boxtimes Y\colon \Delta^{\op}\times\Delta^{\op}\to\mathcal{C}$ for the external
  tensor product. 
  Then $|X\otimes Y|\to |X|\otimes |Y|$ is an equivalence:
  \begin{align*}
      \colim_{\Delta^{\op}}X_n\otimes Y_n &\simeq \colim_{\Delta^{\op}} \delta^*(X\boxtimes Y)  \\
      &\simeq \colim_{\Delta^{\op}\times \Delta^{\op}} X\boxtimes Y \\
      &\simeq |X|\otimes |Y|.
  \end{align*}
  Here, the second equivalence uses that $\Delta^{\op}$ is sifted and the last equivalence uses that $\otimes$ commutes with colimits in each variable. 
\end{proof}

\subsection{Algebras and bimodules in a symmetric monoidal category}
\label{app:algebras}

We collect the formal properties of algebras and bimodules internal to a symmetric monoidal $1$-category $\mathrm{C}$ that are used throughout, specialized in the body of the paper to $\mathrm{C} = \Ban$, $\Ban_{\leq}$, $\sBan_{\leq}$, $\VectOne$ and $\sVectOne$.
We refer to~\cite{MR450361} for an elementary introduction, ~\cite{MR2534210} for the bicategorical statements and to~\cite{MR3590516} and references therein for a higher categorical treatment.

Throughout this subsection, $\mathrm{C}$ is a symmetric monoidal category with reflexive coequalizers which are preserved by $\otimes$ in each variable.
An \emph{algebra} in $\mathrm{C}$ is an object $A$ with an associative unital multiplication $A \otimes A \to A$, $\mathbbm{1} \to A$; algebras and their homomorphisms form a symmetric monoidal category $\Alg(\mathrm{C})$. 
The multiplication of $A \otimes B$ uses the braiding.
Given algebras $A,B$, an \emph{$(A,B)$-bimodule} is an object $M$ with commuting unital actions $A \otimes M \to M$ and $M \otimes B \to M$; bimodules and their maps form a category $\Mor(\mathrm{C})(B,A)$.
For an $(A,B)$-bimodule $M$ and a $(B,C)$-bimodule $N$, the \emph{relative tensor product} $M \otimes_B N$ is the coequalizer
\begin{equation}
\label{eq:generalreltensor}
\begin{tikzcd}
    M \otimes B \otimes N \ar[r,shift left,"\rho \otimes \id"]
    \ar[r,shift right,swap,"\id \otimes \lambda"]
    & M \otimes N \ar[r] & M \otimes_B N \,,
\end{tikzcd}
\end{equation}
which is reflexive with common section given by the unit $M \otimes N \to M \otimes B \otimes N$, and hence exists by assumption.
Because $\otimes$ preserves these coequalizers, $M \otimes_B N$ inherits the structure of an $(A,C)$-bimodule and the construction is associative up to coherent isomorphism.

\begin{lemma}[{\cite[Theorem 11.5]{MR2534210}}]
\label{lem:morexists}
Let $\mathrm{C}$ be a symmetric monoidal $1$-category with reflexive coequalizers which are preserved by $\otimes$ in each variable.
Then there is a symmetric monoidal bicategory $\Mor(\mathrm{C})$ with objects the algebras in $\mathrm{C}$, $1$-morphisms the bimodules, $2$-morphisms the bimodule maps, composition given by~\eqref{eq:generalreltensor}, identity $1$-morphism on $A$ given by $A$ as an $(A,A)$-bimodule, and monoidal structure induced by $\otimes$.
\end{lemma}

See~\cite[Example 11.6]{MR2534210} for the verification in the motivating case.
We write $M \colon A \rightsquigarrow B$ for a $1$-morphism of $\Mor(\mathrm{C})$, i.e.\ an object of the hom-category $\Mor(\mathrm{C})(A,B)$, a $(B,A)$-bimodule.

\paragraph{Self-enrichment}

Suppose now that $\mathrm{C}$ is in addition closed and has equalizers.
Then the hom-categories of $\Mor(\mathrm{C})$ are themselves enriched in $\mathrm{C}$: for $(A,B)$-bimodules $M, N$ the internal hom of bimodule maps is the equalizer
\begin{equation}
\label{eq:selfenrich}
    \underline{\Hom}_{A,B}(M,N) \longrightarrow \underline{\mathrm{C}}(M,N) \rightrightarrows \underline{\mathrm{C}}(A \otimes M \otimes B, N)
\end{equation}
of the two morphisms comparing the actions, where $\underline{\mathrm{C}}$ denotes the internal hom of $\mathrm{C}$.

\begin{definition}
    Let $(\mathrm{C}, \otimes,\beta)$ be a symmetric monoidal category and $A \in \mathrm{C}$ an algebra. The \emph{opposite algebra $A^{\op}$} of $A$ is the same object with multiplication $A^{\op} \otimes A^{\op} \to A^{\op}$ changed by the braiding $\beta$.
\end{definition}

The category $\Mor(\mathrm{C})(A,B)$ is equivalent to the category of $A^{\op} \otimes B$-modules.

\begin{lemma}[{\cite[Section 1]{lewismandell2007},     \cite[\S 4.2]{Vasilakopoulou2014}}
]
\label{lem:selfenrichment}
    Let $\mathrm{C}$ be a closed symmetric monoidal category with equalizers and let $A,B,C$ be algebras in $\mathrm{C}$.
    Then $\Mor(\mathrm{C})(B,A)$ is $\mathrm{C}$-enriched with hom-objects $\underline{\Hom}_{A,B}(M,N)$, and the relative tensor product
    \begin{equation*}
        -\otimes_B- \colon \Mor(\mathrm{C})(B,A) \times \Mor(\mathrm{C})(C,B) \longrightarrow \Mor(\mathrm{C})(C,A)
    \end{equation*}
    is a $\mathrm{C}$-enriched bifunctor.
    Moreover, $\Mor(\mathrm{C})(B,A)$ is tensored and cotensored over $\mathrm{C}$, by $M\otimes X$ and $\underline{\mathrm{C}}(X,M)$ respectively.
\end{lemma}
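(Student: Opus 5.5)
The statement is Lemma~\ref{lem:selfenrichment}, which is formal, so the plan is to build everything from the equalizer~\eqref{eq:selfenrich} and the closed structure of $\mathrm{C}$.

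\emph{Enrichment.} First I make $\Mor(\mathrm{C})(B,A)$ a $\mathrm{C}$-category.
\begin{itemize}
\item The hom-object $\underline{\Hom}_{A,B}(M,N)$ is the equalizer~\eqref{eq:selfenrich}; it exists because $\mathrm{C}$ has equalizers.
\item Internal composition $\underline{\mathrm{C}}(N,P)\otimes\underline{\mathrm{C}}(M,N)\to\underline{\mathrm{C}}(M,P)$ restricts to these equalizers. To see this, check that the two action-comparison maps agree after composing. By adjunction this is the statement that a composite of $A$- and $B$-linear maps is linear, which holds on generalized elements.
\item The unit is the map $\mathbbm{1}\to\underline{\mathrm{C}}(M,M)$ picking out $\id_M$, which factors through the equalizer.
\item Associativity and unitality are inherited from the self-enrichment of $\mathrm{C}$, because equalizer inclusions are monomorphisms.
\end{itemize}

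\emph{Enriched bifunctoriality of $-\otimes_B-$.} I need a map
\[
\underline{\Hom}_{A,B}(M,M')\otimes\underline{\Hom}_{B,C}(N,N')\to\underline{\Hom}_{A,C}(M\otimes_B N,\,M'\otimes_B N').
\]
\begin{itemize}
\item Start from the external tensor map $\underline{\mathrm{C}}(M,M')\otimes\underline{\mathrm{C}}(N,N')\to\underline{\mathrm{C}}(M\otimes N,M'\otimes N')$, which comes from the closed symmetric monoidal structure.
\item Restricted to the equalizers, this map is compatible with the two arrows of the coequalizer~\eqref{eq:generalreltensor}, by $B$-linearity. So after adjoining, it descends along $M\otimes N\to M\otimes_B N$.
\item For the descent I use that $X\otimes-$ preserves reflexive coequalizers. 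This holds because $\otimes$ preserves them in each variable, and is automatic since $X\otimes-$ is a left adjoint.
\item Then check that the resulting map lands in the $(A,C)$-linear part.
\item Functoriality and unit axioms follow again by monicity of the equalizer inclusions.
\end{itemize}

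\emph{Tensors and cotensors.}
\begin{itemize}
\item For $X\in\mathrm{C}$, let $M\otimes X$ carry the actions induced on $M$ (using the braiding to move $B$ past $X$), and let $\underline{\mathrm{C}}(X,M)$ carry the transposed actions.
\item Verify $\underline{\Hom}_{A,B}(M\otimes X,N)\cong\underline{\mathrm{C}}(X,\underline{\Hom}_{A,B}(M,N))\cong\underline{\Hom}_{A,B}(M,\underline{\mathrm{C}}(X,N))$.
\item These isomorphisms come from applying the right adjoint $\underline{\mathrm{C}}(X,-)$ to the equalizer diagram~\eqref{eq:selfenrich}. Right adjoints preserve equalizers, and the isomorphism $\underline{\mathrm{C}}(M\otimes X,N)\cong\underline{\mathrm{C}}(X,\underline{\mathrm{C}}(M,N))$ intertwines the two parallel arrows.
\end{itemize}

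\emph{Main obstacle.} I expect the main difficulty to be the descent step for $\otimes_B$: checking that the $B$-balancing diagrams commute internally. With $\mathbb{Z}/2$-graded $\mathrm{C}$ such as $\sBan$, the braiding inserts Koszul signs, and these must match Convention~\ref{conv:graded-linear}. I would carry out this check by working with generalized elements $T\to\underline{\mathrm{C}}(-,-)$, so that it reduces to ordinary equational reasoning with the braiding tracked explicitly. For the rest I would cite \cite{lewismandell2007,Vasilakopoulou2014}.
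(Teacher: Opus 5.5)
Your proposal is correct, but it argues differently from the paper.

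\textbf{The paper's route.} The paper does no internal verification. It observes that an $(A,B)$-bimodule is the same as a left $A\otimes B^{\op}$-module. Under this identification, \eqref{eq:selfenrich} becomes the one-sided module function object of \cite{lewismandell2007}. The paper then imports the enrichment, the tensoring and cotensoring (from the parametrized adjunctions there), and the enriched bifunctoriality of $-\otimes_B-$ wholesale from that reference.

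\textbf{Your route.} You build each piece directly from the equalizer \eqref{eq:selfenrich}, the closed structure, and the reflexive coequalizer \eqref{eq:generalreltensor}:
\begin{itemize}
\item you restrict internal composition and the external tensor map to the equalizers;
\item you descend along $M\otimes N\to M\otimes_B N$, using that $H\otimes-$ is a left adjoint;
\item you apply $\underline{\mathrm{C}}(X,-)$ to the equalizer diagram to get the tensor and cotensor isomorphisms.
\end{itemize}

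\textbf{What each buys.} The paper's route is shorter and reduces the two-sided statement to a single one-sided theorem. The cost is that all sign bookkeeping is hidden inside the identification with $A\otimes B^{\op}$-modules. Your route is self-contained and makes the Koszul check explicit: $g(bn)=(-1)^{|g||b|}b\,g(n)$ against $(f\otimes g)(m\otimes n)=(-1)^{|g||m|}f(m)\otimes g(n)$. That check is precisely what Convention~\ref{conv:graded-linear} cites this lemma for.

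\textbf{One detail to add.} In the axiom checks for $-\otimes_B-$, monicity of the equalizer inclusions is not enough. You also need $\underline{\mathrm{C}}(M\otimes_B N,P)\to\underline{\mathrm{C}}(M\otimes N,P)$ to be monic. This holds because $\underline{\mathrm{C}}(-,P)$ turns the coequalizer \eqref{eq:generalreltensor} into an equalizer.
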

\begin{proof}
    An $(A,B)$-bimodule is the same thing as a left $A \otimes B^{\op}$-module, and under this identification $\underline{\Hom}_{A,B}(M,N)$ is the module function object of~\cite[Section 1]{lewismandell2007}, defined there also by the equalizer \eqref{eq:selfenrich}.
    That this construction enriches the module category over $\mathrm{C}$ is deduced there from the natural bijection between module maps $M \to N$ and morphisms $\mathbbm{1} \to \underline{\Hom}_{A,B}(M,N)$ in $\mathrm{C}$; the tensoring and cotensoring come from the parametrized adjunctions in loc.\ cit., and the enrichment of the relative tensor product is the statement there that $-\otimes_B-$ is a $\mathrm{C}$-enriched bifunctor.
\end{proof}

\begin{remark}
    A more general and clean version of Lemma \ref{lem:selfenrichment} is: $\Mor(\mathrm{C})$ is enriched in $\Cat_1(\mathrm{C})$. 
    We could not find this statement in the literature, and we won't need this generality in this paper.
\end{remark}

\begin{proposition}[{\cite{laxsymdaniel}}]
\label{prop:daniel}
For any symmetric monoidal $(2,2)$-category $B$, the assignment $b \mapsto \Hom_B(\mathbbm{1},b)$ defines a lax symmetric monoidal $2$-functor $B \to \Cat_1$ to the $(2,2)$-category of categories.
\end{proposition}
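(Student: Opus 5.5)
The statement is Proposition~\ref{prop:daniel}: for a symmetric monoidal $(2,2)$-category $B$, the assignment $b \mapsto \Hom_B(\mathbbm{1},b)$ is a lax symmetric monoidal $2$-functor $B \to \Cat_1$. I would prove it as the $2$-categorical analogue of the fact that a corepresentable functor out of a symmetric monoidal $1$-category is lax monoidal.

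First I construct the underlying $2$-functor. On objects it is $\Hom_B(\mathbbm{1},-)$. A $1$-morphism $f\colon b\to b'$ acts by postcomposition $f\circ -$. A $2$-morphism $\alpha\colon f\Rightarrow f'$ acts by whiskering $\alpha\ast -$. The associator and unitor of composition in $B$ provide the coherent compositor and unitor isomorphisms, so this is an honest pseudofunctor.

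Second, I build the lax structure. The unit map $\ast \to \Hom_B(\mathbbm{1},\mathbbm{1})$ picks out $\id_{\mathbbm{1}}$. The binary map is
\[
\mu_{b,c}\colon \Hom_B(\mathbbm{1},b)\times \Hom_B(\mathbbm{1},c) \to \Hom_B(\mathbbm{1},b\otimes c),\qquad (x,y)\mapsto (x\otimes y)\circ \lambda^{-1}_{\mathbbm{1}},
\]
using the unitor equivalence $\mathbbm{1}\simeq \mathbbm{1}\otimes\mathbbm{1}$; on $2$-morphisms it is $\otimes$ of $2$-cells. Naturality of $\mu$ in $(b,c)$ is not strict. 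For $f\colon b\to b'$ and $g\colon c\to c'$ the required invertible modification comes from the interchange isomorphism $(f\otimes g)\circ(x\otimes y)\cong (f\circ x)\otimes(g\circ y)$, which is part of the data of $\otimes$ being a pseudofunctor $B\times B\to B$. Likewise the associativity, unit and symmetry constraints for $\mu$ are supplied by the associator, unitors and braiding of $B$. These are transported along $\mathbbm{1}\simeq \mathbbm{1}^{\otimes n}$, using that the braiding on $\mathbbm{1}\otimes\mathbbm{1}$ is coherently the identity.

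Third, I verify the coherence axioms of a lax symmetric monoidal pseudofunctor between symmetric monoidal bicategories (in the Gurski/Schommer-Pries sense). This is the main obstacle: there are many pentagonator, syzygy and braiding axioms, and doing them by hand is unilluminating.

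To avoid this I would rather argue abstractly. By strictification for symmetric monoidal bicategories (Gurski--Osorno / Schommer-Pries), I may replace $B$ by a symmetric monoidal equivalent one with strict associativity and unitality of $\otimes$, which reduces most axioms to equalities. Alternatively, in the $(\infty,2)$-setting, $\Hom_B(\mathbbm{1},-)$ is right adjoint to $\Cat_1\to B$, $\mathcal{C}\mapsto \mathcal{C}\otimes\mathbbm{1}$ (the tensoring with $\mathbbm{1}$), which is strong symmetric monoidal where it exists. Right adjoints of strong monoidal functors are canonically lax monoidal, the same doctrinal-adjunction argument as \cite[Prop.~A]{zbMATH07785229} used in Lemma~\ref{lem:geomrealizationmonoidal}. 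Since $B$ need not be tensored over $\Cat_1$, I would instead embed $B$ into presheaves $\Fun(B^{\op},\Cat_1)$ with Day convolution, where the Yoneda embedding is strong monoidal. There $\Hom(\mathbbm{1},-)$ is evaluation at the unit, i.e.\ the global sections functor of the unit object, which is lax monoidal by the Day convolution unit map.
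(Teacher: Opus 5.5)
The paper gives no proof of this proposition; it imports it from \cite{laxsymdaniel}. Your proposal is correct in its final form and takes a genuinely different, more conceptual route, though two justifications need repair.

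Your Steps 1--2 record the right data, but Step~3 stops where the content is. Of your three escape routes, only the last is actually an argument: embed $B$ by $y\colon B\to\mathcal{P}(B)=\Fun(B^{\op},\Cat_1)$ with Day convolution, use $\Hom_B(\mathbbm{1},b)\simeq y(b)(\mathbbm{1})$, and compose the strong monoidal $y$ with the lax monoidal $\mathrm{ev}_{\mathbbm{1}}$. This trades a hand verification of the bicategorical axioms for formal properties of presheaves. Against a bare citation, it buys a conceptual explanation. The price is that the key input is a coherence theorem that already implies the proposition: a symmetric monoidal Day convolution on $\Cat_1$-valued presheaves on a symmetric monoidal $(2,2)$-category for which $y$ is strong. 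So it must be quoted from the bicategorical or enriched-$\infty$-categorical literature, not asserted.

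Repair two points. First, $\mathrm{ev}_{\mathbbm{1}}$ is not a ``global sections'' functor; that would be $\Hom(\ast,-)$. Naming the universal map $F(\mathbbm{1})\times G(\mathbbm{1})\to(F\ast G)(\mathbbm{1})$ gives the structure map but no coherence. Derive the lax structure instead from $\mathrm{ev}_{\mathbbm{1}}$ being restriction along the strong symmetric monoidal unit inclusion $\ast\to B^{\op}$. Equivalently, it is right adjoint to $\calC\mapsto\calC\times y(\mathbbm{1})$, which is strong because Day convolution preserves $\Cat_1$-tensors in each variable. That is exactly the hypothesis hidden in your phrase ``strong symmetric monoidal where it exists''. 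Second, \cite[Prop.~A]{zbMATH07785229} is $(\infty,1)$-categorical, so it gives the lax structure only on the underlying $(2,1)$-functor. That is what the paper actually uses for $\ModOne\colon\BimBanOne\to\Cat_1^\Sigma$, but the full $(2,2)$-statement needs a two-dimensional doctrinal adjunction.

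Finally, drop the strictification remark. The available strictifications of symmetric monoidal bicategories are of Gray type, such as quasi-strict symmetric monoidal $2$-categories. These keep a non-identity interchanger $(f\otimes g)\circ(x\otimes y)\cong(f\circ x)\otimes(g\circ y)$. That is precisely the $2$-cell furnishing naturality of $\mu$, so the relevant axioms do not become equalities.
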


\paragraph{Bimodules induced by homomorphisms}

Every algebra homomorphism $f \colon A \to B$ induces two bimodules.
Writing $B_f$ for $B$ with its left $B$-action and with the right $A$-action $b \cdot a := b f(a)$, and ${}_f B$ for $B$ with the left $A$-action $a \cdot b := f(a) b$ and its right $B$-action, we obtain
\begin{equation*}
    B_f \colon A \rightsquigarrow B \,, \qquad
    {}_f B \colon B \rightsquigarrow A \,.
\end{equation*}
The first assignment is functorial and symmetric monoidal, giving $\Alg(\mathrm{C}) \to \Mor(\mathrm{C})$; the second is contravariant, giving $\Alg(\mathrm{C})^{\op} \to \Mor(\mathrm{C})$.
The evaluation $B_f \otimes_A {}_f B \to B$ exhibits $B_f$ as left adjoint to ${}_f B$. 
On module categories this is the usual adjunction between extension and restriction of scalars.

\begin{definition}
\label{def:bimgeneral}
    Let $\mathrm{C}$ be as in Lemma~\ref{lem:morexists}.
    We write $\Bim(\mathrm{C})$ for the symmetric monoidal $(2,1)$-category with
    \begin{itemize}
        \item the same objects as $\Mor(\mathrm{C})$, that is, the algebras in $\mathrm{C}$;
        \item those $1$-morphisms $M \colon A \rightsquigarrow B$ of $\Mor(\mathrm{C})$ which admit a right adjoint;
        \item the invertible $2$-morphisms between those.
    \end{itemize}
\end{definition}

This is well-defined: adjoints compose and the unit $\mathbbm{1}$ is self-adjoint, so $\Bim(\mathrm{C}) \subseteq \Mor(\mathrm{C})$ is closed under composition and under $\otimes$.

\begin{remark}
\label{rmk:onesidedrestriction}
Since $B_f$ admits the right adjoint $_f B$ for every $f$, while ${}_f B$ in general does not, the covariant functor $\Alg(\mathrm{C}) \to \Bim(\mathrm{C})$ survives in general.
However, the contravariant one only restricts to a wide subcategory of $\Alg(\mathrm{C})$, thus Definition \ref{def:bimgeneral} breaks the symmetry of these two constructions.
\end{remark}

\bibliographystyle{alpha}
\bibliography{biblio.bib}{}

\end{document}

%% file: preamble.tex
\usepackage[utf8]{inputenc}
\usepackage[T1]{fontenc}

\usepackage{amsmath}
\usepackage{amsfonts}
\usepackage{amssymb}
\usepackage{amsthm}
\usepackage{mathtools}
\usepackage{bbm}

\usepackage{enumitem}

\usepackage{tikz-cd}
\usepackage{tikz}
\usetikzlibrary{decorations.markings}
\usetikzlibrary{decorations.pathmorphing}
\usetikzlibrary{cd}

\usepackage{tabularx}
\usepackage{booktabs}
\usepackage{array}
\usepackage{makecell}
\usepackage{geometry}
\usepackage{hyperref}

\newcommand{\Tor}{\operatorname{Tor}}

\newcommand{\Spec}{\operatorname{Spec}}

\newcommand{\Aut}{\operatorname{Aut}}
\newcommand{\Hom}{\operatorname{Hom}}
\newcommand{\Mor}{\operatorname{Mor}}
\newcommand{\Morita}{\sim}
\newcommand{\notMorita}{\nsim}
\newcommand{\End}{\operatorname{End}}
\DeclareMathOperator{\GL}{GL}
\newcommand{\pt}{\operatorname{pt}}
\newcommand{\ModOne}{\mathrm{Mod}}
\newcommand{\ModInf}{\mathrm{Mod}}
\newcommand{\ModEn}{\mathsf{Mod}}
\newcommand{\LMod}{\operatorname{LMod}}

\newcommand{\id}{\operatorname{id}}

\newcommand{\Ext}{\operatorname{Ext}}
\newcommand{\sVectOne}{\mathrm{sVect}}
\newcommand{\sVectEn}{\mathsf{sVect}}

\newcommand{\VectOne}{\mathrm{Vect}}
\newcommand{\VectEn}{\mathsf{Vect}}

\newcommand{\ev}{\operatorname{ev}}
\newcommand{\coev}{\operatorname{coev}}

\newcommand{\sLine}{\operatorname{sLine}}

\newcommand{\tr}{\operatorname{tr}}
\newcommand{\Fun}{\operatorname{Fun}}

\newcommand{\Pic}{\operatorname{Pic}}

\newcommand{\ob}{\operatorname{ob}}

\newcommand{\op}{\operatorname{op}}
\newcommand{\flop}{\operatorname{flop}}
\newcommand{\fib}{\operatorname{fib}}
\newcommand{\cofib}{\operatorname{cofib}}
\DeclareMathOperator{\Cl}{Cl}
\DeclareMathOperator{\Cxl}{\mathbb{C}\mathrm{l}}
\newcommand{\Fin}{\mathrm{Fin}}
\newcommand{\ABS}{\mathrm{ABS}}
\newcommand{\Kar}{\mathrm{Kar}}

\newcommand{\Z}{\mathbb{Z}}
\newcommand{\N}{\mathbb{N}}

\newcommand{\C}{\mathbb{C}}
\newcommand{\R}{\mathbb{R}}
\newcommand{\E}{\mathbb{E}}

\newcommand{\calC}{\mathcal{C}}
\newcommand{\calD}{\mathcal{D}}
\newcommand{\calE}{\mathcal{E}}

\newcommand{\calV}{\mathcal{V}}

\newcommand{\grotimes}{\hat{\otimes}}

\newcommand{\im}{\operatorname{Im}}

\newcommand{\BanAlgInf}{\mathrm{Alg}(\Ban)_{\infty}}
\newcommand{\BanAlgEn}{\mathsf{Alg(Ban)}}

\newcommand{\sBanAlgInf}{\mathrm{Alg}(\sBan)_{\infty}}
\newcommand{\sBanAlgEn}{\mathsf{Alg(sBan)}}
\newcommand{\sBanAlgOne}{\mathrm{Alg}(\sBan)}
\newcommand{\BanCat}{\mathrm{BanCat}}
\newcommand{\BimBanInf}{\Bim(\Ban)_{\infty}}
\newcommand{\BimBanOne}{\Bim(\Ban)}
\newcommand{\sBimBanInf}{\Bim(\sBan)_{\infty}}
\newcommand{\sBimBanOne}{\Bim(\sBan)}
\newcommand{\Bim}{\operatorname{Bim}}
\newcommand{\const}{\mathrm{const}}
\newcommand{\gr}{\textrm{gr}}

\newcommand{\Op}{\operatorname{Op}}

\newcommand{\ThmB}{\hyperlink{thmB}{Theorem~B}}
\newcommand{\ThmC}{\hyperlink{thmC}{Theorem~C}}

\newcommand{\mastercomparisonbody}{%
\centering
\small
\setlength{\tabcolsep}{3pt}%
\renewcommand{\arraystretch}{1.35}%
\begin{tabularx}{\textwidth}{
  @{}
  >{\raggedright\arraybackslash}p{0.20\textwidth}
  >{\raggedleft\arraybackslash}l
  @{\;}c@{\;}
  >{\raggedright\arraybackslash}X
  >{\raggedleft\arraybackslash}l
  @{\;}c@{\;}
  >{\raggedright\arraybackslash}X
  @{}
}
\toprule
&
\multicolumn{3}{c}{$\Cl_{+1}$}
&
\multicolumn{3}{c}{$\Cl_{-1}$}
\\
\cmidrule(lr){2-4}
\cmidrule(l){5-7}

covariant cofiber
&
$K_A^{\mathrm{gr}}$
&
$\cong$
&
$\cofib\bigl(
K_{|A|}\to K_{|A\grotimes\Cl_{+1}|}
\bigr)$

\scriptsize (ABS)
&
$\Omega^{-2}K_{A^{\op}}^{\mathrm{gr}}$
&
$\cong$
&
$\cofib\bigl(
K_{|A|}\to K_{|A\grotimes\Cl_{-1}|}
\bigr)$
\\

covariant fiber
&
$\Omega K_A^{\mathrm{gr}}$
&
$\cong$
&
$\fib\bigl(
K_{|A|}\to K_{|A\grotimes\Cl_{+1}|}
\bigr)$

\scriptsize (van Daele)
&
$\Omega^{-1}K_{A^{\op}}^{\mathrm{gr}}$
&
$\cong$
&
$\fib\bigl(
K_{|A|}\to K_{|A\grotimes\Cl_{-1}|}
\bigr)$
\\

\midrule

contravariant cofiber
&
$\Omega^{-1}K_{A^{\op}}^{\mathrm{gr}}$
&
$\cong$
&
$\cofib\bigl(
K_{|A\grotimes\Cl_{+1}|}\to K_{|A|}
\bigr)$
&
$\Omega K_A^{\mathrm{gr}}$
&
$\cong$
&
$\cofib\bigl(
K_{|A\grotimes\Cl_{-1}|}\to K_{|A|}
\bigr)$
\\

contravariant fiber
&
$K_{A^{\op}}^{\mathrm{gr}}$
&
$\cong$
&
$\fib\bigl(
K_{|A\grotimes\Cl_{+1}|}\to K_{|A|}
\bigr)$

\scriptsize (Karoubi)
&
$\Omega^{2}K_A^{\mathrm{gr}}$
&
$\cong$
&
$\fib\bigl(
K_{|A\grotimes\Cl_{-1}|}\to K_{|A|}
\bigr)$
\\

\bottomrule
\end{tabularx}
}
\newcommand{\sslash}{/\!\!/}

\newcommand{\Free}{\operatorname{Free}}
\newcommand{\Idem}{\operatorname{Idem}}
\newcommand{\Ring}{\operatorname{Ring}}

\newcommand{\Ho}{\mathrm{Ho}}

\newcommand{\Top}{\operatorname{Top}}

\newcommand{\sBan}{\mathrm{sBan}}

\newcommand{\CStarAlg}{\operatorname{C}^*\!\!\operatorname{Alg}}
\newcommand{\Cat}{\operatorname{Cat}}
\newcommand{\Spc}{\operatorname{Spc}}

\newcommand{\Sp}{\operatorname{Sp}}
\DeclareMathOperator*{\colim}{colim}

\newcommand{\Grpd}{\operatorname{Grpd}}
\newcommand{\CMon}{\operatorname{CMon}}
\newcommand{\CGrp}{\operatorname{CGrp}}
\newcommand{\Alg}{\operatorname{Alg}}
\newcommand{\Ban}{\mathrm{Ban}}
\newcommand{\Set}{\operatorname{Set}}
\newcommand{\sSet}{\operatorname{sSet}}
\newcommand{\InnInf}{\mathrm{Inn}_{\infty}}
\newcommand{\InnEn}{\mathsf{Inn}}
\newcommand{\CHaus}{\mathrm{CHaus}}
\newcommand{\CAlg}{\mathrm{CAlg}}
\DeclareMathOperator{\Sing}{Sing}

\newcommand{\gp}{\mathrm{gp}}
\newcommand{\rank}{\mathrm{rank}}

\newcommand{\fl}{\operatorname{fl}}

\newtheorem{theorem}{Theorem}[section]
\newtheorem*{theorem*}{Theorem}
\newtheorem*{theoremA*}{Theorem~A}
\newtheorem*{theoremB*}{Theorem~B}
\newtheorem*{theoremC*}{Theorem~C}

\newtheorem{proposition}[theorem]{Proposition}
\newtheorem{lemma}[theorem]{Lemma}

\newtheorem{corollary}[theorem]{Corollary}

\theoremstyle{definition}

\newtheorem{definition}[theorem]{Definition}

\theoremstyle{remark}

\newtheorem{remark}[theorem]{Remark}
\newtheorem{convention}[theorem]{Convention}
\newtheorem{notation}[theorem]{Notation}
\newtheorem{example}[theorem]{Example}
\newtheorem{warning}[theorem]{Warning}

\definecolor{Blue} {rgb} {0.282352,0.239215,0.803921}
\definecolor{Green} {rgb} {0.133333,0.545098,0.133333}

\newcounter{jfc}

\makeatletter
\newcommand{\nameditem}[1]{%
    \item[#1]% Creates the description item
    \protected@edef\@currentlabel{#1}% Tells \ref to use this exact text
}
\makeatother

\newtheorem{construction}[theorem]{Construction}

\makeatletter
\renewcommand\paragraph{\@startsection{paragraph}{4}%
  \z@{.5\linespacing\@plus.7\linespacing}{-\fontdimen2\font}%
  {\normalfont\bfseries}}
\makeatother